\newcommand{\bk}{\Bbbk}
\newcommand{\Z}{\mathbb{Z}}
\newcommand{\C}{\mathbb{C}}
\newcommand{\Q}{\mathbb{Q}}
\newcommand{\F}{\mathbb{F}}
\newcommand{\scO}{\mathscr{O}}
\newcommand{\scK}{\mathscr{K}}
\newcommand{\Gm}{\mathbb{G}_{\mathrm{m}}}
\newcommand{\sA}{\mathsf{A}}
\newcommand{\sB}{\mathsf{B}}
\newcommand{\sC}{\mathsf{C}}
\newcommand{\sD}{\mathsf{D}}
\newcommand{\sE}{\mathsf{E}}
\newcommand{\sa}{\mathsf{a}}
\newcommand{\spp}{\mathsf{p}}
\newcommand{\snn}{\mathsf{n}}
\newcommand{\smm}{\mathsf{m}}
\newcommand{\bR}{\mathsf{R}}
\newcommand{\Rn}{\mathsf{Rn}}
\newcommand{\fg}{\mathfrak{g}}
\newcommand{\fb}{\mathfrak{b}}
\newcommand{\ft}{\mathfrak{t}}
\newcommand{\fn}{\mathfrak{n}}
\newcommand{\fm}{\mathfrak{m}}
\newcommand{\fp}{\mathfrak{p}}
\newcommand{\fnt}{\dot{\mathfrak{n}}}
\newcommand{\fN}{\mathfrak{N}}
\newcommand{\fZ}{\mathfrak{Z}}
\newcommand{\weyl}{\mathsf{M}}
\newcommand{\coweyl}{\mathsf{N}}
\newcommand{\tilt}{\mathsf{T}}
\newcommand{\irr}{\mathsf{L}}
\newcommand{\St}{\mathrm{St}}
\newcommand{\Fr}{\mathrm{Fr}}
\newcommand{\Dist}{\mathrm{Dist}}
\newcommand{\tcN}{{\widetilde{\mathcal{N}}}}
\newcommand{\se}{\mathsf{e}}
\newcommand{\bX}{\mathbf{X}}
\newcommand{\aff}{{\mathrm{aff}}}
\newcommand{\bXpp}{\bX^{+,\mathrm{reg}}}
\newcommand{\dom}{\mathsf{dom}}
\newcommand{\convo}{\mathsf{conv}^0}
\newcommand{\conv}{\mathsf{conv}}
\newcommand{\Waff}{W_{\mathrm{aff}}}
\newcommand{\WaffCox}{W_{\mathrm{aff}}^{\mathrm{Cox}}}
\newcommand{\Waffmin}{{}^0 \hspace{-1pt} \Waff}
\newcommand{\WaffminI}{{}^0 \hspace{-1pt} \Waff^I}
\newcommand{\cF}{\mathcal{F}}
\newcommand{\cE}{\mathcal{E}}
\newcommand{\cG}{\mathcal{G}}
\newcommand{\cL}{\mathcal{L}}
\newcommand{\cO}{\mathcal{O}}
\newcommand{\IC}{\mathcal{IC}}
\newcommand{\bQ}{\mathbf{Q}}
\newcommand{\cJ}{\mathcal{J}}
\newcommand{\cT}{\mathcal{T}}
\newcommand{\Gr}{\mathrm{Gr}}
\newcommand{\Fl}{\mathrm{Fl}}
\newcommand{\sph}{{\mathrm{sph}}}
\newcommand{\Sat}{\mathcal{S}}
\newcommand{\Iw}{\mathrm{Iw}}
\newcommand{\bL}{{\mathsf{\Lambda}}}
\newcommand{\bS}{\mathbf{S}}
\newcommand{\Koszul}{\mathsf{Koszul}}
\newcommand{\cC}{\mathcal{C}}
\newcommand{\lmod}{\text{-}\mathsf{mod}}
\newcommand{\ldgmod}{\text{-}\mathsf{dgmod}}
\newcommand{\mix}{{\mathrm{mix}}}
\newcommand{\Perv}{\mathsf{Perv}}
\newcommand{\Par}{\mathsf{Parity}}
\newcommand{\PerPar}{\mathsf{PervParity}}
\newcommand{\Rep}{\mathsf{Rep}}
\newcommand{\Repd}{\mathsf{Rep}^{\mathrm{disc}}}
\newcommand{\Repf}{\mathsf{Rep}^{\mathrm{f}}}
\newcommand{\Vect}{\mathsf{Vect}}
\newcommand{\Coh}{\mathsf{Coh}}
\newcommand{\QCoh}{\mathsf{QCoh}}
\newcommand{\Db}{D^{\mathrm{b}}}
\newcommand{\Kb}{K^{\mathrm{b}}}
\newcommand{\Dfg}{D^\mathrm{fg}}
\newcommand{\Dmix}{D^\mix}
\newcommand{\Stein}{\mathrm{Stein}}
\DeclareMathOperator{\Hom}{Hom}
\DeclareMathOperator{\Ext}{Ext}
\DeclareMathOperator{\End}{End}
\DeclareMathOperator{\Sym}{Sym}
\DeclareMathOperator{\Ind}{Ind}
\DeclareMathOperator{\For}{For}
\DeclareMathOperator{\pr}{pr}
\DeclareMathOperator{\incl}{in}
\DeclareMathOperator{\inc}{inc}
\newcommand{\id}{\mathrm{id}}
\newcommand{\simto}{\xrightarrow{\sim}}
\newcommand{\la}{\langle}
\newcommand{\ra}{\rangle}
\newcommand{\aq}{\sslash}
\newcommand{\rcap}{{\stackrel{_R}{\cap}}}
\newcommand{\coH}{\mathsf{H}}
\newcommand{\qis}{\mathrm{qis}}
\newcommand{\rmZ}{\mathrm{Z}}
\newcommand{\bK}{\mathbb{K}}
\newcommand{\bO}{\mathbb{O}}
\newcommand{\bE}{\mathbb{E}}
\def\lotimes{\@ifnextchar_{\@lotimessub}{\@lotimesnosub}}
\def\@lotimessub_#1{\mathchoice{\mathbin{\mathop{\otimes}^L}_{#1}}%
  {\otimes^L_{#1}}{\otimes^L_{#1}}{\otimes^L_{#1}}}
\def\@lotimesnosub{\mathbin{\mathop{\otimes}^L}}
\numberwithin{equation}{section}
\newtheorem{thm}{Theorem}[section]
\newtheorem{lem}[thm]{Lemma}
\newtheorem{prop}[thm]{Proposition}
\newtheorem{cor}[thm]{Corollary}
\newtheorem{conj}[thm]{Conjecture}
\theoremstyle{definition}
\newtheorem{defn}[thm]{Definition}
\theoremstyle{remark}
\newtheorem{rmk}[thm]{Remark}
\title[Reductive groups, loop Grassmannian, Springer resolution]{Reductive groups, the loop Grassmannian, and the Springer resolution}
\author{Pramod N. Achar}
\address{Department of Mathematics\\
  Louisiana State University\\
  Baton Rouge, LA 70803\\
  U.S.A.}
\email{pramod@math.lsu.edu}
\author{Simon Riche}
\address{Universit\'e Clermont Auvergne, CNRS, LMBP, F-63000 Clermont-Ferrand, France.
}
\email{simon.riche@uca.fr}
\thanks{P.A. was supported by NSA Grant No.~H98230-15-1-0175 and NSF Grant No.~DMS-1500890.  S.R. was partially supported by ANR Grant 
No.~ANR-13-BS01-0001-01. This project has received
funding from the European Research Council (ERC) under the European Union's Horizon 2020
research and innovation programme (grant agreement No 677147).
}
\begin{document}

\begin{abstract}
In this paper we prove equivalences of categories relating the derived category of a block of the category of representations of a connected reductive algebraic group over an algebraically closed field of characteristic $p$ bigger than the Coxeter number and a derived category of equivariant coherent sheaves on the Springer resolution (or a parabolic counterpart). In the case of the principal block, combined with previous results, this provides a modular version of celebrated constructions due to Arkhipov--Bezrukavnikov--Ginzburg for Lusztig's quantum groups at a root of unity. As an application, we prove a ``graded version'' of a conjecture of Finkelberg--Mirkovi{\'c} describing the principal block in terms of mixed perverse sheaves on the dual affine Grassmannian, and deduce a new proof of Lusztig's conjecture in large characteristic.
\end{abstract}

\maketitle

\tableofcontents


\section{Introduction}

\subsection{Main players}
\label{ss:intro-main-players}

Let $G$ be a connected reductive algebraic group over an algebraically closed field $\bk$ of characteristic $\ell$, and let $T \subset B \subset G$ be a maximal torus and a Borel subgroup.  Assume that $\ell > h$, where $h$ is the Coxeter number of $G$, and that the derived subgroup of $G$ is simply connected. Under these assumptions, most of the combinatorial data for the category $\Repf(G)$ of finite-dimensional algebraic $G$-modules (in particular, characters of simple and indecomposable tilting modules) can be deduced from the corresponding data in the ``principal block'' $\Rep_\varnothing(G)$,  i.e.~the Serre subcategory generated by the simple modules whose highest weight has the form $w (\rho)-\rho + \ell \lambda$ for $\lambda \in X^*(T)$ and $w \in W=N_G(T)/T$. (Here, as usual $\rho$ is the half sum of positive roots.)

In the hope of computing these data, it has long been desired to have a ``geometric model'' for this category, in the spirit of what is known for representations of complex semisimple Lie algebras~\cite{bb,bk}, affine Kac--Moody Lie algebras~\cite{kt}, quantum groups at a root of unity~\cite{abg:qglg}, and reductive Lie algebras in positive characteristic~\cite{bmr1, bmr2, bm}. The main goal of the present paper is to provide such a model. 

More precisely, let $\dot G$ denote the Frobenius twist of $G$, and let $\dot G^\vee$ be the complex connected reductive group whose root datum is dual to that of $\dot G$.  (Thus, the coweight lattice for $\dot G^\vee$ is identified with $\ell X^*(T)$.)  This paper is concerned with the categories and functors in the following diagram:
\begin{equation}
\label{eqn:main-players}
\xymatrix@C=27pt{
\Dmix_{(\Iw)}(\Gr,\bk) \ar[r]_-P^-{\sim} \ar@/^6ex/[rrr]^-{\text{graded Finkelberg--Mirkovi\'c conjecture}}_-{\bQ}&
\Db\Coh^{\dot G \times \Gm}(\tcN) \ar[r]_-{F} &
\Db_{\Stein}(B) \ar[r]_-{R\Ind_B^G}^-{\sim} &
\Db\Rep_\varnothing(G).
}
\end{equation}
Here, $\Gr$ is the affine Grassmannian for $\dot G^\vee$, $\Iw \subset \dot G^\vee(\C[ \hspace{-1pt} [z] \hspace{-1pt} ])$ is an Iwahori subgroup, and $\Dmix_{(\Iw)}(\Gr,\bk)$ is the mixed derived category of $\bk$-sheaves on $\Gr$ which are constructible with respect to the stratification by $\Iw$-orbits (in the sense of~\cite{modrap2}). Next, $\tcN$ is the Springer resolution for $\dot G$, with its natural action of $\dot G \times \Gm$, and $\Db_{\Stein}(B)$ is the derived category of complexes of $B$-modules whose cohomology is trivial on the first Frobenius kernel $B_1 \subset B$.  

The functor $P$ in~\eqref{eqn:main-players} is an equivalence of triangulated categories that was established by the first author and L.~Rider (see~\cite{ar:agsr}) and by C.~Mautner and the second author (see~\cite{mr:etsps}) independently.  The other two functors in this diagram are the topics of two of the main results in this paper.  The \emph{formality theorem} asserts that $\Db\Coh^{\dot G \times \Gm}(\tcN)$ is a graded version of $\Db_{\Stein}(B)$, and the \emph{induction theorem} asserts that $R\Ind_B^G : \Db_{\Stein}(B) \to \Db\Rep_\varnothing(G)$ is an equivalence of categories.  In the last section of the paper, we will study the composition $\bQ := R\Ind_B^G{} \circ F \circ P$, and we will prove a graded analogue of the Finkelberg--Mirkovi\'c conjecture~\cite{fm:sif1}, describing $\Rep_\varnothing(G)$ in terms of $\Perv^\mix_{(\Iw)}(\Gr,\bk)$.

Statements analogous to those above were established by Arkhipov--Bezrukav\-nikov--Ginzburg~\cite{abg:qglg} for quantum groups at a root of unity.  Their work has significant consequences for the representation theory of quantum groups: they lead to alternative proofs of Lusztig's character formula for simple modules (see~\cite[\S1.2]{abg:qglg}) and of Soergel's character formula for tilting modules (using~\cite{yun}).  We believe that the results of the present paper will likewise have consequences for the representation theory of $G$. In particular, we expect to use them to establish the character formulas for simple and tilting $G$-modules conjectured by the second author and G.~Williamson in~\cite{rw}. See~\S\ref{ss:characters} below for details.

\subsection{Statements and strategy}
\label{ss:statements}

Let us now state our results more precisely.

The diagram~\eqref{eqn:main-players} is inspired by the ideas in~\cite{abg:qglg}, but the proofs in this paper are quite different from those in~\cite{abg:qglg}.  In particular, a central theme of this paper is the importance of ``wall-crossing functors.'' Most of the categories and functors in~\eqref{eqn:main-players} have analogues associated to parabolic subgroups. When we construct the various functors in~\eqref{eqn:main-players}, we will simultaneously construct their parabolic analogues, and we will construct commutative diagrams that relate the Borel version to a parabolic version (or two parabolic versions to each other).  Wall-crossing functors play an essential role in the argument, even if one is interested only in the Borel versions of the theorems, because they let us reduce difficult calculations (in, say, $\Db_\Stein(B)$) to easier cases. 

At several points, we will need the notion of a degrading functor. Let $\cC$ and $\cC'$ be triangulated categories, and suppose $\cC$ is equipped with an autoequivalence $\{ 1\}: \cC \to \cC$.  A triangulated functor $\varphi: \cC \to \cC'$ is called a \emph{degrading functor} (with respect to $\{1\}$) if (i)~its image generates $\cC'$ as a triangulated category, and (ii)~there is a natural isomorphism $\varphi \cong \varphi \circ \{1\}$ that induces, for any $X, Y \in \cC$, an isomorphism
\[
\bigoplus_{n \in \Z} \Hom_{\cC} (X,Y\{n\}) \simto \Hom_{\cC'}(\varphi X, \varphi Y).
\]

Let $S$ be the set of simple reflections in the Weyl group $W$ of $(G,T)$.  For any subset $I \subset S$, we let $P_I \subset G$ be the corresponding standard parabolic subgroup, $\dot P_I$ be its Frobenius twist, and $\fnt_I$ be the Lie algebra of the unipotent radical of $\dot P_I$. The Frobenius morphism of $P_I$ will be denoted $\Fr$, and for $V \in \Rep(\dot P_I)$ we will denote by $\Fr^*(V)$ the $P_I$-module obtained from $V$ by composition with $\Fr$. We will use similar notation for other groups below.

Let $\Db_{\Stein}(P_I)$ be the full triangulated subcategory of the derived category $\Db \Repf(P_I)$ of finite-dimensional algebraic $P_I$-modules generated by the objects of the form $\St_I \otimes \Fr^*(V)$ for $V$ in $\Repf(\dot P_I)$.  Here, $\St_I$ is a fixed Steinberg module for $P_I$, i.e.,~the module $\Ind_B^{P_I}((\ell-1)\varsigma_I)$, where $\varsigma_I$ is a fixed character of $T$ such that for any simple coroot $\alpha^\vee$, we have
\[
\la \alpha^\vee,\varsigma_I\ra =
\begin{cases}
1 & \text{if $\alpha^\vee$ corresponds to a reflection $s_\alpha \in I$,} \\
0 & \text{if $\alpha^\vee$ corresponds to a reflection $s_\alpha \notin I$.}
\end{cases}
\]
Finally, let $\tcN_I := \dot G \times^{\dot P_I} \fnt_I$.  For a $\dot G \times \Gm$-equivariant coherent sheaf $\cF$ on $\tcN_I$, let $\cF\la 1\ra$ be the sheaf obtained by twisting the $\Gm$-action.  The following statement combines parts of Theorems~\ref{thm:formality}, \ref{thm:parabolicJ}, and~\ref{thm:parabolicI}.

\begin{thm}[Formality theorem]\label{thm:intro-formality}
For any subset $I \subset S$, there is a functor 
\[
F_I: \Db\Coh^{\dot G \times \Gm}(\tcN_I) \to \Db_\Stein(P_I)
\]
that is a degrading functor with respect to $\la 1\ra[1]$ and such that for any $V \in \Repf(\dot G)$, there is a natural isomorphism
\[
F_I(\cF \otimes V) \cong F_I(\cF) \otimes \Fr^*(V).
\]
If $J \subset I \subset S$ (so that $P_J \subset P_I$), there is a commutative diagram
\begin{equation}\label{eqn:intro-formality-comm}
\vcenter{\xymatrix@C=2cm{
 \Db\Coh^{\dot G \times \Gm}(\tcN_J) \ar[r]^-{F_J} \ar[d]_-{\Pi_{J,I}} & \Db_{\Stein}(P_J)\ar[d]|{R\Ind_{P_J}^{P_I} \bigl( (-) \otimes \bk(\varsigma_J - \varsigma_I) \bigr)} \\
 \Db\Coh^{\dot G \times \Gm}(\tcN_I) \ar[r]^-{F_I}Ê& \Db_{\Stein}(P_I).
}}
\end{equation}
\end{thm}

In this commutative diagram, $\Pi_{J,I}$ is a functor that is defined using the intermediate space $\tcN_{J,I} := \dot G \times^{\dot P_J} \fnt_I$ and the correspondence
\[
\tcN_J \leftarrow \tcN_{J,I} \to \tcN_I;
\]
see~\S\ref{ss:Ind-Res-Springer} for details.

Next, let $\Rep_I(G)$ be the Serre subcategory of $\Repf(G)$ generated by the simple modules whose highest weight has the form $w(\rho-\varsigma_I) -\rho + \ell \lambda$ with $\lambda \in X^*(T)$. This subcategory is a direct summand of $\Repf(G)$, and it ``has singularity $I$'' in the sense that the stabilizer of $-\varsigma_I$ for the dot-action of the affine Weyl group is the parabolic subgroup $W_I$ of $W$ generated by $I$. In particular, when $I=\varnothing$, $\Rep_\varnothing(G)$ is a sum of regular blocks of $\Repf(G)$. If $J \subset I \subset S$, then we have a natural translation functor $T_J^I : \Rep_J(G) \to \Rep_I(G)$.

The following statement combines parts of Lemma~\ref{lem:theta-phi-iso} (see also Proposition~\ref{prop:Theta-psi}) and Theorem~\ref{thm:induction-thm}.

\begin{thm}[Induction theorem]\label{thm:intro-induction}
For any subset $I \subset S$, the functor
\begin{equation}\label{eqn:intro-induction-equiv}
R\Ind_{P_I}^G : \Db_\Stein(P_I) \to \Db\Rep_I(G)
\end{equation}
is an equivalence of categories.  Moreover, for any $V \in \Repf(\dot G)$, there is a natural isomorphism
\[
R\Ind_{P_I}^G(M \otimes \Fr^*(V)) \cong R\Ind_{P_I}^G(M) \otimes \Fr^*(V).
\]
If $J \subset I \subset S$, there is a commutative diagram
\begin{equation}\label{eqn:intro-induction-comm}
\vcenter{\xymatrix@C=2cm{
 \Db_\Stein(P_J) \ar[r]^-{R\Ind_{P_J}^G} \ar[d]|{R\Ind_{P_J}^{P_I} \bigl( (-) \otimes \bk(\varsigma_J - \varsigma_I) \bigr)} & \Db\Rep_J(G) \ar[d]^{T_J^I} \\
 \Db_\Stein(P_I) \ar[r]^-{R\Ind_{P_I}^G}Ê& \Db\Rep_I(G).
}}
\end{equation}
\end{thm}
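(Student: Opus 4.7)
\emph{Proof plan.} The projection formula is an instance of the standard tensor identity: for $V \in \Repf(\dot G)$, the module $\Fr^*(V)$ is a $G$-module whose restriction to $P_I$ is $\Fr^*(V|_{\dot P_I})$, so the isomorphism follows from the projection formula for the proper morphism $G/P_I \to \mathrm{pt}$ applied to the associated sheaf of $M$.

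The commutative diagram~\eqref{eqn:intro-induction-comm} combines transitivity of induction, $R\Ind_{P_J}^G \cong R\Ind_{P_I}^G \circ R\Ind_{P_J}^{P_I}$, with an analysis of the translation functor $T_J^I$. Writing $T_J^I$ as block projection to $\Rep_I(G)$ of tensoring with a suitable $\dot G$-module $W$ (whose $W$-orbit of extremal weights contains $\varsigma_J - \varsigma_I$), one pulls $W$ through the outer induction via the projection formula. It then remains to show that, after block projection to $\Rep_I(G)$, only the one-dimensional subspace $\bk(\varsigma_J - \varsigma_I) \subset W|_{P_J}$ contributes; contributions of other weights land in different blocks of $\Rep(G)$ by the linkage principle and are annihilated.

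The equivalence is the main content. The plan is to verify fully faithfulness on the generators $\St_I \otimes \Fr^*(V)$ (for $V \in \Repf(\dot P_I)$), and essential surjectivity onto $\Db\Rep_I(G)$. Fully faithfulness, via Frobenius reciprocity
\[
R\Hom_G(R\Ind_{P_I}^G M,\, R\Ind_{P_I}^G N) \cong R\Hom_{P_I}(M,\, R\Ind_{P_I}^G(N)|_{P_I}),
\]
reduces to analyzing the adjunction unit $N \to R\Ind_{P_I}^G(N)|_{P_I}$ on Stein-type generators. Geometrically, this is a spectral sequence indexed by $P_I$-orbits on $G/P_I$, and the projective/injective properties of $\St_I$ over the first Frobenius kernel $P_{I,1}$ should annihilate all non-identity contributions, leaving only $N$. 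Essential surjectivity is verified by realizing each costandard module $\nabla(w(\rho-\varsigma_I)-\rho + \ell\lambda) \in \Rep_I(G)$ as $R\Ind_{P_I}^G$ of an explicit Stein-type object, using the Steinberg tensor product theorem together with weight arithmetic. The main obstacle is the vanishing step in fully faithfulness; the commutative diagram~\eqref{eqn:intro-induction-comm} together with its counterpart from Theorem~\ref{thm:intro-formality} allow reducing the delicate case to $I = \varnothing$, where the geometry of $G/B$ admits direct Bott--Borel--Weil-type analysis.
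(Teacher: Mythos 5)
The projection formula and the outline for the commutative diagram~\eqref{eqn:intro-induction-comm} are reasonable (though note the paper needs more than commutativity: for the reduction arguments to work it must be a \emph{commutative diagram of adjoint pairs}, i.e.\ the isomorphism must intertwine the units/counits of $(\Theta^{\varnothing,I},\Theta_{\varnothing,I})$ and $(T_I^\varnothing,T_\varnothing^I)$, which your sketch does not address).

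The genuine gap is in your plan for full faithfulness. You propose to compute the adjunction unit $N \to \For^G_{P_I}R\Ind_{P_I}^G(N)$ on Stein generators via a filtration by $P_I$-orbits on $G/P_I$ and to kill the non-open cells using projectivity of $\St_I$ over $P_{I,1}$. This is precisely the ``direct calculation'' that the paper explicitly declines to carry out: it is unclear how to produce objects of $\Db_\Stein(P_I)$ whose $\Ext$-groups \emph{and} whose images under $R\Ind_{P_I}^G$ are both controlled, and projectivity of $\St_I$ over the Frobenius kernel does not by itself force the vanishing of the higher cohomology contributed by the non-identity Bruhat cells (in characteristic $p$ there is no Bott--Borel--Weil theorem to invoke, and the relevant weights require a genuine linkage analysis; this is the hard computation of Arkhipov--Bezrukavnikov--Ginzburg and of Hodge--Karuppuchamy--Scott, which moreover is only known to work for $I=\varnothing$). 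Your proposed reduction of general $I$ to $I=\varnothing$ via the commutative diagram is also unsubstantiated: knowing $R\Ind_B^G$ is an equivalence and that the square commutes does not formally yield full faithfulness of $R\Ind_{P_I}^G$, since $T_\varnothing^I$ neither reflects isomorphisms nor identifies Hom-spaces. The paper's actual route is different: it composes with the formality/Koszul-duality functor so that the problem lives on $\Db\Coh^{\dot G\times\Gm}(\tcN_I)$, where the standard and costandard exotic sheaves $\Delta_I(\lambda)$, $\nabla_I(\lambda)$ supply a generating family with completely known graded Hom's (Corollary~\ref{cor:exotic-orth}); one then shows that the composite functor sends these to Weyl and dual Weyl modules (Proposition~\ref{prop:exotic-weyl}), using the wall-crossing functors $\Pi_{J,I}$, $\Theta_{J,I}$, $T_J^I$ and the adjoint-pair compatibilities, and full faithfulness follows by comparing both sides on this family (Proposition~\ref{prop:coh-degr}). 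Without either that mechanism or a worked-out substitute for the cell-by-cell vanishing, the central claim of the theorem remains unproved in your proposal.
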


\begin{rmk}
T.~Hodge, P.~Karuppuchamy and L.~Scott have obtained a different proof that~\eqref{eqn:intro-induction-equiv} is an equivalence in the case $I=\varnothing$, see~\cite{hks}. Their proof
is closer to the proof of the quantum case in~\cite{abg:qglg}. (It does not directly apply to other parabolic subgroups, as far as we understand.) 
\end{rmk}

Combining Theorems~\ref{thm:intro-formality} and~\ref{thm:intro-induction} with the main results of~\cite{ar:agsr, mr:etsps}, one sees immediately that the functor $\bQ := R\Ind_B^G {}\circ F \circ P$ is a degrading functor.  We will discuss further properties of $\bQ$ in \S\ref{ss:graded-fm-intro}.

\subsection{Koszul duality and the formality theorem}
\label{ss:koszul-formality}

We now discuss in more detail the ingredients in the proof of Theorem~\ref{thm:intro-formality}.  Given a subset $I \subset S$, consider the exterior algebra
\[
\bL_I:=\bigwedge \hspace{-3pt} {}^\bullet \, \fnt_I,
\]
regarded as a dg-algebra with trivial differential and with $\fnt_I$ placed in degree $-1$. For any subset $J \subset I$, the group $\dot P_J$ acts on $\bL_I$.  Let $\Dfg_{\dot P_J}(\bL_I)$ be the derived category of $\dot P_J$-equivariant $\bL_I$-dg-modules with finitely generated cohomology.  

The proof of Theorem~\ref{thm:intro-formality} involves breaking up the commutative diagram into subdiagrams as shown in Figure~\ref{fig:intro-formality-proof}. In the middle row, $P_JM_{I,1}$ is the (scheme-theoretic) preimage of $\dot P_J$ under the Frobenius morphism $\Fr : P_I \to \dot P_I$. (The notation will be explained in~\S\ref{ss:statement-equiv-formality}.)

\begin{figure}
\[
\vcenter{\xymatrix@C=1.5cm{
\Db\Coh^{\dot G \times \Gm}(\tcN_J) \ar[d] \ar[r]^-{\varkappa_J} \ar@/_10ex/[dd]_(.3){\Pi_{J,I}} \ar@/^5ex/[rr]^-{F_J} \ar@{}[dr]|{\S\ref{sec:koszul};\S\ref{sec:exotic}} &
  \Dfg_{\dot P_J}(\bL_J) \ar[d] \ar[r]^-{\psi_J}_-{\S\ref{sec:formality-non-equiv};\S\ref{sec:equiv-formality}} \ar@/_5ex/[dd]_(.2){\Theta_{J,I}}|\hole \ar@{}[dr]|{\S\ref{sec:compatibility}} &
  \Db_\Stein(P_J) \ar[d]|{\hphantom{\scriptstyle \bigl( (-) \otimes \bk(\varsigma_J - \varsigma_I) \bigr)}R\Ind_{P_J}^{P_JM_{I,1}} \bigl( (-) \otimes \bk(\varsigma_J - \varsigma_I) \bigr)} \\
\Db\Coh^{\dot G \times \Gm}(\tcN_{J,I}) \ar[d] \ar[r]^-{\varkappa_{J,I}}  \ar@{}[dr]|{\S\ref{sec:koszul};\S\ref{sec:exotic}} &
  \Dfg_{\dot P_J}(\bL_I) \ar[d] \ar[r]^-{\psi_{J,I}}  \ar@{}[dr]|{\S\ref{sec:compatibility}} &
  \Db_\Stein(P_JM_{I,1}) \ar[d]|{R\Ind_{P_JM_{I,1}}^{P_I}} \\
\Db\Coh^{\dot G \times \Gm}(\tcN_I) \ar[r]^-{\varkappa_I} \ar@/_5ex/[rr]_-{F_I} &
  \Dfg_{\dot P_I}(\bL_I) \ar[r]^-{\psi_I}_-{\S\ref{sec:formality-non-equiv};\S\ref{sec:equiv-formality}} &
  \Db_\Stein(P_I)}}
\]
\caption{Setup for the proof of Theorem~\ref{thm:intro-formality}}\label{fig:intro-formality-proof}
\end{figure}

The left half of Figure~\ref{fig:intro-formality-proof} is essentially a study of Koszul duality.  Recall that $\Coh^{\dot G \times \Gm}(\tcN_I)$ is equivalent to $\Coh^{\dot P_I \times \Gm}(\fnt_I)$.  The latter is, in turn, identified with the category of finitely-generated graded $\dot P_I$-equivariant modules over the symmetric algebra $\bS_I:=\Sym(\fnt_I^*)$.  The functor $\varkappa_I$ and its variants are degrading functors that are close to the well-known Koszul duality relating $\bS_I$ to $\bL_I$, see~\cite{bgs, gkm}. The appropriate theory, including the commutativity of the squares in the left half of the figure, is developed in Sections~\ref{sec:koszul} and~\ref{sec:exotic}, building on~\cite{gkm,mr2}.

The right half of Figure~\ref{fig:intro-formality-proof} involves the study of a certain dg-algebra $\Rn_I$.  This algebra is equipped with a homomorphism $\sigma_I: \Rn_I \to \bL_I$, as well as a quasi-isomorphism $\pi_I$ to the distribution algebra of the first Frobenius kernel $N_{I,1}$ of $N_I$. We can therefore consider the composition
\begin{equation}\label{eqn:intro-formality-non-equiv}
\Dfg(\bL_I) \xrightarrow{\sigma_I^*} \Dfg(\Rn_I) \xrightarrow[\sim]{\pi_{I*}} \Db \Repf(N_{I,1}).
\end{equation}
We will build the right half of the figure in three steps.  First, in Section~\ref{sec:formality-non-equiv}, we use the functors in~\eqref{eqn:intro-formality-non-equiv} to construct an equivalence of categories
\begin{equation}
\label{eqn:equiv-phi-intro}
\varphi_I : \Dfg(\bL_I) \simto \Db_{\Stein}(P_{I,1}),
\end{equation}
where the right-hand is the subcategory of the bounded derived category of finite-dimensional representations of the Frobenius kernel $P_{I,1}$ of $P_I$ generated by $\St_I$.
Next, in Section~\ref{sec:equiv-formality}, we study the action of $\dot P_I$ or $\dot P_J$ on the various algebras in~\eqref{eqn:intro-formality-non-equiv} in order to construct $\psi_I$ and show that it is an equivalence.  Finally, the commutativity of the two squares in the right half of Figure~\ref{fig:intro-formality-proof} is shown in Section~\ref{sec:compatibility}.

\begin{rmk}
Let us briefly explain the origin of the name ``formality theorem'' for Theorem~\ref{thm:intro-formality} (which we took from~\cite{abg:qglg}). For simplicity we restrict to the case $I=\varnothing$. In this case, a well-known result due to Friedlander--Parshall~\cite{fp:claag} asserts that there exists a graded algebra isomorphism
\begin{equation}
\label{eqn:cohomology-B1}
\Ext_{B_1}^\bullet(\bk,\bk) \cong \Sym(\fnt_\varnothing^*),
\end{equation}
where in the right-hand side $\fnt_\varnothing^*$ is placed in degree $2$. On the other hand, it follows from abstract nonsense that the category $\Db_{\Stein}(B_1)$ can be described in terms of dg-modules over the dg-algebra $R\Hom_{B_1}(\bk, \bk)$. In view of~\eqref{eqn:cohomology-B1}, if we could prove that this dg-algebra is formal (i.e.~quasi-isomorphic to its cohomology), then this would prove that $\Db_{\Stein}(B_1)$ can be described in terms of the dg-algebra $\Sym(\fnt_\varnothing^*)$ (with trivial differential). Combining this with some form of Koszul duality would provide an approach to proving equivalence~\eqref{eqn:equiv-phi-intro}. In practice, however this is \emph{not} the way we construct this equivalence, and in fact we will not prove the formality of any dg-algebra.
\end{rmk}

\subsection{Exotic sheaves and the induction theorem}
\label{ss:exotic-induction}

We saw in~\S\ref{ss:koszul-formality} that in the proof of Theorem~\ref{thm:intro-formality}, the proof that $F_I$ is a degrading functor is quite separate from the proof that~\eqref{eqn:intro-formality-comm} commutes.  In contrast, for Theorem~\ref{thm:intro-induction}, the commutativity of~\eqref{eqn:intro-induction-comm} must be established first. This plays an essential role in the proof that $R\Ind_{P_I}^G: \Db_\Stein(P_I) \to \Db\Rep_I(G)$ is an equivalence.

The commutativity of~\eqref{eqn:intro-induction-comm} is established in Section~\ref{sec:translation}, as part of a larger effort concerned with the diagram in Figure~\ref{fig:intro-induction-proof}.  This figure also depicts the left adjoints of $\Pi_{\varnothing, I}$, $\Theta_{\varnothing,I}$, and $T_\varnothing^I$. 

\begin{figure}
\[
\vcenter{
\xymatrix{
\Db\Coh^{\dot G \times \Gm}(\tcN_\varnothing) \ar[r]^-{\varkappa_\varnothing} \ar@<1ex>[d]^{\Pi_{\varnothing,I}} \ar@/^5ex/[rr]^-{F_\varnothing} \ar@{}[dr]|{\S\ref{sec:exotic}} & 
  \Dfg_{\dot B}(\bL_\varnothing) \ar[r]^-{\psi_\varnothing} \ar@<1ex>[d]^{\Theta_{\varnothing,I}} \ar@{}[drr]|{\S\ref{sec:translation}} &
  \Db_{\Stein}(B) \ar[r]^-{R\Ind_B^G} & 
  \Db\Rep_\varnothing(G) \ar@<1ex>[d]^{T_\varnothing^I} \\
\Db\Coh^{\dot G \times \Gm}(\tcN_I) \ar[r]^-{\varkappa_I} \ar@<1ex>[u]^{\Pi^{\varnothing,I}\la -n_I\ra[-n_I]} \ar@/_5ex/[rr]_-{F_I} &
  \Dfg_{\dot P_I}(\bL_I) \ar@<1ex>[u]^{\Theta^{\varnothing,I}}\ar[r]^-{\psi_I} &
  \Db_\Stein(P_I) \ar[r]^-{R\Ind_{P_I}^G} &
  \Db\Rep_{I}(G). \ar@<1ex>[u]^{T_I^\varnothing}
}
}
\]
\caption{Setup for the proof of Theorem~\ref{thm:intro-induction}}\label{fig:intro-induction-proof}
\end{figure}

The main result of Section~\ref{sec:translation} asserts, in addition to the commutativity of~\eqref{eqn:intro-induction-comm}, that when $\#I=1$,\footnote{A posteriori, this assumption can be removed; see Remark~\ref{rmk:thm-translation}.} the middle and rightmost parts of Figure~\ref{fig:intro-induction-proof} form a \emph{commutative diagram of adjoint pairs}.  This means that there is a pair of natural isomorphisms that intertwine the units (or the counits) for the adjoint pairs $(\Theta^{\varnothing,I}, \Theta_{\varnothing,I})$ and $(T_I^\varnothing, T_\varnothing^I)$.  Similarly, we will show in Section~\ref{sec:exotic} that the leftmost square in that figure is a commutative diagram of adjoint pairs.

Let us now return to the problem of showing that $R\Ind_{P_I}^G: \Db_{\Stein}(P_I) \to \Db\Rep_I(G)$ is an equivalence. It is easy to see that the essential image of this functor generates $\Db\Rep_I(G)$ as a triangulated category, so it is enough to show that it is fully faithful. If we had a rich enough supply of objects in $\Db_{\Stein}(P_I)$ whose $\Ext$-groups and images under $R\Ind_{P_I}^G$ were understood, we could try to prove full faithfulness by direct calculation. Unfortunately, it is unclear (at least to us) how to produce such objects in $\Db_{\Stein}(P_I)$.\footnote{In the case $I=\varnothing$, the proof in~\cite{abg:qglg} essentially proceeds in this way, but it turns out that the ``direct calculation'' is not so easy.}

Figure~\ref{fig:intro-induction-proof} suggests looking instead at $\Db\Coh^{\dot G \times \Gm}(\tcN_I)$.  Note that, since we already know that $F_I$ is a degrading functor, $R\Ind_{P_I}^G$ is fully-faithful if and only if $R\Ind_{P_I}^G {} \circ F_I$ is a degrading functor.  Moreover, in the special case $I = \varnothing$, there is a rich supply of objects with favorable $\Ext$-properties in $\Db\Coh^{\dot G \times \Gm}(\tcN_\varnothing)$: namely, the standard and costandard objects in the heart of the \emph{exotic t-structure}, which has been introduced by Bezrukavnikov~\cite{bez:ctm} and studied further in~\cite{ar:agsr, mr:etspc}.  Using the special case $I = \{s\}$ in Figure~\ref{fig:intro-induction-proof}, we prove in Section~\ref{sec:induction-thm} that $R\Ind_B^G {}\circ F_\varnothing$ takes standard (resp.~costandard) exotic sheaves to Weyl (resp.~dual Weyl) modules.  That gets us most of the way to finishing the proof of Theorem~\ref{thm:intro-induction} (in the case $I=\varnothing$).

For general $I$, we introduce some ``parabolic analogues'' of the standard and costandard exotic sheaves, and study how they behave under the functors $\Pi_{\varnothing,I}$ and $\Pi^{\varnothing,I}$. Using the case $I=\varnothing$, in this case also we prove that the functor $R\Ind_{P_I}^G{} \circ F_I$ takes standard (resp.~costandard) exotic sheaves to Weyl (resp.~dual Weyl) modules, and we finish the proof as before.
(These parabolic exotic sheaves might be of independent interest. In particular they allow one to define an ``exotic t-structure'' on $\Db\Coh^{\dot G \times \Gm}(\tcN_I)$, which might have other applications.)

\subsection{The graded Finkelberg--Mirkovi\'c conjecture}
\label{ss:graded-fm-intro}

Recall that $\dot G^\vee$ is the complex connected reductive group which is Langlands-dual to $\dot G$, and that $\Gr=\dot G^\vee(\C( \hspace{-1pt} (z) \hspace{-1pt} ))/\dot G^\vee(\C[ \hspace{-1pt} [z] \hspace{-1pt} ])$ is its affine Grassmannian. Let $\Perv_\sph(\Gr,\bk)$ be the abelian category of $\dot G^\vee(\C[ \hspace{-1pt} [z] \hspace{-1pt} ])$-equivariant $\bk$-perverse sheaves on $\Gr$. This category admits a natural convolution product $\star$, and the celebrated \emph{geometric Satake equivalence}, due in this setting to Mirkovi{\'c}--Vilonen~\cite{mv:gld}, asserts that there exists an equivalence of monoidal categories
\[
\Sat : (\Perv_{\sph}(\Gr, \bk), \star) \simto (\Repf(\dot G), \otimes).
\]
The category $\Repf(\dot G)$ embeds naturally in the category $\Rep_\varnothing(G)$ via the functor $V \mapsto \Fr^*(V)$. On the other hand, $\Perv_{\sph}(\Gr, \bk)$ embeds in the category $\Perv_{(\Iw)}(\Gr,\bk)$ of $\bk$-perverse sheaves on $\Gr$ which are constructible with respect to the $\Iw$-orbits (where $\Iw$ is an Iwahori subgroup, as in~\S\ref{ss:intro-main-players}).
The Finkelberg--Mirkovi{\'c} conjecture~\cite{fm:sif1}
predicts that the equivalence $\Sat$ can be ``extended'' to an equivalence of highest-weight categories
\[
Q : \Perv_{(\Iw)}(\Gr,\bk) \simto \Rep_\varnothing(G)
\]
which satisfies $Q(\cF \star \cG) \cong Q(\cF) \otimes \Fr^*(\Sat(\cG))$ for any $\cF$ in $\Perv_{(\Iw)}(\Gr,\bk)$ and $\cG$ in $\Perv_{\sph}(\Gr, \bk)$. (Here, $\star$ also denotes the natural convolution action of $\Perv_{\sph}(\Gr, \bk)$ on $\Perv_{(\Iw)}(\Gr,\bk)$.)

As an application of our constructions, we prove a ``graded version'' of this conjecture. Namely, consider the abelian category $\Perv^\mix_{(\Iw)}(\Gr, \bk)$ of mixed $\bk$-perverse sheaves on $\Gr$ which are constructible with respect to the $\Iw$-orbits, in the sense of~\cite{modrap2}, and let $\langle 1 \rangle$ be its ``Tate twist'' autoequivalence.  This category is a graded highest weight category in a natural way. Moreover there exists a natural action of $\Perv_{\sph}(\Gr, \bk)$ on $\Dmix_{(\Iw)}(\Gr, \bk)$ (induced by convolution), see~\S\ref{ss:geometric-Satake}, and we prove in Proposition~\ref{prop:convolution-exact} that this action restricts to an action on $\Perv^\mix_{(\Iw)}(\Gr, \bk)$.

\begin{thm}[Graded Finkelberg--Mirkovi\'c conjecture]
\label{thm:intro-fm-conjecture}
There is an exact functor
\[
\bQ : \Perv^\mix_{(\Iw)}(\Gr, \bk) \to \Rep_\varnothing(G)
\]
with the following properties:
\begin{enumerate}
\item
the functor $\bQ$ sends standard, costandard, simple, and indecomposable tilting objects in $\Perv^\mix_{(\Iw)}(\Gr,\bk)$ to standard, costandard, simple, and indecomposable tilting objects in $\Rep_\varnothing(G)$ respectively;
\item
there is an isomorphism $\varepsilon : \bQ \circ \langle 1 \rangle \simto \bQ$ that induces, for any $\cF,\cG$ in $\Perv^\mix_{(\Iw)}(\Gr, \bk)$ and any $k \in \Z$, an isomorphism
\[
\bigoplus_{n \in \Z} \Ext^k_{\Perv^\mix_{(\Iw)}(\Gr, \bk)}(\cF,\cG \langle n \rangle) \simto \Ext^k_{\Rep_\varnothing(G)}(\bQ(\cF), \bQ(\cG));
\]
\item
there exists a functorial isomorphism
\[
\bQ(\cF \star \cG) \cong \bQ(\cF) \otimes \Fr^*(\Sat(\cG))
\]
for any $\cF$ in $\Perv^\mix_{(\Iw)}(\Gr, \bk)$ and $\cG$ in $\Perv_{\sph}(\Gr, \bk)$.
\end{enumerate}
\end{thm}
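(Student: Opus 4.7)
The natural candidate is the composition
\[
\bQ := R\Ind_B^G \circ F \circ P
\]
from~\S\ref{ss:intro-main-players}, regarded a priori as a triangulated functor $\Dmix_{(\Iw)}(\Gr,\bk) \to \Db\Rep_\varnothing(G)$. By Theorem~\ref{thm:intro-induction}, $R\Ind_B^G$ is an equivalence; by Theorem~\ref{thm:intro-formality}, $F = F_\varnothing$ is a degrading functor for $\la 1\ra[1]$; and by~\cite{ar:agsr, mr:etsps} the equivalence $P$ intertwines $\la 1\ra[1]$ with the Tate twist $\langle 1\rangle$ on the mixed side. It follows formally that $\bQ$ is a degrading functor with respect to $\langle 1\rangle$, which supplies the isomorphism $\varepsilon : \bQ \circ \langle 1\rangle \simto \bQ$ of~(2) together with the $\Ext$-formula (first between arbitrary objects of $\Dmix_{(\Iw)}(\Gr,\bk)$ and $\Db\Rep_\varnothing(G)$, and then specialized to the hearts once t-exactness is established).

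The crux is to show that $\bQ$ is t-exact for the perverse t-structure on the source and the tautological t-structure on the target. For this I would track standard and costandard objects through the three factors. The equivalence $P$ from~\cite{ar:agsr, mr:etsps} matches the standard (resp.~costandard) mixed perverse sheaves $\Delta^\mix_w$ (resp.~$\nabla^\mix_w$) with the standard (resp.~costandard) objects of the exotic t-structure on $\Db\Coh^{\dot G \times \Gm}(\tcN)$. On the other hand, the analysis outlined in~\S\ref{ss:exotic-induction} and carried out in Section~\ref{sec:induction-thm} shows that $R\Ind_B^G \circ F_\varnothing$ sends standard (resp.~costandard) exotic sheaves to the Weyl modules $\weyl_w$ (resp.~dual Weyl modules $\coweyl_w$), concentrated in cohomological degree $0$. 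Combining these, $\bQ(\Delta^\mix_w) \cong \weyl_w$ and $\bQ(\nabla^\mix_w) \cong \coweyl_w$; since $\Perv^\mix_{(\Iw)}(\Gr,\bk)$ is generated as a triangulated category by the $\Delta^\mix_w\langle n\rangle$, and since $\Ext^{<0}(\weyl_v,\coweyl_w) = 0$ in $\Rep_\varnothing(G)$, a standard d\'evissage then yields t-exactness of $\bQ$. Hence $\bQ$ restricts to an exact functor between the hearts, which I still denote by $\bQ$.

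Property~(1) for standards and costandards is part of the matching just established. For simples, I would use that $\irr^\mix_w$ is the image of the canonical morphism $\Delta^\mix_w \to \nabla^\mix_w$, so that $\bQ(\irr^\mix_w)$ is the image of $\weyl_w \to \coweyl_w$, which is precisely the simple $\irr_w$. For indecomposable tiltings, exactness combined with the matchings of standards and costandards ensures that a standard/costandard filtration is preserved; indecomposability is preserved because property~(2) provides an isomorphism on $\End$-algebras in degree $0$, so local endomorphism rings stay local. Property~(3) follows by combining the convolution compatibility of $P$ (a central feature of~\cite{ar:agsr, mr:etsps}, which translates $(-) \star \cG$ for $\cG \in \Perv_{\sph}(\Gr,\bk)$ into tensoring by $\Sat(\cG) \in \Repf(\dot G)$ on the coherent side), the isomorphism $F(\cE \otimes V) \cong F(\cE) \otimes \Fr^*(V)$ of Theorem~\ref{thm:intro-formality}, and the isomorphism $R\Ind_B^G(M \otimes \Fr^*(V)) \cong R\Ind_B^G(M) \otimes \Fr^*(V)$ of Theorem~\ref{thm:intro-induction}.

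The most delicate step is verifying the matching $\bQ(\Delta^\mix_w) \cong \weyl_w$ and $\bQ(\nabla^\mix_w) \cong \coweyl_w$, which ultimately rests on an intricate comparison of three ``combinatorial skeletons'' (standard mixed perverse sheaves, standard exotic sheaves, and Weyl modules), each passing through a Koszul-type or an induction equivalence. Fortunately, this matching is precisely what is proved in Section~\ref{sec:induction-thm} en route to Theorem~\ref{thm:intro-induction}, so for Theorem~\ref{thm:intro-fm-conjecture} itself the hard input is essentially ``free,'' and everything else reduces to formal manipulations with degrading functors, highest weight structures, and the monoidal compatibility already built into $P$.
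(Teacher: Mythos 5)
Your proposal is correct and follows essentially the same route as the paper: the functor $\bQ$ is defined as $R\Ind_B^G \circ F_\varnothing \circ P$ (in the paper's proof of Theorem~\ref{thm:graded-fm}, with $F_\varnothing = \psi_\varnothing \circ \varkappa_\varnothing$), the degrading property gives~(2), the matching of standard/costandard objects through $P$ (from Theorem~\ref{thm:ar-equivalence} and Proposition~\ref{prop:exotic-weyl}) gives t-exactness and~(1), and the convolution compatibility of $P$ combined with the tensor compatibilities of Theorems~\ref{thm:intro-formality} and~\ref{thm:intro-induction} gives~(3). The only slight variation is your justification of t-exactness from $\Ext^{<0}$-vanishing rather than the paper's appeal to~\cite[Proposition~3.4]{modrap2}, but these are equivalent formulations of the same standard argument.
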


As in~\eqref{eqn:main-players}, we define $\bQ$ to be the composition $R\Ind_B^G {}\circ F_\varnothing \circ P$. Then parts~(2)  and~(3) follow quite easily from Theorems~\ref{thm:intro-formality} and~\ref{thm:intro-induction}, combined with the main result of~\cite{ar:agsr,mr:etspc}. (Part~(2) is essentially a restatement of the fact that $\bQ$ is a degrading functor with respect to the Tate twist.)  The papers~\cite{ar:agsr,mr:etspc} also tell us how $P$ interacts with exotic sheaves on $\tcN$.  Combining this with the study of exotic sheaves in the proof of Theorem~\ref{thm:intro-induction} leads to a proof of t-exactness for $\bQ$, and of part~(1) of the theorem above.

\begin{rmk}
The natural analogue of the Finkelberg--Mirkovi\'c conjecture in the setting of quantum groups at a root of unity is proved by Arkhipov--Bezrukavnikov--Ginzburg in~\cite{abg:qglg}, using their versions of Theorems~\ref{thm:intro-formality},~\ref{thm:intro-induction}, and of the results of~\cite{ar:agsr,mr:etspc}. However, since they do not consider the role of the exotic t-structure in this picture, they have to work harder to prove the exactness of their version of our functor $\bQ$; see~\cite[\S 9.10]{abg:qglg}.
\end{rmk}

\subsection{Relationship with the Bezrukavnikov--Mirkovi{\'c}--Rumynin theory of localization in positive characteristic}
\label{ss:relation-BMR}

The papers~\cite{bmr1, bmr2, bm} build a ``localization theory'' for modules over the enveloping algebra $\mathcal{U}(\fg)$ of the Lie algebra $\fg$ of $G$; in other words they provide a ``geometric model'' for the representation theory of this algebra. Building on these results, in~\cite{riche} the second author has obtained a geometric model for the representation theory of the restricted enveloping algebra $\mathsf{g}$ of $\fg$, i.e.~the quotient of $\mathcal{U}(\fg)$ by the trivial character of the Frobenius center (or equivalently the distribution algebra of the Frobenius kernel $G_1$). In this subsection we briefly explain the (philosophical) relation between our results and those of~\cite{bmr1, bmr2, bm, riche}.

Let as above $I \subset S$ be a subset, and consider the category $\mathsf{g}\lmod^{\mathrm{fg}}_I$ of finite-dimensional $\mathsf{g}$-modules with generalized Harish-Chandra character $-\varsigma_I$. (This category would be denoted $\mathrm{Mod}^{\mathrm{fg}}_{-\varsigma_I}((\mathcal{U} \fg)_0)$ in the conventions of~\cite[\S 3.2]{riche}.) Consider also the Grothendieck resolution
\[
\widetilde{\fg}_I := {\dot G} \times^{\dot P_I} {\dot \fp}_I,
\]
where $\dot \fp_I$ is the Lie algebra of $\dot P_I$.
Then by~\cite[Theorem~3.4.14]{riche} there exists an equivalence of triangulated categories
\begin{equation}
\label{eqn:equiv-Ug}
\mathsf{DGCoh} \bigl( \widetilde{\fg}_I \, \rcap_{\dot \fg \times \dot G / \dot P_I} \, \dot G / \dot P_I \bigr) \simto \Db \bigl( \mathsf{g}\lmod^{\mathrm{fg}}_I \bigr),
\end{equation}
where the left-hand side is the (derived) category of coherent dg-sheaves on the dg-scheme obtained as the derived intersection of $\widetilde{\fg}_I$ and the zero-section $\dot G / \dot P_I$ in $\dot \fg \times \dot G / \dot P_I$; see~\cite[\S 1.8]{riche} for details on this construction.

A construction similar to that of the functor $\varkappa_I$ in~\S\ref{ss:koszul-formality} (involving Koszul duality) provides a functor
\[
\overline{\varkappa}_I : \Db \Coh^{\Gm}(\tcN_I) \to \mathsf{DGCoh} \bigl( \widetilde{\fg}_I \, \rcap_{\dot \fg \times \dot G / \dot P_I} \, \dot G / \dot P_I \bigr)
\]
with properties similar to those of $\varkappa_I$, see~\cite{riche}. Composing this functor with~\eqref{eqn:equiv-Ug} we obtain a functor
\begin{equation}
\label{eqn:equiv-Ug-2}
\Db \Coh^{\Gm}(\tcN_I) \to \Db \bigl( \mathsf{g}\lmod^{\mathrm{fg}}_I \bigr)
\end{equation}
which is a degrading functor.

Now we have a natural forgetful functor
\[
\Db \Coh^{\dot G \times \Gm}(\tcN_I) \to \Db \Coh^{\Gm}(\tcN_I),
\]
and differentiation of the $G$-action provides a natural functor
\[
\Db \Rep_I(G) \to \Db \bigl( \mathsf{g}\lmod^{\mathrm{fg}}_I \bigr).
\]
It is reasonable to expect that the following diagram is commutative:
\begin{equation}
\label{eqn:diagram-localization}
\vcenter{
\xymatrix@C=2.5cm{
\Db \Coh^{\dot G \times \Gm}(\tcN_I) \ar[d] \ar[r]^-{R\Ind_{P_I}^G \circ F_I} & \Db \Rep_I(G) \ar[d] \\
\Db \Coh^{\Gm}(\tcN_I) \ar[r]^-{\eqref{eqn:equiv-Ug-2}} & \Db \bigl( \mathsf{g}\lmod^{\mathrm{fg}}_I \bigr).
}
}
\end{equation}
This would explain the relationship between the results of the present paper and localization theory.

We will not attempt to prove the commutativity of~\eqref{eqn:diagram-localization}. One difficulty in trying to prove such a relationship is that the construction of the equivalence~\eqref{eqn:equiv-Ug} depends on the choice of a ``splitting bundle'' for some Azumaya algebra; in order to prove some compatibility result we would most likely have in particular to understand this choice better, and see how one can choose the bundle in a more canonical way.

\subsection{Application: a character formula for tilting modules}
\label{ss:characters}


The results of this paper open the way to geometric approaches to various deep problems in the representation theory of $G$, either via constructible sheaves or via coherent sheaves.

First, in~\cite{rw}, the second author and G.~Williamson conjecture that the multiplicities of standard/costandard modules in indecomposable tilting modules in $\Rep_\varnothing(G)$ can be expressed in terms of the values at $1$ of some $\ell$-Kazhdan--Lusztig polynomials (in the sense of~\cite{jw}), which compute the dimensions of the stalks of some indecomposable parity complexes on the affine flag variety $\Fl$ of $\dot G^\vee$. This conjecture is proved in the case $G=\mathrm{GL}_n(\bk)$ in~\cite{rw}, but the methods used in this proof do not make sense for a general reductive group. 

Note that, as was noticed by Andersen, from the characters of indecomposable tilting $G$-modules one can deduce (at least if $\ell \geq 2h-2$) character formulas for simple $G$-modules, see~\cite[\S 1.8]{rw}; hence the conjectural tilting character formula provides a replacement for Lusztig's conjecture~\cite{lusztig}, which was recently shown to be false for some values of $\ell$, see~\cite{williamson}.

Theorem~\ref{thm:intro-fm-conjecture} is a first step towards a proof of this character formula valid for any reductive group. Namely, this result reduces the computation of multiplicities of tilting objects in $\Rep_\varnothing(G)$ to the similar problem in $\Perv^\mix_{(\Iw)}(\Gr, \bk)$. In a work in progress with S.~Makisumi and G.~Williamson we develop a modular analogue of the (geometric) Koszul duality for Kac--Moody groups of Bezrukavnikov--Yun~\cite{by}, and deduce in particular an equivalence of graded additive categories between the category of tilting objects in $\Perv^\mix_{(\Iw)}(\Gr, \bk)$ and the category of Iwahori--Whittaker parity complexes on $\Fl$ as considered in~\cite[\S 11.7]{rw}, see~\cite{amrw}. Since the combinatorics of the latter category is known to be governed by the appropriate $\ell$-Kazhdan--Lusztig polynomials (see~\cite[Theorem~11.13]{rw}), this implies the conjectural character formula for tilting $G$-modules of~\cite{rw}.

In a different direction, in a joint work with W.~Hardesty~\cite{ahr} we use the relation between the category $\Rep_\varnothing(G)$ and exotic coherent sheaves on $\tcN_\varnothing$ to obtain first results towards a proof of a conjecture of Humphreys~\cite{humphreys-conj} on support varieties of tilting $G$-modules.


We conclude this paper with a direct application of our results to characters of simple $G$-modules, independent of~\cite{amrw}. In particular, we give a new proof of Lusztig's conjecture~\cite{lusztig} for $\ell$ large (with no explicit bound), as already proved by Andersen--Jantzen--Soergel~\cite{ajs} (building on work of Kazhdan--Lusztig~\cite{kl}, Lusztig~\cite{LUSMon} and Kashiwara--Tanisaki~\cite{kt}), Fiebig~\cite{fiebig} and Bezrukavnikov--Mirkovi\'c~\cite{bm} (as part of a broader picture). But we obtain slightly more than what was known until now: 
\begin{enumerate}
\item
a geometric character formula valid for all simple modules in the principal block and in all characteristics $\ell>h$ (in terms of mixed intersection cohomology complexes on $\Gr$), see Proposition~\ref{prop:ch-simples};
\item
and an equivalence between the validity of Lusztig's conjecture and parity-vanishing properties of some (ordinary) intersection cohomology complexes on $\Gr$, see Theorem~\ref{thm:criterion-lusztig}.
\end{enumerate}


\subsection{Acknowledgments}

This paper began as a joint project with Ivan Mirkovi\'c. We thank him for his encouragement, and inspiring discussions at early stages of our work. As should be clear already, this paper owes much to the ideas of Bezrukavnikov and his collaborators, in particular those of~\cite{abg:qglg}. We also thank Geordie Williamson for stimulating discussions. Finally, we thank Terrell~Hodge, Paramasamy~Karuppuchamy and Leonard~Scott for keeping us informed of their progress on~\cite{hks}.

\subsection{Contents}

This paper is divided into 3 parts, which each begin with an overview of their content. Part~\ref{pt:prelim} is devoted to preliminaries. Part~\ref{pt:formality} is concerned with the proof of the formality theorem. Finally, Part~\ref{pt:induction} is devoted to the proof of the induction theorem and of the graded analogue of the Finkelberg--Mirkovi\'c conjecture.

\newpage

\part{Preliminary results}
\label{pt:prelim}

\textbf{Overview.}
Section~\ref{sec:dgalg} contains background material on (module categories for) dg-algebras equipped with actions of algebraic groups. In Section~\ref{sec:reductive}, we fix notation and conventions for reductive groups and related objects. We also prove a number of lemmas on the behavior of Steinberg modules for Levi subgroups under various functors. These modules play an important role in Part~\ref{pt:formality}.  Finally, in Section~\ref{sec:koszul}, we study some version of the familiar Koszul duality for symmetric and exterior algebras on a vector space equipped with a group action. In particular, we show that Koszul duality is compatible (in a suitable sense) with a change of vector space.

\section{Dg-algebras and dg-modules}
\label{sec:dgalg}

Throughout this section, we let $\bk$ be a field. 

\subsection{Dg-modules}
\label{ss:notation}

If $\sA$ is a ring, we denote by $\sA\lmod$ the abelian category of $\sA$-modules. If $\sA$ is a dg-algebra, we denote by $\sA\ldgmod$ the category of (left) $\sA$-dg-modules, and by $D(\sA)$ the corresponding derived category. If the cohomology algebra $\coH^\bullet(\sA)$ is left Noetherian, we denote by $\Dfg(\sA) \subset D(\sA)$ the full subcategory of differential graded modules whose cohomology is finitely generated over $\coH^\bullet(\sA)$.

Let $f: \sA \to \sB$ be a homomorphism of dg-algebras.  We denote by
\[
f^*: \sB\ldgmod \to \sA\ldgmod
\]
the functor that regards a $\sB$-module as an $\sA$-module via $f$. This functor is exact, and we denote similarly the induced functor from $D(\sB)$ to $D(\sA)$.

The functor $f^*$
has a right adjoint
\[
f_*: \sA\ldgmod \to \sB\ldgmod \qquad\text{given by}\qquad f_*(M) = \Hom^\bullet_{\sA}(\sB,M),
\]
where the $\sB$-module structure is induced by right multiplication of $\sB$ on itself.
(The functor $f^*$ also has a left adjoint $M \mapsto \sB \otimes_{\sA} M$, but we will not use any special notation for this functor.)
It is well known that, if $\sA$ is concentrated in nonpositive degrees (i.e.~if $\sA^i=0$ for $i>0$), then the category $\sA\ldgmod$ has enough K-injective objects (see~\cite[Proposition~3.11]{spaltenstein} for the simpler case of modules over a ring, or~\cite[Theorem~1.3.6]{riche} for the more complicated case of sheaves of dg-modules); therefore the functor $f_*$ admits a right derived functor
\[
Rf_* : D(\sA) \to D(\sB).
\]
Arguments similar to those in~\cite{spaltenstein} or~\cite{riche} show that $Rf_*$ is right adjoint to $f^*$. Also, if $f : \sA \to \sB$ and $g : \sB \to \sC$ are morphisms of dg-algebras concentrated in nonpositive degrees, then we have a canonical isomorphism
\[
f^* \circ g^* \cong (g \circ f)^*.
\]
By adjunction we deduce an isomorphism
\begin{equation}
\label{eqn:transitivity-direct-image}
R(g \circ f)_* \cong Rg_* \circ Rf_*.
\end{equation}

\subsection{Normal subalgebras and quotients}
\label{ss:normal-subalg}

Let $\sA$ be a $\bk$-dg-algebra concentrated in nonpositive degrees and endowed with a counit $\varepsilon: \sA \to \bk$ (assumed to be a morphism of complexes), and let $\sA_+=\ker(\varepsilon)$ be the augmentation ideal. Let $\sa \subset \sA$ be a normal dg-subalgebra, i.e., a dg-subalgebra with the property that $\sA \cdot (\sa \cap \sA_+) = (\sa \cap \sA_+) \cdot \sA$.  Let $\sA \aq \sa := \sA/\sA\cdot (\sa \cap \sA_+)$. For any $\sA$-dg-module $M$, we consider the complex
\[
\Hom^\bullet_{\sa}(\bk, M),
\]
where $\bk$ is considered as an $\sa$-dg-module via the restriction of $\varepsilon$. This complex identifies with the sub-$\sA$-dg-module of $M$ consisting of elements $m \in M$ satisfying $a \cdot m = \varepsilon(a) m$ for all $a \in \sa$. In particular, it has a natural structure of $\sA \aq \sa$-dg-module. The assignment $M \mapsto \Hom^\bullet_{\sa}(\bk,M)$ defines a functor from the category of $\sA$-dg-modules to the category of $\sA \aq \sa$-dg-modules; we denote its right derived functor by
\[
R\Hom^\bullet_{\sa}(\bk, -) : D(\sA) \to D(\sA \aq \sa).
\]
(This functor can be computed by means of K-injective resolutions.) 

If $p : \sA \to \sA \aq \sa$ is the natural surjection, then we have a natural isomorphism of functors $p_* \cong \Hom^\bullet_{\sa}(\bk, -)$; we deduce a canonical isomorphism
\begin{equation}
\label{eqn:direct-image-normal-subalg}
Rp_* \cong R\Hom^\bullet_{\sa}(\bk, -).
\end{equation}
The following lemma justifies our choice of a special notation for this functor. (In practice we will always work under the assumption of this lemma; otherwise the notation might be misleading.)

\begin{lem}
Assume that $\sA$ is K-flat as a right $\sa$-dg-module, and consider the embedding $i : \sa \to \sA$.
For any $M$ in $D(\sA)$, the image in $D(\bk)$ of the  $\sA \aq \sa$-dg-module $R\Hom^\bullet_{\sa}(\bk, M)$ coincides with the complex $R\Hom^\bullet_{\sa}(\bk, i^* M)$.
\end{lem}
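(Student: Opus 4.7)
The plan is to realize both sides of the claimed equality using a single K-injective resolution of $M$ in $\sA\ldgmod$, and then observe that passing from $\sA \aq \sa$-dg-modules down to $\bk$-dg-modules simply forgets extra structure but leaves the underlying complex unchanged. The only nontrivial point is that a K-injective resolution of $M$ over $\sA$ should remain K-injective after restricting along $i$; this is precisely where the K-flatness hypothesis is used.

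The first step is therefore to prove that, under the K-flatness assumption, the restriction functor $i^* : \sA\ldgmod \to \sa\ldgmod$ sends K-injective objects to K-injective objects. This is a standard adjunction argument. The functor $i^*$ admits a left adjoint $\sA \otimes_\sa (-) : \sa\ldgmod \to \sA\ldgmod$, and K-flatness of $\sA$ as a right $\sa$-dg-module says exactly that this left adjoint preserves acyclic objects. Given a K-injective $I$ in $\sA\ldgmod$ and an acyclic $\sa$-dg-module $N$, adjunction provides an isomorphism of complexes
\[
\Hom^\bullet_{\sa}(N, i^* I) \cong \Hom^\bullet_{\sA}(\sA \otimes_{\sa} N, I);
\]
the right-hand side is acyclic because $\sA \otimes_{\sa} N$ is acyclic and $I$ is K-injective over $\sA$. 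Hence $i^* I$ is K-injective in $\sa\ldgmod$.

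In the second step one assembles the argument. Since $\sA$, and hence $\sa$, is concentrated in nonpositive degrees, both $\sA\ldgmod$ and $\sa\ldgmod$ have enough K-injectives, so one chooses a K-injective resolution $M \xrightarrow{\qis} I$ in $\sA\ldgmod$. By definition, $R\Hom^\bullet_{\sa}(\bk, M)$ is represented, as an $\sA \aq \sa$-dg-module, by $\Hom^\bullet_{\sa}(\bk, I)$. On the other hand, $i^* M \xrightarrow{\qis} i^* I$ is still a quasi-isomorphism, and $i^* I$ is K-injective over $\sa$ by the first step, so $R\Hom^\bullet_{\sa}(\bk, i^* M)$ is represented by $\Hom^\bullet_{\sa}(\bk, i^* I)$. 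But the underlying complexes of $\bk$-vector spaces of $\Hom^\bullet_{\sa}(\bk, I)$ and $\Hom^\bullet_{\sa}(\bk, i^* I)$ coincide tautologically, since the construction $\Hom^\bullet_{\sa}(\bk, -)$ depends only on the $\sa$-dg-module structure of its argument. This yields the required equality in $D(\bk)$.

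The only genuine obstacle is the first step, and the K-flatness hypothesis is imposed precisely to make it go through; without it, the restriction of a K-injective $\sA$-dg-module need not be K-injective, and the two complexes in the statement could fail to agree already at the level of $D(\bk)$.
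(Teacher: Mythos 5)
Your proof is correct and follows essentially the same route as the paper: the paper's proof consists precisely of the observation that $i^*$ preserves K-injectives because its left adjoint $\sA \otimes_{\sa}(-)$ preserves acyclics under the K-flatness hypothesis, which is exactly your first step. Your write-up merely makes explicit the adjunction isomorphism and the final comparison of resolutions that the paper leaves implicit.
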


\begin{proof}
The claim follows from the fact that, under our assumption, if $M$ is a K-injective $\sA$-dg-module then $i^* M$ is also K-injective (as an $\sa$-dg-module), since the functor $\sA \otimes_{\sa} (-)$ sends acyclic dg-modules to acyclic dg-modules.
\end{proof}

One can restate the fact that the functor $p^*$ is left adjoint to $Rp_*$ by saying that there exists a functorial isomorphism
\begin{equation}
\label{eqn:RHom-normal-subalg}
\Hom_{D(\sA \aq \sa)} \bigl( M, R\Hom^\bullet_{\sa}(\bk, N) \bigr) \cong \Hom_{D(\sA)}( M,N)
\end{equation}
for any $M$ in $D(\sA \aq \sa)$ and any $N$ in $D(\sA)$ (where we omit the functor $p^*$ in the right-hand side).

\subsection{Semidirect products}
\label{ss:semi-direct-products}

Let $\sD$ be a Hopf algebra over $\bk$, and let $\sA$ be a $\bk$-dg-algebra that is also a $\sD$-module in such a way that 
\begin{itemize}
\item
the differential of $\sA$ commutes with the $\sD$-action;
\item
$d \cdot 1=\varepsilon(d) \cdot 1$ for any $d \in \sD$;
\item
the multiplication map $\sA \otimes \sA \to \sA$ is a homomorphism of $\sD$-modules.
\end{itemize}
One can then form the \emph{semidirect product} or \emph{crossed product} $\sA \rtimes \sD$, namely the dg-algebra which coincides with $\sA \otimes \sD$ as a complex of $\bk$-vector spaces (where $\sD$ is considered as a complex concentrated in degree $0$, with trivial differential), and with multiplication given by
\[
(a \rtimes d) \cdot (b \rtimes e) = \sum a(d_{(1)}\cdot b) \rtimes d_{(2)} e.
\]
Here we are using Sweedler's notation, with $\Delta(d) = \sum d_{(1)} \otimes d_{(2)}$.

Consider now two Hopf algebras $\sD$ and $\sE$ over $\bk$ and a $\bk$-linear morphism of Hopf algebras $\varphi : \sD \to \sE$. Let $\sA$ and $\sB$ be $\bk$-dg-algebras endowed with actions of $\sE$ as above, and $f : \sA \to \sB$ be a $\bk$-linear morphism of dg-algebras which commutes with the $\sE$-actions. Then one can consider the commutative square
\[
\xymatrix@C=1.5cm{
\sA \rtimes \sD \ar[r]^-{f \rtimes \id_{\sD}} \ar[d]_-{\id_{\sA} \rtimes \varphi} & \sB \rtimes \sD \ar[d]^-{\id_{\sB} \rtimes \varphi} \\
\sA \rtimes \sE \ar[r]^-{f \rtimes \id_{\sE}} & \sB \rtimes \sE
}
\]
of dg-algebras and morphisms of dg-algebras.

\begin{lem}
\label{lem:base-change}
Consider the setting described above, and assume that $\sA$ and $\sB$ are concentrated in nonpositive degrees. Then there exists an isomorphism of functors
\[
(f \rtimes \id_{\sE})^* \circ R(\id_{\sB} \rtimes \varphi)_* \cong R(\id_{\sA} \rtimes \varphi)_* \circ (f \rtimes \id_{\sD})^*.
\]
\end{lem}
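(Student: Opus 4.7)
The plan is to reduce the base-change isomorphism to derived tensor-hom adjunction, after an explicit bimodule identification that captures the ``Cartesian-like'' nature of the square. Abbreviate $u := f \rtimes \id_\sD$, $v := f \rtimes \id_\sE$, $\alpha := \id_\sA \rtimes \varphi$, $\beta := \id_\sB \rtimes \varphi$, so that the commutative square reads $v \circ \alpha = \beta \circ u$. The natural base-change morphism $c \colon v^* \circ R\beta_* \to R\alpha_* \circ u^*$ is constructed in the standard way: the equality $v \circ \alpha = \beta \circ u$ together with~\eqref{eqn:transitivity-direct-image} produces a canonical isomorphism $Rv_* \circ R\alpha_* \cong R\beta_* \circ Ru_*$, and composing with the unit of $u^* \dashv Ru_*$ and the counit of $v^* \dashv Rv_*$ yields $c$. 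The task is to show that $c$ is an isomorphism.

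The key input is the bimodule identification
\[
(\sB \rtimes \sD) \otimes^L_{\sA \rtimes \sD} (\sA \rtimes \sE) \cong \sB \rtimes \sE
\]
as $(\sB \rtimes \sD, \sA \rtimes \sE)$-bimodules. To establish it, I would exploit the fact that, by the very definition of a semidirect product, $\sA \rtimes \sD \cong \sA \otimes_\bk \sD$ is free (in particular, K-flat) as a left $\sA$-dg-module via $\sA \hookrightarrow \sA \rtimes \sD$, and likewise for $\sA \rtimes \sE$. This immediately yields $\sB \rtimes \sD \cong \sB \otimes_\sA (\sA \rtimes \sD)$ as $(\sB, \sA \rtimes \sD)$-bimodules, hence
\[
(\sB \rtimes \sD) \otimes_{\sA \rtimes \sD} (\sA \rtimes \sE) \cong \sB \otimes_\sA (\sA \rtimes \sE) \cong \sB \otimes_\bk \sE = \sB \rtimes \sE.
\]
The freeness of $\sA \rtimes \sD$ and $\sA \rtimes \sE$ over $\sA$ ensures that each tensor product above coincides with its derived analogue, so the same formula holds with $\otimes^L$ in place of $\otimes$; crucially, no flatness hypothesis on $f$ or $\varphi$ is required.

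To conclude, I would apply derived tensor-hom adjunction: for any $M \in D(\sB \rtimes \sD)$,
\begin{align*}
v^* R\beta_* M = R\Hom^\bullet_{\sB \rtimes \sD}(\sB \rtimes \sE, M) &\cong R\Hom^\bullet_{\sB \rtimes \sD}\bigl( (\sB \rtimes \sD) \otimes^L_{\sA \rtimes \sD} (\sA \rtimes \sE), M \bigr) \\
&\cong R\Hom^\bullet_{\sA \rtimes \sD}(\sA \rtimes \sE, u^* M) = R\alpha_* u^* M,
\end{align*}
where the last line uses $R\Hom^\bullet_{\sB \rtimes \sD}(\sB \rtimes \sD, M) \cong u^* M$ in $D(\sA \rtimes \sD)$. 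A routine diagram chase identifies this composite with $c$, completing the proof. The main obstacle is really the bookkeeping of bimodule structures: one must verify that the above bimodule isomorphism is compatible with both $(\sA \rtimes \sD)$-actions involved (via $u$ on one side, via $\alpha$ on the other), and that tracking them through Hom-tensor adjunction reproduces $c$ itself rather than some unrelated natural transformation.
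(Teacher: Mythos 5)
Your proof is correct, and it takes a route that is recognizably different from the paper's, though in the same spirit of ``identify $\sB \rtimes \sE$ explicitly as an induced module and invoke adjunction.'' The paper induces from the Hopf algebra $\sD$: it uses the isomorphisms $(\sA \rtimes \sD) \otimes_{\sD} \sE \cong \sA \rtimes \sE$ and $(\sB \rtimes \sD) \otimes_{\sD} \sE \cong \sB \rtimes \sE$ together with the (obvious) K-flatness of $\sA \rtimes \sD$ and $\sB \rtimes \sD$ as right $\sD$-modules, so that both sides of the base-change morphism collapse to $R\Hom_{\sD}(\sE, M)$. You instead induce across the square, via the bimodule isomorphism $(\sB \rtimes \sD) \otimes^L_{\sA \rtimes \sD} (\sA \rtimes \sE) \cong \sB \rtimes \sE$, which you justify through freeness over $\sA$. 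Both identifications are valid, and both proofs end with the same unavoidable bookkeeping step (checking that the resulting isomorphism is the canonical base-change morphism), which the paper also dispatches with an ``easily checked.'' The paper's choice buys a completely painless flatness verification, since $\sA \rtimes \sD = \sA \otimes_\bk \sD$ is visibly free over $\sD$; your choice requires one extra (but true) observation to pass from $\otimes^L_{\sA\rtimes\sD}$ to $\otimes_{\sA\rtimes\sD}$, namely that a K-flat resolution $P \to \sA \rtimes \sE$ over $\sA \rtimes \sD$ restricts to a quasi-isomorphism of K-flat complexes over $\sA$ (because $\sA \rtimes \sD$ is itself K-flat over $\sA$), so that $\sB \otimes_\sA P \to \sB \otimes_\sA (\sA \rtimes \sE)$ remains a quasi-isomorphism. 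Your phrase ``the freeness \dots ensures that each tensor product coincides with its derived analogue'' is gesturing at exactly this, but in a careful write-up you should spell it out, since neither $\sB$ over $\sA$ nor $\sE$ over $\sD$ is assumed flat. With that point made explicit, your argument is a complete and correct alternative to the one in the paper.
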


\begin{proof}
Adjunction and isomorphism~\eqref{eqn:transitivity-direct-image} provide a morphism of functors
\begin{multline*}
R(\id_{\sB} \rtimes \varphi)_* \to R(\id_{\sB} \rtimes \varphi)_* R(f \rtimes \id_{\sD})_* (f \rtimes \id_{\sD})^* \cong \\
R\bigl( (f \rtimes \id_{\sE}) \circ (\id_{\sA} \rtimes \varphi) \bigr)_* (f \rtimes \id_{\sD})^* \cong R(f \rtimes \id_{\sE})_* R(\id_{\sA} \rtimes \varphi)_* (f \rtimes \id_{\sD})^*,
\end{multline*}
and using adjunction again we deduce a natural morphism of functors 
\begin{equation}
\label{eqn:base-change-morph}
(f \rtimes \id_{\sE})^* \circ R(\id_{\sB} \rtimes \varphi)_* \to R(\id_{\sA} \rtimes \varphi)_* \circ (f \rtimes \id_{\sD})^*.
\end{equation}
To prove that the latter morphism is invertible, we observe that the algebras $\sA \rtimes \sD$ and $\sB \rtimes \sD$ are K-flat as complexes of right $\sD$-modules (for the action induced by right multiplication of $\sD$ on itself). Moreover, there exist canonical isomorphisms of $\sA \rtimes \sD$-modules and $\sB \rtimes \sD$-modules respectively
\[
(\sA \rtimes \sD) \otimes_{\sD} \sE \cong \sA \rtimes \sE, \qquad (\sB \rtimes \sD) \otimes_{\sD} \sE \cong \sB \rtimes \sE.
\]
We deduce, for $M$ in $D(\sB \rtimes \sD)$, functorial isomorphisms in $D(\Bbbk)$:
\begin{multline*}
R\Hom^\bullet_{\sB \rtimes \sD}(\sB \rtimes \sE,M) \cong R\Hom^\bullet_{\sB \rtimes \sD}((\sB \rtimes \sD) \otimes_{\sD} \sE,M) \cong R\Hom^\bullet_{\sD}(\sE,M) \\
\cong R\Hom^\bullet_{\sA \rtimes \sD}((\sA \rtimes \sD) \otimes_{\sD} \sE,M) \cong R\Hom^\bullet_{\sA \rtimes \sD}(\sA \rtimes \sE,M).
\end{multline*}
It is easily checked that this isomorphism is induced by~\eqref{eqn:base-change-morph}, and the lemma is proved.
\end{proof}

\subsection{Induction}
\label{ss:induction}

For any affine $\bk$-group scheme $H$, we denote by $\Rep(H)$ the abelian category of (not necessarily finite-dimensional) algebraic $H$-modules, and by $\Repf(H) \subset \Rep(H)$ the subcategory consisting of finite-dimensional modules. If $\lambda : H \to \bk^\times$ is a character of $H$, we denote by $\bk_H(\lambda)$ the corresponding $1$-dimensional $H$-module. (When $\lambda$ is the trivial character, we abbreviate the notation to $\bk$.)

If $H$ and $K$ are affine $\bk$-group schemes and $\varphi : H \to K$ is a morphism of group schemes, we can consider the \emph{induction functor}
\[
\Ind_H^K : \Rep(H) \to \Rep(K)
\]
defined by $\Ind_H^K(V) = (V \otimes \mathcal{O}(K))^H$, where $\mathcal{O}(K)$ is considered as a $K \times H$-module via the action induced by
\[
(k,h) \cdot g = kg \varphi(h)^{-1} \quad \text{for $g \in K$ and $(k,h) \in K \times H$.}
\]
Note that we allow $\varphi$ to be any morphism, not necessarily an embedding of a closed subgroup (as e.g.~in~\cite{jantzen}). The functor $\Ind_H^K$ is right adjoint to the forgetful functor
\[
\For_H^K : \Rep(K) \to \Rep(H);
\]
in particular it takes injective objects to injective objects.

The (left exact) functor $\Ind_H^K$ admits a right derived functor
\[
R\Ind_H^K : D^+ \Rep(H) \to D^+ \Rep(K),
\]
which can be computed using injective resolutions, and which is right adjoint to the functor $\For_H^K : D^+ \Rep(K) \to D^+ \Rep(H)$.

This construction is transitive in the sense that if $\varphi : H \to K$ and $\psi : K \to I$ are morphisms of affine $\bk$-group schemes, then we have canonical isomorphisms of functors
\begin{equation}
\label{eqn:transitivity-RInd}
\For^K_H \circ \For^I_K \cong \For^I_H, \qquad R\Ind_K^I \circ R\Ind_H^K \cong R\Ind_H^I,
\end{equation}
where the functors $\For^I_H$ and $R\Ind_H^I$ are defined with respect to the morphism $\psi \circ \varphi : H \to I$. (In fact, the first isomorphism is obvious, and the second one follows by adjunction.)

Later on we will need the following technical lemma. Consider as above a morphism of (affine) $\bk$-group schemes $\varphi : H \to K$, and let $H' \subset H$, $K' \subset K$ be closed subgroups such that $\varphi(H') \subset K'$. Then we can consider the diagram
\[
\xymatrix@C=2cm{
D^+ \Rep(H) \ar[r]^-{R\Ind_H^K} \ar[d]_-{\For_{H'}^H} & D^+ \Rep(K) \ar[d]^-{\For_{K'}^K} \\
D^+ \Rep(H') \ar[r]^-{R\Ind_{H'}^{K'}} & D^+ \Rep(K').
}
\]

\begin{lem}
\label{lem:Ind-For}
Assume that:
\begin{enumerate}
\item
\label{it:Ind-For-ass-1}
the morphism
\[
H \times^{H'} K' \to K : [h:k] \mapsto k \varphi(h)^{-1}
\]
is an isomorphism;
\item
$H'$ is a finite group scheme.
\end{enumerate}
Then there exists a canonical isomorphism of functors
\[
\For_{K'}^K \circ R\Ind_H^K \cong R\Ind_{H'}^{K'} \circ \For_{H'}^H
\]
from $D^+ \Rep(H)$ to $D^+ \Rep(K')$.
\end{lem}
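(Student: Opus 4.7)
The plan is to realize the claimed isomorphism as a Beck--Chevalley base-change transformation, to verify that it is an isomorphism at the non-derived level via a direct computation using assumption~\eqref{it:Ind-For-ass-1}, and then to promote it to the derived level using the finiteness of $H'$.

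First I would construct the natural transformation abstractly. The four forgetful functors fit into a commutative square: both compositions coincide with $\For_{H'}^K$ by transitivity~\eqref{eqn:transitivity-RInd}. Passing to right adjoints yields a canonical Beck--Chevalley natural transformation
\[
\alpha : \For_{K'}^K \circ R\Ind_H^K \to R\Ind_{H'}^{K'} \circ \For_{H'}^H,
\]
built from the unit of $(\For_{H'}^{K'}, R\Ind_{H'}^{K'})$ and the counit of $(\For_H^K, R\Ind_H^K)$.

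Next I would check that $\alpha$ is an isomorphism at the non-derived level. Assumption~\eqref{it:Ind-For-ass-1} provides an $H \times K'$-equivariant isomorphism of schemes $H \times^{H'} K' \simto K$, hence an algebra isomorphism
\[
\mathcal{O}(K) \simto (\mathcal{O}(H) \otimes \mathcal{O}(K'))^{H'}.
\]
For $V \in \Rep(H)$, tensoring with $V$ and successively extracting $H$- and $H'$-invariants (the two actions commute), combined with the tensor identity $(V \otimes \mathcal{O}(H))^H \cong V$, yields natural isomorphisms
\[
(V \otimes \mathcal{O}(K))^H \cong \bigl((V \otimes \mathcal{O}(H))^H \otimes \mathcal{O}(K')\bigr)^{H'} \cong (V \otimes \mathcal{O}(K'))^{H'},
\]
where the residual $H'$-action on the middle $V$ is precisely $\For_{H'}^H V$. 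A routine diagram chase identifies this composite with the non-derived version of $\alpha$.

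It remains to promote this to the derived level. Since $\For_{K'}^K$ is exact, $\For_{K'}^K \circ R\Ind_H^K$ coincides with $R(\For_{K'}^K \circ \Ind_H^K)$; and since $\For_{H'}^H$ is exact, it suffices to show that $\For_{H'}^H$ sends injective $H$-modules to $\Ind_{H'}^{K'}$-acyclic $H'$-modules in order to identify the right-hand side in the same way. I expect this acyclicity to be the main obstacle, and it is where assumption~(2) enters crucially: since $H'$ is a finite group scheme, the homogeneous space $H/H'$ is affine, and a standard criterion (cf.~\cite{jantzen}) then ensures that $\Ind_{H'}^H$ is exact, equivalently that $\For_{H'}^H$ preserves injectivity. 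It then remains to observe that injective $H'$-modules are $\Ind_{H'}^{K'}$-acyclic: they are direct summands of modules of the form $W \otimes \mathcal{O}(H')$ (with trivial action on $W$), and by transitivity of induction together with the exactness of induction from the trivial subgroup (which amounts to tensoring with $\mathcal{O}(K')$), their higher inductions to $K'$ vanish.
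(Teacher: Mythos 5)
Your proof is correct, and its overall architecture matches the paper's: construct the base-change morphism, verify it is an isomorphism at the non-derived level using assumption~(1), and then reduce the derived statement to showing that the restriction to $H'$ of an injective $H$-module is acyclic for $\Ind_{H'}^{K'}$. (Your non-derived computation via $\cO(K) \cong (\cO(H) \otimes \cO(K'))^{H'}$ is just a repackaging of the paper's two explicit mutually inverse maps.) The one step where you genuinely diverge is the acyclicity argument. The paper never asserts that $\For_{H'}^H$ preserves injectives; it reduces to $M = V \otimes \cO(H)$ and computes directly that $R\Ind_{H'}^{K'}(\cO(H)) \cong R\mathbb{I}^{H'}(\cO(H)\otimes\cO(K')) \cong R\Ind_{H'}^{H}(\cO(K'))$, which vanishes in positive degrees because $H'$ is finite. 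You instead argue that $\For_{H'}^H$ preserves injectivity; this is true here (it is the combination of \cite[Proposition~I.4.12 \& Corollary~I.5.13(b)]{jantzen} that the paper itself invokes in the proofs of Lemmas~\ref{lem:Hom-fixed-points-PI1} and~\ref{lem:Ind-J-J'}), after which acyclicity is immediate — indeed your closing argument about $W \otimes \cO(H')$ is redundant, since injective objects are acyclic for any left exact functor by construction of its right derived functor. Two minor caveats: exactness of $\Ind_{H'}^H$ and preservation of injectives by $\For_{H'}^H$ are not tautologically ``equivalent'' — the implication you need is precisely the content of I.4.12 — and both routes ultimately rest on the same input, namely that finiteness of $H'$ makes $H/H'$ affine and hence $\Ind_{H'}^{H}$ exact; the paper's swap $R\Ind_{H'}^{K'}(\cO(H)) \cong R\Ind_{H'}^H(\cO(K'))$ is simply a way of exploiting that exactness without passing through I.4.12.
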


\begin{proof}
For any $M$ in $\Rep(H)$,
restriction induces a functorial morphism
\begin{equation}
\label{eqn:Ind-For-1}
\For^K_{K'} \circ \Ind_H^K(M) = \bigl( M \otimes \cO(K) \bigr)^H \to \bigl( M \otimes \cO(K') \bigr)^{H'} = \Ind_{H'}^{K'} \circ \For_{H'}^H(M).
\end{equation}
One can also define a functorial morphism
\begin{equation}
\label{eqn:Ind-For-2}
\Ind_{H'}^{K'} \circ \For_{H'}^H(M) \to \For^K_{K'} \circ \Ind_H^K(M)
\end{equation}
as follows: an element in $\Ind_{H'}^{K'} \circ \For_{H'}^H(M)$ is an $H'$-equivariant morphism $f : K' \to M$. Inducing this morphism we obtain an $H$-equivariant morphism $H \times^{H'} K' \to H \times^{H'} M$. By~\eqref{it:Ind-For-ass-1} the domain of this map identifies with $K$. Composing with the action morphism $H \times^{H'} M \to M$ we deduce an $H$-equivariant morphism $K \to M$, i.e.~an element of $\For^K_{K'} \circ \Ind_H^K(M)$.

It is straightforward to check that the morphisms~\eqref{eqn:Ind-For-1} and~\eqref{eqn:Ind-For-2} are inverse to each other, so that we obtain an isomorphism of functors
\[
\For^K_{K'} \circ \Ind_H^K \cong \Ind_{H'}^{K'} \circ \For_{H'}^H.
\]
From this isomorphism we deduce a canonical morphism of functors
\[
\For^K_{K'} \circ R\Ind_H^K \to R\Ind_{H'}^{K'} \circ \For_{H'}^H,
\]
and to prove that this morphism is an isomorphism it suffices to prove that if $M$ is an injective $H$-module then the $H'$-module $\For^H_{H'}(M)$ is acyclic for the functor $\Ind_{H'}^{K'}$.

So, let $M$ be an injective $H$-module. By~\cite[Proposition~I.3.10(a)]{jantzen}, there exists a $\bk$-vector space $V$ such that $M$ is a direct summand of $V \otimes \cO(H)$. We have a natural isomorphism
\[
R\Ind_{H'}^{K'}(V \otimes \cO(H)) \cong V \otimes R\Ind_{H'}^{K'}(\cO(H)),
\]
so that to conclude it suffices to prove that
\[
R^{>0}\Ind_{H'}^{K'}(\cO(H))=0.
\]
Now using~\cite[Proposition~I.3.10(c)]{jantzen} we see that, as complexes of vector spaces, we have
\[
R\Ind_{H'}^{K'}(\cO(H)) \cong R\mathbb{I}^{H'}(\cO(H) \otimes \cO(K')),
\]
where $\mathbb{I}^{H'} : \Rep(H') \to \Vect(\bk)$ is the functor of $H'$-invariants and where $H'$ acts diagonally on $\cO(H) \otimes \cO(K')$. From this we deduce a canonical isomorphism
\[
R\Ind_{H'}^{K'}(\cO(H)) \cong R\Ind_{H'}^H(\cO(K')).
\]
Then the desired vanishing follows from~\cite[Corollary~I.5.13(b)]{jantzen}.
\end{proof}

\begin{rmk}
\label{rmk:infinitesimal-groups-1}
Assume that $H$ and $K$ are infinitesimal affine $\bk$-group schemes in the sense of~\cite[\S I.8.1]{jantzen}. Then there exist canonical equivalences of categories
\begin{equation}
\label{eqn:Rep-Distmod}
\Rep(H) \cong \Dist(H)\lmod, \qquad \Rep(K) \cong \Dist(K)\lmod
\end{equation}
where $\Dist(-)$ denotes the distribution algebra;
see~\cite[\S\S I.8.4--6]{jantzen}. On the other hand, the morphism $\varphi : H \to K$ defines an algebra morphism $\phi : \Dist(H) \to \Dist(K)$, see~\cite[\S I.7.9]{jantzen}. It is straightforward to check that in this setting the functor $\Ind_H^K : \Rep(H) \to \Rep(K)$ corresponds to the functor $\phi_*$ defined in~\S\ref{ss:notation} under the identifications~\eqref{eqn:Rep-Distmod}.
\end{rmk}

\subsection{A spectral sequence for $H$-modules}
\label{ss:spectral-sequence-quotient}

Let $H$ be an affine $\bk$-group scheme, and let $K \subset H$ be a closed normal subgroup. Let $V$ be a finite-dimensional $H$-module. Then, for any $H$-module $V'$, the natural (diagonal) $H$-action on the vector space $\Hom_{K}(V,V')$ descends to an (algebraic) $H/K$-action. In other words, the functor $\Hom_K(V,-)$ factors through a functor $\Rep(H) \to \Rep(H/K)$, which we will denote similarly. Then the derived functors $\Ext^n_K(V,-)$ also factor through functors $\Ext^n_K(V,-) : \Rep(H) \to \Rep(H/K)$.

\begin{lem}
\label{lem:spectral-sequence}
For any $V'$ in $\Rep(H)$,
there exists a (bifunctorial) convergent spectral sequence
\[
E_2^{p,q} = \coH^p(H/K, \Ext^q_K(V,V')) \Rightarrow \Ext^{p+q}_H(V,V').
\]
\end{lem}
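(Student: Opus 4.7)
The plan is to realize the desired spectral sequence as a Grothendieck spectral sequence associated to a composition of derived functors. First, I would verify that the functor $\Hom_H(V,-) : \Rep(H) \to \Vect(\bk)$ factors as
\[
\Rep(H) \xrightarrow{\Hom_K(V,-)} \Rep(H/K) \xrightarrow{\coH^0(H/K,-)} \Vect(\bk),
\]
which is essentially a tautology: since $V$ is an $H$-module, the diagonal $H$-action on $\Hom_K(V,V')$ is well defined, factors through $H/K$ because $K$ acts trivially on $K$-equivariant maps, and its $H/K$-invariants are precisely $\Hom_H(V,V')$. Both factors are left exact, and the first admits a right derived functor (its derived functors being $\Ext^q_K(V,-)$ as promised, thanks to the remarks preceding the lemma statement).

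Next, I would apply the Grothendieck composite-functor spectral sequence. For this to yield the stated $E_2$-page converging to $\Ext_H^{p+q}(V,V')$, I need the technical fact that $\Hom_K(V,-)$ sends injective objects of $\Rep(H)$ to objects of $\Rep(H/K)$ that are acyclic for $\coH^0(H/K,-)$. This is where the genuine work lies.

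To verify the acyclicity condition, I would use the structural fact (\cite[Proposition~I.3.10(a)]{jantzen}) that every injective $H$-module $I$ is a direct summand of a module of the form $W \otimes \cO(H)$ for some $\bk$-vector space $W$. Since $V$ is finite-dimensional, we have
\[
\Hom_K(V, W \otimes \cO(H)) \cong W \otimes \Hom_K(V, \cO(H)),
\]
so it suffices to show $\Hom_K(V,\cO(H))$ is an injective (or at least $H/K$-acyclic) object of $\Rep(H/K)$. Identifying $\Hom_K(V,\cO(H)) \cong (V^* \otimes \cO(H))^K$ and invoking the tensor identity, one rewrites this as $\Ind_K^H(V^*)$; standard arguments (as in~\cite[\S I.3, \S I.6]{jantzen}, analogous to the end of the proof of Lemma~\ref{lem:Ind-For}) then show that this module, regarded with its $H/K$-action, is acyclic for $\coH^\bullet(H/K,-)$.

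The main obstacle is the acyclicity verification in step three: one must take care to track which $H$-action on $\cO(H)$ is being used (so that the residual action on $\Hom_K$ genuinely descends to $H/K$), and to apply the tensor identity in a form compatible with passing to $K$-invariants. Once this is done, convergence of the spectral sequence is automatic because it lies in the first quadrant, and bifunctoriality is inherited from the bifunctoriality of the Grothendieck spectral sequence construction.
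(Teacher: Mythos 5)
Your overall strategy --- factor $\Hom_H(V,-)$ as $\coH^0(H/K,-)\circ\Hom_K(V,-)$, apply the Grothendieck composite-functor spectral sequence, and observe that the only real work is the acyclicity of $\Hom_K(V,I)$ for $I$ injective --- is exactly the paper's. The one difference is that the paper first uses adjunction to reduce to the case $V=\bk$ (replacing $V'$ by $V^*\otimes V'$ throughout), after which the acyclicity check becomes simply $\mathbb{I}^K(E\otimes\cO(H))\cong E\otimes\cO(H/K)$, which is visibly injective over $H/K$. That reduction is precisely what dissolves the ``main obstacle'' you identify, and is the cheaper route.

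Your direct route can be completed, but the identification you propose in the third step is not correct as stated. The module $\Ind_K^H(V^*)=(V^*\otimes\cO(H))^K$ carries the $H$-action coming from the regular action on the $\cO(H)$ factor alone; $K$ does not act trivially on it, so it is not an $H/K$-module at all, and in particular it cannot agree, as an $H/K$-module, with $\Hom_K(V,\cO(H))$ equipped with its residual diagonal action. What is true is that the untwisting isomorphism (the tensor identity for $\Ind_1^H$) converts the diagonal action on $V^*\otimes\cO(H)$ into the one-sided action on $V^*_{\mathrm{triv}}\otimes\cO(H)$, whence $\Hom_K(V,\cO(H))\cong V^*\otimes\cO(H/K)$ with \emph{trivial} action on $V^*$ --- an injective $H/K$-module. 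Alternatively, and more cheaply: the functor $(-)^K\colon\Rep(H)\to\Rep(H/K)$ is right adjoint to the exact inflation functor and hence preserves injectives, and $V^*\otimes I$ is injective over $H$ by~\cite[Proposition~I.3.10(c)]{jantzen}, so $\Hom_K(V,I)=(V^*\otimes I)^K$ is injective (not merely acyclic) over $H/K$. Either repair makes your argument complete.
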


\begin{proof}
Using adjunction we can assume that $V$ is the trivial $H$-module. In this case the spectral sequence we wish to construct looks as follows:
\[
E_2^{p,q} = \coH^p(H/K, \coH^q(K,V')) \Rightarrow \coH^{p+q}(H,V').
\]
This spectral sequence is obtained from Grothendieck's spectral sequence for the derived functor of a composition of functors, see e.g.~\cite[Proposition~I.4.1]{jantzen}. For this we observe that we have $\mathbb{I}^H = \mathbb{I}^{H/K} \circ \mathbb{I}^K$, where as above $\mathbb{I}$ is the functor of invariants. Then we have to check that if $V'$ is an injective $H$-module then the $H/K$-module $\mathbb{I}^K(V')$ is injective. However, as in the proof of Lemma~\ref{lem:Ind-For}, we can assume that $V'=E \otimes \cO(H)$ where $E$ is a $\bk$-vector space (with trivial $H$-action). Then we have 
\[
\mathbb{I}^K(V') = \mathbb{I}^K(E \otimes \cO(H)) \cong E \otimes \cO(H/K),
\]
so that this $H/K$-module is indeed injective.
\end{proof}

From this lemma we deduce the following property.

\begin{cor}
\label{cor:dim-Ext}
For any $n \geq 0$ we have
\[
\dim(\Ext^n_H(V,V')) \leq \sum_{p+q=n} \dim \bigl( \coH^p(H/K, \Ext^q_K(V,V')) \bigr)
\]
(if the right-hand side is $<\infty$).
\end{cor}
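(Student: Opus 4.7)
The plan is to read off the inequality directly from the convergent spectral sequence provided by Lemma~\ref{lem:spectral-sequence}. There is nothing representation-theoretic to do beyond that: the statement is a completely general consequence of having a convergent first-quadrant (cohomological) spectral sequence whose $E_2$-page is finite-dimensional on each antidiagonal.

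First I would invoke Lemma~\ref{lem:spectral-sequence} to get the convergent spectral sequence
\[
E_2^{p,q} = \coH^p(H/K, \Ext^q_K(V,V')) \Rightarrow \Ext^{p+q}_H(V,V').
\]
The assumption in the statement is that $\sum_{p+q=n} \dim E_2^{p,q} < \infty$; in particular each $E_2^{p,q}$ on the antidiagonal $p+q=n$ is a finite-dimensional $\bk$-vector space. Since every page $E_r^{p,q}$ with $r \geq 2$ is a subquotient of $E_{r-1}^{p,q}$, and hence of $E_2^{p,q}$, one has $\dim E_\infty^{p,q} \leq \dim E_2^{p,q}$ for all $p,q$ on this antidiagonal.

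Next, convergence of the spectral sequence means that the abutment $\Ext^n_H(V,V')$ carries a (finite, in our situation) decreasing filtration whose associated graded is $\bigoplus_{p+q=n} E_\infty^{p,q}$. Taking dimensions,
\[
\dim \Ext^n_H(V,V') = \sum_{p+q=n} \dim E_\infty^{p,q} \leq \sum_{p+q=n} \dim E_2^{p,q},
\]
which is precisely the asserted inequality. The only potential obstacle is a bookkeeping issue: one must make sure the filtration on $\Ext^n_H(V,V')$ is actually finite so that its dimension is the sum of the dimensions of the associated graded pieces, but this is automatic once the right-hand side is finite because $E_\infty^{p,q}$ vanishes outside finitely many $(p,q)$ with $p+q=n$ (those with $E_2^{p,q}\neq 0$), and on this finite set each $E_\infty^{p,q}$ is finite-dimensional.
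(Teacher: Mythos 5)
Your argument is correct and matches the paper's own proof, which likewise reads off the inequality from the convergence of the spectral sequence in Lemma~\ref{lem:spectral-sequence} by noting that the associated graded of the filtration on $\Ext^n_H(V,V')$ is a subquotient of $\bigoplus_{p+q=n} E_2^{p,q}$. You supply a bit more detail (bounding each $E_\infty^{p,q}$ by $E_2^{p,q}$ and checking finiteness of the filtration), but the route is the same.
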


\begin{proof}
The convergence of the spectral sequence of Lemma~\ref{lem:spectral-sequence} means that for any $n$, there is a filtration on $\Ext^n_H(V,V')$ whose associated graded is a subquotient of
\[
\bigoplus_{p+q=n} \coH^p(H/K, \Ext^q_K(V,V')).
\]
The claim follows.
\end{proof}

\subsection{Equivariant dg-modules}
\label{ss:equiv-dgmod}

Let $H$ be an affine $\bk$-group scheme, and let $\sA$ be a $\bk$-dg-algebra endowed with the structure of an $H$-module which is compatible with the grading, the differential and the multiplication. (Such a structure will be called an $H$-equivariant dg-algebra.)
Let $\sA\ldgmod_H$ be the category of $H$-equivariant $\sA$-dg-modules, i.e.~$\sA$-dg-modules $M$ endowed with the structure of an $H$-module which is compatible with the grading and the differential, and such that the action morphism $\sA \otimes M \to M$ is $H$-equivariant. (Morphisms are required to commute with the $\sA$- and $H$-actions.) We denote by $D_H(\sA)$ the corresponding derived category. If $\coH^\bullet(\sA)$ is left Noetherian, we denote by $\Dfg_H(\sA) \subset D_H(\sA)$ the full triangulated subcategory whose objects have finitely generated cohomology.

If $\sA$ is concentrated in nonpositive degrees,
we will also consider the full subcategory  $\sA\ldgmod_H^+$ of $\sA\ldgmod_H$ consisting of dg-modules which are bounded below, and the corresponding derived category $D^+_H(\sA)$. Our assumption implies that the usual truncation functors for complexes define functors on the category $\sA\ldgmod$; using these functors it is easy to check that the natural functor $D_H^+(\sA) \to D_H(\sA)$ is fully faithful, and that its essential image is the full subcategory of $D_H(\sA)$ consisting of dg-modules whose cohomology is bounded below.

We will not attempt to study the general theory of equivariant dg-modules. For instance, it is not clear to us whether, given a general $H$-equivariant dg-algebra $\sA$ as above (even if it is concentrated in nonpositive degrees), any object of $\sA\ldgmod_H$ (or even of $\sA\ldgmod^+_H$) admits a K-injective resolution. (A very special case of this question will be treated in~\S\ref{ss:Kinj-equivariant} below.) In this setting, we will restrict ourselves to easy constructions.

First we remark that if $H$ and $K$ are affine $\bk$-group schemes, $\varphi : H \to K$ is a morphism of group schemes, and $\sA$ is a $K$-equivariant dg-algebra, then $\sA$ can also be considered as an $H$-equivariant dg-algebra via $\varphi$. Moreover, the functor $\For^K_H : \Rep(K) \to \Rep(H)$ associated with $\varphi$ induces an exact functor $\sA\ldgmod_K \to \sA\ldgmod_H$. We will denote by
\[
\For^K_H : D_K(\sA) \to D_H(\sA)
\]
the induced functor on derived categories. If $\sA$ is concentrated in nonpositive degrees, then this functor restricts to a functor $D^+_K(\sA) \to D^+_H(\sA)$.

Now let $\sA$ and $\sB$ be $H$-equivariant dg-algebras, and
let $f : \sA \to \sB$ be an $H$-equivariant morphism of dg-algebras. As in the nonequivariant setting (see~\S\ref{ss:notation}) we have an exact ``restriction of scalars'' functor $f^* : \sB\ldgmod_H \to \sA\ldgmod_H$, and the corresponding derived functor
\[
f^* : D_H(\sB) \to D_H(\sA).
\]
If $\sA$ and $\sB$ are concentrated in nonpositive degrees,
this functor clearly restricts to a functor from $D_H^+(\sB)$ to $D_H^+(\sA)$. If $\sA$, $\sB$, $\sC$ are $H$-equivariant dg-algebras and $f : \sA \to \sB$, $g : \sB \to \sC$ are $H$-equivariant morphisms of dg-algebras, then we have
\begin{equation}
\label{eqn:composition-restriction-scalars}
(g \circ f)^* = f^* \circ g^*.
\end{equation}

Combining the previous two constructions, it is clear that if $\varphi : H \to K$ is a morphism of affine $\bk$-group schemes and if $f : \sA \to \sB$ is a $K$-equivariant morphism of $K$-equivariant dg-algebras, the following diagram commutes:
\begin{equation}
\label{eqn:restriction-scalars-For}
\vcenter{
\xymatrix@C=1.5cm{
D_K(\sB) \ar[r]^-{f^*} \ar[d]_-{\For^K_H} & D_K(\sA) \ar[d]^-{\For^K_H} \\
D_H(\sB) \ar[r]^-{f^*} & D_H(\sA).
}
}
\end{equation}

The following lemma is well known.

\begin{lem}
\label{lem:qis-equiv}
Let $H$ be an affine $\bk$-group scheme, let $\sA$ and $\sB$ be $H$-equivariant dg-algebras, and
let $f : \sA \to \sB$ be an $H$-equivariant morphism of dg-algebras which is a quasi-isomorphism. Then the functor $f^* : D_H(\sB) \to D_H(\sA)$ is an equivalence of categories.

If $\sA$ and $\sB$ are concentrated in nonpositive degrees then $f^*$ restricts to an equivalence $D_H^+(\sB) \simto D_H^+(\sA)$, and if $\coH^\bullet(\sA) \cong \coH^\bullet(\sB)$ is left Noetherian then $f^*$ restricts to an equivalence $\Dfg_H(\sB) \simto \Dfg_H(\sA)$.
\end{lem}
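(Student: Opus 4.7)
The plan is to exhibit a quasi-inverse to $f^*$, following the standard non-equivariant argument but tracking the $H$-action carefully. Since $f$ is a quasi-isomorphism and a morphism in $\sA\ldgmod_H$ is by convention a quasi-isomorphism when its underlying map of complexes is, $f^*$ descends to a well-defined triangulated functor $D_H(\sB) \to D_H(\sA)$. At the level of dg-module categories it admits an obvious left adjoint $\sB \otimes_\sA - : \sA\ldgmod_H \to \sB\ldgmod_H$, with the diagonal $H$-action on the tensor product (well-defined because $f$ is $H$-equivariant). To derive this functor without appealing to K-injective or K-projective resolutions (whose existence in $\sA\ldgmod_H$ is precisely the difficulty the paper wishes to sidestep), I would use the bar construction $\mathrm{Bar}(\sA, \sA, M)$, which for any $M \in \sA\ldgmod_H$ is a canonical, functorial, $H$-equivariant quasi-isomorphism to $M$ whose underlying complex is K-flat over $\sA$. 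Setting $L_f(M) := \sB \otimes_\sA \mathrm{Bar}(\sA, \sA, M)$ produces a triangulated functor $L_f : D_H(\sA) \to D_H(\sB)$.

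The next step is to check that $L_f$ is left adjoint to $f^*$ and that the unit $M \to f^* L_f(M)$ and counit $L_f f^*(N) \to N$ are isomorphisms. The adjunction at the level of dg-module categories lifts to derived categories by naturality of the bar resolution. Because quasi-isomorphisms in $D_H(\sA)$ and $D_H(\sB)$ are detected by the forgetful functors to $D(\sA)$ and $D(\sB)$, checking that the unit and counit are isomorphisms reduces to the analogous claim in the non-equivariant setting, which is classical: the unit is the composition $M \to \mathrm{Bar}(\sA, \sA, M) \to \sB \otimes_\sA \mathrm{Bar}(\sA, \sA, M)$, which is a quasi-isomorphism because $f$ is one and the bar resolution is K-flat, and the counit is treated similarly.

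For the restrictions to subcategories: the functor $f^*$ preserves both bounded-below cohomology and (under the Noetherian hypothesis) finitely generated cohomology, and the quasi-inverse $L_f$ does as well, because $\coH^\bullet(L_f(M))$ is isomorphic to $\coH^\bullet(M)$ as a module over $\coH^\bullet(\sA) \cong \coH^\bullet(\sB)$ (again by reduction to the non-equivariant case). Consequently $f^*$ restricts to equivalences $D_H^+(\sB) \simto D_H^+(\sA)$ and $\Dfg_H(\sB) \simto \Dfg_H(\sA)$. I expect the main technical obstacle to be the verification that the bar construction has the required functoriality and K-flatness properties in the equivariant setting, but this is essentially formal since the bar complex is built only from iterated tensor products and the multiplication of $\sA$, on which the diagonal $H$-action is compatible by hypothesis.
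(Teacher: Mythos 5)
Your proof is correct and follows essentially the same route as the paper's: the paper's sketch also exhibits $\sB \lotimes_{\sA} (-)$ as a quasi-inverse via $H$-equivariant K-flat resolutions obtained by ``the same procedure as for ordinary dg-modules'' in Bernstein--Lunts, which in that reference is precisely the bar construction you describe (and which the paper invokes explicitly in the adjacent Lemma~\ref{lem:Kproj-resolution}). You have merely spelled out the adjunction and unit/counit argument that the paper compresses into a citation of \cite[Theorem~10.12.5.1]{bl:esf}, and your treatment of the bounded-below and finitely-generated restrictions matches the paper's concluding remark.
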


\begin{proof}[Sketch of proof]
The same procedure as for ordinary dg-modules (see~\cite{bl:esf}) shows that for any $M$ in $\sA\ldgmod_H$, there exists $M'$ in $\sA\ldgmod_H$ which is K-flat as an $\sA$-dg-module and a quasi-isomorphism $M' \xrightarrow{\qis} M$. Hence the derived functor
\[
\sB \lotimes_{\sA} (-) : D_H(\sA) \to D_H(\sB)
\]
is well defined. Then the same arguments as for~\cite[Theorem~10.12.5.1]{bl:esf} show that $f^*$ is an equivalence, with quasi-inverse given by $\sB \lotimes_{\sA} (-)$.

The final claim is clear from the fact that for $M$ in $D_H(\sB)$, $\coH^\bullet(M)$ is bounded below, resp.~finitely generated, iff $\coH^\bullet(f^*(M))$ is bounded below, resp.~finitely generated.
\end{proof}

\begin{rmk}
\label{rmk:infinitesimal-groups-2}
Consider as above an affine $\bk$-group scheme $H$ and a morphism $f : \sA \to \sB$ of $H$-equivariant dg-algebras concentrated in nonpositive degrees.
Assume also that $H$ is infinitesimal. 
We can consider the semidirect product $\sA \rtimes \Dist(H)$ as defined in~\S\ref{ss:semi-direct-products}. We also have a similar semidirect product $\sB \rtimes \Dist(H)$, and a dg-algebra morphism $f \rtimes \id : \sA \rtimes \Dist(H) \to \sB \rtimes \Dist(H)$.
Then the equivalence 
$\Rep(H) \cong \Dist(H)\lmod$
considered in~\eqref{eqn:Rep-Distmod} induces equivalences
\begin{equation}
\label{eqn:Rep-Distmod-dg}
D_H(\sA) \cong D(\sA \rtimes \Dist(H)), \qquad D_H(\sB) \cong D(\sB \rtimes \Dist(H)).
\end{equation}
In fact these equivalences also hold at the level of nonderived categories, so that K-injective resolutions exist in this setting.

Clearly, the following diagram commutes up to an isomorphism of functors:
\[
\xymatrix@C=2cm{
D_H(\sB) \ar[r]^-{f^*} \ar[d]_-{\eqref{eqn:Rep-Distmod-dg}}^-{\wr} & D_H(\sA) \ar[d]_-{\wr}^-{\eqref{eqn:Rep-Distmod-dg}} \\
D(\sB \rtimes \Dist(H)) \ar[r]^-{(f \rtimes \id)^*} & D(\sA \rtimes \Dist(H)).
}
\]
For simplicity, the functor corresponding to the functor $R(f \rtimes \id)_*$ under the identifications~\eqref{eqn:Rep-Distmod-dg} will be denoted
\[
Rf_* : D_H(\sA) \to D_H(\sB).
\]
\end{rmk}

\begin{rmk}
\label{rmk:infinitesimal-groups-3}
Let $H$ and $K$ be infinitesimal affine $\bk$-group schemes and let $\varphi : H \to K$ be a morphism of $\bk$-group schemes. Let $\sA$ be a $K$-equivariant $\bk$-dg-algebra concentrated in nonpositive degrees. Then via $\varphi$ we can also consider $\sA$ as an $H$-equivariant dg-algebra, and as in Remark~\ref{rmk:infinitesimal-groups-2} we have natural equivalences
\begin{equation}
\label{eqn:Rep-Distmod-dg-2}
D_K^+(\sA) \cong D^+(\sA \rtimes \Dist(K)), \qquad D_H^+(\sA) \cong D^+(\sA \rtimes \Dist(H)).
\end{equation}
Moreover $\varphi$ induces an algebra morphism $\phi : \Dist(H) \to \Dist(K)$, and hence a dg-algebra morphism $\id_{\sA} \rtimes \phi : \sA \rtimes \Dist(H) \to \sA \rtimes \Dist(K)$, so that we can consider the associated direct and inverse image functors relating $D^+(\sA \rtimes \Dist(K))$ and $D^+(\sA \rtimes \Dist(H))$.
It is clear that the following diagram commutes:
\[
\xymatrix@C=1.5cm{
D_K^+(\sA) \ar[d]^-{\wr}_-{\eqref{eqn:Rep-Distmod-dg-2}} \ar[r]^{\For^K_H} & D_H^+(\sA) \ar[d]_-{\wr}^-{\eqref{eqn:Rep-Distmod-dg-2}} \\
D^+(\sA \rtimes \Dist(K)) \ar[r]^-{(\id_{\sA} \rtimes \phi)^*} & D^+(\sA \rtimes \Dist(H)).
}
\]
We will denote by
\[
R\Ind_H^K : D_H^+(\sA) \to D_K^+(\sA)
\]
the functor corresponding to $R(\id_{\sA} \rtimes \phi)_*$ under the identifications~\eqref{eqn:Rep-Distmod-dg-2}.
This notation is justified by the fact that this functor is compatible with the functors $R\Ind_H^K$ of~\S\ref{ss:induction} in the obvious sense; in fact this follows from the observation that any K-injective $\sA \rtimes \Dist(H)$-dg-module is also K-injective as a complex of $\Dist(H)$-modules, since $\sA \rtimes \Dist(H)$ is K-flat as a complex of right $\Dist(H)$-modules.
\end{rmk}

\subsection{$H$-action on $\Hom$-spaces}
\label{ss:Hom-action}

Let $H$ be an affine $\bk$-group scheme, and let $\sA$ be an $H$-equivariant dg-algebra.

\begin{lem}
\label{lem:Kproj-resolution}
For any $M$ in $\sA\ldgmod_H$, there exists an object $M'$ in $\sA\ldgmod_H$ which is K-projective as an $\sA$-dg-module and a quasi-isomorphism $M' \xrightarrow{\qis} M$.
\end{lem}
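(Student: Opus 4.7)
The plan is to imitate, in the $H$-equivariant setting, the standard construction of semi-free (and therefore K-projective) resolutions for ordinary dg-modules, as in~\cite{bl:esf}. A key simplification in the present statement is that we only need $M'$ to be K-projective as an $\sA$-dg-module with its $H$-action forgotten; so it suffices to check that the usual iterative construction admits an $H$-equivariant variant, without attempting to control any notion of projectivity with respect to the $H$-action itself.

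First, I would set up the local model. For any graded object $V$ of $\Rep(H)$, the module $\sA \otimes V$, equipped with the diagonal $H$-action and the differential coming from $d_\sA$, is an object of $\sA\ldgmod_H$, and when viewed purely as an $\sA$-dg-module it is a direct sum of shifts of $\sA$, hence K-projective. More generally, I would check that the same conclusion holds for ``twisted'' modules $(\sA \otimes V, d)$, where $d$ is any $H$-equivariant differential making $\sA \otimes V$ semi-free over $\sA$ in the sense of~\cite{bl:esf}; the point being that the underlying (non-equivariant) $\sA$-dg-module is then a standard semi-free, and all semi-frees are K-projective.

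Next, I would run the standard induction. Starting from $M$, take $V_0 = M$ as a graded object of $\Rep(H)$ and set $P_0 = \sA \otimes V_0$, with the action map $a \otimes m \mapsto a \cdot m$ as an $H$-equivariant surjection onto $M$. At each subsequent stage, one has an $H$-equivariant semi-free $P_n$ together with a map $P_n \to M$; to kill the next layer of cohomology of the cone, one adjoins a free module $\sA \otimes V_{n+1}$ for a suitable graded object $V_{n+1}$ of $\Rep(H)$ mapping into the cycles of the cone via a null-homotopy. The colimit $M' := \mathrm{colim}_n P_n$ is then an object of $\sA\ldgmod_H$, is semi-free (hence K-projective) as an $\sA$-dg-module, and admits an $H$-equivariant quasi-isomorphism $M' \xrightarrow{\qis} M$.

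The step I expect to be the main obstacle is the inductive choice of the graded $H$-module $V_{n+1}$. Since $H$ need not be linearly reductive, one cannot in general split the surjection from cycles onto cohomology inside $\Rep(H)$, so a minimal choice of generators is not available. The simplest workaround is to take $V_{n+1}$ to be the entire graded $H$-module of cycles of the cone (or, even more crudely, the whole cone in each degree); this inflates the resolution considerably but preserves both $H$-equivariance and K-projectivity, and no finiteness property of $M'$ is required by the statement. This is the equivariant counterpart of using a bar-type resolution rather than a minimal free resolution in the non-equivariant setting.
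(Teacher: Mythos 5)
Your approach matches the paper's: the proof there consists of a single sentence citing the bar resolution of Bernstein--Lunts~\cite[\S 10.12.2.4]{bl:esf}, which is precisely the bar-type resolution you arrive at, chosen for the same reason (that $H$ need not be linearly reductive, so one cannot pick a minimal $H$-equivariant set of generators at each stage). One small caution on the way you set up the induction: with $V_0 = M$ as a graded $H$-module and the free differential $d_{\sA} \otimes 1$ on $P_0 = \sA \otimes V_0$, the multiplication map $a \otimes m \mapsto a \cdot m$ is not a chain map (the term $\pm a\, dm$ survives), while if you instead use the full tensor differential the module is no longer visibly semi-free. Going directly to the bar complex $\bigoplus_{n\geq 0}\sA \otimes \sA^{\otimes n}\otimes M$, filtered by bar length, sidesteps both issues and is the form in which the paper invokes it.
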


\begin{proof}
The ``bar resolution'' of~\cite[\S 10.12.2.4]{bl:esf} (see also~\cite[Remark~10.12.2.7]{bl:esf}) provides a resolution with the desired properties.
\end{proof}

From now on in this subsection we assume that $\bk$ is algebraically closed and that $H$ is reduced and of finite type (in other words an algebraic group in the ``traditional'' sense). Then we can consider the abelian category $\Repd(H)$ of ``discrete'' $H$-representations, i.e.~vector spaces $V$ endowed with a group homomorphism from (the $\bk$-points of) $H$ to $\mathrm{GL}(V)$ which is not necessarily a morphism of algebraic varieties.  (A typical example is an infinite-dimensional representation that is not the union of its finite-dimensional subrepresentations, which might arise e.g.~when taking the dual of an infinite-dimensional algebraic $H$-module.)

For any $M$ in $\sA\ldgmod_H$, consider the functor
\[
\Hom^\bullet_{\sA}(-,M) : \bigl( \sA\ldgmod_H \bigr)^{\mathrm{op}} \to C(\Repd(H))
\]
(where the right-hand side is the category of complexes of objects in $\Repd(H)$), where the $H$-action is diagonal. The resolutions considered in Lemma~\ref{lem:Kproj-resolution} are split on the right for this functor, so that we can consider the associated derived functor
\[
R\Hom_{\sA}(-,M) : D_H(\sA)^{\mathrm{op}} \to D (\Repd(H)).
\]
By construction, for any $N$ in $\sA\ldgmod_H$ and any $n \in \Z$ we have a canonical isomorphism
\[
\coH^n(R\Hom_{\sA}(N,M)) \cong \Hom^n_{D(\sA)}(\For^H_{\{1\}}(N),\For^H_{\{1\}}(M)).
\]
In particular, this implies that the vector space $\Hom^n_{D(\sA)}(\For^H_{\{1\}}(N),\For^H_{\{1\}}(M))$ has a natural action of $H$ (which might be nonalgebraic).

\begin{lem}
\label{lem:action-Hom}
Let $f : \sA \to \sB$ be an $H$-equivariant morphism of $H$-equivariant dg-algebras. Then for any $M,N$ in $D_H(\sB)$, the morphism
\[
\Hom_{D(\sB)}(M,N) \to \Hom_{D(\sA)}(f^*M, f^*N)
\]
induced by the functor $f^*$
(where for simplicity we omit the functors $\For^H_{\{1\}}$) is $H$-equivariant.
\end{lem}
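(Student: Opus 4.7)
The plan is to unfold the definitions of the $H$-actions on the two sides and factor the map induced by $f^{*}$ through an explicit composition of two morphisms of complexes of $H$-modules, each of which is manifestly $H$-equivariant.

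First, I would choose, via Lemma~\ref{lem:Kproj-resolution}, a K-projective resolution $\pi \colon P \to M$ in $\sB\ldgmod_H$. Applying $f^{*}$ gives a quasi-isomorphism $f^{*}\pi \colon f^{*}P \to f^{*}M$ in $\sA\ldgmod_H$, but the subtle point is that $f^{*}P$ need not be K-projective as an $\sA$-dg-module. To remedy this, I would apply Lemma~\ref{lem:Kproj-resolution} a second time to produce a K-projective resolution $\rho \colon Q \to f^{*}P$ in $\sA\ldgmod_H$; the composition $f^{*}\pi \circ \rho \colon Q \to f^{*}M$ is then a K-projective resolution of $f^{*}M$ in $\sA\ldgmod_H$. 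With these choices, by the construction recalled above the lemma, the $H$-action on $\Hom^{n}_{D(\sB)}(M,N)$ is identified with the action on $\coH^{n}(\Hom^{\bullet}_{\sB}(P,N))$ induced by the diagonal $H$-action on the Hom complex, and similarly $\Hom^{n}_{D(\sA)}(f^{*}M, f^{*}N)$ is identified with $\coH^{n}(\Hom^{\bullet}_{\sA}(Q, f^{*}N))$.

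Next, I would observe that under these identifications, the map induced by $f^{*}$ sends the class of a cocycle $\tilde\varphi \colon P \to N$ to the class of $f^{*}\tilde\varphi \circ \rho \colon Q \to f^{*}N$, and thus factors as
\[
\Hom^{\bullet}_{\sB}(P,N) \hookrightarrow \Hom^{\bullet}_{\sA}(f^{*}P, f^{*}N) \xrightarrow{\rho^{*}} \Hom^{\bullet}_{\sA}(Q, f^{*}N),
\]
where the first arrow is the tautological inclusion (every $\sB$-linear map is $\sA$-linear via $f$) and the second is precomposition with $\rho$. The first arrow is $H$-equivariant because the diagonal $H$-action on either complex depends only on the $H$-module structures of the source and target, which are unchanged by restriction of scalars; the second arrow is $H$-equivariant because $\rho$ is a morphism in $\sA\ldgmod_H$, hence $H$-equivariant. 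Passing to $\coH^{n}$ yields the desired $H$-equivariance.

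There is no serious obstacle here; the only point that requires care is that, since $f^{*}$ need not preserve K-projectivity, one cannot compute both Hom spaces using a single resolution, and must instead interpose the auxiliary resolution $\rho \colon Q \to f^{*}P$ to factor the map through a diagram of complexes of $H$-modules.
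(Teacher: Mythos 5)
Your proof is correct and follows essentially the same route as the paper: choose a K-projective resolution $P \to M$ over $\sB$ and then a second K-projective resolution $Q \to f^*P$ over $\sA$ (via Lemma~\ref{lem:Kproj-resolution}), and observe that the induced map on Hom-complexes is a composition of manifestly $H$-equivariant morphisms. Your explicit factorization through the tautological inclusion $\Hom^\bullet_{\sB}(P,N) \hookrightarrow \Hom^\bullet_{\sA}(f^*P,f^*N)$ followed by $\rho^*$ is just a slightly more detailed phrasing of the paper's one-line verification.
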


\begin{proof}
Let $M' \xrightarrow{\qis} M$ and $M'' \xrightarrow{\qis} f^*(M)$ be resolutions as in Lemma~\ref{lem:Kproj-resolution}. Then we have $H$-equivariant isomorphisms
\begin{align*}
\Hom_{D(\sB)}(M,N) &\cong \coH^0(\Hom^\bullet_{\sB}(M',N)), \\
\Hom_{D(\sA)}(f^* M,f^* N) &\cong \coH^0(\Hom^\bullet_{\sA}(M'',f^* N)).
\end{align*}
Moreover, the morphism under consideration is induced by the morphism of complexes
\[
\Hom^\bullet_{\sB}(M',N) \to \Hom^\bullet_{\sA}(M'',f^* N)
\]
sending a morphism $\varphi : M' \to N[k]$ to the composition
\[
M'' \xrightarrow{\qis} f^*(M) \xrightarrow{f^*(\varphi)} f^*(N)[k].
\]
This morphism is obviously $H$-equivariant, which proves the lemma.
\end{proof}

\subsection{The case of finite-dimensional dg-algebras}
\label{ss:Kinj-equivariant}

As in~\S\ref{ss:equiv-dgmod}, we let $H$ be a $\bk$-group scheme, and $\sA$ be an $H$-equivariant dg-algebra concentrated in nonpositive degrees. We assume in addition that $\dim_\bk(\sA)<\infty$.

\begin{lem}
\label{lem:K-inj-equiv}
For any bounded below $H$-equivariant $\sA$-dg-module $X$, there exists a bounded below $H$-equivariant $\sA$-dg-module $Y$ which is
\begin{itemize}
\item
K-injective as an $H$-equivariant $\sA$-dg-module;
\item
K-injective as an $\sA$-dg-module;
\item
a complex of injective $H$-modules 
\end{itemize}
and a quasi-isomorphism of $H$-equivariant $\sA$-dg-modules $\varphi : X \xrightarrow{\mathrm{qis}} Y$.
\end{lem}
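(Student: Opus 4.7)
My strategy is to build $Y$ as the totalization of a Cartan--Eilenberg-type injective resolution of $X$, using coinduced dg-modules as the building blocks, since these carry all three desired properties at once.

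Consider the forgetful functor $\For\colon \sA\ldgmod_H \to C^+(\Rep(H))$ from $H$-equivariant $\sA$-dg-modules to bounded-below complexes of algebraic $H$-modules (forgetting only the $\sA$-action). Since $\dim_\bk\sA<\infty$ and $\sA$ is concentrated in nonpositive degrees, this functor admits a right adjoint $\mathrm{Coind}(V^\bullet):=\Hom^\bullet_\bk(\sA, V^\bullet)$ (with $\sA$ acting by precomposition and $H$ acting diagonally), which preserves boundedness below. The essential observation is that for any bounded-below complex $V^\bullet$ of injective $H$-modules, the object $\mathrm{Coind}(V^\bullet)$ enjoys all three properties:
\begin{itemize}
\item \emph{K-injective as $H$-equivariant $\sA$-dg-module}, by adjunction from the K-injectivity of $V^\bullet$ in $C^+(\Rep(H))$ (Spaltenstein, since $V^\bullet$ is a bounded-below complex of injective $H$-modules);
\item \emph{K-injective as $\sA$-dg-module}, because the same formula realizes the right adjoint to the exact forgetful $\sA\ldgmod \to C(\bk\lmod)$, and bounded-below complexes over a field are automatically K-injective;
\item \emph{termwise injective as $H$-module}, since each graded piece $(\mathrm{Coind}(V^\bullet))^n$ is a finite direct sum of copies of various $V^j$.
\end{itemize}

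Given a bounded-below $X$, the unit $\eta_X\colon X \to \mathrm{Coind}(\For(X))$ is a monomorphism (because $\sA$ is unital: evaluating at $1\in\sA$ recovers the identity). Choosing an injection of $\For(X)$ into a bounded-below complex of injective $H$-modules and applying $\mathrm{Coind}$ gives a monomorphism from $X$ into a coinduced module; iterating on the cokernel produces an exact sequence $0 \to X \to I^0 \to I^1 \to \cdots$ in $\sA\ldgmod_H$, with each $I^k$ of coinduced form and bounded below (thanks to finite-dimensionality of $\sA$). Set $Y := \mathrm{Tot}(I^\bullet)$ with $Y^n := \bigoplus_{i+j=n}(I^i)^j$ and the standard signed differential; the boundedness of $X$ below together with $\dim_\bk\sA<\infty$ makes the sum finite in each degree, and the standard Cartan--Eilenberg spectral sequence identifies the induced map $X \to Y$ as a quasi-isomorphism.

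The three desired properties now transfer from the $I^k$ to $Y$: termwise injectivity as $H$-module is immediate from a finite direct sum of injectives being injective; the two K-injectivity conditions follow from the classical fact that a bounded-below totalization of a Cartan--Eilenberg resolution by K-injectives remains K-injective, applied separately in each of the two relevant categories. \emph{The main obstacle} is precisely this last step: verifying convergence of the spectral sequence for $\Hom^\bullet(Z, Y)$ (with $Z$ acyclic) that reduces K-injectivity of $Y$ to K-injectivity of each $I^k$. Here the bounded-below hypothesis on $X$ ensures that the filtration of $Y$ by resolution degree is bounded in each hom-degree, so the spectral sequence converges and the argument goes through essentially as in the classical non-dg setting.
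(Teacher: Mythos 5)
Your construction is the same one the paper uses: coinduce an injective resolution in $H$-modules via $\Hom^\bullet_\bk(\sA,-)$, iterate to get an exact sequence $X\hookrightarrow I^0\to I^1\to\cdots$ of bounded-below $H$-equivariant $\sA$-dg-modules each enjoying the three desired properties, and take the totalization $Y$. The identification of coinduced modules as simultaneously K-injective in both senses and termwise $H$-injective is also correct (and, as a small point, the reason $\mathrm{Coind}$ preserves the lower bound is that $\sA$ sits in nonpositive degrees; finite-dimensionality is what makes each $Y^n$ a finite sum, not what bounds it below).

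The one real gap is in the justification of K-injectivity of $Y$. You claim that ``the filtration of $Y$ by resolution degree is bounded in each hom-degree, so the spectral sequence converges.'' It is true that the filtration on $Y$ itself is bounded in each cohomological degree, but this does not pass to the filtration it induces on $\Hom^\bullet(Z,Y)$ when $Z$ is an arbitrary acyclic object: the degree-$n$ piece of this Hom complex is the infinite product $\prod_m\Hom(Z^m,Y^{m+n})$, and since $Z$ need not be bounded above, the filtration on the Hom complex by resolution degree is genuinely unbounded. A bounded spectral sequence argument therefore does not apply, and the convergence you assert is exactly the nontrivial point. The correct replacement (and what the paper does) is to note that $Y=\varprojlim_p Y_p$ with $Y_p$ the $p$-th truncated total complex, that each short exact sequence $I^{p+1}[-p-1]\hookrightarrow Y_{p+1}\twoheadrightarrow Y_p$ splits at the level of graded $H$-equivariant $\sA$-modules (so that the induced maps $\Hom^\bullet(Z,Y_{p+1})\to\Hom^\bullet(Z,Y_p)$ are surjective with acyclic kernel, $I^{p+1}$ being K-injective), and then to invoke the Mittag--Leffler/$\lim^1$ argument --- in the paper's formulation, that the inverse system $(\Hom^\bullet(Z,Y_p))_p$ is $\mathfrak{I}$-special in the sense of Spaltenstein~\cite[Definition~2.1, Lemma~2.3]{spaltenstein}. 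You do flag this step as the main obstacle and appeal to the classical non-dg argument, which is indeed the right precedent, but the reason you give for convergence is not the operative one and would not survive scrutiny as written.
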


\begin{proof}
We proceed in a way similar to the procedure in~\cite[Lemma~1.3.5]{riche}. Namely, we first consider a bounded below complex $V_0$ of injective $H$-modules (with the same lower bound as $X$) and an injective morphism of complexes of $H$-modules $X \hookrightarrow V_0$. This morphism defines in a natural way an injective morphism $X \hookrightarrow \Hom_{\bk}^\bullet(\sA,V_0)$. (Here, $\sA$ acts on $\Hom_{\bk}^\bullet(\sA,V_0)$ through right multiplication in $\sA$, as in the definition of the coinduction functor in~\cite[\S 1.2]{riche}, and $H$ acts diagonally.) One can easily check that $Z_0:=\Hom_{\bk}^\bullet(\sA,V_0)$ is bounded below with the same bound as $X$ and K-injective, both as an $\sA$-dg-module and as an $H$-equivariant $\sA$-dg-module. Using~\cite[Proposition~I.3.10(b)]{jantzen}, one can also check that $Z_0$ is a complex of injective $H$-modules. 

Proceeding similarly with the cokernel of the injection $X \hookrightarrow Z_0$ and repeating, we obtain $H$-equivariant $\sA$-dg-modules $Z_k$ which are bounded below with the same bound as $X$, K-injective both as $\sA$-dg-modules and as $H$-equivariant $\sA$-dg-modules, 
and whose terms are injective $H$-modules, 
and an exact sequence of $H$-equivariant $\sA$-dg-modules
\[
X \hookrightarrow Z_0 \to Z_1 \to Z_2 \to \cdots
\]
Let $Y$ be the total complex of the double complex $0 \to Z_0 \to Z_1 \to \cdots$ (where $Z_k$ is in horizontal degree $k$). Then there exists a natural morphism $X \to Y$, which is easily seen to be a quasi-isomorphism. Hence to conclude it suffices to check that $Y$ has the desired properties. Clearly each graded component of $Y$ is an injective $H$-module, so we need only consider the first two conditions.

For any $p$, we denote by $Y_p$ the total complex of the double complex $0 \to Z_0 \to Z_1 \to \cdots \to Z_{p-1} \to Z_p \to 0 \to \cdots$. Then for any $p$ we have an exact sequence
\begin{equation}
\label{eqn:ses-Kinj}
Z_{p+1}[-p-1] \hookrightarrow Y_{p+1} \twoheadrightarrow Y_p
\end{equation}
which is split as an exact sequence of $H$-equivariant graded $\sA$-modules (i.e.~when we forget differentials).

Now we can prove that $Y$ is K-injective as an $H$-equivariant $\sA$-dg-module. Let $M$ be an acyclic $H$-equivariant $\sA$-dg-module. We have, as complexes of $\bk$-vector spaces,
\[
\Hom^\bullet_{\sA\ldgmod_H}(M,Y) \cong \varprojlim_p \Hom^\bullet_{\sA\ldgmod_H}(M,Y_p).
\]
(Here, $\Hom^\bullet_{\sA\ldgmod_H}(X,X')$ is the complex whose $i$-th term consists of homogeneous morphisms of $H$-equivariant $\sA$-modules of degree $i$ from $X$ to $X'$, with the differential induced by $d_X$ and $d_{X'}$.)
For any $p$, since the exact sequence~\eqref{eqn:ses-Kinj} is split as an exact sequence of $H$-equivariant graded $\sA$-modules, it induces an exact sequence of complexes
\[
\Hom^\bullet_{\sA\ldgmod_H}(M,Z_{p+1}[-p-1]) \hookrightarrow \Hom^\bullet_{\sA\ldgmod_H}(M,Y_{p+1}) \twoheadrightarrow \Hom^\bullet_{\sA\ldgmod_H}(M,Y_p).
\]
Since $Z_{p+1}$ is K-injective, the complex $\Hom^\bullet_{\sA\ldgmod_H}(M,Z_{p+1}[-p-1])$ is acyclic. Hence the inverse system $(\Hom^\bullet_{\sA\ldgmod_H}(M,Y_p))_{p \geq 0}$ is $\mathfrak{I}$-special in the sense of~\cite[Definition~2.1]{spaltenstein}, where $\mathfrak{I}$ is the class of acyclic complexes of $\bk$-vector spaces. Using~\cite[Lemma~2.3]{spaltenstein} we deduce that its inverse limit $\Hom^\bullet_{\sA\ldgmod_H}(M,Y)$ is acyclic, which proves the desired K-injectivity.

The same arguments show that $Y$ is also K-injective as an $\sA$-dg-module, and the proof is complete.
\end{proof}

Now we consider affine $\bk$-group schemes $H$ and $K$, a morphism of $\bk$-group schemes $\varphi : H \to K$, and a finite-dimensional $K$-equivariant dg-algebra $\sA$ concentrated in nonpositive degrees. Via $\varphi$ we can also consider $\sA$ as an $H$-equivariant dg-algebra. The functor $\Ind_H^K : \Rep(H) \to \Rep(K)$ induces a functor from $\sA\ldgmod_H$ to $\sA\ldgmod_K$ (which we will also denote $\Ind_H^K$) as follows: if $M$ is in $\sA\ldgmod_H$, we consider the $\sA$-action on the complex of $K$-modules $\Ind_H^K(M)$ defined by $(a \cdot f)(k) = (k^{-1} \cdot a) \cdot f(k)$ (where elements in $\Ind_H^K(M)=\mathbb{I}^H(\cO(K) \otimes M)$ are considered as algebraic morphisms $K \to M$ as in~\cite[\S I.3.3]{jantzen}). Lemma~\ref{lem:K-inj-equiv} ensures that the right derived functor $R\Ind_H^K$ is well defined on the subcategory $D^+_H(\sA) \subset D_H(\sA)$, and that moreover the following diagram commutes up to isomorphism, where the vertical arrows are induced by the functor of forgetting the $\sA$-action:
\begin{equation}
\label{eqn:diag-for-ind}
\vcenter{
\xymatrix@C=1.5cm{
D^+_H(\sA) \ar[r]^-{R\Ind_H^K} \ar[d] & D^+_K(\sA) \ar[d] \\
D^+ \Rep(H) \ar[r]^-{R\Ind_H^K} & D^+ \Rep(K).
}
}
\end{equation}
It is also easily checked that the functor $R\Ind_H^K$ is right-adjoint to the forgetful functor $\For^K_H : D^+_K(\sA) \to D^+_H(\sA)$.

\section{Reductive algebraic groups and Steinberg modules}
\label{sec:reductive}

\subsection{Notation for algebraic groups}
\label{ss:notation-alg-groups}

From now on we assume that
$\Bbbk$ is an algebraically closed field of positive characteristic $\ell$, and let $G$ be a 
connected reductive
algebraic group over $\Bbbk$ 
with simply connected derived subgroup. 
Let $T \subset B \subset G$ be a maximal torus and a Borel subgroup, and let $B^+$ be the opposite Borel subgroup (with respect to $T$). We also denote by $N$ the unipotent radical of $B$, and by $\fg$, $\fb$, $\ft$, $\fb^+$, $\fn$ the Lie algebras of $G$, $B$, $T$, $B^+$, $N$.

We will denote by $\Phi$ the root system of $(G,T)$, by $\Phi^+ \subset \Phi$ the system of positive roots consisting of the $T$-weights in nilradical of $\fb^+$, by $\Sigma \subset \Phi$ the corresponding simple roots, by $W$ the Weyl group of $(G,T)$, and by $S \subset W$ the set of simple reflections corresponding to $\Sigma$. We will denote by
\[
s \mapsto \alpha_s, \qquad \alpha \mapsto s_\alpha
\]
the natural bijections $S \simto \Sigma$ and $\Sigma \simto S$. For any $\alpha \in \Phi$ we denote by $\fg_\alpha$ the corresponding root subspace in $\fg$, and by $\alpha^\vee$ the corresponding coroot. 

For any subset $I \subset S$, we denote by $\Sigma_I=\{\alpha_s : s \in I\} \subset \Sigma$ the corresponding subset of $\Sigma$. Then we have the corresponding root system $\Phi_I = \Phi \cap \Z \Sigma_I$ and positive roots $\Phi_I^+ =\Phi^+ \cap \Phi_I$. We also let $W_I \subset W$ be the (parabolic) subgroup generated by $I$, and $w_I$ be the longest element in $W_I$. We denote by $P_I \subset G$ the 
parabolic subgroup containing $B$ associated with $I$, and by $\fp_I$ its Lie algebra, so that
\[
\fp_I = \fb \oplus \bigoplus_{\alpha \in \Phi_I^+} \fg_\alpha.
\]
We denote by $M_I$ the Levi factor of $P_I$ containing $T$, by $\fm_I$ its Lie algebra, by $N_I$ the unipotent radical of $P_I$, and by $\fn_I$ its Lie algebra. Of course, when $I=\varnothing$ we have $P_\varnothing = B$, $M_\varnothing=T$ and $N_\varnothing=N$. When $I=\{s\}$ for some $s \in S$, we simplify the notation $P_{\{s\}}$, $M_{\{s\}}$, etc.~to $P_s$, $M_s$, etc. (This simplification will also be used for other notation depending on $I \subset S$ that will be defined later in the paper.)

We denote by $\dot G=G^{(1)}$ the Frobenius twist of $G$. Recall that by definition, as rings we have $\cO(\dot G) = \cO(G)$, but the $\bk$-actions are different: if $x \in \bk$, then $x$ acts on $\cO(\dot G)$ in the way $x^{1/\ell}$ acts on $\cO(G)$. (Here, $(-)^{1/\ell}$ is the inverse of the field automorphism of $\bk$ given by $x \mapsto x^\ell$.) The Frobenius morphism $\Fr : G \to \dot G$ is the $\bk$-scheme morphism induced by the $\bk$-algebra morphism $\cO(\dot G) \to \cO(G)$ defined by $f \mapsto f^\ell$. The $\bk$-scheme $\dot G$ has a natural structure of $\bk$-algebraic group, and $\Fr$ is an algebraic group morphism. Its kernel is (by definition) the Frobenius kernel of $G$, and will be denoted $G_1$.
It is an infinitesimal affine $\bk$-group scheme.
We use similar notation for the subgroups of $G$ introduced above. In particular, $\dot T$ is a maximal torus in $\dot G$, and $\dot B$ is a Borel subgroup in $\dot G$.

We let $\bX$ denote the lattice of characters of $T$ (or equivalently of $B$), and
$\bX^+ \subset \bX$ be the set of dominant weights.
Given a subset $I \subset S$, we set
\[
\rho_I := \frac{1}{2} \sum_{\alpha \in \Phi^+_I} \alpha \quad \in \bX \otimes_{\Z} \Q.
\]
We also choose a weight $\varsigma_I \in \bX$ such that $\langle \varsigma_I, \alpha^\vee \rangle = 1$ for all $\alpha \in \Sigma_I$.
When $I=S$, we simplify the notation to $\rho$ and $\varsigma$. 
(Starting from Section~\ref{sec:translation} we will make a more specific choice for these weights, but in Sections~\ref{sec:reductive}--\ref{sec:compatibility} they can be arbitrary.)
Throughout the paper we assume that $\ell > h$, where
$h$ is the Coxeter number of $\Phi$.

Since $\cO(\dot T) = \cO(T)$, the lattice of characters of $\dot T$ identifies canonically with $\bX$. With this identification, the morphism $\bX \to \bX$ induced by composition with the Frobenius morphism $T \to \dot T$ is given by $\lambda \mapsto \ell \lambda$. In other words, we have $\For^{\dot T}_T(\bk_{\dot T}(\lambda)) = \bk_T(\ell \lambda)$.

If $I \subset S$, we set $\fN_I := \mathcal{U}(\fn_I)$, the universal enveloping algebra of $\fn_I$. 
We denote by $\fZ_I \subset \fN_I$ the (central) subalgebra generated by elements of the form $x^\ell - x^{[\ell]}$ for $x \in \fn_I$. Then $\fZ_I$ is canonically isomorphic to $\Sym(\dot\fn_I)$ (where $\dot\fn_I$ is the Lie algebra of $\dot N_I$), and if $\bk$ is the trivial $\fZ_I$-module we have
\[
\fN_I \otimes_{\fZ_I} \bk = \snn_I,
\]
where $\snn_I$ is the restricted enveloping algebra of $\fn_I$, which identifies with the distribution algebra of $N_{I,1}$.

Note that our notation (and the rest of the notation introduced later) follows the following pattern: if $H$ is an algebraic group over $\bk$, then $\dot H$ is its Frobenius twist, $H_1$ its Frobenius kernel, $\mathfrak{h}$ its Lie algebra, $\mathfrak{H}$ the enveloping algebra of $\mathfrak{h}$, and $\mathsf{h}$ the distribution algebra of $H_1$ (or equivalently the restricted enveloping algebra of $\mathfrak{h}$).

\subsection{Steinberg modules}
\label{ss:Steinberg}

Given $I \subset S$, we can define the $P_I$-module
\[
\St_I := \Ind_{B}^{P_I} \bigl( \bk_B((\ell -1)\varsigma_I) \bigr).
\]
It is clear that $N_I \subset P_I$ acts trivially on $\St_I$, so that this module factors through an $M_I$-module (which we denote similarly.)
When $I = \varnothing$, $\St_I$ is just the one-dimensional $B$-module $\bk_B((\ell-1)\varsigma_{\varnothing})$ (i.e.~the trivial module if we have chosen $\varsigma_\varnothing=0$). When $I=S$ we omit the subscript $S$. 
For any $I$, $\St_I$ is irreducible as a $P_I$- or $M_I$-module.  When regarded as an $M_{I,1}$-module,
or as an $M_{I,1}T$-module, 
it is simple, injective, and projective (see~\cite[Proposition~II.10.2]{jantzen}).

\begin{rmk}
The results of~\cite{jantzen} cited above (as well as those cited below) are stated for the module $\Ind_{B \cap M_I}^{M_I}((\ell-1)\rho_I)$ instead of $\St_I$, assuming that $(\ell-1)\rho_I$ belongs to $\bX$. However, under our assumptions, if $I \neq \varnothing$ then $\ell$ is odd, so that $(\ell-1)\rho_I$ indeed belongs to $\bX$. And we have isomorphisms of $P_I$-modules
\[
\Ind_{B \cap M_I}^{M_I}((\ell-1)\rho_I) \cong \Ind_{B}^{P_I}((\ell-1)\rho_I) \cong \St_I \otimes \bk_{P_I}((\ell-1)(\rho_I-\varsigma_I)),
\]
since $(\ell-1)(\rho_I-\varsigma_I)$ is a character of $P_I$. These isomorphisms allow us to transfer the required results from the case of $\Ind_{B \cap M_I}^{M_I}((\ell-1)\rho_I)$ to the case of $\St_I$.
\end{rmk}

\begin{lem}
\label{lem:st-sll}
Let $B_I = B \cap M_I$, and let $B_I^+ = B^+ \cap M_I$.  Then we have isomorphisms of $M_I$-modules
\[
\St_I \cong \Ind_{B_{I,1}}^{M_{I,1}} \bk_{B_{I,1}}((\ell-1) \varsigma_I) \cong \Ind_{B_{I,1}^+}^{M_{I,1}} \bk_{B_{I,1}^+} ((\ell-1)(\varsigma_I - 2\rho_I)).
\]
\end{lem}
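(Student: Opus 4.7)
The plan is to reduce both isomorphisms to standard facts from~\cite[Chapter~II]{jantzen} via the tensor identity and Weyl-group conjugation. The key preliminary observation is that $(\ell-1)(\varsigma_I - \rho_I)$ is a character of $M_I$: indeed, $\langle \varsigma_I - \rho_I, \alpha^\vee \rangle = 0$ for every $\alpha \in \Sigma_I$, and the simple coroots of $\Phi_I$ generate the coroot lattice of $M_I$.

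For the first isomorphism, I would use the projection/tensor identity, which gives
\[
\Ind_{B_{I,1}}^{M_{I,1}} \bk((\ell-1)\varsigma_I) \cong \Ind_{B_{I,1}}^{M_{I,1}} \bk((\ell-1)\rho_I) \otimes \bk((\ell-1)(\varsigma_I - \rho_I))
\]
and similarly $\Ind_{B_I}^{M_I} \bk((\ell-1)\varsigma_I) \cong \Ind_{B_I}^{M_I} \bk((\ell-1)\rho_I) \otimes \bk((\ell-1)(\varsigma_I - \rho_I))$, since in each case the second factor is a character of (the relevant subgroup of) $M_I$ obtained by restriction. By the discussion in the preceding remark, $\St_I$ itself is, as an $M_I$-module, the tensor product of $\Ind_{B_I}^{M_I} \bk((\ell-1)\rho_I)$ with $\bk((\ell-1)(\rho_I - \varsigma_I))$; hence the first isomorphism reduces to the standard identification
\[
\Ind_{B_I}^{M_I} \bk((\ell-1)\rho_I) \cong \Ind_{B_{I,1}}^{M_{I,1}} \bk((\ell-1)\rho_I),
\]
which is part of~\cite[II.3.18 and II.10.1]{jantzen}.

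For the second isomorphism, I would exploit that the longest element $w_I \in W_I$, lifted to a representative in $N_{M_I}(T)$, conjugates $B_I^+$ to $B_I$ and hence $B_{I,1}^+$ to $B_{I,1}$. This conjugation yields an $M_{I,1}$-module isomorphism
\[
\Ind_{B_{I,1}^+}^{M_{I,1}} \bk(\mu) \cong \Ind_{B_{I,1}}^{M_{I,1}} \bk(w_I(\mu))
\]
for any $\mu \in \bX$. Applying this with $\mu = (\ell-1)(\varsigma_I - 2\rho_I)$ and using that $\varsigma_I - \rho_I$ is $W_I$-invariant while $w_I(\rho_I) = -\rho_I$, we compute
\[
w_I(\varsigma_I - 2\rho_I) = w_I\bigl((\varsigma_I - \rho_I) - \rho_I\bigr) = (\varsigma_I - \rho_I) + \rho_I = \varsigma_I,
\]
and composing with the first isomorphism gives the second.

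The main (minor) obstacle is bookkeeping: the right-hand sides are a priori $M_{I,1}$-modules, whereas the statement asserts an $M_I$-equivariant isomorphism. This is resolved by noting that $T$ normalizes $B_{I,1}$ and $B_{I,1}^+$, so that both inductions acquire natural $M_{I,1}T$-module structures; since $\St_I$ is simple as an $M_{I,1}$-module and has a unique $M_I$-module structure extending it, the isomorphisms produced above are automatically $M_I$-equivariant after the identifications have been made.
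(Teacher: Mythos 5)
The paper's own proof of this lemma is a one-line citation to \cite[II.3.18(4)--(5) \& II.3.7(4)]{jantzen}, so your self-contained reconstruction is a genuinely different (and more informative) route. Your treatment of the first isomorphism is fine: reduce to the weight $(\ell-1)\rho_I$ by the tensor identity and invoke the standard description of the Steinberg module restricted to the Frobenius kernel. (Two small slips there: the character should be $\bk((\ell-1)(\varsigma_I-\rho_I))$, not $\bk((\ell-1)(\rho_I-\varsigma_I))$, when you solve for $\St_I$; and you should note, as the paper's preceding remark does, that $(\ell-1)\rho_I\in\bX$ because $\ell$ is odd whenever $I\neq\varnothing$.)

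The step that is stated too glibly is the claimed isomorphism $\Ind_{B_{I,1}^+}^{M_{I,1}}\bk(\mu)\cong\Ind_{B_{I,1}}^{M_{I,1}}\bk(w_I(\mu))$ ``for any $\mu$.'' Conjugation by a representative $\dot w_I\in M_I(\bk)$ only yields an isomorphism between $\Ind_{B_{I,1}}^{M_{I,1}}\bk(w_I\mu)$ and the \emph{twist} of $\Ind_{B_{I,1}^+}^{M_{I,1}}\bk(\mu)$ by conjugation with $\dot w_I$; since $\dot w_I$ lies in $M_I(\bk)$ but not in $M_{I,1}$ (nor in $M_{I,1}T$), that twist is not tautologically isomorphic to the untwisted module --- an exact autoequivalence fixing all simples need not fix all objects, so this requires an argument or a citation. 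The cleanest repair is to conjugate the first isomorphism rather than an arbitrary induced module: since $\St_I$ is the restriction of an $M_I$-module, the map $v\mapsto\dot w_I^{-1}v$ shows that its conjugation twist is isomorphic to itself, while the twist of $\Ind_{B_{I,1}}^{M_{I,1}}\bk((\ell-1)\varsigma_I)$ is computed formally to be $\Ind_{B_{I,1}^+}^{M_{I,1}}\bk((\ell-1)w_I(\varsigma_I))=\Ind_{B_{I,1}^+}^{M_{I,1}}\bk((\ell-1)(\varsigma_I-2\rho_I))$, using your (correct) computation of $w_I(\varsigma_I-2\rho_I)$. Finally, your closing claim that $\St_I$ has a \emph{unique} $M_I$-module structure extending its $M_{I,1}$-structure is false as stated: such structures differ by $\Fr^*$ of characters of $\dot M_I$. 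The ambiguity is harmless here because the isomorphisms you construct are $T$-equivariant (they match highest weights), which pins down the character; but the uniqueness argument should be replaced by that observation.
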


\begin{proof}
This follows from~\cite[II.3.18(4)--(5) \& II.3.7(4)]{jantzen}.
\end{proof}

Next, we define a $P_I$-module
\[
\rmZ_I := \For^{G}_{P_I}(\St) \otimes \bk_{P_I}((\ell-1)(\varsigma_I - 2\rho_I + 2\rho - \varsigma)).
\]
(Note that $\langle \varsigma_I - 2\rho_I + 2\rho - \varsigma, \alpha^\vee \rangle = 0$ for any $\alpha \in \Phi_I$, so that $\varsigma_I - 2\rho_I + 2\rho - \varsigma$ defines a character of $M_I$, and hence of $P_I$ via the surjection $P_I \to M_I$.)

\begin{lem}
\label{lem:pi-ind}
We have an isomorphism of $P_{I,1}$-modules $\rmZ_I \cong \Ind_{M_{I,1}}^{P_{I,1}} (\St_I)$.  Moreover, as a $P_{I,1}$-module, $\rmZ_I$ is the injective envelope of $\St_I$.
\end{lem}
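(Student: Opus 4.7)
The plan is to first establish the isomorphism $\rmZ_I \cong \Ind_{M_{I,1}}^{P_{I,1}} \St_I$, and then deduce the injective-envelope claim from standard properties of induction together with a short socle calculation.

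For the first assertion, I would begin from the $I = S$ case of Lemma~\ref{lem:st-sll}, which yields the $G_1$-isomorphism
\[
\For^G_{G_1}(\St) \cong \Ind_{B_1^+}^{G_1}\bigl(\bk_{B_1^+}((\ell-1)(\varsigma - 2\rho))\bigr).
\]
Restricting further to $P_{I,1}$ and pulling the character twist by $\bk((\ell-1)(\varsigma_I - 2\rho_I + 2\rho - \varsigma))$ inside the induction via the tensor identity, the character exponents collapse to $(\ell-1)(\varsigma_I - 2\rho_I)$, giving
\[
\For^G_{P_{I,1}}(\rmZ_I) \cong \For^{G_1}_{P_{I,1}} \Ind_{B_1^+}^{G_1}\bigl(\bk((\ell-1)(\varsigma_I - 2\rho_I))\bigr).
\]
Applying Lemma~\ref{lem:Ind-For} with $H = B_1^+$, $K = G_1$, $H' = B_{I,1}^+$ and $K' = P_{I,1}$ rewrites the right-hand side as $\Ind_{B_{I,1}^+}^{P_{I,1}}\bigl(\bk((\ell-1)(\varsigma_I - 2\rho_I))\bigr)$, and transitivity~\eqref{eqn:transitivity-RInd} combined with the second isomorphism of Lemma~\ref{lem:st-sll} then identifies this with $\Ind_{M_{I,1}}^{P_{I,1}} \St_I$.

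The only delicate point in this chain is hypothesis~\eqref{it:Ind-For-ass-1} of Lemma~\ref{lem:Ind-For}, namely that the multiplication morphism
\[
B_1^+ \times^{B_{I,1}^+} P_{I,1} \to G_1
\]
is an isomorphism of $\bk$-schemes. This is the infinitesimal incarnation of the Lie-algebra decomposition $\fg = \fb^+ + \fp_I$ with $\fb^+ \cap \fp_I = \fb_I^+$; at the level of distribution algebras it is equivalent to the statement that the multiplication map
\[
\Dist(B_1^+) \otimes_{\Dist(B_{I,1}^+)} \Dist(P_{I,1}) \to \Dist(G_1)
\]
is an isomorphism, which in turn follows from restricted PBW decompositions of $\Dist(G_1)$ and $\Dist(B_1^+)$ with respect to the span of root spaces for $-(\Phi^+ \setminus \Phi_I^+)$. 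I expect this to be the only step requiring any real bookkeeping.

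For the second assertion, injectivity of $\Ind_{M_{I,1}}^{P_{I,1}} \St_I$ as a $P_{I,1}$-module is immediate: $\St_I$ is injective over $M_{I,1}$ (as recalled in~\S\ref{ss:Steinberg}), and induction along the closed embedding $M_{I,1} \hookrightarrow P_{I,1}$ is right adjoint to the exact forgetful functor, hence preserves injectives. To identify $\rmZ_I$ as the injective envelope of $\St_I$ it therefore suffices to compute the socle. Since $N_{I,1}$ is unipotent and normal in $P_{I,1}$, it acts trivially on every simple $P_{I,1}$-module, and so every simple $P_{I,1}$-module is the pullback of a simple $M_{I,1}$-module; Frobenius reciprocity then gives, for any simple $P_{I,1}$-module $L$,
\[
\Hom_{P_{I,1}}\bigl(L,\, \Ind_{M_{I,1}}^{P_{I,1}} \St_I\bigr) \cong \Hom_{M_{I,1}}\bigl(\For^{P_{I,1}}_{M_{I,1}} L,\, \St_I\bigr),
\]
which is one-dimensional when $L \cong \St_I$ and zero otherwise by simplicity of $\St_I$. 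Hence the socle of $\rmZ_I$ equals $\St_I$ with multiplicity one; in particular $\rmZ_I$ is indecomposable, and is the injective envelope of $\St_I$.
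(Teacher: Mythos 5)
Your chain for the first isomorphism is essentially the paper's: both pass through Lemma~\ref{lem:st-sll} for $S$ and for $I$, a comparison of $G_1$- and $P_{I,1}$-inductions, and transitivity. You substitute the paper's Lemma~\ref{lem:Ind-For} (with the hypothesis verified via $\fg = \fp_I + \fb^+$, $\fp_I \cap \fb^+ = \fb_I^+$, and infinitesimality of the Frobenius kernels -- which is correct) where the paper simply cites Jantzen's Lemma~II.3.2 for the same identity $\For^{G_1}_{P_{I,1}}\circ\Ind_{B_1^+}^{G_1} \cong \Ind_{B_{I,1}^+}^{P_{I,1}}\circ\For^{B_1^+}_{B_{I,1}^+}$. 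There is, however, a genuine gap in the justification of your first displayed formula: you claim the character twist by $\bk((\ell-1)(\varsigma_I - 2\rho_I + 2\rho - \varsigma))$ is ``pulled inside the induction via the tensor identity.'' The tensor identity for $\Ind_{B_1^+}^{G_1}$ requires the external tensor factor to be a $G_1$-module, but this weight is orthogonal only to $\Phi_I$, so it defines a character of $P_I$ but not of $G$; it carries no $G_1$-structure and cannot be absorbed into a $G_1$-induction. The displayed formula happens to be true, but the valid derivation reverses the order of your two steps: first apply Lemma~\ref{lem:Ind-For} to rewrite $\For^{G_1}_{P_{I,1}}\Ind_{B_1^+}^{G_1}(\bk((\ell-1)(\varsigma - 2\rho)))$ as $\Ind_{B_{I,1}^+}^{P_{I,1}}(\bk((\ell-1)(\varsigma - 2\rho)))$, and only then absorb the character twist, where the tensor identity is now a legitimate application over $P_{I,1}$. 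After that reordering the chain closes as you describe and agrees with the paper's.

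Your argument for the injective-envelope assertion is correct and takes a genuinely different route. You compute the socle directly: since $N_{I,1}$ is unipotent normal in $P_{I,1}$, every simple $P_{I,1}$-module is inflated from $M_{I,1}$, and Frobenius reciprocity gives $\mathrm{soc}(\rmZ_I) \cong \St_I$; an injective module with simple socle is the injective hull of that socle, hence $\rmZ_I$ is the injective envelope of $\St_I$. The paper instead deduces indecomposability of $\rmZ_I$ from the indecomposability of $\St$ as an $N_1$-module, then uses the nonzero adjunction unit $\St_I \to \rmZ_I$ to locate $\St_I$ in the socle. Your route is more self-contained (no appeal to the $N_1$-indecomposability of $\St$), while the paper's leans on a cited fact about $\St$ it uses elsewhere. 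Both are valid.
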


\begin{proof}
By the tensor identity (see~\cite[Proposition~I.3.6]{jantzen}),
the first assertion is equivalent to the claim that
\[
\St \cong \Ind_{M_{I,1}}^{P_{I,1}} \bigl( \St_I \otimes \bk_{M_{I,1}} ((\ell-1)(2\rho_I - \varsigma_I - 2\rho + \varsigma)) \bigr)
\]
as $P_{I,1}$-modules.
By Lemma~\ref{lem:st-sll} (applied to $I$ and then to $S$) and transitivity of induction, we have isomorphisms of $P_{I,1}$-modules
\begin{multline*}
\Ind_{M_{I,1}}^{P_{I,1}} \bigl( \St_I \otimes \bk_{M_{I,1}} ((\ell-1)(2\rho_I - \varsigma_I - 2\rho + \varsigma)) \bigr)
\\
\cong \Ind_{B_{I,1}^+}^{P_{I,1}} \bk_{B_{I,1}^+}((\ell-1)(-2\rho+\varsigma))
\cong \Ind_{B_1^+}^{G_1} \bk_{B_1^+}((\ell-1)(-2\rho+\varsigma)) \cong \St
\end{multline*}
(where the second isomorphism can be deduced from~\cite[Lemma~II.3.2]{jantzen}).
Since induction takes injective modules to injective modules, $\rmZ_I$ is an injective $P_{I,1}$-module.  It is indecomposable because $\St$ is an indecomposable $N_{1}$-module (see e.g.~\cite[II.3.18(1)]{jantzen}), so the adjunction morphism $\St_I \to \Ind_{M_{I,1}}^{P_{I,1}} \St_I = \rmZ_I$ shows that it must be the injective envelope of $\St_I$.
\end{proof}

\begin{rmk}
\label{rmk:pi-ind-proj}
The $P_{I,1}$-module $\rmZ_I$ is also projective; see~\cite[\S I.8.10]{jantzen}. Using~\cite[Lemma~II.9.3]{jantzen}, we deduce that it is even projective as a $P_{I,1} T$-module.
\end{rmk}

\begin{cor}
\label{cor:Ind-Pi}
Consider the projection $P_{I,1} \to M_{I,1}$, and the associated functor $R\Ind_{P_{I,1}}^{M_{I,1}}$. Then we have $R\Ind_{P_{I,1}}^{M_{I,1}}(\rmZ_I) \cong \St_I$.
\end{cor}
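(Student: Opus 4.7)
The plan is to combine Lemma~\ref{lem:pi-ind} with a purely formal adjunction argument. Write $\iota \colon M_{I,1} \hookrightarrow P_{I,1}$ for the inclusion and $\pi \colon P_{I,1} \to M_{I,1}$ for the projection, so that $\pi \circ \iota = \id_{M_{I,1}}$; the functor $R\Ind_{P_{I,1}}^{M_{I,1}}$ in the statement is the right derived functor of induction along $\pi$, while the isomorphism $\rmZ_I \cong \Ind_{M_{I,1}}^{P_{I,1}}(\St_I)$ of Lemma~\ref{lem:pi-ind} is induction along $\iota$.

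My first step will be to kill the higher cohomology. By the second assertion of Lemma~\ref{lem:pi-ind}, $\rmZ_I$ is the injective envelope of $\St_I$ in $\Rep(P_{I,1})$, hence in particular is injective as a $P_{I,1}$-module. Therefore the trivial resolution $\rmZ_I \to \rmZ_I$ computes the right derived functor, so that
\[
R\Ind_{P_{I,1}}^{M_{I,1}}(\rmZ_I) \cong \Ind_{P_{I,1}}^{M_{I,1}}(\rmZ_I)
\]
concentrated in cohomological degree $0$. This reduces the corollary to an identification of the underived induction.

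The second step will be to identify this module with $\St_I$ by Yoneda's lemma. For any $V \in \Rep(M_{I,1})$, chaining the adjunctions for $\pi$ and $\iota$ and invoking the first assertion of Lemma~\ref{lem:pi-ind} yields natural isomorphisms
\[
\Hom_{M_{I,1}}\bigl(V,\Ind_{P_{I,1}}^{M_{I,1}}(\rmZ_I)\bigr) \cong \Hom_{P_{I,1}}\bigl(\pi^* V,\Ind_{M_{I,1}}^{P_{I,1}}(\St_I)\bigr) \cong \Hom_{M_{I,1}}(\iota^*\pi^* V,\St_I).
\]
Since $\iota^* \pi^* = (\pi \circ \iota)^* = \id$, this is naturally isomorphic to $\Hom_{M_{I,1}}(V,\St_I)$, and Yoneda produces the required isomorphism. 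Given Lemma~\ref{lem:pi-ind}, no step of this argument is substantive; the only point that needs even a moment's thought is the legitimacy of using injectivity in $\Rep(P_{I,1})$ to compute $R\Ind_{P_{I,1}}^{M_{I,1}}$, which is immediate from the general setup recalled in~\S\ref{ss:induction}.
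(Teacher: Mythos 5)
Your proof is correct. It rests on the same underlying idea as the paper's: the composite $M_{I,1}\hookrightarrow P_{I,1}\twoheadrightarrow M_{I,1}$ is the identity, so inducing up and then back down returns $\St_I$, with Lemma~\ref{lem:pi-ind} supplying the identification $\rmZ_I \cong \Ind_{M_{I,1}}^{P_{I,1}}(\St_I)$. The difference lies in how the derived functor is tamed. The paper first upgrades Lemma~\ref{lem:pi-ind} to the derived statement $\rmZ_I \cong R\Ind_{M_{I,1}}^{P_{I,1}}(\St_I)$, citing the exactness of induction along the closed embedding (Jantzen, Corollary~I.5.13(b)), and then applies the derived transitivity isomorphism~\eqref{eqn:transitivity-RInd} to the identity composite. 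You instead work on the other side: you use the \emph{second} assertion of Lemma~\ref{lem:pi-ind} (injectivity of $\rmZ_I$ over $P_{I,1}$, which the paper's proof never invokes) to collapse $R\Ind_{P_{I,1}}^{M_{I,1}}(\rmZ_I)$ to the underived induction, and then rerun the underived transitivity by chaining the two adjunctions and applying Yoneda — which is exactly how~\eqref{eqn:transitivity-RInd} is established in~\S\ref{ss:induction} anyway. The trade is even: you avoid Jantzen's exactness result at the price of using the injective-envelope statement. Both arguments are complete and formal; there is no gap in yours.
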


\begin{proof}
Lemma~\ref{lem:pi-ind} and~\cite[Corollary~I.5.13(b)]{jantzen} imply that $\rmZ_I \cong R\Ind_{M_{I,1}}^{P_{I,1}}(\St_I)$. Using~\eqref{eqn:transitivity-RInd}, it follows that we have
\[
R\Ind_{P_{I,1}}^{M_{I,1}}(\rmZ_I) \cong R\Ind_{P_{I,1}}^{M_{I,1}} \circ R\Ind_{M_{I,1}}^{P_{I,1}} (\St_I) \cong \St_I
\]
since the composition $M_{I,1} \to P_{I,1} \to M_{I,1}$ is the identity morphism.
\end{proof}

\begin{cor}
\label{cor:StI-PiI}
There exists a nonzero morphism of $P_I$-modules $\St_I \to \rmZ_I$.
\end{cor}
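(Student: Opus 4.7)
The plan is to exhibit an explicit nonzero vector $w \in \rmZ_I$ whose $P_I$-span is isomorphic to $\St_I$. Since $\St_I = L_{P_I}((\ell-1)\varsigma_I)$ is simple as a $P_I$-module, it suffices to produce a nonzero vector $w \in \rmZ_I$ of $T$-weight $(\ell-1)\varsigma_I$ that is annihilated by $N^+ \cap M_I$ (so that its $M_I$-span is the highest-weight $M_I$-module of Steinberg weight, necessarily isomorphic to $\St_I$) and fixed by $N_I$ (so that the $P_I$-span coincides with the $M_I$-span, with trivial $N_I$-action).

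I would construct $w$ starting from a nonzero $B^+$-lowest weight vector $v_- \in \St$; this has $T$-weight $w_0(\ell-1)\varsigma$ and is annihilated by the unipotent radical $N$ of $B$. In the paper's conventions the roots of $N_I$ form a subset of the roots $\Phi^-$ of $N$, so $N_I \subset N$ and $v_-$ is $N_I$-fixed. Because $M_I$ normalizes $N_I$, a short calculation shows the entire $M_I$-orbit $M_I \cdot v_- \subset \St$ is pointwise fixed by $N_I$: for $m \in M_I$ and $n \in N_I$, $n(mv_-) = m(m^{-1}nm)v_- = mv_-$, since $m^{-1}nm \in N_I$ stabilizes $v_-$. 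Let $v^{\mathrm{hi}} \in M_I \cdot v_-$ be a nonzero $(B^+ \cap M_I)$-highest weight vector; then $v^{\mathrm{hi}}$ is annihilated by $N^+ \cap M_I$, fixed by $N_I$, and has $T$-weight $(\ell-1) w_I w_0 \varsigma$, where $w_I \in W_I$ denotes the longest element.

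The crucial computation is the weight identity
\[
w_I w_0 \varsigma \;=\; \varsigma - 2(\rho - \rho_I),
\]
which follows from the standard formulas $w_0 \rho = -\rho$ and $w_I \rho = \rho - 2\rho_I$, together with the $W$-invariance of $\varsigma - \rho$ (valid because the derived subgroup of $G$ is simply connected, so $\varsigma - \rho$ lifts to a character of $G$). Granted this, setting $w := v^{\mathrm{hi}} \otimes 1 \in \rmZ_I = \St \otimes \bk_{P_I}(\nu)$ with $\nu = (\ell-1)(\varsigma_I - 2\rho_I + 2\rho - \varsigma)$, a direct telescoping shows the $T$-weight of $w$ is exactly $(\ell-1)\varsigma_I$, and the invariance of $w$ under $N^+ \cap M_I$ and $N_I$ is inherited from $v^{\mathrm{hi}}$ (as these unipotent subgroups act trivially on the one-dimensional character $\bk_{P_I}(\nu)$).

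The main subtle point of the argument is the choice of vector. The obvious candidate---the $B^+$-highest weight vector of $\St$---yields a $P_I$-submodule of $\rmZ_I$ isomorphic to $\St_I \otimes \bk_{P_I}(2(\ell-1)(\rho - \rho_I))$, which coincides with $\St_I$ only when $I = S$. The correct choice instead uses the $M_I$-highest weight vector of the $M_I$-orbit generated by the $B^+$-\emph{lowest} weight vector of $\St$; the twist $\nu$ appearing in the definition of $\rmZ_I$ was engineered precisely so that this alternative vector lands at $T$-weight $(\ell-1)\varsigma_I$ in $\rmZ_I$, producing the desired embedding $\St_I \hookrightarrow \rmZ_I$.
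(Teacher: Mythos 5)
Your proof is correct, but it follows a genuinely different route from the paper's. The paper argues structurally: by Lemma~\ref{lem:pi-ind}, $\rmZ_I$ is the injective envelope of $\St_I$ over $P_{I,1}$, so $\Hom_{P_{I,1}}(\St_I,\rmZ_I)$ is one-dimensional; since $P_{I,1}$ is normal in $P_I$, the group $P_I$ acts on this line by a character $\chi$, and a $T$-equivariant version of the adjunction isomorphism $\rmZ_I \cong \Ind_{M_{I,1}T}^{P_{I,1}T}(\St_I)$ shows $\chi|_T$ is trivial, hence $\chi$ is trivial. You instead construct the embedding explicitly by locating a $(B^+\cap M_I)$-highest weight vector of weight $(\ell-1)\varsigma_I$ inside $\rmZ_I$ that is $N_I$-fixed, and invoking the simplicity of $\St_I$ as a $P_I$-module. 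Your route is more elementary and self-contained (it bypasses Lemma~\ref{lem:pi-ind} and the injective envelope machinery), at the cost of a more delicate weight computation; the paper's route is shorter given the lemmas already in hand, and generalizes more mechanically to the $T$-equivariant setting the authors need later.

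One step you should spell out: you assert without justification that the $M_I$-span of $M_I\cdot v_-$ contains a highest weight vector of weight precisely $(\ell-1)\,w_I w_0\varsigma$. This requires knowing that the highest weight of the $M_I$-submodule $V$ generated by the lowest-weight vector $v_-$ is exactly $w_I$ applied to the lowest weight. The argument is: since $v_-$ is annihilated by $N^-\cap M_I$, every element of $V$ is obtained from $v_-$ by applying raising operators, so all weights of $V$ lie in $\mu_- + \Z_{\geq 0}\Phi_I^+$ where $\mu_-=(\ell-1)w_0\varsigma$; the weight set of $V$ is $W_I$-stable with multiplicities, so every weight $\eta$ of $V$ satisfies both $\eta \geq_I \mu_-$ and (applying $w_I$) $\eta \leq_I w_I\mu_-$; and the $w_I\mu_-$-weight space of $V$ is one-dimensional (again by $W_I$-invariance and the one-dimensionality of the $\mu_-$-weight space of $\St$). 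Since a vector in the unique maximal weight space of a finite-dimensional module is automatically annihilated by the upper unipotent radical, this closes the gap. Also note that in the sentence \emph{``Let $v^{\mathrm{hi}} \in M_I\cdot v_-$''} you mean a vector in the linear span of the orbit, not in the orbit itself; the pointwise $N_I$-invariance of the orbit extends to its span by linearity, so the rest of the argument goes through.
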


\begin{proof}
Consider the vector space $\Hom_{P_{I,1}}(\St_I, \rmZ_I)$. Since $P_{I,1}$ is normal in $P_I$, and since both $\St_I$ and $\rmZ_I$ admit $P_I$-module structures, this space admits a natural $P_I$-action (by conjugation). By Lemma~\ref{lem:pi-ind}, this module has dimension $1$, so that $P_I$ necessarily acts via a character $\chi : P_I \to \Gm$. Now the same arguments as for Lemma~\ref{lem:pi-ind} (see in particular~\cite[II.10.1(4) \& \S 9.1--2]{jantzen}) show that $\rmZ_I \cong \Ind_{M_{I,1} T}^{P_{I,1} T}(\St_I)$, so that adjunction provides a nonzero morphism of $P_{I,1} T$-modules $\St_I \to \rmZ_I$. This shows that $\chi$ is trivial on $T$, and hence that it is the trivial character.
\end{proof}

\begin{lem}
\label{lem:Ind-Steinberg}
Let $I \subset I' \subset S$. Then $\St_{I'}$ is a direct summand in $\Ind_{M_{I,1}T}^{M_{I',1} T}(\St_I \otimes \bk_{M_{I,1} T}((\ell-1)(\varsigma_{I'}-\varsigma_I)))$ with multiplicity $1$. Moreover we have
\begin{multline*}
\dim_{\bk}(\Hom_{M_{I',1}T}(\St_{I'}, \Ind_{M_{I,1}T}^{M_{I',1} T}(\St_I \otimes \bk((\ell-1)(\varsigma_{I'}-\varsigma_I))))) = \\
\dim_{\bk}(\Hom_{M_{I',1}T}(\Ind_{M_{I,1}T}^{M_{I',1} T}(\St_I \otimes \bk((\ell-1)(\varsigma_{I'}-\varsigma_I))), \St_{I'}))=1.
\end{multline*}
In particular, any composition
\[
\St_{I'} \to \Ind_{M_{I,1}T}^{M_{I',1} T}(\St_I \otimes \bk((\ell-1)(\varsigma_{I'}-\varsigma_I))) \to \St_{I'}
\]
where both arrows are $M_{I',1}T$-equivariant and nonzero is itself nonzero.
\end{lem}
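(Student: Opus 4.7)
The plan is to show first that $\St_{I'}$ is a direct summand of $V := \Ind_{M_{I,1}T}^{M_{I',1}T}(\St_I \otimes \bk_{M_{I,1}T}((\ell-1)(\varsigma_{I'}-\varsigma_I)))$ with multiplicity exactly one; all the remaining claims will follow formally from the fact that $\St_{I'}$ is simple, projective and injective as $M_{I',1}T$-module.

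I would begin with an elementary reformulation of the right-hand coefficient: since $I \subset I'$ we have $\langle \varsigma_{I'}-\varsigma_I,\alpha^\vee\rangle=0$ for $\alpha\in\Sigma_I$, so $(\ell-1)(\varsigma_{I'}-\varsigma_I)$ extends to a character of $M_I$ and tensoring with $\bk((\ell-1)(\varsigma_{I'}-\varsigma_I))$ is an autoequivalence of $\Rep(M_{I,1}T)$. Combining Lemma~\ref{lem:st-sll} with the tensor identity yields
\[
\St_I \otimes \bk_{M_{I,1}T}((\ell-1)(\varsigma_{I'}-\varsigma_I)) \;\cong\; \Ind_{B_{I,1}T}^{M_{I,1}T}\bk_{B_{I,1}T}((\ell-1)\varsigma_{I'}),
\]
a module I will call $L$; it is simple, projective and injective in $\Rep(M_{I,1}T)$ because $\St_I$ is. By Frobenius reciprocity ($\For \dashv \Ind$),
\[
\Hom_{M_{I',1}T}(\St_{I'}, V) \;\cong\; \Hom_{M_{I,1}T}(\St_{I'}|_{M_{I,1}T}, L),
\]
and since $L$ is its own projective cover and injective hull in $\Rep(M_{I,1}T)$, the right-hand dimension equals the direct summand multiplicity of $L$ in $\St_{I'}|_{M_{I,1}T}$, which in turn equals $\dim\Hom_{M_{I,1}T}(L, \St_{I'})$.

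To compute this last dimension, I note that $\St_{I'} = \Ind_{B_{I'}}^{M_{I'}}((\ell-1)\varsigma_{I'})$ has $(\ell-1)\varsigma_{I'}$ as its highest $T$-weight with multiplicity one, and every vector in this weight line is killed by $N_{I'}$, hence a fortiori by $N_I \subset N_{I'}$. The highest-weight vector of $L$ is, up to scalar, the unique $T$-eigenvector of weight $(\ell-1)\varsigma_{I'}$ in $L$ and is $N_{I,1}$-invariant, so any nonzero $L\to\St_{I'}$ is determined by sending it into the (one-dimensional) $(\ell-1)\varsigma_{I'}$-weight line of $\St_{I'}$; conversely, any nonzero vector $w$ in that line generates an $M_{I,1}T$-submodule which is a nonzero quotient of $L$, hence isomorphic to $L$ by simplicity. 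Therefore $\dim\Hom_{M_{I,1}T}(L, \St_{I'})=1$, and so $\dim\Hom_{M_{I',1}T}(\St_{I'}, V)=1$.

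Finally, because $\St_{I'}$ is simple, projective, and injective in $\Rep(M_{I',1}T)$, any embedding $\St_{I'}\hookrightarrow V$ and any surjection $V\twoheadrightarrow \St_{I'}$ split, so the above calculation forces $\St_{I'}$ to occur as a direct summand of $V$ with multiplicity exactly one. The same principle (the multiplicity as subquotient of such a summand coincides with its direct-summand multiplicity) gives $\dim\Hom_{M_{I',1}T}(V, \St_{I'})=1$ as well. Writing $V\cong\St_{I'}\oplus V'$ with $\St_{I'}$ not a subquotient of $V'$, any nonzero map $\St_{I'}\to V$ is a nonzero multiple of the canonical inclusion and any nonzero map $V\to\St_{I'}$ is a nonzero multiple of the canonical projection, so their composition is a nonzero scalar multiple of $\id_{\St_{I'}}$. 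The only delicate point is the combined use of the projective, injective and simple properties on both sides to translate a single Hom-computation into a statement about direct-summand multiplicities; everything else is a routine application of Frobenius reciprocity and a weight-space count.
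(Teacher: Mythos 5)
Your proof is correct and follows essentially the same strategy as the paper's: both reduce via Frobenius reciprocity to a $\Hom$-group computation in $\Rep(M_{I,1}T)$, exploit that the (twisted) Steinberg module there is simple, projective, and injective to convert a $\Hom$-dimension into a direct-summand/composition-factor multiplicity, and then bound that multiplicity by $1$ via the fact that the weight $(\ell-1)\varsigma_{I'}$ occurs with multiplicity one. Two small points are worth noting. First, your parenthetical ``$N_I \subset N_{I'}$'' is backwards with the paper's notation (for unipotent radicals of parabolics one has $N_{I'} \subset N_I$); what you actually need is $B^+ \cap M_I \subset B^+ \cap M_{I'}$, and it is the $T$-maximality of $(\ell-1)\varsigma_{I'}$ in $\St_{I'}$ that makes the weight-line vector a highest-weight vector for $M_I$. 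Second, your ``converse'' step asserting that $w$ generates a nonzero quotient of $L$ implicitly invokes the universal property of the baby Verma module for $M_{I,1}T$ together with the identification of that baby Verma with the (simple) Steinberg module $L$; the paper instead produces the nonzero morphism more directly, via the identity $\St_{I'} \cong \Ind_{P_I}^{P_{I'}}(\St_I \otimes \bk(\nu))$ and the counit of adjunction. Either route works, but making the universal-property input explicit would tighten your write-up.
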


\begin{proof}
Set $\nu=(\ell-1)(\varsigma_{I'} - \varsigma_I)$. By adjunction, we have
\[
\Hom_{M_{I',1} T}(\St_{I'}, \Ind_{M_{I,1}T}^{M_{I',1} T}(\St_I \otimes \bk(\nu))) \cong \Hom_{M_{I,1} T}(\St_{I'} \otimes \bk(-\nu), \St_I).
\]
Since $\St_I$ is both injective and simple as an $M_{I,1} T$-module, it is its own injective envelope, and the dimension of the vector space considered above is the multiplicity of $\St_I$ as a composition factor of $\St_{I'} \otimes \bk(-\nu)$. Now the highest weights of $\St_I$ and $\St_{I'} \otimes \bk(-\nu)$ are both equal to $(\ell-1)\varsigma_I$, and the corresponding weight spaces have dimension $1$. So, the multiplicity under consideration is at most $1$. On the other hand we have $\St_{I'} = \Ind_{P_I}^{P_{I'}}(\St_I \otimes \bk(\nu))$, so adjunction provides a nonzero morphism of $P_I$-modules (hence of $M_{I,1}T$-modules) $\St_{I'} \to \St_I \otimes \bk(\nu)$, and hence the multiplicity is at least $1$.

We have thus proved that
\begin{equation}
\label{eqn:dim-Hom-1}
\dim_{\bk}( \Hom_{M_{I',1}T}(\St_{I'}, \Ind_{M_{I,1}T}^{M_{I',1} T}(\St_I \otimes \bk(\nu))) ) = 1.
\end{equation}
Any nonzero $M_{I',1} T$-equivariant morphism $\St_{I'} \to \Ind_{M_{I,1}T}^{M_{I',1} T}(\St_I \otimes \bk(\nu))$ must be injective since $\St_{I'}$ is simple. And since both $M_{I',1}T$-modules are injective, such a morphism must be the embedding of a direct summand. This proves that $\St_{I'}$ is a direct summand in $\Ind_{M_{I,1}T}^{M_{I',1} T}(\St_I \otimes \bk(\nu))$ with multplicity $1$.

It remains to compute
\begin{equation}
\label{eqn:dim-Hom-2}
\dim_{\bk}(\Hom_{M_{I',1}T}(\Ind_{M_{I,1}T}^{M_{I',1} T}(\St_I \otimes \bk(\nu)), \St_{I'})).
\end{equation}
By the same arguments as above, this dimension is the multiplicity of $\St_{I'}$ as a composition factor of $\Ind_{M_{I,1}T}^{M_{I',1} T}(\St_I \otimes \bk(\nu))$. Now since $\St_{I'}$ is also its own projective cover,~\eqref{eqn:dim-Hom-1} shows that this multiplicity is $1$, and~\eqref{eqn:dim-Hom-2} is proved.

The final assertion is an easy consequence of the previous statements.
\end{proof}

\subsection{The case of semisimple rank $1$}
\label{ss:ssrk1}

We conclude this subsection with some results in the special case where $I$ consists of a single simple reflection $s$. Recall that $\St_s$ has weights
\[
(\ell-1)\varsigma_s, (\ell-1)\varsigma_s - \alpha_s, (\ell-1)\varsigma_s - 2\alpha_s, \cdots, (\ell-1)\varsigma_s - (\ell-1)\alpha_s.
\]

For any $\lambda \in \bX$ with $\langle \lambda, \alpha_s^\vee \rangle \geq 0$, we set
\[
\coweyl_s(\lambda):=\Ind_B^{P_s}(\lambda), \qquad \weyl_s(\lambda):=(\Ind_B^{P_s}(-s \lambda))^*.
\]
Both of these modules factor through $M_s$-modules; as such they are isomorphic to the costandard and standard $M_s$-module of highest weight $\lambda$ respectively.

There exists, up to scalar, a unique nonzero morphism $\weyl_s(\lambda) \to \coweyl_s(\lambda)$; its image is the simple $M_s$-module with highest weight $\lambda$, which we denote by $\irr_s(\lambda)$. Finally, we denote by $\tilt_s(\lambda)$ the indecomposable tilting $M_s$-module of highest weight $\lambda$. The $M_s$-modules $\irr_s(\lambda)$ and $\tilt_s(\lambda)$ will sometimes be considered as $P_s$-modules via the projection $P_s \to M_s$.

\begin{lem}
\label{lem:steinberg-diff}
There exists an exact sequence of $B$-modules
\begin{equation}\label{eqn:steinberg-diff}
0 \to \bk_B(\ell\varsigma_s-\ell\alpha_s) \to \St_s \otimes \bk_B(\varsigma_s-\alpha_s) \xrightarrow{f} \St_s \otimes \bk_B(\varsigma_s) \to \bk_B(\ell\varsigma_s) \to 0
\end{equation}
which corresponds to a nonzero element of $\Ext^2_B(\bk_B(\ell\varsigma_s), \bk_B(\ell\varsigma_s-\ell\alpha_s))$.
\end{lem}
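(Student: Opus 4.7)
My plan is to reduce to the semisimple-rank-one situation (essentially $\mathrm{SL}_2$), construct the sequence explicitly, then separately check non-vanishing of its Yoneda class. Start from the $B$-module short exact sequence
\[
0 \to \bk_B(\varsigma_s - \alpha_s) \to V_s \to \bk_B(\varsigma_s) \to 0,
\]
where $V_s = \weyl_s(\varsigma_s)=\coweyl_s(\varsigma_s)$ is the two-dimensional standard $M_s$-module (with $\bk_B(\varsigma_s - \alpha_s)$ the lowest-weight $B$-submodule), and tensor with $\St_s$ to obtain a $B$-SES
\[
0 \to \St_s \otimes \bk_B(\varsigma_s - \alpha_s) \to \St_s \otimes V_s \to \St_s \otimes \bk_B(\varsigma_s) \to 0.
\]
Its middle term is the indecomposable tilting $\tilt_s(\ell\varsigma_s)$ (tensor products of tilting $M_s$-modules are tilting, and the character matches $\chi(\ell\varsigma_s)+\chi((\ell-2)\varsigma_s)$). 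The $M_s$-Loewy series of $\tilt_s(\ell\varsigma_s)$ is $[\irr_s((\ell-2)\varsigma_s)\mid\Fr^*V_s\mid\irr_s((\ell-2)\varsigma_s)]$, whose middle layer $\Fr^*V_s$ sits in the $B$-extension $0\to\bk_B(\ell\varsigma_s-\ell\alpha_s)\to\Fr^*V_s\to\bk_B(\ell\varsigma_s)\to 0$. This reveals $\bk_B(\ell\varsigma_s-\ell\alpha_s)$ as the unique lowest-weight line of $\St_s\otimes\bk_B(\varsigma_s-\alpha_s)$ and $\bk_B(\ell\varsigma_s)$ as the unique highest-weight quotient of $\St_s\otimes\bk_B(\varsigma_s)$.

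Next I will define the map $f$ explicitly and verify exactness. The quotient $(\St_s\otimes\bk_B(\varsigma_s-\alpha_s))/\bk_B(\ell\varsigma_s-\ell\alpha_s)$ and the kernel of $\St_s\otimes\bk_B(\varsigma_s)\twoheadrightarrow\bk_B(\ell\varsigma_s)$ are $(\ell-1)$-dimensional $B$-modules with identical $T$-weight decompositions, both cyclic under $\fn$. Using $\St_s\cong\Sym^{\ell-1}V_s$ and the divided-power action of a generator of $\fn$, one finds explicit bases in which both look like $f\cdot e_i=(\ell-1-i)e_{i+1}$, and checking $B$-equivariance forces a unique (up to scalar) isomorphism between them. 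Defining $f$ to be the composite of projection, this isomorphism, and inclusion gives a $B$-equivariant map whose kernel and cokernel are the prescribed 1-dimensional modules; exactness of the 4-term sequence is then immediate.

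The main obstacle is the last step: showing the associated class in $\Ext^2_B(\bk_B(\ell\varsigma_s),\bk_B(\ell\varsigma_s-\ell\alpha_s))\cong H^2(B,\bk_B(-\ell\alpha_s))$ is nonzero. I plan to exploit the injectivity properties of $\St_s$: since $\St_s$ is $B_1T$-injective, so is $\St_s\otimes\bk_B(\varsigma_s-\alpha_s)$, and the Hochschild--Serre spectral sequence for $B_1\lhd B$ together with the computation $(\St_s\otimes\bk_B(\varsigma_s-\alpha_s-\ell\varsigma_s))^{B_1}\cong\bk_{\dot B}(-\alpha_s)$ gives
\[
\Ext^p_B\bigl(\bk_B(\ell\varsigma_s),\,\St_s\otimes\bk_B(\varsigma_s-\alpha_s)\bigr)\cong H^p(\dot B,\bk_{\dot B}(-\alpha_s)).
\]
Splitting the 4-term sequence as $\sigma_1:0\to\bk_B(\ell\varsigma_s-\ell\alpha_s)\to\St_s\otimes\bk_B(\varsigma_s-\alpha_s)\to X\to 0$ and $\sigma_2:0\to X\to\St_s\otimes\bk_B(\varsigma_s)\to\bk_B(\ell\varsigma_s)\to 0$ with $X=\operatorname{im}(f)$, the class $[\cE]$ equals $\delta[\sigma_2]$ for the connecting map $\delta:\Ext^1_B(\bk_B(\ell\varsigma_s),X)\to\Ext^2_B(\bk_B(\ell\varsigma_s),\bk_B(\ell\varsigma_s-\ell\alpha_s))$ coming from $\sigma_1$. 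Combining the above identification with the Lyndon--Hochschild--Serre sequence for $N\lhd B$ (showing the target $\Ext^2$ is $1$-dimensional, generated by a Bockstein-type class in the weight-$\ell\alpha_s$ part of $H^2(N,\bk)$), I identify $\delta[\sigma_2]$ with that generator. The non-vanishing ultimately reduces to the familiar fact that $\Fr^*V_s$ is a non-split $B$-extension (as it is the Frobenius pullback of the non-split $\dot B$-module $V_s$), which prevents any compatible splitting of the two halves of $\cE$.
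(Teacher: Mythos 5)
Your construction of the four-term sequence is a valid and conceptually appealing repackaging of the paper's explicit computation: identifying $\St_s \otimes V_s$ with the tilting module $\tilt_s(\ell\varsigma_s)$ and reading off the relevant weight lines from its Loewy structure is essentially equivalent to the paper's use of the Jantzen basis (the paper also quotes the explicit formulas from \cite[Proposition~II.5.2]{jantzen} to define $f$ and then separately to build the two auxiliary short exact sequences). For this part your route and the paper's route are close.

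Where you genuinely diverge is the non-vanishing of the $\Ext^2$ class, and here there is a real gap. The claim that $\St_s$ (and hence $\St_s \otimes \bk_B(\varsigma_s-\alpha_s)$) is $B_1T$-injective is false as soon as $G$ has rank at least two. The module $\St_s$ is injective over $M_{s,1}T$ (this is in \S\ref{ss:Steinberg}), but not over $P_{s,1}$ or $B_1$: indeed $N_{s,1}$ acts trivially on $\St_s$, so as a $B_1$-module it is inflated along the quotient $B_1 \to B_1/N_{s,1}$, and inflated modules are essentially never injective when the kernel is nontrivial (already $\Ext^1_{B_1}(\bk,\St_s \otimes -)$ picks up contributions from $H^1(N_{s,1},\bk) \cong \dot\fn_s^*$ through the $N_{s,1} \lhd B_1$ spectral sequence). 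This breaks the collapse of your first Hochschild--Serre sequence and hence the identification $\Ext^p_B(\bk_B(\ell\varsigma_s), \St_s\otimes\bk_B(\varsigma_s-\alpha_s)) \cong H^p(\dot B,\bk_{\dot B}(-\alpha_s))$ on which the rest of the argument rests. (That identification happens to hold in the $\mathrm{SL}_2$ case, but no reduction from $\Ext^\bullet_B$ to the rank-one Borel is provided, and such a reduction is not automatic since $H^\bullet(B,-)$ sees all of $N$, not just the $\alpha_s$-root group.) Moreover, even granting the cohomological computations, the final step — that the non-splitness of $\Fr^* V_s$ ``prevents any compatible splitting of the two halves of $\cE$'' — is not a proof: a non-split middle subquotient does not by itself force the Yoneda class of the spliced sequence to be nonzero, and you would still need to show that $[\sigma_2]$ is not in the kernel of the connecting map $\delta$.

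The paper instead gives a completely module-theoretic argument: it produces two non-split short exact sequences with middle terms $\coweyl_s(\ell\varsigma_s)$ and $\weyl_s(\ell\varsigma_s)$ (non-splitness coming from indecomposability of these modules over $B$, which in turn comes from full faithfulness of $\For_B^{P_s}$), computes $\Ext^1_B(\St_s\otimes\bk_B(\varsigma_s), \bk_B(\ell\varsigma_s-\ell\alpha_s)) \cong \bk$ via Kempf's theorem, and then argues by contradiction: if the $\Ext^2$ class were zero, the witnessing module $V$ would have to be isomorphic both to $\weyl_s(\ell\varsigma_s)$ and to $\coweyl_s(\ell\varsigma_s)$, which is impossible since these have distinct socles over $P_s$. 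This avoids any spectral-sequence manipulations and any rank reductions. If you want to pursue the cohomological route, you would need to either correct the injectivity claim (replacing $B_1$ by a smaller group such as $B_{s,1}$ and then handling the contribution of $N_{s,1}$ explicitly), or prove a genuine reduction to rank one, and then supply a concrete identification of $\delta[\sigma_2]$ with the Bockstein generator rather than invoking non-splitness of $\Fr^* V_s$.
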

\begin{proof}
In~\cite[Proposition~II.5.2]{jantzen}, a certain basis $\{ v_0, v_1, \cdots, v_{\ell-1}\}$ of $\St_s$ is considered, where each $v_i$ is a weight vector of weight $(\ell - 1)\varsigma_s - i\alpha_s$.  Consider the linear map $f: \St_s \otimes \bk(\varsigma_s-\alpha_s) \to \St_s \otimes \bk(\varsigma_s)$ given by
\[
f(v_i \otimes 1) = 
\begin{cases}
\frac{\ell -1 - i}{\ell-1} v_{i+1} \otimes 1& \text{if $0 \le i < \ell -1$,} \\
0 & \text{if $i = \ell - 1$.}
\end{cases}
\]
According to the formulas in~\cite[Proposition~II.5.2]{jantzen}, $f$ is $B$-equivariant.  Its kernel is the span of $v_{\ell-1} \otimes 1$, which is isomorphic to $\bk_B(\ell\varsigma_s - \ell\alpha_s)$, and its cokernel is spanned by the image of $v_0 \otimes 1$; it is isomorphic to $\bk_B(\ell\varsigma_s)$.  Hence we have constructed the four-term exact sequence~\eqref{eqn:steinberg-diff}.

Before addressing the claim about $\Ext^2$, let us construct some short exact sequences.
Consider the module $\coweyl_s(\ell\varsigma_s)$, and let $\{u_0, u_1, \cdots, u_\ell\}$ be the basis for this module described in~\cite[Proposition~II.5.2]{jantzen}.  Let $g: \St_s \otimes \bk_B(\varsigma_s - \alpha_s) \to \coweyl_s(\ell\varsigma_s)$ be the map given by
\[
g(v_i \otimes 1) = u_{i+1} \quad \text{for } i \in \{0, \cdots, \ell-1\}.
\]
As in the preceding paragraph, one can check using~\cite[Proposition~II.5.2]{jantzen} that $g$ is $B$-equivariant. This map is clearly injective, so that we obtain a short exact sequence of $B$-modules
\begin{equation}\label{eqn:steinberg-diff-ext1}
0 \to \St_s \otimes \bk_B(\varsigma_s - \alpha_s) \xrightarrow{g} \coweyl_s(\ell\varsigma_s) \to \bk_B(\ell\varsigma_s) \to 0.
\end{equation}

We claim that~\eqref{eqn:steinberg-diff-ext1} is not split. Indeed, to prove this it suffices to prove that $\coweyl_s(\ell\varsigma_s)$ is indecomposable as a $B$-module. However it is clearly indecomposable as a $P_s$-module, and the functor $\For_B^{P_s}$ is fully faithful (since its right adjoint $\Ind_B^{P_s}$ satisfies $\Ind_B^{P_s} \circ \For_B^{P_s} \cong \id$). Hence $\coweyl_s(\ell\varsigma_s)$ is indeed indecomposable over $B$, which proves our claim.

Next, taking the dual of~\eqref{eqn:steinberg-diff-ext1}, and then tensoring with $\bk_B(2\ell\varsigma_s - \ell\alpha_s)$ and observing that
\[
\St_s^* \cong \St_s \otimes \bk_{P_s}((\ell-1)(\alpha_s - 2\varsigma_s))
\]
and
\[
\weyl_s(\ell\varsigma_s)=\coweyl_s(\ell\alpha_s - \ell\varsigma_s)^* \cong \coweyl_s(\ell \varsigma_s)^* \otimes \bk_{P_s}(2\ell\varsigma_s - \ell\alpha_s),
\]
we obtain a short exact sequence
\begin{equation}\label{eqn:steinberg-diff-ext2}
0 \to \bk_B(\ell\varsigma_s - \ell\alpha_s) \to \weyl_s(\ell\varsigma_s) \to \St_s \otimes \bk_B(\varsigma_s) \to 0.
\end{equation}
Since~\eqref{eqn:steinberg-diff-ext1} is not split, this short exact sequence is not split either.

By~\cite[Proposition~II.5.2 \& Corollary~II.5.3]{jantzen}, we have
\[
R\Ind_B^{P_s} \bk_B((\ell-1)\varsigma_s - \ell\alpha_s) \cong \Ind_B^{P_s} \bk_B ((\ell-1)\varsigma_s)[-1] = \St_s[-1].
\]
We therefore have
\begin{multline*}
\Ext^1_B(\St_s \otimes \bk_B(\varsigma_s), \bk_B(\ell\varsigma_s - \ell\alpha_s))
\cong \Ext^1_B(\St_s, \bk_B((\ell-1)\varsigma_s - \ell\alpha_s)) \\
\cong \Hom^1_{P_s}(\St_s, R\Ind_B^{P_s} \bk_B((\ell-1)\varsigma_s - \ell\alpha_s))
\cong \Hom_{P_s}(\St_s, \St_s) \cong \bk.
\end{multline*}
From these considerations we deduce that~\eqref{eqn:steinberg-diff-ext2} is the unique nonsplit extension of $\St_s \otimes \bk_B(\varsigma_s)$ by $\bk_B(\ell\varsigma_s - \ell\alpha_s)$, and then that~\eqref{eqn:steinberg-diff-ext1} is the unique nonsplit extension of $\bk_B(\ell\varsigma_s)$ by $\St_s \otimes \bk_B(\varsigma_s - \alpha_s)$.

We can
finally finish the proof of the lemma. Suppose for a contradiction that the element of $\Ext^2_B(\bk_B(\ell\varsigma_s), \bk_B(\ell\varsigma_s-\ell\alpha_s))$ corresponding to~\eqref{eqn:steinberg-diff} vanishes.  This means that there exists a $B$-module $V$ equipped with a filtration $0 \subset V_1 \subset V_2 \subset V$ such that~\eqref{eqn:steinberg-diff} is isomorphic to
\[
0 \to V_1 \to V_2 \to V/V_1 \to V/V_2 \to 0.
\]
Consider the short exact sequence $0 \to V_1 \to V \to V/V_1 \to 0$.  This extension cannot split, because $V_1 \cong \bk_B(\ell\varsigma_s-\ell\alpha_s)$ is not a direct summand of $V_2 \cong \St_s \otimes \bk_B(\varsigma_s-\alpha_s)$ (since $\St_s$ is indecomposable over $B$).  So from~\eqref{eqn:steinberg-diff-ext2}, we conclude that $V \cong \weyl_s(\ell\varsigma_s)$.  A similar argument using the short exact sequence $0 \to V_2 \to V \to V/V_2 \to 0$ and~\eqref{eqn:steinberg-diff-ext1} shows that $V \cong \coweyl_s(\ell\varsigma_s)$.  But now we have our contradiction, since $\weyl_s(\ell\varsigma_s)$ and $\coweyl_s(\ell\varsigma_s)$ are not isomorphic as $P_s$-modules, and hence not as $B$-modules either, since $\For_B^{P_s}$ is fully faithful. (In fact, both $\weyl_s(\ell\varsigma_s)$ and $\coweyl_s(\ell\varsigma_s)$ are nonsimple and have the simple $M_s$-module $\irr_s(\ell\varsigma_s)$ with highest weight $\ell \varsigma_s$---viewed as a $P_s$-module---as a composition factor with multiplicity $1$, but this module is the top of $\weyl_s(\ell\varsigma_s)$ and the socle of $\coweyl_s(\ell\varsigma_s)$.) This finishes the proof.
\end{proof}

The following lemma gathers well-known properties of the tilting module $\tilt_s(\ell \varsigma_s)$, see e.g.~\cite[Lemma~1.1 \& Lemma~1.3]{dh:dtpmi}.

\begin{lem}
\label{lem:tilting-SL2-l}
The $M_s$-module $\tilt_s(\ell \varsigma_s)$ is isomorphic to $\irr_s((\ell-1)\varsigma_s) \otimes \irr_s(\varsigma_s)$. This module fits into exact sequences
\[
\coweyl_s(\ell \varsigma_s - \alpha_s) \hookrightarrow \tilt_s(\ell \varsigma_s) \twoheadrightarrow \coweyl_s(\ell \varsigma_s) \quad \text{and} \quad \weyl_s(\ell \varsigma_s) \hookrightarrow \tilt_s(\ell \varsigma_s) \twoheadrightarrow \weyl_s(\ell \varsigma_s - \alpha_s).
\]
Moreover we have
\[
\coweyl_s(\ell \varsigma_s - \alpha_s) \cong \weyl_s(\ell \varsigma_s - \alpha_s) \cong \irr_s(\ell \varsigma_s - \alpha_s),
\]
and the modules $\coweyl_s(\ell \varsigma_s)$ and $\weyl_s(\ell \varsigma_s)$ have length $2$, with socle $\irr_s(\ell \varsigma_s)$ and $\irr_s(\ell \varsigma_s - \alpha_s)$ respectively, and top $\irr_s(\ell \varsigma_s - \alpha_s)$ and $\irr_s(\ell \varsigma_s)$ respectively.
\end{lem}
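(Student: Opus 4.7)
Since $M_s$ has semisimple rank one, every claim in the lemma concerns representations of a reductive group of rank one, and each follows from classical $\mathrm{SL}_2$-type analysis together with the strong linkage principle. My plan is to address the three assertions in reverse order of their logical dependence, leaving the identification of $\tilt_s(\ell\varsigma_s)$ for last.

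First, the weight $\ell\varsigma_s - \alpha_s$ satisfies $\langle \ell\varsigma_s - \alpha_s, \alpha_s^\vee\rangle = \ell - 2$, which lies in $[0,\ell-1]$, so the weight lies in the closure of the fundamental alcove for the dot-action of the affine Weyl group of $(M_s,T)$; strong linkage then forces $\coweyl_s(\ell\varsigma_s - \alpha_s) = \weyl_s(\ell\varsigma_s - \alpha_s) = \irr_s(\ell\varsigma_s - \alpha_s)$. For the length-two structure of $\coweyl_s(\ell\varsigma_s)$ and $\weyl_s(\ell\varsigma_s)$, linkage confines the composition factors to $\irr_s(\ell\varsigma_s)$ and $\irr_s(\ell\varsigma_s - \alpha_s)$, and a direct weight-space count pins each multiplicity at one. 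Combined with the general fact that $\coweyl_s(\lambda)$ has simple socle $\irr_s(\lambda)$ and $\weyl_s(\lambda)$ has simple top $\irr_s(\lambda)$, this yields the claimed socle/top descriptions.

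Finally, the Steinberg module $\St_s = \irr_s((\ell-1)\varsigma_s)$ is tilting, since it equals both $\weyl_s((\ell-1)\varsigma_s)$ and $\coweyl_s((\ell-1)\varsigma_s)$, and $\irr_s(\varsigma_s) = \weyl_s(\varsigma_s) = \coweyl_s(\varsigma_s)$ is tilting because $\varsigma_s$ lies in the fundamental alcove; hence their tensor product is tilting, with $\ell\varsigma_s$ as its unique (multiplicity-one) highest weight, and so has $\tilt_s(\ell\varsigma_s)$ as a direct summand. A direct character computation verifies
\[
\mathrm{ch}\bigl(\irr_s((\ell-1)\varsigma_s)\bigr) \cdot \mathrm{ch}\bigl(\irr_s(\varsigma_s)\bigr) = \mathrm{ch}\,\coweyl_s(\ell\varsigma_s) + \mathrm{ch}\,\coweyl_s(\ell\varsigma_s - \alpha_s),
\]
which forces the tensor product to coincide with $\tilt_s(\ell\varsigma_s)$ and fixes its $\coweyl_s$- and $\weyl_s$-filtration factors. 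The two displayed short exact sequences then follow from the highest-weight category fact that in a tilting module with exactly two highest-weight filtration factors, the larger-weight standard factor may be taken as a submodule and the larger-weight costandard factor as a quotient. The only genuine piece of work is the character identity above, which is a routine rank-one manipulation; everything else is formal.
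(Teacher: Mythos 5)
The paper does not actually prove this lemma; it merely cites Doty--Henke \cite[Lemma~1.1 \& Lemma~1.3]{dh:dtpmi}. Your proof therefore supplies an argument where the paper has none, and the approach (linkage plus a rank-one character count) is a natural and correct one.

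There is, however, a small gap at the final ``which forces the tensor product to coincide with $\tilt_s(\ell\varsigma_s)$'' step. Knowing that $\tilt_s(\ell\varsigma_s)$ is a direct summand of $\St_s\otimes\irr_s(\varsigma_s)$, and that the latter has $\nabla$-filtration factors $\coweyl_s(\ell\varsigma_s)$ and $\coweyl_s(\ell\varsigma_s-\alpha_s)$ each once, does not by itself exclude the possibility that $\tilt_s(\ell\varsigma_s)=\coweyl_s(\ell\varsigma_s)$ with complementary summand $\coweyl_s(\ell\varsigma_s-\alpha_s)$. To rule this out you need to observe that $\coweyl_s(\ell\varsigma_s)$ is not tilting: by the socle/top description you already established, $\coweyl_s(\ell\varsigma_s)$ has socle $\irr_s(\ell\varsigma_s)$ and top $\irr_s(\ell\varsigma_s-\alpha_s)$, whereas a (necessarily self-dual) tilting module would have isomorphic socle and top (equivalently, $\coweyl_s(\ell\varsigma_s)\neq\weyl_s(\ell\varsigma_s)$ since their tops differ). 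With this one extra sentence, $\tilt_s(\ell\varsigma_s)$ must have both $\nabla$-factors, and hence exhausts the tensor product. The rest of the argument, including the identification of the extreme terms in the two exact sequences via $\Ext^1$-vanishing between standard (resp.\ costandard) objects in the correct order, is correct.
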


From Lemma~\ref{lem:tilting-SL2-l} we deduce the following fact, which is in fact a special case of a claim in~\cite[Theorem~2.1]{dh:dtpmi}.

\begin{cor}
\label{cor:tilting-SL2}
For any $\lambda \in \bX$ such that $\langle \lambda, \alpha_s^\vee \rangle \geq 0$, the $M_s$-module $\tilt_s(\ell \varsigma_s) \otimes \irr_s(\ell \lambda)$ has top $\irr_s(\ell \lambda + \ell \varsigma_s - \alpha_s)$; in particular, it is indecomposable.
\end{cor}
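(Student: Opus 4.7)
My plan is to combine the description $\tilt_s(\ell\varsigma_s) \cong \St_s \otimes \irr_s(\varsigma_s)$ from Lemma~\ref{lem:tilting-SL2-l} with the Steinberg tensor product theorem for the rank-one reductive group $M_s$, first to rewrite the module and exhibit $\irr_s(\ell(\varsigma_s+\lambda) - \alpha_s)$ as an explicit simple quotient, and then to pin down the top via an $M_{s,1}$-coinvariants adjunction. For the rewriting, since $(\ell-1)\varsigma_s$ is $p$-restricted, Steinberg gives $\St_s \otimes \Fr^*(\irr_s(\lambda)) \cong \irr_s((\ell-1)\varsigma_s + \ell\lambda)$, so $X := \tilt_s(\ell\varsigma_s) \otimes \irr_s(\ell\lambda) \cong \irr_s(\varsigma_s) \otimes \irr_s((\ell-1)\varsigma_s + \ell\lambda)$. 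Tensoring the short exact sequence $\weyl_s(\ell\varsigma_s) \hookrightarrow \tilt_s(\ell\varsigma_s) \twoheadrightarrow \irr_s(\ell\varsigma_s - \alpha_s)$ of Lemma~\ref{lem:tilting-SL2-l} with $\irr_s(\ell\lambda)$ and applying Steinberg once more (noting that $\ell\varsigma_s - \alpha_s$ is $p$-restricted, with $\alpha_s^\vee$-pairing $\ell-2$) produces a surjection $X \twoheadrightarrow \irr_s(\ell(\varsigma_s+\lambda) - \alpha_s)$, so this simple occurs in the top.

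To rule out any other simple from the top, I would decompose an arbitrary simple $M_s$-module via Steinberg as $\irr_s(\nu) = \irr_s(\nu_0) \otimes \Fr^*(\irr_s(\nu_1))$ with $\nu_0$ $p$-restricted, and use the adjunction
\[
\Hom_{M_s}\bigl(X,\, \irr_s(\nu_0)\otimes \Fr^*(\irr_s(\nu_1))\bigr) \;\cong\; \Hom_{\dot M_s}\bigl((X \otimes \irr_s(\nu_0)^*)_{M_{s,1}},\, \irr_s(\nu_1)\bigr),
\]
where $(-)_{M_{s,1}}$ denotes coinvariants for the normal subgroup $M_{s,1} \subset M_s$ and the $\dot M_s$-action comes from $M_s/M_{s,1} \cong \dot M_s$. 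The key input is that the restriction $\tilt_s(\ell\varsigma_s)|_{M_{s,1}}$ is a single indecomposable projective $M_{s,1}$-module of dimension $2\ell$ with simple top corresponding to $\irr_s(\ell\varsigma_s - \alpha_s)$: indeed, by Lemma~\ref{lem:tilting-SL2-l} the $M_s$-top of $\tilt_s(\ell\varsigma_s)$ is $\irr_s(\ell\varsigma_s - \alpha_s)$, which restricts to an $(\ell-1)$-dimensional simple $M_{s,1}$-module, and for $M_s$ of semisimple rank one with $\ell > h = 2$ every non-Steinberg projective indecomposable over $M_{s,1}$ has dimension $2\ell$. Standard reciprocity for projective modules over the Frobenius algebra $\Dist(M_{s,1})$ then yields $(\tilt_s(\ell\varsigma_s) \otimes \irr_s(\nu_0)^*)_{M_{s,1}} \cong \delta_{\nu_0,\,\ell\varsigma_s - \alpha_s} \cdot \bk$, and tensoring with the $M_{s,1}$-trivial factor $\Fr^*(\irr_s(\lambda))$ (which contributes $\irr_s(\lambda)$ to the $\dot M_s$-module structure) gives $(X \otimes \irr_s(\nu_0)^*)_{M_{s,1}} \cong \delta_{\nu_0,\,\ell\varsigma_s - \alpha_s} \cdot \irr_s(\lambda)$ as $\dot M_s$-modules. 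Thus the $\Hom$-space is one-dimensional exactly when $\nu_0 = \ell\varsigma_s - \alpha_s$ and $\nu_1 = \lambda$, i.e.\ $\nu = \ell(\varsigma_s+\lambda) - \alpha_s$, and vanishes otherwise; the top is therefore simple, and indecomposability follows immediately.

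The main obstacle I anticipate is the structural claim that $\tilt_s(\ell\varsigma_s)|_{M_{s,1}}$ is a single projective indecomposable---a concrete dimension-and-top check that is straightforward only because the semisimple rank is one and relies on knowing the list of $M_{s,1}$-PIMs explicitly, in the spirit of Lemma~\ref{lem:pi-ind} and Remark~\ref{rmk:pi-ind-proj}; tracking the $\dot M_s$-module structure on the one-dimensional coinvariant space through the tensor factor $\Fr^*(\irr_s(\lambda))$ is another bookkeeping step that has to be carried out carefully.
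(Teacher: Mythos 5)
Your approach is correct but genuinely different from the paper's. The paper stays entirely at the level of $M_s$-modules: using the three-step filtration of $\tilt_s(\ell\varsigma_s)\otimes\irr_s(\ell\lambda)$ furnished by Lemma~\ref{lem:tilting-SL2-l}, it observes that every composition factor other than $\irr_s(\ell\lambda+\ell\varsigma_s-\alpha_s)$ has the form $\irr_s(\ell\mu)$, then rules these out of the top and socle via
\[
\Hom_{M_s}(\tilt_s(\ell\varsigma_s)\otimes\irr_s(\ell\lambda),\irr_s(\ell\mu)) \cong \Hom_{M_s}(\tilt_s(\ell\varsigma_s),\irr_s(\ell\mu)\otimes\irr_s(\ell\lambda)^*) = 0
\]
(the vanishing coming from Lemma~\ref{lem:tilting-SL2-l}), and finally excludes multiplicity two in the top by a splitting argument against the socle constraint. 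You instead descend to the Frobenius kernel $M_{s,1}$: you identify $\tilt_s(\ell\varsigma_s)|_{M_{s,1}}$ as a single projective indecomposable, decompose an arbitrary simple by Steinberg's theorem, and use the $M_{s,1}$-coinvariants adjunction to compute $\dim\Hom_{M_s}(X,\irr_s(\nu))$ for every simple $\nu$ at once, which yields the simplicity of the top and its multiplicity one in a single reciprocity calculation. That is the systematic gain of your route. The cost is that it requires structural input about PIMs for $M_{s,1}$ (their classification, dimensions, and the indecomposability of $\tilt_s(\ell\varsigma_s)|_{M_{s,1}}$) which is not supplied by Lemma~\ref{lem:pi-ind} or Remark~\ref{rmk:pi-ind-proj}---those concern the Steinberg-block projective $\rmZ_I$, not non-Steinberg PIMs---so a separate appeal to the theory of projective indecomposables for a semisimple-rank-one Frobenius kernel is needed; and the figure ``$2\ell$'' implicitly treats $M_s$ as $\mathrm{SL}_2$ rather than a general Levi with a central torus, a harmless but non-cosmetic adjustment (Frobenius kernels of tori are diagonalizable, so their contribution is semisimple). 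The paper's route avoids all Frobenius-kernel bookkeeping by never leaving $\Rep(M_s)$, which is why it is the more economical one given what Lemma~\ref{lem:tilting-SL2-l} already provides.
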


\begin{proof}
By Steinberg's tensor product theorem, we have
\[
\irr_s(\ell \varsigma_s - \alpha_s) \otimes \irr_s(\ell \lambda) \cong \irr_s(\ell \lambda + \ell \varsigma_s - \alpha_s).
\]
Hence from Lemma~\ref{lem:tilting-SL2-l} we deduce that $\tilt_s(\ell \varsigma_s) \otimes \irr_s(\ell \lambda)$ admits a filtration with sucessive subquotients $\irr_s(\ell \lambda + \ell \varsigma_s - \alpha_s)$, $\irr_s(\ell \varsigma_s) \otimes \irr_s(\ell \lambda)$ and $\irr_s(\ell \lambda + \ell \varsigma_s - \alpha_s)$. In particular, the simple composition factors of this module which are not isomorphic to $\irr_s(\ell \lambda + \ell \varsigma_s - \alpha_s)$ are of the form $\irr_s(\ell \mu)$ with $\mu \in \bX$ such that $\langle \mu, \alpha_s^\vee \rangle \geq 0$.

We claim that no simple $M_s$-module of the form $\irr_s(\ell \mu)$ appears in the top or the socle of $\tilt_s(\ell \varsigma_s) \otimes \irr_s(\ell \lambda)$. We will prove this claim for the top; the case of the socle is similar. We have
\[
\Hom_{M_s}(\tilt_s(\ell \varsigma_s) \otimes \irr_s(\ell \lambda), \irr_s(\ell \mu)) \cong \Hom_{M_s}(\tilt_s(\ell \varsigma_s), \irr_s(\ell \mu) \otimes \irr_s(\ell \lambda)^*).
\]
Now all the composition factors of $\irr_s(\ell \mu) \otimes \irr_s(\ell \lambda)^*$ are of the form $\irr_s(\ell \nu)$, and we have
\[
\Hom_{M_s}(\tilt_s(\ell \varsigma_s), \irr_s(\ell \nu))=0
\]
for any $\nu$ (see Lemma~\ref{lem:tilting-SL2-l}), which implies our claim.

From this claim we deduce in particular that the top of $\tilt_s(\ell \varsigma_s) \otimes \irr_s(\ell \lambda)$ is either $\irr_s(\ell \lambda + \ell \varsigma_s - \alpha_s)$ or $\irr_s(\ell \lambda + \ell \varsigma_s - \alpha_s)^{\oplus 2}$. But the latter case cannot occur, since otherwise the embedding
\[
\irr_s(\ell \lambda + \ell \varsigma_s - \alpha_s) \hookrightarrow \tilt_s(\ell \varsigma_s) \otimes \irr_s(\ell \lambda)
\] 
deduced from the embedding $\irr_s(\ell \varsigma_s - \alpha_s) \hookrightarrow \tilt_s(\ell \varsigma_s)$ would split, and then $\tilt_s(\ell \varsigma_s) \otimes \irr_s(\ell \lambda)$ would have a simple module of the form $\irr_s(\ell \mu)$ in its socle, which does not hold as we have seen.
\end{proof}

\begin{prop}\label{prop:steinberg-tilting}
Let $\lambda \in \bX$ be such that $\langle \lambda, \alpha_s^\vee \rangle \geq 0$.
\begin{enumerate}
\item
\label{it:ind-tilting}
As $P_s$-modules, we have $\Ind_B^{P_s} (\St_s \otimes \bk_B(\varsigma_s) \otimes \irr_s(\ell\lambda)) \cong \tilt_s(\ell\varsigma_s) \otimes \irr_s(\ell\lambda)$. 
\item 
\label{it:ind-surj}
For any nonzero map of $B$-modules
\[
g: \St_s \otimes \bk_B(\varsigma_s) \otimes \irr_s(\ell\lambda) \to \bk_B(\ell\varsigma_s) \otimes \irr_s(\ell\lambda),
\]
the morphism
\[
\Ind_B^{P_s}(g) : \Ind_B^{P_s}(\St_s \otimes \bk_B(\varsigma_s) \otimes \irr_s(\ell\lambda)) \to \Ind_B^{P_s} (\bk_B(\ell\varsigma_s) \otimes \irr_s(\ell\lambda))
\]
is surjective.
\item
\label{it:ind-isom}
Let $\theta$ be an endomorphism of $\Ind_B^{P_s} (\St_s \otimes \bk_B(\varsigma_s) \otimes \irr_s(\ell\lambda))$, and let 
\[
h : \Ind_B^{P_s} (\St_s \otimes \bk_B(\varsigma_s) \otimes \irr_s(\ell\lambda)) \to \Ind_B^{P_s} (\bk_B(\ell\varsigma_s) \otimes \irr_s(\ell\lambda))
\]
be a morphism.
If the composition $h \circ \theta$
is surjective, then $\theta$ is an isomorphism.
\end{enumerate}
\end{prop}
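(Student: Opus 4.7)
\emph{Proof proposal.} For part~\eqref{it:ind-tilting}, the plan is to apply the tensor identity: since both $\St_s$ and $\irr_s(\ell\lambda)$ are $P_s$-modules, they can be pulled out of $\Ind_B^{P_s}$, leaving
\[
\Ind_B^{P_s}\bigl( \St_s \otimes \bk_B(\varsigma_s) \otimes \irr_s(\ell\lambda) \bigr) \cong \St_s \otimes \Ind_B^{P_s}(\bk_B(\varsigma_s)) \otimes \irr_s(\ell\lambda).
\]
Since $\langle \varsigma_s, \alpha_s^\vee\rangle = 1$, one has $\Ind_B^{P_s}(\bk_B(\varsigma_s)) = \coweyl_s(\varsigma_s) = \irr_s(\varsigma_s)$, and Lemma~\ref{lem:tilting-SL2-l} identifies $\St_s \otimes \irr_s(\varsigma_s) = \irr_s((\ell-1)\varsigma_s) \otimes \irr_s(\varsigma_s)$ with $\tilt_s(\ell\varsigma_s)$.

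Part~\eqref{it:ind-surj} is the main computational step. I would first check that the space
$\Hom_B(\St_s \otimes \bk_B(\varsigma_s) \otimes \irr_s(\ell\lambda), \bk_B(\ell\varsigma_s) \otimes \irr_s(\ell\lambda))$ is one-dimensional: via adjunction, the tensor identity, and the defining property $\Ind_B^{P_s}(\bk_B((\ell-1)\varsigma_s)) = \St_s$, this space rewrites as $\Hom_{P_s}(\St_s, \St_s \otimes \End(\irr_s(\ell\lambda)))$, which a Schur-type calculation (using that $\St_s$ is simple over $P_{s,1}$ and $\End(\irr_s(\ell\lambda))$ is trivial on $P_{s,1}$) reduces to $\End_{P_s}(\irr_s(\ell\lambda)) \cong \bk$. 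Consequently any nonzero $g$ is a scalar multiple of $g_0 \otimes \id_{\irr_s(\ell\lambda)}$, where $g_0 : \St_s \otimes \bk_B(\varsigma_s) \twoheadrightarrow \bk_B(\ell\varsigma_s)$ is the surjection appearing in~\eqref{eqn:steinberg-diff}; by the tensor identity it thus suffices to show that $\Ind_B^{P_s}(g_0)$ is surjective. For this I would apply $R\Ind_B^{P_s}$ to the short exact sequence $0 \to K \to \St_s \otimes \bk_B(\varsigma_s) \to \bk_B(\ell\varsigma_s) \to 0$ extracted from~\eqref{eqn:steinberg-diff} (with $K$ the image of $f$). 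Borel--Weil--Bott on $P_s/B \cong \mathbb{P}^1$ gives $R\Ind_B^{P_s}(\bk_B(\varsigma_s - \alpha_s)) = 0$ (the weight $\varsigma_s - \alpha_s$ is fixed by the dot-action of $s$), while $R\Ind_B^{P_s}(\bk_B(\ell\varsigma_s - \ell\alpha_s))$ is concentrated in degree~$1$ and equals $\coweyl_s(\ell\varsigma_s - \alpha_s)$; combining the long exact sequences for the two short exact sequences $0 \to \bk_B(\ell\varsigma_s - \ell\alpha_s) \to \St_s \otimes \bk_B(\varsigma_s - \alpha_s) \to K \to 0$ and $0 \to K \to \St_s \otimes \bk_B(\varsigma_s) \to \bk_B(\ell\varsigma_s) \to 0$ yields a short exact sequence
\[
0 \to \coweyl_s(\ell\varsigma_s - \alpha_s) \to \tilt_s(\ell\varsigma_s) \to \coweyl_s(\ell\varsigma_s) \to 0
\]
whose middle morphism is $\Ind_B^{P_s}(g_0)$, establishing surjectivity.

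Part~\eqref{it:ind-isom} is then essentially formal. By part~\eqref{it:ind-tilting} and Corollary~\ref{cor:tilting-SL2}, $V := \Ind_B^{P_s}(\St_s \otimes \bk_B(\varsigma_s) \otimes \irr_s(\ell\lambda)) \cong \tilt_s(\ell\varsigma_s) \otimes \irr_s(\ell\lambda)$ is indecomposable with simple top, so $\End_{P_s}(V)$ is local and $V$ admits a unique maximal submodule $\mathrm{rad}(V)$. If $\theta$ were not an isomorphism, then $\theta$ would lie in the maximal ideal of $\End_{P_s}(V)$, forcing $\mathrm{Im}(\theta) \subsetneq V$ and therefore $\mathrm{Im}(\theta) \subset \mathrm{rad}(V)$. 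But any $P_s$-module morphism $h : V \to W$ between finite-length modules sends $\mathrm{rad}(V)$ into $\mathrm{rad}(W) \subsetneq W$, so $\mathrm{Im}(h \circ \theta) \subsetneq W$, contradicting surjectivity of $h \circ \theta$. The long-exact-sequence bookkeeping in part~\eqref{it:ind-surj} is the only step where there is real work to do; the rest combines the tensor identity, Schur's lemma, and the standard structure theory of indecomposable modules over a local endomorphism ring.
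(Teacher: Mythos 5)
Your proof is correct. Part~\eqref{it:ind-tilting} is identical to the paper's. Part~\eqref{it:ind-isom} is essentially the paper's argument run in contrapositive form: where the paper produces an explicit surjection from $W := \Ind_B^{P_s}(\bk_B(\ell\varsigma_s) \otimes \irr_s(\ell\lambda))$ onto $\irr_s(\ell\varsigma_s-\alpha_s+\ell\lambda)$ and observes that the canonical surjection of $V$ onto its simple top factors through $h$, you instead argue directly that a non-isomorphism $\theta$ has image inside $\mathrm{rad}(V)$, which any homomorphism carries into $\mathrm{rad}(W) \subsetneq W$; this avoids needing the explicit simple quotient of $W$ and relies only on the general radical-preservation fact plus $W\ne 0$, a mild simplification.

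Part~\eqref{it:ind-surj} is where you genuinely diverge from the paper. Both of you first reduce to $\lambda = 0$ by computing that the relevant $\Hom$-space is one-dimensional, so that $g = g_0 \otimes \id_{\irr_s(\ell\lambda)}$. For the case $\lambda=0$, the paper invokes the abstract theory of tilting modules: $\Hom_{P_s}(\tilt_s(\ell\varsigma_s), \coweyl_s(\ell\varsigma_s))$ is one-dimensional and any nonzero map there is surjective, and the relevant map is nonzero by adjunction. You instead split the four-term sequence~\eqref{eqn:steinberg-diff} into two short exact sequences and chase $R\Ind_B^{P_s}$ through the associated long exact sequences, using the vanishing $R\Ind_B^{P_s}(\bk_B(\varsigma_s - \alpha_s)) = 0$ (the weight sits on a dot-wall) and the identification $R^1\Ind_B^{P_s}(\bk_B(\ell\varsigma_s - \ell\alpha_s)) \cong \coweyl_s(\ell\varsigma_s - \alpha_s)$ to conclude $R^1\Ind_B^{P_s}(K) = 0$, hence surjectivity. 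This is more computational but entirely self-contained, and it has the nice feature of independently re-deriving the first short exact sequence of Lemma~\ref{lem:tilting-SL2-l} rather than quoting the general theory of tilting modules as a black box. The cost is comparable; both routes work.
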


\begin{proof}
\eqref{it:ind-tilting}~By the tensor identity, we have
\begin{multline}
\label{eqn:steinberg-tilting}
\Ind_B^{P_s} (\St_s \otimes \bk_B(\varsigma_s) \otimes \irr_s(\ell\lambda)) \cong (\St_s \otimes \irr_s(\ell\lambda)) \otimes \Ind_B^{P_s} \bk_B(\varsigma_s) \\
\cong \St_s \otimes \irr_s(\ell\lambda) \otimes \irr_s(\varsigma_s).
\end{multline}
Then the claim follows from Lemma~\ref{lem:tilting-SL2-l}.

\eqref{it:ind-surj}~First consider the special case where $\lambda = 0$.  In this case, a nonzero map $\St_s \otimes \bk_B(\varsigma_s) \to \bk_B(\ell\varsigma_s)$ is clearly unique up to a scalar.  Applying the functor $\Ind_B^{P_s}$ yields a map $\tilt_s(\ell\varsigma_s) \to \coweyl_s(\ell\varsigma_s)$, which is nonzero by adjunction.  Now,
the general theory of tilting modules implies that $\Hom_{P_s}(\tilt_s(\ell\varsigma_s), \coweyl_s(\ell\varsigma_s))$ is $1$-dimensional, and that any nonzero map in this space is surjective. This implies the desired claim in the special case $\lambda=0$.

For general $\lambda$, we have
\begin{multline*}
\Hom_B(\St_s \otimes \bk_B(\varsigma_s) \otimes \irr_s(\ell\lambda), \bk_B(\ell\varsigma_s) \otimes \irr_s(\ell\lambda)) \\
\cong \Hom_B(\St_s \otimes \irr_s(\ell\lambda), \bk_B((\ell-1)\varsigma_s) \otimes \irr_s(\ell\lambda)) \\
\cong \Hom_{P_s}(\St_s \otimes \irr_s(\ell\lambda), \Ind_B^{P_s}(\bk_B((\ell-1)\varsigma_s) \otimes \irr_s(\ell\lambda))) \\
\cong \Hom_{P_s}(\St_s \otimes \irr_s(\ell\lambda), \St_s \otimes \irr_s(\ell\lambda)) \cong \bk,
\end{multline*}
where the last step holds because $\St_s \otimes \irr_s(\ell\lambda)$ is simple by Steinberg's tensor product theorem.  This calculation shows that any nonzero map $g: \St_s \otimes \bk_B(\varsigma_s) \otimes \irr_s(\ell\lambda) \to \bk_B(\ell\varsigma_s) \otimes \irr_s(\ell\lambda)$ is of the form $g_0 \otimes \id_{\irr_s(\ell\lambda)}$, where $g_0: \St_s \otimes \bk_B(\varsigma_s) \to \bk_B(\ell\varsigma_s)$ is a nonzero map.  It follows (using the tensor identity) that $\Ind_B^{P_s}(g)$ can be identified with $\Ind_B^{P_s}(g_0) \otimes \id_{\irr_s(\ell\lambda)}$, so that this map is surjective by the special case considered above.

\eqref{it:ind-isom}~If $h \circ \theta$ is surjective, then $h$ is surjective. Now we have
\[
\Ind_B^{P_s} (\bk_B(\ell\varsigma_s) \otimes \irr_s(\ell\lambda)) \cong \coweyl_s(\ell\varsigma_s) \otimes \irr_s(\ell\lambda)
\]
by the tensor identity. From this, Lemma~\ref{lem:tilting-SL2-l} and Steinberg's tensor product theorem, we deduce that there exists a surjection
\[
\Ind_B^{P_s} (\bk_B(\ell\varsigma_s) \otimes \irr_s(\ell\lambda)) \twoheadrightarrow \irr_s(\ell \varsigma_s - \alpha_s + \ell \lambda).
\]
This implies that the surjection from $\Ind_B^{P_s} (\St_s \otimes \bk_B(\varsigma_s) \otimes \irr_s(\ell\lambda)) \cong \tilt_s(\ell\varsigma_s) \otimes \irr_s(\ell\lambda)$ (see~\eqref{it:ind-tilting}) to its top $\irr_s(\ell \varsigma_s - \alpha_s + \ell \lambda)$ (see Corollary~\ref{cor:tilting-SL2}) factors through $h$. Hence from our assumption we obtain that the composition
\begin{multline*}
\Ind_B^{P_s} (\St_s \otimes \bk_B(\varsigma_s) \otimes \irr_s(\ell\lambda)) \xrightarrow{\theta} \Ind_B^{P_s} (\St_s \otimes \bk_B(\varsigma_s) \otimes \irr_s(\ell\lambda)) \\
\twoheadrightarrow \mathrm{top} \bigl( \Ind_B^{P_s} (\St_s \otimes \bk_B(\varsigma_s) \otimes \irr_s(\ell\lambda)) \bigr)
\end{multline*}
is nonzero, which implies that $\theta$ is surjective, and then an isomorphism since it is an endomorphism of a finite-dimensional module.
\end{proof}

\section{Koszul duality}
\label{sec:koszul}

In this section we fix a field $\F$, an algebraic $\F$-group scheme $H$, and a finite-dimensional $H$-module $V$. We review the construction and main properties of the \emph{Koszul duality} equivalence relating dg-modules over the exterior algebra of $V$ and dg-modules over the symmetric algebra of $V^*$. The version we use is essentially the version of~\cite{gkm}, but the construction given there has the annoying feature that it requires unnatural boundedness conditions on the dg-modules. Here we use slightly different arguments, which require introducing an extra grading, but allow us to get rid of these conditions. These arguments are very close to those of~\cite{mr2}, so we omit most proofs.

\subsection{Reminder on Koszul duality}
\label{ss:reminder-Koszul}

Let us consider the dg-algebra
\[
\bL:=\bigwedge \hspace{-3pt} {}^\bullet \, V,
\]
where $V$ is placed in degree $-1$, and the differential is trivial. We will consider the $H \times \Gm$-action on $\bL$ which is compatible with the multiplication in the obvious sense, and where $H$, resp.~$\Gm$, acts on $V$ via its natural action, resp.~in such a way that $z \in \Gm$ acts by dilation by $z^{-2}$. In this way $\bL$ can be considered as an $H \times \Gm$-equivariant dg-algebra, and we can consider the category $\bL\ldgmod_{H \times \Gm}$ of $H \times \Gm$-equivariant $\bL$-dg-modules as in~\S\ref{ss:equiv-dgmod}, the corresponding derived category $D_{H \times \Gm}(\bL)$, and the full subcategory $\Dfg_{H \times \Gm}(\bL)$. 

The $\Gm$-action on an $H \times \Gm$-equivariant $\bL$-dg-module will rather be regarded as an extra $\Z$-grading on the dg-module, which we will call the \emph{internal} grading. Using this point of view
we can consider the full subcategory $\bL\ldgmod_{H \times \Gm}^{\boxplus}$ of  $\bL\ldgmod_{H \times \Gm}$ consisting of objects whose internal grading is bounded below, and the corresponding derived category $D^{\boxplus}_{H \times \Gm}(\bL)$. (This category shouldn't be confused with the derived category $D^{+}_{H \times \Gm}(\bL)$ of equivariant $\bL$-dg-modules which are bounded below for the \emph{cohomological} grading.) The embedding of $\bL\ldgmod_{H \times \Gm}^{\boxplus}$ in $\bL\ldgmod_{H \times \Gm}$ induces a functor $D^{\boxplus}_{H \times \Gm}(\bL) \to D_{H \times \Gm}(\bL)$, which is easily seen to be fully faithful. The essential image of this functor contains $\Dfg_{H \times \Gm}(\bL)$, so that $\Dfg_{H \times \Gm}(\bL)$ can be considered as a full subcategory in $D^{\boxplus}_{H \times \Gm}(\bL)$.

We will also consider the dg-algebra
\[
\bS := \mathrm{Sym}(V^*),
\]
where $V^*$ is placed in degree $2$, and the differential is trivial. We will consider the $H \times \Gm$-action on $\bS$ which is compatible with the multiplication in the obvious sense, and where $H$, resp.~$\Gm$, acts on $V^*$ via its natural action, resp.~in such a way that $z \in \Gm$ acts by dilation by $z^{2}$. In this way $\bS$ can be considered as an $H \times \Gm$-equivariant dg-algebra, and we can consider the category $\bS\ldgmod_{H \times \Gm}$ of $H \times \Gm$-equivariant $\bS$-dg-modules as in~\S\ref{ss:equiv-dgmod}, the corresponding derived category $D_{H \times \Gm}(\bS)$, and the full subcategory $\Dfg_{H \times \Gm}(\bS)$. As above one can also consider the category $\bS\ldgmod_{H \times \Gm}^{\boxplus}$ of $H \times \Gm$-equivariant dg-modules whose internal grading is bounded below, and the corresponding derived category $D^{\boxplus}_{H \times \Gm}(\bS)$. Again it is easily checked that the natural functor $D^{\boxplus}_{H \times \Gm}(\bS) \to D_{H \times \Gm}(\bS)$ is fully faithful, and that its essential image contains the subcategory $\Dfg_{H \times \Gm}(\bS)$.

We will denote by
\[
\langle 1 \rangle : D^{\boxplus}_{H \times \Gm}(\bL) \to D^{\boxplus}_{H \times \Gm}(\bL) \quad \text{and} \quad \langle 1 \rangle : D^{\boxplus}_{H \times \Gm}(\bS) \to D^{\boxplus}_{H \times \Gm}(\bS)
\]
the functors of tensoring with the tautological $1$-dimensional $\Gm$-module.

The goal of this subsection is to outline the proof of the following result.

\begin{thm}
\label{thm:koszul}
There exists an equivalence of triangulated categories
\[
\Koszul : D^{\boxplus}_{H \times \Gm}(\bS) \simto D^{\boxplus}_{H \times \Gm}(\bL)
\]
which commutes with the functors $\langle 1 \rangle$. This equivalence restricts to an equivalence of triangulated categories
\[
\Dfg_{H \times \Gm}(\bS) \simto \Dfg_{H \times \Gm}(\bL).
\]
\end{thm}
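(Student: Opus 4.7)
The plan is to follow the classical Koszul duality construction, but packaged so as to exploit the internal grading and avoid resolutions that may not exist equivariantly, following the approach of~\cite{mr2}. I would define the functor $\Koszul$ explicitly at the dg-module level: for an $H \times \Gm$-equivariant $\bS$-dg-module $M$, set
\[
\Koszul(M) := \bL \otimes_{\F} M
\]
as an $H \times \Gm$-equivariant trigraded object (cohomological degree, internal $\Gm$-degree, $H$-action), where $\bL$ acts on the left factor. The differential is the sum of the internal differential of $M$ and a ``Koszul differential'' built from the canonical $H$-invariant element $\sum_i x_i \otimes x_i^* \in V \otimes V^* \subset \bL \otimes \bS$, where $\{x_i\}$ is a basis of $V$ with dual basis $\{x_i^*\}$. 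The bidegree conventions on $V$ and $V^*$ (internal degree $-2$ vs $+2$, cohomological $-1$ vs $+2$) make this differential homogeneous of the correct total bidegree, and its $H$-invariance makes the whole construction $H$-equivariant with no need to choose resolutions.

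Next I would check that $\Koszul$ descends to a triangulated functor $D^{\boxplus}_{H \times \Gm}(\bS) \to D^{\boxplus}_{H \times \Gm}(\bL)$ commuting with $\langle 1 \rangle$. Exactness on quasi-isomorphisms is verified via the spectral sequence associated to the filtration of $\Koszul(M)$ by internal degree: because $M$ is bounded below in the internal degree and each graded piece of $\bL$ is finite-dimensional, in each fixed internal degree of $\Koszul(M)$ only finitely many pieces of $\bL$ contribute, so the spectral sequence converges and the $E_1$ page is functorial in the cohomology of $M$. Commutation with $\langle 1 \rangle$ is immediate from the construction. Then I would construct the candidate quasi-inverse $\Koszul'(N) := \bS \otimes_{\F} N$ with the dual Koszul differential, and build unit and counit natural transformations via the evident double complex $\bL \otimes \bS$ equipped with the Koszul differential, together with the augmentations $\bL \otimes \bS \to \F$ on both sides. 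The content is that these unit and counit maps are quasi-isomorphisms, which reduces to the classical acyclicity of the Koszul complex $\bL \otimes \bS \xrightarrow{\qis} \F$; once again, the boundedness of the internal grading guarantees convergence of the spectral sequences comparing the two compositions to the identity.

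For the final assertion, I would show by an explicit check on compact generators that $\Koszul$ and $\Koszul'$ preserve finite generation of cohomology. The generators of $\Dfg_{H \times \Gm}(\bS)$ are the shifts of finite-dimensional $H$-modules viewed as free $\bS$-modules of finite rank; on such an object $V \otimes \bS$, $\Koszul$ produces $V \otimes \bL \otimes \bS$ with Koszul differential, which is quasi-isomorphic to $V$ regarded as an $\bL$-dg-module with trivial action of $V \subset \bL$, manifestly in $\Dfg_{H \times \Gm}(\bL)$; the reverse direction is symmetric. The main technical obstacle is the absence of enough K-projective or K-injective $H$-equivariant dg-modules in general. Circumventing this is precisely the reason for working with the explicit Koszul complex rather than an abstract derived tensor product, and for carrying along the internal grading: the latter provides the boundedness needed to make the spectral sequences for the unit and counit of the adjunction, as well as the one proving exactness, actually converge, which is the single point where the argument of~\cite{gkm} needed unnatural extra assumptions.
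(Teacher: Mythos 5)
Your proposal follows essentially the same approach as the paper, which simply adapts the argument of~\cite{mr2}: introduce explicit Koszul-type functors at the chain level, check they preserve acyclicity via a bounded filtration whose convergence is guaranteed by the internal grading, show they are quasi-inverse using acyclicity of the Koszul complex, and preserve finite generation by testing on generators. The one difference is notational: the paper (and~\cite{mr2}) uses $\mathscr{B}(N) = \Hom_\F(\bL,N)$ for the functor from $\bS$-dg-modules to $\bL$-dg-modules, whereas you use $\bL \otimes_\F N$; since $\bL$ is finite-dimensional these differ only by a twist by the graded line $(\bigwedge^{\dim V}V)^*$ and a shift, so your version also gives an equivalence commuting with $\langle 1 \rangle$ (possibly with a corresponding adjustment of the normalization of the quasi-inverse). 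Also note that the filtration you want for the exactness argument is by the exterior degree in $\bL$ (which is bounded because $\bL$ is finite-dimensional) within each fixed internal degree, not literally by internal degree, since the whole differential preserves internal degree; the internal-grading boundedness is what makes each internal-degree component a finite total complex.
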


\begin{proof}[Sketch of proof]
As in~\cite{mr2} we consider functors
\[
\mathscr{A} : \bL\ldgmod_{H \times \Gm}^{\boxplus} \to \bS\ldgmod_{H \times \Gm}^{\boxplus}, \quad \mathscr{B} : \bS\ldgmod_{H \times \Gm}^{\boxplus} \to \bL\ldgmod_{H \times \Gm}^{\boxplus}
\]
defined by
\[
\mathscr{A}(M) = \bS \otimes_\F M, \qquad \mathscr{B}(N) = \Hom_\F(\bL, N),
\]
where the $\bS$-action (respectively the $\bL$-action), the differential and the grading are defined as in~\cite[\S 2.2]{mr2}. (In each case the differential is obtained as the sum of the natural differential with a ``Koszul type'' differential.) One can check as in~\cite[Theorem~2.6(i)]{mr2} that these functors send acyclic complexes to acyclic complexes, and hence that they induce triangulated functors
\[
\overline{\mathscr{A}} : D_{H \times \Gm}^{\boxplus}(\bL) \to D_{H \times \Gm}^{\boxplus}(\bS), \quad \overline{\mathscr{B}} : D_{H \times \Gm}^{\boxplus}(\bS) \to D_{H \times \Gm}^{\boxplus}(\bL).
\]
Next, as in~\cite[Theorem~2.6(ii)]{mr2} one checks that these functors are quasi-inverse to each other, and we obtain the desired equivalence $\Koszul := \overline{\mathscr{B}}$. Finally, arguments similar to those in the proof of~\cite[Proposition~2.11]{mr2} imply that $\overline{\mathscr{A}}$, resp.~$\overline{\mathscr{B}}$, sends $\Dfg_{H \times \Gm}(\bL)$ into $\Dfg_{H \times \Gm}(\bS)$, resp.~$\Dfg_{H \times \Gm}(\bS)$ into $\Dfg_{H \times \Gm}(\bL)$. The second statement follows.
\end{proof}

\begin{rmk}
The equivalence constructed (in a much more general setting) in~\cite{mr2} differs from
the equivalence of Theorem~\ref{thm:koszul} by composition with duality. This turns out to be a crucial idea in order to obtain the general equivalence considered in~\cite{mr2}.
\end{rmk}

\subsection{Regrading and forgetting the grading}
\label{ss:regrading}

The version of Koszul duality we will use later is not exactly the one provided by Theorem~\ref{thm:koszul}. First, consider the category $\bS\lmod_{H \times \Gm}$ of $H \times \Gm$-equivariant $\bS$-modules, and the corresponding derived category $D(\bS\lmod_{H \times \Gm})$. Let also $\bS\lmod_{H \times \Gm}^{\mathrm{fg}}$ be the full subcategory of $\bS\lmod_{H \times \Gm}$ consisting of finitely generated modules. Then it is well known that the natural functor
\[
\Db(\bS\lmod_{H \times \Gm}^{\mathrm{fg}}) \to D(\bS\lmod_{H \times \Gm})
\]
is fully faithful, and that its essential image is the subcategory of complexes whose total cohomology is finitely generated.

Let $C(\bS\lmod_{H \times \Gm})$ be the category of chain complexes of objects of $\bS\lmod_{H \times \Gm}$. If $M$ is in $\bS\lmod_{H \times \Gm}$, as in~\S\ref{ss:reminder-Koszul} we will consider the $\Gm$-action on $M$ as an ``internal'' grading $M=\bigoplus_i M_i$. Then we consider the functor
\[
\xi : C(\bS\lmod_{H \times \Gm}) \to \bS\ldgmod_{H \times \Gm}
\]
sending a complex $(M^i)_{i \in \Z}$ to the dg-module $\xi(M)$ whose $n$-th term is
\[
\xi(M)^n = \bigoplus_{i+j=n} M^i_j,
\]
with the natural differential, $\bS$-action, and $H$-action, and where $\Gm$ acts on $M^i_j \subset \xi(M)^{i+j}$ with weight $j$. It is clear that $\xi$ is an equivalence of categories; therefore it induces an equivalence of triangulated categories
\[
\xi : D(\bS\lmod_{H \times \Gm}) \simto D_{H \times \Gm}(\bS)
\]
which satisfies
\[
\xi \circ \langle 1 \rangle = \langle 1 \rangle [-1] \circ \xi.
\]
It is clear also from the comments above that $\xi$ induces an equivalence of triangulated categories
\[
\Db(\bS\lmod_{H \times \Gm}^{\mathrm{fg}}) \simto \Dfg_{H \times \Gm}(\bS),
\]
which we will again denote $\xi$.

Consider now the functor
\[
\For^{H \times \Gm}_H : D_{H \times \Gm}(\bL) \to D_H(\bL)
\]
associated with the obvious embedding $H=H \times \{1\} \hookrightarrow H \times \Gm$.

\begin{lem}
\label{lem:forget-grading}
For any $M$ in $\Dfg_{H \times \Gm}(\bL)$ and any $N$ in $D^+_{H \times \Gm}(\bL)$, the functor $\For^{H \times \Gm}_H$ induces an isomorphism
\begin{equation}
\label{eqn:forget-grading}
\bigoplus_{n \in \Z} \Hom_{D_{H \times \Gm}(\bL)}(M,N \langle n \rangle) \simto \Hom_{D_{H}(\bL)}(\For^{H \times \Gm}_H M, \For^{H \times \Gm}_H N).
\end{equation}
\end{lem}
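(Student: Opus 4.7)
I would proceed by dévissage on $M$. Let $\mathcal{C} \subset \Dfg_{H \times \Gm}(\bL)$ denote the full subcategory of objects for which~\eqref{eqn:forget-grading} is an isomorphism for every $N \in D^+_{H \times \Gm}(\bL)$. Both sides of~\eqref{eqn:forget-grading} are contravariant cohomological functors of $M$: the right-hand side because $\For^{H \times \Gm}_H$ is a triangulated functor, and the left-hand side because a direct sum of long exact sequences of vector spaces remains exact. By the five lemma, $\mathcal{C}$ is therefore a triangulated subcategory of $\Dfg_{H \times \Gm}(\bL)$.

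Next, I would verify~\eqref{eqn:forget-grading} directly when $M$ has the form $\bL \otimes_\F V \langle k \rangle [m]$, with $V$ a finite-dimensional $H$-module (with trivial $\Gm$-action) and $k,m \in \Z$. Such an $M$ is K-projective in $\bL\ldgmod_{H \times \Gm}$, and $\For^{H \times \Gm}_H M$ is K-projective in $\bL\ldgmod_H$; hence both sides of~\eqref{eqn:forget-grading} can be computed as cohomologies of explicit Hom-complexes. Using the adjunction between $\bL \otimes (-)$ and the forgetful functor to graded $H$-modules, one gets
\[
\Hom_{D_{H \times \Gm}(\bL)}(\bL \otimes V \langle k \rangle[m], N\langle n \rangle) \cong H^{-m}\bigl( \Hom_H^\bullet(V, N)_{n-k} \bigr),
\]
where the subscript denotes the $\Gm$-weight-$(n-k)$ component. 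Summing over $n \in \Z$ and using that cohomology commutes with direct sums recovers $H^{-m}(\Hom_H^\bullet(V, \For N))$, which is canonically $\Hom_{D_H(\bL)}(\For(\bL \otimes V[m]), \For N)$, as required.

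Finally, to conclude $\mathcal{C} = \Dfg_{H \times \Gm}(\bL)$, I would show that any $M \in \Dfg_{H \times \Gm}(\bL)$ admits a quasi-isomorphism $P \to M$ where $P$ is a cohomologically bounded-above $H \times \Gm$-equivariant $\bL$-dg-module whose term in each cohomological degree is a finite direct sum of free modules of the form $\bL \otimes V \langle k\rangle$ with $V$ a finite-dimensional $H$-module. Such a resolution is built inductively: starting from a finite $H \times \Gm$-equivariant generating set of $H^{\bullet}(M)$ in top cohomological degree, one lifts to a morphism from a finite direct sum of free modules surjective on cohomology in that degree, passes to the cone, and iterates. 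While $P$ is generally unbounded below (because the exterior algebra $\bL$ is Frobenius but not hereditary, so finite-dimensional $\bL$-modules typically have infinite projective dimension), the cohomological lower bound on $N$ ensures that only a finite range of cohomological degrees of $P$ contributes to either side of~\eqref{eqn:forget-grading}, and on this finite range $P$ is a finite direct sum of free modules of the form treated in the previous paragraph. Hence $P$ belongs to $\mathcal{C}$, and therefore so does $M$.

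The main obstacle lies in the last step: passing from free modules, where the isomorphism is a direct computation, to the typically infinite resolution $P$. This is handled by carefully combining the finite generation of $H^\bullet(M)$ with the cohomological lower bound on $N$ to reduce the comparison to a bounded, finite-dimensional computation.
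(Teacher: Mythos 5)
Your overall dévissage framework (both sides of~\eqref{eqn:forget-grading} are cohomological in $M$, five lemma, reduce to generators) is sound and is essentially the same reduction the paper performs, but the base case is where the argument breaks, and it breaks for a structural reason. The object $\bL\otimes_\F V$ is \emph{not} K-projective in $\bL\ldgmod_{H\times\Gm}$ for a general algebraic group scheme $H$: K-projectivity would force $\Hom^\bullet_{\bL\ldgmod_{H\times\Gm}}(\bL\otimes V,-)=\bigl(V^*\otimes(-)\bigr)^{H\times\Gm}$ to preserve acyclicity, which fails because $\mathbb{I}^H$ is only left exact (for the $H=\dot P_J$ of the applications, $\Rep(H)$ has no nonzero projectives at all). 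Consequently your displayed formula is false: taking $\bL=\F$, $V=\F$, $k=m=0$ and $N$ a single module placed in cohomological degree $-j$, the left-hand side of~\eqref{eqn:forget-grading} computes $H^j(H\times\Gm,N\la n\ra)$, which is nonzero for $j>0$ in general, while your formula returns $0$. The same defect then infects the last step: the unbounded complex $P$ of ``free'' modules is not K-projective equivariantly, so $H^0(\Hom^\bullet(P,N))$ is not known to compute $\Hom_{D_{H\times\Gm}(\bL)}(M,N)$, and the finite-range observation does not repair this.

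The correct route — and the one the paper takes — is to resolve $N$ rather than $M$. Because $\bL$ is finite-dimensional and concentrated in nonpositive degrees, Lemma~\ref{lem:K-inj-equiv} yields a quasi-isomorphism $N\xrightarrow{\qis}N'$ with $N'$ K-injective as an $H\times\Gm$-equivariant $\bL$-dg-module and, since injective $H\times\Gm$-modules are injective over $H$, also K-injective as an $H$-equivariant one. One then reduces $M$ by \emph{truncation} to a finite-dimensional dg-module concentrated in a single degree (finite-dimensionality, not freeness, is the property that matters), after which both sides of~\eqref{eqn:forget-grading} are the zeroth cohomology of the same complex: $\Hom^\bullet_{\bL\ldgmod_H}(M,N')$ decomposes as $\bigoplus_{n}\Hom^\bullet_{\bL\ldgmod_{H\times\Gm}}(M,N'\la n\ra)$ precisely because any $H$-equivariant, $\bL$-linear map out of a finite-dimensional $M$ is a finite sum of $\Gm$-homogeneous components. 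Your detour through equivariant free resolutions is therefore both unnecessary and, as written, unjustified.
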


\begin{proof}
Using truncation functors and the five-lemma we can assume that $M$ is finite-dimensional and concentrated in a single degree. In the proof of Lemma~\ref{lem:K-inj-equiv} we have seen how to construct an object $N'$ which is K-injective as an $H \times \Gm$-equivariant $\bL$-dg-module and a quasi-isomorphism $N \xrightarrow{\qis} N'$. Looking at this construction, and using the fact that any injective $H \times \Gm$-module is also injective as an $H$-module (as can be deduced from~\cite[Propositions~I.3.9(c) and~I.3.10(b)]{jantzen}), one can easily check that $N'$ is also K-injective as an $H$-equivariant $\bL$-dg-module. It follows that the left-hand side in~\eqref{eqn:forget-grading} is the $0$-th cohomology of the complex
\[
\bigoplus_{n \in \Z} \Hom^\bullet_{\bL\ldgmod_{H \times \Gm}}(M,N' \langle n \rangle),
\]
while the right-hand side is the $0$-th cohomology of the complex
\[
\Hom^\bullet_{\bL\ldgmod_{H}}(M,N').
\]
The functor $\For^{H \times \Gm}_H$ clearly induces an isomorphism between these two complexes, and the claim follows.
\end{proof}

We finally set
\[
\kappa := \For^{H \times \Gm}_H \circ \Koszul \circ \xi : \Db(\bS\lmod_{H \times \Gm}^{\mathrm{fg}}) \to \Dfg_H(\bL).
\]
This functor is endowed with a canonical isomorphism
\[
\kappa \circ \langle 1 \rangle[1] \cong \kappa.
\]
Moreover, it follows from Lemma~\ref{lem:forget-grading} that, for any $M,N$ in $\Db(\bS\lmod_{H \times \Gm}^{\mathrm{fg}})$, $\kappa$ and this isomorphism induce an isomorphism
\begin{equation}
\label{eqn:forget-grading-kappa}
\bigoplus_{n \in \Z} \Hom_{\Db(\bS\lmod_{H \times \Gm}^{\mathrm{fg}})}(M,N \langle n \rangle [n]) \simto \Hom_{\Dfg_H(\bL)}(\kappa M, \kappa N).
\end{equation}

\subsection{Compatibilities}
\label{ss:compatibilities-Koszul}

Let now $V' \subset V$ be an $H$-stable subspace. Then we can consider the dg-algebras $\bL$ and $\bS$ as above, but also the similar dg-algebras
\[
\bL' := \bigwedge \hspace{-3pt} {}^\bullet \, V', \qquad \bS':=\mathrm{Sym}((V')^*)
\]
attached to $V'$, and the corresponding functor $\kappa'$. The embedding $V' \hookrightarrow V$ induces an embedding $e : \bL' \hookrightarrow \bL$ and a surjection $f : \bS \twoheadrightarrow \bS'$. Therefore we can consider the functors
\[
e^* : \Dfg_H(\bL) \to \Dfg_H(\bL'), \qquad \bS' \lotimes_{\bS} (-) : \Db(\bS\lmod_{H \times \Gm}^{\mathrm{fg}}) \to \Db(\bS'\lmod_{H \times \Gm}^{\mathrm{fg}}).
\]

\begin{prop}
\label{prop:koszul-compatibility-1}
There exists a canonical isomorphism of functors making the following diagram commutative:
\[
\xymatrix@C=1.5cm{
\Db(\bS\lmod_{H \times \Gm}^{\mathrm{fg}}) \ar[d]_-{\bS' \lotimes_{\bS} (-)} \ar[r]^-{\kappa} & \Dfg_H(\bL) \ar[d]^-{e^*} \\
\Db(\bS'\lmod_{H \times \Gm}^{\mathrm{fg}}) \ar[r]^-{\kappa'} & \Dfg_H(\bL').
}
\]
\end{prop}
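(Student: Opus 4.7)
The plan is to construct an explicit natural transformation between the two functors and then verify it is an isomorphism by reduction to a family of generators.

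First I would construct the natural transformation. By the definitions recalled in Sections~\ref{ss:reminder-Koszul}--\ref{ss:regrading}, for $M \in \Db(\bS\lmod^{\mathrm{fg}}_{H\times\Gm})$ we have $\kappa(M) = \For^{H\times\Gm}_H\bigl(\Hom_\F(\bL,\xi(M))\bigr)$, with differential built from the internal differential of $\xi(M)$ together with the Koszul differential encoded by the $\bS$-action and the pairing $V\otimes V^* \to \F$. Restriction of maps along $e\colon \bL'\hookrightarrow\bL$ supplies a morphism $\Hom_\F(\bL,\xi(M))\to\Hom_\F(\bL',\xi(M))$, and the canonical map $\xi(M)\to\xi(\bS'\lotimes_\bS M)$ (obtained after replacing $M$ by a K-flat resolution) induces a further morphism to $\Hom_\F(\bL',\xi(\bS'\lotimes_\bS M))=\kappa'(\bS'\lotimes_\bS M)$. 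Their composition $\alpha_M$ is $\bL'$-equivariant because the $\bL$-action on $\Hom_\F(\bL,-)$ restricted to $\bL'$ agrees with the natural $\bL'$-action, and it is compatible with the differentials because $f\colon\bS\twoheadrightarrow\bS'$ intertwines the Koszul differentials attached to the pairings $V\otimes V^*\to\F$ and $V'\otimes(V')^*\to\F$. Naturality in $M$ and compatibility with $\langle 1\rangle$ are then immediate from the construction.

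Next I would check $\alpha_M$ is an isomorphism. Both sides of the diagram are triangulated functors that commute with direct sums and with $\langle 1\rangle$, so by d\'evissage it suffices to verify the statement on a generating family. Since $\bS$ is a polynomial algebra, every object of $\bS\lmod^{\mathrm{fg}}_{H\times\Gm}$ admits a finite resolution by $H\times\Gm$-equivariant modules of the form $\bS\otimes V$ with $V \in \Repf(H\times\Gm)$; hence it is enough to verify $\alpha_M$ is a quasi-isomorphism for such $M$. In that case $\bS'\lotimes_\bS M = \bS'\otimes V$ with no higher Tor, and both $e^*\kappa(\bS\otimes V)$ and $\kappa'(\bS'\otimes V)$ become concrete complexes of $H$-modules.

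To identify them, I would choose a (non-equivariant) vector-space splitting $V = V'\oplus W$, inducing graded identifications $\bL\cong\bL'\otimes\bigwedge W$ and $\bS\cong\bS'\otimes\Sym W^*$. Under these identifications both sides of $\alpha_{\bS\otimes V}$ become $(\bL')^*\otimes\bigwedge W^*\otimes V$ equipped with Koszul differentials that a direct computation shows to coincide (this is essentially the standard Koszul calculation identifying $\bS'\lotimes_\bS\bS$ with the Koszul complex on $W^*$, Koszul-dual to the restriction $\bL\to\bL'$). The underlying morphism of complexes is therefore invertible, and since $\alpha_M$ was defined functorially in $M$ and in $H$, the isomorphism is automatically $H$-equivariant. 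The main obstacle will be the careful book-keeping of the two gradings (cohomological versus internal) and of the signs in the Koszul differentials throughout this last explicit computation; the argument is conceptually straightforward but notationally delicate.
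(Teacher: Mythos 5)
Your argument is correct in outline, but it takes a genuinely different route from the paper's. The paper never constructs a natural transformation in the direction you do: it first reduces to the graded functor $\Koszul$ (the passages through $\xi$ and $\For^{H\times\Gm}_H$ being harmless), and then replaces the square by its \emph{adjoint} square --- $\bS'\lotimes_{\bS}(-)$ is left adjoint to $f^*$, and $e^*$ has a right adjoint $Re_*=\Hom^\bullet_{\bL'}(\bL,-)$ because $\bL$ is free over $\bL'$. Since the horizontal arrows are equivalences, commutativity of the original square is equivalent to that of the adjoint square, and the latter holds on the nose at the level of the underived, exact functors: $e_*\circ\mathscr{B}'\cong\mathscr{B}\circ f^*$ by inspection of $\mathscr{B}(N)=\Hom_\F(\bL,N)$. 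This buys a proof with no d\'evissage and no quasi-isomorphism to verify. Your direct construction of $\alpha_M$ is legitimate --- the check that restriction along $e$ followed by $N\to\bS'\otimes_{\bS}N$ intertwines the Koszul differentials is exactly the computation you indicate, since the dual basis vectors of a complement $W$ of $V'$ die in $(V')^*$ --- but it forces the reduction to generators $\bS\otimes V$ and an explicit comparison there, and that last step is stated imprecisely: $e^*\kappa(\bS\otimes V)$ has underlying complex $\Hom_\F(\bL,\bS\otimes V)$ while $\kappa'(\bS'\otimes V)$ has underlying complex $\Hom_\F(\bL',\bS'\otimes V)$, and these differ by the acyclic Koszul factor $\bigwedge W^*\otimes\Sym(W^*)$, so they are \emph{not} isomorphic as complexes and $\alpha_{\bS\otimes V}$ is only a quasi-isomorphism. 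That is all you need in the derived category, but you must then actually verify that the induced map on cohomology (a map $V\to V$, since both sides compute $\mathscr{B}\mathscr{A}(V)\simeq V$ and $\mathscr{B}'\mathscr{A}'(V)\simeq V$) is an isomorphism rather than zero; this is a short computation with the unit of the Koszul equivalence, not something that follows "therefore".
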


\begin{proof}
Consider the functor
\[
\bS' \otimes_{\bS} (-) : \bS\ldgmod_{H \times \Gm}^{\boxplus} \to  \bS'\ldgmod_{H \times \Gm}^{\boxplus}.
\]
It is easily checked that there are enough objects in $\bS\ldgmod_{H \times \Gm}^{\boxplus}$ which are K-flat as $\bS$-dg-modules, and this implies that this functor admits a left derived functor
\[
\bS' \lotimes_{\bS} (-) : D_{H \times \Gm}^{\boxplus}(\bS) \to D_{H \times \Gm}^{\boxplus}(\bS').
\]
Then to prove the proposition it suffices to construct an isomorphism of functors making the following square commutative:
\begin{equation}
\label{eqn:koszul-comp}
\vcenter{
\xymatrix@C=2cm{
D_{H \times \Gm}^{\boxplus}(\bS) \ar[r]^-{\Koszul} \ar[d]_-{\bS' \lotimes_{\bS} (-)} & D_{H \times \Gm}^{\boxplus}(\bL) \ar[d]^-{e^*} \\
D_{H \times \Gm}^{\boxplus}(\bS') \ar[r]^-{\Koszul'} & D_{H \times \Gm}^{\boxplus}(\bL').
}
}
\end{equation}

The left vertical arrow in~\eqref{eqn:koszul-comp} is left-adjoint to the functor $f^* : D_{H \times \Gm}^{\boxplus}(\bS') \to D_{H \times \Gm}^{\boxplus}(\bS)$. And since $\bL$ is free over $\bL'$, the functor $e_*$ induces a functor $Re_* : D_{H \times \Gm}^{\boxplus}(\bL') \to D_{H \times \Gm}^{\boxplus}(\bL)$, which is right-adjoint to $e^*$. Since the horizontal arrows in~\eqref{eqn:koszul-comp} are equivalences, to prove that this diagram is commutative it suffices to prove that the following diagram is commutative:
\begin{equation}
\label{eqn:koszul-comp2}
\vcenter{
\xymatrix@C=2cm{
D_{H \times \Gm}^{\boxplus}(\bS) \ar[r]^-{\Koszul} & D_{H \times \Gm}^{\boxplus}(\bL) \\
D_{H \times \Gm}^{\boxplus}(\bS') \ar[r]^-{\Koszul'} \ar[u]^-{f^*} & D_{H \times \Gm}^{\boxplus}(\bL'). \ar[u]_-{Re_*}
}
}
\end{equation}

Now, recall the functor $\mathscr{B}$ considered in the proof of Theorem~\ref{thm:koszul}, and let $\mathscr{B}'$ be the similar functor associated with $V'$. Then by construction we have an isomorphism
\[
e_* \circ \mathscr{B}' \cong \mathscr{B} \circ f^*.
\]
Since all the functors considered here are exact, we deduce the desired commutativity of~\eqref{eqn:koszul-comp2}.
\end{proof}

Let now $K \subset H$ be a closed subgroup, and assume that $H/K$ is a projective noetherian scheme. Then we can consider the functor $\kappa$ in the $H$-equivariant setting or in the $K$-equivariant setting. 

On the $\bL$-side, we can consider the functor
\[
\For^H_K : \Dfg_H(\bL) \to \Dfg_K(\bL),
\]
and its right adjoint
\[
R\Ind_K^H : \Dfg_K(\bL) \to \Dfg_H(\bL),
\]
see~\S\ref{ss:Kinj-equivariant}. (The fact that this functor restricts to a functor between the categories of objects with finitely generated cohomology follows from the commutativity of diagram~\eqref{eqn:diag-for-ind} and~\cite[Proposition~I.5.12]{jantzen}.)

On the $\bS$-side, we can also consider the functor
\[
\For^{H \times \Gm}_{K \times \Gm} : \Db(\bS\lmod_{H \times \Gm}^{\mathrm{fg}}) \to \Db(\bS\lmod_{K \times \Gm}^{\mathrm{fg}}).
\]
The category $\bS\lmod_{K \times \Gm}$ identifies with the category $\QCoh^{K \times \Gm}(V)$ of $K \times \Gm$-equivariant quasi-coherent sheaves on $V$. From this point of view, it is well known that it admits enough injective objects, see e.g.~\cite[\S A.2]{mr:etspc}. Using the same procedure as in~\S\ref{ss:Kinj-equivariant} we see that the functor $\Ind_{K \times \Gm}^{H \times \Gm}$ induces a functor from $\bS\lmod_{K \times \Gm}$ to $\bS\lmod_{H \times \Gm}$, which we will also denote $\Ind_{K \times \Gm}^{H \times \Gm}$. Since the category $\bS\lmod_{K \times \Gm}$ has enough injective objects, this functor admits a right derived functor
\[
R\Ind_{K \times \Gm}^{H \times \Gm} : D^+(\bS\lmod_{K \times \Gm}) \to D^+(\bS\lmod_{H \times \Gm}).
\]
From the point of view of quasi-coherent sheaves, the functor $\Ind_{K \times \Gm}^{H \times \Gm}$ identifies with the composition of the ``induction equivalence''
\[
\QCoh^{K \times \Gm}(V) \cong \QCoh^{H \times \Gm}((H \times \Gm) \times^{K \times \Gm} V)
\]
with the direct image functor associated with the morphism
\[
(H \times \Gm) \times^{K \times \Gm} V \to V
\]
induced by the $H \times \Gm$-action on $V$. This morphism is projective under our assumptions, and using the compatibility between equivariant and ordinary direct image functors (see~\cite[Proposition~A.10]{mr:etspc}), we deduce that $R\Ind_{K \times \Gm}^{H \times \Gm}$ restricts to a functor
\[
R\Ind_{K \times \Gm}^{H \times \Gm} : \Db(\bS\lmod_{K \times \Gm}^{\mathrm{fg}}) \to \Db(\bS\lmod_{H \times \Gm}^{\mathrm{fg}}),
\]
which is right adjoint to the functor $\For^{H \times \Gm}_{K \times \Gm}$.

\begin{prop}
\label{prop:koszul-compatibility-2}
There exist canonical isomorphisms of functors making the following diagrams commutative:
\[
\xymatrix{
\Db(\bS\lmod_{H \times \Gm}^{\mathrm{fg}}) \ar[d]_-{\For^{H \times \Gm}_{K \times \Gm}} \ar[r]^-{\kappa} & \Dfg_H(\bL) \ar[d]^-{\For^H_K} \\
\Db(\bS\lmod_{K \times \Gm}^{\mathrm{fg}}) \ar[r]^-{\kappa} & \Dfg_K(\bL),
}
\qquad
\xymatrix{
\Db(\bS\lmod_{K \times \Gm}^{\mathrm{fg}}) \ar[r]^-{\kappa}\ar[d]_-{R\Ind_{K \times \Gm}^{H \times \Gm}} & \Dfg_K(\bL) \ar[d]^-{R\Ind_K^H} \\
\Db(\bS\lmod_{H \times \Gm}^{\mathrm{fg}}) \ar[r]^-{\kappa} & \Dfg_H(\bL).
}
\]
\end{prop}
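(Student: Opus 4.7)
The plan is to handle the two diagrams separately, with the induction diagram deduced from the forget diagram by adjunction plus a base-change statement.

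For the first (forget) diagram, my strategy is to factor $\kappa$ as $\For^{H\times\Gm}_H \circ \Koszul \circ \xi$ and reduce to showing naturality of $\Koszul \circ \xi$ in the group. Since $\For^{H\times\Gm}_K = \For^{K\times\Gm}_K \circ \For^{H\times\Gm}_{K\times\Gm} = \For^H_K \circ \For^{H\times\Gm}_H$, it suffices to construct an isomorphism
\[
\Koszul \circ \xi \circ \For^{H \times \Gm}_{K \times \Gm} \cong \For^{H \times \Gm}_{K \times \Gm} \circ \Koszul \circ \xi
\]
as functors $\Db(\bS\lmod_{H\times\Gm}^{\mathrm{fg}}) \to \Dfg_{K\times\Gm}(\bL)$. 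This should be immediate from the construction: the regrading functor $\xi$ is defined at the level of underlying $\F$-vector spaces with extra equivariance; similarly the functor $\mathscr{B}(N) = \Hom_\F(\bL, N)$ from the proof of Theorem~\ref{thm:koszul} is $\F$-linear and the group-equivariant structure is inherited diagonally, so restricting scalars from $H\times\Gm$ to $K\times\Gm$ manifestly commutes with $\mathscr{B}$. This step is routine.

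For the second (induction) diagram, I will first establish a ``fully equivariant'' version of the induction compatibility, namely the commutativity of
\[
\Koszul \circ \xi \circ R\Ind_{K\times\Gm}^{H\times\Gm} \cong R\Ind_{K\times\Gm}^{H\times\Gm} \circ \Koszul \circ \xi.
\]
Since $\Koszul \circ \xi$ is an equivalence (Theorem~\ref{thm:koszul}), this commutativity follows by adjunction from the first (forget) diagram applied in the fully-equivariant setting. Then it remains to show a base-change isomorphism
\[
\For^{H \times \Gm}_H \circ R\Ind_{K \times \Gm}^{H \times \Gm} \cong R\Ind_K^H \circ \For^{K \times \Gm}_K
\]
as functors $\Dfg_{K\times\Gm}(\bL) \to \Dfg_H(\bL)$, after which composing with $\Koszul \circ \xi$ gives the desired diagram.

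The base change corresponds to the Cartesian square of group schemes in which $K = H \times_{H\times\Gm} (K\times\Gm)$, with the horizontal arrows being the natural inclusions $K \hookrightarrow K\times\Gm$ and $H \hookrightarrow H\times\Gm$. At the level of non-derived functors on $\bL$-modules one has an evident isomorphism $\For^{H\times\Gm}_H \circ \Ind_{K\times\Gm}^{H\times\Gm} \cong \Ind_K^H \circ \For^{K\times\Gm}_K$ coming from the isomorphism $(H\times\Gm) \times^{K\times\Gm} V' \cong H \times^K V'$ for any $K\times\Gm$-module $V'$ (essentially because the $\Gm$-factor pulls through freely). To pass to derived functors, the main obstacle to watch out for is that one must verify that suitably injective resolutions in $\bL\ldgmod_{K\times\Gm}^+$ remain acyclic after applying $\For^{K\times\Gm}_K$ for the purposes of computing $R\Ind_K^H$. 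This is the analogue of the argument in Lemma~\ref{lem:Ind-For} or in~\cite[Proposition~I.3.10, Corollary~I.5.13]{jantzen}: any K-injective equivariant resolution constructed as in Lemma~\ref{lem:K-inj-equiv} has components of the form $\Hom_\F(\bL, V \otimes \cO(K\times\Gm)) \cong \Hom_\F(\bL,V) \otimes \cO(K)\otimes \cO(\Gm)$, whose restriction to $K$ is still a direct summand of an object of analogous form, hence $R\Ind_K^H$-acyclic. Finally, the desired compatibility on finitely generated objects follows from the projectivity assumption on $H/K$, which ensures both induction functors preserve finite generation of cohomology (via compatibility with the direct image along the projective morphism $(H\times\Gm)\times^{K\times\Gm} V \to V$, as already used just before the statement of the proposition).
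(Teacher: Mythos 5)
Your proof is correct and follows essentially the same route as the paper's (very terse) argument: reduce everything to the fully equivariant functor $\Koszul \circ \xi$, observe that the forgetful square is immediate from the construction of $\mathscr{B}$ and $\xi$, and deduce the induction square by adjunction since $\Koszul \circ \xi$ is an equivalence. You additionally spell out the base-change isomorphism $\For^{H\times\Gm}_H \circ R\Ind_{K\times\Gm}^{H\times\Gm} \cong R\Ind_K^H \circ \For^{K\times\Gm}_K$ needed to pass from the fully equivariant statement to the one involving $\kappa$ — a step the paper's proof leaves implicit — and your treatment of it (underived isomorphism from $(H\times\Gm)\times^{K\times\Gm}V' \cong H\times^K V'$ plus acyclicity of the resolutions from Lemma~\ref{lem:K-inj-equiv}) is sound.
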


\begin{proof}
It is enough to prove similar compatibilities for the functor $\Koszul \circ \xi$. In this setting the commutativity of the first diagram is obvious, and the commutativity of the second one follows by adjunction.
\end{proof}

\newpage

\part{Formality theorems}
\label{pt:formality}

\textbf{Overview.}
This part of the paper contains the proof of the Formality theorem (Theorem~\ref{thm:intro-formality}).  First, in Section~\ref{sec:formality-non-equiv} we prove a formality theorem for a derived category of representations of the Frobenius kernel $P_{I,1}$ of $P_I$. Then in Section~\ref{sec:equiv-formality} we upgrade this to an ``equivariant'' version, containing all of Theorem~\ref{thm:intro-formality} except the commutative diagram.  Finally, that commutative diagram is established in Section~\ref{sec:compatibility}.

\section{Formality for \texorpdfstring{$P_{I,1}$}{PI1}-modules}
\label{sec:formality-non-equiv}

In this section we fix a subset $I \subset S$.
We will denote by
\[
\Db_{\Stein}(P_{I,1})
\]
the full triangulated subcategory of the category $\Db \Repf(P_{I,1})$ generated by the object $\St_I$. The goal of this section is to describe this category in terms of differential graded modules over the exterior algebra of $\dot\fn_I$.

\subsection{A differential graded resolution of $\snn_I$}
\label{ss:RnI}

Recall the algebras $\fN_I$, $\fZ_I$, and $\snn_I$ introduced in~\S\ref{ss:notation-alg-groups}.
Let $\bL_I = \bigwedge^\bullet \fnt_I$, considered as a dg-algebra as in Section~\ref{sec:koszul}, and
consider the graded algebra
\[
\bR_I:=\bL_I \otimes \fZ_I.
\]
This algebra identifies with the (graded-)symmetric algebra of the complex $\dot\fn_I \xrightarrow{\id} \dot\fn_I$, where the first term is in degree $-1$. Therefore, it admits a natural differential which satisfies the (graded) Leibniz rule; in other words it admits a natural structure of a differential graded algebra. Moreover, the differential is $\fZ_I$-linear, and we have
\begin{equation}
\label{eqn:cohomology-R}
\coH^n(\bR_I) = \begin{cases}
\bk & \text{if $n=0$;} \\
0 & \text{otherwise.}
\end{cases}
\end{equation}
(In fact, decomposing $\dot\fn_I$ as a direct sum of $1$-dimensional vector spaces, we see that the complex $\bR_I$ is a tensor product of $\dim(\dot\fn_I)$ copies of the complex $\bk[X] \xrightarrow{X \cdot (-)} \bk[X]$ where $X$ is an indeterminate.) Hence the natural morphism of complexes of $\fZ_I$-modules $\bR_I \to \bk$ induced by the augmentation $\fZ_I \to \bk$ is a quasi-isomorphism.

A major role in our arguments will be played by the differential graded algebra
\[
\Rn_I := \bR_I \otimes_{\fZ_I} \fN_I.
\]
Since $\fN_I$ is flat (in fact, free) over $\fZ_I$, by~\eqref{eqn:cohomology-R} we have
\[
\coH^n(\Rn_I) = \begin{cases}
\snn_I & \text{if $n=0$;} \\
0 & \text{otherwise.}
\end{cases}
\]
Hence the morphism of differential graded algebras $\pi_I : \Rn_I \to \snn_I$ induced by the morphism $\bR_I \to \bk$ (where $\snn_I$ is considered as a differential graded algebra concentrated in degree $0$, with trivial differential) is a quasi-isomorphism.

The differential graded algebras $\Rn_I$ and $\snn_I$ admit natural actions of $P_I$ induced by the adjoint action of $P_I$ on $\fn_I$, and the quasi-isomorphism $\pi_I$ is $P_I$-equivariant. By restriction, we deduce actions of $M_I$ and $M_{I,1}$, such that $M_{I,1}$ acts trivially on the subalgebra $\bL_I \subset \Rn_I$.

Note that the functor
\begin{equation}
\label{eqn:RpiI}
R\pi_{I*} : D^+_{M_{I,1}}(\Rn_I) \to D^+_{M_{I,1}}(\snn_I)
\end{equation}
is well defined; see Remark~\ref{rmk:infinitesimal-groups-2}.

\begin{lem}
\label{lem:piI-equivalence}
The functor
\[
\pi_I^* : D_{M_{I,1}}(\snn_I) \to D_{M_{I,1}}(\Rn_I)
\]
is an equivalence of categories, which restricts to an equivalence
\begin{equation}
\label{eqn:equiv-piI}
\Dfg_{M_{I,1}}(\snn_I) \simto \Dfg_{M_{I,1}}(\Rn_I).
\end{equation}
The functor~\eqref{eqn:RpiI} is also an equivalence, and it restricts to a functor $\Dfg_{M_{I,1}}(\Rn_I) \to \Dfg_{M_{I,1}}(\snn_I)$ which is a quasi-inverse to~\eqref{eqn:equiv-piI}.
\end{lem}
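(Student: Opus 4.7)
The plan is to derive this lemma as a direct consequence of Lemma~\ref{lem:qis-equiv} applied to the $M_{I,1}$-equivariant quasi-isomorphism $\pi_I : \Rn_I \to \snn_I$ constructed in~\S\ref{ss:RnI}. First I would verify the hypotheses: $\snn_I$ is concentrated in degree $0$, while $\Rn_I \cong \bL_I \otimes \fN_I$ (as a graded vector space) lives in degrees $-\dim(\fn_I), \ldots, 0$, so both dg-algebras are concentrated in nonpositive degrees; the morphism $\pi_I$ is $P_I$-equivariant by construction, hence in particular $M_{I,1}$-equivariant; and it is already observed to be a quasi-isomorphism, since by flatness of $\fN_I$ over $\fZ_I$ combined with~\eqref{eqn:cohomology-R} we have $\coH^\bullet(\Rn_I) = \snn_I$ concentrated in degree $0$. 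Applying the first assertion of Lemma~\ref{lem:qis-equiv} then gives that $\pi_I^* : D_{M_{I,1}}(\snn_I) \to D_{M_{I,1}}(\Rn_I)$ is an equivalence.

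For the restriction to the subcategories with finitely generated cohomology, I would note that $\coH^\bullet(\snn_I) \cong \snn_I$ is finite-dimensional and hence (trivially) left Noetherian, so the final assertion of Lemma~\ref{lem:qis-equiv} yields directly the equivalence~\eqref{eqn:equiv-piI}.

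Finally, to handle the functor $R\pi_{I*}$, I would appeal to Remark~\ref{rmk:infinitesimal-groups-2}, which applies since $M_{I,1}$ is infinitesimal and both $\Rn_I$ and $\snn_I$ are concentrated in nonpositive degrees. This remark produces $R\pi_{I*}$ as a right adjoint to $\pi_I^*$. Since the right adjoint of an equivalence is itself an equivalence and is a quasi-inverse to it, $R\pi_{I*}$ is automatically an equivalence and a quasi-inverse to $\pi_I^*$. The fact that it restricts to a functor $\Dfg_{M_{I,1}}(\Rn_I) \to \Dfg_{M_{I,1}}(\snn_I)$ providing a quasi-inverse to~\eqref{eqn:equiv-piI} then comes for free from the already established restricted equivalence property of $\pi_I^*$ (a quasi-inverse to the restriction is the restriction of any quasi-inverse, provided it lands in the subcategory in question, which it must since its composition with $\pi_I^*$ is the identity on $\Dfg_{M_{I,1}}(\Rn_I)$).

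There is no genuine obstacle in this proof: the entire content of the lemma is a formal consequence of Lemma~\ref{lem:qis-equiv} together with the elementary observation about adjoints of equivalences. The only point deserving attention is the identification of $R\pi_{I*}$ as the right adjoint in the equivariant setting, which is precisely the content of Remark~\ref{rmk:infinitesimal-groups-2}.
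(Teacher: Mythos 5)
Your proof is correct and follows essentially the same route as the paper: both deduce the equivalence (and its restriction to the finitely-generated subcategories) from Lemma~\ref{lem:qis-equiv}, identify $R\pi_{I*}$ as the right adjoint of the restriction of $\pi_I^*$ to the bounded-below categories, and use $\pi_I^* R\pi_{I*}(X) \cong X$ to see that $R\pi_{I*}$ preserves finite generation of cohomology. The only cosmetic difference is that the paper explicitly invokes the $D^+$-restricted equivalence from Lemma~\ref{lem:qis-equiv} before taking the adjoint, a step you leave implicit but which is covered by the same lemma.
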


\begin{proof}
The first claim follows from Lemma~\ref{lem:qis-equiv}.
By the same lemma,
the functor $\pi_I^*$ restricts to an equivalence $D^+_{M_{I,1}}(\snn_I) \simto D^+_{M_{I,1}}(\Rn_I)$. The right adjoint $R\pi_{I*} : D^+_{M_{I,1}}(\Rn_I) \to D^+_{M_{I,1}}(\snn_I)$ to this restriction must therefore be its quasi-inverse; in particular it must be an equivalence. Finally, for $X$ in $\Dfg_{M_{I,1}}(\Rn_I)$ we have
\[
\pi_I^* R\pi_{I*}(X) \cong X,
\]
which implies that $R \pi_{I*}(X)$ has finitely generated cohomology.
\end{proof}

\subsection{A crucial vanishing lemma}

Note that the category of $M_{I,1}$-equivariant $\Rn_I$-dg-modules is canonically equivalent to the category of modules over the semi-direct product $\Rn_I \rtimes \smm_I$, where $\smm_I$ is the restricted enveloping algebra of $\fm_I$, or equivalently the distribution algebra of $M_{I,1}$; see Remark~\ref{rmk:infinitesimal-groups-2}. The same consideration applies to $\bL_I$-modules.

Now, consider the dg-subalgebra $\fN_I \subset \Rn_I \rtimes \smm_I$. This dg-subalgebra is normal, $\Rn_I \rtimes \smm_I$ is K-flat as a right $\fN_I$-dg-module, and we have $(\Rn_I \rtimes \smm_I) \aq \fN_I \cong \bL_I \rtimes \smm_I$.
Hence we can apply the results of~\S\ref{ss:normal-subalg} in this setting, and in particular consider the object $R\Hom^\bullet_{\fN_I}(\bk,\bk)$
in $D_{M_{I,1}}(\bL_I)$. Since the dg-algebra $\bL_I$ is concentrated in nonpositive degrees, the usual truncation functors for complexes define functors on $D_{M_{I,1}}(\bL_I)$. We set
\begin{equation}
\label{eqn:RHom-fN}
R\Hom^{>0}_{\fN_I}(\bk,\bk) := \tau_{>0} \bigl( R\Hom^\bullet_{\fN_I}(\bk,\bk) \bigr).
\end{equation}
Then, considering similar constructions for the dg-subalgebra $\bL_I \subset \bL_I \rtimes \smm_I$, we can form the object
\[
R\Hom_{\bL_I}(\bk, R\Hom^{>0}_{\fN_I}(\bk,\bk))
\]
in $D^+(\smm_I) \cong D^+ \Rep(M_{I,1})$.

\begin{lem}
\label{lem:cohomology-fNI}
For any $k \in \Z$, the $P_I$-module $\Ext^{k}_{\fN_I}(\bk,\bk)$ is a subquotient of $(\bigwedge\hspace{-3pt}{}^k \, \fn_I)^*$. 
\end{lem}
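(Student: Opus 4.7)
The plan is to compute $\Ext^\bullet_{\fN_I}(\bk,\bk)$ using the Koszul (Chevalley--Eilenberg) resolution of the trivial $\fN_I$-module, which is naturally $P_I$-equivariant, and then read off the desired subquotient property from the shape of the resolution.

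More precisely, I would use the standard Koszul resolution $K_\bullet \twoheadrightarrow \bk$, where $K_n = \fN_I \otimes_\bk \bigwedge^n \fn_I$ with the usual Chevalley--Eilenberg differential (given by the $\fn_I$-action combined with the Lie bracket). This is a resolution of $\bk$ by free $\fN_I$-modules; exactness can be proved using the PBW filtration, whose associated graded is the classical Koszul complex for $\Sym(\fn_I)$, which is exact in any characteristic. Since $P_I$ acts on $\fn_I$ by the adjoint action, and the multiplication in $\fN_I$, the bracket on $\fn_I$ and the exterior powers $\bigwedge^n \fn_I$ are all $P_I$-equivariant, the whole resolution $K_\bullet$ lives in the category of $P_I$-equivariant $\fN_I$-modules.

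Applying $\Hom_{\fN_I}(-,\bk)$ then yields a $P_I$-equivariant complex whose $n$-th term is
\[
\Hom_{\fN_I} \bigl( \fN_I \otimes \bigwedge \hspace{-3pt} {}^n \, \fn_I, \bk \bigr) \cong \bigl( \bigwedge \hspace{-3pt} {}^n \, \fn_I \bigr)^*,
\]
with differential equal to the transpose of the Chevalley--Eilenberg differential (again a $P_I$-equivariant map). Taking cohomology in degree $k$ gives
\[
\Ext^k_{\fN_I}(\bk,\bk) \cong \coH^k \bigl( (\bigwedge \hspace{-3pt} {}^\bullet \, \fn_I)^* \bigr),
\]
an isomorphism of $P_I$-modules. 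In particular $\Ext^k_{\fN_I}(\bk,\bk)$ is a subquotient of $(\bigwedge^k \fn_I)^*$ as a $P_I$-module, which is exactly the statement of the lemma.

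There is no real obstacle here: the only point requiring any care is to make sure that the Koszul resolution is genuinely $P_I$-equivariant (not merely equivariant for the inner $N_I$-action built into $\fN_I$), but this is immediate from the functoriality of the construction $\fh \mapsto (\mathcal{U}(\fh) \otimes \bigwedge^\bullet \fh, d_{\mathrm{CE}})$ with respect to Lie algebra automorphisms, applied to the adjoint $P_I$-action on $\fn_I$.
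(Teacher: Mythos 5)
Your proof is correct and matches the paper's argument essentially verbatim: both compute $\Ext^\bullet_{\fN_I}(\bk,\bk)$ via the $P_I$-equivariant Chevalley--Eilenberg resolution and read off the subquotient property from the terms $(\bigwedge^k \fn_I)^*$ of the resulting complex. The additional details you supply (PBW filtration for exactness, transpose differential) are sound but not needed beyond what the paper already records.
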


\begin{proof}
We can compute $\Ext^{k}_{\fN_I}(\bk,\bk)$ using the Chevalley--Eilenberg complex, which provides a $P_I$-equivariant projective resolution of the trivial $\fN_I$-module, see e.g.~\cite[Theorem~7.7.2]{weibel}. In this way we see that this $P_I$-module is the $k$-th cohomology of a complex whose underlying graded vector space is $(\bigwedge\hspace{-3pt}{}^k \, \fn_I)^*$, and the claim follow.
\end{proof}

The main result of this subsection is the following technical result.

\begin{lem}
\label{lem:vanishing-RHom}
We have
\[
R\Hom_{M_{I,1}}(\St_I, R\Hom_{\bL_I}(\bk, R\Hom^{>0}_{\fN_I}(\bk,\bk)) \otimes \St_I)=0.
\]
\end{lem}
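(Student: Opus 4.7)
The plan is to combine a $T$-weight bound on $\coH^i(X)$ with a composition-factor analysis. Since $\St_I$ is both projective and injective as an $M_{I,1}$-module (see \S\ref{ss:Steinberg}), the functor $\Hom_{M_{I,1}}(\St_I, -)$ is exact and its value records the multiplicity of $\St_I$ as a composition factor. Setting $X := R\Hom_{\bL_I}(\bk, R\Hom^{>0}_{\fN_I}(\bk, \bk))$, the lemma reduces to showing that $\St_I$ is not an $M_{I,1}$-composition factor of $\coH^i(X) \otimes \St_I$ for any $i \in \Z$.

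The construction of $X$ is $P_I$-equivariant throughout, so $X$ inherits an $M_I$-module structure extending its $M_{I,1}$-one. Using Lemma~\ref{lem:cohomology-fNI} (which bounds $\coH^q(R\Hom^{>0}_{\fN_I}(\bk, \bk))$ for $q > 0$ as a $P_I$-subquotient of $\bigwedge^q\fn_I^*$) together with the standard Koszul-duality identification $\Ext^\bullet_{\bL_I}(\bk, \bk) \cong \Sym^\bullet(\fnt_I^*)$ (whose $T$-weights lie in $-\ell \cdot \Z_{\geq 0}(\Phi^+ \setminus \Phi^+_I)$), the hypercohomology spectral sequence
\[
E_2^{p,q} = \Ext^p_{\bL_I}(\bk, \coH^q(R\Hom^{>0}_{\fN_I}(\bk, \bk))) \Rightarrow \coH^{p+q}(X)
\]
shows that every $T$-weight of $\coH^i(X)$ has the form $-\mu - \ell\nu$, where $\mu$ is a nonzero sum of distinct positive roots in $\Phi^+ \setminus \Phi^+_I$ and $\nu \in \Z_{\geq 0}(\Phi^+ \setminus \Phi^+_I)$. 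In particular, every such weight is strictly negative in the root partial order.

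Hence every $T$-weight of $V := \coH^i(X) \otimes \St_I$ is of the form $(\ell-1)\varsigma_I - \mu - \sigma - \ell\nu$ with $\sigma \in \Z_{\geq 0}\Sigma_I$ coming from weights of $\St_I$. To conclude, I appeal to Steinberg's tensor product theorem applied to $V$ as an $M_I$-module: any $M_I$-composition factor $L^{M_I}(\mu_*)$ of $V$ whose $M_{I,1}$-restriction contains $\St_I$ must have its Steinberg-restricted part $\mu_{*,0}$ satisfying $L^{M_I}(\mu_{*,0})|_{M_{I,1}} \cong \St_I$; this forces $\mu_* = (\ell-1)\varsigma_I + \ell\eta$ for some $\eta$ constrained by pairings with $\Sigma_I^\vee$. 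Matching $\mu_*$ against the form of available weights of $V$ then yields a congruence of the shape $\mu + \sigma \in \ell\bX$. The main obstacle in the proof is verifying that, under the hypothesis $\ell > h$, the bounds on the $\alpha_s$-coefficients of $\mu$ (a sum of at most $|\Phi^+ \setminus \Phi^+_I|$ distinct positive roots) in terms of the root system data preclude this congruence from holding for any nonzero $\mu$ and admissible $\sigma$, yielding the desired contradiction.
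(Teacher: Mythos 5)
There is a genuine gap at the final step, and you yourself flag it. Your strategy — reduce via projectivity of $\St_I$ to a composition-factor count, analyze $T$-weights of $\coH^i(X)\otimes\St_I$, and invoke Steinberg's tensor product theorem — leads you to need to rule out a congruence of the form
\[
\pm\!\!\sum_{\alpha\in F}\alpha \;-\; \sigma \;\in\; \ell\bX,
\]
where $F\subset\Phi^+\setminus\Phi_I^+$ is a nonempty subset and $\sigma\in\Z_{\geq0}\Sigma_I$ ranges over the weight gaps of $\St_I$ (so $\sigma$ can be as large as $2(\ell-1)\rho_I$). You describe verifying this as ``the main obstacle'' and do not carry it out. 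But this is not a routine verification: it is strictly stronger than the fact the paper actually uses, namely \cite[Lemma~II.12.10]{jantzen}, which says only that a nonempty sum $\sum_{\alpha\in F}\alpha$ with $F\subset\Phi^+$ is itself not in $\ell\bX$ when $\ell>h$. Because $\langle\sigma,\alpha_0^\vee\rangle$ can greatly exceed $\ell$ (its bound grows like $(\ell-1)\langle 2\rho_I,\alpha_0^\vee\rangle$), the standard ``pair with the highest coroot'' argument behind II.12.10 does not directly extend, and it is not clear that your strengthened claim holds without a genuinely new argument.

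The paper sidesteps this difficulty by never analyzing the weights of $\St_I$ at all. Having reduced (as you do) to a statement about $\Ext^k_{\fN_I}(\bk,\bk)\otimes\St_I$, it rewrites $\St_I\cong\Ind_{B_{I,1}}^{M_{I,1}}\bk_{B_{I,1}}((\ell-1)\varsigma_I)$ (Lemma~\ref{lem:st-sll}) and uses the tensor identity and exactness of $\Ind_{B_{I,1}}^{M_{I,1}}$ to filter the tensor product by induced modules $\Ind_{B_{I,1}}^{M_{I,1}}(\bk_{B_{I,1}}((\ell-1)\varsigma_I+\nu))$, with $\nu$ a weight of $\Ext^k_{\fN_I}(\bk,\bk)$. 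The $M_{I,1}$-linkage principle then says that $\St_I$ can only occur as a composition factor when $(\ell-1)\varsigma_I+\nu\in W_I\bullet(\ell-1)\varsigma_I+\ell\bX=(\ell-1)\varsigma_I+\ell\bX$, i.e.\ $\nu\in\ell\bX$ — exactly what II.12.10 rules out, with no $\sigma$ anywhere in sight. So: your overall reduction is sound, but the composition-factor analysis via Steinberg's theorem is the wrong tool at the final step; replacing it with the induced-module filtration and linkage principle closes the gap and reduces the needed input to the standard lemma you implicitly want.
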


\begin{proof}
It follows in particular from Lemma~\ref{lem:cohomology-fNI} that the object $R\Hom^{>0}_{\fN_I}(\bk,\bk)$ has bounded cohomology. Using truncation functors, we deduce that to prove the lemma, it suffices to show that for any $k>0$ we have
\[
R\Hom_{M_{I,1}}(\St_I, R\Hom_{\bL_I}(\bk, \Ext^k_{\fN_I}(\bk,\bk)) \otimes \St_I)=0,
\]
where $\Ext^k_{\fN_I}(\bk,\bk)$ is considered as a (trivial) $\bL_I$-dg-module concentrated in degree $0$. Then, to prove this fact it is enough to prove that
\begin{equation}
\label{eqn:vanishing-RHom-1}
R\Hom_{M_{I,1}}(\St_I, R\Hom_{\bL_I}(\bk, \bk) \otimes \Ext^k_{\fN_I}(\bk,\bk) \otimes \St_I)=0 \quad \text{for any $k>0$.}
\end{equation}
And since $M_{I,1}$ acts trivially on $\bL_I$, the complex $R\Hom_{\bL_I}(\bk, \bk) \in D^+ \Rep(M_{I,1})$ is isomorphic to a direct sum of shifts of trivial modules, so that~\eqref{eqn:vanishing-RHom-1} reduces to the claim that
\begin{equation*}
R\Hom_{M_{I,1}}(\St_I, \Ext^k_{\fN_I}(\bk,\bk) \otimes \St_I)=0 \quad \text{for any $k>0$.}
\end{equation*}
Finally, since $\St_I$ is projective as an $M_{I,1}$-module (see~\S\ref{ss:Steinberg}), to prove this, we must show that
\begin{equation}
\label{eqn:vanishing-RHom-2}
\Hom_{M_{I,1}}(\St_I, \Ext^k_{\fN_I}(\bk,\bk) \otimes \St_I)=0 \quad \text{for any $k>0$.}
\end{equation}

By Lemma~\ref{lem:cohomology-fNI}, all the $T$-weights in the $M_I$-module $\Ext^k_{\fN_I}(\bk,\bk)$ are of the form
\[
-\sum_{\alpha \in F} \alpha
\]
where $F \subset \Phi^+ \setminus \Phi_I^+$ is a subset of cardinality $k$. By~\cite[Lemma~II.12.10]{jantzen}, under our assumptions that $k \neq 0$ and $\ell>h$, no such weight belongs to $\ell \bX$. Now, using
Lemma~\ref{lem:st-sll}, the tensor identity, and the fact that the induction functor $\Ind_{B_{I,1}}^{M_{I,1}}(-)$ is exact, 
we see that the $M_{I,1}$-module $\Ext^k_{\fN_I}(\bk,\bk) \otimes \St_I$ admits a (finite) filtration with subquotients of the form
\[
\Ind_{B_{I,1}}^{M_{I,1}}(\bk_{B_{I,1}}((\ell-1)\varsigma_I + \nu)),
\]
where $\nu$ is a $T$-weight of $\Ext^k_{\fN_I}(\bk,\bk)$. As explained above, no weight of the form $(\ell-1)\varsigma_I + \nu$ belongs to $W_I \bullet (\ell-1)\varsigma_I + \ell \bX = (\ell-1)\varsigma_I + \ell \bX$. By the linkage principle for $M_{I,1}$-modules (see~\cite[Corollary~II.9.12]{jantzen}), it follows that the simple module $\St_I$ is not a composition factor of any subquotient of this filtration. This proves~\eqref{eqn:vanishing-RHom-2} and finishes the proof.
\end{proof}

\subsection{From $\bL_I$-modules to $\Rn_I$-modules}
\label{ss:fromLtoRn}

Recall that $M_{I,1}$ acts trivially (in other words through the quotient $M_{I,1} \to \{1\}$) on $\bL_I$. Therefore, we can consider the functor
\[
\For^{\{1\}}_{M_{I,1}} : D(\bL_I) \to D_{M_{I,1}}(\bL_I).
\]
On the other hand, for any $V$ in $\Rep(M_{I,1})$ one can consider the functor
\[
(-) \otimes V : D_{M_{I,1}}(\bL_I) \to D_{M_{I,1}}(\bL_I).
\]
(Here, $\bL_I$ acts on $X \otimes V$ via its action on $X$, and $M_{I,1}$ acts diagonally.) In particular we can consider the object $\bk \otimes V$, where $\bk$ is the trivial dg-module; this object will simply be denoted $V$.
Using this convention, we denote by
\[
\Dfg_{\Stein}(\bL_I)
\]
the full triangulated subcategory of the category $\Dfg_{M_{I,1}}(\bL_I)$ generated by
$\St_I$. 

\begin{lem}
\label{lem:stein-ff}
The functor $\Dfg(\bL_I) \to \Dfg_{\Stein}(\bL_I)$ given by
\[
V \mapsto \For^{\{1\}}_{M_{I,1}} (V) \otimes \St_I
\]
is fully faithful.
\end{lem}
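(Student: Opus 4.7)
The plan is to pass to the equivalent category of modules over a tensor product dg-algebra and then use a Künneth-style argument together with the projectivity and simplicity of $\St_I$.

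First, I observe that since $\fnt_I$ is the Frobenius twist of $\fn_I$, the infinitesimal group $M_{I,1}$ acts trivially on $\fnt_I$, and hence on the whole exterior algebra $\bL_I$. Consequently, the semidirect product dg-algebra $\bL_I \rtimes \smm_I$ appearing in Remark~\ref{rmk:infinitesimal-groups-2} is just the ordinary tensor product $\bL_I \otimes \smm_I$ (with $\smm_I$ concentrated in degree $0$ and carrying trivial differential). The equivalence of that remark then identifies $D_{M_{I,1}}(\bL_I)$ with $D(\bL_I \otimes \smm_I)$, and under this identification the object $\For^{\{1\}}_{M_{I,1}}(V) \otimes \St_I$ corresponds to the external tensor product $V \boxtimes \St_I$, where $V$ carries its $\bL_I$-action on the first factor and $\St_I$ carries its $\smm_I$-action on the second.

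Next, I would take a K-projective resolution $P \xrightarrow{\qis} V$ over $\bL_I$ (available by Lemma~\ref{lem:Kproj-resolution}). Since $\St_I$ is projective as an $\smm_I$-module (see~\S\ref{ss:Steinberg}), $P \otimes \St_I$ is K-projective as a $\bL_I \otimes \smm_I$-dg-module, so for $W \in \Dfg(\bL_I)$ I can compute by the standard adjunction
\[
\Hom^\bullet_{\bL_I \otimes \smm_I}(P \otimes \St_I, W \otimes \St_I) \cong \Hom^\bullet_{\bL_I}\bigl(P, \Hom_{\smm_I}(\St_I, W \otimes \St_I)\bigr).
\]
Because $W$ carries the trivial $\smm_I$-action and $\End_{\smm_I}(\St_I) = \bk$ (the Steinberg module being simple over $M_{I,1}$), evaluation at $\id_{\St_I}$ provides a canonical isomorphism $\Hom_{\smm_I}(\St_I, W \otimes \St_I) \cong W$ of $\bL_I$-dg-modules. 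Combining these steps gives
\[
R\Hom_{D_{M_{I,1}}(\bL_I)}(V \otimes \St_I, W \otimes \St_I) \cong R\Hom_{\bL_I}(V, W),
\]
and this isomorphism is visibly induced by applying $(-) \otimes \St_I$; taking $H^0$ yields fully faithfulness.

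The main thing to verify carefully is the claim that $P \otimes \St_I$ is K-projective over $\bL_I \otimes \smm_I$, but once the trivial-action observation collapses the semidirect product to an honest tensor product, this reduces to the standard fact that the tensor product of a K-projective dg-module with a projective module over a separate (ungraded) algebra remains K-projective. I do not expect any serious obstacle; the only subtle point is the initial identification of $\bL_I \rtimes \smm_I$ with $\bL_I \otimes \smm_I$, which rests on $\fnt_I$ being a Frobenius twist.
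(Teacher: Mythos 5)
Your proof is correct and rests on the same key observations as the paper's: $\St_I$ is projective over $\smm_I$ with $\End_{\smm_I}(\St_I) \cong \bk$, $M_{I,1}$ acts trivially on $\bL_I$ (so that $\bL_I \rtimes \smm_I = \bL_I \otimes \smm_I$), and a K-projective $\bL_I$-resolution tensored with $\St_I$ gives a K-projective resolution over $\bL_I \otimes \smm_I$. The only difference is cosmetic: the paper first reduces to $V = W = \bk$ by noting that $\Dfg(\bL_I)$ is generated by $\bk$, while you run the tensor-Hom adjunction directly for arbitrary $V$ and $W$, which spares you that reduction step.
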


\begin{proof}
First, we observe that
the category $\Dfg(\bL_I)$ is generated, as a triangulated category, by $\bk$. Indeed, since $\bL_I$ is concentrated in nonpositive degrees, the usual truncation functors for complexes induce functors for $\bL_I$-modules. Then using these truncation functors we see that that the category $\Dfg(\bL_I)$ is generated (again as a triangulated category) by differential graded modules which are concentrated in degree $0$. Such objects are direct sums of copies of $\bk$, and the claim is proved.

Using this claim, to prove the lemma it suffices to prove that the morphism
\[
\Ext^\bullet_{\bL_I}(\bk,\bk) \to \Ext^\bullet_{D_{M_{I,1}}(\bL_I)}(\St_I,\St_I)
\]
induced by our functor
is an isomorphism. Now,
since $\St_I$ is a projective $M_{I,1}$-module with $\End_{M_{I,1}}(\St_I) \cong \bk$ (see~\S\ref{ss:Steinberg}), there are natural isomorphisms
\[
\Ext^\bullet_{\bL_I}(\bk,\bk) \cong
\Ext^\bullet_{\bL_I}(\bk,\bk) \otimes \End_{M_{I,1}}(\St_I) \cong
\Ext^\bullet_{D_{M_{I,1}}(\bL_I)}(\St_I,\St_I),
\]
and the lemma follows. (Here, in order to prove the second isomorphism, we remark that if $X \to \bk$ is a quasi-isomorphism of $\bL_I$-dg-modules with $X$ K-projective, then the induced morphism $X \otimes \St_I \to \St_I$ will be a quasi-isomorphism of $M_{I,1}$-equivariant $\bL_I$-dg-modules, with $X \otimes \St_I$ K-projective as an $M_{I,1}$-equivariant $\bL_I$-dg-module.)
\end{proof}

Consider now the morphism $\sigma_I : \Rn_I \to \Rn_I \aq \fN_I = \bL_I$.

\begin{prop}
\label{prop:sigma-fully-faithful}
The functor
\[
\sigma_I^* : \Dfg_{\Stein}(\bL_I) \to \Dfg_{M_{I,1}}(\Rn_I)
\]
is fully faithful.
\end{prop}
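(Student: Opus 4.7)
The plan is to check full faithfulness via the adjunction $(\sigma_I^*, R\sigma_{I*})$. For $X, Y \in \Dfg_{\Stein}(\bL_I)$, we have
\[
\Hom_{D_{M_{I,1}}(\Rn_I)}(\sigma_I^* X, \sigma_I^* Y) \cong \Hom_{D_{M_{I,1}}(\bL_I)}(X, R\sigma_{I*}\sigma_I^* Y),
\]
so $\sigma_I^*$ is fully faithful iff the unit $\eta_Y: Y \to R\sigma_{I*}\sigma_I^* Y$ becomes an isomorphism after applying $\Hom_{D_{M_{I,1}}(\bL_I)}(X, -)$ for all $X, Y \in \Dfg_{\Stein}(\bL_I)$. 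A standard d\'evissage, using that $\Dfg_{\Stein}(\bL_I)$ is generated as a triangulated category by $\St_I$, reduces this to verifying
\[
\Hom_{D_{M_{I,1}}(\bL_I)}(\St_I, \mathrm{cone}(\eta_{\St_I})[n]) = 0 \qquad \text{for all } n \in \Z.
\]

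To compute $\mathrm{cone}(\eta_{\St_I})$, I would invoke the lemma following~\eqref{eqn:direct-image-normal-subalg} for the normal dg-subalgebra $\fN_I \subset \Rn_I \rtimes \smm_I$, whose quotient is $\bL_I \rtimes \smm_I$, using that $\Rn_I \cong \bL_I \otimes_\bk \fN_I$ is free (hence K-flat) as a right $\fN_I$-module. This identifies $R\sigma_{I*}\sigma_I^* \St_I$ with $R\Hom^\bullet_{\fN_I}(\bk, \sigma_I^*\St_I)$, equipped with its natural $\bL_I \rtimes \smm_I$-structure. Since $\sigma_I$ annihilates the augmentation ideal of $\fN_I$, the action of $\fN_I$ on $\sigma_I^*\St_I$ is through its augmentation, yielding a canonical identification of underlying objects $R\Hom^\bullet_{\fN_I}(\bk,\bk) \otimes \St_I$. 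A direct inspection, using that the degree $-1$ generators of $\bL_I$ lift to degree $-1$ elements of $\Rn_I$ which act on $\sigma_I^*\St_I$ through the augmentation of $\bL_I$, shows that the resulting $\bL_I$-action on this tensor product is trivial and that $\eta_{\St_I}$ corresponds to the inclusion $\bk = \coH^0 R\Hom^\bullet_{\fN_I}(\bk,\bk) \hookrightarrow R\Hom^\bullet_{\fN_I}(\bk,\bk)$. Together with~\eqref{eqn:RHom-fN}, this produces a distinguished triangle
\[
\St_I \xrightarrow{\eta_{\St_I}} R\sigma_{I*}\sigma_I^* \St_I \to R\Hom^{>0}_{\fN_I}(\bk,\bk) \otimes \St_I \xrightarrow{+1}
\]
in $D_{M_{I,1}}(\bL_I)$, with trivial $\bL_I$-action on the third term.

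It then suffices to show that $\Hom_{D_{M_{I,1}}(\bL_I)}(\St_I, R\Hom^{>0}_{\fN_I}(\bk,\bk) \otimes \St_I[n]) = 0$ for all $n$. I would apply the adjunction associated with the surjection $p: \bL_I \rtimes \smm_I \twoheadrightarrow \smm_I$ (under which $p^*\St_I$ recovers $\St_I$, since $\bL_I$ acts trivially on $\St_I$), yielding
\begin{multline*}
\Hom_{D_{M_{I,1}}(\bL_I)}(\St_I, R\Hom^{>0}_{\fN_I}(\bk,\bk)\otimes \St_I[n]) \cong \\
\Hom_{D(\smm_I)}\bigl(\St_I, R\Hom^\bullet_{\bL_I}(\bk, R\Hom^{>0}_{\fN_I}(\bk,\bk) \otimes \St_I)[n]\bigr).
\end{multline*}
Since $\bL_I$ acts trivially on both tensor factors on the right-hand side, $\St_I$ can be pulled out of the inner $R\Hom^\bullet_{\bL_I}(\bk, -)$, reducing the target to $R\Hom^\bullet_{\bL_I}(\bk, R\Hom^{>0}_{\fN_I}(\bk,\bk)) \otimes \St_I$. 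The required vanishing is then precisely the content of Lemma~\ref{lem:vanishing-RHom}. I expect the main obstacle to be the careful bookkeeping of the three intertwined equivariant structures ($\bL_I$-, $\Rn_I$-, and $M_{I,1}$-) through these identifications, and in particular verifying that the relevant $\bL_I$-actions on $\mathrm{cone}(\eta_{\St_I})$ are trivial so that the two nested $R\Hom$'s can be untangled as above.
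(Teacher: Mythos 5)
Your proposal is correct and follows essentially the same route as the paper: reduce to the generator $\St_I$, use the normal-subalgebra adjunctions~\eqref{eqn:RHom-normal-subalg} for $\fN_I \subset \Rn_I \rtimes \smm_I$ and $\bL_I \subset \bL_I \rtimes \smm_I$ to identify the map in question with $\bk = \tau_{\leq 0}R\Hom_{\fN_I}(\bk,\bk) \to R\Hom_{\fN_I}(\bk,\bk)$ tensored with $\St_I$, and conclude from the vanishing of Lemma~\ref{lem:vanishing-RHom}. The one caveat is that your claim that the $\bL_I$-action on $R\Hom^\bullet_{\fN_I}(\bk,\bk)$ is trivial is neither obvious nor needed: only triviality of the action on the $\St_I$ factor is used to pull it out of $R\Hom_{\bL_I}(\bk,-)$, and Lemma~\ref{lem:vanishing-RHom} is stated for the natural (a priori nontrivial) $\bL_I$-action, which its proof handles by truncating to the cohomology objects $\Ext^k_{\fN_I}(\bk,\bk)$, where triviality is automatic for degree reasons.
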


\begin{proof}
To prove the proposition it suffices to prove that the morphism
\[
\Hom^\bullet_{\Dfg_{M_{I,1}}(\bL_I)}(\St_I,\St_I) \to \Hom^\bullet_{\Dfg_{M_{I,1}}(\Rn_I)}(\St_I,\St_I)
\]
induced by $\sigma_I^*$
is an isomorphism. 
Using the constructions of~\S\ref{ss:normal-subalg} for the normal dg-subalgebras $\bL_I \subset \bL_I \rtimes \smm_I$ and $\fN_I \subset \Rn_I \rtimes \smm_I$, and in particular isomorphisms~\eqref{eqn:RHom-normal-subalg} and~\eqref{eqn:transitivity-direct-image} (see also~\eqref{eqn:direct-image-normal-subalg}), we have canonical isomorphisms
\begin{align*}
\Hom^\bullet_{\Dfg_{M_{I,1}}(\bL_I)}(\St_I,\St_I) &\cong \Hom^\bullet_{D^+ \Rep(M_{I,1})}(\St_I, R\Hom_{\bL_I}(\bk, \St_I)),
\\
\Hom^\bullet_{\Dfg_{M_{I,1}}(\Rn_I)}(\St_I,\St_I) &\cong \Hom^\bullet_{D^+ \Rep(M_{I,1})}(\St_I, R\Hom_{\bL_I}(\bk, R\Hom_{\fN_I}(\bk,\St_I))).
\end{align*}
Now $\fN_I$ and $\bL_I$ act trivially on $\St_I$, so that we have
\[
R\Hom_{\fN_I}(\bk,\St_I) \cong R\Hom_{\fN_I}(\bk,\bk) \otimes \St_I, \; R\Hom_{\bL_I}(\bk, \St_I) \cong R\Hom_{\bL_I}(\bk,\bk) \otimes \St_I.
\]
Hence the claim above reduces to the claim that the morphism
\begin{multline*}
\Hom^\bullet_{D^+ \Rep(M_{I,1})}(\St_I, R\Hom_{\bL_I}(\bk, \bk) \otimes \St_I) \\
\to \Hom^\bullet_{D^+ \Rep(M_{I,1})}(\St_I, R\Hom_{\bL_I}(\bk, R\Hom_{\fN_I}(\bk, \bk)) \otimes \St_I)
\end{multline*}
induced by the natural morphism
\begin{equation}
\label{eqn:sigma-fully-faithful}
\bk = \tau_{\leq 0} R\Hom_{\fN_I}(\bk,\bk) \to R\Hom_{\fN_I}(\bk,\bk)
\end{equation}
in $\Dfg_{M_{I,1}}(\bL_I)$ is an isomorphism.
The cone of~\eqref{eqn:sigma-fully-faithful} is $R\Hom^{>0}_{\fN_I}(\bk,\bk)$, so the desired claim follows from the fact that
\[
\Hom^\bullet_{D^+ \Rep(M_{I,1})}(\St_I, R\Hom_{\bL_I}(\bk, R\Hom^{>0}_{\fN_I}(\bk, \bk)) \otimes \St_I)=0,
\]
which was proved in Lemma~\ref{lem:vanishing-RHom}.
\end{proof}

\subsection{Formality theorem for $P_{I,1}$}
\label{ss:formality-PI1}

Since $N_{I,1}$-modules are the same thing as $\snn_I$-modules, see Remark~\ref{rmk:infinitesimal-groups-1}, there exists a canonical equivalence of categories
\begin{equation}
\label{eqn:equiv-PI1}
D^+_{M_{I,1}}(\snn_I) \cong D^+ \Rep(P_{I,1}).
\end{equation}
Let us consider the following composition of functors, 
which we will denote by $\varphi_I$:
\begin{multline*}
D^+(\bL_I) \xrightarrow{\For^{\{1\}}_{M_{I,1}} (-) \otimes \St_I} D^+_{M_{I,1}}(\bL_I) \xrightarrow{\sigma_I^*} D^+_{M_{I,1}}(\Rn_I) \\
\xrightarrow{R\pi_{I*}} D^+_{M_{I,1}}(\snn_I) \xrightarrow[\sim]{\eqref{eqn:equiv-PI1}}D^+ \Rep(P_{I,1}).
\end{multline*}
It is clear that this functor satisfies $\varphi_I(\bk) \cong \St_I$.
Combining Lemma~\ref{lem:piI-equivalence}, Lemma~\ref{lem:stein-ff}, and Proposition~\ref{prop:sigma-fully-faithful}, we obtain the following ``formality'' theorem.

\begin{thm}
\label{thm:formality-non-equiv}
The functor $\varphi_I$ is fully faithful on the full subcategory $\Dfg(\bL_I)$, and it induces an equivalence
\[
\Dfg(\bL_I) \simto \Db_{\Stein} (P_{I,1}).
\]
\end{thm}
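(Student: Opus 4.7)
The plan is to assemble the theorem directly from the results already proved in this section, since $\varphi_I$ is by construction a composition of four functors whose individual behaviour has been analyzed. I would first verify full faithfulness stage by stage, and then track the image of the generator $\bk \in \Dfg(\bL_I)$ to identify the essential image with $\Db_\Stein(P_{I,1})$.

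Concretely, recall from the proof of Lemma~\ref{lem:stein-ff} that the category $\Dfg(\bL_I)$ is generated as a triangulated category by the trivial module $\bk$, and that the first functor $V \mapsto \For^{\{1\}}_{M_{I,1}}(V) \otimes \St_I$ is fully faithful and sends $\bk$ to $\St_I$. Its essential image is therefore a triangulated subcategory of $\Dfg_{M_{I,1}}(\bL_I)$ containing $\St_I$, and one quickly sees it must coincide with $\Dfg_\Stein(\bL_I)$ (the full subcategory generated by $\St_I$), so that we get a fully faithful functor $\Dfg(\bL_I) \to \Dfg_\Stein(\bL_I)$ which is essentially surjective, i.e.\ an equivalence. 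Next, Proposition~\ref{prop:sigma-fully-faithful} gives the full faithfulness of $\sigma_I^* : \Dfg_\Stein(\bL_I) \to \Dfg_{M_{I,1}}(\Rn_I)$, and Lemma~\ref{lem:piI-equivalence} provides an equivalence $R\pi_{I*} : \Dfg_{M_{I,1}}(\Rn_I) \simto \Dfg_{M_{I,1}}(\snn_I)$.

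The remaining identification $\Dfg_{M_{I,1}}(\snn_I) \cong \Db\Repf(P_{I,1})$ I would deduce from the decomposition $P_{I,1} \cong N_{I,1} \rtimes M_{I,1}$, which gives $\Dist(P_{I,1}) \cong \snn_I \rtimes \smm_I$, combined with Remark~\ref{rmk:infinitesimal-groups-2}: $D_{M_{I,1}}(\snn_I) \cong D(\Dist(P_{I,1})\lmod) \cong D(\Rep(P_{I,1}))$. Since $\snn_I$ is finite-dimensional over $\bk$, finite generation over $\snn_I$ coincides with finite-dimensionality, so the subcategory $\Dfg_{M_{I,1}}(\snn_I)$ is identified with $\Db\Repf(P_{I,1})$. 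Composing all four steps, $\varphi_I$ restricted to $\Dfg(\bL_I)$ is fully faithful, and tracking $\bk$ through each step (using that $\St_I$ has trivial actions of $\bL_I$, $\fN_I$, and $\snn_I$, so the pullbacks along $\sigma_I$ and $\pi_I$ leave it unchanged) one finds $\varphi_I(\bk) \cong \St_I$ in $\Repf(P_{I,1})$. The essential image of $\varphi_I$ is then a triangulated subcategory generated by $\St_I$, and hence is exactly $\Db_\Stein(P_{I,1})$.

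There is in fact no remaining obstacle at this stage: the real difficulty has been absorbed into the vanishing result of Lemma~\ref{lem:vanishing-RHom} (which underpins the full faithfulness of $\sigma_I^*$) and the dg-resolution $\Rn_I \to \snn_I$ of~\S\ref{ss:RnI}. The only delicate point in the present assembly is bookkeeping: checking that the cohomological finiteness conditions match across the equivalences $\pi_I^*$ and $R\pi_{I*}$ (handled explicitly in Lemma~\ref{lem:piI-equivalence}), and that the generation argument for the essential image is valid, which comes down to the fact that a fully faithful triangulated functor sends a generator to a generator of the essential image.
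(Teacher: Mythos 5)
Your proposal is correct and follows essentially the same route as the paper, which proves the theorem simply by citing Lemma~\ref{lem:piI-equivalence}, Lemma~\ref{lem:stein-ff}, and Proposition~\ref{prop:sigma-fully-faithful} without further elaboration; you have merely spelled out the assembly of these pieces, including the standard observation that a fully faithful triangulated functor carries the triangulated subcategory generated by $\bk$ onto the triangulated subcategory generated by $\St_I$.
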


\subsection{Equivariance}
\label{ss:equivariance}

In this subsection we fix a subset $J \subset I$.
The dg-algebra $\bL_I$ has a natural action of $\dot P_I$, and hence of $P_J$ via the morphism $P_J \to \dot P_I$ induced by the Frobenius morphism $\Fr_{P_I} : P_I \to \dot P_I$.
If $V \in \Rep(\dot P_J)$, as in~\S\ref{ss:fromLtoRn} we can consider $V$ (or more precisely $\For^{\dot P_J}_{P_J}(V)$) as a $P_J$-equivariant $\bL_I$-dg-module concentrated in degree $0$, with trivial $\bL_I$-action. Using the constructions of~\S\ref{ss:Hom-action}, we deduce a natural action of (the group of $\bk$-points of) $P_J$ on the vector space $\Hom^n_{D(\bL_I)}(\bk,V)$, for any $n \in \Z$.

On the other hand, consider the distribution algebra $\spp_I$ of $P_{I,1}$. Since $P_{I,1} \subset P_I$ is a normal subgroup, there exists a natural action of $P_I$, hence also of $P_J$, on this algebra. If $V \in \Rep(\dot P_J)$, we can consider $\St_I \otimes V$ as a representation of $\Fr_{P_I}^{-1}(\dot P_J)$, hence as a $P_J$-equivariant $\spp_I$-dg-module.
Using again the constructions of~\S\ref{ss:Hom-action}, we deduce a natural action of $P_J$ on the vector space
\[
\Hom^n_{D\Rep(P_{I,1})}(\St_I,\St_I \otimes V) \cong \Hom^n_{D(\spp_I)}(\St_I,\St_I \otimes V),
\]
for any $n \in \Z$.

In Section~\ref{sec:equiv-formality} we will need the following consequence of Theorem~\ref{thm:formality-non-equiv}.

\begin{prop}
\label{prop:phiI-equivariance}
For any $V \in \Rep(\dot P_J)$, considered as a $\bL_I$-dg-module concentrated in degree $0$ (with trivial $\bL_I$-action), there exists a canonical isomorphism
\[
\varphi_I(V) \cong \St_I \otimes V.
\]
Moreover, for any $n \in \Z$, the functor $\varphi_I$ induces a $P_J$-equivariant isomorphism of vector spaces
\[
\Hom^n_{D(\bL_I)}(\bk, V) \simto \Hom^n_{D \Rep(P_{I,1})}(\St_I, \St_I \otimes V).
\]
\end{prop}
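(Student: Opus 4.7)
The plan has two parts. For the first assertion, I would trace $V$ through each of the four functors composing $\varphi_I$. Since $V$ has trivial $\bL_I$-action, the object $\For^{\{1\}}_{M_{I,1}}(V) \otimes \St_I$ is $V \otimes \St_I$ with $\bL_I$ still acting through its augmentation. Pulling back by $\sigma_I$ preserves this triviality because $\sigma_I$ carries the augmentation ideal of $\Rn_I$ into that of $\bL_I$. By Lemma~\ref{lem:piI-equivalence}, $R\pi_{I*}$ is quasi-inverse to $\pi_I^*$, so it produces the unique $\snn_I$-dg-module whose pullback has trivial $\Rn_I$-action, namely $V \otimes \St_I$ with trivial $\snn_I$-action. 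Translating through the equivalence $D^+_{M_{I,1}}(\snn_I) \cong D^+ \Rep(P_{I,1})$ and recalling that $N_I$, and in particular $N_{I,1}$, acts trivially on $\St_I$ (see~\S\ref{ss:Steinberg}), the output is the natural $P_{I,1}$-module $\St_I \otimes V$, where $V$ is given its (necessarily trivial) $P_{I,1}$-structure coming from $\Fr_{P_I}$ being trivial on $P_{I,1}$.

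For the second assertion, my plan is to endow every object in the chain above with a $P_J$-equivariant structure: $V$ via $\Fr \colon P_J \to \dot P_J$ and its given $\dot P_J$-action; the dg-algebras $\bL_I$, $\Rn_I$, $\snn_I$ via the natural $\dot P_I$- or $P_I$-actions restricted to $P_J$; and $\St_I$ and $V \otimes \St_I$ via the $P_I$-module structure on $\St_I$ and the diagonal action. With these structures, $\sigma_I$ and $\pi_I$ are $P_J$-equivariant, being $P_I$-equivariant. The isomorphism induced by $\varphi_I$ then factors as a chain of four maps on Hom-spaces: the tensor-with-$\St_I$ map, then $\sigma_I^*$, then $R\pi_{I*}$, then the tautological equivalence with Hom in $D^+\Rep(P_{I,1}) = D^+(\spp_I)$. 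I would check $P_J$-equivariance at each step: the first by the direct computation
\[
g \cdot (f \otimes \id_{\St_I}) = (g_V \otimes g_{\St_I})\,(f \otimes \id_{\St_I})\,(\id_\bk \otimes g_{\St_I})^{-1} = (g_V \cdot f) \otimes \id_{\St_I};
\]
the second and third by Lemma~\ref{lem:action-Hom} applied to $\sigma_I$ and to $\pi_I^*$ respectively (for the third, invoking the equivariance of the quasi-inverse $R\pi_{I*}$ from that of $\pi_I^*$); and the fourth tautologically.

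The main technical point I expect to be the obstacle is correctly interpreting the $P_J$-action on the intermediate Hom-spaces, which live in the equivariant derived categories $D^+_{M_{I,1}}(-)$, whereas Lemma~\ref{lem:action-Hom} is stated for Hom-spaces in non-equivariant derived categories. The resolution is that via Lemma~\ref{lem:stein-ff} on the source and the tautological identification $D^+_{M_{I,1}}(\snn_I) \cong D^+(\spp_I)$ on the target, the intermediate Hom-spaces are canonically identified with non-equivariant Hom-spaces carrying well-defined $P_J$-actions coming from the $P_J$-equivariant structures on the objects; the framework of~\S\ref{ss:Hom-action} with $H = P_J$ then covers each step of the chain.
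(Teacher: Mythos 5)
Your first part (computing $\varphi_I(V)$) is correct in outline: tracing $V$ through the four constituent functors, using triviality of the $\bL_I$- and $N_{I,1}$-actions and the fact that $R\pi_{I*}$ is quasi-inverse to $\pi_I^*$, recovers the paper's argument that $\varphi_I$ commutes with tensoring by a vector space.

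The second part has a gap that you identify as an obstacle but do not actually close. After the first step in your chain, the Hom-spaces live in $M_{I,1}$-equivariant derived categories $D^+_{M_{I,1}}({-}) \cong D^+(({-}) \rtimes \smm_I)$, and Lemma~\ref{lem:action-Hom} requires the relevant dg-algebra to carry a $P_J$-action. Your proposed resolution handles the two endpoints: on the source via Lemma~\ref{lem:stein-ff}, and on the target via $D^+_{M_{I,1}}(\snn_I) \cong D^+\Rep(P_{I,1})$, where $\spp_I$ does carry a $P_J$-action by conjugation ($P_{I,1}$ being normal). But it does not handle the middle: $\Rn_I \rtimes \smm_I$ (and $\bL_I \rtimes \smm_I$) carry no natural $P_J$-action, because $M_{I,1}$ is \emph{not} normalized by $P_J$ --- this is stated explicitly in~\S\ref{ss:statement-equiv-formality}. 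So the ``$P_J$-action'' on $\Hom^n_{D_{M_{I,1}}(\Rn_I)}(\St_I, \St_I \otimes V)$ that your chain would pass through is simply not defined by the framework of~\S\ref{ss:Hom-action}, and Lemma~\ref{lem:action-Hom} applied to $\sigma_I$ or $\pi_I$ (which are $P_I$-equivariant morphisms of $P_I$-equivariant dg-algebras) yields equivariance only for the maps between Hom-spaces in the \emph{non}-equivariant categories $D(\bL_I)$, $D(\Rn_I)$, $D(\snn_I)$, not their $M_{I,1}$-equivariant counterparts.

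The missing idea, which the paper supplies, is to post-compose with the forgetful functor $\For^{P_{I,1}}_{N_{I,1}}$. One checks that the induced map $\Hom^n_{D\Rep(P_{I,1})}(\St_I,\St_I) \to \Hom^n_{D\Rep(N_{I,1})}(\St_I,\St_I)$ is injective (since $\St_I$ is $M_{I,1}$-projective, the source is the space of $M_{I,1}$-invariants in the target) and $P_I$-equivariant (Lemma~\ref{lem:action-Hom} applied to $\snn_I \hookrightarrow \spp_I$), so it suffices to verify equivariance after this restriction. After restricting, commutativity of~\eqref{eqn:restriction-scalars-For} gives $\For^{P_{I,1}}_{N_{I,1}} \circ \varphi_I \cong (R\pi_{I*} \circ \sigma_I^*({-})) \otimes \St_I$ where $R\pi_{I*}$ and $\sigma_I^*$ are now genuinely non-equivariant functors between $D(\bL_I)$, $D(\Rn_I)$, $D(\snn_I)$ --- precisely the setting where your appeals to Lemma~\ref{lem:action-Hom} are valid. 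Without this restriction step, the equivariance argument you describe cannot be carried out.
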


\begin{proof}
To prove the isomorphism $\varphi_I(V) \cong \St_I \otimes V$, since $\spp_I$ acts trivially on $V$, it suffices to prove that the functor $\varphi_I$ commutes with tensoring with a vector space (up to natural isomorphism). However, it is clear that the functors $\For^{\{1\}}_{M_{I,1}}(-) \otimes \St_I$, $\sigma_I^*$, and the equivalence \eqref{eqn:equiv-PI1}, commute with this operation. And the functor $R\pi_{I*}$ also commutes with tensoring with a vector space, since its inverse $\pi_I^*$ (see Lemma~\ref{lem:piI-equivalence}) clearly has this property.

Now, we claim that the natural morphism
\begin{equation}
\label{eqn:Hom-bLI-tensoring}
\Hom^n_{D(\bL_I)}(\bk, \bk) \otimes V \to \Hom^n_{D(\bL_I)}(\bk, V)
\end{equation}
is an isomorphism. Indeed, consider the Koszul resolution $K_I$ for the trivial $\bL_I$-dg-module $\bk$, as considered e.g.~in~\cite[\S 2.3]{mr}. This dg-module is a K-projective resolution of $\bk$, so that we have
\[
\Hom^n_{D(\bL_I)}(\bk, V) \cong \coH^n(\Hom^\bullet_{\bL_I}(K_I,V)) \cong \coH^n( \Sym^\bullet(\dot \fn_I^*) \otimes V),
\]
where $\dot \fn_I^*$ is in degree $2$. We deduce that
\[
\Hom^n_{D(\bL_I)}(\bk, V) \cong \begin{cases}
\Sym^{n/2}(\dot \fn_I^*) \otimes V & \text{if $n \in 2\Z_{\geq 0}$;} \\
0 & \text{otherwise.}
\end{cases}
\]
We have a similar description for $\Hom^n_{D(\bL_I)}(\bk, \bk)$, and from this it is clear that~\eqref{eqn:Hom-bLI-tensoring} is an isomorphism.

Similarly, we claim that the natural morphism
\begin{equation}
\label{eqn:Hom-PI1-tensoring}
\Hom^n_{D \Rep(P_{I,1})}(\St_I, \St_I) \otimes V \to \Hom^n_{D \Rep(P_{I,1})}(\St_I, \St_I \otimes V)
\end{equation}
is an isomorphism. Indeed, if $X^\bullet$ is an injective resolution of $\St_I$ as a $P_{I,1}$-module, then using~\cite[Proposition~I.3.10(c)]{jantzen} we see that $X^\bullet \otimes V$ is an injective resolution of $\St_I \otimes V$, so that we have
\begin{multline*}
\Hom^n_{D^+ \Rep(P_{I,1})}(\St_I, \St_I \otimes V) \cong \coH^n(\Hom^\bullet_{P_{I,1}}(\St_I, X^\bullet \otimes V)) \\
\cong \coH^n(\Hom^\bullet_{P_{I,1}}(\St_I, X^\bullet) \otimes V) \cong \coH^n(\Hom^\bullet_{P_{I,1}}(\St_I, X^\bullet)) \otimes V.
\end{multline*}
This shows that~\eqref{eqn:Hom-PI1-tensoring} is indeed an isomorphism.

It is easy to check that isomorphisms~\eqref{eqn:Hom-bLI-tensoring} and~\eqref{eqn:Hom-PI1-tensoring} are $P_J$-equivariant, and compatible with the morphisms induced by $\varphi_I$ in the obvious sense. So, to conclude, it suffices to prove that $\varphi_I$ induces a $P_J$-equivariant isomorphism
\[
\Hom^n_{D(\bL_I)}(\bk, \bk) \simto \Hom^n_{D \Rep(P_{I,1})}(\St_I, \St_I).
\]
The fact that this morphism is invertible follows from Theorem~\ref{thm:formality-non-equiv}, and what remains to be proved is $P_J$-equivariance. For this we can assume (for simplicity of notation) that $J=I$.

We remark that the morphism
\begin{equation}
\label{eqn:For-P-N}
\Hom^n_{D \Rep(P_{I,1})}(\St_I, \St_I) \to \Hom^n_{D \Rep(N_{I,1})}(\St_I, \St_I)
\end{equation}
induced by the functor $\For^{P_{I,1}}_{N_{I,1}}$ associated with the embedding $N_{I,1} \hookrightarrow P_{I,1}$ is injective. Indeed, by~\eqref{eqn:RHom-normal-subalg} applied to $\snn_I \subset \spp_I$, we have a canonical isomorphism
\[
\Hom^\bullet_{P_{I,1}}(\St_I, \St_I) \cong \Hom^\bullet_{M_{I,1}}(\St_I, R\Hom_{N_{I,1}}(\bk, \St_I)).
\]
And since $\St_I$ is projective as an $M_{I,1}$-module (see~\S\ref{ss:Steinberg}), we deduce for any $n$ canonical isomorphisms
\begin{multline*}
\Hom^n_{D^+ \Rep(P_{I,1})}(\St_I, \St_I) \cong \Hom_{M_{I,1}}(\St_I, \Hom^n_{N_{I,1}}(\bk,\St_I)) \\
\cong \Hom_{M_{I,1}}(\bk, \Hom^n_{N_{I,1}}(\St_I,\St_I))
\end{multline*}
(since $N_{I,1}$ acts trivially on $\St_I$). The claim follows.

For similar reasons as above, the vector space $\Hom^n_{D \Rep(N_{I,1})}(\St_I, \St_I)$ has a natural action of $P_I$ and, by Lemma~\ref{lem:action-Hom} (applied to the inclusion $\snn_I \hookrightarrow \spp_I$),~\eqref{eqn:For-P-N} is $P_I$-equivariant. Hence to conclude it suffices to prove that the morphism
\[
\Hom^n_{D(\bL_I)}(\bk, \bk) \to \Hom^n_{D^+ \Rep(N_{I,1})}(\St_I, \St_I)
\]
induced by $\For^{P_{I,1}}_{N_{I,1}} \circ \varphi_I$ is $P_I$-equivariant. Now, applying the commutativity of diagram~\eqref{eqn:restriction-scalars-For} for the functors $\sigma_I^*$ and $\pi_I^*$, we see that we have an isomorphism of functors
\[
\For^{P_{I,1}}_{N_{I,1}} \circ \varphi_I \cong R\pi_{I*} \circ (\sigma_I^*(-) \otimes \St_I) \cong (R\pi_{I*} \circ \sigma_I^*(-)) \otimes \St_I,
\]
where $R\pi_{I*}$ is now considered as a functor from $D(\Rn_I)$ to $D(\snn_I)$ and $\sigma_I^*$ as a functor from $D(\bL_I)$ to $D(\Rn_I)$.
(We also use once again the fact that $N_{I,1}$ acts trivially on $\St_I$.) 
Since the natural morphism
\[
\Hom^n_{D \Rep(N_{I,1})}(\bk, \bk) \to \Hom^n_{D \Rep(N_{I,1})}(\St_I, \St_I)
\]
is clearly $P_I$-equivariant,
to conclude we only need to prove that the morphism
\[
\Hom^n_{D(\bL_I)}(\bk, \bk) \to \Hom^n_{D \Rep(N_{I,1})}(\bk, \bk)
\]
induced by the functor $R\pi_{I*} \circ \sigma_I^*$ is $P_I$-equivariant. However, this morphism is the composition
\[
\Hom^n_{D(\bL_I)}(\bk, \bk) \to \Hom^n_{D(\Rn_I)}(\bk, \bk) \to \Hom^n_{D(\snn_I)}(\bk, \bk)
\]
where the first morphism is induced by $\sigma_I^*$, and the second one is the inverse to the isomorphism induced by $\pi_I^*$. Hence the desired property follows from Lemma~\ref{lem:action-Hom}.
\end{proof}

\subsection{An $\Ext^2$-computation for $B$-modules}

In this subsection we fix some $s \in S$.
The following fact, whose proof uses a computation done in the course of the proof of Proposition~\ref{prop:phiI-equivariance},
will be used in~\S\ref{ss:beta-min-parabolic} below.

\begin{lem}\label{lem:ext2b}
We have $\dim_\bk \bigl( \Ext^2_B(\bk_B(\ell\varsigma_s),\bk_B(\ell\varsigma_s-\ell\alpha_s)) \bigr)=1$.
\end{lem}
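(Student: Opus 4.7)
The strategy is to match the explicit nonzero class from Lemma~\ref{lem:steinberg-diff} (which already gives the lower bound $\geq 1$) with an upper bound obtained from the spectral sequence of Lemma~\ref{lem:spectral-sequence} via Corollary~\ref{cor:dim-Ext}. After tensoring with the invertible character $\bk_B(-\ell\varsigma_s)$, the quantity to bound is $\dim H^2(B,\bk_B(-\ell\alpha_s))$, and the four-term exact sequence in Lemma~\ref{lem:steinberg-diff} already witnesses a nonzero element of this space.

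For the upper bound I apply Corollary~\ref{cor:dim-Ext} to the normal closed subgroup $B_1\triangleleft B$, so that $B/B_1=\dot B$. Because $-\ell\alpha_s$ is trivial on $B_1$, there is a $\dot B$-equivariant identification $H^q(B_1,\bk_B(-\ell\alpha_s)) \cong H^q(B_1,\bk)\otimes \bk_{\dot B}(-\alpha_s)$, and the crucial input is the $\dot B$-module structure on $H^\bullet(B_1,\bk)$. I obtain this by specializing Proposition~\ref{prop:phiI-equivariance} to the case $I=J=\varnothing$ and $V=\bk$: this yields a canonical $\dot B$-equivariant isomorphism $H^q(B_1,\bk) \cong \Hom^q_{D(\bL_\varnothing)}(\bk,\bk)$, and the Koszul resolution computation performed in its proof identifies the right-hand side with $\Sym^{q/2}(\dot\fn^*)$ for $q$ even and with $0$ for $q$ odd. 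In particular $H^1(B_1,\bk)=0$, so $E_2^{1,1}=0$, and it remains only to bound the two surviving $E_2$-terms of total degree $2$.

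The term $E_2^{0,2} = (\dot\fn^* \otimes \bk_{\dot B}(-\alpha_s))^{\dot B}$ lies inside the zero $\dot T$-weight subspace of $\dot\fn^* \otimes \bk_{\dot B}(-\alpha_s)$; the $\dot T$-weights of $\dot\fn^*$ being exactly the positive roots, this zero weight subspace is one-dimensional (contributed by $\beta=\alpha_s$), so $\dim E_2^{0,2}\leq 1$. The term $E_2^{2,0}=H^2(\dot B,\bk_{\dot B}(-\alpha_s))$ vanishes by a standard Kempf-type argument: since $-\alpha_s = s_{\alpha_s}\cdot 0$, a factorization through the minimal parabolic $\dot P_s$ gives $R\Ind_{\dot B}^{\dot G}\bk_{\dot B}(-\alpha_s) \cong \bk[-1]$, hence $H^2(\dot B,\bk_{\dot B}(-\alpha_s)) \cong H^1(\dot G,\bk)$, which vanishes because a connected reductive algebraic group admits no non-trivial algebraic homomorphism to $\mathbb{G}_a$. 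Summing these contributions, Corollary~\ref{cor:dim-Ext} yields $\dim H^2(B,\bk_B(-\ell\alpha_s))\leq 1$, matching the lower bound and completing the proof. The only non-routine ingredient is the $\dot B$-equivariant identification of $H^2(B_1,\bk)$ with $\dot\fn^*$, which is precisely the ``computation done in the course of the proof of Proposition~\ref{prop:phiI-equivariance}'' alluded to in the lemma's preamble.
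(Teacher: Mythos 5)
Your proposal is correct and follows essentially the same route as the paper's own proof: the lower bound from Lemma~\ref{lem:steinberg-diff}, the twist to $\Ext^2_B(\bk_B,\bk_B(-\ell\alpha_s))$, Corollary~\ref{cor:dim-Ext} applied to $B_1\triangleleft B$, the $\dot B$-equivariant identification of $\Ext^\bullet_{B_1}$ via Proposition~\ref{prop:phiI-equivariance}, the weight bound on $E_2^{0,2}$, and the Kempf-style vanishing of $E_2^{2,0}$. The only cosmetic difference is that you spell out a justification for $\Ext^1_{\dot G}(\bk,\bk)=0$ (via $\Hom(\dot G,\mathbb{G}_a)=0$), which the paper leaves implicit.
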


\begin{proof}
We certainly have $\Ext^2_B(\bk_B(\ell\varsigma_s),\bk_B(\ell\varsigma_s-\ell\alpha_s)) \cong \Ext^2_B(\bk_B,\bk_B(-\ell\alpha_s))$.  It follows from Proposition~\ref{prop:phiI-equivariance} and its proof (in the special case $J=I=\varnothing$) that as $B$-modules we have
\[
\Ext_{B_1}^q(\bk_{B},\bk_{B}(-\ell\alpha_s)) 
\cong
\begin{cases}
\Sym^{q/2}(\fnt^*) \otimes \bk_B(-\ell\alpha_s) & \text{if $q \in 2\Z_{\ge 0}$,} \\
0 & \text{otherwise.}
\end{cases}
\]
Corollary~\ref{cor:dim-Ext} (together with~\cite[Proposition~I.9.5]{jantzen}) then tells us that
\begin{multline}\label{eqn:ext2b-ineq}
\dim_\bk \bigl( \Ext_B^2(\bk_B,\bk_B(-\ell\alpha_s)) \bigr) \\
\le \dim_\bk \bigl( \coH^0(\dot B,\Ext^2_{B_1}(\bk_{B_1},\bk_{B_1}(-\ell\alpha_s))) \bigr)
+ \dim_\bk \bigl( \coH^2(\dot B,\Hom_{B_1}(\bk_{B_1},\bk_{B_1}(-\ell\alpha_s))) \bigr) \\
=
\dim_\bk \bigl( \coH^0(\dot B, \fnt^* \otimes \bk_{\dot B}(-\alpha_s)) \bigr)
+ \dim_\bk \bigl( \coH^2(\dot B, \bk_{\dot B}(-\alpha_s)) \bigr).
\end{multline}
The weights of $\fnt^* \otimes \bk_{\dot B}(-\alpha_s)$ are of the form $\beta - \alpha_s$ with $\beta \in \Phi^+$, each with multiplicity~$1$.  In particular, it has a $1$-dimensional $0$-weight space, so
\[
\dim_{\bk} \bigl( \coH^0(\dot B, \fnt^* \otimes \bk(-\alpha_s)) \bigr) \le 1.
\]

Let us now study
\[
\coH^2(\dot B, \bk_{\dot B}(-\alpha_s)) = \Ext^2_{\dot B}(\bk_{\dot B}, \bk_{\dot B}(-\alpha_s)).
\]
Using adjunction and the fact that $R\Ind_{\dot B}^{\dot G} \bk_{\dot B}(-\alpha_s) \cong \bk_{\dot G}[-1]$ (as follows from~\cite[Corollary~II.5.5]{jantzen}), we have
\[
\Ext^2_{\dot B}(\bk_{\dot B},\bk_{\dot B}(-\alpha_s)) \cong \Hom^2_{\dot G}(\bk_{\dot G}, R\Ind_{\dot B}^{\dot G}(\bk_{\dot B}(-\alpha_s))) \cong \Ext^1_{\dot G}(\bk_{\dot G},\bk_{\dot G}) = 0.
\]
So~\eqref{eqn:ext2b-ineq} now says that $\dim \Ext_B^2(\bk_B,\bk_B(-\ell\alpha_s)) \le 1$.  We have already seen in Lemma~\ref{lem:steinberg-diff} that $\Ext_B^2(\bk_B,\bk_B(-\ell\alpha_s)) \cong \Ext_B^2(\bk_B(\ell\varsigma_s), \bk_B(\ell\varsigma_s - \ell\alpha_s)) \ne 0$, and the lemma follows.
\end{proof}

\section{\texorpdfstring{$\dot P_J$}{P.J}-equivariant formality}
\label{sec:equiv-formality}

As in Section~\ref{sec:formality-non-equiv}, we fix a subset $I \subset S$. We also fix another subset $J \subset I$.

\subsection{Statement}
\label{ss:statement-equiv-formality}

We denote by $P_J M_{I,1}$ the subgroup of $G$ generated by $P_J$ and $M_{I,1}$, or equivalently by $P_J$ and $P_{I,1}$, which is normalized by $P_J$. Note that any element of $P_J M_{I,1}$ can be written (nonuniquely) as the product of an element of $P_J$ and an element of $M_{I,1}$, which justifies our notation, but that $M_{I,1}$ is \emph{not} normalized by $P_J$. The subgroup $P_J M_{I,1} \subset P_I$ can also be characterized as the inverse image of $\dot P_J$ under the Frobenius morphism $\Fr_{P_I} : P_I \to \dot P_I$; in particular we have a natural surjective morphism $\Fr_{J,I} : P_J M_{I,1} \to \dot P_J$. We denote by
\[
\Db_{\Stein}(P_J M_{I,1}) \subset \Db \Repf(P_J M_{I,1})
\]
the full subcategory generated by objects of the form $\St_I \otimes V$ for $V \in \Repf(\dot P_J)$. (Here, what we really mean by $\St_I \otimes V$ is $\For^{P_I}_{P_J M_{I,1}} (\St_I) \otimes \For^{\dot P_J}_{P_J M_{I,1}}(V)$, where the functor $\For^{P_I}_{P_J M_{I,1}}$ is defined with respect to the embedding $P_J M_{I,1} \hookrightarrow P_I$, and the functor $\For^{\dot P_J}_{P_J M_{I,1}}$ is defined with respect to $\Fr_{J,I}$.)

The group $\dot P_I$ acts on $\dot \fn_I$, and hence on the dg-algebra $\bL_I$. By restriction, we can consider $\bL_I$ as a $\dot P_J$-equivariant dg-algebra.
We define the functor
\[
\psi_{J,I} : D^+_{\dot P_J}(\bL_I) \to D^+ \Rep(P_J M_{I,1})
\]
as the composition
\begin{multline*}
D^+_{\dot P_J}(\bL_I) \xrightarrow{\For^{\dot P_J}_{P_J M_{I,1}}} D^+_{P_J M_{I,1}}(\bL_I) \xrightarrow{- \otimes \rmZ_I} D^+_{P_J M_{I,1}}(\bL_I) \xrightarrow{\sigma_I^*} D^+_{P_J M_{I,1}}(\Rn_I) \xrightarrow{(\pi_{I}^*)^{-1}} \\
D^+_{P_J M_{I,1}}(\snn_I) \cong D^+\Rep(N_{I,1} \rtimes P_J M_{I,1}) \xrightarrow{R\Ind_{N_{I,1} \rtimes P_J M_{I,1}}^{P_J M_{I,1}}} D^+ \Rep(P_J M_{I,1}).
\end{multline*}
Here the first arrow is associated with the morphism $\Fr_{J,I}$, the equivalence on the second line is induced by the equivalence $\Rep(N_{I,1}) \cong \snn_I\lmod$ (see~\eqref{eqn:Rep-Distmod}), and the functor $R\Ind_{N_{I,1} \rtimes P_J M_{I,1}}^{P_J M_{I,1}}$ is defined with respect to the morphism $N_{I,1} \rtimes P_J M_{I,1} \to P_J M_{I,1}$ given by multiplication in $P_J M_{I,1}$. 
Since the morphism $\pi_I$ is a quasi-isomorphism (see~\S\ref{ss:RnI}), by
Lemma~\ref{lem:qis-equiv} the functor $\pi_I^* : D^+_{P_J M_{I,1}}(\snn_I) \to D^+_{P_J M_{I,1}}(\Rn_I)$ is an equivalence, so that the fourth arrow is well defined.

The main result of this section is the following.

\begin{thm}
\label{thm:formality}
The functor $\psi_{J,I}$ is fully faithful on the subcategory $\Dfg_{\dot P_J}(\bL_I)$, and it induces an equivalence of categories
\[
\psi_{J,I}: \Dfg_{\dot P_J}(\bL_I) \simto \Db_{\Stein}(P_J M_{I,1}).
\]
Moreover, for any $X \in \Dfg_{\dot P_J}(\bL_I)$ and any $V \in \Rep(\dot P_J)$, there exists a natural and functorial isomorphism
\begin{equation}
\label{eqn:isom-psi-tensoring}
\psi_{J,I}(X \otimes V) \cong \psi_{J,I}(X) \otimes \For^{\dot P_J}_{P_JM_{I,1}}(V).
\end{equation}
\end{thm}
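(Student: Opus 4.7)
The strategy is to split Theorem~\ref{thm:formality} into three claims: (a) the tensor compatibility~\eqref{eqn:isom-psi-tensoring}; (b) identification of $\psi_{J,I}$ on a convenient generating set; (c) full faithfulness on $\Dfg_{\dot P_J}(\bL_I)$, with essential image $\Db_{\Stein}(P_JM_{I,1})$. Each will be reduced to Theorem~\ref{thm:formality-non-equiv} and Proposition~\ref{prop:phiI-equivariance}.

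First, I would verify~(a) by inspecting each constituent of $\psi_{J,I}$: the forgetful functor, $(-)\otimes \rmZ_I$, $\sigma_I^*$ and $(\pi_I^*)^{-1}$ are ``change of structure'' operations in the module variable that trivially commute with tensoring by an external $V \in \Rep(\dot P_J)$ pulled back to the appropriate group; for $R\Ind^{P_JM_{I,1}}_{N_{I,1}\rtimes P_JM_{I,1}}$ this is the projection formula, valid because $V$ is pulled back from the target. Composing gives~\eqref{eqn:isom-psi-tensoring}. Then I would evaluate $\psi_{J,I}$ on the trivial dg-module: tracing the construction, $\sigma_I^*$ and $(\pi_I^*)^{-1}$ produce $\rmZ_I$ with trivial $\snn_I$-action, so $R\Ind^{P_JM_{I,1}}_{N_{I,1}\rtimes P_JM_{I,1}}(\rmZ_I)$ computes $R\Hom_{N_{I,1}}(\bk,\rmZ_I)$; since $\rmZ_I\cong \Ind^{P_{I,1}}_{M_{I,1}}\St_I$ is injective over $N_{I,1}$ (being of the form $\St_I\otimes\scO(N_{I,1})$ as an $N_{I,1}$-module), one obtains $\psi_{J,I}(\bk)\cong \St_I$. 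Combined with (a), this yields $\psi_{J,I}(V)\cong \St_I\otimes V$ for every $V\in \Repf(\dot P_J)$. Since $\Dfg_{\dot P_J}(\bL_I)$ is generated as a triangulated category by the finite-dimensional $\dot P_J$-modules placed in a single cohomological degree (with necessarily trivial $\bL_I$-action, since $\bL_I$ lies in non-positive degrees), and $\Db_{\Stein}(P_JM_{I,1})$ is generated by the $\St_I\otimes V$, this settles the match of generators.

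For~(c), full faithfulness reduces (by devissage with truncation functors and a five-lemma argument) to showing that for $V,V'\in\Repf(\dot P_J)$ and every $n\in\Z$ the induced map
\[
\Hom^n_{D^+_{\dot P_J}(\bL_I)}(V,V')\longrightarrow \Hom^n_{\Rep(P_JM_{I,1})}(\St_I\otimes V,\St_I\otimes V')
\]
is an isomorphism. I would compute both sides via Grothendieck spectral sequences factoring through $\dot P_J$-invariants. On the representation side, $P_{I,1}$ is normal in $P_JM_{I,1}$ with quotient $\dot P_J$ (since $P_JM_{I,1}=\Fr_{P_I}^{-1}(\dot P_J)$), and Lemma~\ref{lem:spectral-sequence} gives
\[
E_2^{p,q}=\coH^p\!\bigl(\dot P_J,\Ext^q_{P_{I,1}}(\St_I\otimes V,\St_I\otimes V')\bigr)\Rightarrow \Ext^{p+q}_{P_JM_{I,1}}(\St_I\otimes V,\St_I\otimes V').
\]
On the dg side, the analogous spectral sequence
\[
\coH^p\!\bigl(\dot P_J,\Ext^q_{D(\bL_I)}(V,V')\bigr)\Rightarrow \Ext^{p+q}_{D^+_{\dot P_J}(\bL_I)}(V,V')
\]
arises from factoring $\Hom_{D^+_{\dot P_J}(\bL_I)}(V,-)$ as $\dot P_J$-invariants of $\Hom_{D(\bL_I)}(V,-)$; the latter carries an algebraic $\dot P_J$-action since the Koszul resolution makes each Ext group a finite-dimensional polynomial module of $\dot P_J$. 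Then $\psi_{J,I}$ induces a morphism of spectral sequences, which on $E_2$ terms is exactly the isomorphism of Proposition~\ref{prop:phiI-equivariance}. Convergence delivers the required isomorphism on abutments, and together with the generator match of the second paragraph this proves that $\psi_{J,I}$ restricts to an equivalence $\Dfg_{\dot P_J}(\bL_I)\simto \Db_{\Stein}(P_JM_{I,1})$.

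The delicate point is matching the two $E_2$ pages as \emph{algebraic} $\dot P_J$-modules, since Proposition~\ref{prop:phiI-equivariance} yields only $P_J$-equivariance. On both sides the action factors through $\dot P_J$: on the $\bL_I$-side because $V$, $V'$ are Frobenius-pullbacks and the Hom spaces are internally determined by such; on the representation side because for any $P_I$-modules $M,M'$ the conjugation $P_I$-action on $\Ext^\bullet_{P_{I,1}}(M,M')$ factors through $\dot P_I=P_I/P_{I,1}$, so its restriction to $P_J$ factors through $\dot P_J$. Compatibility of the isomorphism with differentials follows from naturality of Proposition~\ref{prop:phiI-equivariance} in $V,V'$. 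Once these two points are settled, the rest of the theorem is formal.
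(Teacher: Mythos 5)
Your decomposition into (a) tensor compatibility, (b) evaluation on generators, and (c) full faithfulness matches the paper's strategy, and parts~(a) and (b) are in line with Lemmas~\ref{lem:frobenius-psi} and~\ref{lem:trivial-psi}. But part~(c) takes a genuinely different route, and that route has a gap.

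You propose to produce a \emph{morphism of Grothendieck spectral sequences} induced by $\psi_{J,I}$ and then invoke convergence. There are two problems. First, a triangulated functor between derived categories does not automatically induce a morphism between Grothendieck spectral sequences; one needs a lift to compatible filtered complexes (or simultaneous Cartan--Eilenberg resolutions) on both sides. The functor $\psi_{J,I}$ is a long zigzag through forgetful, tensor, restriction-of-scalars, $(\pi_I^*)^{-1}$, and a derived induction step, and nothing in the construction provides such filtered lifts. You would have to build and compare resolutions on both sides by hand, which is substantial extra work you have not done. Second, and more fundamentally, even assuming you had a morphism of spectral sequences, an isomorphism on $E_2$ pages only gives an isomorphism on abutments if you additionally know that the map is a morphism of \emph{convergent} spectral sequences in both directions, which still amounts to showing the $E_2$-isomorphism persists through all pages --- and on the $\Rep(P_JM_{I,1})$-side there is no a~priori degeneration statement.

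The paper circumvents both issues by an asymmetric argument. On the $\bL_I$-side, the explicit Koszul resolution shows the relevant spectral sequence degenerates outright (giving the exact dimension $\sum_{i+2j=n}\dim\Ext^i_{\dot P_J}(\bk,\Sym^j(\fnt_I^*)\otimes V)$). On the $\Rep$-side, one does \emph{not} prove degeneration; instead, Corollary~\ref{cor:dim-Ext} supplies only the inequality
\[
\dim_\bk\Hom^n_{P_JM_{I,1}}(\St_I,\St_I\otimes V)\ \le\ \sum_{i+k=n}\dim_\bk\Ext^i_{\dot P_J}\bigl(\bk,\Ext^k_{P_{I,1}}(\St_I,\St_I)\otimes V\bigr).
\]
Combining this inequality with the $\dot P_J$-equivariant $E_2$-identification from Proposition~\ref{prop:phiI-equivariance} shows the target has dimension at most that of the source. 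The missing half is supplied separately: using an injective resolution $V\to X^\bullet$ and the injective case (Lemmas~\ref{lem:Hom-fixed-points-bLI}, \ref{lem:Hom-fixed-points-PI1}, Corollary~\ref{cor:psi-ff-injectives}), the paper shows the map induced by $\psi_{J,I}$ is \emph{injective}. Injective map plus $\dim(\text{target})\le\dim(\text{source})$ yields an isomorphism --- with no need to compare differentials or to produce a morphism of spectral sequences.

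So: your part~(c) as written is not a proof, because ``$\psi_{J,I}$ induces a morphism of spectral sequences'' is asserted without justification, and it is precisely the step the paper is structured to avoid. If you want to complete your route, you must either construct the morphism of spectral sequences explicitly (hard), or replace that step with the injectivity argument via injective resolutions, as the paper does, after which only a one-sided dimension bound is needed.
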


Theorem~\ref{thm:formality} will be proved in~\S\ref{ss:proof-fomality}. For this proof we will relate the functor $\psi_{J,I}$ to the functor $\varphi_I$ of Section~\ref{sec:formality-non-equiv}. More precisely, in~\S\ref{ss:proof-psi-phi} we prove the following.

\begin{prop}
\label{prop:psi-phi}
The following diagram commutes up to an isomorphism of functors:
\[
\xymatrix@C=2cm{
D^+_{\dot P_J}(\bL_I) \ar[r]^-{\psi_{J,I}} \ar[d]_-{\For^{\dot P_J}_{\{1\}}} & D^+ \Rep(P_J M_{I,1}) \ar[d]^-{\For^{P_J M_{I,1}}_{P_{I,1}}} \\
D^+(\bL_I) \ar[r]^-{\varphi_I} & D^+ \Rep(P_{I,1}).
}
\]
\end{prop}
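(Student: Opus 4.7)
The plan is to verify the commutativity of the diagram by unwinding the definitions and applying a sequence of compatibilities, eventually reducing to a concrete identity expressing the defining properties of $\rmZ_I$. Setting $Y := \For^{\dot P_J}_{\{1\}}(X)$ for $X \in D^+_{\dot P_J}(\bL_I)$, my first step is to rewrite the composition $\For^{P_J M_{I,1}}_{P_{I,1}} \circ \psi_{J,I}$ by bringing the forgetful functor past the induction. For this I apply the base change Lemma~\ref{lem:Ind-For} to the square
\[
\vcenter{\xymatrix@C=1cm{
N_{I,1} \rtimes P_{I,1} \ar[r] \ar@{^{(}->}[d] & P_{I,1} \ar@{^{(}->}[d] \\
N_{I,1} \rtimes P_J M_{I,1} \ar[r] & P_J M_{I,1}
}}
\]
whose horizontal arrows are given by multiplication in $P_I$. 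The first hypothesis of the lemma follows from a direct scheme-theoretic calculation exploiting the normality of $N_{I,1}$ in $P_I$; the second hypothesis (finiteness of $N_{I,1} \rtimes P_{I,1}$) is clear since this group scheme is infinitesimal.

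The resulting isomorphism, combined with the compatibility of $\sigma_I^*$ and $(\pi_I^*)^{-1} \cong R\pi_{I*}$ with the various forgetful functors (see~\eqref{eqn:restriction-scalars-For} and Lemma~\ref{lem:piI-equivalence}), and with the fact that the composition $P_{I,1} \hookrightarrow P_J M_{I,1} \xrightarrow{\Fr_{J,I}} \dot P_J$ is trivial, reduces the proposition to the construction of a natural isomorphism
\[
R\Ind_{N_{I,1} \rtimes P_{I,1}}^{P_{I,1}}\bigl((\pi_I^*)^{-1}\sigma_I^*(Y \otimes \rmZ_I)\bigr) \cong \varphi_I(Y)
\]
in $D^+\Rep(P_{I,1})$, for all $Y \in D^+(\bL_I)$. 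Here $Y \otimes \rmZ_I$ is regarded as a $P_{I,1}$-equivariant $\bL_I$-dg-module with $P_{I,1}$ acting trivially on $Y$ and in the usual way on $\rmZ_I$; note that the two actions do commute, since $P_{I,1} = \ker(\Fr_{P_I})$ acts trivially on $\bL_I$.

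The final and most delicate step is this identification. My plan is to separate the $Y$-dependence from the $\rmZ_I$-dependence via the tensor identity, thereby reducing to the identification of the $R\Ind$ of the $\rmZ_I$-piece with $\St_I$. For this I invoke the two characterizing properties of $\rmZ_I$ obtained in Section~\ref{sec:reductive}, namely $\rmZ_I \cong \Ind_{M_{I,1}}^{P_{I,1}}(\St_I)$ (Lemma~\ref{lem:pi-ind}) and $R\Ind_{P_{I,1}}^{M_{I,1}}(\rmZ_I) \cong \St_I$ (Corollary~\ref{cor:Ind-Pi}); together, these express $\St_I$ as the ``$M_{I,1}$-part'' of $\rmZ_I$, which is precisely what the induction along the multiplication $N_{I,1} \rtimes P_{I,1} \to P_{I,1}$ should extract. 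The main obstacle I foresee is the careful disentanglement of the two distinct $N_{I,1}$-subgroups sitting inside $N_{I,1} \rtimes P_{I,1}$ (the factor $N_{I,1} \rtimes \{1\}$, on which $(\pi_I^*)^{-1}\sigma_I^*(Y \otimes \rmZ_I)$ has trivial action, versus $N_{I,1} \subset \{1\} \rtimes P_{I,1}$, via which $\rmZ_I$ is acted upon), while simultaneously tracking the nontrivial $\bL_I$-action on the $Y$-factor. Handling this cleanly will require the systematic use of the semidirect-product framework of~\S\ref{ss:semi-direct-products} together with the K-injective resolutions provided by Lemma~\ref{lem:K-inj-equiv}.
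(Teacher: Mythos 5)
Your reduction steps are all correct and match the paper's: applying Lemma~\ref{lem:Ind-For} to bring $\For^{P_JM_{I,1}}_{P_{I,1}}$ past the induction (this is subdiagram (d) of Figure~\ref{fig:psi-phi}), then commuting the forgetful functor past $\sigma_I^*$ and $(\pi_I^*)^{-1}$ via~\eqref{eqn:restriction-scalars-For} (subdiagrams (b), (c)), and using the triviality of $P_{I,1} \to \dot P_J$ to reduce to the displayed identity with $Y = \For^{\dot P_J}_{\{1\}}(X)$. However, the proposal does not actually prove that identity: you correctly identify the disentanglement of the two $N_{I,1}$-subgroups inside $N_{I,1} \rtimes P_{I,1}$ as the genuine difficulty, and then leave it as a foreseen ``obstacle'' rather than resolving it. That obstacle is the crux of the argument.

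The paper resolves it by routing through an intermediate $M_{I,1}$-equivariant column (the bottom row of Figure~\ref{fig:psi-phi}) and checking three subdiagrams (e), (f), (g). Subdiagram (e) is the isomorphism $R\Ind_{M_{I,1}}^{P_{I,1}}(\For^{\{1\}}_{M_{I,1}}(V) \otimes \St_I) \cong \For^{\{1\}}_{P_{I,1}}(V) \otimes \rmZ_I$, proved by showing these dg-modules are split for $\Ind_{M_{I,1}}^{P_{I,1}}$ (using injectivity of $\St_I$) and then applying the tensor identity and Lemma~\ref{lem:pi-ind} in the direction $\rmZ_I \cong \Ind_{M_{I,1}}^{P_{I,1}}(\St_I)$ --- this is where your ``separation of $Y$ from $\rmZ_I$'' actually happens, but at the $M_{I,1}$-level, not the $P_{I,1}$-level you are working at. Subdiagram (f) uses Lemma~\ref{lem:base-change} to commute $\sigma_I^*$ past $R\Ind_{M_{I,1}}^{P_{I,1}}$; this lemma does not appear in your plan. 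Subdiagram (g) is the decisive point on the semidirect-product bookkeeping: the paper checks an explicit commutative square of dg-algebras, namely $\Rn_I \rtimes \smm_I \hookrightarrow \Rn_I \rtimes \spp_I \xrightarrow{\pi_I \rtimes \id} \snn_I \rtimes \spp_I \xrightarrow{\mathrm{mult}} \spp_I$ agreeing with $\Rn_I \rtimes \smm_I \xrightarrow{\pi_I \rtimes \id} \snn_I \rtimes \smm_I = \spp_I$, which via~\eqref{eqn:transitivity-direct-image} yields $R\Ind_{N_{I,1} \rtimes P_{I,1}}^{P_{I,1}} \circ R\pi_{I*} \cong R\pi_{I*} \circ R\Ind_{M_{I,1}}^{P_{I,1}}$. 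Your proposed tool Corollary~\ref{cor:Ind-Pi} (that $R\Ind_{P_{I,1}}^{M_{I,1}}(\rmZ_I) \cong \St_I$) is not used in the paper's argument and does not, on its own, provide the required commutation of $R\pi_{I*}$ with $R\Ind$ in the semidirect-product framework; without Lemma~\ref{lem:base-change} and the algebra-level computation of (g), the proof is incomplete at exactly the step you flag.
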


\subsection{Proof of Proposition~\ref{prop:psi-phi}}
\label{ss:proof-psi-phi}

Let us consider the large diagram of Figure~\ref{fig:psi-phi}. Here to save space we have omitted the identifications
\begin{align*}
D^+_{P_J M_{I,1}}(\snn_I) &\cong D^+ \Rep(N_{I,1} \rtimes P_J M_{I,1}), \\
D^+_{P_{I,1}}(\snn_I) &\cong D^+ \Rep(N_{I,1} \rtimes P_{I,1}), \\
D^+_{M_{I,1}}(\snn_I) &\cong D^+ \Rep(N_{I,1} \rtimes M_{I,1}) \cong D^+ \Rep(P_{I,1})
\end{align*}
induced by~\eqref{eqn:Rep-Distmod}, and the functor $R\Ind_{N_{I,1} \rtimes P_{I,1}}^{P_{I,1}}$ is defined with respect to the multiplication morphism $N_{I,1} \rtimes P_{I,1} \to P_{I,1}$. Note that the lower vertical arrows in the second and third columns are well defined thanks to Remark~\ref{rmk:infinitesimal-groups-3}, and that the functors $R\pi_{I*}$ on the second and third lines are well defined thanks to Remark~\ref{rmk:infinitesimal-groups-2}.

By construction, the functor $\psi_{J,I}$ is the composition of the arrows appearing on the top of this diagram, and the functor $\varphi_I$ is the composition of the arrows appearing on the bottom of this diagram. 
Hence to prove the proposition it suffices to prove that each subdiagram (a)--(g) commutes (up to isomorphism).

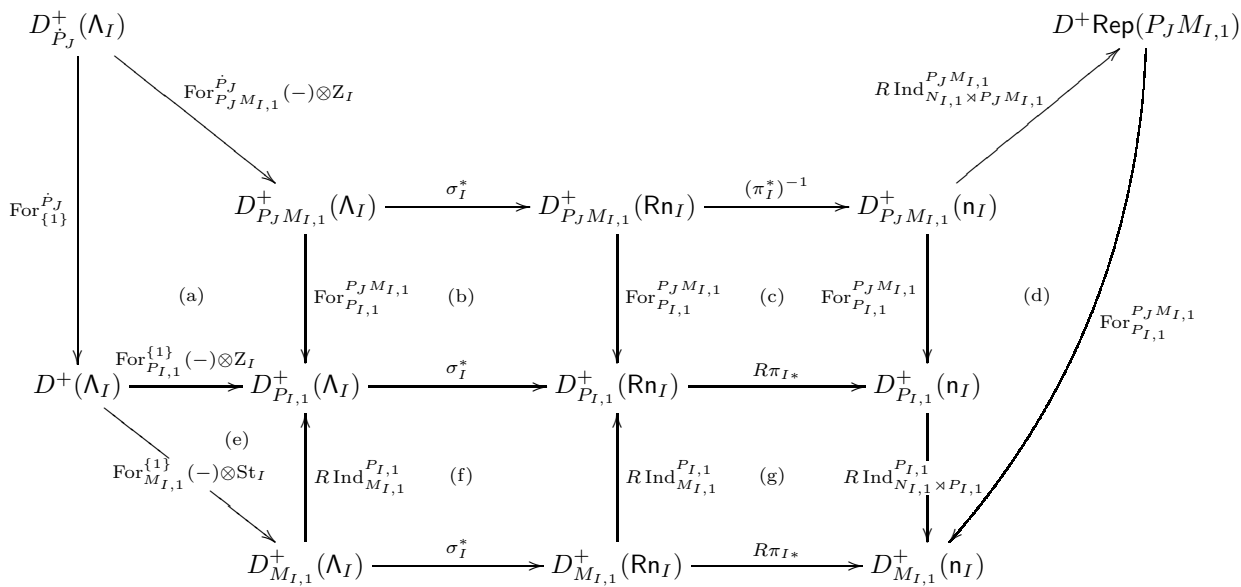
\begin{figure}
\begin{sideways}
\xymatrix@C=0.5cm@R=1.7cm{
D^+_{\dot P_J}(\bL_I) \ar[drr]^-{\For^{\dot P_J}_{P_J M_{I,1}}(-) \otimes \rmZ_I} \ar[dd]_-{\For^{\dot P_J}_{\{1\}}} 
  \ar@{}[dddrr]|{\mathrm{(a)}} &&&&&&&&& 
D^+\Rep(P_J M_{I,1}) 
  \ar@/^5ex/[dddl]^-{\For^{P_J M_{I,1}}_{P_{I,1}}} \ar@{}[dddl]|{\mathrm{(d)}}\\
&&
  D^+_{P_J M_{I,1}}(\bL_I) \ar[rrr]^-{\sigma_I^*} \ar[d]^-{\For^{P_{J} M_{I,1}}_{P_{I,1}}} 
    \ar@{}[drrr]|{\mathrm{(b)}} &&&
  D^+_{P_J M_{I,1}}(\Rn_I) \ar[rrr]^-{(\pi_{I}^*)^{-1}} \ar[d]^-{\For^{P_{J} M_{I,1}}_{P_{I,1}}} 
    \ar@{}[drrr]|{\mathrm{(c)}} &&&
D^+_{P_J M_{I,1}}(\snn_I) \ar[ur]^-{R\Ind_{N_{I,1} \rtimes P_J M_{I,1}}^{P_J M_{I,1}}}\ar[d]_-{\For^{P_J M_{I,1}}_{P_{I,1}}} & \\
D^+(\bL_I) 
\ar[rr]^-{\For^{\{1\}}_{P_{I,1}}({-}) \otimes \rmZ_I} 
\ar[drr]|-{\For^{\{1\}}_{M_{I,1}}({-}) \otimes \St_I} 
&&
  D^+_{P_{I,1}}(\bL_I) \ar[rrr]^-{\sigma_I^*}  \ar@{<-}[d]^-{R\Ind^{P_{I,1}}_{M_{I,1}}} 
    \ar@{}[drrr]|{\mathrm{(f)}} \ar@{}[dll]|(.3){\mathrm{(e)}} &&&
  D^+_{P_{I,1}}(\Rn_I) \ar[rrr]^-{R\pi_{I*}} \ar@{<-}[d]^-{R\Ind^{P_{I,1}}_{M_{I,1}}} \ar@{}[drrr]|{\mathrm{(g)}}
&&&
  D^+_{P_{I,1}}(\snn_I) \ar[d]|-{\hspace{-10pt}R\Ind_{N_{I,1} \rtimes P_{I,1}}^{P_{I,1}}} \\
  &&
  D^+_{M_{I,1}}(\bL_I) \ar[rrr]^-{\sigma_I^*} &&&
  D^+_{M_{I,1}}(\Rn_I) \ar[rrr]^-{R\pi_{I*}} &&&
  D^+_{M_{I,1}}(\snn_I) 
}
\end{sideways}
\caption{Diagram for the proof of Proposition~\ref{prop:psi-phi}}\label{fig:psi-phi}
\end{figure}

It is clear that subdiagram (a) commutes, and (b) commutes by~\eqref{eqn:restriction-scalars-For}. Consider now subdiagram (c). As in Lemma~\ref{lem:piI-equivalence}, the functor
\[
R\pi_{I*} : D^+_{P_{I,1}}(\Rn_I) \to D^+_{P_{I,1}}(\snn_I)
\]
is an equivalence of categories, with quasi-inverse $\pi_I^*$. Hence to prove the desired commutativity it suffices to prove that the following diagram commutes:
\[
\xymatrix@C=1.5cm{
D^+_{P_J M_{I,1}}(\snn_I) \ar[r]^-{\pi_I^*} \ar[d]_-{\For^{P_J M_{I,1}}_{P_{I,1}}} & D^+_{P_J M_{I,1}}(\Rn_I) \ar[d]^-{\For^{P_J M_{I,1}}_{P_{I,1}}} \\
D^+_{P_{I,1}}(\snn_I) \ar[r]^-{\pi_I^*} & D^+_{P_{I,1}}(\Rn_I).
}
\]
This again follows from~\eqref{eqn:restriction-scalars-For}.

Next, Lemma~\ref{lem:Ind-For}, applied to the multiplication morphism $N_{I,1} \rtimes P_{J} M_{I,1} \to P_{J} M_{I,1}$ and to the subgroups $N_{I,1} \rtimes P_{I,1} \subset N_{I,1} \rtimes P_{J} M_{I,1}$ and $P_{I,1} \subset P_{J} M_{I,1}$, implies that subdiagram (d) commutes.

Consider now subdiagram (e). We claim that for any bounded below $\bL_I$-dg-module $V$, the $M_{I,1}$-equivariant $\bL_I$-dg-module $\For^{\{1\}}_{M_{I,1}}(V) \otimes \St_I$ is split on the right for the functor $\Ind_{M_{I,1}}^{P_{I,1}} : \bL_I\ldgmod_{M_{I,1}} \to \bL_I\ldgmod_{P_{I,1}}$.
Indeed, to prove this claim it suffices to prove that if $\For^{\{1\}}_{M_{I,1}}(V) \otimes \St_I \to Y$ is a quasi-isomorphism of $M_{I,1}$-equivariant $\bL_I$-dg-modules such that $Y$ is K-injective, then the induced morphism $\Ind_{M_{I,1}}^{P_{I,1}}(\For^{\{1\}}_{M_{I,1}}(V) \otimes \St_I ) \to \Ind_{M_{I,1}}^{P_{I,1}}(Y)$ is a quasi-isomorphism. However $Y$ is K-injective as a complex of $M_{I,1}$-modules because $\bL_I \rtimes \smm_I$ is K-flat as a right $\smm_I$-module, and 
$\For^{\{1\}}_{M_{I,1}}(V) \otimes \St_I$ is a bounded below complex of injective $M_{I,1}$-modules by~\cite[Proposition~I.3.10(c)]{jantzen}, since $\St_I$ is an injective $M_{I,1}$-module (see~\S\ref{ss:Steinberg}). Hence this fact is clear.

Using this claim, we see that the composition $R\Ind_{M_{I,1}}^{P_{I,1}} \circ (\For^{\{1\}}_{M_{I,1}}(-) \otimes \St_I)$ appearing in subdiagram (e) is the functor on derived categories induced by the exact functor
\[
\bL_I\ldgmod^+ \to \bL_I\ldgmod^+_{P_{I,1}} : V \mapsto \Ind_{M_{I,1}}^{P_{I,1}} (\For^{\{1\}}_{M_{I,1}}(V) \otimes \St_I).
\]
Now, for any $V$ in $\bL_I\ldgmod^+$ we obviously have
\[
\Ind_{M_{I,1}}^{P_{I,1}} (\For^{\{1\}}_{M_{I,1}}(V) \otimes \St_I) \cong \For^{\{1\}}_{P_{I,1}}(V) \otimes \Ind_{M_{I,1}}^{P_{I,1}} (\St_I) \cong \For^{\{1\}}_{P_{I,1}}(V) \otimes \rmZ_I
\]
by Lemma~\ref{lem:pi-ind}, which finishes the proof of the commutativity of subdiagram (e).

Finally, subdiagram (f) commutes by Lemma~\ref{lem:base-change} (see also Remark~\ref{rmk:infinitesimal-groups-3}), and subdiagram (g) commutes by~\eqref{eqn:transitivity-direct-image}, since the following diagram commutes:
\[
\xymatrix@C=1.5cm{
\Rn_I \rtimes \smm_I \ar@{^{(}->}[r] \ar[rd]_-{\pi_I \rtimes \id} & \Rn_I \rtimes \spp_I \ar[r]^-{\pi_I \rtimes \id} & \snn_I \rtimes \spp_I \ar[d]^-{\mathrm{mult}} \\
& \snn_I \rtimes \smm_I \ar@{=}[r] & \spp_I.
}
\]

We have proved that all the pieces in the diagram of Figure~\ref{fig:psi-phi} commute. Hence the diagram as a whole commutes, and Proposition~\ref{prop:psi-phi} is proved.

\subsection{Proof of Theorem~\ref{thm:formality}}
\label{ss:proof-fomality}

We begin with some preliminary lemmas. 

\begin{lem}
\label{lem:frobenius-psi}
For any $X \in D^+_{\dot P_J}(\bL_I)$ and any $V \in \Rep(\dot P_J)$, there is a natural isomorphism $\psi_{J,I}(X \otimes V) \cong \psi_{J,I}(X) \otimes \For_{P_JM_{I,1}}^{\dot P_J}(V)$.
\end{lem}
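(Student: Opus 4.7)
The strategy is to trace the claimed isomorphism through each of the five functors in the definition of $\psi_{J,I}$. A preliminary observation: since $V$ is pulled back from $\dot P_J$ along the Frobenius morphism $\Fr_{J,I} : P_JM_{I,1} \to \dot P_J$, and since $N_{I,1}$ is contained in $\ker(\Fr_{J,I})$, the subgroup $N_{I,1}$ acts trivially on $\For^{\dot P_J}_{P_JM_{I,1}}(V)$. Consequently, whenever we view $V$ as a dg-module over $\snn_I$, $\Rn_I$, or $\bL_I$ at the various stages, the action of the relevant dg-algebra factors through the augmentation to $\bk$.

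With this in mind, the first four functors in the composition all commute with $(-) \otimes V$ in a transparent way. The functor $\For^{\dot P_J}_{P_JM_{I,1}}$, being restriction of scalars, preserves tensor products tautologically. Tensoring with $\rmZ_I$ commutes with tensoring with $\For^{\dot P_J}_{P_JM_{I,1}}(V)$ by symmetry and associativity of the tensor product. The functor $\sigma_I^*$ is restriction of scalars along $\sigma_I : \Rn_I \to \bL_I$; since the $\bL_I$-action on $X \otimes \rmZ_I \otimes V$ is inherited only from $X \otimes \rmZ_I$, we obtain a natural isomorphism $\sigma_I^*\bigl((X\otimes \rmZ_I) \otimes V\bigr) \cong \sigma_I^*(X \otimes \rmZ_I) \otimes V$ in $D^+_{P_JM_{I,1}}(\Rn_I)$. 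The same argument applies to $\pi_I^*$, and hence to its quasi-inverse $(\pi_I^*)^{-1}$ by uniqueness of adjoints.

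For the final step, we must show that the derived induction $R\Ind_{N_{I,1} \rtimes P_JM_{I,1}}^{P_JM_{I,1}}$ commutes with tensoring with $V$ viewed as a $P_JM_{I,1}$-module. Writing $H := N_{I,1} \rtimes P_JM_{I,1}$, $K := P_JM_{I,1}$, with the multiplication morphism $m : H \to K$, this is an instance of the tensor identity: for any $Y \in \Rep(H)$ and $W \in \Rep(K)$ there is a natural isomorphism
\[
\Ind_H^K\bigl(Y \otimes \For^K_H(W)\bigr) \cong \Ind_H^K(Y) \otimes W,
\]
as in~\cite[Proposition~I.3.6]{jantzen}. Since tensoring with a fixed module preserves injectivity for algebraic group representations (see~\cite[Proposition~I.3.10]{jantzen}), this identity passes to the derived functors, giving the needed compatibility at the fifth stage. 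Composing the five compatibilities produces the required functorial isomorphism. The only nontrivial point is the derived tensor identity in the last step; all other stages reduce to formal manipulations.
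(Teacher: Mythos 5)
Your proposal is correct and follows essentially the same route as the paper: both arguments trace the tensoring-with-$V$ operation through each of the five functors in the definition of $\psi_{J,I}$, with the only non-formal step being the (extended) tensor identity for $R\Ind_{N_{I,1}\rtimes P_JM_{I,1}}^{P_JM_{I,1}}$. Your added justification for passing the tensor identity to derived functors (tensoring preserves injectives) fills in a detail the paper leaves implicit, but the structure of the argument is the same.
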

\begin{proof}
We certainly have the following collection of natural isomorphisms (in each line, $Y$ should be understood as belonging to the appropriate category of dg-modules):
\begin{align*}
\For_{P_JM_{I,1}}^{\dot P_J}(Y \otimes V) \otimes \rmZ_I &\cong \For_{P_JM_{I,1}}^{\dot P_J}(Y) \otimes \rmZ_I \otimes \For_{P_JM_{I,1}}^{\dot P_J}(V), \\
\sigma_I^*(Y \otimes \For_{P_JM_{I,1}}^{\dot P_J}(V)) &\cong \sigma_I^*(Y) \otimes \For_{P_JM_{I,1}}^{\dot P_J}(V), \\
\pi_I^*(Y \otimes \For_{P_JM_{I,1}}^{\dot P_J}(V)) &\cong \pi_I^*(Y) \otimes \For_{P_JM_{I,1}}^{\dot P_J}(V).
\end{align*}
The tensor identity  (or rather its easy extension to our more general version of induction) tells us that
\[
R\Ind_{N_{I,1} \rtimes P_J M_{I,1}}^{P_J M_{I,1}}(Y \otimes \For_{P_JM_{I,1}}^{\dot P_J}(V))
\cong 
R\Ind_{N_{I,1} \rtimes P_J M_{I,1}}^{P_J M_{I,1}}(Y) \otimes \For_{P_JM_{I,1}}^{\dot P_J}(V).
\]
The lemma follows from the combination of these isomorphisms.
\end{proof}

\begin{lem}
\label{lem:trivial-psi}
There exists a canonical isomorphism $\psi_{J,I}(\bk) \cong \St_I$.
\end{lem}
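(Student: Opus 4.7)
The strategy is to unwind the definition of $\psi_{J,I}$ applied to $\bk$, interpret the final induction step as a cohomology computation via Lemma~\ref{lem:Ind-For}, and then identify the answer with $\St_I$ using the $P_I$-equivariant embedding $\St_I \hookrightarrow \rmZ_I$ from Corollary~\ref{cor:StI-PiI}.

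First, I will trace $\bk$ through the four functors defining $\psi_{J,I}$. The functor $\For^{\dot P_J}_{P_J M_{I,1}}$ returns the trivial $P_J M_{I,1}$-equivariant $\bL_I$-dg-module $\bk$; tensoring with $\rmZ_I$ yields $\rmZ_I$ with trivial $\bL_I$-action and the natural $P_JM_{I,1}$-action (inherited from its $P_I$-module structure). Both $\sigma_I^*$ applied to the trivial $\bL_I$-module $\rmZ_I$ and $\pi_I^*$ applied to the trivial $\snn_I$-module $\rmZ_I$ produce the same object---the underlying complex $\rmZ_I$ on which $\Rn_I$ acts through its augmentation---since $\rmZ_I$ is concentrated in degree~$0$ and the two composed augmentations $\Rn_I \to \bL_I \to \bk$ and $\Rn_I \to \snn_I \to \bk$ agree. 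Hence $(\pi_I^*)^{-1}\sigma_I^*(\rmZ_I)$ is the trivial $\snn_I$-module $\rmZ_I$, and under the equivalence $D^+_{P_J M_{I,1}}(\snn_I) \cong D^+ \Rep(N_{I,1} \rtimes P_J M_{I,1})$ this corresponds to $\rmZ_I$ viewed as a module over $H := N_{I,1} \rtimes P_J M_{I,1}$ with trivial $N_{I,1}$-part and natural $P_J M_{I,1}$-part, i.e.~$(n,p)\cdot v = p\cdot v$.

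Next, I will analyze the functor $R\Ind_H^{P_J M_{I,1}}$ associated with the multiplication morphism $\mathrm{mult}: H \to P_J M_{I,1}$. This morphism is surjective with kernel $\tilde N := \{(n,n^{-1}) : n \in N_{I,1}\} \cong N_{I,1}$, which is infinitesimal (hence finite). Applying Lemma~\ref{lem:Ind-For} to $\mathrm{mult}$ with $H' = \tilde N$ and $K' = \{1\}$ gives $\For_{\{1\}}^{P_J M_{I,1}} \circ R\Ind_H^{P_J M_{I,1}}(\rmZ_I) \cong R\Gamma(\tilde N, \rmZ_I)$. Direct inspection shows that $\tilde N$ acts on $\rmZ_I$ via the natural $N_{I,1}$-action precomposed with inversion, so this equals $R\Gamma(N_{I,1}, \rmZ_I)$. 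A second application of Lemma~\ref{lem:Ind-For}---to the projection $P_{I,1} \twoheadrightarrow M_{I,1}$ with $H' = N_{I,1}$ and $K' = \{1\}$---combined with Corollary~\ref{cor:Ind-Pi}, identifies $R\Gamma(N_{I,1}, \rmZ_I)$ with $\St_I$ as a complex of vector spaces. In particular $\psi_{J,I}(\bk)$ is concentrated in degree~$0$, and equals $\rmZ_I^{N_{I,1}}$ as a $P_J M_{I,1}$-module (which makes sense since $N_{I,1}$ is normal in $P_I$, hence in $P_J M_{I,1}$).

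Finally, Corollary~\ref{cor:StI-PiI} provides a nonzero $P_I$-module morphism $\St_I \to \rmZ_I$. Since $N_I$ acts trivially on $\St_I$, this morphism lands in $\rmZ_I^{N_{I,1}}$, giving a nonzero $P_I$-equivariant map $\St_I \to \rmZ_I^{N_{I,1}}$. Because $\St_I$ is simple as an $M_{I,1}$-module (see~\S\ref{ss:Steinberg}) the map is injective, and comparing dimensions ($\dim \rmZ_I^{N_{I,1}} = \dim \St_I$ by the previous step) shows that it is an isomorphism. Restricting the $P_I$-action to $P_J M_{I,1}$ yields the desired isomorphism $\psi_{J,I}(\bk) \cong \St_I$.

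The main obstacle I anticipate is bookkeeping for the equivariant structures throughout: specifically, verifying that the identification $R\Ind_H^{P_J M_{I,1}}(\rmZ_I) \cong \rmZ_I^{N_{I,1}}$ produced by Lemma~\ref{lem:Ind-For} is genuinely $P_J M_{I,1}$-equivariant, and that the residual $H/\tilde N = P_J M_{I,1}$-action on $\rmZ_I^{N_{I,1}}$ coincides with the natural restriction of the $P_I$-action (rather than being twisted)---this is what ultimately allows the comparison with the genuine $P_I$-module $\St_I$.
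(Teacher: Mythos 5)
Your plan follows the same overall strategy as the paper's proof: unwind the definition to reach $\psi_{J,I}(\bk) \cong R\Ind_{N_{I,1}\rtimes P_JM_{I,1}}^{P_JM_{I,1}}(\rmZ_I)$, establish concentration in degree $0$, then use Corollary~\ref{cor:StI-PiI} together with simplicity of $\St_I$ and a dimension count. Where you diverge is in how you handle the $P_JM_{I,1}$-module structure of that induced object, and this is where a genuine gap appears — one that you correctly anticipate but do not actually close.

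Specifically, you assert that $\psi_{J,I}(\bk)$ ``equals $\rmZ_I^{N_{I,1}}$ as a $P_JM_{I,1}$-module,'' but the only tool you invoke is Lemma~\ref{lem:Ind-For} with $K' = \{1\}$. That lemma, applied with $K'$ trivial, identifies only the underlying complex of $\bk$-vector spaces: it tells you $\For_{\{1\}}^{P_JM_{I,1}}\circ R\Ind_H^{P_JM_{I,1}} \cong R\Ind_{\tilde N}^{\{1\}}\circ\For_{\tilde N}^H$, which says nothing about the residual $P_JM_{I,1}$-action on the result. (Trying to upgrade this by taking $K' = P_JM_{I,1}$ fails because condition~(1) of Lemma~\ref{lem:Ind-For} does not hold in that setting.) The identity $\Ind_H^K(M) \cong M^{\ker\varphi}$ as $K$-modules for surjective $\varphi : H \to K$ is true and not hard — a Yoneda argument using the adjunction $\For_H^K \dashv \Ind_H^K$ does it — but it is a separate lemma that your write-up would need. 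The paper's own proof sidesteps this entirely: rather than computing the induced module explicitly, it constructs the morphism $\St_I \to \Ind_{N_{I,1}\rtimes P_JM_{I,1}}^{P_JM_{I,1}}(\rmZ_I)$ by adjunction, observing that because $N_{I,1}$ acts trivially on $\St_I$ the multiplication-morphism action and the projection action on $\St_I$ coincide, so $\Hom_{N_{I,1}\rtimes P_JM_{I,1}}(\St_I,\rmZ_I) \cong \Hom_{P_JM_{I,1}}(\St_I,\rmZ_I)$; Corollary~\ref{cor:StI-PiI} then supplies the desired nonzero $P_JM_{I,1}$-equivariant map with no need to compute the target as a module. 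You would also still need the dimension equality, which the paper gets by applying Proposition~\ref{prop:psi-phi} to identify $\For^{P_JM_{I,1}}_{P_{I,1}}\psi_{J,I}(\bk) \cong \varphi_I(\bk) = \St_I$ (your route via two applications of Lemma~\ref{lem:Ind-For} and Corollary~\ref{cor:Ind-Pi} also gives the vector-space identification, so that part is fine).

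One further omission: the lemma asserts the isomorphism is \emph{canonical}. Your argument produces an isomorphism defined only up to a scalar. The paper pins this down by noting that the forgetful map $\Hom_{P_JM_{I,1}}(\St_I,\St_I) \to \Hom_{P_{I,1}}(\St_I,\St_I)$ is an isomorphism of one-dimensional spaces, so the canonical $\varphi_I(\bk)\cong\St_I$ transports via Proposition~\ref{prop:psi-phi} to a canonical isomorphism at the $\psi_{J,I}$ level.
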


\begin{proof}
From the definition of $\psi_{J,I}$ we see that
\[
\psi_{J,I}(\bk) \cong R\Ind_{N_{I,1} \rtimes P_J M_{I,1}}^{P_J M_{I,1}}(\rmZ_I),
\]
where the induction functor is defined with respect to the multiplication morphism, and where $N_{I,1} \rtimes P_J M_{I,1}$ acts on $\rmZ_I$ via the projection to the second component $P_J M_{I,1}$. Since $\varphi_I(\bk)=\St_I$ is concentrated in degree $0$,
using Proposition~\ref{prop:psi-phi} we see that $\psi_{J,I}(\bk)$ is also concentrated in degree $0$, so that
\[
\psi_{J,I}(\bk) \cong \Ind_{N_{I,1} \rtimes P_J M_{I,1}}^{P_J M_{I,1}}(\rmZ_I).
\]
We also deduce that $\For^{P_J M_{I,1}}_{P_{I,1}}(\Ind_{N_{I,1} \rtimes P_J M_{I,1}}^{P_J M_{I,1}}(\rmZ_I)) \cong \St_I$.

Now by adjunction we have
\[
\Hom_{P_J M_{I,1}}(\St_I, \Ind_{N_{I,1} \rtimes P_J M_{I,1}}^{P_J M_{I,1}}(\rmZ_I)) \cong \Hom_{N_{I,1} \rtimes P_J M_{I,1}}(\St_I, \rmZ_I),
\]
where $N_{I,1} \rtimes P_J M_{I,1}$ acts on $\St_I$ via the multiplication morphism to $P_{J} M_{I,1}$. But since $N_{I,1}$ acts trivially on $\St_I$, this action coincides with the action via the projection $N_{I,1} \rtimes P_J M_{I,1} \to P_J M_{I,1}$ on the second factor, and we deduce that
\[
\Hom_{N_{I,1} \rtimes P_J M_{I,1}}(\St_I, \rmZ_I) \cong \Hom_{P_J M_{I,1}}(\St_I, \rmZ_I).
\]
By Corollary~\ref{cor:StI-PiI} there exists a nonzero $P_J M_{I,1}$-equivariant morphism $\St_I \to \rmZ_I$, and by these isomorphisms we deduce a nonzero morphism of $P_{J}M_{I,1}$-modules $\St_I \to \Ind_{N_{I,1} \rtimes P_J M_{I,1}}^{P_J M_{I,1}}(\rmZ_I)$. Since $\St_I$ is simple (see~\S\ref{ss:Steinberg}), this morphism is injective. And the remarks above imply that our two modules have the same dimension, so that this morphism must be an isomorphism.

We have thus proved that there exists an isomorphism $\psi_{J,I}(\bk) \cong \St_I$. To construct a canonical isomorphism, we simply remark that the forgetful functor induces an isomorphism
\[
\Hom_{P_J M_{I,1}}(\St_I, \St_I) \simto \Hom_{P_{I,1}}(\St_I, \St_I)
\]
(since both spaces have dimension $1$), so that the canonical isomorphism $\varphi_I(\bk) \cong \St_I$ induces, via Proposition~\ref{prop:psi-phi}, a canonical isomorphism $\psi_{J,I}(\bk) \cong \St_I$.
\end{proof}

As explained in~\S\ref{ss:equivariance}, for any $V \in \Rep(\dot P_J)$ and any $n \in \Z$, the vector space $\Hom^n_{D(\bL_I)}(\bk,V)$ admits a natural action of $P_J$, which can easily be seen to factor through an action of $\dot P_J$ (see the proof of Proposition~\ref{prop:phiI-equivariance}).

\begin{lem}
\label{lem:Hom-fixed-points-bLI}
For any injective $\dot P_J$-module $V$ and any $n \in \Z$, the morphism
\[
\Hom^n_{D_{\dot P_J}(\bL_I)}(\bk,V) \to \Hom^n_{D(\bL_I)}(\bk,V)
\]
induced by the functor $\For^{\dot P_J}_{\{1\}}$
is injective, and it induces an isomorphism
\[
\Hom^n_{D_{\dot P_J}(\bL_I)}(\bk,V) \simto \mathbb{I}^{\dot P_J} \bigl( \Hom^n_{D(\bL_I)}(\bk,V) \bigr).
\]
\end{lem}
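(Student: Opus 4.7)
Here is my plan. I would base the argument on two resolutions. First, the Koszul resolution $K_I \xrightarrow{\qis} \bk$ as $\bL_I$-dg-modules (cf.~the proof of Proposition~\ref{prop:phiI-equivariance}), which is K-projective over $\bL_I$ and carries a canonical $\dot P_J$-equivariant structure; its graded components are of the form $\bL_I \otimes C_n$ for finite-dimensional $\dot P_J$-modules $C_n$ built functorially from $\fnt_I$. Second, a resolution $V \xrightarrow{\qis} Y$ provided by Lemma~\ref{lem:K-inj-equiv} (applicable since $\dim_\bk(\bL_I) < \infty$) which is bounded below, K-injective both as a $\dot P_J$-equivariant $\bL_I$-dg-module and as an $\bL_I$-dg-module, and consists of injective $\dot P_J$-modules.

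Using the equivariant K-injectivity of $Y$ and the quasi-isomorphism $K_I \xrightarrow{\qis} \bk$, the equivariant Hom is computed by $\Hom^n_{D_{\dot P_J}(\bL_I)}(\bk,V) \cong \coH^n \Hom^\bullet_{\bL_I\ldgmod_{\dot P_J}}(K_I,Y)$. Since each $\bL_I \otimes C_n$ is an ``induced'' object, one verifies the natural identity $\Hom^\bullet_{\bL_I\ldgmod_{\dot P_J}}(K_I,M) = \mathbb{I}^{\dot P_J} \Hom^\bullet_{\bL_I}(K_I,M)$ functorially in $M$. The K-projectivity of $K_I$ over $\bL_I$ together with the quasi-isomorphism $V \to Y$ provides a quasi-isomorphism $\Hom^\bullet_{\bL_I}(K_I,V) \to \Hom^\bullet_{\bL_I}(K_I,Y)$. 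Both are bounded-below complexes of injective $\dot P_J$-modules (each term is a tensor product of a finite-dimensional $\dot P_J$-module with an injective $\dot P_J$-module, hence injective), so this quasi-isomorphism is a homotopy equivalence and is preserved by the additive functor $\mathbb{I}^{\dot P_J}$. Combining these steps yields
\[
\Hom^n_{D_{\dot P_J}(\bL_I)}(\bk,V) \cong \coH^n \mathbb{I}^{\dot P_J} \bigl( \Hom^\bullet_{\bL_I}(K_I,V) \bigr).
\]

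To finish, I would observe that $\Ext^q_{\bL_I}(\bk,V) \cong \Sym^{q/2}(\fnt_I^*) \otimes V$ (for even $q \geq 0$, as computed in the proof of Proposition~\ref{prop:phiI-equivariance}) is again an injective $\dot P_J$-module. The hypercohomology spectral sequence
\[
E_2^{p,q} = \coH^p \bigl( \dot P_J, \Ext^q_{\bL_I}(\bk,V) \bigr) \Rightarrow \coH^{p+q} \bigl( R\mathbb{I}^{\dot P_J} \Hom^\bullet_{\bL_I}(K_I,V) \bigr)
\]
degenerates at $E_2$, and since a bounded-below complex of injective $\dot P_J$-modules is $R\mathbb{I}^{\dot P_J}$-acyclic, this yields
\[
\coH^n \mathbb{I}^{\dot P_J} \Hom^\bullet_{\bL_I}(K_I,V) \cong \mathbb{I}^{\dot P_J} \bigl( \Hom^n_{D(\bL_I)}(\bk,V) \bigr).
\]
Chasing through the identifications, the forgetful map $\For^{\dot P_J}_{\{1\}}$ is identified with the tautological inclusion of invariants, which is manifestly injective.

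The main technical point I anticipate is bookkeeping: verifying that the $\dot P_J$-action on $\Hom^n_{D(\bL_I)}(\bk,V)$ defined abstractly in~\S\ref{ss:Hom-action} via K-projective resolutions agrees with the one arising from $K_I$ through the above chain of identifications. This should follow from the functoriality of the construction of~\S\ref{ss:Hom-action} together with the fact that $K_I$ is itself $\dot P_J$-equivariant, so that its image in the non-equivariant derived category serves as a $\dot P_J$-equivariant K-projective resolution in the sense of Lemma~\ref{lem:Kproj-resolution}.
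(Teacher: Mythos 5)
Your proposal is correct and follows essentially the same route as the paper: resolve $V$ by the K-injective object of Lemma~\ref{lem:K-inj-equiv} whose terms are injective $\dot P_J$-modules, replace $\bk$ by the Koszul resolution $K_I$ to identify the equivariant Hom-complex with $\mathbb{I}^{\dot P_J}(\Sym^\bullet(\fnt_I^*)\otimes({-}))$, and use that a quasi-isomorphism of bounded-below complexes of injective $\dot P_J$-modules survives the application of $\mathbb{I}^{\dot P_J}$. The only (harmless) difference is at the end, where the paper simply computes both sides explicitly as $\Sym^{n/2}(\fnt_I^*)\otimes V$ and its invariants, rather than invoking a spectral sequence.
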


\begin{proof}
By Lemma~\ref{lem:K-inj-equiv}, there exists an object $X$ in $\bL_I\ldgmod_{\dot P_J}^+$ which is K-injective and has components which are injective $\dot P_J$-modules, and a quasi-isomorphism of $\dot P_J$-equivariant dg-modules $V \xrightarrow{\qis} X$. Then we have
\[
\Hom^n_{D_{\dot P_J}(\bL_I)}(\bk,V) \cong \coH^n(\Hom^\bullet_{\bL_I\ldgmod_{\dot P_J}}(\bk,X)).
\]
Now,
as in~\S\ref{ss:equivariance},
consider the Koszul resolution $K_I$ of the $\bL_I$-dg-module $\bk$. Then since $X$ is K-injective the quasi-isomorphism $K_I \xrightarrow{\qis} \bk$ induces a quasi-isomorphism
\[
\Hom^\bullet_{\bL_I\ldgmod_{\dot P_J}}(\bk,X) \xrightarrow{\qis} \Hom^\bullet_{\bL_I\ldgmod_{\dot P_J}}(K_I,X).
\]
Next, we remark that we have
\[
\Hom^\bullet_{\bL_I\ldgmod_{\dot P_J}}(K_I,X) = \mathbb{I}^{\dot P_J}(\Hom^\bullet_{\bL_I}(K_I,X)) = \mathbb{I}^{\dot P_J}(\Sym^\bullet(\dot \fn_I^*) \otimes X),
\]
where $\dot \fn_I^*$ is in degree $2$.
The morphism $\Sym^\bullet(\dot \fn_I^*) \otimes V \to \Sym^\bullet(\dot \fn_I^*) \otimes X$ induced by the quasi-isomor\-phism $V \xrightarrow{\qis} X$ is a quasi-isomorphism of bounded below complexes of injective $\dot P_J$-modules; therefore it induces a quasi-isomorphism
\[
\mathbb{I}^{\dot P_J}(\Sym^\bullet(\dot \fn_I^*) \otimes V) \xrightarrow{\qis} \mathbb{I}^{\dot P_J}(\Sym^\bullet(\dot \fn_I^*) \otimes X).
\]
Combining these isomorphisms, we obtain that
\begin{multline*}
\Hom^n_{D_{\dot P_J}(\bL_I)}(\bk,V) \cong \coH^n(\mathbb{I}^{\dot P_J}(\Sym^\bullet(\dot \fn_I^*) \otimes V)) \\
\cong \begin{cases}
\mathbb{I}^{\dot P_J}(\Sym^{n/2}(\dot \fn_I^*) \otimes V) & \text{if $n \in 2\Z_{\geq 0}$;} \\
0 & \text{otherwise.}
\end{cases}
\end{multline*}

Similarly we have
\[
\Hom^n_{D_{\dot P_J}(\bL_I)}(\bk,V) \cong \begin{cases}
\Sym^{n/2}(\dot \fn_I^*) \otimes V & \text{if $n \in 2\Z_{\geq 0}$;} \\
0 & \text{otherwise}
\end{cases}
\]
(see the proof of Proposition~\ref{prop:phiI-equivariance})
and the lemma follows.
\end{proof}

Similarly (see again~\S\ref{ss:equivariance}), for any $\dot P_J$-module $V$ and any $n \in \Z$, the vector space $\Hom^n_{D^+ \Rep(P_{I,1})}(\St_I, \St_I \otimes V)$ admits a natural action of $P_J$.

\begin{lem}
\label{lem:Hom-fixed-points-PI1}
For any injective $\dot P_J$-module $V$ and any $n \in \Z$, the $P_J$-action on $\Hom^n_{D^+ \Rep(P_{I,1})}(\St_I,\St_I \otimes V)$ factors through an action of $\dot P_J$. Moreover,
the morphism
\[
\Hom^n_{D^+ \Rep(P_J M_{I,1})}(\St_I,\St_I \otimes V) \to \Hom^n_{D^+ \Rep(P_{I,1})}(\St_I,\St_I \otimes V)
\]
induced by the functor $\For^{P_J M_{I,1}}_{P_{I,1}}$
is injective, and induces an isomorphism
\[
\Hom^n_{D^+ \Rep(P_J M_{I,1})}(\St_I,\St_I \otimes V) \simto \mathbb{I}^{\dot P_J} \bigl( \Hom^n_{D^+ \Rep(P_{I,1})}(\St_I,\St_I \otimes V) \bigr).
\]
\end{lem}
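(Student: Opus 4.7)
The plan is to reduce both sides to $\dot P_J$-cohomology of $V$, exploiting the short exact sequence $1 \to P_{I,1} \to P_J M_{I,1} \to \dot P_J \to 1$ together with the key observation that $V$, pulled back to $P_J M_{I,1}$ via $\Fr_{J,I}$, is trivial on $P_{I,1}$ (since $P_{I,1}$ lies in the kernel of the Frobenius, hence of $\Fr_{J,I}$).

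Since $\St_I$ is injective (and simple) as a $P_{I,1}$-module by \S\ref{ss:Steinberg}, and since $\spp_I$ is finite-dimensional (so $\Rep(P_{I,1}) \cong \spp_I\lmod$ is a category of modules over a Noetherian algebra, where arbitrary direct sums of injectives are injective), the $P_{I,1}$-module $\St_I \otimes V$, which is a direct sum of copies of $\St_I$, is $P_{I,1}$-injective. Combined with $\End_{P_{I,1}}(\St_I) \cong \bk$, this yields
\[
\Ext^q_{P_{I,1}}(\St_I, \St_I \otimes V) \cong \begin{cases} V & \text{if } q = 0, \\ 0 & \text{otherwise.}\end{cases}
\]
This isomorphism is $P_J$-equivariant for the natural $P_J$-action on $V$ via $\Fr_{J,I}$, which factors through $\dot P_J$. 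Now the discussion preceding Lemma~\ref{lem:spectral-sequence} tells us that the diagonal $P_J M_{I,1}$-action on each $\Ext^q_{P_{I,1}}(\St_I, \St_I \otimes V)$ descends to an algebraic $\dot P_J = P_J M_{I,1}/P_{I,1}$-action. Restricting this $P_J M_{I,1}$-action to the subgroup $P_J$ recovers the natural $P_J$-action from \S\ref{ss:equivariance} (both constructions give the conjugation action on Hom-spaces), which thus factors through $\dot P_J$ via the Frobenius $P_J \to \dot P_J$. This proves the first claim.

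For the second claim, the spectral sequence of Lemma~\ref{lem:spectral-sequence} applied to $K = P_{I,1} \subset H = P_J M_{I,1}$, with source $\St_I$ and target $\St_I \otimes V$, collapses at $E_2$ by the vanishing above, giving
\[
\Ext^n_{P_J M_{I,1}}(\St_I, \St_I \otimes V) \cong \coH^n(\dot P_J, V).
\]
Since $V$ is $\dot P_J$-injective by hypothesis, the right-hand side vanishes for $n > 0$ and equals $\mathbb{I}^{\dot P_J}(V)$ for $n = 0$. Under the identifications above this matches $\mathbb{I}^{\dot P_J}(\Hom^n_{D^+\Rep(P_{I,1})}(\St_I, \St_I \otimes V))$. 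The only remaining point---which is the most subtle step of the argument but not a serious obstacle---is to verify that the map induced by $\For^{P_J M_{I,1}}_{P_{I,1}}$ corresponds under these identifications to the inclusion $\mathbb{I}^{\dot P_J}(V) \hookrightarrow V$; this follows from the bifunctoriality of Lemma~\ref{lem:spectral-sequence}, which identifies the forgetful map with the edge map $E_\infty^{0,n} \hookrightarrow E_2^{0,n}$ of the collapsed spectral sequence, and the injectivity statement is immediate from the same edge-map description.
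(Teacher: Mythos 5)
There is a genuine error at the very first step. You assert that $\St_I$ is injective as a $P_{I,1}$-module ``by \S\ref{ss:Steinberg}''. That section only says $\St_I$ is injective as an $M_{I,1}$-module (or $M_{I,1}T$-module); as a $P_{I,1}$-module it is \emph{not} injective unless $N_{I,1}$ is trivial --- indeed Lemma~\ref{lem:pi-ind} identifies its injective envelope over $P_{I,1}$ as $\rmZ_I \cong \Ind_{M_{I,1}}^{P_{I,1}}(\St_I)$, which is strictly larger. Consequently your claimed vanishing $\Ext^q_{P_{I,1}}(\St_I,\St_I\otimes V)=0$ for $q>0$ is false: by Proposition~\ref{prop:phiI-equivariance} (essentially Friedlander--Parshall) one has $\Ext^q_{P_{I,1}}(\St_I,\St_I\otimes V)\cong \Sym^{q/2}(\fnt_I^*)\otimes V$ for $q\in 2\Z_{\ge 0}$. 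Your conclusion that $\Hom^n_{D^+\Rep(P_JM_{I,1})}(\St_I,\St_I\otimes V)\cong \coH^n(\dot P_J,V)$ vanishes for $n>0$ is therefore also false, and in fact contradicts the way the lemma is used (Corollary~\ref{cor:psi-ff-injectives} matches this space with $\mathbb{I}^{\dot P_J}(\Sym^{n/2}(\fnt_I^*)\otimes V)$, which is generically nonzero for even $n>0$). If the vanishing you claim were true, the entire formality problem the paper is solving would be trivial.

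The spectral-sequence strategy itself is salvageable, but for a different reason than the one you give: one shows (as in~\eqref{eqn:Hom-PI1-tensoring}) that $\Ext^q_{P_{I,1}}(\St_I,\St_I\otimes V)\cong \Ext^q_{P_{I,1}}(\St_I,\St_I)\otimes V$, and since $V$ is $\dot P_J$-injective so is $M\otimes V$ for any finite-dimensional $M$ by~\cite[Proposition~I.3.10(c)]{jantzen}; hence $E_2^{p,q}=0$ for all $p>0$ and the sequence degenerates onto the column $p=0$, giving $\Ext^n_{P_JM_{I,1}}\cong \mathbb{I}^{\dot P_J}(\Ext^n_{P_{I,1}})$ without any vanishing of the higher $\Ext^q_{P_{I,1}}$. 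You would still need to identify the edge map with the forgetful map, which you only gesture at. The paper avoids the spectral sequence altogether: it takes an injective resolution $\St_I\to X$ over $P_JM_{I,1}$, writes $\Hom^\bullet_{P_JM_{I,1}}(\St_I,X\otimes V)=\mathbb{I}^{\dot P_J}(\Hom^\bullet_{P_{I,1}}(\St_I,X)\otimes V)$, and uses exactness of $\mathbb{I}^{\dot P_J}((-)\otimes V)$ to commute invariants with cohomology; the identification of the two $P_J$-actions is then checked via a $P_J$-equivariant projective resolution over $\spp_I$. You would need to rework your argument along one of these lines.
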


\begin{proof}
Let $\St_I \xrightarrow{\qis} X$ be an injective resolution in $\Rep(P_J M_{I,1})$. Then $X \otimes V$ is an injective resolution of $\St_I \otimes V$, so that we have
\[
\Hom^n_{D^+ \Rep(P_J M_{I,1})}(\St_I,\St_I \otimes V) \cong \coH^n (\Hom^{\bullet}_{P_J M_{I,1}}(\St_I, X \otimes V)).
\]
On the other hand we have
\[
\Hom^{\bullet}_{P_J M_{I,1}}(\St_I, X \otimes V) = \mathbb{I}^{\dot P_J} ( \Hom^{\bullet}_{P_{I,1}}(\St_I, X \otimes V) ) \cong \mathbb{I}^{\dot P_J} ( \Hom^{\bullet}_{P_{I,1}}(\St_I, X) \otimes V ),
\]
where the $\dot P_J$-action is induced by the $P_{J} M_{I,1}$-actions on $\St_I$, $X$ and $V$. Since $V$ is injective, the functor $\mathbb{I}^{\dot P_J}(- \otimes V)$ is exact; therefore we obtain that
\begin{multline*}
\Hom^n_{D^+ \Rep(P_J M_{I,1})}(\St_I,\St_I \otimes V) \cong \mathbb{I}^{\dot P_J} ( \coH^n (\Hom^{\bullet}_{P_{I,1}}(\St_I, X)) \otimes V ) \\
\cong \mathbb{I}^{\dot P_J} ( \coH^n (\Hom^{\bullet}_{P_{I,1}}(\St_I, X \otimes V ))).
\end{multline*}

By~\cite[Proposition~I.4.12 \& Corollary~I.5.13(b)]{jantzen}, any injective $P_{J} M_{I,1}$-module is injective as a $P_{I,1}$-module; in particular $X \otimes V$ is an injective resolution of $\St_I \otimes V$ as a $P_{I,1}$-module, and we have
\[
\coH^n (\Hom^{\bullet}_{P_{I,1}}(\St_I, X \otimes V)) \cong \Hom^n_{D^+ \Rep(P_{I,1})}(\St_I, \St_I \otimes V).
\]
This finally proves that
\[
\Hom^n_{D^+ \Rep(P_J M_{I,1})}(\St_I,\St_I \otimes V) \cong \mathbb{I}^{\dot P_J}(\Hom^n_{D^+ \Rep(P_{I,1})}(\St_I, \St_I \otimes V)).
\]

This isomorphism proves the lemma, provided we prove that the $P_J$-action deduced (via the Frobenius) from the $\dot P_J$-action considered in this proof coincides with the action constructed (in the general setting) in~\S\ref{ss:Hom-action}. For this we choose a complex of $P_J$-equivariant $\spp_I$-modules $Y$ and a $P_J$-equivariant quasi-isomorphism $Y \xrightarrow{\qis} \St_I$ which is a projective resolution over $\spp_I$. Then this morphism induces a quasi-isomorphism
\[
\Hom^\bullet_{P_{I,1}}(\St_I, X \otimes V) \xrightarrow{\qis} \Hom^\bullet_{P_{I,1}}(Y,X \otimes V)
\]
because $X \otimes V$ is a bounded below complex of injective $P_{I,1}$-modules. And the quasi-isomorphism $\St_I \otimes V \xrightarrow{\qis} X \otimes V$ induces a quasi-isomorphism
\[
\Hom^\bullet_{P_{I,1}}(Y,\St_I \otimes V) \xrightarrow{\qis} \Hom^\bullet_{P_{I,1}}(Y,X \otimes V)
\]
since $Y$ is a bounded above complex of projective $P_{I,1}$-modules. These quasi-isomorphisms are $P_J$-equivariant, so the actions do indeed coincide.
\end{proof}

\begin{cor}
\label{cor:psi-ff-injectives}
For any injective $\dot P_J$-module $V$ and any $n \in \Z$, the functor $\psi_{J,I}$ induces an isomorphism
\[
\Hom^n_{D_{\dot P_J}(\bL_I)}(\bk,V) \simto \Hom^n_{D^+ \Rep(P_J M_{I,1})}(\St_I, \St_I \otimes V).
\]
\end{cor}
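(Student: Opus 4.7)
The plan is to deduce the corollary by combining the previous lemmas through a single commutative square that reduces the claim to its non-equivariant counterpart (Proposition~\ref{prop:phiI-equivariance}) together with the identifications of equivariant Hom-spaces with $\dot P_J$-invariants.

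First, using Lemma~\ref{lem:frobenius-psi} applied to $X = \bk$ together with Lemma~\ref{lem:trivial-psi}, I would fix a canonical isomorphism $\psi_{J,I}(V) \cong \St_I \otimes \For^{\dot P_J}_{P_J M_{I,1}}(V)$. The functor $\psi_{J,I}$ then induces a natural map
\[
\alpha_n \colon \Hom^n_{D_{\dot P_J}(\bL_I)}(\bk, V) \longrightarrow \Hom^n_{D^+\Rep(P_J M_{I,1})}(\St_I, \St_I \otimes V),
\]
and the goal is to show that $\alpha_n$ is an isomorphism.

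Next, I would assemble the commutative square obtained by applying the forgetful functors $\For^{\dot P_J}_{\{1\}}$ on the left and $\For^{P_J M_{I,1}}_{P_{I,1}}$ on the right. By Proposition~\ref{prop:psi-phi} (together with the compatibility between the two canonical isomorphisms $\psi_{J,I}(\bk) \cong \St_I$ and $\varphi_I(\bk) \cong \St_I$ built into Lemma~\ref{lem:trivial-psi}), this square reads
\[
\xymatrix@C=1.2cm{
\Hom^n_{D_{\dot P_J}(\bL_I)}(\bk, V) \ar[r]^-{\alpha_n} \ar[d] & \Hom^n_{D^+\Rep(P_J M_{I,1})}(\St_I, \St_I \otimes V) \ar[d] \\
\Hom^n_{D(\bL_I)}(\bk, V) \ar[r]^-{\beta_n} & \Hom^n_{D^+\Rep(P_{I,1})}(\St_I, \St_I \otimes V),
}
\]
where $\beta_n$ is the map induced by $\varphi_I$ (using the same identification $\varphi_I(V) \cong \St_I \otimes V$ from Proposition~\ref{prop:phiI-equivariance}).

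Finally, Proposition~\ref{prop:phiI-equivariance} tells us that $\beta_n$ is a $P_J$-equivariant isomorphism, and since $V$ is a $\dot P_J$-module, the $P_J$-actions on both sides factor through $\dot P_J$ (this is observed in the proof of Proposition~\ref{prop:phiI-equivariance} on the left, and asserted in Lemma~\ref{lem:Hom-fixed-points-PI1} on the right). By Lemmas~\ref{lem:Hom-fixed-points-bLI} and~\ref{lem:Hom-fixed-points-PI1}, the two vertical arrows are exactly the inclusions of $\dot P_J$-fixed points. Applying the exact functor $\mathbb{I}^{\dot P_J}$ to the isomorphism $\beta_n$ therefore yields the desired isomorphism $\alpha_n$. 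No essential difficulty remains; the only point requiring some care is checking that the horizontal maps in the square are really induced by $\psi_{J,I}$ and $\varphi_I$ applied to the same canonical isomorphisms, which is precisely how the isomorphism in Lemma~\ref{lem:trivial-psi} was constructed.
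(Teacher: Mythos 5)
Your proposal is correct and follows essentially the same route as the paper's proof: the commutative square from Proposition~\ref{prop:psi-phi}, the identification of the bottom arrow as a $P_J$-equivariant isomorphism via Proposition~\ref{prop:phiI-equivariance}, and the identification of the vertical arrows as inclusions of fixed points via Lemmas~\ref{lem:Hom-fixed-points-bLI} and~\ref{lem:Hom-fixed-points-PI1}. The only difference is cosmetic (you make explicit the compatibility of the two canonical isomorphisms $\psi_{J,I}(\bk)\cong\St_I$ and $\varphi_I(\bk)\cong\St_I$, which the paper treats as built into Lemma~\ref{lem:trivial-psi}).
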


\begin{proof}
By Lemmas~\ref{lem:frobenius-psi} and~\ref{lem:trivial-psi}, we have canonical isomorphisms $\psi_{J,I}(\bk) \cong \St_I$ and $\psi_{J,I}(V) \cong \St_I \otimes V$. Now by Proposition~\ref{prop:psi-phi} we have a commutative diagram
\[
\xymatrix@C=1.5cm{
\Hom^n_{D_{\dot P_J}(\bL_I)}(\bk,V) \ar[r] \ar[d] & \Hom^n_{D^+ \Rep(P_J M_{I,1})}(\St_I, \St_I \otimes V) \ar[d] \\
\Hom^n_{D(\bL_I)}(\bk,V) \ar[r] & \Hom^n_{D^+ \Rep(P_{I,1})}(\St_I, \St_I \otimes V),
}
\]
where the horizontal morphisms are induced by $\psi_{J,I}$ and $\varphi_I$ respectively, and the vertical morphisms by the appropriate forgetful functors. By Proposition~\ref{prop:phiI-equivariance} the lower line is a $P_J$-equivariant isomorphism, and by Lemmas~\ref{lem:Hom-fixed-points-bLI} and~\ref{lem:Hom-fixed-points-PI1} the vertical arrows are embeddings of the $P_J$-fixed points. Therefore the upper line is also an isomorphism.
\end{proof}

Now we deduce a similar property for finite-dimensional $\dot P_J$-modules.

\begin{prop}
\label{prop:psi-ff-V}
For any finite dimensional $\dot P_J$-module $V$, and any $n \in \Z$, the functor $\psi_{J,I}$ induces an isomorphism
\[
\Hom^n_{\Dfg_{\dot P_J}(\bL_I)}(\bk,V) \simto \Ext^n_{P_J M_{I,1}}(\St_I, \St_I \otimes V).
\]
\end{prop}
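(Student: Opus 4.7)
The proof proceeds by dévissage against an injective resolution of $V$, reducing the general case to the case of injective $\dot P_J$-modules that was already treated in Corollary~\ref{cor:psi-ff-injectives}. Choose an injective resolution $V \to I^\bullet$ in $\Rep(\dot P_J)$, with each $I^p$ injective (and typically infinite-dimensional) as a $\dot P_J$-module. Viewing $V$ and each $I^p$ as $\dot P_J$-equivariant $\bL_I$-dg-modules concentrated in cohomological degree $0$ with trivial $\bL_I$-action, the augmentation $V \to I^\bullet$ becomes a quasi-isomorphism in $D^+_{\dot P_J}(\bL_I)$. Correspondingly, $\St_I \otimes V \to \St_I \otimes I^\bullet$ is a quasi-isomorphism in $D^+ \Rep(P_J M_{I,1})$, since tensoring with the finite-dimensional vector space $\St_I$ is exact.

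The stupid filtration on $I^\bullet$ induces, on both sides, a convergent first-quadrant spectral sequence of the form
\[
E_1^{p,q} = \Hom^q_{D^+_{\dot P_J}(\bL_I)}(\bk, I^p) \Rightarrow \Hom^{p+q}_{D^+_{\dot P_J}(\bL_I)}(\bk, V),
\]
together with its analogue obtained by replacing $(\bk, I^p, V)$ with $(\St_I, \St_I \otimes I^p, \St_I \otimes V)$ in $D^+ \Rep(P_J M_{I,1})$. Because each $I^p$ is injective as a $\dot P_J$-module, Corollary~\ref{cor:psi-ff-injectives} asserts that $\psi_{J,I}$ induces isomorphisms between the $E_1$-pages. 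By the functoriality of $\psi_{J,I}$, these isomorphisms commute with the differentials $d_1$, and hence induce isomorphisms on all subsequent pages; convergence then implies that the induced map on the abutments is an isomorphism. Since $V$ is finite-dimensional it lies in the fully faithful subcategory $\Dfg_{\dot P_J}(\bL_I) \subset D^+_{\dot P_J}(\bL_I)$, so the Hom-space computed in the former agrees with the one computed in the latter.

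The main technical point is to construct the first spectral sequence rigorously in the somewhat non-standard equivariant dg-category $D^+_{\dot P_J}(\bL_I)$. This can be done by taking K-injective resolutions of the individual $I^p$ (using Lemma~\ref{lem:K-inj-equiv}) and assembling them, through a Cartan--Eilenberg-style construction, into a single K-injective resolution of $I^\bullet$ whose filtration by columns yields the required spectral sequence; alternatively, one may replace the hypercohomology spectral sequence by an explicit dimension-shifting argument using the short exact sequences $0 \to Z^p \to I^p \to Z^{p+1} \to 0$ arising from the resolution, propagating the isomorphism of Corollary~\ref{cor:psi-ff-injectives} degree by degree.
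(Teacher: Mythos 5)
Your approach is genuinely different from the paper's, and it has real gaps. The most serious one concerns the second spectral sequence, on the $P_J M_{I,1}$ side. You want a morphism of spectral sequences induced by $\psi_{J,I}$, identified on $E_1$-pages with the isomorphisms of Corollary~\ref{cor:psi-ff-injectives}. But $\psi_{J,I}$ is a composition of derived functors, including the \emph{inverse} of an equivalence ($(\pi_I^*)^{-1}$), so it is not a functor on filtered chain complexes; "functoriality of $\psi_{J,I}$" at the level of derived categories does not by itself produce a morphism of exact couples. One would have to pass to the towers of truncations $\cdots \to I^{\ge 1} \to I^{\ge 0}=I^\bullet$ and $\cdots \to \psi_{J,I}(I^{\ge 1}) \to \psi_{J,I}(I^{\ge 0})$, verify (via Lemma~\ref{lem:frobenius-psi}) that the graded pieces of the latter are $\St_I\otimes I^p[-p]$, carry out the exact-couple construction in both settings, and then check that the resulting $E_1$-comparison coincides with the isomorphism of Corollary~\ref{cor:psi-ff-injectives}. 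This is not impossible, but it is a substantial block of work that you have only gestured at. Your fallback, dimension shifting via the cosyzygies $Z^p$, fails outright: because $\bL_I$ is an exterior algebra, $\Hom^q_{D(\bL_I)}(\bk,I^p)$ is nonzero for \emph{all} even $q\ge0$ even though $I^p$ is $\dot P_J$-injective, so the usual shift $\Hom^n(\bk,V)\cong\Hom^{n-1}(\bk,Z^1)$ does not hold; and since each $Z^p$ is infinite-dimensional, the statement of the proposition cannot be applied to it, so the induction has no valid ingredient to propagate.

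The paper sidesteps all of this by never asking the unbounded resolution to give information on the $P_JM_{I,1}$ side. It truncates to the bounded complex $X_k$, where Corollary~\ref{cor:psi-ff-injectives} plus the 5-lemma really do apply; then it uses the explicit description~\eqref{eqn:Ext-bLI} to show that the $\bL_I$-side map $\Hom^n(\bk,V)\to\Hom^n(\bk,X_k)$ is already an isomorphism for $k\gg 0$, yielding \emph{injectivity} of the map in the statement. Surjectivity is not proved by tracking the resolution at all: instead the paper derives the dimension bound~\eqref{eqn:ineq-dimensions} from the Lyndon--Hochschild--Serre spectral sequence for $P_{I,1}\triangleleft P_JM_{I,1}$ (Corollary~\ref{cor:dim-Ext}), and then uses the $\dot P_J$-equivariant identification $\Hom^k_{D(\bL_I)}(\bk,\bk)\cong\Ext^k_{P_{I,1}}(\St_I,\St_I)$ from Proposition~\ref{prop:phiI-equivariance} to match that bound with the known dimension of the source. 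Injectivity plus equality of dimensions then gives the isomorphism. This "injectivity plus dimension count" structure is precisely designed to avoid the convergence and naturality questions your proposal runs into.
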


\begin{proof}
As in the proof of Corollary~\ref{cor:psi-ff-injectives},
we have canonical isomorphisms $\psi_{J,I}(\bk) \cong \St_I$ and $\psi_{J,I}(V) \cong \St_I \otimes V$.
Choose an injective resolution $V \to X^\bullet$ of $V$ as a $\dot P_J$-module and, for any $k \geq 0$, let $X_k$ be the complex
\[
\cdots \to 0 \to X^0 \to \cdots \to X^k \to 0 \to \cdots
\]
We have natural isomorphisms
\begin{align*}
\Hom^n_{\Dfg_{\dot P_J}(\bL_I)}(\bk,V) &\cong \Hom^n_{\Dfg_{\dot P_J}(\bL_I)}(\bk,X^\bullet), \\
\Ext^n_{P_J M_{I,1}}(\St_I, \St_I \otimes V) &\cong \Hom^n_{P_J M_{I,1}}(\St_I, \St_I \otimes X^\bullet).
\end{align*}
Hence the natural morphism $X^\bullet \to X_k$ induces a commutative diagram
\begin{equation}
\label{eqn:diag-psi-V}
\vcenter{
\xymatrix{
\Hom^n_{\Dfg_{\dot P_J}(\bL_I)}(\bk,V) \ar[r] \ar[d] & \Ext^n_{P_J M_{I,1}}(\St_I, \St_I \otimes V) \ar[d] \\
\Hom^n_{\Dfg_{\dot P_J}(\bL_I)}(\bk,X_k) \ar[r] & \Ext^n_{P_J M_{I,1}}(\St_I, \St_I \otimes X_k),
}
}
\end{equation}
where the horizontal arrows are induced by $\psi_{J,I}$. By Corollary~\ref{cor:psi-ff-injectives} and the $5$-lemma, the lower line is an isomorphism. On the other hand, the same arguments as in the proof of Lemma~\ref{lem:Hom-fixed-points-bLI} show that we have
\begin{equation}
\label{eqn:Ext-bLI}
\Hom^n_{\Dfg_{\dot P_J}(\bL_I)}(\bk,X^\bullet) \cong \bigoplus_{i+2j = n} \coH^i(\mathbb{I}^{\dot P_J}(\Sym^j(\dot \fn_I^*) \otimes X^\bullet)),
\end{equation}
and similarly for $X_k$. In particular, we deduce that the left-hand morphism in~\eqref{eqn:diag-psi-V} is an isomorphism for $k \gg 0$. It follows that the upper horizontal morphism is injective, and to finish the proof we only have to prove that
\begin{equation}
\label{eqn:ineq-dimensions}
\dim_\bk(\Ext^n_{P_J M_{I,1}}(\St_I, \St_I \otimes V)) \leq \dim_\bk(\Hom^n_{\Dfg_{\dot P_J}(\bL_I)}(\bk,V)).
\end{equation}

The formula~\eqref{eqn:Ext-bLI} also shows that
\begin{multline*}
\dim_\bk(\Hom^n_{\Dfg_{\dot P_J}(\bL_I)}(\bk,V)) = \sum_{i+2j = n} \dim_\bk ( \Ext^i_{\dot P_J}(\bk, \Sym^j(\dot \fn_I^*) \otimes V)) \\
= \sum_{i+k = n} \dim_\bk ( \Ext^i_{\dot P_J}(\bk, \Hom^k_{D(\bL_I)}(\bk,\bk) \otimes V) ).
\end{multline*}
On the other hand, by Corollary~\ref{cor:dim-Ext} we have
\begin{multline*}
\dim_\bk(\Ext^n_{P_J M_{I,1}}(\St_I, \St_I \otimes V)) \leq \sum_{i+k=n} \dim_\bk(\Ext^i_{\dot P_J}(\bk, \Ext^k_{P_{I,1}}(\St_I, \St_I \otimes V))) \\
= \sum_{i+k=n} \dim_\bk ( \Ext^i_{\dot P_J}(\bk, \Ext^k_{P_{I,1}}(\St_I, \St_I) \otimes V)).
\end{multline*}
By Proposition~\ref{prop:phiI-equivariance}, for any $k$ we have an isomorphism of $\dot P_J$-modules
\[
\Hom^k_{D(\bL_I)}(\bk,\bk) \cong \Ext^k_{P_{I,1}}(\St_I, \St_I),
\]
hence these formulas prove~\eqref{eqn:ineq-dimensions} and conclude the proof. (Note that all the dimensions under consideration here are finite thanks to~\cite[Proposition~II.4.10]{jantzen}.)
\end{proof}

We can finally complete the proof of Theorem~{\rm \ref{thm:formality}}.

\begin{proof}[Proof of Theorem~{\rm \ref{thm:formality}}]
The second part of the theorem has already been established in Lemma~\ref{lem:frobenius-psi}.
Since the category $\Dfg_{\dot P_J}(\bL_I)$, resp.~$\Db_{\Stein}(P_J M_{I,1})$, is generated by the objects $V$, resp.~$\St_I \otimes V$, for $V \in \Repf(\dot P_J)$ (see the proof of Lemma~\ref{lem:stein-ff} for the first case), and since $\psi_{J,I}(V) \cong \St_I \otimes V$ (see Lemmas~\ref{lem:frobenius-psi} and~\ref{lem:trivial-psi}), to prove the first part of the theorem, it suffices to show that for any $V,V \in \Repf(\dot P_J)$ and any $n \in \Z$ the morphism
\[
\Hom^n_{D_{\dot P_J}(\bL_I)}(V,V') \to \Hom^n_{\Db_{\Stein}(P_J M_{I,1})}(\St_I \otimes V, \St_I \otimes V')
\]
induced by $\psi_{J,I}$ is an isomorphism. However we have a commutative diagram
\[
\xymatrix{
\Hom^n_{D_{\dot P_J}(\bL_I)}(V,V') \ar[r] \ar[d]_-{\wr} & \Hom^n_{\Db_{\Stein}(P_J M_{I,1})}(\St_I \otimes V, \St_I \otimes V') \ar[d]^-{\wr} \\
\Hom^n_{D_{\dot P_J}(\bL_I)}(\bk,V^* \otimes V') \ar[r] & \Hom^n_{\Db_{\Stein}(P_J M_{I,1})}(\St_I, \St_I \otimes V^* \otimes V')
}
\]
where both horizontal arrows are induced by $\psi_{J,I}$ and the vertical arrows are induced by the natural adjunctions. The lower horizontal arrow is invertible by Proposition~\ref{prop:psi-ff-V}, hence so is the upper arrow, and the theorem is proved.
\end{proof}

\section{Compatibility with induction}
\label{sec:compatibility}

In this section, we show that the equivalence of Theorem~\ref{thm:formality} is compatible (in the appropriate sense) with induction of representations from one subgroup of the form $P_J M_{I,1}$ to a larger one.  A larger such subgroup can be obtained by enlarging either $J$ or $I$.  The two cases are rather different, and we will treat them separately.

\subsection{Enlarging $J$}

In this subsection we fix $J \subset J' \subset I$. Then $\dot P_J \subset \dot P_{J'}$. Using the constructions of~\S\ref{ss:Kinj-equivariant} we can consider the functor
\[
R\Ind_{\dot P_J}^{\dot P_{J'}} : D^+_{\dot P_J}(\bL_I) \to D^+_{\dot P_{J'}}(\bL_I).
\]
Using~\cite[Proposition~I.5.12]{jantzen} and the commutativity of diagram~\eqref{eqn:diag-for-ind}, we see that this functor restricts to a functor from $\Dfg_{\dot P_J}(\bL_I)$ to $\Dfg_{\dot P_{J'}}(\bL_I)$. 

\begin{lem}
\label{lem:Ind-J-J'}
For any $V \in \Rep(\dot P_J)$, there exists a canonical isomorphism
\[
R\Ind_{P_J M_{I,1}}^{P_{J'} M_{I,1}}(\St_I \otimes \For^{\dot P_{J}}_{P_{J} M_{I,1}}(V)) \cong \St_I \otimes \For^{\dot P_{J'}}_{P_{J'} M_{I,1}}(R\Ind_{\dot P_J}^{\dot P_{J'}}(V)).
\]
\end{lem}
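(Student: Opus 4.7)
My plan is to prove the lemma in two steps: first pull $\St_I$ out of the induction via the tensor identity, then establish a base change isomorphism for the cartesian square of Frobenius morphisms.

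\textbf{Step 1 (Tensor identity).} Since $\St_I$ naturally carries a $P_I$-module structure, it restricts to a $P_{J'}M_{I,1}$-module, and $\St_I \otimes \For^{\dot P_J}_{P_J M_{I,1}}(V) \cong \For^{P_{J'}M_{I,1}}_{P_J M_{I,1}}(\St_I) \otimes \For^{\dot P_J}_{P_J M_{I,1}}(V)$. The standard tensor identity for $R\Ind$ (the easy extension of \cite[Proposition~I.3.6]{jantzen} to our more general setting, already invoked in Lemma~\ref{lem:frobenius-psi}) then yields
\[
R\Ind_{P_J M_{I,1}}^{P_{J'} M_{I,1}}(\St_I \otimes \For^{\dot P_J}_{P_J M_{I,1}}(V)) \cong \St_I \otimes R\Ind_{P_J M_{I,1}}^{P_{J'} M_{I,1}}(\For^{\dot P_J}_{P_J M_{I,1}}(V)).
\]

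\textbf{Step 2 (Base change).} It remains to construct a natural isomorphism
\[
R\Ind_{P_J M_{I,1}}^{P_{J'} M_{I,1}} \circ \For^{\dot P_J}_{P_J M_{I,1}} \simto \For^{\dot P_{J'}}_{P_{J'} M_{I,1}} \circ R\Ind_{\dot P_J}^{\dot P_{J'}}.
\]
The geometric input is that the square
\[
\vcenter{\xymatrix@R=0.6cm@C=1.2cm{
P_J M_{I,1} \ar@{^{(}->}[r] \ar[d]_-{\Fr_{J,I}} & P_{J'} M_{I,1} \ar[d]^-{\Fr_{J',I}} \\
\dot P_J \ar@{^{(}->}[r] & \dot P_{J'}
}}
\]
is cartesian: $P_J M_{I,1}$ is by definition the preimage of $\dot P_J$ under $\Fr_{P_I}$, hence (since $P_{I,1} \subset P_J M_{I,1}$) also under $\Fr_{J',I}$. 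The vertical arrows are surjections with common kernel the finite infinitesimal group scheme $P_{I,1}$. Using the units and counits of the four adjunctions $(\For,R\Ind)$ on the edges of the square, one builds a canonical base change morphism in the desired direction.

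To verify that this morphism is an isomorphism, I would reduce to the case of an injective $V \in \Rep(\dot P_J)$ (so that $R\Ind_{\dot P_J}^{\dot P_{J'}}(V)$ is concentrated in degree $0$). The underived version of the isomorphism then follows from the formula $\Ind_H^K(W) = (W \otimes \cO(K))^H$: taking the right-translation $P_{I,1}$-invariants on $\cO(P_{J'}M_{I,1})$ gives $\cO(\dot P_{J'})$, after which the remaining $\dot P_J$-invariants recover $\Ind_{\dot P_J}^{\dot P_{J'}}(V)$, and the left $P_{J'}M_{I,1}$-action on $\cO(P_{J'}M_{I,1})$ factors through $\Fr_{J',I}$ so the resulting module is precisely $\For^{\dot P_{J'}}_{P_{J'}M_{I,1}}(\Ind_{\dot P_J}^{\dot P_{J'}}(V))$.

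\textbf{Main obstacle.} The nontrivial point is lifting the underived identification to the derived level, i.e.~showing that if $V$ is injective over $\dot P_J$ then $\For^{\dot P_J}_{P_JM_{I,1}}(V)$ is acyclic for $\Ind_{P_J M_{I,1}}^{P_{J'}M_{I,1}}$. By~\cite[Proposition~I.3.10(a)]{jantzen} it suffices to treat $V = E \otimes \cO(\dot P_J)$. For such $V$, one applies Lemma~\ref{lem:Ind-For} with $H = P_{J'}M_{I,1}$, $K = \dot P_{J'}$, $\varphi = \Fr_{J',I}$, $H' = P_{I,1}$ and $K' = \{1\}$ (whose hypotheses are satisfied: $P_{I,1}$ is finite infinitesimal, and the cartesian condition $P_{J'}M_{I,1}/P_{I,1} \cong \dot P_{J'}$ is just the definition of the Frobenius quotient), and combines this with standard acyclicity of $\Ind_{\{1\}}^{\dot P_{J'}}$ on injective coordinate rings. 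This gives the required vanishing of $R^{>0}\Ind$ and hence the desired base change.
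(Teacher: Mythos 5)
Your Step~1 (tensor identity) and the underived identification in Step~2 both match the paper's approach, and the cartesian-square observation is correct. The gap is in the acyclicity argument. You need to show $R^{>0}\Ind_{P_J M_{I,1}}^{P_{J'}M_{I,1}}\bigl(\For^{\dot P_J}_{P_JM_{I,1}}(\cO(\dot P_J))\bigr)=0$, i.e.\ you need information about the \emph{horizontal} induction $\Ind_{P_J M_{I,1}}^{P_{J'}M_{I,1}}$ along the inclusion of subgroups. But your invocation of Lemma~\ref{lem:Ind-For} with $H = P_{J'}M_{I,1}$, $K = \dot P_{J'}$, $\varphi = \Fr_{J',I}$, $H' = P_{I,1}$, $K' = \{1\}$ yields the identity $\For_{\{1\}}^{\dot P_{J'}} \circ R\Ind_{P_{J'}M_{I,1}}^{\dot P_{J'}} \cong R\Ind_{P_{I,1}}^{\{1\}} \circ \For_{P_{I,1}}^{P_{J'}M_{I,1}}$, which describes the \emph{vertical} (Frobenius-quotient) induction $R\Ind_{P_{J'}M_{I,1}}^{\dot P_{J'}}$ --- a completely different functor from the one whose higher derived images you need to kill. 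Moreover, one cannot apply Lemma~\ref{lem:Ind-For} to the inclusion $P_J M_{I,1} \hookrightarrow P_{J'}M_{I,1}$ with any choice of subgroups, because the map $(P_J M_{I,1}) \times^{P_{I,1}} P_{I,1} \to P_{J'}M_{I,1}$ is not surjective, so hypothesis~(1) of that lemma fails. As written, your proposal does not give the required vanishing.

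The missing ingredient is a symmetry trick. Because $P_{I,1}$ acts trivially on $\For^{\dot P_J}_{P_JM_{I,1}}(\cO(\dot P_J))$, one can write
\[
R\Ind_{P_J M_{I,1}}^{P_{J'}M_{I,1}}\bigl(\For^{\dot P_J}_{P_JM_{I,1}}(\cO(\dot P_J))\bigr) \cong R\mathbb{I}^{P_J M_{I,1}}\bigl(\cO(P_{J'}M_{I,1}) \otimes \cO(\dot P_J)\bigr) \cong R\Ind_{P_J M_{I,1}}^{\dot P_J}\bigl(\cO(P_{J'}M_{I,1})\bigr),
\]
swapping the roles of the two tensor factors. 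This converts the problem into the Frobenius-quotient induction $R\Ind_{P_J M_{I,1}}^{\dot P_J}$ (i.e.\ the $H=P_J M_{I,1}$, $K=\dot P_J$ version --- not $P_{J'}M_{I,1}$). By uniqueness of right adjoints to $\For^{\dot P_J}_{P_J M_{I,1}}$, this functor agrees with the right derived functor of $P_{I,1}$-invariants $\widetilde{\mathbb{I}}^{P_{I,1}}\colon\Rep(P_J M_{I,1})\to\Rep(\dot P_J)$; since injective $P_J M_{I,1}$-modules restrict to injective $P_{I,1}$-modules (\cite[Prop.\ I.4.12, Cor.\ I.5.13(b)]{jantzen}), the underlying complex is $R\mathbb{I}^{P_{I,1}}(\cO(P_{J'}M_{I,1})) \cong R\Ind_{P_{I,1}}^{P_{J'}M_{I,1}}(\bk)$, which vanishes in positive degrees by \cite[Cor.\ I.5.13(b)]{jantzen}. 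Without this swap, your base change morphism cannot be shown to be an isomorphism.
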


\begin{proof}
Using the tensor identity, it suffices to prove that we have a canonical isomorphism
\[
R\Ind_{P_J M_{I,1}}^{P_{J'} M_{I,1}}(\For^{\dot P_{J}}_{P_{J} M_{I,1}}(V)) \cong \For^{\dot P_{J'}}_{P_{J'} M_{I,1}}(R\Ind_{\dot P_J}^{\dot P_{J'}}(V)).
\]
First, we remark that since $P_{I,1}$ acts trivially on $\For^{\dot P_{J}}_{P_{J} M_{I,1}}(V)$, there exists a canonical isomorphism
\[
\Ind_{P_J M_{I,1}}^{P_{J'} M_{I,1}}(\For^{\dot P_{J}}_{P_{J} M_{I,1}}(V)) \cong \For^{\dot P_{J'}}_{P_{J'} M_{I,1}}(\Ind_{\dot P_J}^{\dot P_{J'}}(V))
\]
for any $V$ in $\Rep(\dot P_J)$. Hence, as in the proof of Lemma~\ref{lem:Ind-For}, to conclude it is enough to prove that
\[
R^i \Ind_{P_J M_{I,1}}^{P_{J'} M_{I,1}}(\For^{\dot P_{J}}_{P_{J} M_{I,1}}(\cO(\dot P_J))) = 0 \quad \text{for $i>0$.}
\]
Now, again as in the proof of Lemma~\ref{lem:Ind-For}, we have
\begin{multline*}
R \Ind_{P_J M_{I,1}}^{P_{J'} M_{I,1}}(\For^{\dot P_{J}}_{P_{J} M_{I,1}}(\cO(\dot P_J))) \cong R\mathbb{I}^{P_J M_{I,1}}(\cO(P_{J'} M_{I,1}) \otimes \cO(\dot P_J)) \\
\cong R\Ind_{P_J M_{I,1}}^{\dot P_J}(\cO(P_{J'} M_{I,1})).
\end{multline*}
The functor $R\Ind_{P_J M_{I,1}}^{\dot P_J}$ is right adjoint to the functor $\For_{P_J M_{I,1}}^{\dot P_J}$; but this functor also admits as a right adjoint the right derived functor of the functor $\widetilde{\mathbb{I}}^{P_{I,1}}(-) : \Rep(P_J M_{I,1}) \to \Rep(\dot P_J)$ induced by $\mathbb{I}^{P_{I,1}}$. Hence we also have
\[
R\Ind_{P_J M_{I,1}}^{P_{J'} M_{I,1}}(\For^{\dot P_{J}}_{P_{J} M_{I,1}}(\cO(\dot P_J))) \cong R\widetilde{\mathbb{I}}^{P_{I,1}}(\cO(P_{J'} M_{I,1})).
\]
Now using~\cite[Proposition~I.4.12 \& Corollary~I.5.13(b)]{jantzen} we see that any injective $P_{J} M_{I,1}$-module is also injective over $P_{I,1}$, so that
\[
R\widetilde{\mathbb{I}}^{P_{I,1}}(\cO(P_{J'} M_{I,1})) \cong R\mathbb{I}^{P_{I,1}}(\cO(P_{J'} M_{I,1})) \cong R\Ind_{P_{I,1}}^{P_{J'} M_{I,1}}(\bk).
\]
And using again~\cite[Corollary~I.5.13(b)]{jantzen} we obtain that $R^i\Ind_{P_{I,1}}^{P_{J'} M_{I,1}}(\bk)=0$ for $i>0$, which finishes the proof.
\end{proof}

It follows in particular from Lemma~\ref{lem:Ind-J-J'} and~\cite[Proposition~I.5.12]{jantzen} that the functor $R\Ind_{P_J M_{I,1}}^{P_{J'} M_{I,1}}$ restricts to a functor from $\Db_\Stein(P_J M_{I,1})$ to $\Db_\Stein(P_{J'}M_{I,1})$.

\begin{thm}
\label{thm:parabolicJ}
The following diagram commutes up to isomorphism:
\[
\xymatrix@C=2cm{
\Dfg_{\dot P_J}(\bL_I) \ar[d]_-{R\Ind_{\dot P_J}^{\dot P_{J'}}} \ar[r]^-{\psi_{J,I}}_-{\sim} & \Db_\Stein(P_J M_{I,1}) \ar[d]^-{R\Ind_{P_J M_{I,1}}^{P_{J'} M_{I,1}}} \\
\Dfg_{\dot P_{J'}}(\bL_I) \ar[r]_-{\psi_{J',I}}^-{\sim} & \Db_\Stein(P_{J'}M_{I,1}).
}
\]
\end{thm}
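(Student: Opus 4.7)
The strategy is to verify commutativity on a convenient generating family and then invoke Lemma~\ref{lem:Ind-J-J'}. Since $\bL_I$ is concentrated in nonpositive degrees with $\bL_I^0 = \bk$, any object of $\Dfg_{\dot P_J}(\bL_I)$ concentrated in a single cohomological degree automatically has trivial $\bL_I$-action, and is therefore simply a finite-dimensional $\dot P_J$-module. Applying the usual truncation functors we deduce that $\Dfg_{\dot P_J}(\bL_I)$ is generated, as a triangulated category, by objects $V \in \Repf(\dot P_J)$ viewed as $\bL_I$-dg-modules concentrated in degree zero (with trivial $\bL_I$-action). As every arrow in the diagram is a triangulated functor, it is enough to construct a natural isomorphism between the two compositions on such $V$.

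For the upper-right route, Theorem~\ref{thm:formality} (the natural isomorphism~\eqref{eqn:isom-psi-tensoring} combined with Lemma~\ref{lem:trivial-psi}) gives $\psi_{J,I}(V) \cong \St_I \otimes \For^{\dot P_J}_{P_J M_{I,1}}(V)$, and then Lemma~\ref{lem:Ind-J-J'} yields
\[
R\Ind_{P_J M_{I,1}}^{P_{J'} M_{I,1}}\bigl(\psi_{J,I}(V)\bigr) \;\cong\; \St_I \otimes \For^{\dot P_{J'}}_{P_{J'} M_{I,1}}\bigl(R\Ind_{\dot P_J}^{\dot P_{J'}}(V)\bigr).
\]
For the lower-left route, $R\Ind_{\dot P_J}^{\dot P_{J'}}(V)$ is a bounded complex of $\dot P_{J'}$-modules with trivial $\bL_I$-action, and we aim to show
\[
\psi_{J',I}\bigl(R\Ind_{\dot P_J}^{\dot P_{J'}}(V)\bigr) \;\cong\; \St_I \otimes \For^{\dot P_{J'}}_{P_{J'} M_{I,1}}\bigl(R\Ind_{\dot P_J}^{\dot P_{J'}}(V)\bigr).
\]
Composing these two identifications will then supply the desired natural isomorphism on the generating subcategory, whence it will extend to all of $\Dfg_{\dot P_J}(\bL_I)$ by d\'evissage.

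The main technical step is the second identification: extending the tensor compatibility of Theorem~\ref{thm:formality} from individual $\dot P_{J'}$-modules to a bounded complex with trivial $\bL_I$-action. The cleanest route is to unwind $\psi_{J',I}$ directly on an object $Y$ with trivial $\bL_I$-action: the tensoring with $\rmZ_I$, the pullbacks $\sigma_I^*$ and $(\pi_I^*)^{-1}$, and the initial forgetful step all commute with the operation of forgetting the $\bL_I$-action on $Y$, so $\psi_{J',I}(Y)$ reduces to $R\Ind_{N_{I,1} \rtimes P_{J'} M_{I,1}}^{P_{J'} M_{I,1}}\bigl(\For^{\dot P_{J'}}_{P_{J'} M_{I,1}}(Y) \otimes \rmZ_I\bigr)$, which by the tensor identity for induction (and Lemma~\ref{lem:trivial-psi}) equals $\St_I \otimes \For^{\dot P_{J'}}_{P_{J'} M_{I,1}}(Y)$. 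Once this is established naturally in $Y$, both routes agree as natural functors on the generating subcategory, and extension to the whole category follows from the naturality of all identifications together with the triangulated structure.
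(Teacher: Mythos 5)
Your objectwise computations are consistent with what is true, but the argument has a genuine gap at the final step. Knowing that the two compositions take isomorphic values on a family of objects generating $\Dfg_{\dot P_J}(\bL_I)$ as a triangulated category --- even with isomorphisms natural in $V$ for $V$ ranging over $\Repf(\dot P_J)$ --- does not yield an isomorphism of the two triangulated functors: cones are not functorial, so there is no d\'evissage that extends a natural transformation defined only on a generating (non-triangulated) subcategory to the whole category. Note also that $\Dfg_{\dot P_J}(\bL_I)$ has many morphisms between these generators that are invisible in $\Repf(\dot P_J)$ (e.g.\ the even-degree classes coming from $\Sym(\fnt_I^*)$), and a transformation defined only on $\Repf(\dot P_J)$ carries no compatibility with them. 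What d\'evissage \emph{does} give is: once a morphism of functors $\eta$ is constructed on all of $\Dfg_{\dot P_J}(\bL_I)$, invertibility of $\eta_V$ on generators implies invertibility everywhere. The missing idea is therefore the construction of a global comparison morphism. The paper obtains one by passing to left adjoints: since $\psi_{J,I}$ and $\psi_{J',I}$ are equivalences, the stated diagram commutes if and only if the corresponding diagram with the forgetful functors $\For_{\dot P_J}^{\dot P_{J'}}$ and $\For_{P_J M_{I,1}}^{P_{J'} M_{I,1}}$ commutes (cf.\ Lemma~\ref{lem:equivalence-mate}); for that diagram canonical comparison morphisms exist globally (base-change maps built from units/counits and the restriction $\cO(P_{J'} M_{I,1}) \to \cO(P_J M_{I,1})$), and their invertibility is verified using Lemma~\ref{lem:Ind-For}. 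Your Lemma~\ref{lem:Ind-J-J'} is used in the paper only to show that $R\Ind_{P_J M_{I,1}}^{P_{J'} M_{I,1}}$ preserves the Steinberg subcategories, not to prove commutativity.

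A secondary issue: in the lower-left route you treat $Y = R\Ind_{\dot P_J}^{\dot P_{J'}}(V)$ as a complex with trivial $\bL_I$-action. This is not automatic: the derived functor is computed via a K-injective equivariant resolution as in Lemma~\ref{lem:K-inj-equiv}, whose terms do not carry the trivial $\bL_I$-action, and the fact that the cohomology of the resulting dg-module has trivial $\bL_I$-action does not imply that the object is isomorphic in $D_{\dot P_{J'}}(\bL_I)$ to one with trivial action (this is a formality-type assertion). Justifying it would itself require a compatibility statement of the kind the paper establishes via Lemma~\ref{lem:base-change} and diagram~\eqref{eqn:restriction-scalars-For}.
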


\begin{proof}
The functor $R\Ind_{\dot P_J}^{\dot P_{J'}}$ is right adjoint to the functor $\For_{\dot P_J}^{\dot P_{J'}}$, and the functor $R\Ind_{P_J M_{I,1}}^{P_{J'} M_{I,1}}$ is right adjoint to the functor $\For_{P_J M_{I,1}}^{P_{J'} M_{I,1}}$. Hence to construct an isomorphism as in the statement of the theorem it suffices to construct an isomorphism which makes the following diagram commutative:
\[
\xymatrix@C=2cm{
\Dfg_{\dot P_J}(\bL_I) \ar[r]^-{\psi_{J,I}}_-{\sim} & \Db_\Stein(P_J M_{I,1}) \\
\Dfg_{\dot P_{J'}}(\bL_I) \ar[r]_-{\psi_{J',I}}^-{\sim} \ar[u]^-{\For_{\dot P_J}^{\dot P_{J'}}} & \Db_\Stein(P_{J'}M_{I,1}). \ar[u]_-{\For_{P_J M_{I,1}}^{P_{J'} M_{I,1}}}
}
\]
\begin{figure}
{\tiny
\xymatrix@C=0.8cm@R=1.3cm{
D^+_{\dot P_J}(\bL_I) \ar[rd]^-{\rmZ_I \otimes \For^{\dot P_J}_{P_J M_{I,1}}(-)} &&&& D^+ \Rep(P_J M_{I,1}) \\
& D^+_{P_{J} M_{I,1}}(\bL_I) \ar[r]^-{\sigma_I^*} & D^+_{P_{J} M_{I,1}}(\Rn_I) \ar[r]^-{(\pi_I^*)^{-1}} & D^+_{P_{J} M_{I,1}}(\snn_I) \ar[ru]^-{R\Ind_{N_{I,1} \rtimes P_J M_{I,1}}^{P_J M_{I,1}}} & \\
& D^+_{P_{J'} M_{I,1}}(\bL_{I}) \ar[r]^-{\sigma_{I}^*} \ar[u]^-{\For^{P_{J'} M_{I,1}}_{{P_{J} M_{I,1}}}} & D^+_{P_{J'} M_{I,1}}(\Rn_{I}) \ar[r]^-{(\pi_{I}^*)^{-1}} \ar[u]^-{\For^{P_{J'} M_{I,1}}_{{P_{J} M_{I,1}}}} & D^+_{P_{J'} M_{I,1}}(\snn_{I}) \ar[u]^-{\For^{P_{J'} M_{I,1}}_{{P_{J} M_{I,1}}}} \ar[rd]_-{R\Ind_{N_{I,1} \rtimes P_{J'} M_{I,1}}^{P_{J'} M_{I,1}}}
& \\
D^+_{\dot P_{J'}}(\bL_{I}) \ar[uuu]|-{\For^{\dot P_{J'}}_{\dot P_J}} \ar[ru]_-{\rmZ_{I} \otimes \For^{\dot P_{J'}}_{P_{J'} M_{I,1}}(-)} &&&& D^+ \Rep(P_{J'} M_{I,1}) \ar[uuu]|-{\For_{P_J M_{I,1}}^{P_{J'} M_{I,1}}} \\
}
}
\caption{Diagram for the proof of Theorem~\ref{thm:parabolicJ}}\label{fig:psi-parabolic-2}
\end{figure}
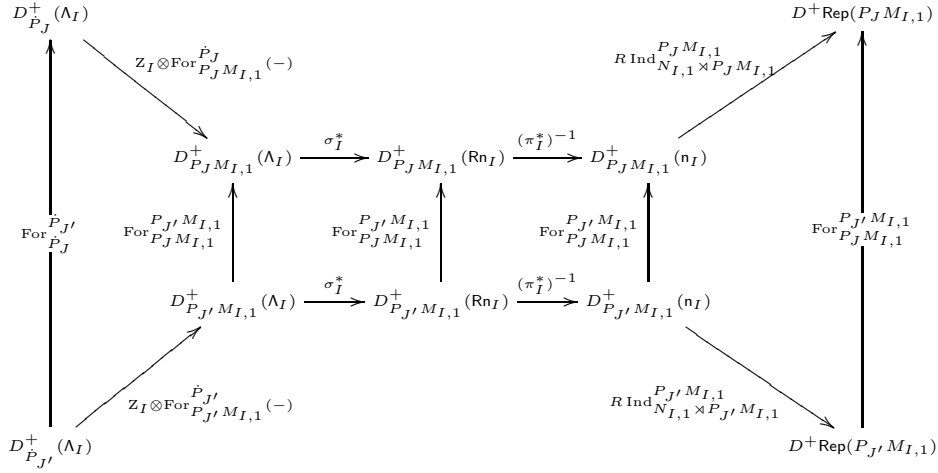%
For this we consider the large diagram of Figure~\ref{fig:psi-parabolic-2}. We will prove that all parts of this diagram are commutative; restricting to $\Dfg_{\dot P_{J'}}(\bL_I)$ will provide the desired isomorphism.

First, we remark that the left-hand trapezoid obviously commutes, and that the two central squares are special cases of diagram~\eqref{eqn:restriction-scalars-For}, so that they indeed commute. Hence to conclude the proof it suffices to prove that the right-hand trapezoid commutes.

By definition, we have
\[
\Ind_{N_{I,1} \rtimes P_{J'} M_{I,1}}^{P_{J'} M_{I,1}} = \mathbb{I}^{N_{I,1} \rtimes P_{J'} M_{I,1}}(\cO(P_{J'} M_{I,1}) \otimes -).
\]
Now the restriction morphism
$\cO(P_{J'} M_{I,1}) \to \cO(P_{J} M_{I,1})$ 
induces a morphism of functors
\begin{multline*}
\For^{P_{J'} M_{I,1}}_{P_{J} M_{I,1}} \circ
\Ind_{N_{I,1} \rtimes P_{J'} M_{I,1}}^{P_{J'} M_{I,1}} = \For^{P_{J'} M_{I,1}}_{P_{J} M_{I,1}} \circ \mathbb{I}^{N_{I,1} \rtimes P_{J'} M_{I,1}}(\cO(P_{J'} M_{I,1}) \otimes -) \\
\to \mathbb{I}^{N_{I,1} \rtimes P_{J} M_{I,1}}(\cO(P_{J} M_{I,1}) \otimes \For^{N_{I,1} \rtimes P_{J'} M_{I,1}}_{N_{I,1} \rtimes P_{J} M_{I,1}}(-)) \\
= \Ind_{N_{I,1} \rtimes P_{J} M_{I,1}}^{P_{J} M_{I,1}} \circ \For^{N_{I,1} \rtimes P_{J'} M_{I,1}}_{N_{I,1} \rtimes P_{J} M_{I,1}}.
\end{multline*}
By general properties of derived functors, this morphism induces a morphism
\begin{equation}
\label{eqn:morph-parabolicJ}
\For^{P_{J'} M_{I,1}}_{P_{J} M_{I,1}} \circ R\Ind_{N_{I,1} \rtimes P_{J'} M_{I,1}}^{P_{J'} M_{I,1}} \to R\Ind_{N_{I,1} \rtimes P_{J} M_{I,1}}^{P_{J} M_{I,1}} \circ \For^{N_{I,1} \rtimes P_{J'} M_{I,1}}_{N_{I,1} \rtimes P_{J} M_{I,1}}.
\end{equation}
By Lemma~\ref{lem:Ind-For}, we have canonical isomorphisms
\begin{align*}
\For^{P_{J'} M_{I,1}}_{P_{I,1}} \circ R\Ind_{N_{I,1} \rtimes P_{J'} M_{I,1}}^{P_{J'} M_{I,1}} &\cong R\Ind_{N_{I,1} \rtimes P_{I,1}}^{P_{I,1}} \circ \For^{N_{I,1} \rtimes P_{J'} M_{I,1}}_{N_{I,1} \rtimes P_{I,1}}, \\
\For^{P_{J} M_{I,1}}_{P_{I,1}} \circ R\Ind_{N_{I,1} \rtimes P_{J} M_{I,1}}^{P_{J} M_{I,1}} &\cong R\Ind_{N_{I,1} \rtimes P_{I,1}}^{P_{I,1}} \circ \For^{N_{I,1} \rtimes P_{J} M_{I,1}}_{N_{I,1} \rtimes P_{I,1}}.
\end{align*}
Moreover, under these identifications, the image of~\eqref{eqn:morph-parabolicJ} is the identity morphism of the functor $R\Ind_{N_{I,1} \rtimes P_{I,1}}^{P_{I,1}} \circ \For^{N_{I,1} \rtimes P_{J'} M_{I,1}}_{N_{I,1} \rtimes P_{I,1}}$; in particular it is an isomorphism. We deduce that~\eqref{eqn:morph-parabolicJ} induces an isomorphism on every object of $D^+ \Rep(N_{I,1} \rtimes P_{J'} M_{I,1})$, hence that it is an isomorphism of functors.
\end{proof}

\subsection{Enlarging $I$}

In this subsection we fix $J \subset I \subset I' \subset S$. Then $P_{I} \subset P_{I'}$ and $\dot \fn_{I'} \subset \dot \fn_I$. We deduce a $\dot P_J$-equivariant embedding of $\dot P_J$-equivariant dg-algebras $j_{I,I'} : \bL_{I'} \hookrightarrow \bL_I$.

\begin{lem}
\label{lem:induction-St}
There exists a canonical isomorphism of (complexes of) $P_J M_{I',1}$-modules
\[
R\Ind_{P_J M_{I,1}}^{P_J M_{I',1}} (\St_I \otimes \bk((\ell-1)(\varsigma_{I'}-\varsigma_I))) \cong \St_{I'}.
\]
\end{lem}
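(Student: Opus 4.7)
The plan is to produce the isomorphism via the counit of an adjunction, verify the higher derived functors vanish by base change to a well-behaved induction between Frobenius kernels, and conclude via simplicity of $\St_{I'}$ together with a dimension count.

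First I would rewrite the source module using the tensor identity: since $(\ell-1)(\varsigma_{I'}-\varsigma_I)$ extends to a character of $P_I$, we have
\[
\St_I \otimes \bk((\ell-1)(\varsigma_{I'}-\varsigma_I)) \cong \Ind_B^{P_I}(\bk((\ell-1)\varsigma_{I'})).
\]
Transitivity of induction combined with Kempf vanishing at the dominant weight $(\ell-1)\varsigma_{I'}$ gives
\[
R\Ind_{P_I}^{P_{I'}}(\St_I \otimes \bk((\ell-1)(\varsigma_{I'}-\varsigma_I))) \cong \St_{I'},
\]
concentrated in degree $0$. The counit of the adjunction $\For^{P_{I'}}_{P_I} \dashv \Ind_{P_I}^{P_{I'}}$ at the simple $P_I$-module $\St_I \otimes \bk((\ell-1)(\varsigma_{I'}-\varsigma_I))$ thus yields a canonical $P_I$-equivariant surjection
\[
\For^{P_{I'}}_{P_I}(\St_{I'}) \twoheadrightarrow \St_I \otimes \bk((\ell-1)(\varsigma_{I'}-\varsigma_I)).
\]
Restricting to $P_JM_{I,1}$ and applying Frobenius reciprocity for $P_JM_{I,1} \hookrightarrow P_JM_{I',1}$ produces a canonical, nonzero $P_JM_{I',1}$-equivariant morphism
\[
\eta : \St_{I'} \to \Ind_{P_JM_{I,1}}^{P_JM_{I',1}}(\St_I \otimes \bk((\ell-1)(\varsigma_{I'}-\varsigma_I))).
\]

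Next I would show the higher derived functors vanish. Since $N_{I',1} \subset P_J$, we have $P_JM_{I',1} = P_J \cdot P_{I',1}$, and one checks directly that $P_JM_{I,1} \cap P_{I',1} = P_{I,1}$ and $P_JM_{I,1} \cdot P_{I',1} = P_JM_{I',1}$. These identities imply that the multiplication morphism
\[
P_JM_{I,1} \times^{P_{I,1}} P_{I',1} \xrightarrow{\sim} P_JM_{I',1}
\]
is a scheme-theoretic isomorphism, so Lemma~\ref{lem:Ind-For} applied with $H = P_JM_{I,1}$, $K = P_JM_{I',1}$, $H' = P_{I,1}$, $K' = P_{I',1}$ yields a canonical isomorphism of functors
\[
\For^{P_JM_{I',1}}_{P_{I',1}} \circ R\Ind_{P_JM_{I,1}}^{P_JM_{I',1}} \cong R\Ind_{P_{I,1}}^{P_{I',1}} \circ \For^{P_JM_{I,1}}_{P_{I,1}}.
\]
Since $P_{I',1}/P_{I,1}$ is a finite (hence affine) $\bk$-scheme, $\Ind_{P_{I,1}}^{P_{I',1}}$ is exact by~\cite[Proposition~I.5.13(b)]{jantzen}. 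By conservativity of $\For^{P_JM_{I',1}}_{P_{I',1}}$ it follows that $R^{>0}\Ind_{P_JM_{I,1}}^{P_JM_{I',1}}(\St_I \otimes \bk((\ell-1)(\varsigma_{I'}-\varsigma_I))) = 0$.

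Finally I would verify that $\eta$ is an isomorphism. The $P_{I',1}$-module $\St_{I'}$ is simple, since $N_{I',1}$ acts trivially and $\St_{I'}$ is simple over $M_{I',1}$ by~\S\ref{ss:Steinberg}, and hence it is simple also as a $P_JM_{I',1}$-module. In particular, the nonzero morphism $\eta$ is injective, and a dimension count using $P_JM_{I,1}\backslash P_JM_{I',1} \cong P_{I,1}\backslash P_{I',1}$ gives
\[
\dim_\bk \Ind_{P_JM_{I,1}}^{P_JM_{I',1}}(\St_I \otimes \bk((\ell-1)(\varsigma_{I'}-\varsigma_I))) = \ell^{|\Phi_I^+|} \cdot \ell^{|\Phi_{I'}^+| - |\Phi_I^+|} = \ell^{|\Phi_{I'}^+|} = \dim_\bk \St_{I'},
\]
so $\eta$ is an isomorphism. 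The main technical obstacle is the verification of the scheme-theoretic isomorphism $P_JM_{I,1} \times^{P_{I,1}} P_{I',1} \cong P_JM_{I',1}$ required by Lemma~\ref{lem:Ind-For}: bijectivity on functor-of-points is immediate from the intersection and product identities above, but the passage to an isomorphism of schemes requires a careful handling of the infinitesimal group schemes $P_{I,1}$ and $P_{I',1}$.
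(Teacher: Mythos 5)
Your proof is correct and takes essentially the same route as the paper's: both use Lemma~\ref{lem:Ind-For} to reduce the vanishing of higher derived functors to the exact induction $\Ind_{P_{I,1}}^{P_{I',1}}$, both produce the candidate morphism $\eta$ via Frobenius reciprocity applied to the restriction-of-functions/counit map, and both conclude using structural properties of $\St_{I'}$. The only small variation is at the end: the paper identifies the restriction of $R\Ind_{P_JM_{I,1}}^{P_JM_{I',1}}(\cdots)$ to $P_{I',1}$ as $\St_{I'}$ and uses $\End_{P_{I',1}}(\St_{I'}) = \bk$, while you use simplicity of $\St_{I'}$ plus a dimension count.
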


\begin{proof}
By Lemma~\ref{lem:Ind-For}, in $D^+ \Rep(P_{I,1})$ we have
\begin{multline}
\label{eqn:For-Ind-Steinberg}
\For^{P_J M_{I',1}}_{P_{I',1}} \circ R\Ind_{P_J M_{I,1}}^{P_J M_{I',1}} (\St_I \otimes \bk((\ell-1)(\varsigma_{I'}-\varsigma_I)))
\cong \\
R\Ind_{P_{I,1}}^{P_{I',1}} (\St_I \otimes \bk((\ell-1)(\varsigma_{I'}-\varsigma_I))).
\end{multline}
In particular, since the functor $\Ind_{P_{I,1}}^{P_{I',1}}$ is exact (see~\cite[Corollary~I.5.13(b)]{jantzen}), we deduce that $R\Ind_{P_J M_{I,1}}^{P_J M_{I',1}} (\St_I \otimes \bk((\ell-1)(\varsigma_{I'}-\varsigma_I)))$ is concentrated in degree $0$. Now, 
as in Lemma~\ref{lem:st-sll},
we have an isomorphism of $P_{I,1}$-modules
$\St_I \cong \Ind_{B_1}^{P_{I,1}}(\bk_{B_1}((\ell-1)\varsigma_I))$, and similarly for $I'$. It follows that
\begin{multline*}
\Ind_{P_{I,1}}^{P_{I',1}} (\St_I \otimes \bk((\ell-1)(\varsigma_{I'}-\varsigma_I))) \cong \Ind_{P_{I,1}}^{P_{I',1}} ( \Ind_{B_1}^{P_{I,1}}((\ell-1)\varsigma_I) \otimes \bk((\ell-1)(\varsigma_{I'}-\varsigma_I))) \\
\cong \Ind_{B_1}^{P_{I',1}} ((\ell-1)\varsigma_{I'}) \cong \St_{I'},
\end{multline*}
where the second isomorphism uses the tensor identity and transitivity of induction. Combining these isomorphisms with~\eqref{eqn:For-Ind-Steinberg}, we obtain an isomorphism of $P_{I',1}$-modules
\[
\Ind_{P_J M_{I,1}}^{P_J M_{I',1}} (\St_I \otimes \bk((\ell-1)(\varsigma_{I'}-\varsigma_I))) \cong \St_{I'}.
\]

By adjunction we have
\begin{multline*}
\Hom_{P_J M_{I',1}}(\St_{I'}, \Ind_{P_J M_{I,1}}^{P_J M_{I',1}} (\St_I \otimes \bk((\ell-1)(\varsigma_{I'}-\varsigma_I))) \\
\cong \Hom_{P_J M_{I,1}}(\St_{I'}, \St_I \otimes \bk((\ell-1)(\varsigma_{I'} - \varsigma_I))) \\
\cong \Hom_{P_J M_{I,1}}(\Ind_B^{P_{I'}}(\bk((\ell-1)\varsigma_{I'})), \Ind_B^{P_I} ((\ell-1) \varsigma_{I'})).
\end{multline*}
Since restriction of functions from $P_{I'}$ to $P_I$ induces a nonzero $P_I$-equivariant morphism $\Ind_B^{P_{I'}}(\bk((\ell-1)\varsigma_{I'})) \to \Ind_B^{P_I} ((\ell-1) \varsigma_{I'})$, we deduce that there exists a nonzero $P_{J} M_{I',1}$-equivariant morphism
\[
\St_{I'} \to \Ind_{P_J M_{I,1}}^{P_J M_{I',1}} (\St_I \otimes \bk((\ell-1)(\varsigma_{I'}-\varsigma_I)).
\]
Since both of these modules are isomorphic to $\St_{I'}$ as $P_{I',1}$-modules, and since $\End_{P_{I',1}}(\St_{I'})=\bk$, this morphism must be an isomorphism.
\end{proof}

Lemma~\ref{lem:induction-St} and the generalized tensor identity imply that for any $V \in \Rep(\dot P_J)$ we have a canonical isomorphism
\begin{equation}
\label{eqn:Ind-Steinberg-V}
R\Ind_{P_J M_{I,1}}^{P_J M_{I',1}} \bigl( (\St_I \otimes V) \otimes \bk((\ell-1)(\varsigma_{I'}-\varsigma_I)) \bigr) \cong \St_{I'} \otimes V.
\end{equation}
In particular, it follows that
the functor $R\Ind_{P_J M_{I,1}}^{P_J M_{I',1}} (({-}) \otimes \bk((\ell-1)(\varsigma_{I'}-\varsigma_I)))$ restricts to a functor from $\Db_\Stein(P_J M_{I,1})$ to $\Db_\Stein(P_J M_{I',1})$.

\begin{thm}
\label{thm:parabolicI}
The following diagram commutes up to isomorphism:
\[
\xymatrix@C=2.5cm{
\Dfg_{\dot P_J}(\bL_I) \ar[d]_-{j_{I,I'}^*} \ar[r]^-{\psi_{J,I}}_-{\sim} & \Db_\Stein(P_J M_{I,1}) \ar[d]|-{R\Ind_{P_J M_{I,1}}^{P_J M_{I',1}} (({-}) \otimes \bk((\ell-1)(\varsigma_{I'}-\varsigma_I)))} \\
\Dfg_{\dot P_J}(\bL_{I'}) \ar[r]_-{\psi_{J,I'}}^-{\sim} & \Db_\Stein(P_J M_{I',1}).
}
\]
\end{thm}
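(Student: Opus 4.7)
The proof will follow the blueprint of Theorem~\ref{thm:parabolicJ}: construct a large diagram decomposing both compositions $\psi_{J,I'} \circ j_{I,I'}^*$ and $R\Ind_{P_JM_{I,1}}^{P_JM_{I',1}}\bigl(({-}) \otimes \bk((\ell-1)(\varsigma_{I'}-\varsigma_I))\bigr) \circ \psi_{J,I}$ into their defining chains of functors, and verify commutativity of each resulting subdiagram. As a preliminary reformulation, the transitivity of $R\Ind$ (see~\eqref{eqn:transitivity-RInd}) together with the tensor identity allows one to rewrite the right-hand composition as a single induction $R\Ind_{N_{I,1} \rtimes P_JM_{I,1}}^{P_JM_{I',1}}$ applied to $(\pi_I^*)^{-1}\bigl(\sigma_I^*(\For^{\dot P_J}_{P_JM_{I,1}}(X) \otimes \rmZ_I \otimes \bk((\ell-1)(\varsigma_{I'}-\varsigma_I)))\bigr)$, while $\psi_{J,I'}(j_{I,I'}^*X)$ is by construction a single induction $R\Ind_{N_{I',1} \rtimes P_JM_{I',1}}^{P_JM_{I',1}}$ applied to $(\pi_{I'}^*)^{-1}\bigl(\sigma_{I'}^*(j_{I,I'}^* \For^{\dot P_J}_{P_JM_{I',1}}(X) \otimes \rmZ_{I'})\bigr)$.

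The intermediate vertical arrows connecting the $I$-level and the $I'$-level will be built from three operations: restriction of scalars along the $\dot P_J$-equivariant dg-algebra inclusions $\bL_{I'} \hookrightarrow \bL_I$, $\Rn_{I'} \hookrightarrow \Rn_I$, $\snn_{I'} \hookrightarrow \snn_I$ (which are intertwined by $\sigma$ and $\pi$); twisting by the character $\bk((\ell-1)(\varsigma_{I'}-\varsigma_I))$; and induction of equivariance from $P_JM_{I,1}$ to $P_JM_{I',1}$. The commutativity of each square involving $\sigma^*$ or $(\pi^*)^{-1}$ will follow from~\eqref{eqn:restriction-scalars-For} and a base-change identity in the spirit of Lemma~\ref{lem:base-change}, using the freeness of $\bL_I$ over $\bL_{I'}$ and of $\Rn_I$ over $\Rn_{I'}$; the commutativity of the squares involving group induction will follow from a variant of Lemma~\ref{lem:Ind-For} applied to the relevant semidirect products.

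The main obstacle, and the most substantial new ingredient, is the comparison of the ``$\rmZ$-factor'' after induction: one must produce a canonical isomorphism of the form
\[
R\Ind_{P_JM_{I,1}}^{P_JM_{I',1}}\bigl(\rmZ_I \otimes V \otimes \bk((\ell-1)(\varsigma_{I'}-\varsigma_I))\bigr) \cong \rmZ_{I'} \otimes V
\]
for $V \in \Repf(\dot P_J)$, generalizing the identity~\eqref{eqn:Ind-Steinberg-V}. The character twist $\bk((\ell-1)(\varsigma_{I'}-\varsigma_I))$ does not by itself match the discrepancy between $\rmZ_I$ and the restriction of $\rmZ_{I'}$: a direct calculation from the definition shows that $\rmZ_I \otimes \bk((\ell-1)(\varsigma_{I'}-\varsigma_I))$ differs from the restriction of $\rmZ_{I'}$ to $P_I$ by the character $\bk(-(\ell-1)(2\rho_{I'}-2\rho_I))$. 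I expect this discrepancy to be absorbed in the induction step by the same mechanism driving the proof of Lemma~\ref{lem:induction-St}: the identification $\rmZ_I \cong \Ind_{M_{I,1}}^{P_{I,1}}(\St_I)$ from Lemma~\ref{lem:pi-ind}, transitivity of induction, the injectivity and projectivity of Steinberg modules over Frobenius kernels, and a dimension count used to upgrade a nonzero adjunction morphism to an isomorphism. Once this key identification is established, the outermost square of the large diagram will produce the desired isomorphism of functors.
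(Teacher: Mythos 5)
Your overall blueprint is the right one, and the paper does indeed decompose both compositions into a large diagram (Figure~\ref{fig:psi-parabolic}) whose left and middle portions commute by the same soft arguments you cite (restriction-of-scalars compatibility,~\eqref{eqn:restriction-scalars-For}, a Lemma~\ref{lem:Ind-For}-type argument). You have also correctly located the hard step: the interaction of the $\rmZ$-factor with the change of group and the change of $N$-induction. However, the key lemma you propose to resolve it,
\[
R\Ind_{P_JM_{I,1}}^{P_JM_{I',1}}\bigl(\rmZ_I \otimes V \otimes \bk((\ell-1)(\varsigma_{I'}-\varsigma_I))\bigr) \;\cong\; \rmZ_{I'} \otimes V,
\]
is false. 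The module $\rmZ_I$ has dimension $\ell^{|\Phi^+|}$ (it is a twist of $\St$ as a $P_{I,1}$-module by Lemma~\ref{lem:pi-ind}), and so does $\rmZ_{I'}$; but since this induction is exact (via Lemma~\ref{lem:Ind-For} it reduces to $\Ind_{P_{I,1}}^{P_{I',1}}$, which is exact by~\cite[Corollary~I.5.13(b)]{jantzen}), the left-hand side for $V = \bk$ has dimension $\ell^{|\Phi^+|}\cdot\ell^{|\Phi_{I'}^+|-|\Phi_I^+|} > \ell^{|\Phi^+|}$ whenever $I \subsetneq I'$. The analogous statement \emph{is} true with $\St$ in place of $\rmZ$ --- that is Lemma~\ref{lem:induction-St} --- and the dimensions match precisely because $\dim\St_I = \ell^{|\Phi_I^+|}$.

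The reason this matters is not merely a bookkeeping slip. In the functor $\psi_{J,I}$ the tensor with $\rmZ_I$ is followed by $R\Ind_{N_{I,1}\rtimes P_JM_{I,1}}^{P_JM_{I,1}}$, which collapses $\rmZ_I$ to $\St_I$ (cf.\ Lemma~\ref{lem:trivial-psi}); this $N$-induction for $I$ is a genuinely different functor than the one for $I'$, and the two do \emph{not} simply commute past the group induction $P_JM_{I,1}\to P_JM_{I',1}$. Consequently, one cannot isolate a ``$\rmZ$-factor isomorphism'' on its own; the group induction and the $N$-inductions must be compared together. The paper handles this by factoring through the intermediate group $N_{I',1}\rtimes P_JM_{I,1}$, setting up the two adjoint pairs~\eqref{eqn:adjunction-Ind-For-1}--\eqref{eqn:adjunction-Ind-For-2} (where, as you noticed, the coinduction twist $(\ell-1)(2\rho_{I'}-2\rho_I)$ appears), assembling from their units and counits a natural transformation $\eta$ between the two compositions, and then proving $\eta$ is an isomorphism by reducing to $V=\bk$ and showing the resulting composite~\eqref{eqn:composition-eta-k} is nonzero. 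That nonzero-ness in turn relies on Lemma~\ref{lem:Ind-Steinberg} (the multiplicity-one statement for Steinberg modules), a $T$-equivariant refinement of Lemma~\ref{lem:pi-ind}, and the projectivity of $\rmZ_{I'}$ over $P_{I',1}T$ (Remark~\ref{rmk:pi-ind-proj}). You should replace your claimed $\rmZ$-isomorphism with this adjunction-plus-nonvanishing argument; as stated your proof would grind to a halt at the dimension count.
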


\begin{proof}
The embedding $\fn_{I'} \subset \fn_I$ induces embeddings of dg-algebras
\[
\widehat{\jmath}_{I,I'} : \Rn_{I'} \hookrightarrow \Rn_I, \qquad \overline{\jmath}_{I,I'} : \snn_{I'} \hookrightarrow \snn_I
\]
such that both squares in the following diagram commute:
\[
\xymatrix@C=1.5cm{
\bL_{I'} \ar[d]_-{j_{I,I'}} & \Rn_{I'} \ar[r]^-{\pi_{I'}} \ar[l]_-{\sigma_{I'}} \ar[d]^-{\widehat{\jmath}_{I,I'}} & \snn_{I'} \ar[d]^-{\overline{\jmath}_{I,I'}} \\
\bL_I & \Rn_I \ar[r]^-{\pi_{I}} \ar[l]_-{\sigma_{I}} & \snn_I.
}
\]
We also set
\[
\mu:=(\ell-1)(\varsigma_I-2\rho_I+2\rho_{I'} - \varsigma_{I'}) \quad \text{and} \quad \nu:=(\ell-1)(\varsigma_{I'}-\varsigma_I).
\]
(Note that both $\mu$ and $\nu$ define characters of $M_I$, and hence of $P_I$ and any of its subgroups.)

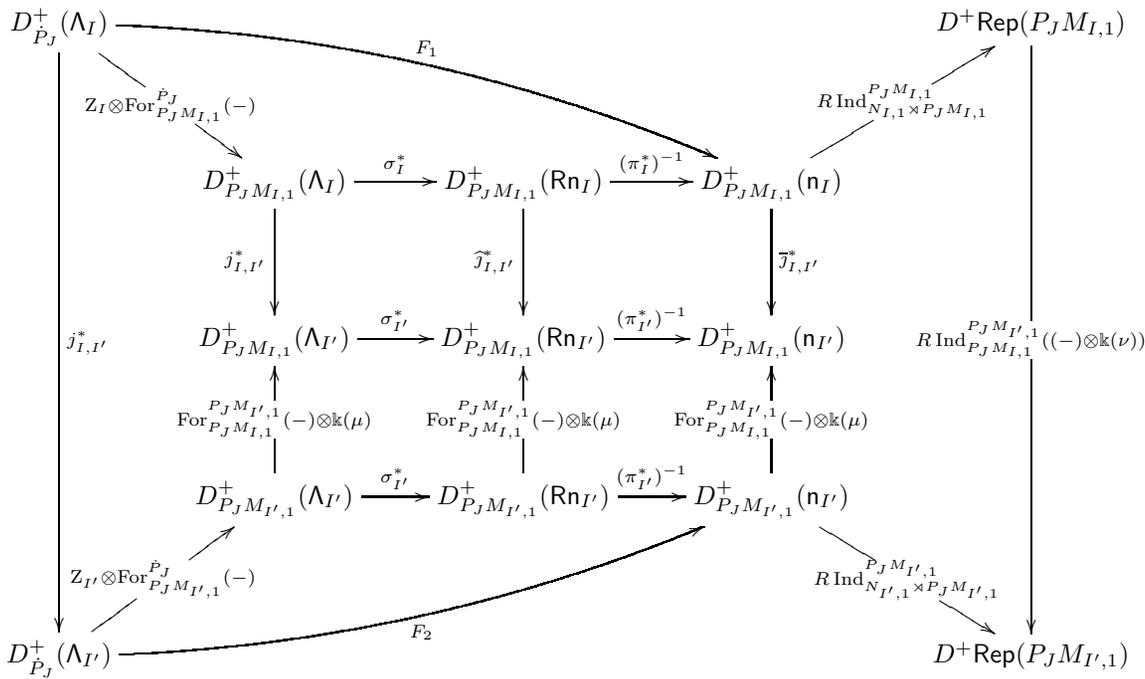
\begin{figure}
\begin{sideways}
\xymatrix@C=0.9cm@R=1.4cm{
D^+_{\dot P_J}(\bL_I) \ar[dddd]^-{j_{I,I'}^*} \ar@/^15pt/[rrrd]^-{F_1} \ar[rd]|-{\rmZ_I \otimes \For^{\dot P_J}_{P_J M_{I,1}}(-)} &&&& D^+ \Rep(P_J M_{I,1}) \ar[dddd]|-{R\Ind_{P_J M_{I,1}}^{P_J M_{I',1}} (({-}) \otimes \bk(\nu))} \\
& D^+_{P_{J} M_{I,1}}(\bL_I) \ar[r]^-{\sigma_I^*} \ar[d]_-{j_{I,I'}^*} & D^+_{P_{J} M_{I,1}}(\Rn_I) \ar[r]^-{(\pi_I^*)^{-1}} \ar[d]_-{\widehat{\jmath}_{I,I'}^*} & D^+_{P_{J} M_{I,1}}(\snn_I) \ar[d]^-{\overline{\jmath}_{I,I'}^*}
\ar[ru]|-{R\Ind_{N_{I,1} \rtimes P_J M_{I,1}}^{P_J M_{I,1}}} & \\
& D^+_{P_{J} M_{I,1}}(\bL_{I'}) \ar[r]^-{\sigma_{I'}^*} & D^+_{P_{J} M_{I,1}}(\Rn_{I'}) \ar[r]^-{(\pi_{I'}^*)^{-1}} & D^+_{P_{J} M_{I,1}}(\snn_{I'}) 
& \\
& D^+_{P_{J} M_{I',1}}(\bL_{I'}) \ar[r]^-{\sigma_{I'}^*} \ar[u]|-{\For^{P_J M_{I',1}}_{P_J M_{I,1}}(-) \otimes \bk(\mu)} & D^+_{P_{J} M_{I',1}}(\Rn_{I'}) \ar[r]^-{(\pi_{I'}^*)^{-1}} \ar[u]|-{\For^{P_J M_{I',1}}_{P_J M_{I,1}}(-) \otimes \bk(\mu)} & D^+_{P_{J} M_{I',1}}(\snn_{I'}) \ar[u]|-{\For^{P_J M_{I',1}}_{P_J M_{I,1}}(-) \otimes \bk(\mu)}
\ar[rd]|-{R\Ind_{N_{I',1} \rtimes P_J M_{I',1}}^{P_J M_{I',1}}} & \\
D^+_{\dot P_J}(\bL_{I'}) \ar[ru]|-{\rmZ_{I'} \otimes \For^{\dot P_J}_{P_J M_{I',1}}(-)} \ar@/_15pt/[rrru]_-{F_2} &&&& D^+ \Rep(P_J M_{I',1}) \\
}
\end{sideways}
\caption{Diagram for the proof of Theorem~\ref{thm:parabolicI}}\label{fig:psi-parabolic}
\end{figure}

Consider the large diagram of Figure~\ref{fig:psi-parabolic}. (Here the functors $F_1$ and $F_2$ are defined so that the corresponding triangle commutes.) It is straightforward (using in particular the commutativity of diagram~\eqref{eqn:restriction-scalars-For}) to check that the left-hand trapezoid and the four central squares in this diagram commute. In an equation, this means that
\begin{equation}
\label{eqn:isom-F1-F2}
\bigl( \For^{P_J M_{I',1}}_{P_J M_{I,1}}(-) \otimes \bk(\mu) \bigr) \circ F_2 \circ j_{I,I'}^* \cong \overline{\jmath}_{I,I'}^* \circ F_1.
\end{equation}

Now we look more closely at the right-hand trapezoid. We complete this part of the diagram as follows:
\[
\xymatrix@C=1.5cm@R=0.6cm{
D^+ \Rep(N_{I,1} \rtimes P_J M_{I,1}) \ar@<-1ex>[d]_-{\For} \ar[r]^-{R\Ind} & D^+ \Rep(P_J M_{I,1}) \ar[dd]^-{R\Ind((-) \otimes \bk(\nu))} \\
D^+ \Rep(N_{I',1} \rtimes P_J M_{I,1}) \ar@<-1ex>[d]_-{R\Ind((-) \otimes \bk(\nu))} \ar@<-1ex>[u]_-{R\Ind} & \\
D^+ \Rep(N_{I',1} \rtimes P_J M_{I',1}) \ar@<-1ex>[u]_-{\For(-) \otimes \bk(\mu)} \ar[r]^-{R\Ind} & D^+ \Rep(P_J M_{I',1}).
}
\]
(Here, for simplicity of notation we have not indicated the groups in the functors $\Ind$ or $\For$. For the vertical arrows, these functors are defined with respect to the obvious inclusions, and for the horizontal arrows they are defined with respect to the multiplication morphisms.) We claim that the pairs of functors
\begin{gather}
\label{eqn:adjunction-Ind-For-1}
(\For^{N_{I,1} \rtimes P_J M_{I,1}}_{N_{I',1} \rtimes P_J M_{I,1}}, R\Ind^{N_{I,1} \rtimes P_J M_{I,1}}_{N_{I',1} \rtimes P_J M_{I,1}}), \\
\label{eqn:adjunction-Ind-For-2}
(R\Ind_{N_{I',1} \rtimes P_J M_{I,1}}^{N_{I',1} \rtimes P_J M_{I',1}}((-) \otimes \bk(\nu)), \For_{N_{I',1} \rtimes P_J M_{I,1}}^{N_{I',1} \rtimes P_J M_{I',1}}(-) \otimes \bk(\mu))
\end{gather}
in this diagram are naturally adjoint pairs. For~\eqref{eqn:adjunction-Ind-For-1}, this follows from the general theory. 

For~\eqref{eqn:adjunction-Ind-For-2}, we first remark that the functor $\Ind_{N_{I',1} \rtimes P_J M_{I,1}}^{N_{I',1} \rtimes P_J M_{I',1}}$ is exact. In fact, using Lemma~\ref{lem:Ind-For} we see that
\[
\For^{N_{I',1} \rtimes P_J M_{I',1}}_{N_{I',1} \rtimes P_{I',1}} \circ R\Ind_{N_{I',1} \rtimes P_J M_{I,1}}^{N_{I',1} \rtimes P_J M_{I',1}} \cong R\Ind^{N_{I',1} \rtimes P_{I',1}}_{N_{I',1} \rtimes P_{I,1}} \circ \For^{N_{I',1} \rtimes P_J M_{I,1}}_{N_{I,1} \rtimes P_{I,1}},
\]
and then the claim follows from~\cite[Proposition~I.5.13(c)]{jantzen}. By~\cite[\S I.8.20]{jantzen}, the functor $\For_{N_{I',1} \rtimes P_J M_{I,1}}^{N_{I',1} \rtimes P_J M_{I',1}}$ has as left adjoint the coinduction functor $\mathrm{Coind}_{N_{I',1} \rtimes P_J M_{I,1}}^{N_{I',1} \rtimes P_J M_{I',1}}$, and moreover we have
\[
\mathrm{Coind}_{N_{I',1} \rtimes P_J M_{I,1}}^{N_{I',1} \rtimes P_J M_{I',1}} \cong \Ind_{N_{I',1} \rtimes P_J M_{I,1}}^{N_{I',1} \rtimes P_J M_{I',1}}(- \otimes \bk((\ell-1)(2\rho_{I'}-2\rho_I))).
\]
(Here, $\mathrm{Coind}_{N_{I',1} \rtimes P_J M_{I,1}}^{N_{I',1} \rtimes P_J M_{I',1}}$ stands for the functor $\mathrm{Coind}_{N_{I',1} \rtimes P_{I,1}}^{N_{I',1} \rtimes P_{I',1}}$, extended as in~\cite[Proposition~I.8.20]{jantzen}.)
Since $\nu = (\ell-1)(2\rho_{I'}-2\rho_I) - \mu$, we deduce the adjunction~\eqref{eqn:adjunction-Ind-For-2}.

Using the adjunction morphisms associated with the pairs~\eqref{eqn:adjunction-Ind-For-1} and~\eqref{eqn:adjunction-Ind-For-2} (together with the generalized tensor identity and the transitivity of induction) we construct a morphism of functors as follows:
{\footnotesize
\begin{multline*}
\label{eqn:def-eta}
R\Ind_{P_J M_{I,1}}^{P_J M_{I',1}} (({-}) \otimes \bk(\nu)) \circ \psi_{J,I} = R\Ind_{P_J M_{I,1}}^{P_J M_{I',1}} (({-}) \otimes \bk(\nu)) \circ R\Ind_{N_{I,1} \rtimes P_J M_{I,1}}^{P_J M_{I,1}} \circ F_1 \\
\xrightarrow{\sim} R\Ind_{N_{I,1} \rtimes P_J M_{I,1}}^{P_J M_{I',1}} (({-}) \otimes \bk(\nu)) \circ F_1
\\
\xrightarrow{\eqref{eqn:adjunction-Ind-For-1}} R\Ind_{N_{I,1} \rtimes P_J M_{I,1}}^{P_J M_{I',1}} (({-}) \otimes \bk(\nu)) \circ R\Ind^{N_{I,1} \rtimes P_J M_{I,1}}_{N_{I',1} \rtimes P_J M_{I,1}} \circ \For^{N_{I,1} \rtimes P_J M_{I,1}}_{N_{I',1} \rtimes P_J M_{I,1}} \circ F_1 \\
\xrightarrow[\sim]{\eqref{eqn:isom-F1-F2}} R\Ind_{N_{I,1} \rtimes P_J M_{I,1}}^{P_J M_{I',1}} (({-}) \otimes \bk(\nu)) \circ R\Ind^{N_{I,1} \rtimes P_J M_{I,1}}_{N_{I',1} \rtimes P_J M_{I,1}} \circ \bigl( \For_{N_{I',1} \rtimes P_J M_{I,1}}^{N_{I',1} \rtimes P_J M_{I',1}}(-) \otimes \bk(\mu) \bigr) \circ F_2 \circ j_{I,I'}^* \\
\xrightarrow{\sim}
R\Ind_{N_{I',1} \rtimes P_J M_{I',1}}^{P_J M_{I',1}} \circ R\Ind^{N_{I',1} \rtimes P_J M_{I',1}}_{N_{I',1} \rtimes P_J M_{I,1}} (({-}) \otimes \bk(\nu)) \circ \bigl( \For_{N_{I',1} \rtimes P_J M_{I,1}}^{N_{I',1} \rtimes P_J M_{I',1}}(-) \otimes \bk(\mu) \bigr) \circ F_2 \circ j_{I,I'}^* \\
\xrightarrow{\eqref{eqn:adjunction-Ind-For-2}} 
R\Ind_{N_{I',1} \rtimes P_J M_{I',1}}^{P_J M_{I',1}} \circ F_2 \circ j_{I,I'}^* = \psi_{J,I'} \circ \circ j_{I,I'}^*.
\end{multline*}
}%
This morphism will be denoted $\eta$.

To conclude the proof it remains to prove that $\eta$ is an isomorphism. For this it suffices to prove that $\eta_V$ is an isomorphism for any $V \in \Repf(\dot P_J)$ (since these objects generate the category $\Dfg_{\dot P_J}(\bL_I)$). And then, by compatibility of all our functors with tensoring with a finite dimensional $\dot P_J$-module, it suffices to consider the case when $V=\bk$. In this case, $\eta_\bk$ is a $P_J M_{I',1}$-equivariant endomorphism of $\St_{I'}$; hence to prove that this morphism is invertible it suffices to prove that it is nonzero. In particular, we can replace all the derived functors appearing in the equations above by their nonderived counterparts. With this replacement, the composition we have to consider looks as follows:
\begin{equation}
\label{eqn:composition-eta-k}
\St_{I'} \to \Ind_{N_{I',1} \rtimes P_J M_{I,1}}^{P_J M_{I',1}}(\rmZ_I \otimes \bk(\nu)) \to \St_{I'}.
\end{equation}

Let us consider the middle term in~\eqref{eqn:composition-eta-k}. One can check, using arguments similar to those in the final step of the proof of Theorem~\ref{thm:parabolicJ}, that, as $P_{I',1} T$-modules, we have
\begin{multline*}
\Ind_{N_{I',1} \rtimes P_J M_{I,1}}^{P_J M_{I',1}}(\rmZ_I \otimes \bk(\nu)) \cong \Ind_{N_{I',1} \rtimes P_{I,1} T}^{P_{I',1} T}(\rmZ_I \otimes \bk(\nu)) \\
\cong \Ind_{N_{I',1} \rtimes M_{I,1} T}^{P_{I',1} T}(\St_I \otimes \bk(\nu)),
\end{multline*}
where the second isomorphism uses a $T$-equivariant version of Lemma~\ref{lem:pi-ind} (see the proof of Corollary~\ref{cor:StI-PiI}). We deduce that, as $M_{I',1} T$-modules, we have
\[
\Ind_{N_{I',1} \rtimes P_J M_{I,1}}^{P_J M_{I',1}}(\rmZ_I \otimes \bk(\nu)) \cong \Ind_{M_{I,1} T}^{M_{I',1} T}(\St_I \otimes \bk(\nu)).
\]
Using Lemma~\ref{lem:Ind-Steinberg}, we see that to conclude, it suffices to prove that both morphisms appearing in~\eqref{eqn:composition-eta-k} are nonzero.

The first morphism is the image under the left exact functor $\Ind_{N_{I,1} \rtimes P_J M_{I,1}}^{P_J M_{I',1}}$ of the injective adjunction morphism $\rmZ_I \otimes \bk(\nu) \to \Ind_{N_{I',1} \rtimes P_J M_{I,1}}^{N_{I,1} \rtimes P_J M_{I,1}}(\rmZ_I \otimes \bk(\nu))$. Therefore it is injective, and in particular nonzero. 

To handle the second morphism, as above we restrict equivariance to $P_{I',1} T$. In this setting, the morphism under consideration is the image under the functor $\Ind_{N_{I',1} \rtimes P_{I',1} T}^{P_{I',1} T}$ of the morphism
\begin{equation}
\Ind_{N_{I',1} \rtimes P_{I,1} T}^{N_{I',1} \rtimes P_{I',1} T}(\rmZ_{I'} \otimes \bk((\ell-1)(2\rho_{I'}-2\rho_I))) \to \rmZ_{I'}
\end{equation}
induced by adjunction. This morphism is surjective. It is even a \emph{split} surjection. In fact, since $N_{I',1} \subset N_{I',1} \rtimes P_{I,1} T$ acts trivially on all the modules under consideration we have
\begin{multline*}
\Ind_{N_{I',1} \rtimes P_{I,1} T}^{N_{I',1} \rtimes P_{I',1} T}(\rmZ_{I'} \otimes \bk((\ell-1)(2\rho_{I'}-2\rho_I))) \cong \\
\For^{P_{I',1} T}_{N_{I',1} \rtimes P_{I',1} T} \bigl( \Ind_{P_{I,1} T}^{P_{I',1} T}(\rmZ_{I'} \otimes \bk((\ell-1)(2\rho_{I'}-2\rho_I))) \bigr)
\end{multline*}
(where the forgetful functor is defined with respect to the projection $N_{I',1} \rtimes P_{I',1} T \to P_{I',1} T$ on the second factor), and our morphism is induced by the surjective morphism
\[
 \Ind_{P_{I,1} T}^{P_{I',1} T}(\rmZ_{I'} \otimes \bk((\ell-1)(2\rho_{I'}-2\rho_I))) \to \rmZ_{I'}
\]
induced by adjunction. Since $\rmZ_{I'}$ is projective as a $P_{I',1}T$-module (see Remark~\ref{rmk:pi-ind-proj}), this surjection must be split, which finally proves that the second morphism in~\eqref{eqn:composition-eta-k} is nonzero, and concludes the proof of the theorem.
\end{proof}

\subsection{The functors $\Theta_{J,I}$ and $\Theta^{J,I}$}
\label{ss:Theta}

In the rest of the paper, we mainly consider the functors $\psi_{J,I}$ only in the special case $J=I$. In this case, we simplify the notation and set
\[
\psi_I := \psi_{I,I} : \Dfg_{\dot P_I}(\bL_I) \simto \Db_{\Stein}(P_I).
\]

Now we fix two subsets $J \subset I \subset S$. Recall the embedding $j_{J,I} : \bL_I \hookrightarrow \bL_J$. We consider the functor
\[
\Theta_{J, I}:= R\Ind_{\dot P_J}^{\dot P_I} \circ j_{J,I}^* \circ \bigl( (-) \otimes \bk_{\dot P_J}(\varsigma_J-\varsigma_I) \bigr) : \Dfg_{\dot P_J}(\bL_J) \to \Dfg_{\dot P_I}(\bL_I).
\]

\begin{prop}
\label{prop:Theta-psi}
The following diagram commutes up to isomorphism:
\[
\xymatrix@C=2cm{
 \Dfg_{\dot P_J}(\bL_J) \ar[r]^-{\psi_J} \ar[d]_-{\Theta_{J,I}} & \Db_{\Stein}(P_J)\ar[d]|-{R\Ind_{P_J}^{P_I} \bigl( (-) \otimes \bk(\varsigma_J - \varsigma_I) \bigr)} \\
 \Dfg_{\dot P_I}(\bL_I) \ar[r]^-{\psi_I}Ê& \Db_{\Stein}(P_I).
}
\]
\end{prop}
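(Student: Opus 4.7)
The plan is to derive this proposition as a formal consequence of the two parabolic compatibility theorems (Theorems~\ref{thm:parabolicJ} and~\ref{thm:parabolicI}) already established in this section, by factoring the functors $\Theta_{J,I}$ and $R\Ind_{P_J}^{P_I}((-) \otimes \bk(\varsigma_J - \varsigma_I))$ through the intermediate category $\Dfg_{\dot P_J}(\bL_I) \cong \Db_{\Stein}(P_J M_{I,1})$ (with the equivalence realized by $\psi_{J,I}$). Thus the verification splits into two compatibilities we already have, together with the compatibility of $\psi_J$ with twisting by a $\dot P_J$-module character.

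First, I would apply Theorem~\ref{thm:parabolicJ} in the special case where its ``$J'$'' is our $I$ (so the inclusion chain is $J \subset I \subset I$); noting that $P_IM_{I,1} = P_I$, this yields an isomorphism
\[
\psi_I \circ R\Ind_{\dot P_J}^{\dot P_I} \cong R\Ind_{P_J M_{I,1}}^{P_I} \circ \psi_{J,I}.
\]
Next I would apply Theorem~\ref{thm:parabolicI} with its ``$I$'' taken to be our $J$ and its ``$I'$'' taken to be our $I$; noting that $P_JM_{J,1} = P_J$, this gives
\[
\psi_{J,I} \circ j_{J,I}^* \cong R\Ind_{P_J}^{P_J M_{I,1}}\bigl((-) \otimes \bk((\ell-1)(\varsigma_I - \varsigma_J))\bigr) \circ \psi_J.
\]
Combining these two, $\psi_I \circ \Theta_{J,I}$ becomes
\[
R\Ind_{P_J M_{I,1}}^{P_I} \circ R\Ind_{P_J}^{P_J M_{I,1}}\bigl((-) \otimes \bk((\ell-1)(\varsigma_I-\varsigma_J))\bigr) \circ \psi_J \circ \bigl((-) \otimes \bk_{\dot P_J}(\varsigma_J - \varsigma_I)\bigr).
\]

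Then I would push the final $\dot P_J$-equivariant twist through $\psi_J$ using the natural isomorphism~\eqref{eqn:isom-psi-tensoring} from Theorem~\ref{thm:formality}, which replaces $\bk_{\dot P_J}(\varsigma_J - \varsigma_I)$ by its pullback along Frobenius, namely $\bk_{P_J}(\ell(\varsigma_J-\varsigma_I))$. At this point the tensor factor becomes $(\ell-1)(\varsigma_I - \varsigma_J) + \ell(\varsigma_J - \varsigma_I) = \varsigma_J - \varsigma_I$ (a character of $P_J$ but not of $P_JM_{I,1}$, so the twist must remain inside the innermost induction). Finally, transitivity of induction~\eqref{eqn:transitivity-RInd} applied to the chain $P_J \hookrightarrow P_JM_{I,1} \hookrightarrow P_I$ collapses the two successive inductions to $R\Ind_{P_J}^{P_I}$, yielding the desired isomorphism.

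The one subtle point to watch, rather than any genuine obstacle, is that the character $\varsigma_J - \varsigma_I$ does \emph{not} extend to $P_JM_{I,1}$, so the tensor twist cannot be commuted past $R\Ind_{P_J}^{P_JM_{I,1}}$; the character arithmetic is precisely calibrated so that the ``twist by $\ell(\varsigma_J-\varsigma_I)$'' introduced when passing the outer twist through $\psi_J$ exactly cancels the ``twist by $(\ell-1)(\varsigma_J-\varsigma_I)$'' that would otherwise be needed to make sense of the transitivity step. Once the two compatibility theorems are in hand, the proof is essentially just this bookkeeping.
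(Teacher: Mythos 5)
Your proposal is correct and follows essentially the same approach as the paper: it factors the vertical arrows through the intermediate category $\Dfg_{\dot P_J}(\bL_I) \cong \Db_{\Stein}(P_J M_{I,1})$, invokes Theorem~\ref{thm:parabolicJ}, Theorem~\ref{thm:parabolicI}, and Lemma~\ref{lem:frobenius-psi} (i.e.~\eqref{eqn:isom-psi-tensoring}) for the three pieces, and collapses the right-hand column via transitivity of induction, with exactly the same character arithmetic $(\ell-1)(\varsigma_I-\varsigma_J)+\ell(\varsigma_J-\varsigma_I)=\varsigma_J-\varsigma_I$. The paper presents this as one large ladder diagram rather than as sequential substitutions, but the content is identical.
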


\begin{proof}
Consider the following diagram:
\[
\xymatrix@C=2cm{
 \Dfg_{\dot P_J}(\bL_J) \ar[r]^-{\psi_J} \ar[d]|-{(-) \otimes \bk_{\dot P_J}(\varsigma_J-\varsigma_I)} & \Db_{\Stein}(P_J)\ar[d]|-{(-) \otimes \bk_{P_J}(\ell(\varsigma_J - \varsigma_I))} \\
  \Dfg_{\dot P_J}(\bL_J) \ar[r]^-{\psi_J} \ar[d]_-{j_{J,I}^*} & \Db_{\Stein}(P_J)\ar[d]|-{R\Ind_{P_J}^{P_J M_{I,1}} \bigl( (-) \otimes \bk((\ell-1)(\varsigma_I - \varsigma_J)) \bigr)} \\
   \Dfg_{\dot P_J}(\bL_I) \ar[r]^-{\psi_{J,I}} \ar[d]_-{R\Ind_{\dot P_J}^{\dot P_I}} & \Db_{\Stein}(P_J M_{I,1})\ar[d]|-{R\Ind_{P_J M_{I,1}}^{P_I}} \\
 \Dfg_{\dot P_I}(\bL_I) \ar[r]^-{\psi_I}Ê& \Db_{\Stein}(P_I).
}
\]
The upper square is commutative by Lemma~\ref{lem:frobenius-psi}. The middle square commutes by Theorem~\ref{thm:parabolicI}, and the bottom square commutes by Theorem~\ref{thm:parabolicJ}. The composition on the left-hand side is $\Theta_{J,I}$, and the composition on the right-hand side is isomorphic to $R\Ind_{P_J}^{P_I}((-) \otimes \bk(\varsigma_J - \varsigma_I))$ (see~\eqref{eqn:transitivity-RInd}). Hence the proposition is proved.
\end{proof}

The algebra $\bL_J$ is free of finite rank as a right $\bL_I$-module; in particular it is K-flat as a right $\bL_I$-dg-module. Therefore the functor $\bL_J \otimes_{\bL_I} (-) : \bL_I\ldgmod_{\dot P_J} \to \bL_J\ldgmod_{\dot P_J}$ is exact, and induces a triangulated functor
\[
\bL_J \lotimes_{\bL_I} (-) : \Dfg_{\dot P_J}(\bL_I) \to \Dfg_{\dot P_J}(\bL_J).
\]
This functor is easily seen to be left adjoint to the functor $j_{J,I}^*$. Hence, if we set
\[
\Theta^{J,I} := \bigl( (-) \otimes \bk_{\dot P_J}(\varsigma_I - \varsigma_J) \bigr) \circ \bigl( \bL_J \lotimes_{\bL_I} (-) \bigr) \circ \For^{\dot P_I}_{\dot P_J} : \Dfg_{\dot P_I}(\bL_I) \to \Dfg_{\dot P_J}(\bL_J),
\]
then the functor $\Theta^{J,I}$ is left adjoint to $\Theta_{J,I}$.

\newpage

\part{Induction theorems}
\label{pt:induction}

\textbf{Overview.}
The main goal of this part is to prove the induction theorem (Theorem~\ref{thm:intro-induction}). This proof appears to be long and quite technical. For this reason, we start this part with a detailed overview explaining the basic ideas of this proof.

As explained in~\S\ref{ss:exotic-induction}, instead of considering the functor $R\Ind_{P_I}^G$ directly, we will consider the composition
\[
\Db \Coh^{\dot G \times \Gm}(\tcN_I) \xrightarrow{\varkappa_I} \Dfg_{\dot P_I}(\bL_I) \xrightarrow{\psi_I} \Db_{\Stein}(P_I) \xrightarrow{R\Ind_{P_I}^G} \Db \Rep_I(G),
\]
where $\psi_I$ is as in~\S\ref{ss:Theta}, and the functor $\varkappa_I$ is induced by the Koszul duality functor of Section~\ref{sec:koszul}. The main point of this is that we can consider some ``standard'' and ``costandard'' objects in $\Db \Coh^{\dot G \times \Gm}(\tcN_I)$ with favorable $\Hom$-vanishing properties. This construction is performed in Section~\ref{sec:exotic}. In case $I=\varnothing$ these objects are simply the standard and costandard objects in the heart of the exotic t-structure, which are well known from~\cite{bez:ctm, ar:agsr, mr:etspc}. In fact, a reader interested only in the case $I=\varnothing$ and familiar with the exotic t-structure may skip most of Section~\ref{sec:exotic}. (From this section, only \S\S\ref{ss:Springer-res}--\ref{ss:Ind-Res-Springer}, \S\ref{ss:reminder-exotic-empty}, and~\S\ref{ss:Koszul-Springer} will be used in the proof of this special case.)

We will show that the composition $R\Ind_{P_I}^G {}\circ \psi_I \circ \varkappa_I$ sends these objects to the usual standard and costandard objects in $\Rep_I(G)$ (see Proposition~\ref{prop:exotic-weyl}).
For this proof, the crucial case is when $I=\varnothing$. In this case, the claim is easy for certain standard (resp.~costandard) objects, and we will deduce the other cases from these ones using translation functors and some analogous functors $\Pi_{J,I}$ and $\Pi^{J,I}$ relating the categories $\Db \Coh^{\dot G \times \Gm}(\tcN_I)$ for different choices of $I$. The compatibility between the functors $R\Ind_{P_I}^G \circ \psi_I \circ \varkappa_I$ and translation functors is proved in Section~\ref{sec:translation}, building on the results of Section~\ref{sec:compatibility}. (More precisely, we compare the functors $\Pi_{J,I}$ and $\Pi^{J,I}$ with the functors $\Theta_{J,I}$ and $\Theta^{J,I}$ of Section~\ref{sec:compatibility} via $\varkappa_I$ in~\S\ref{ss:Koszul-Springer}, and the functors $\Theta_{J,I}$ and $\Theta^{J,I}$ with the translation functors via $R\Ind_{P_I}^G \circ \psi_I$ in~\S\ref{ss:main-translation}.)

But we will need more than the mere existence of some isomorphisms of functors: in order to prove that a certain morphism in the distinguished triangle~\eqref{eqn:exact-seq-costandard} below is nonzero, we will need to prove that one can construct certain isomorphisms of functors which are \emph{compatible with adjunctions} in an appropriate sense. This leads us to the notion of ``commutative diagram of adjoint pairs'', which is introduced and studied in Section~\ref{sec:translation}.

Once all these ingredients are introduced, the proof of the induction theorem is not difficult; see Section~\ref{sec:induction-thm}. The application to the ``graded Finkelberg--Mirkovi\'c conjecture'' is presented in the final Section~\ref{sec:fm-conjecture}.

\section{Translation functors}
\label{sec:translation}
\tikzstyle{trans}=[yscale=0.4,xscale=0.6,baseline,font=\small]
\tikzstyle{transsmall}=[scale=0.4,baseline,font=\tiny]

\newcommand{\ssm}{\setminus}

\subsection{Setting}
\label{ss:setting}

From now on we assume that the derived subgroup $\mathscr{D}(G)$ of $G$ is simply connected, and denote by $T'$ the maximal torus of $\mathscr{D}(G)$ contained in $T$. For any $\alpha \in \Sigma$, we denote by $\varpi_\alpha \in X^*(T')$ the corresponding fundamental weight, and we choose a preimage $\varsigma_\alpha$ of $\varpi_\alpha$ under the surjective morphism $\bX \to X^*(T')$. Then, for any $K \subset S$, we choose $\varsigma_K$ as
\[
\varsigma_K = \sum_{\alpha \in K} \varsigma_\alpha.
\]
With this choice, for any $J \subset I$ we have
\[
\varsigma_I-\varsigma_J = \varsigma_{I \ssm J}.
\]

We define the affine Weyl group $\Waff$ as the semi-direct product $W \ltimes \bX$. (This group is sometimes rather called the \emph{extended} affine Weyl group.) To avoid confusion, for $\lambda \in \bX$ we denote by $t_\lambda$ the element $1 \ltimes \lambda \in \Waff$. The group $\Waff$ acts on $\bX$ via the ``dot action'' defined by
\[
(v t_\lambda) \bullet \mu := v(\mu + \ell \lambda + \rho) - \rho.
\]
The subgroup $\WaffCox:=W \ltimes \Z\Phi$ of $\Waff$ has a natural Coxeter group structure (where we use the same normalization as in~\cite[\S 2.2]{mr:etspc}). Then the Bruhat order and the length function extend in a natural way to $\Waff$. We set
\[
\Waff^\circ := \{w \in \Waff \mid \ell(w)=0\};
\]
then conjugation by $\Waff^\circ$ stabilizes the set of simple reflections in $\Waff$, and we have $\Waff = \Waff^\circ \ltimes \WaffCox$.

Under our running assumption that $\ell>h$, $-\varsigma_K$ belongs to 
\[
\overline{C}_\Z := \{ \lambda \in \bX \mid \text{$0 \le \la \lambda + \rho, \alpha^\vee \ra \le \ell$ for all $\alpha \in \Phi^+$} \}.
\]
Moreover, this weight has ``singularity $K$'' in the sense that it belongs to the walls of $\overline{C}_\Z$ parametrized by the simple roots in $K$, and to no other wall.
By standard arguments (see~\cite[\S II.6.3]{jantzen}), this implies that
\begin{equation}
\label{eqn:stab-varsigmaK}
\{w \in \WaffCox \mid w \bullet (-\varsigma_K) = -\varsigma_K\} = W_K.
\end{equation}

For any $I \subset S$, we set
\[
\bX_I^+ := \{\lambda \in \bX \mid \forall \alpha \in \Phi_I^+, \, \la \lambda,\alpha^\vee \ra \geq 0\}.
\]
Then, for $\lambda \in \bX_I^+$, we denote by
\[
\weyl_I(\lambda), \quad \coweyl_I(\lambda), \quad \irr_I(\lambda)
\]
the Weyl, dual Weyl, and simple $M_I$-modules of highest weight $\lambda$, respectively.
We will also consider these $M_I$-modules as $P_I$-modules via the surjection $P_I \twoheadrightarrow M_I$. As usual, when $I=S$ we omit the subscript in this notation. (In the case $I=\{s\}$, these modules have already been encountered in~\S\ref{ss:ssrk1}.)

Now we fix $J \subset I \subset S$.  In this section, we will build on the results of~\S\ref{ss:Theta} to obtain a relationship between the adjoint functors $(\Theta^{J, I},\Theta_{J, I})$ and translation functors for $\Rep(G)$.  A summary of the categories and functors we will work with in this section appears in Figure~\ref{fig:translation}. 

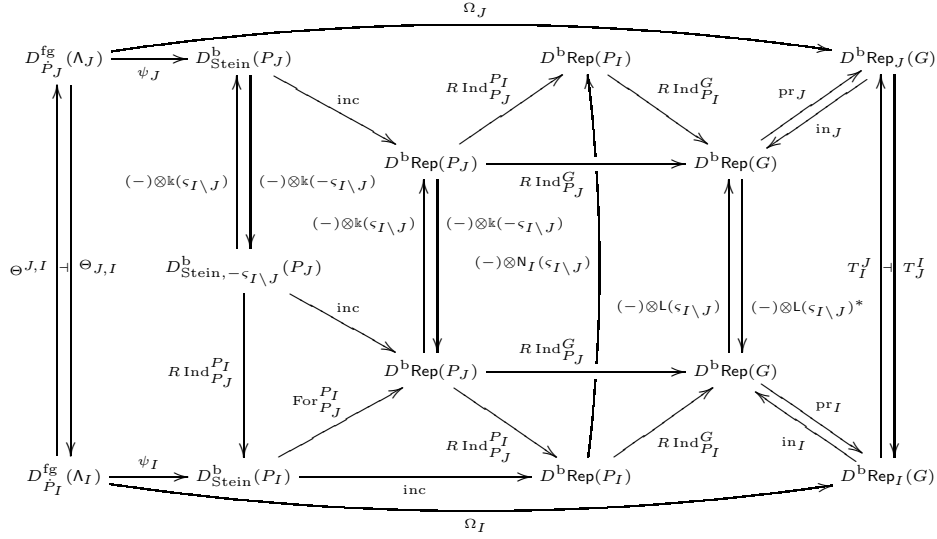
\begin{figure}
\tiny
\[
\xymatrix@C=17pt{
\Dfg_{\dot P_J}(\bL_J) \ar[r]_-{\psi_J} \ar@{}[dddd]|\dashv \ar@<1ex>[dddd]^{\Theta_{J,I}} \ar@/^5ex/[rrrrr]^{\Omega_J}&
  \Db_\Stein(P_J) \ar[dr]^{\inc} \ar@<1ex>[dd]^(.6){({-})\otimes\bk(-\varsigma_{I\ssm J})} &&
  \Db\Rep(P_I) \ar[dr]^{R\Ind_{P_I}^G} &&
  \Db\Rep_{J}(G) \ar@<.75ex>[dl]^{\incl_{J}} \ar@<1ex>[dddd]^{T_{J}^{I}} \ar@{}[dddd]|\dashv \\
&& \Db\Rep(P_J) \ar[rr]_(.4){R\Ind_{P_J}^G} \ar[ur]^{R\Ind_{P_J}^{P_I}} \ar@<1ex>[dd]^(.3){({-})\otimes\bk(-\varsigma_{I\ssm J})} 
 && \Db\Rep(G) \ar@<1ex>[dd]^(.7){({-})\otimes \irr(\varsigma_{I\ssm J})^*} \ar@<.75ex>[ur]^{\pr_{J}} \\
& \Db_{\Stein,-\varsigma_{I\ssm J}}(P_J) \ar[dd]_{R\Ind_{P_J}^{P_I}} \ar@<1ex>[uu]^(.4){({-})\otimes\bk(\varsigma_{I\ssm J})} \ar[dr]^{\inc} \\
&& \Db\Rep(P_J) \ar@<1ex>[uu]^(.7){({-})\otimes\bk(\varsigma_{I\ssm J})} \ar[rr]^(.4){R\Ind_{P_J}^G} \ar[dr]_{R\Ind_{P_J}^{P_I}}
 && \Db\Rep(G) \ar@<1ex>[uu]^(.3){({-})\otimes \irr(\varsigma_{I\ssm J})} \ar@<.75ex>[dr]^{\pr_{I}} \\
\Dfg_{\dot P_I}(\bL_I) \ar[r]^-{\psi_I} \ar@<1ex>[uuuu]^{\Theta^{J,I}} \ar@/_5ex/[rrrrr]_{\Omega_I}&
  \Db_\Stein(P_I) \ar[ur]^{\For_{P_J}^{P_I}} \ar[rr]_{\inc} &&
  \Db\Rep(P_I) \ar[ur]_{R\Ind_{P_I}^G} \ar@/_2ex/[uuuu]^-{({-})\otimes \coweyl_I(\varsigma_{I\ssm J})}|!{[ul];[ur]}\hole|!{[uuul];[uuur]}\hole &&
  \Db\Rep_{I}(G) \ar@<.75ex>[ul]^{\incl_{I}} \ar@<1ex>[uuuu]^{T_{I}^{J}}} 
\]
\caption{Diagram for the study of translation functors}\label{fig:translation}
\end{figure}

Let us explain the notation used in this figure that has not been introduced yet.
First, on the right-hand side, for $K \in \{I,J\}$ we denote by
$\Rep_K(G)$
the Serre subcategory of $\Rep(G)$ generated by the simple modules whose highest weight belongs to $\bX^+ \cap \Waff \bullet(-\varsigma_K)$. It is well known that this subcategory is a direct summand in $\Rep(G)$, and we denote by
\[
\incl_K : \Rep_K(G) \to \Rep(G), \qquad \pr_K : \Rep(G) \to \Rep_K(G)
\]
the corresponding inclusion and projection functors respectively, or the induced functors on derived categories. 
Note that in general $\Rep_K(G)$ is a direct sum of \emph{several} blocks of $\Rep(G)$, even when $K=\varnothing$; this is due to the fact that we work with $\Waff$ and not with $\WaffCox$. More precisely, for any $\omega \in \Waff^\circ$ we can consider the Serre subcategory $\Rep_{K,\omega}(G)$ of $\Rep(G)$ generated by the simple modules whose highest weight belongs to $\bX^+ \cap \WaffCox \omega \bullet(-\varsigma_K)$. Then each $\Rep_{K,\omega}(G)$ is a direct summand in $\Rep(G)$, and $\Rep_K(G)$ is the direct sum of these subcategories.

We also consider the translation functors
\begin{align*}
T_I^J &:= 
\pr_J{} \circ \bigl( ({-})\otimes \irr(\varsigma_{I\ssm J}) \bigr) \circ \incl_{I} : \Rep_I(G) \to \Rep_J(G), \\
\qquad
T_J^I &:= 
\pr_{I}{} \circ \bigl( ({-})\otimes \irr(\varsigma_{I\ssm J})^* \bigr) \circ \incl_{J} : \Rep_J(G) \to \Rep_I(G).
\end{align*}
For any $\omega \in \Waff^\circ$, the restriction of $T_I^J$ to $\Rep_{I,\omega}(G)$ is the functor denoted $T_{\omega \bullet (-\varsigma_I)}^{\omega \bullet (-\varsigma_J)}$ in~\cite[\S II.7.6]{jantzen}, and the restriction of $T_J^I$ to $\Rep_{J,\omega}(G)$ is the functor denoted $T^{\omega \bullet (-\varsigma_I)}_{\omega \bullet (-\varsigma_J)}$ in~\cite[\S II.7.6]{jantzen}

In the left-hand side of the diagram, 
$\Db_{\Stein,-\varsigma_{I\ssm J}}(P_J)$
denotes the full triangulated subcategory of $\Db\Rep(P_J)$ generated by objects of the form $V \otimes \bk(-\varsigma_{I\ssm J})$ with $V \in \Db_\Stein(P_J)$.  The functors $\inc: \Db_\Stein(P_J) \to \Db\Rep(P_J)$ and $\inc: \Db_{\Stein,-\varsigma_{I\ssm J}}(P_J) \to \Db\Rep(P_J)$ are inclusion functors.  

Finally, the functors $\Omega_J$ and $\Omega_I$ are given by
\begin{equation}
\label{eqn:omega-defn}
\Omega_J = \pr_{J} \circ R\Ind_{P_J}^G{} \circ \inc{} \circ \psi_J
\qquad\text{and}\qquad
\Omega_I = \pr_I \circ R\Ind_{P_I}^G{} \circ \inc{} \circ \psi_I.
\end{equation}

Later we will need the following easy lemma.

\begin{lem}
\label{lem:generators-SteinPK}
For any $K \subset S$,
the triangulated category
$\Db_\Stein(P_K)$ is generated 
by the objects of the form $\coweyl_K(\ell\lambda - \varsigma_K)$ with $\lambda \in \bX_K^+ + \varsigma_K$, or by the objects of the form $\weyl_K(\ell\lambda - \varsigma_K)$ with $\lambda \in \bX_K^+ + \varsigma_K$, or 
by the objects of the form $\irr_K(\ell\lambda - \varsigma_K)$ with $\lambda \in \bX_K^+ + \varsigma_K$.
\end{lem}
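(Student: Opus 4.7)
The plan is to identify each of the three proposed generating families with instances of the defining generators $\St_K \otimes \Fr^*(V)$ for $V \in \Repf(\dot P_K)$, and then to conclude by a standard dévissage.

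First, for every $\mu \in \bX_K^+$, I would establish the $P_K$-module isomorphisms
\begin{align*}
\irr_K((\ell-1)\varsigma_K + \ell\mu) &\cong \St_K \otimes \Fr^*(\irr_{\dot M_K}(\mu)), \\
\coweyl_K((\ell-1)\varsigma_K + \ell\mu) &\cong \St_K \otimes \Fr^*(\coweyl_{\dot M_K}(\mu)), \\
\weyl_K((\ell-1)\varsigma_K + \ell\mu) &\cong \St_K \otimes \Fr^*(\weyl_{\dot M_K}(\mu)).
\end{align*}
The first is Steinberg's tensor product theorem applied inside $M_K$, which is legitimate because $\langle (\ell-1)\varsigma_K,\alpha^\vee\rangle = \ell-1$ for every $\alpha \in \Sigma_K$, so that $(\ell-1)\varsigma_K$ is $\ell$-restricted for $M_K$. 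The second and third are the classical analogues for dual Weyl and Weyl modules (compare \cite[II.3.19]{jantzen} and the discussion there), which in this situation follow from the tensor identity inside $M_K$ together with the fact that $\St_K$ is the Steinberg $M_K$-module. Setting $\lambda = \mu + \varsigma_K$, the condition $\mu \in \bX_K^+$ is equivalent to $\lambda \in \bX_K^+ + \varsigma_K$, and $(\ell-1)\varsigma_K + \ell\mu = \ell\lambda - \varsigma_K$, so the left-hand sides are exactly the proposed generators. In particular, each family is contained in $\Db_\Stein(P_K)$, since $\Fr^*$ of an $\dot M_K$-module is a fortiori the $\Fr^*$ of a $\dot P_K$-module via the quotient $\dot P_K \twoheadrightarrow \dot M_K$.

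Next, I would verify that the simple family generates $\Db_\Stein(P_K)$. By definition this category is generated by $\St_K \otimes \Fr^*(V)$ for $V \in \Repf(\dot P_K)$; a finite composition series on $V$, together with the distinguished triangles obtained by tensoring with $\St_K \otimes \Fr^*(-)$, reduces the problem to the case where $V$ is a simple $\dot P_K$-module. Since $\dot N_K$ acts trivially on such modules, any simple $V$ is of the form $\irr_{\dot M_K}(\nu)$ with $\nu \in \bX_K^+$, pulled back along $\dot P_K \twoheadrightarrow \dot M_K$; the first isomorphism then identifies $\St_K \otimes \Fr^*(\irr_{\dot M_K}(\nu))$ with $\irr_K(\ell\lambda - \varsigma_K)$ for $\lambda = \nu + \varsigma_K \in \bX_K^+ + \varsigma_K$, as desired.

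Finally, an induction on $\mu = \lambda - \varsigma_K$ in the $M_K$-dominance order settles the Weyl and dual Weyl cases: the Steinberg-type decomposition identifies $\coweyl_K(\ell\lambda - \varsigma_K)$ with $\St_K \otimes \Fr^*(\coweyl_{\dot M_K}(\mu))$, whose composition factors, obtained by tensoring with the composition series of $\coweyl_{\dot M_K}(\mu)$ and invoking Steinberg once more, are precisely the simples $\irr_K(\ell\lambda' - \varsigma_K)$ with $\lambda' \in \bX_K^+ + \varsigma_K$ and $\lambda' \le \lambda$, the top $\irr_K(\ell\lambda - \varsigma_K)$ occurring with multiplicity one. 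The associated distinguished triangles then allow one to pass freely between the simple and dual Weyl families; the Weyl case is symmetric. The main obstacle is the Steinberg-type tensor decomposition for $\coweyl_K$ and $\weyl_K$ in the first step; once that is in place, everything else is routine dévissage.
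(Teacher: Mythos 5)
Your proposal is correct, but it takes a genuinely different route from the paper. The paper's proof first performs a small weight computation identifying $((W_K\ltimes\bX)\bullet(-\varsigma_K))\cap\bX_K^+$ with the set $\{\ell\lambda-\varsigma_K : \lambda\in\bX_K^++\varsigma_K\}$, and then invokes the strong linkage principle~\cite[II.7.3(5)]{jantzen} as a black box to conclude that the simple, Weyl and dual Weyl families are related by a unitriangular change of basis (so that they generate the same triangulated subcategory). After this reduction it only verifies the simple case, via Steinberg's tensor product theorem for $M_K$. You instead establish Steinberg-type tensor decompositions for all three families --- $\coweyl_K(\ell\lambda-\varsigma_K)\cong\St_K\otimes\Fr^*(\coweyl_{\dot M_K}(\lambda-\varsigma_K))$ and its simple and Weyl analogues --- and then run an explicit induction via composition series. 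Both arguments are sound. Your version is more explicit: the identifications $\coweyl_K(\ell\lambda-\varsigma_K)\cong\St_K\otimes\Fr^*(\coweyl_{\dot M_K}(\lambda-\varsigma_K))$ show at once that each family lies in $\Db_\Stein(P_K)$, and the d\'evissage through composition series reproves exactly the fragment of linkage that the paper cites. The cost is that you must supply the Steinberg decomposition for (dual) Weyl modules, which is a standard but separate fact (what you cite as~\cite[II.3.19]{jantzen}, applied to $M_K$, with the usual adjustment by the character $(\ell-1)(\varsigma_K-\rho_K)$ of $M_K$ to replace $\rho_K$ by $\varsigma_K$); the paper avoids this by working only with the simple family. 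Minor suggestion: when you assert that the composition factors of $\coweyl_{\dot M_K}(\mu)$ have highest weights $\nu\preceq\mu$ and hence form a finite set, you are implicitly using that these $\nu$ lie in $\mu+\Z\Sigma_K$ and in the convex hull of $W_K\mu$, which should be stated since $M_K$ is reductive and not semisimple.
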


\begin{proof}
Note that
\[
((W_K \ltimes \bX) \bullet (-\varsigma_K)) \cap \bX_K^+ = \{\ell\lambda - \varsigma_K, \, \lambda \in \bX_K^+ + \varsigma_K\}.
\]
Using this and~\cite[II.7.3(5)]{jantzen}, we see that
the three cases are equivalent; we will prove the case of the objects $\irr_K(\ell \lambda - \varsigma_K)$.

By definition (see~\S\ref{ss:statement-equiv-formality}), $\Db_\Stein(P_K)$ is generated as a triangulated category by the objects of the form $\St_K \otimes \For^{\dot P_K}_{P_K}(\irr_K(\mu))$ with $\mu \in \bX_K^+$. Now since $\St_K$ is simple as an $M_K$-module (see~\S\ref{ss:Steinberg}), by Steinberg's tensor product theorem (see~\cite[Proposition~II.3.16]{jantzen}) we have
\[
\St_K \otimes \For^{\dot P_K}_{P_K}(\irr_K(\mu)) \cong \irr_K((\ell-1)\varsigma_K) \otimes \For^{\dot P_K}_{P_K}(\irr_K(\mu)) \cong \irr_K((\ell-1)\varsigma_K + \ell \mu),
\]
and the claim follows.
\end{proof}

\subsection{String diagrams and commutative diagrams of adjoint pairs}
\label{ssec:string}

It will be convenient to use the ``string diagram'' notation to carry out computations with natural transformations.  The string diagrams in this section should be read from top to bottom.  We follow the usual convention that if $p \dashv q$ is an adjoint pair of functors (with a fixed adjunction), then the unit $\eta: \id \to qp$ and the counit $\epsilon: pq \to \id$ are denoted by
\[
\vcenter{\hbox{\begin{tikzpicture}[trans]
\useasboundingbox (-0.5,-0.5) rectangle (2.5,1.5);
\draw (0.00,0.00) -- (0.05,0.07) -- (0.10,0.15) -- (0.15,0.22) -- (0.20,0.30) -- (0.25,0.37) -- (0.30,0.44) -- (0.35,0.50) -- (0.40,0.57) -- (0.45,0.63) -- (0.50,0.69) -- (0.55,0.74) -- (0.60,0.79) -- (0.65,0.84) -- (0.70,0.88) -- (0.75,0.91) -- (0.80,0.94) -- (0.85,0.97) -- (0.90,0.99) -- (0.95,1.00) -- (1.00,1.00);
\draw (1.00,1.00) -- (1.05,1.00) -- (1.10,0.99) -- (1.15,0.97) -- (1.20,0.94) -- (1.25,0.91) -- (1.30,0.88) -- (1.35,0.84) -- (1.40,0.79) -- (1.45,0.74) -- (1.50,0.69) -- (1.55,0.63) -- (1.60,0.57) -- (1.65,0.50) -- (1.70,0.44) -- (1.75,0.37) -- (1.80,0.30) -- (1.85,0.22) -- (1.90,0.15) -- (1.95,0.07) -- (2.00,0.00);
\draw (0.00,-0.50) node{$q$};
\draw (2.00,-0.50) node{$p$};
\end{tikzpicture}}}
\qquad\text{and}\qquad
\vcenter{\hbox{\begin{tikzpicture}[trans]
\useasboundingbox (-0.5,-0.5) rectangle (2.5,2.5);
\draw (0.00,1.00) -- (0.05,0.93) -- (0.10,0.85) -- (0.15,0.78) -- (0.20,0.70) -- (0.25,0.63) -- (0.30,0.56) -- (0.35,0.50) -- (0.40,0.43) -- (0.45,0.37) -- (0.50,0.31) -- (0.55,0.26) -- (0.60,0.21) -- (0.65,0.16) -- (0.70,0.12) -- (0.75,0.09) -- (0.80,0.06) -- (0.85,0.03) -- (0.90,0.01) -- (0.95,0.00) -- (1.00,0.00);
\draw (1.00,0.00) -- (1.05,0.00) -- (1.10,0.01) -- (1.15,0.03) -- (1.20,0.06) -- (1.25,0.09) -- (1.30,0.12) -- (1.35,0.16) -- (1.40,0.21) -- (1.45,0.26) -- (1.50,0.31) -- (1.55,0.37) -- (1.60,0.43) -- (1.65,0.50) -- (1.70,0.56) -- (1.75,0.63) -- (1.80,0.70) -- (1.85,0.78) -- (1.90,0.85) -- (1.95,0.93) -- (2.00,1.00);
\draw (0.00,1.50) node{$p$};
\draw (2.00,1.50) node{$q$};
\end{tikzpicture}}}
\]
respectively. The most important rules for doing calculations with string diagrams are those coming from the unit-counit equations
\[
\epsilon p \circ p \eta = \id_p \qquad \text{and}\qquad q \epsilon \circ \eta q = \id_q
\]
(sometimes called the ``zigzag relations''), depicted graphically as
\begin{equation}\label{eqn:unit-counit}
\vcenter{\hbox{\begin{tikzpicture}[trans]
\useasboundingbox (-0.5,-0.5) rectangle (3.5,2.5);
\draw (0.00,1.00) -- (0.05,0.92) -- (0.10,0.83) -- (0.15,0.75) -- (0.20,0.67) -- (0.25,0.59) -- (0.30,0.52) -- (0.35,0.45) -- (0.40,0.38) -- (0.45,0.31) -- (0.50,0.25) -- (0.55,0.19) -- (0.60,0.14) -- (0.65,0.10) -- (0.70,0.06) -- (0.75,0.03) -- (0.80,0.01) -- (0.85,-0.01) -- (0.90,-0.01) -- (0.95,-0.01) -- (1.00,0.00);
\draw (1.00,0.00) -- (1.02,0.01) -- (1.05,0.02) -- (1.07,0.04) -- (1.10,0.05) -- (1.12,0.07) -- (1.15,0.09) -- (1.17,0.11) -- (1.20,0.14) -- (1.22,0.16) -- (1.25,0.19) -- (1.27,0.22) -- (1.30,0.24) -- (1.32,0.27) -- (1.35,0.30) -- (1.37,0.34) -- (1.40,0.37) -- (1.42,0.40) -- (1.45,0.43) -- (1.47,0.47) -- (1.50,0.50);
\draw (1.50,0.50) -- (1.52,0.53) -- (1.55,0.57) -- (1.57,0.60) -- (1.60,0.63) -- (1.62,0.66) -- (1.65,0.70) -- (1.67,0.73) -- (1.70,0.76) -- (1.72,0.78) -- (1.75,0.81) -- (1.77,0.84) -- (1.80,0.86) -- (1.82,0.89) -- (1.85,0.91) -- (1.87,0.93) -- (1.90,0.95) -- (1.92,0.96) -- (1.95,0.98) -- (1.97,0.99) -- (2.00,1.00);
\draw (2.00,1.00) -- (2.05,1.01) -- (2.10,1.01) -- (2.15,1.01) -- (2.20,0.99) -- (2.25,0.97) -- (2.30,0.94) -- (2.35,0.90) -- (2.40,0.86) -- (2.45,0.81) -- (2.50,0.75) -- (2.55,0.69) -- (2.60,0.62) -- (2.65,0.55) -- (2.70,0.48) -- (2.75,0.41) -- (2.80,0.33) -- (2.85,0.25) -- (2.90,0.17) -- (2.95,0.08) -- (3.00,0.00);
\draw (0.00,1.50) node{$p$};
\draw (3.00,-0.50) node{$p$};
\end{tikzpicture}}}
\ =\ 
\vcenter{\hbox{\begin{tikzpicture}[trans]
\useasboundingbox (-0.5,-0.5) rectangle (0.5,2.5);
\draw (0.00,1.00) -- (0.00,0.00);
\draw (0.00,1.50) node{$p$};
\draw (0.00,-0.50) node{$p$};
\end{tikzpicture}}}
\qquad\text{and}\qquad
\vcenter{\hbox{\begin{tikzpicture}[trans]
\useasboundingbox (-0.5,-0.5) rectangle (3.5,2.5);
\draw (0.00,0.00) -- (0.05,0.08) -- (0.10,0.17) -- (0.15,0.25) -- (0.20,0.33) -- (0.25,0.41) -- (0.30,0.48) -- (0.35,0.55) -- (0.40,0.62) -- (0.45,0.69) -- (0.50,0.75) -- (0.55,0.81) -- (0.60,0.86) -- (0.65,0.90) -- (0.70,0.94) -- (0.75,0.97) -- (0.80,0.99) -- (0.85,1.01) -- (0.90,1.01) -- (0.95,1.01) -- (1.00,1.00);
\draw (1.00,1.00) -- (1.02,0.99) -- (1.05,0.98) -- (1.07,0.96) -- (1.10,0.95) -- (1.12,0.93) -- (1.15,0.91) -- (1.17,0.89) -- (1.20,0.86) -- (1.22,0.84) -- (1.25,0.81) -- (1.27,0.78) -- (1.30,0.76) -- (1.32,0.73) -- (1.35,0.70) -- (1.37,0.66) -- (1.40,0.63) -- (1.42,0.60) -- (1.45,0.57) -- (1.47,0.53) -- (1.50,0.50);
\draw (1.50,0.50) -- (1.52,0.47) -- (1.55,0.43) -- (1.57,0.40) -- (1.60,0.37) -- (1.62,0.34) -- (1.65,0.30) -- (1.67,0.27) -- (1.70,0.24) -- (1.72,0.22) -- (1.75,0.19) -- (1.77,0.16) -- (1.80,0.14) -- (1.82,0.11) -- (1.85,0.09) -- (1.87,0.07) -- (1.90,0.05) -- (1.92,0.04) -- (1.95,0.02) -- (1.97,0.01) -- (2.00,0.00);
\draw (2.00,0.00) -- (2.05,-0.01) -- (2.10,-0.01) -- (2.15,-0.01) -- (2.20,0.01) -- (2.25,0.03) -- (2.30,0.06) -- (2.35,0.10) -- (2.40,0.14) -- (2.45,0.19) -- (2.50,0.25) -- (2.55,0.31) -- (2.60,0.38) -- (2.65,0.45) -- (2.70,0.52) -- (2.75,0.59) -- (2.80,0.67) -- (2.85,0.75) -- (2.90,0.83) -- (2.95,0.92) -- (3.00,1.00);
\draw (3.00,1.50) node{$q$};
\draw (0.00,-0.50) node{$q$};
\end{tikzpicture}}}
\ =\ 
\vcenter{\hbox{\begin{tikzpicture}[trans]
\useasboundingbox (-0.5,-0.5) rectangle (0.5,2.5);
\draw (0.00,1.00) -- (0.00,0.00);
\draw (0.00,1.50) node{$q$};
\draw (0.00,-0.50) node{$q$};
\end{tikzpicture}}}
\end{equation}

Suppose now that we have four categories $\mathcal{A}, \mathcal{A}', \mathcal{B}, \mathcal{B}'$, with functors $f: \mathcal{A} \to \mathcal{B}$ and $f': \mathcal{A}' \to \mathcal{B}'$ and two adjoint pairs $p \dashv q$ and $r \dashv s$ as shown in the following diagram:
\begin{equation}\label{eqn:comm-adjoint}
\vcenter{
\xymatrix@C=2cm{
\mathcal{A} \ar[r]^{f} \ar@{}[d]|\dashv \ar@<1ex>[d]^q & \mathcal{B} \ar@{}[d]|\dashv \ar@<1ex>[d]^s \\
\mathcal{A}' \ar[r]^{f'} \ar@<1ex>[u]^p & \mathcal{B}'. \ar@<1ex>[u]^r}
}
\end{equation}
There exists a bijection
\begin{equation}\label{eqn:mateship}
\mathrm{Mor}(f'q,sf) \simto \mathrm{Mor}(rf',fp)
\end{equation}
that sends a morphism $\theta: f'q \to sf$ to the morphism $\theta^\wedge: rf' \to fp$ defined by
\ifdefined\PARTCOMPILE{
\[
FIGURE
\]
}
\else {
\[
\vcenter{\hbox{\begin{tikzpicture}[trans]
\useasboundingbox (-0.5,-0.5) rectangle (2.5,3.5);
\draw (1.00,1.00) -- (2.00,0.00);
\draw (1.00,1.00) -- (0.00,0.00);
\draw (2.00,2.00) -- (1.00,1.00);
\draw (0.00,2.00) -- (1.00,1.00);
\filldraw[fill=white] (1.00,1.00) ellipse (0.80cm and 0.50cm);
\draw (0.00,2.50) node{$r$};
\draw (2.00,2.50) node{$f'$};
\draw (1.00,1.00) node{$\theta^\wedge$};
\draw (0.00,-0.50) node{$f$};
\draw (2.00,-0.50) node{$p$};
\end{tikzpicture}}}
\ = \ 
\vcenter{\hbox{\begin{tikzpicture}[trans]
\useasboundingbox (-0.5,-0.5) rectangle (4.5,3.5);
\draw (2.00,1.00) -- (2.05,1.08) -- (2.09,1.15) -- (2.14,1.22) -- (2.18,1.30) -- (2.23,1.37) -- (2.28,1.44) -- (2.33,1.51) -- (2.37,1.57) -- (2.42,1.63) -- (2.47,1.69) -- (2.52,1.74) -- (2.57,1.79) -- (2.62,1.84) -- (2.67,1.88) -- (2.72,1.92) -- (2.78,1.95) -- (2.83,1.97) -- (2.89,1.99) -- (2.94,2.00) -- (3.00,2.00);
\draw (3.00,2.00) -- (3.06,2.00) -- (3.12,1.98) -- (3.18,1.97) -- (3.24,1.94) -- (3.30,1.91) -- (3.36,1.87) -- (3.42,1.83) -- (3.48,1.79) -- (3.53,1.73) -- (3.59,1.68) -- (3.64,1.62) -- (3.69,1.56) -- (3.74,1.50) -- (3.79,1.43) -- (3.84,1.36) -- (3.88,1.29) -- (3.91,1.22) -- (3.95,1.15) -- (3.98,1.07) -- (4.00,1.00);
\draw (4.00,1.00) -- (4.01,0.95) -- (4.03,0.90) -- (4.04,0.85) -- (4.05,0.80) -- (4.05,0.75) -- (4.06,0.70) -- (4.06,0.64) -- (4.06,0.59) -- (4.06,0.54) -- (4.06,0.49) -- (4.06,0.45) -- (4.05,0.40) -- (4.05,0.35) -- (4.04,0.30) -- (4.04,0.25) -- (4.03,0.20) -- (4.02,0.15) -- (4.02,0.10) -- (4.01,0.05) -- (4.00,0.00);
\draw (2.00,2.00) -- (2.01,1.95) -- (2.01,1.90) -- (2.02,1.85) -- (2.03,1.80) -- (2.03,1.75) -- (2.04,1.70) -- (2.04,1.65) -- (2.05,1.60) -- (2.05,1.56) -- (2.05,1.51) -- (2.06,1.46) -- (2.06,1.41) -- (2.05,1.36) -- (2.05,1.31) -- (2.05,1.25) -- (2.04,1.20) -- (2.03,1.15) -- (2.03,1.10) -- (2.01,1.05) -- (2.00,1.00);
\draw (2.00,1.00) -- (1.98,0.93) -- (1.95,0.85) -- (1.92,0.78) -- (1.88,0.71) -- (1.85,0.64) -- (1.80,0.57) -- (1.76,0.50) -- (1.71,0.44) -- (1.66,0.38) -- (1.61,0.32) -- (1.56,0.26) -- (1.50,0.21) -- (1.44,0.17) -- (1.38,0.12) -- (1.32,0.09) -- (1.26,0.06) -- (1.19,0.03) -- (1.13,0.01) -- (1.06,0.00) -- (1.00,0.00);
\draw (1.00,0.00) -- (0.94,0.00) -- (0.87,0.01) -- (0.81,0.03) -- (0.74,0.06) -- (0.68,0.09) -- (0.62,0.12) -- (0.56,0.17) -- (0.50,0.21) -- (0.44,0.26) -- (0.39,0.32) -- (0.34,0.38) -- (0.29,0.44) -- (0.24,0.50) -- (0.20,0.57) -- (0.15,0.64) -- (0.12,0.71) -- (0.08,0.78) -- (0.05,0.85) -- (0.02,0.93) -- (0.00,1.00);
\draw (0.00,1.00) -- (-0.01,1.05) -- (-0.03,1.10) -- (-0.03,1.15) -- (-0.04,1.20) -- (-0.05,1.25) -- (-0.05,1.31) -- (-0.05,1.36) -- (-0.06,1.41) -- (-0.06,1.46) -- (-0.05,1.51) -- (-0.05,1.56) -- (-0.05,1.60) -- (-0.04,1.65) -- (-0.04,1.70) -- (-0.03,1.75) -- (-0.03,1.80) -- (-0.02,1.85) -- (-0.01,1.90) -- (-0.01,1.95) -- (0.00,2.00);
\draw (2.00,1.00) -- (2.00,0.00);
\filldraw[fill=white] (2.00,1.00) ellipse (0.80cm and 0.50cm);
\draw (0.00,2.50) node{$r$};
\draw (2.00,2.50) node{$f'$};
\draw (2.00,1.00) node{$\theta$};
\draw (2.00,-0.50) node{$f$};
\draw (4.00,-0.50) node{$p$};
\end{tikzpicture}}}
\]}\fi
The inverse map of~\eqref{eqn:mateship} associates to $\phi: rf' \to fp$ the morphism $\phi^\vee : f'q \to sf$ defined by
\ifdefined\PARTCOMPILE{
\[
FIGURE
\]
}
\else {
\[
\vcenter{\hbox{\begin{tikzpicture}[trans]
\useasboundingbox (-0.5,-0.5) rectangle (2.5,3.5);
\draw (1.00,1.00) -- (2.00,0.00);
\draw (1.00,1.00) -- (0.00,0.00);
\draw (2.00,2.00) -- (1.00,1.00);
\draw (0.00,2.00) -- (1.00,1.00);
\filldraw[fill=white] (1.00,1.00) ellipse (0.80cm and 0.50cm);
\draw (0.00,2.50) node{$f'$};
\draw (2.00,2.50) node{$q$};
\draw (1.00,1.00) node{$\phi^\vee$};
\draw (0.00,-0.50) node{$s$};
\draw (2.00,-0.50) node{$f$};
\end{tikzpicture}}}
\ = \ 
\vcenter{\hbox{\begin{tikzpicture}[trans]
\useasboundingbox (-0.5,-0.5) rectangle (4.5,3.5);
\draw (0.00,0.00) -- (-0.01,0.05) -- (-0.02,0.10) -- (-0.02,0.15) -- (-0.03,0.20) -- (-0.04,0.25) -- (-0.04,0.30) -- (-0.05,0.35) -- (-0.05,0.40) -- (-0.06,0.45) -- (-0.06,0.49) -- (-0.06,0.54) -- (-0.06,0.59) -- (-0.06,0.64) -- (-0.06,0.70) -- (-0.05,0.75) -- (-0.05,0.80) -- (-0.04,0.85) -- (-0.03,0.90) -- (-0.01,0.95) -- (0.00,1.00);
\draw (0.00,1.00) -- (0.02,1.07) -- (0.05,1.15) -- (0.09,1.22) -- (0.12,1.29) -- (0.16,1.36) -- (0.21,1.43) -- (0.26,1.50) -- (0.31,1.56) -- (0.36,1.62) -- (0.41,1.68) -- (0.47,1.73) -- (0.52,1.79) -- (0.58,1.83) -- (0.64,1.87) -- (0.70,1.91) -- (0.76,1.94) -- (0.82,1.97) -- (0.88,1.98) -- (0.94,2.00) -- (1.00,2.00);
\draw (1.00,2.00) -- (1.06,2.00) -- (1.11,1.99) -- (1.17,1.97) -- (1.22,1.95) -- (1.28,1.92) -- (1.33,1.88) -- (1.38,1.84) -- (1.43,1.79) -- (1.48,1.74) -- (1.53,1.69) -- (1.58,1.63) -- (1.63,1.57) -- (1.67,1.51) -- (1.72,1.44) -- (1.77,1.37) -- (1.82,1.30) -- (1.86,1.22) -- (1.91,1.15) -- (1.95,1.08) -- (2.00,1.00);
\draw (2.00,1.00) -- (2.00,0.00);
\draw (2.00,2.00) -- (1.99,1.95) -- (1.99,1.90) -- (1.98,1.85) -- (1.97,1.80) -- (1.97,1.75) -- (1.96,1.70) -- (1.96,1.65) -- (1.95,1.60) -- (1.95,1.56) -- (1.95,1.51) -- (1.94,1.46) -- (1.94,1.41) -- (1.95,1.36) -- (1.95,1.31) -- (1.95,1.25) -- (1.96,1.20) -- (1.97,1.15) -- (1.97,1.10) -- (1.99,1.05) -- (2.00,1.00);
\draw (2.00,1.00) -- (2.02,0.93) -- (2.05,0.85) -- (2.08,0.78) -- (2.12,0.71) -- (2.15,0.64) -- (2.20,0.57) -- (2.24,0.50) -- (2.29,0.44) -- (2.34,0.38) -- (2.39,0.32) -- (2.44,0.26) -- (2.50,0.21) -- (2.56,0.17) -- (2.62,0.12) -- (2.68,0.09) -- (2.74,0.06) -- (2.81,0.03) -- (2.87,0.01) -- (2.94,0.00) -- (3.00,0.00);
\draw (3.00,0.00) -- (3.06,0.00) -- (3.13,0.01) -- (3.19,0.03) -- (3.26,0.06) -- (3.32,0.09) -- (3.38,0.12) -- (3.44,0.17) -- (3.50,0.21) -- (3.56,0.26) -- (3.61,0.32) -- (3.66,0.38) -- (3.71,0.44) -- (3.76,0.50) -- (3.80,0.57) -- (3.85,0.64) -- (3.88,0.71) -- (3.92,0.78) -- (3.95,0.85) -- (3.98,0.93) -- (4.00,1.00);
\draw (4.00,1.00) -- (4.01,1.05) -- (4.03,1.10) -- (4.03,1.15) -- (4.04,1.20) -- (4.05,1.25) -- (4.05,1.31) -- (4.05,1.36) -- (4.06,1.41) -- (4.06,1.46) -- (4.05,1.51) -- (4.05,1.56) -- (4.05,1.60) -- (4.04,1.65) -- (4.04,1.70) -- (4.03,1.75) -- (4.03,1.80) -- (4.02,1.85) -- (4.01,1.90) -- (4.01,1.95) -- (4.00,2.00);
\filldraw[fill=white] (2.00,1.00) ellipse (0.80cm and 0.50cm);
\draw (2.00,2.50) node{$f'$};
\draw (4.00,2.50) node{$q$};
\draw (2.00,1.00) node{$\phi$};
\draw (0.00,-0.50) node{$s$};
\draw (2.00,-0.50) node{$f$};
\end{tikzpicture}}}
\]}\fi
The unit-counit relations~\eqref{eqn:unit-counit} imply that the assignments $\theta \mapsto \theta^\wedge$ and $\phi \mapsto \phi^\vee$ are indeed inverse to one another.

These constructions satisfy the following property.

\begin{lem}
\label{lem:mateship-diagram}
Let $\theta \in \mathrm{Mor}(f'q,sf)$.
For any $X$ in $\mathcal{A}'$ and $Y$ in $\mathcal{A}$, the following diagram commutes:
\[
\xymatrix{
\Hom_{\mathcal{A}}(pX,Y) \ar[r]^-{f} \ar[d]_-{\mathrm{adj}}^-{\wr} & \Hom_{\mathcal{B}}(fpX,fY) \ar[d]^{(-) \circ \theta_X^\wedge} \\
\Hom_{\mathcal{A}'}(X,qY) \ar[r] & \Hom_{\mathcal{B}}(rf'X,fY),
}
\]
where the bottom map is the composition
\begin{multline*}
\Hom_{\mathcal{A}'}(X,qY) \xrightarrow{f'} \Hom_{\mathcal{B}'}(f'X,f'qY) \xrightarrow{\theta_Y \circ (-)} \Hom_{\mathcal{B}'}(f'X,sfY) \\
\xrightarrow[\sim]{\mathrm{adj}} \Hom_{\mathcal{B}}(rf'X, fY).  
\end{multline*}
\end{lem}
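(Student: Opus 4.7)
The plan is to unwind the bijection~\eqref{eqn:mateship} into an explicit formula and then reduce the commutativity of the square to the naturality of $\theta$. Reading off the string diagram defining $\theta^{\wedge}$, we see that
\[
\theta^{\wedge}_{X} \;=\; \epsilon^{r,s}_{fpX} \circ r(\theta_{pX}) \circ r f'(\eta^{p,q}_{X}),
\]
where $\eta^{p,q}$ denotes the unit of $p \dashv q$ and $\epsilon^{r,s}$ denotes the counit of $r \dashv s$. Similarly, the adjunction isomorphism $\Hom_{\mathcal{A}}(pX,Y) \simto \Hom_{\mathcal{A}'}(X,qY)$ sends a morphism $\alpha : pX \to Y$ to $q(\alpha) \circ \eta^{p,q}_{X}$, and the inverse of $\Hom_{\mathcal{B}'}(f'X, sfY) \simto \Hom_{\mathcal{B}}(rf'X, fY)$ sends $\beta$ to $\epsilon^{r,s}_{fY} \circ r(\beta)$.

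With these formulas in hand I would simply chase $\alpha$ around both sides of the square. The top-then-right path yields
\[
f(\alpha) \circ \theta^{\wedge}_{X} \;=\; f(\alpha) \circ \epsilon^{r,s}_{fpX} \circ r(\theta_{pX}) \circ r f'(\eta^{p,q}_{X}),
\]
while the left-then-bottom path yields
\[
\epsilon^{r,s}_{fY} \circ r\bigl(\theta_{Y} \circ f'q(\alpha) \circ f'(\eta^{p,q}_{X})\bigr).
\]
The first step will be to apply naturality of $\epsilon^{r,s}$ with respect to $f(\alpha) : fpX \to fY$, which allows us to pull $f(\alpha)$ past $\epsilon^{r,s}_{fpX}$ at the cost of replacing it by $\epsilon^{r,s}_{fY} \circ rs f(\alpha)$. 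After factoring out $\epsilon^{r,s}_{fY}$ and the ambient $r$, we reduce to proving the equality
\[
sf(\alpha) \circ \theta_{pX} \;=\; \theta_{Y} \circ f'q(\alpha).
\]

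This last identity is precisely the naturality square of the transformation $\theta : f'q \Rightarrow sf$ applied to the morphism $\alpha : pX \to Y$ in $\mathcal{A}$, so it holds automatically. There is no genuine obstacle in this argument; it is a purely formal string-diagrammatic manipulation, and the only thing to be careful about is keeping track of which unit and counit one is invoking at each step. (The same diagram can be verified in a single line by drawing the string diagrams defining the two compositions and isotoping one into the other using the zigzag relations~\eqref{eqn:unit-counit}.)
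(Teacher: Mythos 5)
Your proof is correct and follows essentially the same route as the paper's: both unwind $\theta^\wedge$ and the adjunction bijections into their explicit unit/counit formulas and then reduce the commutativity to the naturality of $\epsilon$ and $\theta$. The paper packages this as a large commutative diagram (Figure~\ref{fig:mateship-diagram}) whose exterior square is the statement, while you run the same computation equationally; the content is identical.
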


\begin{proof}
Consider the diagram of Figure~\ref{fig:mateship-diagram} (where we simplify the notation, and write e.g.~$\theta$ for $\theta_Y \circ (-)$). It follows from the definitions that each part of this diagram is commutative, and the exterior square in this diagram is exactly the diagram of the lemma.
\begin{figure}
\begin{sideways}
\xymatrix@R=2cm@C=0.5cm{
\Hom(pX,Y) \ar[ddd]_-{\mathrm{adj}}^-{\wr} \ar[rrr]^-{f} \ar[rd]^-{q} &&& \Hom(fpX,fY) \ar[d]^-{\epsilon} \ar@/^12ex/[ddd]^- {\theta^\wedge} \ar[ld]_-{s} \\
& \Hom(qpX,qY) \ar@/_2ex/[ldd]_-{\eta} \ar[d]^-{f'} & \Hom(sfpX,sfY) \ar[r]^-{\mathrm{adj}}_-{\sim} \ar[d]^-{\theta} & \Hom(rsfpX,fY) \ar[d]^-{\theta} \\
& \Hom(f'qpX,f'qY) \ar[d]^-{\eta} \ar[r]^-{\theta} & \Hom(f'qpX,sfY) \ar[r]^-{\mathrm{adj}}_-{\sim} \ar[d]^-{\eta} & \Hom(rf'qpX,fY) \ar[d]^-{\eta} \\
\Hom(X,qY) \ar[r]^-{f'} & \Hom(f'X,f'qY) \ar[r]^-{\theta} & \Hom(f'X,sfY) \ar[r]^-{\mathrm{adj}}_-{\sim} &\Hom(rf'X, fY).
}
\end{sideways}
\caption{Hom-spaces for Lemma~\ref{lem:mateship-diagram}}\label{fig:mateship-diagram}
\end{figure}
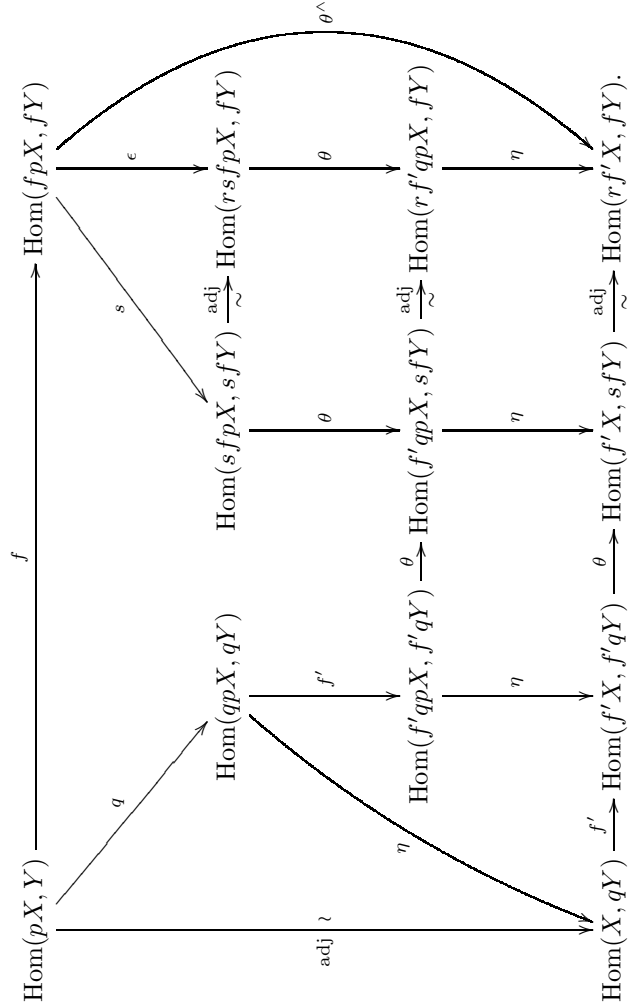
\end{proof}

\begin{defn}
\label{defn:comm-adjoint}
The diagram~\eqref{eqn:comm-adjoint} is said to be a \emph{commutative diagram of adjoint pairs} if there exists an isomorphism $\theta: f'q \simto sf$ such that $\theta^\wedge: rf' \to fp$ is also an isomorphism.
\end{defn}

Of course, the condition in Definition~\ref{defn:comm-adjoint} is equivalent to requiring that there be an isomorphism $\phi: rf' \to fp$ such that $\phi^\vee : f'q \to sf$ is also an isomorphism.  The following easy observation (which is standard and was already implicitly used in the proof of Theorem~\ref{thm:parabolicJ}) says that Definition~\ref{defn:comm-adjoint} is easy to satisfy when $f$ and $f'$ are equivalences.

\begin{lem}
\label{lem:equivalence-mate}
In diagram~\eqref{eqn:comm-adjoint}, suppose $f$ and $f'$ are equivalences of categories.  If $\theta: f'q \to sf$ is an isomorphism, then $\theta^\wedge: rf' \to fp$ is an isomorphism as well.  Similarly, if $\phi: rf' \to fp$ is an isomorphism, then so is $\phi^\vee: f'q \to sf$.
\end{lem}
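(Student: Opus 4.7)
\smallskip

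\noindent\emph{Proof plan.} The plan is to deduce both implications of the lemma from Lemma~\ref{lem:mateship-diagram} by a Yoneda argument. Fix $X \in \mathcal{A}'$ and $Y \in \mathcal{A}$, and consider the commutative square of Lemma~\ref{lem:mateship-diagram}, in which the right-hand vertical is precomposition with $\theta^\wedge_X$. The left-hand vertical is always a bijection (adjunction), and since $f$ is fully faithful, the top horizontal map $f : \Hom_\mathcal{A}(pX,Y) \to \Hom_\mathcal{B}(fpX, fY)$ is also a bijection. The bottom horizontal map is the composition of three maps: the map induced by $f'$, which is a bijection because $f'$ is fully faithful; the map $\theta_Y \circ (-)$; and the adjunction isomorphism $\Hom_{\mathcal{B}'}(f'X, sfY) \simto \Hom_\mathcal{B}(rf'X, fY)$.

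Suppose first that $\theta$ is an isomorphism. Then $\theta_Y \circ (-)$ is a bijection, so the bottom horizontal map is bijective, and commutativity forces the right vertical map
\[
(-) \circ \theta^\wedge_X : \Hom_\mathcal{B}(fpX, fY) \longrightarrow \Hom_\mathcal{B}(rf'X, fY)
\]
to be bijective as well. Since $f$ is essentially surjective, every object $Z \in \mathcal{B}$ is isomorphic to some $fY$; by the naturality of our maps in $Z$, we deduce that precomposition with $\theta^\wedge_X$ induces a bijection $\Hom_\mathcal{B}(fpX, Z) \simto \Hom_\mathcal{B}(rf'X, Z)$ for every $Z \in \mathcal{B}$, functorially in $Z$. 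Yoneda's lemma then gives that $\theta^\wedge_X$ is an isomorphism, and varying $X$ shows that $\theta^\wedge$ is an isomorphism of functors.

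For the second claim, note that if $\phi : rf' \to fp$ is an isomorphism and we set $\theta := \phi^\vee$, then the bijection~\eqref{eqn:mateship} yields $\theta^\wedge = \phi$; thus we must show that if $\theta^\wedge$ is an isomorphism then so is $\theta$. Running the argument of the previous paragraph in reverse, the invertibility of $\theta^\wedge$ implies that the right vertical map of the diagram of Lemma~\ref{lem:mateship-diagram} is a bijection, hence so is the bottom horizontal composition. Cancelling the (always bijective) outer factors induced by $f'$ and by the adjunction, we conclude that $\theta_Y \circ (-) : \Hom_{\mathcal{B}'}(f'X, f'qY) \to \Hom_{\mathcal{B}'}(f'X, sfY)$ is a bijection for every $X$ and $Y$; since $f'$ is essentially surjective, another application of Yoneda gives that $\theta_Y$ is an isomorphism for every $Y$, i.e.\ $\theta = \phi^\vee$ is an isomorphism. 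No serious obstacle arises in either direction: once Lemma~\ref{lem:mateship-diagram} is combined with the fully faithful/essentially surjective properties of $f$ and $f'$, everything reduces to routine Yoneda bookkeeping.
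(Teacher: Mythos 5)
Your proof is correct, and it fills in the route the paper mentions but declines to spell out: the paper's proof opens by saying that the statement ``can be deduced from Lemma~\ref{lem:mateship-diagram} and the Yoneda lemma,'' which is exactly what you do, and then instead gives a different, string-diagram argument. In that argument, one uses the adjunctions $f^{-1} \dashv f$ and $f'^{-1} \dashv f'$ to build two explicit natural transformations (obtained by ``bending'' the strands of $\theta$ and $\theta^{-1}$) that are checked to be mutually inverse, and $\theta^\wedge$ is then recognised as the composite of one of these with the isomorphisms $\id \to f' f'^{-1}$ and $f^{-1} f \to \id$. The two arguments buy different things: the string-diagram proof produces an explicit inverse of $\theta^\wedge$ directly from $\theta^{-1}$, which is convenient for the diagram calculus used heavily elsewhere in Section~\ref{sec:translation}, whereas your Yoneda argument is a cleaner abstract-nonsense deduction that makes the roles of full faithfulness (to get bijectivity of the top and of the $f'$-factor in the bottom) and essential surjectivity (to pass from objects of the form $fY$, resp.\ $f'X$, to arbitrary objects before invoking Yoneda) completely transparent. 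Your reverse direction is also handled correctly by noting that $(\phi^\vee)^\wedge = \phi$ and then cancelling the invertible outer factors of the bottom row. One small point worth flagging: you appeal to ``naturality in $Z$'' before applying Yoneda, but in fact precomposition with the fixed morphism $\theta^\wedge_X$ is automatically natural in the target, and bijectivity for arbitrary $Z$ follows from bijectivity for $Z = fY$ simply by composing with an isomorphism $Z \cong fY$; no extra naturality argument is needed.
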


\begin{proof}
This statement can be deduced from Lemma~\ref{lem:mateship-diagram} and the Yoneda lemma. Alternatively, one can argue using string diagrams as follows.
If $\theta$ is an isomorphism, then the following two natural transformations (whose construction uses the natural adjunctions $f^{-1} \dashv f$ and $f'{}^{-1} \dashv f'$) are isomorphisms as well, inverse to each other:
\ifdefined\PARTCOMPILE{
\[
FIGURE
\]
}
\else {
\[
\vcenter{\hbox{\begin{tikzpicture}[trans]
\useasboundingbox (-0.5,-0.5) rectangle (8.5,5.5);
\draw (2.00,4.00) -- (1.98,3.90) -- (1.96,3.80) -- (1.94,3.71) -- (1.92,3.61) -- (1.91,3.51) -- (1.89,3.41) -- (1.88,3.31) -- (1.87,3.21) -- (1.86,3.11) -- (1.85,3.01) -- (1.85,2.92) -- (1.85,2.82) -- (1.85,2.71) -- (1.86,2.61) -- (1.87,2.51) -- (1.89,2.41) -- (1.91,2.31) -- (1.93,2.21) -- (1.96,2.10) -- (2.00,2.00);
\draw (2.00,2.00) -- (2.03,1.93) -- (2.06,1.85) -- (2.10,1.78) -- (2.13,1.71) -- (2.17,1.64) -- (2.22,1.57) -- (2.26,1.50) -- (2.31,1.44) -- (2.36,1.37) -- (2.41,1.32) -- (2.46,1.26) -- (2.52,1.21) -- (2.57,1.16) -- (2.63,1.12) -- (2.69,1.09) -- (2.75,1.06) -- (2.81,1.03) -- (2.87,1.01) -- (2.94,1.00) -- (3.00,1.00);
\draw (3.00,1.00) -- (3.06,1.00) -- (3.13,1.01) -- (3.19,1.03) -- (3.26,1.06) -- (3.32,1.09) -- (3.39,1.12) -- (3.45,1.16) -- (3.51,1.21) -- (3.57,1.26) -- (3.63,1.32) -- (3.68,1.37) -- (3.73,1.44) -- (3.78,1.50) -- (3.82,1.57) -- (3.86,1.64) -- (3.90,1.71) -- (3.93,1.78) -- (3.96,1.85) -- (3.98,1.93) -- (4.00,2.00);
\draw (4.00,2.00) -- (4.01,2.05) -- (4.01,2.10) -- (4.02,2.16) -- (4.02,2.21) -- (4.02,2.26) -- (4.02,2.31) -- (4.02,2.36) -- (4.01,2.41) -- (4.01,2.46) -- (4.00,2.51) -- (4.00,2.56) -- (4.00,2.61) -- (3.99,2.66) -- (3.99,2.71) -- (3.99,2.76) -- (3.99,2.81) -- (3.99,2.86) -- (3.99,2.90) -- (3.99,2.95) -- (4.00,3.00);
\draw (4.00,3.00) -- (4.01,3.07) -- (4.03,3.13) -- (4.06,3.20) -- (4.09,3.26) -- (4.12,3.32) -- (4.16,3.38) -- (4.20,3.44) -- (4.25,3.50) -- (4.30,3.56) -- (4.35,3.61) -- (4.40,3.66) -- (4.46,3.71) -- (4.52,3.76) -- (4.59,3.80) -- (4.65,3.84) -- (4.72,3.88) -- (4.79,3.91) -- (4.86,3.95) -- (4.93,3.97) -- (5.00,4.00);
\draw (5.00,4.00) -- (5.19,4.05) -- (5.38,4.07) -- (5.57,4.08) -- (5.77,4.06) -- (5.96,4.02) -- (6.15,3.97) -- (6.34,3.90) -- (6.53,3.81) -- (6.71,3.71) -- (6.88,3.60) -- (7.05,3.47) -- (7.21,3.34) -- (7.35,3.19) -- (7.49,3.04) -- (7.61,2.87) -- (7.72,2.71) -- (7.82,2.53) -- (7.90,2.36) -- (7.96,2.18) -- (8.00,2.00);
\draw (8.00,2.00) -- (8.01,1.95) -- (8.01,1.90) -- (8.02,1.85) -- (8.02,1.80) -- (8.03,1.75) -- (8.03,1.70) -- (8.03,1.65) -- (8.03,1.60) -- (8.03,1.55) -- (8.03,1.50) -- (8.03,1.45) -- (8.02,1.40) -- (8.02,1.35) -- (8.02,1.30) -- (8.02,1.25) -- (8.01,1.20) -- (8.01,1.15) -- (8.01,1.10) -- (8.00,1.05) -- (8.00,1.00);
\draw (8.00,1.00) -- (8.00,0.95) -- (8.00,0.90) -- (7.99,0.85) -- (7.99,0.80) -- (7.99,0.75) -- (7.99,0.70) -- (7.99,0.65) -- (7.99,0.60) -- (7.99,0.55) -- (7.99,0.50) -- (7.99,0.45) -- (7.99,0.40) -- (7.99,0.35) -- (7.99,0.30) -- (7.99,0.25) -- (8.00,0.20) -- (8.00,0.15) -- (8.00,0.10) -- (8.00,0.05) -- (8.00,0.00);
\draw (0.00,4.00) -- (0.00,3.95) -- (0.00,3.90) -- (0.00,3.85) -- (0.01,3.80) -- (0.01,3.75) -- (0.01,3.70) -- (0.01,3.65) -- (0.01,3.60) -- (0.01,3.55) -- (0.01,3.50) -- (0.01,3.45) -- (0.01,3.40) -- (0.01,3.35) -- (0.01,3.30) -- (0.01,3.25) -- (0.01,3.20) -- (0.01,3.15) -- (0.00,3.10) -- (0.00,3.05) -- (0.00,3.00);
\draw (0.00,3.00) -- (-0.00,2.95) -- (-0.01,2.90) -- (-0.01,2.85) -- (-0.01,2.80) -- (-0.02,2.75) -- (-0.02,2.70) -- (-0.02,2.65) -- (-0.03,2.60) -- (-0.03,2.55) -- (-0.03,2.50) -- (-0.03,2.45) -- (-0.03,2.40) -- (-0.03,2.35) -- (-0.03,2.30) -- (-0.03,2.25) -- (-0.03,2.20) -- (-0.02,2.15) -- (-0.02,2.10) -- (-0.01,2.05) -- (0.00,2.00);
\draw (0.00,2.00) -- (0.04,1.82) -- (0.11,1.64) -- (0.19,1.47) -- (0.28,1.29) -- (0.40,1.13) -- (0.52,0.96) -- (0.66,0.81) -- (0.81,0.66) -- (0.97,0.53) -- (1.14,0.40) -- (1.31,0.29) -- (1.49,0.19) -- (1.68,0.10) -- (1.87,0.03) -- (2.06,-0.02) -- (2.25,-0.06) -- (2.44,-0.08) -- (2.63,-0.07) -- (2.82,-0.05) -- (3.00,0.00);
\draw (3.00,0.00) -- (3.07,0.02) -- (3.14,0.05) -- (3.20,0.09) -- (3.27,0.12) -- (3.33,0.16) -- (3.40,0.20) -- (3.46,0.24) -- (3.52,0.29) -- (3.57,0.34) -- (3.63,0.39) -- (3.68,0.44) -- (3.73,0.50) -- (3.77,0.56) -- (3.81,0.62) -- (3.85,0.68) -- (3.89,0.74) -- (3.92,0.80) -- (3.95,0.87) -- (3.98,0.93) -- (4.00,1.00);
\draw (4.00,1.00) -- (4.01,1.05) -- (4.02,1.10) -- (4.03,1.14) -- (4.04,1.19) -- (4.05,1.24) -- (4.05,1.29) -- (4.05,1.34) -- (4.05,1.39) -- (4.05,1.44) -- (4.05,1.49) -- (4.05,1.54) -- (4.05,1.59) -- (4.04,1.64) -- (4.04,1.69) -- (4.03,1.74) -- (4.03,1.80) -- (4.02,1.85) -- (4.01,1.90) -- (4.01,1.95) -- (4.00,2.00);
\draw (4.00,2.00) -- (4.05,2.08) -- (4.09,2.15) -- (4.14,2.22) -- (4.19,2.30) -- (4.24,2.37) -- (4.29,2.44) -- (4.33,2.51) -- (4.38,2.57) -- (4.43,2.63) -- (4.48,2.69) -- (4.53,2.74) -- (4.58,2.79) -- (4.63,2.84) -- (4.68,2.88) -- (4.73,2.92) -- (4.78,2.95) -- (4.84,2.97) -- (4.89,2.99) -- (4.94,3.00) -- (5.00,3.00);
\draw (5.00,3.00) -- (5.06,3.00) -- (5.11,2.98) -- (5.17,2.97) -- (5.23,2.94) -- (5.28,2.91) -- (5.34,2.88) -- (5.40,2.83) -- (5.45,2.79) -- (5.51,2.74) -- (5.56,2.68) -- (5.61,2.62) -- (5.67,2.56) -- (5.72,2.50) -- (5.76,2.43) -- (5.81,2.36) -- (5.85,2.29) -- (5.89,2.22) -- (5.93,2.15) -- (5.97,2.07) -- (6.00,2.00);
\draw (6.00,2.00) -- (6.04,1.90) -- (6.07,1.79) -- (6.10,1.69) -- (6.13,1.59) -- (6.14,1.49) -- (6.16,1.39) -- (6.16,1.29) -- (6.17,1.19) -- (6.17,1.09) -- (6.16,0.99) -- (6.16,0.89) -- (6.15,0.79) -- (6.13,0.69) -- (6.12,0.59) -- (6.10,0.49) -- (6.08,0.39) -- (6.06,0.29) -- (6.04,0.20) -- (6.02,0.10) -- (6.00,0.00);
\filldraw[fill=white] (4.00,2.00) ellipse (0.80cm and 0.50cm);
\draw (0.00,4.50) node{$f^{-1}$};
\draw (2.00,4.50) node{$r$};
\draw (4.00,2.00) node{$\theta$};
\draw (6.00,-0.50) node{$p$};
\draw (8.00,-0.50) node{$f^{\prime}{}^{-1}$};
\end{tikzpicture}}}
\qquad
\vcenter{\hbox{\begin{tikzpicture}[trans]
\useasboundingbox (-0.5,-0.5) rectangle (8.5,5.5);
\draw (2.00,4.00) -- (1.98,3.90) -- (1.96,3.80) -- (1.94,3.71) -- (1.92,3.61) -- (1.91,3.51) -- (1.89,3.41) -- (1.88,3.31) -- (1.87,3.21) -- (1.86,3.11) -- (1.85,3.01) -- (1.85,2.92) -- (1.85,2.82) -- (1.85,2.71) -- (1.86,2.61) -- (1.87,2.51) -- (1.89,2.41) -- (1.91,2.31) -- (1.93,2.21) -- (1.96,2.10) -- (2.00,2.00);
\draw (2.00,2.00) -- (2.03,1.93) -- (2.06,1.85) -- (2.10,1.78) -- (2.13,1.71) -- (2.17,1.64) -- (2.22,1.57) -- (2.26,1.50) -- (2.31,1.44) -- (2.36,1.37) -- (2.41,1.32) -- (2.46,1.26) -- (2.52,1.21) -- (2.57,1.16) -- (2.63,1.12) -- (2.69,1.09) -- (2.75,1.06) -- (2.81,1.03) -- (2.87,1.01) -- (2.94,1.00) -- (3.00,1.00);
\draw (3.00,1.00) -- (3.06,1.00) -- (3.13,1.01) -- (3.19,1.03) -- (3.26,1.06) -- (3.32,1.09) -- (3.39,1.12) -- (3.45,1.16) -- (3.51,1.21) -- (3.57,1.26) -- (3.63,1.32) -- (3.68,1.37) -- (3.73,1.44) -- (3.78,1.50) -- (3.82,1.57) -- (3.86,1.64) -- (3.90,1.71) -- (3.93,1.78) -- (3.96,1.85) -- (3.98,1.93) -- (4.00,2.00);
\draw (4.00,2.00) -- (4.01,2.05) -- (4.01,2.10) -- (4.02,2.16) -- (4.02,2.21) -- (4.02,2.26) -- (4.02,2.31) -- (4.02,2.36) -- (4.01,2.41) -- (4.01,2.46) -- (4.00,2.51) -- (4.00,2.56) -- (4.00,2.61) -- (3.99,2.66) -- (3.99,2.71) -- (3.99,2.76) -- (3.99,2.81) -- (3.99,2.86) -- (3.99,2.90) -- (3.99,2.95) -- (4.00,3.00);
\draw (4.00,3.00) -- (4.01,3.07) -- (4.03,3.13) -- (4.06,3.20) -- (4.09,3.26) -- (4.12,3.32) -- (4.16,3.38) -- (4.20,3.44) -- (4.25,3.50) -- (4.30,3.56) -- (4.35,3.61) -- (4.40,3.66) -- (4.46,3.71) -- (4.52,3.76) -- (4.59,3.80) -- (4.65,3.84) -- (4.72,3.88) -- (4.79,3.91) -- (4.86,3.95) -- (4.93,3.97) -- (5.00,4.00);
\draw (5.00,4.00) -- (5.19,4.05) -- (5.38,4.07) -- (5.57,4.08) -- (5.77,4.06) -- (5.96,4.02) -- (6.15,3.97) -- (6.34,3.90) -- (6.53,3.81) -- (6.71,3.71) -- (6.88,3.60) -- (7.05,3.47) -- (7.21,3.34) -- (7.35,3.19) -- (7.49,3.04) -- (7.61,2.87) -- (7.72,2.71) -- (7.82,2.53) -- (7.90,2.36) -- (7.96,2.18) -- (8.00,2.00);
\draw (8.00,2.00) -- (8.01,1.95) -- (8.01,1.90) -- (8.02,1.85) -- (8.02,1.80) -- (8.03,1.75) -- (8.03,1.70) -- (8.03,1.65) -- (8.03,1.60) -- (8.03,1.55) -- (8.03,1.50) -- (8.03,1.45) -- (8.02,1.40) -- (8.02,1.35) -- (8.02,1.30) -- (8.02,1.25) -- (8.01,1.20) -- (8.01,1.15) -- (8.01,1.10) -- (8.00,1.05) -- (8.00,1.00);
\draw (8.00,1.00) -- (8.00,0.95) -- (8.00,0.90) -- (7.99,0.85) -- (7.99,0.80) -- (7.99,0.75) -- (7.99,0.70) -- (7.99,0.65) -- (7.99,0.60) -- (7.99,0.55) -- (7.99,0.50) -- (7.99,0.45) -- (7.99,0.40) -- (7.99,0.35) -- (7.99,0.30) -- (7.99,0.25) -- (8.00,0.20) -- (8.00,0.15) -- (8.00,0.10) -- (8.00,0.05) -- (8.00,0.00);
\draw (0.00,4.00) -- (0.00,3.95) -- (0.00,3.90) -- (0.00,3.85) -- (0.01,3.80) -- (0.01,3.75) -- (0.01,3.70) -- (0.01,3.65) -- (0.01,3.60) -- (0.01,3.55) -- (0.01,3.50) -- (0.01,3.45) -- (0.01,3.40) -- (0.01,3.35) -- (0.01,3.30) -- (0.01,3.25) -- (0.01,3.20) -- (0.01,3.15) -- (0.00,3.10) -- (0.00,3.05) -- (0.00,3.00);
\draw (0.00,3.00) -- (-0.00,2.95) -- (-0.01,2.90) -- (-0.01,2.85) -- (-0.01,2.80) -- (-0.02,2.75) -- (-0.02,2.70) -- (-0.02,2.65) -- (-0.03,2.60) -- (-0.03,2.55) -- (-0.03,2.50) -- (-0.03,2.45) -- (-0.03,2.40) -- (-0.03,2.35) -- (-0.03,2.30) -- (-0.03,2.25) -- (-0.03,2.20) -- (-0.02,2.15) -- (-0.02,2.10) -- (-0.01,2.05) -- (0.00,2.00);
\draw (0.00,2.00) -- (0.04,1.82) -- (0.11,1.64) -- (0.19,1.47) -- (0.28,1.29) -- (0.40,1.13) -- (0.52,0.96) -- (0.66,0.81) -- (0.81,0.66) -- (0.97,0.53) -- (1.14,0.40) -- (1.31,0.29) -- (1.49,0.19) -- (1.68,0.10) -- (1.87,0.03) -- (2.06,-0.02) -- (2.25,-0.06) -- (2.44,-0.08) -- (2.63,-0.07) -- (2.82,-0.05) -- (3.00,0.00);
\draw (3.00,0.00) -- (3.07,0.02) -- (3.14,0.05) -- (3.20,0.09) -- (3.27,0.12) -- (3.33,0.16) -- (3.40,0.20) -- (3.46,0.24) -- (3.52,0.29) -- (3.57,0.34) -- (3.63,0.39) -- (3.68,0.44) -- (3.73,0.50) -- (3.77,0.56) -- (3.81,0.62) -- (3.85,0.68) -- (3.89,0.74) -- (3.92,0.80) -- (3.95,0.87) -- (3.98,0.93) -- (4.00,1.00);
\draw (4.00,1.00) -- (4.01,1.05) -- (4.02,1.10) -- (4.03,1.14) -- (4.04,1.19) -- (4.05,1.24) -- (4.05,1.29) -- (4.05,1.34) -- (4.05,1.39) -- (4.05,1.44) -- (4.05,1.49) -- (4.05,1.54) -- (4.05,1.59) -- (4.04,1.64) -- (4.04,1.69) -- (4.03,1.74) -- (4.03,1.80) -- (4.02,1.85) -- (4.01,1.90) -- (4.01,1.95) -- (4.00,2.00);
\draw (4.00,2.00) -- (4.05,2.08) -- (4.09,2.15) -- (4.14,2.22) -- (4.19,2.30) -- (4.24,2.37) -- (4.29,2.44) -- (4.33,2.51) -- (4.38,2.57) -- (4.43,2.63) -- (4.48,2.69) -- (4.53,2.74) -- (4.58,2.79) -- (4.63,2.84) -- (4.68,2.88) -- (4.73,2.92) -- (4.78,2.95) -- (4.84,2.97) -- (4.89,2.99) -- (4.94,3.00) -- (5.00,3.00);
\draw (5.00,3.00) -- (5.06,3.00) -- (5.11,2.98) -- (5.17,2.97) -- (5.23,2.94) -- (5.28,2.91) -- (5.34,2.88) -- (5.40,2.83) -- (5.45,2.79) -- (5.51,2.74) -- (5.56,2.68) -- (5.61,2.62) -- (5.67,2.56) -- (5.72,2.50) -- (5.76,2.43) -- (5.81,2.36) -- (5.85,2.29) -- (5.89,2.22) -- (5.93,2.15) -- (5.97,2.07) -- (6.00,2.00);
\draw (6.00,2.00) -- (6.04,1.90) -- (6.07,1.79) -- (6.10,1.69) -- (6.13,1.59) -- (6.14,1.49) -- (6.16,1.39) -- (6.16,1.29) -- (6.17,1.19) -- (6.17,1.09) -- (6.16,0.99) -- (6.16,0.89) -- (6.15,0.79) -- (6.13,0.69) -- (6.12,0.59) -- (6.10,0.49) -- (6.08,0.39) -- (6.06,0.29) -- (6.04,0.20) -- (6.02,0.10) -- (6.00,0.00);
\filldraw[fill=white] (4.00,2.00) ellipse (0.80cm and 0.50cm);
\draw (0.00,4.50) node{$p$};
\draw (2.00,4.50) node{$f^{\prime}{}^{-1}$};
\draw (4.00,2.00) node{$\theta^{-1}$};
\draw (6.00,-0.50) node{$f^{-1}$};
\draw (8.00,-0.50) node{$r$};
\end{tikzpicture}}}
\]}\fi
The former is obtained by composing $\theta^\wedge$ with the isomorphisms $\id \to f'f'{}^{-1}$ and $f^{-1}f \to \id$, so $\theta^\wedge$ is an isomorphism.  The argument for $\phi$ and $\phi^\vee$ is similar.
\end{proof}

In the following lemma, we do \emph{not} assume that $f$ and $f'$ are equivalences.

\begin{lem}\label{lem:comm-adj-counit}
Suppose that~\eqref{eqn:comm-adjoint} is a commutative diagram of adjoint pairs.  Then $f$ takes the counit for the adjoint pair $p \dashv q$ to the counit for the adjoint pair $r \dashv s$.  
More precisely, there exists an isomorphism of functors $fpq \simto rsf$ such that, for
any $X \in \mathcal{A}$, the diagram
\[
\xymatrix@C=2cm{
f(pq(X)) \ar[r]^-{f(\epsilon_X)} \ar[d]_{\wr} & f(X) \ar@{=}[d] \\
rs(f(X)) \ar[r]_-{\epsilon_{f(X)}} & f(X)}
\]
commutes.
\end{lem}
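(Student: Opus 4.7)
The plan is to build the required isomorphism $\Phi : fpq \simto rsf$ as an explicit composite of whiskerings of $\theta$ and $\theta^\wedge$, and then verify the square in the statement by a short chase using naturality together with a zigzag identity.

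Since $\theta^\wedge : rf' \simto fp$ is an isomorphism by hypothesis, whiskering on the right by $q$ gives an isomorphism $\theta^\wedge q : rf'q \simto fpq$, and whiskering the isomorphism $\theta : f'q \simto sf$ on the left by $r$ gives $r\theta : rf'q \simto rsf$. I will set
\[
\Phi := r\theta \circ (\theta^\wedge q)^{-1} : fpq \simto rsf.
\]
With this definition, the assertion of the lemma is pointwise equivalent to the equality of natural transformations
\[
f \epsilon^{pq} \circ \theta^\wedge q \;=\; \epsilon^{rs} f \circ r\theta,
\]
where $\epsilon^{pq}$ and $\epsilon^{rs}$ denote the counits of the two adjunctions.

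To verify this equality, I would unfold $\theta^\wedge$ via its definition in \S\ref{ssec:string} as $\epsilon^{rs} fp \circ r\theta p \circ rf' \eta^{pq}$, whisker on the right by $q$, and then apply two naturality moves: first, naturality of $\epsilon^{rs} : rs \to \id$ along the $2$-cell $f \epsilon^{pq} : fpq \to f$, which lets $f \epsilon^{pq}$ slide past $\epsilon^{rs} fpq$ and produces the factor $\epsilon^{rs} f \circ rsf \epsilon^{pq}$; second, naturality of $\theta : f'q \to sf$ along $\epsilon^{pq} : pq \to \id$, which rewrites $sf \epsilon^{pq} \circ \theta pq$ as $\theta \circ f' q \epsilon^{pq}$. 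After these two moves, the surviving factor is $f'(q \epsilon^{pq} \circ \eta^{pq} q)$, which collapses to $\id_{f'q}$ by the zigzag identity for the adjoint pair $p \dashv q$ displayed in~\eqref{eqn:unit-counit}. What is left is precisely $\epsilon^{rs} f \circ r\theta$, as required.

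This is essentially a bookkeeping exercise; in the string-diagram calculus it amounts to three local isotopies (two naturality slides and one zigzag straightening), so no real obstacle should appear. The only subtlety is to keep the whiskerings straight, which is why writing $\Phi$ as the concrete composite $r\theta \circ (\theta^\wedge q)^{-1}$ at the start makes the rest of the computation mechanical.
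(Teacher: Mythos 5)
Your proof is correct and follows the same route as the paper: you define the isomorphism $\Phi = r\theta \circ (\theta^\wedge q)^{-1}$ exactly as the paper does, reduce the commutativity of the square to the identity $f\epsilon^{pq}\circ\theta^\wedge q = \epsilon^{rs}f\circ r\theta$, and verify it by unfolding $\theta^\wedge$ and applying naturality plus the zigzag relation, which is precisely what the paper records in its string-diagram form (the paper simply asserts it follows ``immediately from the definition of $\theta^\wedge$ and the rules in~\eqref{eqn:unit-counit}'' without spelling out the two naturality slides that you make explicit).
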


\begin{proof}
Let $\theta$ be as in Definition~\ref{defn:comm-adjoint}, and
consider the isomorphism $f(pq(X)) \simto rs(f(X))$ given by
\[
f(pq(X)) \xrightarrow{(\theta^\wedge_{q(X)})^{-1}}
rf'q(X) \xrightarrow{r(\theta_X)}
rs(f(X)).
\]
Then the lemma follows from the claim that
\ifdefined\PARTCOMPILE{
\[
FIGURE
\]
}
\else {
\[
\vcenter{\hbox{\begin{tikzpicture}[trans]
\useasboundingbox (-0.5,-0.5) rectangle (4.5,3.5);
\draw (2.00,1.00) -- (2.00,0.00);
\draw (2.00,2.00) -- (2.00,1.00);
\draw (0.00,2.00) -- (0.11,1.97) -- (0.22,1.94) -- (0.33,1.92) -- (0.43,1.89) -- (0.54,1.85) -- (0.65,1.82) -- (0.75,1.79) -- (0.86,1.75) -- (0.96,1.71) -- (1.07,1.67) -- (1.17,1.62) -- (1.27,1.57) -- (1.37,1.52) -- (1.46,1.46) -- (1.56,1.40) -- (1.65,1.33) -- (1.74,1.26) -- (1.83,1.18) -- (1.92,1.09) -- (2.00,1.00);
\draw (2.00,1.00) -- (2.05,0.94) -- (2.10,0.88) -- (2.15,0.81) -- (2.20,0.74) -- (2.25,0.68) -- (2.30,0.61) -- (2.35,0.55) -- (2.39,0.48) -- (2.44,0.42) -- (2.49,0.36) -- (2.54,0.30) -- (2.59,0.25) -- (2.63,0.20) -- (2.68,0.15) -- (2.73,0.11) -- (2.79,0.07) -- (2.84,0.05) -- (2.89,0.02) -- (2.94,0.01) -- (3.00,0.00);
\draw (3.00,0.00) -- (3.06,0.00) -- (3.12,0.01) -- (3.17,0.03) -- (3.23,0.05) -- (3.29,0.08) -- (3.35,0.11) -- (3.41,0.15) -- (3.47,0.20) -- (3.53,0.25) -- (3.58,0.31) -- (3.64,0.37) -- (3.69,0.43) -- (3.74,0.50) -- (3.79,0.56) -- (3.83,0.63) -- (3.87,0.71) -- (3.91,0.78) -- (3.95,0.85) -- (3.97,0.93) -- (4.00,1.00);
\draw (4.00,1.00) -- (4.01,1.05) -- (4.03,1.10) -- (4.04,1.15) -- (4.05,1.21) -- (4.05,1.26) -- (4.06,1.31) -- (4.06,1.36) -- (4.06,1.41) -- (4.06,1.46) -- (4.06,1.51) -- (4.06,1.56) -- (4.05,1.61) -- (4.05,1.66) -- (4.04,1.71) -- (4.04,1.75) -- (4.03,1.80) -- (4.02,1.85) -- (4.02,1.90) -- (4.01,1.95) -- (4.00,2.00);
\filldraw[fill=white] (2.00,1.00) ellipse (0.80cm and 0.50cm);
\draw (0.00,2.50) node{$r$};
\draw (2.00,2.50) node{$f'$};
\draw (4.00,2.50) node{$q$};
\draw (2.00,1.00) node{$\theta^\wedge$};
\draw (2.00,-0.50) node{$f$};
\end{tikzpicture}}}
 = 
\vcenter{\hbox{\begin{tikzpicture}[trans]
\useasboundingbox (-0.5,-0.5) rectangle (4.5,3.5);
\draw (2.00,2.00) -- (2.01,1.95) -- (2.01,1.90) -- (2.02,1.85) -- (2.03,1.80) -- (2.03,1.75) -- (2.04,1.70) -- (2.04,1.65) -- (2.05,1.60) -- (2.05,1.56) -- (2.05,1.51) -- (2.06,1.46) -- (2.06,1.41) -- (2.05,1.36) -- (2.05,1.31) -- (2.05,1.25) -- (2.04,1.20) -- (2.03,1.15) -- (2.03,1.10) -- (2.01,1.05) -- (2.00,1.00);
\draw (2.00,1.00) -- (1.98,0.93) -- (1.95,0.85) -- (1.92,0.78) -- (1.88,0.71) -- (1.85,0.64) -- (1.80,0.57) -- (1.76,0.50) -- (1.71,0.44) -- (1.66,0.38) -- (1.61,0.32) -- (1.56,0.26) -- (1.50,0.21) -- (1.44,0.17) -- (1.38,0.12) -- (1.32,0.09) -- (1.26,0.06) -- (1.19,0.03) -- (1.13,0.01) -- (1.06,0.00) -- (1.00,0.00);
\draw (1.00,0.00) -- (0.94,0.00) -- (0.87,0.01) -- (0.81,0.03) -- (0.74,0.06) -- (0.68,0.09) -- (0.62,0.12) -- (0.56,0.17) -- (0.50,0.21) -- (0.44,0.26) -- (0.39,0.32) -- (0.34,0.38) -- (0.29,0.44) -- (0.24,0.50) -- (0.20,0.57) -- (0.15,0.64) -- (0.12,0.71) -- (0.08,0.78) -- (0.05,0.85) -- (0.02,0.93) -- (0.00,1.00);
\draw (0.00,1.00) -- (-0.01,1.05) -- (-0.03,1.10) -- (-0.03,1.15) -- (-0.04,1.20) -- (-0.05,1.25) -- (-0.05,1.31) -- (-0.05,1.36) -- (-0.06,1.41) -- (-0.06,1.46) -- (-0.05,1.51) -- (-0.05,1.56) -- (-0.05,1.60) -- (-0.04,1.65) -- (-0.04,1.70) -- (-0.03,1.75) -- (-0.03,1.80) -- (-0.02,1.85) -- (-0.01,1.90) -- (-0.01,1.95) -- (0.00,2.00);
\draw (2.00,1.00) -- (2.00,0.00);
\draw (4.00,2.00) -- (2.00,1.00);
\filldraw[fill=white] (2.00,1.00) ellipse (0.80cm and 0.50cm);
\draw (0.00,2.50) node{$r$};
\draw (2.00,2.50) node{$f'$};
\draw (4.00,2.50) node{$q$};
\draw (2.00,1.00) node{$\theta$};
\draw (2.00,-0.50) node{$f$};
\end{tikzpicture}}}
\]}\fi
which follows immediately from the definition of $\theta^\wedge$ and the rules in~\eqref{eqn:unit-counit}.
\end{proof}

\subsection{More natural transformations}
\label{ss:more-natural}

We now list a number of natural transformations related to Figure~\ref{fig:translation}.  Consider first the triangle
\[
\xymatrix{
& \Db\Rep(P_J) \ar[dr]^-{R\Ind_{P_J}^{P_I}} \\
\Db_{\Stein}(P_I) \ar[ur]^-{\For_{P_J}^{P_I}} \ar[rr]^-{\inc} &&
\Db\Rep(P_I).
}
\]
The unit for the adjoint pair $\For_{P_J}^{P_I} \dashv R\Ind_{P_J}^{P_I}$ gives rise to a natural transformation
\begin{equation}
\label{eqn:RInd-For-PI-PJ}
\vcenter{\hbox{\begin{tikzpicture}[trans,scale=-1]
\useasboundingbox (-0.5,-0.5) rectangle (2.5,3.5);
\draw (1.00,1.00) -- (1.00,0.00);
\draw (0.00,2.00) -- (0.03,1.94) -- (0.06,1.87) -- (0.09,1.81) -- (0.12,1.74) -- (0.15,1.68) -- (0.19,1.62) -- (0.22,1.56) -- (0.26,1.50) -- (0.29,1.44) -- (0.33,1.39) -- (0.37,1.34) -- (0.41,1.29) -- (0.45,1.24) -- (0.49,1.20) -- (0.54,1.16) -- (0.59,1.13) -- (0.64,1.10) -- (0.69,1.07) -- (0.74,1.04) -- (0.80,1.03);
\draw (0.80,1.03) -- (0.81,1.02) -- (0.82,1.02) -- (0.83,1.02) -- (0.84,1.02) -- (0.85,1.01) -- (0.86,1.01) -- (0.87,1.01) -- (0.88,1.01) -- (0.89,1.01) -- (0.90,1.01) -- (0.91,1.01) -- (0.92,1.00) -- (0.93,1.00) -- (0.94,1.00) -- (0.95,1.00) -- (0.96,1.00) -- (0.97,1.00) -- (0.98,1.00) -- (0.99,1.00) -- (1.00,1.00);
\draw (1.00,1.00) -- (1.01,1.00) -- (1.02,1.00) -- (1.03,1.00) -- (1.04,1.00) -- (1.05,1.00) -- (1.06,1.00) -- (1.07,1.00) -- (1.08,1.00) -- (1.09,1.01) -- (1.10,1.01) -- (1.11,1.01) -- (1.12,1.01) -- (1.13,1.01) -- (1.14,1.01) -- (1.15,1.01) -- (1.16,1.02) -- (1.17,1.02) -- (1.18,1.02) -- (1.19,1.02) -- (1.20,1.03);
\draw (1.20,1.03) -- (1.26,1.04) -- (1.31,1.07) -- (1.36,1.10) -- (1.41,1.13) -- (1.46,1.16) -- (1.51,1.20) -- (1.55,1.24) -- (1.59,1.29) -- (1.63,1.34) -- (1.67,1.39) -- (1.71,1.44) -- (1.74,1.50) -- (1.78,1.56) -- (1.81,1.62) -- (1.85,1.68) -- (1.88,1.74) -- (1.91,1.81) -- (1.94,1.87) -- (1.97,1.94) -- (2.00,2.00);
\draw (0.00,2.50) node{$\For_{P_J}^{P_I}$};
\draw (2.00,2.50) node{$R\Ind_{P_J}^{P_I}$};
\draw (1.00,-0.50) node{$\inc$};
\end{tikzpicture}}}
\end{equation}
which is easily seen to be an isomorphism. Similarly, consider the triangle
\[
\xymatrix@R=0.3cm@C=2cm{
 \Db_{\Stein, -\varsigma_{I \ssm J}}(P_J) \ar[dr]^-{\inc} \ar[dd]_-{R\Ind_{P_J}^{P_I}}& \\
 & \Db\Rep(P_J).
 \\
\Db_{\Stein}(P_I) \ar[ur]^-{\For_{P_J}^{P_I}}  &
}
\]
The counit for the adjoint pair $\For_{P_J}^{P_I} \dashv R\Ind_{P_J}^{P_I}$ gives rise to a natural transformation
\begin{equation}
\label{eqn:RInd-For-PI-PJ-2}
\vcenter{\hbox{\begin{tikzpicture}[trans]
\useasboundingbox (-0.5,-0.5) rectangle (2.5,3.5);
\draw (1.00,1.00) -- (1.00,0.00);
\draw (0.00,2.00) -- (0.03,1.94) -- (0.06,1.87) -- (0.09,1.81) -- (0.12,1.74) -- (0.15,1.68) -- (0.19,1.62) -- (0.22,1.56) -- (0.26,1.50) -- (0.29,1.44) -- (0.33,1.39) -- (0.37,1.34) -- (0.41,1.29) -- (0.45,1.24) -- (0.49,1.20) -- (0.54,1.16) -- (0.59,1.13) -- (0.64,1.10) -- (0.69,1.07) -- (0.74,1.04) -- (0.80,1.03);
\draw (0.80,1.03) -- (0.81,1.02) -- (0.82,1.02) -- (0.83,1.02) -- (0.84,1.02) -- (0.85,1.01) -- (0.86,1.01) -- (0.87,1.01) -- (0.88,1.01) -- (0.89,1.01) -- (0.90,1.01) -- (0.91,1.01) -- (0.92,1.00) -- (0.93,1.00) -- (0.94,1.00) -- (0.95,1.00) -- (0.96,1.00) -- (0.97,1.00) -- (0.98,1.00) -- (0.99,1.00) -- (1.00,1.00);
\draw (1.00,1.00) -- (1.01,1.00) -- (1.02,1.00) -- (1.03,1.00) -- (1.04,1.00) -- (1.05,1.00) -- (1.06,1.00) -- (1.07,1.00) -- (1.08,1.00) -- (1.09,1.01) -- (1.10,1.01) -- (1.11,1.01) -- (1.12,1.01) -- (1.13,1.01) -- (1.14,1.01) -- (1.15,1.01) -- (1.16,1.02) -- (1.17,1.02) -- (1.18,1.02) -- (1.19,1.02) -- (1.20,1.03);
\draw (1.20,1.03) -- (1.26,1.04) -- (1.31,1.07) -- (1.36,1.10) -- (1.41,1.13) -- (1.46,1.16) -- (1.51,1.20) -- (1.55,1.24) -- (1.59,1.29) -- (1.63,1.34) -- (1.67,1.39) -- (1.71,1.44) -- (1.74,1.50) -- (1.78,1.56) -- (1.81,1.62) -- (1.85,1.68) -- (1.88,1.74) -- (1.91,1.81) -- (1.94,1.87) -- (1.97,1.94) -- (2.00,2.00);
\draw (0.00,2.50) node{$\For_{P_J}^{P_I}$};
\draw (2.00,2.50) node{$R\Ind_{P_J}^{P_I}$};
\draw (1.00,-0.50) node{$\inc$};
\end{tikzpicture}}}
\end{equation}
Pasting these two triangles, we also have a natural isomorphism $\inc \circ R\Ind_{P_J}^{P_I} \simto R\Ind_{P_J}^{P_I} \circ \inc$, which we will depict as
\begin{equation}
\label{eqn:cross-inc-Ind}
\vcenter{\hbox{\begin{tikzpicture}[trans]
\useasboundingbox (-0.5,-0.5) rectangle (2.5,3.5);
\draw (0.00,2.00) -- (0.05,1.95) -- (0.10,1.90) -- (0.15,1.85) -- (0.20,1.80) -- (0.25,1.75) -- (0.30,1.70) -- (0.35,1.65) -- (0.40,1.60) -- (0.45,1.55) -- (0.50,1.50) -- (0.55,1.45) -- (0.60,1.40) -- (0.65,1.35) -- (0.70,1.30) -- (0.75,1.25) -- (0.80,1.20) -- (0.85,1.15) -- (0.90,1.10) -- (0.95,1.05) -- (1.00,1.00);
\draw (1.00,1.00) -- (1.05,0.95) -- (1.10,0.90) -- (1.15,0.85) -- (1.20,0.80) -- (1.25,0.75) -- (1.30,0.70) -- (1.35,0.65) -- (1.40,0.60) -- (1.45,0.55) -- (1.50,0.50) -- (1.55,0.45) -- (1.60,0.40) -- (1.65,0.35) -- (1.70,0.30) -- (1.75,0.25) -- (1.80,0.20) -- (1.85,0.15) -- (1.90,0.10) -- (1.95,0.05) -- (2.00,0.00);
\draw (2.00,2.00) -- (1.95,1.95) -- (1.90,1.90) -- (1.85,1.85) -- (1.80,1.80) -- (1.75,1.75) -- (1.70,1.70) -- (1.65,1.65) -- (1.60,1.60) -- (1.55,1.55) -- (1.50,1.50) -- (1.45,1.45) -- (1.40,1.40) -- (1.35,1.35) -- (1.30,1.30) -- (1.25,1.25) -- (1.20,1.20) -- (1.15,1.15) -- (1.10,1.10) -- (1.05,1.05) -- (1.00,1.00);
\draw (1.00,1.00) -- (0.95,0.95) -- (0.90,0.90) -- (0.85,0.85) -- (0.80,0.80) -- (0.75,0.75) -- (0.70,0.70) -- (0.65,0.65) -- (0.60,0.60) -- (0.55,0.55) -- (0.50,0.50) -- (0.45,0.45) -- (0.40,0.40) -- (0.35,0.35) -- (0.30,0.30) -- (0.25,0.25) -- (0.20,0.20) -- (0.15,0.15) -- (0.10,0.10) -- (0.05,0.05) -- (0.00,0.00);
\draw (0.00,2.50) node{$\inc$};
\draw (2.00,2.50) node{$R\Ind_{P_J}^{P_I}$};
\draw (0.00,-0.50) node{$R\Ind_{P_J}^{P_I}$};
\draw (2.00,-0.50) node{$\inc$};
\end{tikzpicture}}}
\end{equation}
The following lemma follows directly from the zigzag relation for the adjunction $\For_{P_J}^{P_I} \dashv R\Ind_{P_J}^{P_I}$.

\begin{lem}
\label{lem:zigzag-Ind-For}
The composition
\[
\inc {}\circ R\Ind_{P_J}^{P_I} \xrightarrow{\eqref{eqn:RInd-For-PI-PJ}} R\Ind_{P_J}^{P_I} \circ \For^{P_I}_{P_J} \circ R\Ind_{P_J}^{P_I} \xrightarrow{\eqref{eqn:RInd-For-PI-PJ-2}} R\Ind_{P_J}^{P_I} \circ \inc
\]
coincides with the isomorphism~\eqref{eqn:cross-inc-Ind}.  In other words, we have
\ifdefined\PARTCOMPILE{
\[
FIGURE
\]
}
\else {
\[
\vcenter{\hbox{\begin{tikzpicture}[trans]
\useasboundingbox (-0.5,-0.5) rectangle (4.5,4.5);
\draw (1.00,2.00) -- (1.00,3.00);
\draw (0.00,0.00) -- (-0.01,0.05) -- (-0.01,0.10) -- (-0.02,0.15) -- (-0.02,0.20) -- (-0.03,0.25) -- (-0.03,0.30) -- (-0.04,0.35) -- (-0.04,0.40) -- (-0.04,0.45) -- (-0.05,0.50) -- (-0.05,0.55) -- (-0.05,0.60) -- (-0.05,0.65) -- (-0.04,0.70) -- (-0.04,0.75) -- (-0.03,0.80) -- (-0.03,0.85) -- (-0.02,0.90) -- (-0.01,0.95) -- (0.00,1.00);
\draw (0.00,1.00) -- (0.02,1.06) -- (0.04,1.13) -- (0.06,1.19) -- (0.08,1.26) -- (0.11,1.32) -- (0.14,1.38) -- (0.17,1.44) -- (0.21,1.50) -- (0.25,1.55) -- (0.29,1.61) -- (0.33,1.66) -- (0.37,1.71) -- (0.42,1.76) -- (0.47,1.80) -- (0.52,1.84) -- (0.57,1.87) -- (0.63,1.91) -- (0.68,1.93) -- (0.74,1.96) -- (0.80,1.97);
\draw (0.80,1.97) -- (0.81,1.98) -- (0.82,1.98) -- (0.83,1.98) -- (0.84,1.98) -- (0.85,1.98) -- (0.86,1.99) -- (0.87,1.99) -- (0.88,1.99) -- (0.89,1.99) -- (0.90,1.99) -- (0.91,1.99) -- (0.92,1.99) -- (0.93,2.00) -- (0.94,2.00) -- (0.95,2.00) -- (0.96,2.00) -- (0.97,2.00) -- (0.98,2.00) -- (0.99,2.00) -- (1.00,2.00);
\draw (1.00,2.00) -- (1.01,2.00) -- (1.02,2.00) -- (1.03,2.00) -- (1.04,2.00) -- (1.05,2.00) -- (1.06,2.00) -- (1.07,2.00) -- (1.08,2.00) -- (1.09,2.00) -- (1.10,2.00) -- (1.11,2.00) -- (1.12,2.00) -- (1.13,2.00) -- (1.14,2.00) -- (1.15,1.99) -- (1.16,1.99) -- (1.17,1.99) -- (1.18,1.99) -- (1.19,1.99) -- (1.20,1.99);
\draw (1.20,1.99) -- (1.24,1.98) -- (1.29,1.97) -- (1.33,1.95) -- (1.38,1.94) -- (1.42,1.92) -- (1.46,1.90) -- (1.50,1.88) -- (1.54,1.86) -- (1.58,1.83) -- (1.62,1.81) -- (1.66,1.78) -- (1.70,1.75) -- (1.74,1.72) -- (1.78,1.69) -- (1.81,1.66) -- (1.85,1.63) -- (1.89,1.60) -- (1.93,1.56) -- (1.96,1.53) -- (2.00,1.50);
\draw (2.00,1.50) -- (2.04,1.47) -- (2.07,1.44) -- (2.11,1.40) -- (2.15,1.37) -- (2.19,1.34) -- (2.22,1.31) -- (2.26,1.28) -- (2.30,1.25) -- (2.34,1.22) -- (2.38,1.19) -- (2.42,1.17) -- (2.46,1.14) -- (2.50,1.12) -- (2.54,1.10) -- (2.58,1.08) -- (2.62,1.06) -- (2.67,1.05) -- (2.71,1.03) -- (2.76,1.02) -- (2.80,1.01);
\draw (2.80,1.01) -- (2.81,1.01) -- (2.82,1.01) -- (2.83,1.01) -- (2.84,1.01) -- (2.85,1.01) -- (2.86,1.00) -- (2.87,1.00) -- (2.88,1.00) -- (2.89,1.00) -- (2.90,1.00) -- (2.91,1.00) -- (2.92,1.00) -- (2.93,1.00) -- (2.94,1.00) -- (2.95,1.00) -- (2.96,1.00) -- (2.97,1.00) -- (2.98,1.00) -- (2.99,1.00) -- (3.00,1.00);
\draw (3.00,1.00) -- (3.01,1.00) -- (3.02,1.00) -- (3.03,1.00) -- (3.04,1.00) -- (3.05,1.00) -- (3.06,1.00) -- (3.07,1.00) -- (3.08,1.01) -- (3.09,1.01) -- (3.10,1.01) -- (3.11,1.01) -- (3.12,1.01) -- (3.13,1.01) -- (3.14,1.01) -- (3.15,1.02) -- (3.16,1.02) -- (3.17,1.02) -- (3.18,1.02) -- (3.19,1.02) -- (3.20,1.03);
\draw (3.20,1.03) -- (3.26,1.04) -- (3.32,1.07) -- (3.37,1.09) -- (3.43,1.13) -- (3.48,1.16) -- (3.53,1.20) -- (3.58,1.24) -- (3.63,1.29) -- (3.67,1.34) -- (3.71,1.39) -- (3.75,1.45) -- (3.79,1.50) -- (3.83,1.56) -- (3.86,1.62) -- (3.89,1.68) -- (3.92,1.74) -- (3.94,1.81) -- (3.96,1.87) -- (3.98,1.94) -- (4.00,2.00);
\draw (4.00,2.00) -- (4.01,2.05) -- (4.02,2.10) -- (4.03,2.15) -- (4.03,2.20) -- (4.04,2.25) -- (4.04,2.30) -- (4.05,2.35) -- (4.05,2.40) -- (4.05,2.45) -- (4.05,2.50) -- (4.04,2.55) -- (4.04,2.60) -- (4.04,2.65) -- (4.03,2.70) -- (4.03,2.75) -- (4.02,2.80) -- (4.02,2.85) -- (4.01,2.90) -- (4.01,2.95) -- (4.00,3.00);
\draw (3.00,1.00) -- (3.00,0.00);
\draw (1.00,3.50) node{$\inc$};
\draw (4.00,3.50) node{$R\Ind_{P_J}^{P_I}$};
\draw (0.00,-0.50) node{$R\Ind_{P_J}^{P_I}$};
\draw (3.00,-0.50) node{$\inc$};
\end{tikzpicture}}}
\ =\ 
\vcenter{\hbox{\begin{tikzpicture}[trans]
\useasboundingbox (-0.5,-0.5) rectangle (2.5,3.5);
\draw (0.00,2.00) -- (0.05,1.95) -- (0.10,1.90) -- (0.15,1.85) -- (0.20,1.80) -- (0.25,1.75) -- (0.30,1.70) -- (0.35,1.65) -- (0.40,1.60) -- (0.45,1.55) -- (0.50,1.50) -- (0.55,1.45) -- (0.60,1.40) -- (0.65,1.35) -- (0.70,1.30) -- (0.75,1.25) -- (0.80,1.20) -- (0.85,1.15) -- (0.90,1.10) -- (0.95,1.05) -- (1.00,1.00);
\draw (1.00,1.00) -- (1.05,0.95) -- (1.10,0.90) -- (1.15,0.85) -- (1.20,0.80) -- (1.25,0.75) -- (1.30,0.70) -- (1.35,0.65) -- (1.40,0.60) -- (1.45,0.55) -- (1.50,0.50) -- (1.55,0.45) -- (1.60,0.40) -- (1.65,0.35) -- (1.70,0.30) -- (1.75,0.25) -- (1.80,0.20) -- (1.85,0.15) -- (1.90,0.10) -- (1.95,0.05) -- (2.00,0.00);
\draw (2.00,2.00) -- (1.95,1.95) -- (1.90,1.90) -- (1.85,1.85) -- (1.80,1.80) -- (1.75,1.75) -- (1.70,1.70) -- (1.65,1.65) -- (1.60,1.60) -- (1.55,1.55) -- (1.50,1.50) -- (1.45,1.45) -- (1.40,1.40) -- (1.35,1.35) -- (1.30,1.30) -- (1.25,1.25) -- (1.20,1.20) -- (1.15,1.15) -- (1.10,1.10) -- (1.05,1.05) -- (1.00,1.00);
\draw (1.00,1.00) -- (0.95,0.95) -- (0.90,0.90) -- (0.85,0.85) -- (0.80,0.80) -- (0.75,0.75) -- (0.70,0.70) -- (0.65,0.65) -- (0.60,0.60) -- (0.55,0.55) -- (0.50,0.50) -- (0.45,0.45) -- (0.40,0.40) -- (0.35,0.35) -- (0.30,0.30) -- (0.25,0.25) -- (0.20,0.20) -- (0.15,0.15) -- (0.10,0.10) -- (0.05,0.05) -- (0.00,0.00);
\draw (0.00,2.50) node{$\inc$};
\draw (2.00,2.50) node{$R\Ind_{P_J}^{P_I}$};
\draw (0.00,-0.50) node{$R\Ind_{P_J}^{P_I}$};
\draw (2.00,-0.50) node{$\inc$};
\end{tikzpicture}}}
\]
}
\fi
\end{lem}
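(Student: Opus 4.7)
I will prove the result as a direct consequence of the zigzag (triangular) identities for the adjunction $\For_{P_J}^{P_I} \dashv R\Ind_{P_J}^{P_I}$. The first step is to recall, from the construction just given, that \eqref{eqn:RInd-For-PI-PJ} is the whiskering of the unit $\eta\colon \id \to R\Ind_{P_J}^{P_I} \circ \For_{P_J}^{P_I}$ with $\inc\colon \Db_\Stein(P_I) \to \Db\Rep(P_I)$, producing the natural transformation $\inc \to R\Ind_{P_J}^{P_I} \circ \For_{P_J}^{P_I} \circ \inc$. Similarly, \eqref{eqn:RInd-For-PI-PJ-2} is the whiskering of the counit $\epsilon\colon \For_{P_J}^{P_I} \circ R\Ind_{P_J}^{P_I} \to \id$ with the inclusion $\inc\colon \Db_{\Stein,-\varsigma_{I\ssm J}}(P_J) \hookrightarrow \Db\Rep(P_J)$, giving $\For_{P_J}^{P_I} \circ R\Ind_{P_J}^{P_I} \circ \inc \to \inc$.

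The second step is to unwind what the isomorphism \eqref{eqn:cross-inc-Ind} actually is: by the paper's own description, it is obtained by ``pasting'' these two triangles, which concretely means the composition written in the lemma, after the tautological identification of the two incarnations of $R\Ind_{P_J}^{P_I}$ (viewed either as $\Db\Rep(P_J) \to \Db\Rep(P_I)$ or, when restricted, as $\Db_{\Stein,-\varsigma_{I\ssm J}}(P_J) \to \Db_\Stein(P_I)$). This restriction is legitimate thanks to Lemma~\ref{lem:induction-St}, which ensures that $R\Ind_{P_J}^{P_I}$ carries $\Db_{\Stein,-\varsigma_{I\ssm J}}(P_J)$ into $\Db_\Stein(P_I)$, making the equality $\inc \circ R\Ind_{P_J}^{P_I} = R\Ind_{P_J}^{P_I} \circ \inc$ available at the level of functors.

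Once these preparations are in place, the claim becomes precisely the second zigzag identity from \eqref{eqn:unit-counit} applied to $q = R\Ind_{P_J}^{P_I}$, whiskered on both sides by $\inc$: the composition $(R\Ind_{P_J}^{P_I} \cdot \epsilon) \circ (\eta \cdot R\Ind_{P_J}^{P_I})$ equals $\id_{R\Ind_{P_J}^{P_I}}$, and after whiskering this yields exactly the canonical isomorphism \eqref{eqn:cross-inc-Ind}. Pictorially, the left-hand side of the displayed equation in the lemma is the zigzag built from a ``cup'' (the unit) and a ``cap'' (the counit), which straightens out to the ``crossing'' on the right-hand side. I do not anticipate any real obstacle; the lemma is essentially a formal unfolding of definitions together with one application of a triangular identity, and its only function is to record the explicit form of \eqref{eqn:cross-inc-Ind} in a way amenable to the string-diagram computations that will be used throughout the remainder of the section.
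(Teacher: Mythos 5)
Your proposal is correct and matches the paper's argument exactly: the paper disposes of this lemma in one line by observing that it "follows directly from the zigzag relation for the adjunction $\For_{P_J}^{P_I} \dashv R\Ind_{P_J}^{P_I}$," which is precisely your whiskered triangular-identity computation. The only (immaterial) quibble is your attribution of the restriction statement for $R\Ind_{P_J}^{P_I}$ on $\Db_{\Stein,-\varsigma_{I\ssm J}}(P_J)$ to Lemma~\ref{lem:induction-St}; in the paper this is rather a consequence of Theorems~\ref{thm:parabolicJ} and~\ref{thm:parabolicI} via Proposition~\ref{prop:Theta-psi}.
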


Throughout this section, functors like $({-})\otimes \bk(\varsigma_{I\ssm J})$ and $({-}) \otimes \irr(\varsigma_{I\ssm J})^*$ will often be denoted simply by $\bk(\varsigma_{I\ssm J})$ and $\irr(\varsigma_{I\ssm J})^*$, respectively.  The functors $({-})\otimes \bk(\varsigma_{I\ssm J})$ and $({-})\otimes \bk(-\varsigma_{I\ssm J})$ commute with the appropriate inclusion functors.  These commutativity isomorphisms will be denoted by diagrams of the form
\ifdefined\PARTCOMPILE{
\begin{equation}\label{eqn:comm-defn}
FIGURE
\end{equation}
}
\else {
\begin{equation}\label{eqn:comm-defn}
\vcenter{\hbox{\begin{tikzpicture}[trans]
\useasboundingbox (-0.5,-0.5) rectangle (2.5,3.5);
\draw (0.00,2.00) -- (0.05,1.95) -- (0.10,1.90) -- (0.15,1.85) -- (0.20,1.80) -- (0.25,1.75) -- (0.30,1.70) -- (0.35,1.65) -- (0.40,1.60) -- (0.45,1.55) -- (0.50,1.50) -- (0.55,1.45) -- (0.60,1.40) -- (0.65,1.35) -- (0.70,1.30) -- (0.75,1.25) -- (0.80,1.20) -- (0.85,1.15) -- (0.90,1.10) -- (0.95,1.05) -- (1.00,1.00);
\draw (1.00,1.00) -- (1.05,0.95) -- (1.10,0.90) -- (1.15,0.85) -- (1.20,0.80) -- (1.25,0.75) -- (1.30,0.70) -- (1.35,0.65) -- (1.40,0.60) -- (1.45,0.55) -- (1.50,0.50) -- (1.55,0.45) -- (1.60,0.40) -- (1.65,0.35) -- (1.70,0.30) -- (1.75,0.25) -- (1.80,0.20) -- (1.85,0.15) -- (1.90,0.10) -- (1.95,0.05) -- (2.00,0.00);
\draw (2.00,2.00) -- (1.95,1.95) -- (1.90,1.90) -- (1.85,1.85) -- (1.80,1.80) -- (1.75,1.75) -- (1.70,1.70) -- (1.65,1.65) -- (1.60,1.60) -- (1.55,1.55) -- (1.50,1.50) -- (1.45,1.45) -- (1.40,1.40) -- (1.35,1.35) -- (1.30,1.30) -- (1.25,1.25) -- (1.20,1.20) -- (1.15,1.15) -- (1.10,1.10) -- (1.05,1.05) -- (1.00,1.00);
\draw (1.00,1.00) -- (0.95,0.95) -- (0.90,0.90) -- (0.85,0.85) -- (0.80,0.80) -- (0.75,0.75) -- (0.70,0.70) -- (0.65,0.65) -- (0.60,0.60) -- (0.55,0.55) -- (0.50,0.50) -- (0.45,0.45) -- (0.40,0.40) -- (0.35,0.35) -- (0.30,0.30) -- (0.25,0.25) -- (0.20,0.20) -- (0.15,0.15) -- (0.10,0.10) -- (0.05,0.05) -- (0.00,0.00);
\draw (0.00,2.50) node{$\inc$};
\draw (2.00,2.50) node{$\bk(-\varsigma_{I\ssm J})$};
\draw (0.00,-0.50) node{$\bk(-\varsigma_{I\ssm J})$};
\draw (2.00,-0.50) node{$\inc$};
\end{tikzpicture}}}
\qquad\qquad
\vcenter{\hbox{\begin{tikzpicture}[trans]
\useasboundingbox (-0.5,-0.5) rectangle (2.5,3.5);
\draw (0.00,2.00) -- (0.05,1.95) -- (0.10,1.90) -- (0.15,1.85) -- (0.20,1.80) -- (0.25,1.75) -- (0.30,1.70) -- (0.35,1.65) -- (0.40,1.60) -- (0.45,1.55) -- (0.50,1.50) -- (0.55,1.45) -- (0.60,1.40) -- (0.65,1.35) -- (0.70,1.30) -- (0.75,1.25) -- (0.80,1.20) -- (0.85,1.15) -- (0.90,1.10) -- (0.95,1.05) -- (1.00,1.00);
\draw (1.00,1.00) -- (1.05,0.95) -- (1.10,0.90) -- (1.15,0.85) -- (1.20,0.80) -- (1.25,0.75) -- (1.30,0.70) -- (1.35,0.65) -- (1.40,0.60) -- (1.45,0.55) -- (1.50,0.50) -- (1.55,0.45) -- (1.60,0.40) -- (1.65,0.35) -- (1.70,0.30) -- (1.75,0.25) -- (1.80,0.20) -- (1.85,0.15) -- (1.90,0.10) -- (1.95,0.05) -- (2.00,0.00);
\draw (2.00,2.00) -- (1.95,1.95) -- (1.90,1.90) -- (1.85,1.85) -- (1.80,1.80) -- (1.75,1.75) -- (1.70,1.70) -- (1.65,1.65) -- (1.60,1.60) -- (1.55,1.55) -- (1.50,1.50) -- (1.45,1.45) -- (1.40,1.40) -- (1.35,1.35) -- (1.30,1.30) -- (1.25,1.25) -- (1.20,1.20) -- (1.15,1.15) -- (1.10,1.10) -- (1.05,1.05) -- (1.00,1.00);
\draw (1.00,1.00) -- (0.95,0.95) -- (0.90,0.90) -- (0.85,0.85) -- (0.80,0.80) -- (0.75,0.75) -- (0.70,0.70) -- (0.65,0.65) -- (0.60,0.60) -- (0.55,0.55) -- (0.50,0.50) -- (0.45,0.45) -- (0.40,0.40) -- (0.35,0.35) -- (0.30,0.30) -- (0.25,0.25) -- (0.20,0.20) -- (0.15,0.15) -- (0.10,0.10) -- (0.05,0.05) -- (0.00,0.00);
\draw (0.00,2.50) node{$\bk(-\varsigma_{I\ssm J})$};
\draw (2.00,2.50) node{$\inc$};
\draw (0.00,-0.50) node{$\inc$};
\draw (2.00,-0.50) node{$\bk(-\varsigma_{I\ssm J})$};
\end{tikzpicture}}}
\qquad\qquad
\vcenter{\hbox{\begin{tikzpicture}[trans]
\useasboundingbox (-0.5,-0.5) rectangle (2.5,3.5);
\draw (0.00,2.00) -- (0.05,1.95) -- (0.10,1.90) -- (0.15,1.85) -- (0.20,1.80) -- (0.25,1.75) -- (0.30,1.70) -- (0.35,1.65) -- (0.40,1.60) -- (0.45,1.55) -- (0.50,1.50) -- (0.55,1.45) -- (0.60,1.40) -- (0.65,1.35) -- (0.70,1.30) -- (0.75,1.25) -- (0.80,1.20) -- (0.85,1.15) -- (0.90,1.10) -- (0.95,1.05) -- (1.00,1.00);
\draw (1.00,1.00) -- (1.05,0.95) -- (1.10,0.90) -- (1.15,0.85) -- (1.20,0.80) -- (1.25,0.75) -- (1.30,0.70) -- (1.35,0.65) -- (1.40,0.60) -- (1.45,0.55) -- (1.50,0.50) -- (1.55,0.45) -- (1.60,0.40) -- (1.65,0.35) -- (1.70,0.30) -- (1.75,0.25) -- (1.80,0.20) -- (1.85,0.15) -- (1.90,0.10) -- (1.95,0.05) -- (2.00,0.00);
\draw (2.00,2.00) -- (1.95,1.95) -- (1.90,1.90) -- (1.85,1.85) -- (1.80,1.80) -- (1.75,1.75) -- (1.70,1.70) -- (1.65,1.65) -- (1.60,1.60) -- (1.55,1.55) -- (1.50,1.50) -- (1.45,1.45) -- (1.40,1.40) -- (1.35,1.35) -- (1.30,1.30) -- (1.25,1.25) -- (1.20,1.20) -- (1.15,1.15) -- (1.10,1.10) -- (1.05,1.05) -- (1.00,1.00);
\draw (1.00,1.00) -- (0.95,0.95) -- (0.90,0.90) -- (0.85,0.85) -- (0.80,0.80) -- (0.75,0.75) -- (0.70,0.70) -- (0.65,0.65) -- (0.60,0.60) -- (0.55,0.55) -- (0.50,0.50) -- (0.45,0.45) -- (0.40,0.40) -- (0.35,0.35) -- (0.30,0.30) -- (0.25,0.25) -- (0.20,0.20) -- (0.15,0.15) -- (0.10,0.10) -- (0.05,0.05) -- (0.00,0.00);
\draw (0.00,2.50) node{$\inc$};
\draw (2.00,2.50) node{$\bk(\varsigma_{I\ssm J})$};
\draw (0.00,-0.50) node{$\bk(\varsigma_{I\ssm J})$};
\draw (2.00,-0.50) node{$\inc$};
\end{tikzpicture}}}
\qquad\qquad
\vcenter{\hbox{\begin{tikzpicture}[trans]
\useasboundingbox (-0.5,-0.5) rectangle (2.5,3.5);
\draw (0.00,2.00) -- (0.05,1.95) -- (0.10,1.90) -- (0.15,1.85) -- (0.20,1.80) -- (0.25,1.75) -- (0.30,1.70) -- (0.35,1.65) -- (0.40,1.60) -- (0.45,1.55) -- (0.50,1.50) -- (0.55,1.45) -- (0.60,1.40) -- (0.65,1.35) -- (0.70,1.30) -- (0.75,1.25) -- (0.80,1.20) -- (0.85,1.15) -- (0.90,1.10) -- (0.95,1.05) -- (1.00,1.00);
\draw (1.00,1.00) -- (1.05,0.95) -- (1.10,0.90) -- (1.15,0.85) -- (1.20,0.80) -- (1.25,0.75) -- (1.30,0.70) -- (1.35,0.65) -- (1.40,0.60) -- (1.45,0.55) -- (1.50,0.50) -- (1.55,0.45) -- (1.60,0.40) -- (1.65,0.35) -- (1.70,0.30) -- (1.75,0.25) -- (1.80,0.20) -- (1.85,0.15) -- (1.90,0.10) -- (1.95,0.05) -- (2.00,0.00);
\draw (2.00,2.00) -- (1.95,1.95) -- (1.90,1.90) -- (1.85,1.85) -- (1.80,1.80) -- (1.75,1.75) -- (1.70,1.70) -- (1.65,1.65) -- (1.60,1.60) -- (1.55,1.55) -- (1.50,1.50) -- (1.45,1.45) -- (1.40,1.40) -- (1.35,1.35) -- (1.30,1.30) -- (1.25,1.25) -- (1.20,1.20) -- (1.15,1.15) -- (1.10,1.10) -- (1.05,1.05) -- (1.00,1.00);
\draw (1.00,1.00) -- (0.95,0.95) -- (0.90,0.90) -- (0.85,0.85) -- (0.80,0.80) -- (0.75,0.75) -- (0.70,0.70) -- (0.65,0.65) -- (0.60,0.60) -- (0.55,0.55) -- (0.50,0.50) -- (0.45,0.45) -- (0.40,0.40) -- (0.35,0.35) -- (0.30,0.30) -- (0.25,0.25) -- (0.20,0.20) -- (0.15,0.15) -- (0.10,0.10) -- (0.05,0.05) -- (0.00,0.00);
\draw (0.00,2.50) node{$\bk(\varsigma_{I\ssm J})$};
\draw (2.00,2.50) node{$\inc$};
\draw (0.00,-0.50) node{$\inc$};
\draw (2.00,-0.50) node{$\bk(\varsigma_{I\ssm J})$};
\end{tikzpicture}}}
\end{equation}
}
\fi
The ``transitivity'' isomorphism $R\Ind_{P_I}^G {}\circ R\Ind_{P_J}^{P_I} \cong R\Ind_{P_J}^G$ (see~\eqref{eqn:transitivity-RInd}) will be denoted by
\ifdefined\PARTCOMPILE{
\begin{equation}\label{eqn:tr-defn}
FIGURE
\end{equation}
}
\else {
\begin{equation}\label{eqn:tr-defn}
\vcenter{\hbox{\begin{tikzpicture}[trans]
\useasboundingbox (-0.5,-0.5) rectangle (2.5,3.5);
\draw (1.00,1.00) -- (1.00,0.00);
\draw (2.00,2.00) -- (1.00,1.00);
\draw (0.00,2.00) -- (1.00,1.00);
\filldraw[fill=white] (1.00,1.00) ellipse (0.80cm and 0.50cm);
\draw (0.00,2.50) node{$R\Ind_{P_I}^G$};
\draw (2.00,2.50) node{$R\Ind_{P_J}^{P_I}$};
\draw (1.00,1.00) node{tr};
\draw (1.00,-0.50) node{$R\Ind_{P_J}^G$};
\end{tikzpicture}}}
\qquad\text{or}\qquad
\vcenter{\hbox{\begin{tikzpicture}[trans]
\useasboundingbox (-0.5,-0.5) rectangle (2.5,3.5);
\draw (1.00,1.00) -- (2.00,0.00);
\draw (1.00,1.00) -- (0.00,0.00);
\draw (1.00,2.00) -- (1.00,1.00);
\filldraw[fill=white] (1.00,1.00) ellipse (0.80cm and 0.50cm);
\draw (1.00,2.50) node{$R\Ind_{P_J}^G$};
\draw (1.00,1.00) node{tr};
\draw (0.00,-0.50) node{$R\Ind_{P_I}^G$};
\draw (2.00,-0.50) node{$R\Ind_{P_J}^{P_I}$};
\end{tikzpicture}}}
\end{equation}
}
\fi

Lastly, we have a canonical isomorphism of $P_J$-modules
\[
\bk(\varsigma_{I\ssm J})^* \cong \bk(-\varsigma_{I\ssm J}).
\]
Let us fix
a nonzero (surjective) map of $P_J$-modules
\begin{equation}\label{eqn:lk-quot}
\irr(\varsigma_{I\ssm J}) \to \bk(\varsigma_{I\ssm J});
\end{equation}
then
by duality we deduce a nonzero (injective) map
\begin{equation}\label{eqn:lk-dual}
\bk(-\varsigma_{I\ssm J}) \to \irr(\varsigma_{I\ssm J})^*.
\end{equation}

We define a natural transformation
\[
\gamma: R\Ind_{P_J}^G {}\circ {\bk(-\varsigma_{I\ssm J})} \to {\irr(\varsigma_{I\ssm J})^*} \circ R\Ind_{P_J}^G
\qquad\text{or}\qquad
\vcenter{\hbox{\begin{tikzpicture}[trans]
\useasboundingbox (-0.5,-0.5) rectangle (2.5,3.5);
\draw (1.00,1.00) -- (2.00,0.00);
\draw (1.00,1.00) -- (0.00,0.00);
\draw (2.00,2.00) -- (1.00,1.00);
\draw (0.00,2.00) -- (1.00,1.00);
\filldraw[fill=white] (1.00,1.00) ellipse (0.80cm and 0.50cm);
\draw (0.00,2.50) node{$R\Ind_{P_J}^G$};
\draw (2.00,2.50) node{$\bk(-\varsigma_{I\ssm J})$};
\draw (1.00,1.00) node{$\gamma$};
\draw (0.00,-0.50) node{$\irr(\varsigma_{I\ssm J})^*$};
\draw (2.00,-0.50) node{$R\Ind_{P_J}^G$};
\end{tikzpicture}}}
\]
by
\[
R\Ind_{P_J}^G(M \otimes \bk(-\varsigma_{I\ssm J})) \to R\Ind_{P_J}^G(M \otimes \irr(\varsigma_{I\ssm J})^*)
\simto R\Ind_{P_J}^G(M) \otimes \irr(\varsigma_{I\ssm J})^*,
\]
where the first morphism is induced by~\eqref{eqn:lk-dual} and the second one by the tensor identity.

We likewise define
\[
\delta: {\irr(\varsigma_{I\ssm J})} \circ R\Ind_{P_J}^G \to R\Ind_{P_J}^G{} \circ \bk(\varsigma_{I\ssm J})
\qquad\text{or}\qquad
\vcenter{\hbox{\begin{tikzpicture}[trans]
\useasboundingbox (-0.5,-0.5) rectangle (2.5,3.5);
\draw (1.00,1.00) -- (2.00,0.00);
\draw (1.00,1.00) -- (0.00,0.00);
\draw (2.00,2.00) -- (1.00,1.00);
\draw (0.00,2.00) -- (1.00,1.00);
\filldraw[fill=white] (1.00,1.00) ellipse (0.80cm and 0.50cm);
\draw (0.00,2.50) node{$\irr(\varsigma_{I\ssm J})$};
\draw (2.00,2.50) node{$R\Ind_{P_J}^G$};
\draw (1.00,1.00) node{$\delta$};
\draw (0.00,-0.50) node{$R\Ind_{P_J}^G$};
\draw (2.00,-0.50) node{$\bk(\varsigma_{I\ssm J})$};
\end{tikzpicture}}}
\]
by
\[
R\Ind_{P_J}^G(M) \otimes \irr(\varsigma_{I\ssm J}) \simto R\Ind_{P_J}^G(M \otimes \irr(\varsigma_{I\ssm J})) \to
R\Ind_{P_J}^G(M \otimes \bk(\varsigma_{I\ssm J})),
\]
where the second morphism is induced by~\eqref{eqn:lk-quot}.

\subsection{Natural transformations related to induction}
\label{ss:natural-transfo-induction}

In this subsection, we prove several lemmas about $\gamma$ and $\delta$.  Note that the diagram
\[
\vcenter{
\xymatrix@C=3.5cm{
\Db\Rep(P_J) \ar[r]^{R\Ind_{P_J}^G} \ar@{}[d]|-{\dashv} \ar@<1ex>[d]^-{({-}) \otimes \bk(-\varsigma_{I\ssm J})} & \Db\Rep(G) \ar@{}[d]|-{\dashv} \ar@<1ex>[d]^-{({-}) \otimes \irr(\varsigma_{I\ssm J})^*} \\
\Db\Rep(P_J) \ar[r]_{R\Ind_{P_J}^G} \ar@<1ex>[u]^-{({-}) \otimes \bk(\varsigma_{I\ssm J})} & \Db\Rep(G) \ar@<1ex>[u]^-{({-}) \otimes \irr(\varsigma_{I\ssm J})}
}
}
\]
matches the pattern of~\eqref{eqn:comm-adjoint}, so that the following lemma makes sense.

\begin{lem}
\label{lem:delta-gamma-mate}
We have $\delta = \gamma^\wedge$ and $\gamma = \delta^\vee$.  In other words,
\ifdefined\PARTCOMPILE{
\[
FIGURE
\]
}
\else {
\[
\vcenter{\hbox{\begin{tikzpicture}[trans]
\useasboundingbox (-0.5,-0.5) rectangle (2.5,3.5);
\draw (1.00,1.00) -- (2.00,0.00);
\draw (1.00,1.00) -- (0.00,0.00);
\draw (2.00,2.00) -- (1.00,1.00);
\draw (0.00,2.00) -- (1.00,1.00);
\filldraw[fill=white] (1.00,1.00) ellipse (0.80cm and 0.50cm);
\draw (0.00,2.50) node{$\irr(\varsigma_{I\ssm J})$};
\draw (2.00,2.50) node{$R\Ind_{P_J}^G$};
\draw (1.00,1.00) node{$\delta$};
\draw (0.00,-0.50) node{$R\Ind_{P_J}^G$};
\draw (2.00,-0.50) node{$\bk(\varsigma_{I\ssm J})$};
\end{tikzpicture}}}
\ =\ 
\vcenter{\hbox{\begin{tikzpicture}[trans]
\useasboundingbox (-0.5,-0.5) rectangle (4.5,3.5);
\draw (2.00,1.00) -- (2.05,1.08) -- (2.09,1.15) -- (2.14,1.22) -- (2.18,1.30) -- (2.23,1.37) -- (2.28,1.44) -- (2.33,1.51) -- (2.37,1.57) -- (2.42,1.63) -- (2.47,1.69) -- (2.52,1.74) -- (2.57,1.79) -- (2.62,1.84) -- (2.67,1.88) -- (2.72,1.92) -- (2.78,1.95) -- (2.83,1.97) -- (2.89,1.99) -- (2.94,2.00) -- (3.00,2.00);
\draw (3.00,2.00) -- (3.06,2.00) -- (3.12,1.98) -- (3.18,1.97) -- (3.24,1.94) -- (3.30,1.91) -- (3.36,1.87) -- (3.42,1.83) -- (3.48,1.79) -- (3.53,1.73) -- (3.59,1.68) -- (3.64,1.62) -- (3.69,1.56) -- (3.74,1.50) -- (3.79,1.43) -- (3.84,1.36) -- (3.88,1.29) -- (3.91,1.22) -- (3.95,1.15) -- (3.98,1.07) -- (4.00,1.00);
\draw (4.00,1.00) -- (4.01,0.95) -- (4.03,0.90) -- (4.04,0.85) -- (4.05,0.80) -- (4.05,0.75) -- (4.06,0.70) -- (4.06,0.64) -- (4.06,0.59) -- (4.06,0.54) -- (4.06,0.49) -- (4.06,0.45) -- (4.05,0.40) -- (4.05,0.35) -- (4.04,0.30) -- (4.04,0.25) -- (4.03,0.20) -- (4.02,0.15) -- (4.02,0.10) -- (4.01,0.05) -- (4.00,0.00);
\draw (2.00,2.00) -- (2.01,1.95) -- (2.01,1.90) -- (2.02,1.85) -- (2.03,1.80) -- (2.03,1.75) -- (2.04,1.70) -- (2.04,1.65) -- (2.05,1.60) -- (2.05,1.56) -- (2.05,1.51) -- (2.06,1.46) -- (2.06,1.41) -- (2.05,1.36) -- (2.05,1.31) -- (2.05,1.25) -- (2.04,1.20) -- (2.03,1.15) -- (2.03,1.10) -- (2.01,1.05) -- (2.00,1.00);
\draw (2.00,1.00) -- (1.98,0.93) -- (1.95,0.85) -- (1.92,0.78) -- (1.88,0.71) -- (1.85,0.64) -- (1.80,0.57) -- (1.76,0.50) -- (1.71,0.44) -- (1.66,0.38) -- (1.61,0.32) -- (1.56,0.26) -- (1.50,0.21) -- (1.44,0.17) -- (1.38,0.12) -- (1.32,0.09) -- (1.26,0.06) -- (1.19,0.03) -- (1.13,0.01) -- (1.06,0.00) -- (1.00,0.00);
\draw (1.00,0.00) -- (0.94,0.00) -- (0.87,0.01) -- (0.81,0.03) -- (0.74,0.06) -- (0.68,0.09) -- (0.62,0.12) -- (0.56,0.17) -- (0.50,0.21) -- (0.44,0.26) -- (0.39,0.32) -- (0.34,0.38) -- (0.29,0.44) -- (0.24,0.50) -- (0.20,0.57) -- (0.15,0.64) -- (0.12,0.71) -- (0.08,0.78) -- (0.05,0.85) -- (0.02,0.93) -- (0.00,1.00);
\draw (0.00,1.00) -- (-0.01,1.05) -- (-0.03,1.10) -- (-0.03,1.15) -- (-0.04,1.20) -- (-0.05,1.25) -- (-0.05,1.31) -- (-0.05,1.36) -- (-0.06,1.41) -- (-0.06,1.46) -- (-0.05,1.51) -- (-0.05,1.56) -- (-0.05,1.60) -- (-0.04,1.65) -- (-0.04,1.70) -- (-0.03,1.75) -- (-0.03,1.80) -- (-0.02,1.85) -- (-0.01,1.90) -- (-0.01,1.95) -- (0.00,2.00);
\draw (2.00,1.00) -- (2.00,0.00);
\filldraw[fill=white] (2.00,1.00) ellipse (0.80cm and 0.50cm);
\draw (0.00,2.50) node{$\irr(\varsigma_{I\ssm J})$};
\draw (2.00,2.50) node{$R\Ind_{P_J}^G$};
\draw (2.00,1.00) node{$\gamma$};
\draw (2.00,-0.50) node{$R\Ind_{P_J}^G$};
\draw (4.00,-0.50) node{$\bk(\varsigma_{I\ssm J})$};
\end{tikzpicture}}},
\qquad
\vcenter{\hbox{\begin{tikzpicture}[trans]
\useasboundingbox (-0.5,-0.5) rectangle (2.5,3.5);
\draw (1.00,1.00) -- (2.00,0.00);
\draw (1.00,1.00) -- (0.00,0.00);
\draw (2.00,2.00) -- (1.00,1.00);
\draw (0.00,2.00) -- (1.00,1.00);
\filldraw[fill=white] (1.00,1.00) ellipse (0.80cm and 0.50cm);
\draw (0.00,2.50) node{$R\Ind_{P_J}^G$};
\draw (2.00,2.50) node{$\bk(-\varsigma_{I\ssm J})$};
\draw (1.00,1.00) node{$\gamma$};
\draw (0.00,-0.50) node{$\irr(\varsigma_{I\ssm J})^*$};
\draw (2.00,-0.50) node{$R\Ind_{P_J}^G$};
\end{tikzpicture}}}
\ = \ 
\vcenter{\hbox{\begin{tikzpicture}[trans]
\useasboundingbox (-0.5,-0.5) rectangle (4.5,3.5);
\draw (0.00,0.00) -- (-0.01,0.05) -- (-0.02,0.10) -- (-0.02,0.15) -- (-0.03,0.20) -- (-0.04,0.25) -- (-0.04,0.30) -- (-0.05,0.35) -- (-0.05,0.40) -- (-0.06,0.45) -- (-0.06,0.49) -- (-0.06,0.54) -- (-0.06,0.59) -- (-0.06,0.64) -- (-0.06,0.70) -- (-0.05,0.75) -- (-0.05,0.80) -- (-0.04,0.85) -- (-0.03,0.90) -- (-0.01,0.95) -- (0.00,1.00);
\draw (0.00,1.00) -- (0.02,1.07) -- (0.05,1.15) -- (0.09,1.22) -- (0.12,1.29) -- (0.16,1.36) -- (0.21,1.43) -- (0.26,1.50) -- (0.31,1.56) -- (0.36,1.62) -- (0.41,1.68) -- (0.47,1.73) -- (0.52,1.79) -- (0.58,1.83) -- (0.64,1.87) -- (0.70,1.91) -- (0.76,1.94) -- (0.82,1.97) -- (0.88,1.98) -- (0.94,2.00) -- (1.00,2.00);
\draw (1.00,2.00) -- (1.06,2.00) -- (1.11,1.99) -- (1.17,1.97) -- (1.22,1.95) -- (1.28,1.92) -- (1.33,1.88) -- (1.38,1.84) -- (1.43,1.79) -- (1.48,1.74) -- (1.53,1.69) -- (1.58,1.63) -- (1.63,1.57) -- (1.67,1.51) -- (1.72,1.44) -- (1.77,1.37) -- (1.82,1.30) -- (1.86,1.22) -- (1.91,1.15) -- (1.95,1.08) -- (2.00,1.00);
\draw (2.00,1.00) -- (2.00,0.00);
\draw (2.00,2.00) -- (1.99,1.95) -- (1.99,1.90) -- (1.98,1.85) -- (1.97,1.80) -- (1.97,1.75) -- (1.96,1.70) -- (1.96,1.65) -- (1.95,1.60) -- (1.95,1.56) -- (1.95,1.51) -- (1.94,1.46) -- (1.94,1.41) -- (1.95,1.36) -- (1.95,1.31) -- (1.95,1.25) -- (1.96,1.20) -- (1.97,1.15) -- (1.97,1.10) -- (1.99,1.05) -- (2.00,1.00);
\draw (2.00,1.00) -- (2.02,0.93) -- (2.05,0.85) -- (2.08,0.78) -- (2.12,0.71) -- (2.15,0.64) -- (2.20,0.57) -- (2.24,0.50) -- (2.29,0.44) -- (2.34,0.38) -- (2.39,0.32) -- (2.44,0.26) -- (2.50,0.21) -- (2.56,0.17) -- (2.62,0.12) -- (2.68,0.09) -- (2.74,0.06) -- (2.81,0.03) -- (2.87,0.01) -- (2.94,0.00) -- (3.00,0.00);
\draw (3.00,0.00) -- (3.06,0.00) -- (3.13,0.01) -- (3.19,0.03) -- (3.26,0.06) -- (3.32,0.09) -- (3.38,0.12) -- (3.44,0.17) -- (3.50,0.21) -- (3.56,0.26) -- (3.61,0.32) -- (3.66,0.38) -- (3.71,0.44) -- (3.76,0.50) -- (3.80,0.57) -- (3.85,0.64) -- (3.88,0.71) -- (3.92,0.78) -- (3.95,0.85) -- (3.98,0.93) -- (4.00,1.00);
\draw (4.00,1.00) -- (4.01,1.05) -- (4.03,1.10) -- (4.03,1.15) -- (4.04,1.20) -- (4.05,1.25) -- (4.05,1.31) -- (4.05,1.36) -- (4.06,1.41) -- (4.06,1.46) -- (4.05,1.51) -- (4.05,1.56) -- (4.05,1.60) -- (4.04,1.65) -- (4.04,1.70) -- (4.03,1.75) -- (4.03,1.80) -- (4.02,1.85) -- (4.01,1.90) -- (4.01,1.95) -- (4.00,2.00);
\filldraw[fill=white] (2.00,1.00) ellipse (0.80cm and 0.50cm);
\draw (2.00,2.50) node{$R\Ind_{P_J}^G$};
\draw (4.00,2.50) node{$\bk(-\varsigma_{I\ssm J})$};
\draw (2.00,1.00) node{$\delta$};
\draw (0.00,-0.50) node{$\irr(\varsigma_{I\ssm J})^*$};
\draw (2.00,-0.50) node{$R\Ind_{P_J}^G$};
\end{tikzpicture}}}
\]
}
\fi
\end{lem}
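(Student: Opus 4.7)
The two equalities $\delta = \gamma^\wedge$ and $\gamma = \delta^\vee$ are equivalent, since the operations $({-})^\wedge$ and $({-})^\vee$ are inverse bijections by~\eqref{eqn:mateship}; hence it suffices to prove $\gamma^\wedge = \delta$. The plan is to unfold $\gamma^\wedge_M$ directly from its string-diagram definition, reduce everything to a computation in a single factor using the tensor identity, and then invoke a zigzag identity expressing the fact that~\eqref{eqn:lk-dual} is the dual of~\eqref{eqn:lk-quot}.

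Let $\iota$ and $\pi$ denote the maps~\eqref{eqn:lk-dual} and~\eqref{eqn:lk-quot}. The unit of the adjoint pair $({-})\otimes\bk(\varsigma_{I\ssm J}) \dashv ({-})\otimes\bk(-\varsigma_{I\ssm J})$ is induced by the coevaluation $\mathrm{coev} : \bk \to \bk(\varsigma_{I\ssm J}) \otimes \bk(-\varsigma_{I\ssm J})$, and the counit of $({-})\otimes\irr(\varsigma_{I\ssm J}) \dashv ({-})\otimes\irr(\varsigma_{I\ssm J})^*$ is induced by the evaluation $\mathrm{ev} : \irr(\varsigma_{I\ssm J})^* \otimes \irr(\varsigma_{I\ssm J}) \to \bk$. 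Writing out $\gamma^\wedge_M$ as the composition $\epsilon fp \circ r\gamma p \circ rf'\eta$ from its string diagram, and then repeatedly applying naturality of the tensor identity $\tau^V_M : R\Ind_{P_J}^G(M \otimes V) \simto R\Ind_{P_J}^G(M) \otimes V$ in both variables together with its compatibility with tensor products of $G$-modules (namely $\tau^{V \otimes W}_M = (\tau^V_M \otimes \id_W) \circ \tau^W_{M \otimes V}$), I would rewrite $\gamma^\wedge_M$ in the form
\begin{equation*}
\gamma^\wedge_M = R\Ind_{P_J}^G(\id_M \otimes h) \circ (\tau^{\irr(\varsigma_{I\ssm J})}_M)^{-1},
\end{equation*}
where $h : \irr(\varsigma_{I\ssm J}) \to \bk(\varsigma_{I\ssm J})$ is the zigzag composition
\begin{equation*}
\irr(\varsigma_{I\ssm J}) \xrightarrow{\mathrm{coev} \otimes \id} \bk(\varsigma_{I\ssm J}) \otimes \bk(-\varsigma_{I\ssm J}) \otimes \irr(\varsigma_{I\ssm J}) \xrightarrow{\id \otimes \iota \otimes \id} \bk(\varsigma_{I\ssm J}) \otimes \irr(\varsigma_{I\ssm J})^* \otimes \irr(\varsigma_{I\ssm J}) \xrightarrow{\id \otimes \mathrm{ev}} \bk(\varsigma_{I\ssm J}).
\end{equation*}

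It then remains to verify the zigzag identity $h = \pi$. This is a direct check: by construction $\iota$ is the dual of $\pi$ under the canonical identification $\bk(\varsigma_{I\ssm J})^* \cong \bk(-\varsigma_{I\ssm J})$, so fixing a basis vector $e \in \bk(\varsigma_{I\ssm J})$ and dual basis vector $e^\vee \in \bk(-\varsigma_{I\ssm J})$, the functional $\iota(e^\vee) \in \irr(\varsigma_{I\ssm J})^*$ acts by $v \mapsto e^\vee(\pi(v))$, and unwinding the definition of $h$ gives $h(v) = e^\vee(\pi(v)) \cdot e = \pi(v)$. Comparing with the definition $\delta_M = R\Ind_{P_J}^G(\id_M \otimes \pi) \circ (\tau^{\irr(\varsigma_{I\ssm J})}_M)^{-1}$ yields $\gamma^\wedge_M = \delta_M$, as desired. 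The only potentially delicate step is the bookkeeping of the various instances of $\tau$ in deriving the formula for $\gamma^\wedge_M$, but this is a purely formal manipulation that becomes transparent in string-diagram notation.
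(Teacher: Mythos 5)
Your proof is correct and follows essentially the same route as the paper: both unwind $\gamma^\wedge$ to the zigzag composition $\irr(\varsigma_{I\ssm J}) \to \bk(\varsigma_{I\ssm J}) \otimes \bk(-\varsigma_{I\ssm J}) \otimes \irr(\varsigma_{I\ssm J}) \to \bk(\varsigma_{I\ssm J}) \otimes \irr(\varsigma_{I\ssm J})^* \otimes \irr(\varsigma_{I\ssm J}) \to \bk(\varsigma_{I\ssm J})$ and identify it with the map in~\eqref{eqn:lk-quot}. You simply supply more explicit bookkeeping (the tensor-identity naturality and the basis-vector check) where the paper says ``it is easy to see.''
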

\begin{proof}
Unwinding the definition of $\gamma^\wedge$, we encounter the composition
\[
\irr(\varsigma_{I\ssm J}) \to \bk(\varsigma_{I\ssm J}) \otimes \bk(-\varsigma_{I\ssm J}) \otimes \irr(\varsigma_{I\ssm J})
\to \bk(\varsigma_{I\ssm J}) \otimes \irr(\varsigma_{I\ssm J})^* \otimes \irr(\varsigma_{I\ssm J}) \to \bk(\varsigma_{I\ssm J}),
\]
where the first and last maps come from adjunction, and the second one is induced by~\eqref{eqn:lk-dual}.  It is easy to see that this composition is equal to the map in~\eqref{eqn:lk-quot}.  It follows that $\gamma^\wedge = \delta$. The second equality follows, since $(-)^\vee$ is inverse to $(-)^\wedge$.
\end{proof}

\begin{lem}
\label{lem:induce-block}
For any $M \in \Db_\Stein(P_I)$, the natural adjunction maps
\begin{gather*}
R\Ind_{P_I}^G(\inc M) \to \incl_I\pr_I R\Ind_{P_I}^G(\inc M), \\
\incl_I\pr_I R\Ind_{P_I}^G(\inc M) \to R\Ind_{P_I}^G(\inc M)
\end{gather*}
are isomorphisms.  
\end{lem}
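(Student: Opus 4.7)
Both adjunction morphisms are isomorphisms on an object $X$ of $\Db\Rep(G)$ precisely when $X$ belongs to the direct summand $\Db\Rep_I(G)$. The full subcategory of $\Db\Rep(G)$ consisting of such objects is strictly full and triangulated, so it suffices to check the statement on a generating set for $\Db_\Stein(P_I)$. The natural approach, then, is to invoke Lemma~\ref{lem:generators-SteinPK}, which tells us that $\Db_\Stein(P_I)$ is generated by the objects $\coweyl_I(\ell\lambda - \varsigma_I)$ with $\lambda \in \bX_I^+ + \varsigma_I$.

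For such $\lambda$, the weight $\ell\lambda - \varsigma_I$ satisfies $\langle \ell\lambda - \varsigma_I, \alpha^\vee \rangle = \ell\langle \lambda,\alpha^\vee\rangle - 1 \ge \ell - 1 \ge 0$ for every $\alpha \in \Sigma_I$, hence lies in $\bX_I^+$. Kempf's vanishing applied inside $M_I$ therefore gives an isomorphism $R\Ind_B^{P_I}(\bk(\ell\lambda - \varsigma_I)) \cong \coweyl_I(\ell\lambda - \varsigma_I)$, and by transitivity of induction~\eqref{eqn:transitivity-RInd} we deduce
\[
R\Ind_{P_I}^G\bigl(\coweyl_I(\ell\lambda - \varsigma_I)\bigr) \cong R\Ind_B^G\bigl(\bk(\ell\lambda - \varsigma_I)\bigr).
\]

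To conclude I would observe that $\ell\lambda - \varsigma_I = t_\lambda \bullet (-\varsigma_I)$ belongs to $\Waff \bullet (-\varsigma_I)$. By the linkage principle for $G$ (see~\cite[Corollary~II.6.17]{jantzen}), all composition factors of $R^i\Ind_B^G(\bk(\ell\lambda - \varsigma_I))$ have highest weight in the $\WaffCox$-orbit of $\ell\lambda - \varsigma_I$, which is contained in $\Waff \bullet (-\varsigma_I)$. Hence the complex lies in $\Db\Rep_I(G)$, and the lemma follows. The only potentially subtle point is the bookkeeping between $\Waff$ and $\WaffCox$ in the linkage step, but since $\Rep_I(G)$ is defined as a Serre subcategory generated by an entire $\Waff$-orbit, this distinction causes no difficulty: the full $\WaffCox$-orbit of any element of $\Waff \bullet (-\varsigma_I)$ is automatically contained in $\Waff \bullet (-\varsigma_I)$.
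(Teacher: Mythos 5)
Your proof is correct and follows essentially the same route as the paper's: reduce to the generators of $\Db_\Stein(P_I)$ from Lemma~\ref{lem:generators-SteinPK}, identify $R\Ind_{P_I}^G(\coweyl_I(\ell\lambda-\varsigma_I))$ with $R\Ind_B^G(\bk(\ell\lambda-\varsigma_I))$ via Kempf vanishing and transitivity, and conclude by the linkage principle (the paper cites~\cite[II.7.3(5)]{jantzen} for this last step, which is the same statement you extract from the linkage principle). Your explicit remark on the $\Waff$-versus-$\WaffCox$ bookkeeping is a worthwhile clarification but does not change the argument.
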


\begin{proof}
This statement is equivalent to saying that for any $M \in \Db_\Stein(P_I)$, the object $R\Ind_{P_I}^G(\inc M)$ belongs to $\Db \Rep_I(G)$, or equivalently that its cohomology objects belong to $\Rep_I(G)$.
Using
Lemma~\ref{lem:generators-SteinPK},
it suffices to prove this claim for the objects $\coweyl_I(w \bullet (- \varsigma_I))$ with $w \in W_I \ltimes \bX$ and $w \bullet (- \varsigma_I) \in \bX_I^+$. In this case, using Kempf's vanishing theorem (see~\cite[Proposition~II.4.5]{jantzen}) and~\eqref{eqn:transitivity-RInd} we have
\[
R\Ind_{P_I}^G(\inc M) \cong R\Ind_{P_I}^G(R\Ind_B^{P_I} (\bk(w \bullet( - \varsigma_I)))) \cong R\Ind_B^G (\bk(w \bullet ( - \varsigma_I))).
\]
Then~\cite[II.7.3(5)]{jantzen} implies that this object indeed belongs to $\Db \Rep_I(G)$, and the claim is proved.
\end{proof}

\begin{lem}
\label{lem:induce-translate}
The natural transformation
\[
\pr_I \gamma \inc : \pr_I {}\circ R\Ind_{P_J}^G {}\circ {\bk(-\varsigma_{I\ssm J})} \circ \inc \to \pr_I {}\circ {\irr(\varsigma_{I\ssm J})^*} \circ R\Ind_{P_J}^G {}\circ \inc
\]
of functors from $\Db_\Stein(P_J)$ to $\Db\Rep_{I}(G)$ is an isomorphism.
\end{lem}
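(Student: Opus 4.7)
My plan is to compute the cone of $\pr_I \gamma \inc$ and show that it vanishes for every $M \in \Db_\Stein(P_J)$. By construction the morphism $\gamma_M$ factors as
\[
R\Ind_{P_J}^G\bigl(M \otimes \bk(-\varsigma_{I\ssm J})\bigr) \xrightarrow{R\Ind(M \otimes \eqref{eqn:lk-dual})} R\Ind_{P_J}^G\bigl(M \otimes \irr(\varsigma_{I\ssm J})^*\bigr) \xrightarrow{\;\sim\;} R\Ind_{P_J}^G(M) \otimes \irr(\varsigma_{I\ssm J})^*,
\]
where the second arrow is the tensor identity.  Letting $Q$ be the $P_J$-module cokernel of the injection~\eqref{eqn:lk-dual}, the short exact sequence $0 \to \bk(-\varsigma_{I\ssm J}) \to \irr(\varsigma_{I\ssm J})^* \to Q \to 0$ gives, upon applying $R\Ind_{P_J}^G(M\otimes({-}))$, a distinguished triangle identifying the cone of $\gamma_M$ with $R\Ind_{P_J}^G(M \otimes Q)$.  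It therefore suffices to prove that $\pr_I R\Ind_{P_J}^G(M \otimes Q) = 0$ for every $M \in \Db_\Stein(P_J)$.

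By the definition of $\Db_\Stein(P_J)$ (see~\S\ref{ss:statement-equiv-formality}), combined with Steinberg's tensor product theorem, we may reduce to the case $M = \St_J \otimes \Fr^*(\irr(\mu))$ for some $\mu \in \bX^+$.  I would then compute that every weight of $M \otimes Q$ has the form
\[
-\varsigma_I + \ell\tau + \kappa + \nu,
\qquad
\tau \in \bX,\ \kappa \in \Z\Phi_J,\ \nu \in \Z_{\geq 0}\Phi^+\setminus\{0\},
\]
where $\nu$ is such that $\varsigma_{I\ssm J}-\nu$ is a weight of $\irr(\varsigma_{I\ssm J})$.  The key observation is that such a $\nu$ cannot lie in $\Z\Phi_J$: decomposing $\irr(\varsigma_{I\ssm J})$ as an $M_J$-module and using that $\varsigma_{I\ssm J}$ is $J$-singular, one sees that the only weight of $\irr(\varsigma_{I\ssm J})$ belonging to $\varsigma_{I\ssm J}+\Z\Phi_J$ is $\varsigma_{I\ssm J}$ itself; this rests on the elementary fact that, for each irreducible component of $\Phi_J$, the positive cone $\Z_{\geq 0}\Phi_J^+$ meets the $J$-antidominant cone only at the origin, so that the unique $J$-dominant weight of $\irr(\varsigma_{I\ssm J})$ in $\varsigma_{I\ssm J}+\Z\Phi_J$ is $\varsigma_{I\ssm J}$, which contributes only the one-dimensional $M_J$-factor $\bk(\varsigma_{I\ssm J})$.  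Consequently $\nu$ always has a nontrivial component on some simple root outside $\Sigma_J$.

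The hard part will be completing the linkage-theoretic argument.  Combining the constraint $\nu\notin\Z\Phi_J$ with the hypothesis $\ell > h$---which bounds each Weyl translate $w(\rho - \varsigma_I) - (\rho - \varsigma_I) \in \Z\Phi$ within an $\ell$-alcove in each coordinate---and using that the $\kappa$-contribution absorbs freely into $\ell\bX$ on $\Sigma_J$-components while leaving the $\Sigma\setminus\Sigma_J$-components dictated by $\nu$, I would verify that no weight of $M \otimes Q$ lies in $W \bullet(-\varsigma_I) + \ell\bX$.  The block decomposition of $\Rep(G)$ then forces every composition factor of $R\Ind_{P_J}^G(M \otimes Q)$ to lie outside $\Rep_I(G)$, yielding $\pr_I R\Ind_{P_J}^G(M \otimes Q) = 0$ and completing the proof.
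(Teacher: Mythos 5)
Your outline is correct and closely parallels the paper's: identify the cone of $\gamma_M$ with $R\Ind_{P_J}^G(M \otimes Q)$, reduce to showing $\pr_I R\Ind_{P_J}^G(M \otimes Q)=0$ on generators, and then argue via the block decomposition that no relevant weight lies in $\Waff \bullet (-\varsigma_I)$. Your ``key observation'' that the only weight of $\irr(\varsigma_{I\ssm J})$ lying in $\varsigma_{I\ssm J}+\Z\Phi_J$ is $\varsigma_{I\ssm J}$ itself is also correct, and something very close to it appears in the final paragraph of the paper's proof. (One small remark on your choice of generators: the paper picks $M = \coweyl_J(\ell\lambda-\varsigma_J)$, which via the tensor identity lets one collapse the whole computation to a single line $R\Ind_B^G\bigl(\bk(\ell\lambda-\varsigma_J)\otimes Q\bigr)$; your generators $\St_J \otimes \Fr^*(\irr_J(\mu))$ can be handled the same way once you write $\St_J = \Ind_B^{P_J}((\ell-1)\varsigma_J)$, but the reduction deserves to be made explicit, since the block-decomposition criterion you invoke applies cleanly to $R\Ind_B^G$ of one-dimensional weights rather than directly to $R\Ind_{P_J}^G$ of an arbitrary $P_J$-module. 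Also, ``$V \in \Repf(\dot P_J)$'' in the definition of $\Db_\Stein(P_J)$ forces $\irr_J(\mu)$ with $\mu\in\bX_J^+$, not a $\dot G$-simple.)

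The genuine gap is in what you call the ``hard part,'' and I do not think it can be closed the way you sketch. After the tensor-identity reduction, the linkage question becomes: for a nonzero $\nu' \in \Z_{\geq 0}\Phi^+$ with $\varsigma_{I\ssm J}-\nu'$ a weight of $\irr(\varsigma_{I\ssm J})$, and for any $w\in W$, show $\nu' + \bigl((\rho-\varsigma_I) - w(\rho-\varsigma_I)\bigr) \notin \ell\Z\Phi$. Both summands lie in $\Z_{\geq 0}\Phi^+$ and the sum is nonzero, so you would need the sum to have some coordinate (in terms of simple roots) strictly between $0$ and $\ell$. But the simple-root coefficients of $w(\rho-\varsigma_I)-(\rho-\varsigma_I)$ are \emph{not} bounded by $\ell$ under the hypothesis $\ell>h$: already for $\varsigma_{S\ssm I}-w_S\varsigma_{S\ssm I}\preceq 2\rho$, the $\alpha_s$-coefficient of $2\rho$ in type $A_{n-1}$ is $s(n-s)$, of order $h^2/4\gg h$. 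Pairing with simple coroots instead gives bounds of order $2h$, still not under $\ell$. So the claim ``$\ell>h$ bounds each Weyl translate within an $\ell$-alcove in each coordinate'' is where the argument breaks down; it would only establish the result for $\ell$ considerably larger than $h$.

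What the paper does instead is apply Jantzen's Lemma~II.7.7 (the linkage lemma for translation functors), which by an alcove-geometry argument first forces $\nu$ to be an \emph{extreme} weight $-w\varsigma_{I\ssm J}$ in the $W$-orbit, and simultaneously forces $-\varsigma_J+\nu$ to lie in the $W_J$-orbit $W_J\bullet(-\varsigma_I)$, so $-w\varsigma_{I\ssm J}\in -\varsigma_{I\ssm J}+\Z\Phi_J$. Only then does the $\Z\Phi_J$-argument you isolated (in its special case for the $W$-orbit) finish the job by forcing $w=1$. So your observation is the right second half of the argument, but the necessary first half is the nontrivial Lemma~II.7.7 of Jantzen, which you have not identified and which cannot be replaced by coefficient bounds under the weak hypothesis $\ell>h$.
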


\begin{proof}
Using again Lemma~\ref{lem:generators-SteinPK}, it suffices to prove that this morphism is an isomorphism when applied to any object $\coweyl_J(\ell \lambda - \varsigma_J)$ with $\lambda \in \bX_J^+ + \varsigma_J$. In this case,
the argument is closely modeled on the proof of~\cite[Proposition~II.7.11]{jantzen}.  Let $Q$ be the cokernel of the map~\eqref{eqn:lk-dual}.  Then there is a distinguished triangle
\begin{multline*}
R\Ind_{P_J}^G(\coweyl_J(\ell\lambda-\varsigma_J) \otimes \bk(-\varsigma_{I\ssm J}))
\xrightarrow{\gamma_{\coweyl_J(\ell\lambda-\varsigma_J)}}
R\Ind_{P_J}^G(\coweyl_J(\ell\lambda-\varsigma_J)) \otimes \irr(\varsigma_{I\ssm J})^* \\
\to
R\Ind_{P_J}^G(\coweyl_J(\ell\lambda-\varsigma_J) \otimes Q) \xrightarrow{[1]},
\end{multline*}
so that to conclude we only have to
show that $\pr_I R\Ind_{P_J}^G(\coweyl_J(\ell\lambda-\varsigma_J)  \otimes Q) = 0$.  Since (as in the proof of Lemma~\ref{lem:induce-block}), $R\Ind_{P_J}^G(\coweyl_J(\ell\lambda-\varsigma_J)  \otimes Q) \cong R\Ind_B^G(\bk(\ell\lambda -\varsigma_J) \otimes Q)$, we have reduced the problem to showing that
\begin{equation}\label{eqn:induce-translate}
\pr_I R\Ind_B^G(\bk(\ell\lambda-\varsigma_J)  \otimes Q) = 0.
\end{equation}

Let $\nu$ be a weight of $\irr(\varsigma_{I\ssm J})^*$, and assume that $-\varsigma_J + \nu \in \Waff \bullet (-\varsigma_I)$. Then we must have $-\varsigma_J + \nu \in \WaffCox \bullet (-\varsigma_I)$. Indeed, write
\[
-\varsigma_J + \nu = (wt_\mu) \bullet (-\varsigma_I) = w(\ell \mu - \varsigma_I + \rho) - \rho
\]
where $w \in W$ and $\mu \in \bX$. Then we have
\[
\ell w(\mu) = -\varsigma_J + \nu + w(\varsigma_I) - w(\rho) + \rho.
\]
Here it is easily checked that the right-hand side belongs to $\Z\Phi$; so $\ell w(\mu)$ belongs to $\Z\Phi \cap \ell \bX = \ell \Z\Phi$. (Here the equality follows from the fact that $\bX/\Z\Phi$ has no $\ell$-torsion since $\ell>h$.) This implies that $w(\mu) \in \Z\Phi$, hence that $\mu \in \Z\Phi$, and finally that $wt_\mu \in \WaffCox$, as claimed.

According to~\cite[Lemma~II.7.7]{jantzen}, we must have $\nu = -w\varsigma_{I\ssm J}$ for some $w \in W$, and $-\varsigma_J + \nu = w' \bullet (-\varsigma_I)$ for some $w' \in \WaffCox$ such that $w' \bullet (-\varsigma_J) = -\varsigma_J$. By~\eqref{eqn:stab-varsigmaK}, the latter implies that $w' \in W_J$, so $w' \bullet (-\varsigma_I) \in -\varsigma_I + \Z\Phi_J$.  To summarize, we have that
\begin{equation}\label{eqn:induce-translate2}
-w\varsigma_{I\ssm J} \in -\varsigma_I + \varsigma_J + \Z\Phi_J = -\varsigma_{I\ssm J} + \Z\Phi_J.
\end{equation}
Assume that $w$ was chosen to have minimal length, and choose a reduced expression $w = s_1\cdots s_r$.  Since $-\varsigma_{I\ssm J}$ is antidominant, we have
\[
-\varsigma_{I\ssm J} \prec -s_r\varsigma_{I\ssm J} \prec -s_{r-1}s_r\varsigma_{I\ssm J} \prec \cdots \prec -w\varsigma_{I\ssm J},
\]
where $\prec$ is the standard order on $\bX$ associated with our choice of positive roots (see~\S\ref{ss:Hom-calculations} below).
Write $-w\varsigma_{I\ssm J} + \varsigma_{I\ssm J}$ as $\sum_{s \in S} n_s\alpha_s$.  Here each $n_s$ is a nonnegative integer; it is strictly positive if $s$ occurs at least once in the product $s_1 \cdots s_r$.  If $w \ne 1$, then at least one simple reflection not in $J$ must occur, since $W_J$ stabilizes $-\varsigma_{I\ssm J}$ for the standard action.  So if $w \ne 1$, we have $-w\varsigma_{I\ssm J} + \varsigma_{I\ssm J} \notin \Z\Phi_J$, contradicting~\eqref{eqn:induce-translate2}.

We conclude that $w = 1$, i.e., that the only weight $\nu$ of $\irr(\varsigma_{I\ssm J})^*$ such that $-\varsigma_J + \nu \in \Waff \bullet (-\varsigma_I)$ is $\nu = -\varsigma_{I\ssm J}$.  In other words, if $\nu$ is any weight of $Q$, then $-\varsigma_J + \nu \notin \Waff \bullet (-\varsigma_I)$, and hence
\[
\ell\lambda - \varsigma_J + \nu \notin \Waff \bullet (-\varsigma_I).
\]
Then~\eqref{eqn:induce-translate} follows from this by~\cite[II.7.3(5)]{jantzen}.
\end{proof}

\begin{lem}\label{lem:induce-p-translate}
The natural transformation 
\[
\pr_J \delta \For_{P_J}^{P_I}: \pr_J {}\circ \irr(\varsigma_{I\ssm J}) \circ R\Ind_{P_J}^G {}\circ \For_{P_J}^{P_I} \to \pr_J {}\circ R\Ind_{P_J}^G {}\circ \bk(\varsigma_{I\ssm J}) \circ \For_{P_J}^{P_I}
\]
of functors from $\Db_\Stein(P_I)$ to $\Db\Rep_{J}(G)$ is an isomorphism.
\end{lem}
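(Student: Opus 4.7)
The proof will mirror that of Lemma~\ref{lem:induce-translate}. Let $K$ denote the kernel of the surjection~\eqref{eqn:lk-quot}, so that there is a short exact sequence
\[
0 \to K \to \irr(\varsigma_{I\ssm J}) \to \bk(\varsigma_{I\ssm J}) \to 0
\]
of $P_J$-modules. For each $M \in \Db_\Stein(P_I)$, tensoring with $\For_{P_J}^{P_I}(M)$, applying $R\Ind_{P_J}^G$, and invoking the tensor identity produces a distinguished triangle in which the morphism between the middle and right terms is $\delta_{\For_{P_J}^{P_I}(M)}$ and the left term is $R\Ind_{P_J}^G(\For_{P_J}^{P_I}(M) \otimes K)$. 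The lemma therefore reduces to showing that $\pr_J R\Ind_{P_J}^G(\For_{P_J}^{P_I}(M) \otimes K)=0$ for every $M \in \Db_\Stein(P_I)$.

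I would then invoke Lemma~\ref{lem:generators-SteinPK} to reduce to the case $M=\coweyl_I(\ell\lambda-\varsigma_I)$ with $\lambda \in \bX_I^+ + \varsigma_I$. Combining the transitivity $R\Ind_{P_J}^G \cong R\Ind_{P_I}^G \circ R\Ind_{P_J}^{P_I}$ with the tensor identity, the identity $\coweyl_I(\ell\lambda-\varsigma_I) \cong R\Ind_B^{P_I}(\bk(\ell\lambda-\varsigma_I))$ from Kempf's vanishing, and the transitivity $R\Ind_B^G=R\Ind_{P_I}^G \circ R\Ind_B^{P_I}$, the target vanishing becomes
\[
\pr_J R\Ind_B^G\bigl(\bk(\ell\lambda-\varsigma_I)\otimes \For_B^{P_I}(R\Ind_{P_J}^{P_I}(K))\bigr)=0.
\]
By~\cite[II.7.3(5)]{jantzen} it suffices to verify that for every weight $\nu$ appearing in the cohomology of $R\Ind_{P_J}^{P_I}(K)$, we have $\ell\lambda-\varsigma_I+\nu \notin \Waff\bullet(-\varsigma_J)$. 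At this stage I would use that $\varsigma_{I\ssm J}$ is dominant for $M_I$, so that Kempf's vanishing inside $M_I$ yields $R\Ind_{P_J}^{P_I}(\bk(\varsigma_{I\ssm J})) \cong \coweyl_I(\varsigma_{I\ssm J})$, while $R\Ind_{P_J}^{P_I}(\irr(\varsigma_{I\ssm J})) \cong \irr(\varsigma_{I\ssm J})$ since this is a $G$-module; hence $R\Ind_{P_J}^{P_I}(K)$ is, up to a shift, the fiber of the canonical $P_I$-morphism $\irr(\varsigma_{I\ssm J}) \to \coweyl_I(\varsigma_{I\ssm J})$.

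The main obstacle will be this weight-theoretic verification. Assume for contradiction that $\ell\lambda-\varsigma_I+\nu \in \Waff\bullet(-\varsigma_J)$. As in the proof of Lemma~\ref{lem:induce-translate}, a reduction modulo $\Z\Phi$ shows that one may replace $\Waff$ by $\WaffCox$; then~\cite[Lemma~II.7.7]{jantzen} combined with~\eqref{eqn:stab-varsigmaK} forces $-\varsigma_I+\nu = w'\bullet(-\varsigma_J)$ for some $w' \in W_I$, and a direct computation gives $\nu = w'(\varsigma_{I\ssm J})$, so $\nu$ lies in the $W_I$-orbit of $\varsigma_{I\ssm J}$. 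The key new input, compared with Lemma~\ref{lem:induce-translate}, is that no such $\nu \in W_I\varsigma_{I\ssm J}$ can actually occur as a weight of the fiber: the canonical map $\irr(\varsigma_{I\ssm J}) \to \coweyl_I(\varsigma_{I\ssm J})$ factors through the simple $M_I$-submodule $\irr_I(\varsigma_{I\ssm J}) \subseteq \coweyl_I(\varsigma_{I\ssm J})$ and restricts to an isomorphism on each $W_I$-extremal weight space (both being one-dimensional), so the contributions of such weights to the kernel and cokernel cancel in the fiber. This yields the desired contradiction and completes the proof.
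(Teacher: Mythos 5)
Your proof is correct and essentially the same as the paper's: both arguments reduce to the vanishing of $\pr_J R\Ind_B^G$ applied to $\bk(\ell\lambda-\varsigma_I)$ tensored with (a shift of) the cone of $\irr(\varsigma_{I\ssm J}) \to \coweyl_I(\varsigma_{I\ssm J})$, and then run the identical weight-theoretic argument; the paper's factorization of $\delta$ through the intermediate transformations $\delta'$ and $\delta''$ is just a different packaging of your computation of $R\Ind_{P_J}^{P_I}(K)$ as this fiber. One small slip in the last step: the map $\irr(\varsigma_{I\ssm J}) \to \coweyl_I(\varsigma_{I\ssm J})$ need not have image contained in $\irr_I(\varsigma_{I\ssm J})$ (i.e., ``factor through'' it); the correct statement, and the one your argument actually needs, is that the image \emph{contains} $\irr_I(\varsigma_{I\ssm J})$, since $\irr_I(\varsigma_{I\ssm J})$ is the socle of $\coweyl_I(\varsigma_{I\ssm J})$ and any nonzero submodule contains the socle, which already yields the isomorphism on the one-dimensional $W_I$-extremal weight spaces.
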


\begin{proof}
By adjunction, and since $\coweyl(\varsigma_{I \ssm J}) = \Ind_{P_I}^G(\coweyl_I(\varsigma_{I \ssm J}))$, there exists a canonical morphism $\For^G_{P_I} (\coweyl(\varsigma_{I \ssm J})) \to \coweyl_I(\varsigma_{I \ssm J})$. Moreover this morphism is surjective (see e.g.~\cite[Theorem~3.1.1]{brionkumar} for a much more general statement). Composing with the embedding $\irr(\varsigma_{I \ssm J}) \hookrightarrow \coweyl(\varsigma_{I \ssm J})$ and with a morphism of $P_J$-modules $ \coweyl_I(\varsigma_{I \ssm J}) \to \bk(\varsigma_{I \ssm J})$, we see that~\eqref{eqn:lk-quot} factors as a composition
\begin{equation}
\label{eqn:lk-quot-s1}
\irr(\varsigma_{I \ssm J}) \to \coweyl_I(\varsigma_{I \ssm J}) \to \bk(\varsigma_{I \ssm J}).
\end{equation}

Now, consider the functor
\[
({-}) \otimes \coweyl_I(\varsigma_{I\ssm J}): \Db\Rep(P_I) \to \Db\Rep(P_I).
\]
Using the morphisms in~\eqref{eqn:lk-quot-s1}
in place of~\eqref{eqn:lk-quot}, we can define two natural transformations
\begin{gather*}
\delta': \coweyl_I(\varsigma_{I\ssm J}) \circ R\Ind_{P_J}^{P_I} \to R\Ind_{P_J}^{P_I} {}\circ \bk(\varsigma_{I\ssm J}), \\
\delta'': \irr(\varsigma_{I\ssm J}) \circ R\Ind_{P_I}^G \to R\Ind_{P_I}^G {}\circ \coweyl_I(\varsigma_{I\ssm J})
\end{gather*}
that are analogous to $\delta$.  These transformations are related to $\delta$ by
\ifdefined\PARTCOMPILE{
\[
FIGURE
\]
}
\else {
\[
\vcenter{\hbox{\begin{tikzpicture}[trans]
\useasboundingbox (-0.5,-0.5) rectangle (2.5,3.5);
\draw (1.00,1.00) -- (2.00,0.00);
\draw (1.00,1.00) -- (0.00,0.00);
\draw (2.00,2.00) -- (1.00,1.00);
\draw (0.00,2.00) -- (1.00,1.00);
\filldraw[fill=white] (1.00,1.00) ellipse (0.80cm and 0.50cm);
\draw (0.00,2.50) node{$\irr(\varsigma_{I\ssm J})$};
\draw (2.00,2.50) node{$R\Ind_{P_J}^G$};
\draw (1.00,1.00) node{$\delta$};
\draw (0.00,-0.50) node{$R\Ind_{P_J}^G$};
\draw (2.00,-0.50) node{$\bk(\varsigma_{I\ssm J})$};
\end{tikzpicture}}}
\ =\ \ 
\vcenter{\hbox{\begin{tikzpicture}[trans]
\useasboundingbox (-0.5,-0.5) rectangle (2.5,6.5);
\draw (2.00,4.00) -- (1.86,3.89) -- (1.73,3.78) -- (1.59,3.68) -- (1.46,3.58) -- (1.33,3.48) -- (1.20,3.38) -- (1.08,3.29) -- (0.96,3.21) -- (0.84,3.13) -- (0.73,3.06) -- (0.62,3.00) -- (0.52,2.95) -- (0.43,2.91) -- (0.34,2.88) -- (0.26,2.87) -- (0.19,2.87) -- (0.13,2.88) -- (0.08,2.90) -- (0.03,2.94) -- (0.00,3.00);
\draw (0.00,3.00) -- (-0.01,3.03) -- (-0.02,3.07) -- (-0.03,3.10) -- (-0.03,3.14) -- (-0.04,3.19) -- (-0.04,3.23) -- (-0.04,3.28) -- (-0.04,3.33) -- (-0.04,3.38) -- (-0.04,3.43) -- (-0.04,3.49) -- (-0.03,3.54) -- (-0.03,3.60) -- (-0.03,3.66) -- (-0.02,3.71) -- (-0.02,3.77) -- (-0.01,3.83) -- (-0.01,3.89) -- (-0.00,3.94) -- (0.00,4.00);
\draw (0.00,4.00) -- (0.00,4.06) -- (0.01,4.11) -- (0.01,4.16) -- (0.01,4.22) -- (0.01,4.27) -- (0.01,4.32) -- (0.01,4.37) -- (0.01,4.42) -- (0.01,4.47) -- (0.01,4.52) -- (0.01,4.57) -- (0.01,4.62) -- (0.01,4.67) -- (0.01,4.72) -- (0.01,4.76) -- (0.01,4.81) -- (0.01,4.86) -- (0.00,4.91) -- (0.00,4.95) -- (0.00,5.00);
\draw (2.00,4.00) -- (2.00,3.95) -- (2.00,3.91) -- (1.99,3.86) -- (1.99,3.81) -- (1.99,3.76) -- (1.99,3.72) -- (1.99,3.67) -- (1.99,3.62) -- (1.99,3.57) -- (1.99,3.52) -- (1.99,3.47) -- (1.99,3.42) -- (1.99,3.37) -- (1.99,3.32) -- (1.99,3.27) -- (1.99,3.22) -- (1.99,3.16) -- (1.99,3.11) -- (2.00,3.06) -- (2.00,3.00);
\draw (2.00,3.00) -- (2.00,2.94) -- (2.01,2.89) -- (2.01,2.83) -- (2.02,2.77) -- (2.02,2.71) -- (2.03,2.66) -- (2.03,2.60) -- (2.03,2.54) -- (2.04,2.49) -- (2.04,2.43) -- (2.04,2.38) -- (2.04,2.33) -- (2.04,2.28) -- (2.04,2.23) -- (2.04,2.19) -- (2.03,2.14) -- (2.03,2.10) -- (2.02,2.07) -- (2.01,2.03) -- (2.00,2.00);
\draw (2.00,2.00) -- (1.97,1.94) -- (1.92,1.90) -- (1.87,1.88) -- (1.81,1.87) -- (1.74,1.87) -- (1.66,1.88) -- (1.57,1.91) -- (1.48,1.95) -- (1.38,2.00) -- (1.27,2.06) -- (1.16,2.13) -- (1.04,2.21) -- (0.92,2.29) -- (0.80,2.38) -- (0.67,2.48) -- (0.54,2.58) -- (0.41,2.68) -- (0.27,2.78) -- (0.14,2.89) -- (0.00,3.00);
\draw (2.00,5.00) -- (2.00,4.00);
\draw (0.00,3.00) -- (0.00,2.95) -- (0.00,2.91) -- (0.01,2.86) -- (0.01,2.81) -- (0.01,2.76) -- (0.01,2.72) -- (0.01,2.67) -- (0.01,2.62) -- (0.01,2.57) -- (0.01,2.52) -- (0.01,2.47) -- (0.01,2.42) -- (0.01,2.37) -- (0.01,2.32) -- (0.01,2.27) -- (0.01,2.22) -- (0.01,2.16) -- (0.01,2.11) -- (0.00,2.06) -- (0.00,2.00);
\draw (0.00,2.00) -- (-0.00,1.94) -- (-0.01,1.89) -- (-0.01,1.83) -- (-0.02,1.77) -- (-0.02,1.71) -- (-0.03,1.66) -- (-0.03,1.60) -- (-0.03,1.54) -- (-0.04,1.49) -- (-0.04,1.43) -- (-0.04,1.38) -- (-0.04,1.33) -- (-0.04,1.28) -- (-0.04,1.23) -- (-0.04,1.19) -- (-0.03,1.14) -- (-0.03,1.10) -- (-0.02,1.07) -- (-0.01,1.03) -- (0.00,1.00);
\draw (0.00,1.00) -- (0.03,0.94) -- (0.08,0.90) -- (0.13,0.88) -- (0.19,0.87) -- (0.26,0.87) -- (0.34,0.88) -- (0.43,0.91) -- (0.52,0.95) -- (0.62,1.00) -- (0.73,1.06) -- (0.84,1.13) -- (0.96,1.21) -- (1.08,1.29) -- (1.20,1.38) -- (1.33,1.48) -- (1.46,1.58) -- (1.59,1.68) -- (1.73,1.78) -- (1.86,1.89) -- (2.00,2.00);
\draw (2.00,2.00) -- (2.00,1.95) -- (2.00,1.90) -- (2.00,1.85) -- (2.00,1.80) -- (2.00,1.75) -- (2.00,1.70) -- (2.00,1.65) -- (2.00,1.60) -- (2.00,1.55) -- (2.00,1.50) -- (2.00,1.45) -- (2.00,1.40) -- (2.00,1.35) -- (2.00,1.30) -- (2.00,1.25) -- (2.00,1.20) -- (2.00,1.15) -- (2.00,1.10) -- (2.00,1.05) -- (2.00,1.00);
\draw (2.00,1.00) -- (2.00,0.95) -- (2.00,0.90) -- (2.00,0.85) -- (2.00,0.80) -- (2.00,0.75) -- (2.00,0.70) -- (2.00,0.65) -- (2.00,0.60) -- (2.00,0.55) -- (2.00,0.50) -- (2.00,0.45) -- (2.00,0.40) -- (2.00,0.35) -- (2.00,0.30) -- (2.00,0.25) -- (2.00,0.20) -- (2.00,0.15) -- (2.00,0.10) -- (2.00,0.05) -- (2.00,0.00);
\draw (0.00,1.00) -- (0.00,0.00);
\filldraw[fill=white] (2.00,4.00) ellipse (0.80cm and 0.50cm);
\filldraw[fill=white] (0.00,3.00) ellipse (0.80cm and 0.50cm);
\filldraw[fill=white] (2.00,2.00) ellipse (0.80cm and 0.50cm);
\filldraw[fill=white] (0.00,1.00) ellipse (0.80cm and 0.50cm);
\draw (0.00,5.50) node{$\irr(\varsigma_{I\ssm J})$};
\draw (2.00,5.50) node{$R\Ind_{P_J}^G$};
\draw (2.00,4.00) node{tr};
\draw (0.00,3.00) node{$\delta^{\prime\prime}$};
\draw (2.00,2.00) node{$\delta^{\prime}$};
\draw (0.00,1.00) node{tr};
\draw (0.00,-0.50) node{$R\Ind_{P_J}^G$};
\draw (2.00,-0.50) node{$\bk(\varsigma_{I\ssm J})$};
\end{tikzpicture}}}
\]}\fi
Thus, the lemma will follow if we can show that the following two natural transformations are isomorphisms:
\begin{equation}
\delta' \For_{P_J}^{P_I} :\coweyl_I(\varsigma_{I\ssm J}) \circ R\Ind_{P_J}^{P_I} {}\circ \For_{P_J}^{P_I} \to R\Ind_{P_J}^{P_I} {}\circ \bk(\varsigma_{I\ssm J}) \circ \For_{P_J}^{P_I}, \label{eqn:induce-deltap}
\end{equation}
\begin{multline}
\pr_J \delta'' R\Ind_{P_J}^{P_I} \For_{P_J}^{P_I} : \pr_J {}\circ \irr(\varsigma_{I\ssm J}) \circ R\Ind_{P_I}^G {}\circ R\Ind_{P_J}^{P_I}  {}\circ \For_{P_J}^{P_I} \label{eqn:induce-deltapp}\\
\to \pr_J {}\circ R\Ind_{P_I}^G {}\circ \coweyl_I(\varsigma_{I\ssm J}) \circ R\Ind_{P_J}^{P_I} {}\circ \For_{P_J}^{P_I}.
\end{multline}

The fact that~\eqref{eqn:induce-deltap} is an isomorphism follows the observation that
\[
R\Ind_{P_J}^{P_I}(\For_{P_J}^{P_I}(V)) \otimes \coweyl_I(\varsigma_{I\ssm J}) \cong V \otimes R\Ind_{P_J}^{P_I} \bk(\varsigma_{I\ssm J})
\cong R\Ind_{P_J}^{P_I}(\For_{P_J}^{P_I}(V) \otimes \bk(\varsigma_{I\ssm J}))
\]
by the tensor identity.
On the other hand, because the morphism
\[
R\Ind_{P_J}^{P_I} {}\circ \For_{P_J}^{P_I} \to \inc
\]
induced by the counit
is an isomorphism, we see that~\eqref{eqn:induce-deltapp} is an isomorphism if and only if
\begin{equation}\label{eqn:induce-deltapp-inc}
\pr_J \delta'' \inc : \pr_J {}\circ \irr(\varsigma_{I\ssm J}) \circ R\Ind_{P_I}^G {}\circ \inc \to \pr_J {}\circ R\Ind_{P_I}^G {}\circ \coweyl_I(\varsigma_{I\ssm J}) \circ \inc
\end{equation}
is an isomorphism.  We will prove this by an argument similar to that in the proof of Lemma~\ref{lem:induce-translate}.  Let $C$ be the cone of our morphism $\irr(\varsigma_{I\ssm J}) \to \coweyl_I(\varsigma_{I\ssm J})$.  For $V \in \Db\Rep(P_I)$, we have a distinguished triangle
\[
R\Ind_{P_I}^G(V) \otimes \irr(\varsigma_{I\ssm J})
\xrightarrow{\delta''_V}
R\Ind_{P_I}^G(V \otimes \coweyl_I(\varsigma_{I\ssm J})) \to R\Ind_{P_I}^G(V \otimes C) \xrightarrow{[1]}.
\]
Hence,
for a given $V$, $\pr_J \delta''_V$ is an isomorphism if and only if $\pr_J R\Ind_{P_I}^G(V \otimes C) = 0$.  
Using Lemma~\ref{lem:generators-SteinPK}, we see that
to prove that~\eqref{eqn:induce-deltapp-inc} is an isomorphism, it suffices to show that
\[
\pr_J R\Ind_{P_I}^G(\Ind_B^{P_I}(\bk(\ell\lambda - \varsigma_I)) \otimes C)
\cong \pr_J R\Ind_B^G(\bk(\ell\lambda - \varsigma_I) \otimes C)
\]
vanishes for any $\lambda \in \bX_I^+ + \varsigma_I$. 

We have $\coH^i(C)=0$ unless $i \in \{-1,0\}$, and moreover any weight $\nu$ of $\coH^{-1}(C)$ or $\coH^0(C)$ lies in the $W$-orbit of a dominant weight $\nu^+$ which satisfies $\nu^+ \preceq \varsigma_{I\ssm J}$ (since $\nu$ is a weight of $\coweyl(\varsigma_{I\ssm J})$). Hence, by~\cite[Lemma~II.7.7]{jantzen} and the same arguments as in the proof of Lemma~\ref{lem:induce-translate}, if $\nu$ is such a weight 
and if $-\varsigma_I + \nu \in \Waff \bullet (-\varsigma_J)$, then we have $\nu = w\varsigma_{I\ssm J}$ for some $w \in W$, and $-\varsigma_I + \nu = w' \bullet (-\varsigma_J)$ for some $w' \in \WaffCox$ such that $w' \bullet (-\varsigma_I) = -\varsigma_I$. By~\eqref{eqn:stab-varsigmaK} we have $w' \in W_I$; then, in analogy with~\eqref{eqn:induce-translate2}, we deduce that
\[
w\varsigma_{I\ssm J} \in \varsigma_{I\ssm J} + \Z\Phi_I.
\]
Reasoning similar to which followed~\eqref{eqn:induce-translate2} now shows that $w$ must lie in $W_I$.
However, our morphism $\irr(\varsigma_{I\ssm J}) \to \coweyl_I(\varsigma_{I\ssm J})$ is an isomorphism on the weight space of weight $\varsigma_{I\ssm J}$, and hence also on any weight space whose weight is in $W_I(\varsigma_{I\ssm J})$, so no such weight can appear in $\coH^{-1}(C)$ or $\coH^0(C)$.

To summarize, if $\nu$ is any weight of a cohomology object of $C$, then $-\varsigma_I + \nu \notin \Waff \bullet(-\varsigma_J)$, and hence $\ell\lambda - \varsigma_I + \nu \notin \Waff \bullet (-\varsigma_J)$.  By~\cite[II.7.3(5)]{jantzen}, we conclude that $\pr_J \Ind_B^G(\bk(\ell\lambda - \varsigma_{I\ssm J}) \otimes C) = 0$, as desired.
\end{proof}

\subsection{Natural transformations related to the formality theorem}
\label{ss:natural-formality}

According to Proposition~\ref{prop:Theta-psi}, there exists a natural isomorphism
\[
\alpha: \psi_I \circ \Theta_{J,I} \simto R\Ind_{P_J}^{P_I} {}\circ {\bk(-\varsigma_{I\ssm J})} \circ \psi_J,
\]
which we will depict with the following diagram:
\ifdefined\PARTCOMPILE{
\begin{equation}\label{eqn:alpha-defn}
FIGURE
\end{equation}
}
\else {
\begin{equation}\label{eqn:alpha-defn}
\vcenter{\hbox{\begin{tikzpicture}[trans]
\useasboundingbox (-0.5,-0.5) rectangle (4.5,3.5);
\draw (2.00,1.00) -- (4.00,0.00);
\draw (2.00,1.00) -- (2.00,0.00);
\draw (2.00,1.00) -- (0.00,0.00);
\draw (3.00,2.00) -- (2.00,1.00);
\draw (1.00,2.00) -- (2.00,1.00);
\filldraw[fill=white] (2.00,1.00) ellipse (0.80cm and 0.50cm);
\draw (1.00,2.50) node{$\psi_I$};
\draw (3.00,2.50) node{$\Theta_{J,I}$};
\draw (2.00,1.00) node{$\alpha$};
\draw (0.00,-0.50) node{$R\Ind_{P_J}^{P_I}$};
\draw (2.00,-0.50) node{$\bk(-\varsigma_{I\ssm J})$};
\draw (4.00,-0.50) node{$\psi_J$};
\end{tikzpicture}}}
\end{equation}}\fi
Consider the two functors $\Dfg_{\dot P_I}(\bL_I) \to \Db\Rep(P_J)$ given by $M \mapsto \For_{P_J}^{P_I}(\psi_I(M)) \otimes \bk(\varsigma_{I\ssm J})$ and $M \mapsto \inc(\psi_J(\Theta^{J,I}(M)))$.  We define a natural transformation
\[
\beta: {\bk(\varsigma_{I\ssm J})} \circ \For_{P_J}^{P_I} {}\circ \psi_I \to \inc {}\circ \psi_J \circ \Theta^{J,I}
\]
by
\begin{multline*}
\For_{P_J}^{P_I}(\psi_I(M)) \otimes \bk(\varsigma_{I\ssm J}) \to
\For_{P_J}^{P_I}(\psi_I(\Theta_{J,I}\Theta^{J,I}(M))) \otimes \bk(\varsigma_{I\ssm J}) \\ 
\xrightarrow[\sim]{\alpha}
\For_{P_J}^{P_I}(R\Ind_{P_J}^{P_I}(\psi_J(\Theta^{J,I}(M)) \otimes \bk(-\varsigma_{I\ssm J}))) \otimes \bk(\varsigma_{I\ssm J}) \\
\to
\psi_J(\Theta^{J,I}(M)) \otimes \bk(-\varsigma_{I\ssm J}) \otimes \bk(\varsigma_{I\ssm J})
\simto \psi_J(\Theta^{J,I}(M)),
\end{multline*}
where the first, third and fourth morphisms are induced by adjunction.
Graphically, this means that
\ifdefined\PARTCOMPILE{
\[
FIGURE
\]
}
\else {
\[
\vcenter{\hbox{\begin{tikzpicture}[trans]
\useasboundingbox (-0.5,-0.5) rectangle (4.5,3.5);
\draw (2.00,1.00) -- (4.00,0.00);
\draw (2.00,1.00) -- (2.00,0.00);
\draw (2.00,1.00) -- (0.00,0.00);
\draw (4.00,2.00) -- (2.00,1.00);
\draw (2.00,2.00) -- (2.00,1.00);
\draw (0.00,2.00) -- (2.00,1.00);
\filldraw[fill=white] (2.00,1.00) ellipse (0.80cm and 0.50cm);
\draw (0.00,2.50) node{$\bk(\varsigma_{I\ssm J})$};
\draw (2.00,2.50) node{$\For_{P_J}^{P_I}$};
\draw (4.00,2.50) node{$\psi_I$};
\draw (2.00,1.00) node{$\beta$};
\draw (0.00,-0.50) node{$\inc$};
\draw (2.00,-0.50) node{$\psi_J$};
\draw (4.00,-0.50) node{$\Theta^{J,I}$};
\end{tikzpicture}}}
= 
\vcenter{\hbox{\begin{tikzpicture}[trans]
\useasboundingbox (-0.5,-0.5) rectangle (7.5,5.5);
\draw (7.00,0.00) -- (7.00,0.05) -- (7.00,0.10) -- (6.99,0.15) -- (6.99,0.20) -- (6.99,0.25) -- (6.99,0.30) -- (6.99,0.35) -- (6.99,0.40) -- (6.99,0.45) -- (6.99,0.50) -- (6.99,0.55) -- (6.99,0.60) -- (6.99,0.65) -- (6.99,0.70) -- (6.99,0.75) -- (6.99,0.80) -- (6.99,0.85) -- (6.99,0.90) -- (7.00,0.95) -- (7.00,1.00);
\draw (7.00,1.00) -- (7.01,1.10) -- (7.02,1.20) -- (7.03,1.30) -- (7.04,1.40) -- (7.05,1.49) -- (7.06,1.59) -- (7.07,1.69) -- (7.09,1.79) -- (7.09,1.89) -- (7.10,1.99) -- (7.11,2.09) -- (7.11,2.19) -- (7.11,2.29) -- (7.11,2.39) -- (7.10,2.49) -- (7.09,2.59) -- (7.08,2.69) -- (7.06,2.79) -- (7.03,2.90) -- (7.00,3.00);
\draw (7.00,3.00) -- (6.97,3.07) -- (6.95,3.15) -- (6.91,3.22) -- (6.88,3.29) -- (6.84,3.36) -- (6.80,3.43) -- (6.76,3.50) -- (6.71,3.56) -- (6.67,3.63) -- (6.62,3.68) -- (6.56,3.74) -- (6.51,3.79) -- (6.45,3.84) -- (6.39,3.88) -- (6.33,3.91) -- (6.27,3.94) -- (6.21,3.97) -- (6.14,3.99) -- (6.07,4.00) -- (6.00,4.00);
\draw (6.00,4.00) -- (5.89,3.99) -- (5.77,3.96) -- (5.65,3.92) -- (5.53,3.87) -- (5.40,3.80) -- (5.28,3.73) -- (5.16,3.65) -- (5.04,3.57) -- (4.92,3.48) -- (4.81,3.39) -- (4.70,3.31) -- (4.59,3.23) -- (4.49,3.16) -- (4.39,3.09) -- (4.31,3.04) -- (4.23,3.00) -- (4.15,2.97) -- (4.09,2.96) -- (4.04,2.97) -- (4.00,3.00);
\draw (4.00,3.00) -- (3.99,3.02) -- (3.97,3.05) -- (3.96,3.08) -- (3.95,3.11) -- (3.95,3.15) -- (3.94,3.19) -- (3.94,3.23) -- (3.94,3.28) -- (3.94,3.33) -- (3.94,3.38) -- (3.94,3.44) -- (3.95,3.50) -- (3.95,3.55) -- (3.96,3.61) -- (3.96,3.68) -- (3.97,3.74) -- (3.98,3.80) -- (3.98,3.87) -- (3.99,3.93) -- (4.00,4.00);
\draw (2.00,4.00) -- (1.99,3.95) -- (1.98,3.91) -- (1.98,3.86) -- (1.97,3.81) -- (1.96,3.77) -- (1.95,3.72) -- (1.95,3.67) -- (1.94,3.63) -- (1.94,3.58) -- (1.94,3.53) -- (1.93,3.48) -- (1.93,3.43) -- (1.94,3.38) -- (1.94,3.33) -- (1.94,3.27) -- (1.95,3.22) -- (1.96,3.17) -- (1.97,3.11) -- (1.98,3.06) -- (2.00,3.00);
\draw (2.00,3.00) -- (2.03,2.92) -- (2.06,2.83) -- (2.09,2.75) -- (2.13,2.66) -- (2.18,2.58) -- (2.22,2.50) -- (2.27,2.42) -- (2.32,2.34) -- (2.38,2.27) -- (2.43,2.21) -- (2.49,2.15) -- (2.55,2.09) -- (2.60,2.05) -- (2.66,2.01) -- (2.72,1.98) -- (2.78,1.96) -- (2.84,1.95) -- (2.89,1.96) -- (2.95,1.97) -- (3.00,2.00);
\draw (3.00,2.00) -- (3.01,2.01) -- (3.01,2.01) -- (3.02,2.02) -- (3.03,2.02) -- (3.04,2.03) -- (3.04,2.03) -- (3.05,2.04) -- (3.06,2.05) -- (3.06,2.06) -- (3.07,2.06) -- (3.08,2.07) -- (3.08,2.08) -- (3.09,2.09) -- (3.10,2.09) -- (3.11,2.10) -- (3.11,2.11) -- (3.12,2.12) -- (3.13,2.12) -- (3.13,2.13) -- (3.14,2.14);
\draw (3.14,2.14) -- (3.19,2.19) -- (3.23,2.24) -- (3.28,2.30) -- (3.33,2.35) -- (3.37,2.41) -- (3.42,2.46) -- (3.47,2.51) -- (3.52,2.56) -- (3.57,2.61) -- (3.61,2.66) -- (3.66,2.71) -- (3.71,2.76) -- (3.75,2.80) -- (3.79,2.84) -- (3.83,2.88) -- (3.87,2.91) -- (3.91,2.94) -- (3.94,2.96) -- (3.97,2.98) -- (4.00,3.00);
\draw (4.00,3.00) -- (4.06,3.02) -- (4.10,2.99) -- (4.11,2.92) -- (4.10,2.82) -- (4.06,2.69) -- (4.00,2.54) -- (3.93,2.36) -- (3.84,2.17) -- (3.73,1.96) -- (3.61,1.74) -- (3.47,1.52) -- (3.33,1.30) -- (3.17,1.08) -- (3.01,0.87) -- (2.85,0.67) -- (2.68,0.49) -- (2.51,0.33) -- (2.34,0.19) -- (2.17,0.08) -- (2.00,0.00);
\draw (2.00,0.00) -- (1.84,-0.04) -- (1.68,-0.05) -- (1.53,-0.02) -- (1.38,0.03) -- (1.24,0.11) -- (1.11,0.22) -- (0.98,0.35) -- (0.86,0.50) -- (0.74,0.67) -- (0.63,0.85) -- (0.53,1.05) -- (0.44,1.25) -- (0.35,1.47) -- (0.28,1.69) -- (0.21,1.91) -- (0.15,2.14) -- (0.10,2.36) -- (0.06,2.58) -- (0.02,2.79) -- (0.00,3.00);
\draw (0.00,3.00) -- (-0.00,3.06) -- (-0.01,3.11) -- (-0.01,3.16) -- (-0.01,3.22) -- (-0.02,3.27) -- (-0.02,3.32) -- (-0.02,3.37) -- (-0.02,3.42) -- (-0.02,3.47) -- (-0.02,3.52) -- (-0.02,3.57) -- (-0.02,3.62) -- (-0.02,3.67) -- (-0.01,3.72) -- (-0.01,3.76) -- (-0.01,3.81) -- (-0.01,3.86) -- (-0.01,3.91) -- (-0.00,3.95) -- (0.00,4.00);
\draw (4.00,3.00) -- (4.07,2.96) -- (4.13,2.91) -- (4.20,2.87) -- (4.26,2.82) -- (4.32,2.78) -- (4.38,2.73) -- (4.44,2.69) -- (4.50,2.64) -- (4.56,2.60) -- (4.62,2.55) -- (4.67,2.50) -- (4.72,2.45) -- (4.77,2.40) -- (4.81,2.35) -- (4.85,2.29) -- (4.89,2.24) -- (4.92,2.18) -- (4.95,2.12) -- (4.98,2.06) -- (5.00,2.00);
\draw (5.00,2.00) -- (5.01,1.96) -- (5.02,1.91) -- (5.03,1.86) -- (5.04,1.82) -- (5.04,1.77) -- (5.04,1.72) -- (5.05,1.67) -- (5.05,1.62) -- (5.05,1.57) -- (5.04,1.52) -- (5.04,1.47) -- (5.04,1.42) -- (5.03,1.36) -- (5.03,1.31) -- (5.02,1.26) -- (5.02,1.21) -- (5.01,1.16) -- (5.01,1.10) -- (5.00,1.05) -- (5.00,1.00);
\draw (5.00,1.00) -- (5.00,0.95) -- (4.99,0.90) -- (4.99,0.85) -- (4.99,0.80) -- (4.99,0.74) -- (4.99,0.69) -- (4.99,0.64) -- (4.99,0.59) -- (4.99,0.54) -- (4.99,0.49) -- (4.99,0.44) -- (4.99,0.39) -- (4.99,0.35) -- (4.99,0.30) -- (4.99,0.25) -- (4.99,0.20) -- (4.99,0.15) -- (5.00,0.10) -- (5.00,0.05) -- (5.00,0.00);
\draw (3.00,2.00) -- (3.07,1.95) -- (3.15,1.90) -- (3.22,1.85) -- (3.30,1.80) -- (3.37,1.75) -- (3.44,1.70) -- (3.50,1.65) -- (3.57,1.60) -- (3.63,1.55) -- (3.69,1.50) -- (3.74,1.45) -- (3.79,1.40) -- (3.84,1.35) -- (3.88,1.30) -- (3.91,1.25) -- (3.94,1.20) -- (3.97,1.15) -- (3.99,1.10) -- (4.00,1.05) -- (4.00,1.00);
\draw (4.00,1.00) -- (4.00,0.95) -- (3.99,0.90) -- (3.97,0.85) -- (3.94,0.80) -- (3.91,0.75) -- (3.88,0.70) -- (3.84,0.65) -- (3.79,0.60) -- (3.74,0.55) -- (3.69,0.50) -- (3.63,0.45) -- (3.57,0.40) -- (3.50,0.35) -- (3.44,0.30) -- (3.37,0.25) -- (3.30,0.20) -- (3.22,0.15) -- (3.15,0.10) -- (3.07,0.05) -- (3.00,0.00);
\filldraw[fill=white] (4.00,3.00) ellipse (0.80cm and 0.50cm);
\draw (0.00,4.50) node{$\bk(\varsigma_{I\ssm J})$};
\draw (2.00,4.50) node{$\For_{P_J}^{P_I}$};
\draw (4.00,4.50) node{$\psi_I$};
\draw (4.00,3.00) node{$\alpha$};
\draw (3.00,-0.50) node{$\inc$};
\draw (5.00,-0.50) node{$\psi_J$};
\draw (7.00,-0.50) node{$\Theta^{J,I}$};
\end{tikzpicture}}}
\]
}
\fi

\subsection{Study of $\beta$ for a minimal parabolic}
\label{ss:beta-min-parabolic}

In this subsection, we assume that $J = \varnothing$ and $I = \{s\}$.  Our goal is to prove the following statement.

\begin{prop}
\label{prop:indbeta}
Assume that $J = \varnothing$ and that $I = \{s\}$ for some $s \in S$.  The natural transformation
\[
R\Ind_B^{P_s}\beta: R\Ind_B^{P_s} {}\circ {\bk(\varsigma_s)} \circ \For_B^{P_s} {}\circ \psi_{\{s\}} \to R\Ind_B^{P_s} {}\circ \inc {} \circ \psi_{\varnothing} \circ \Theta^{\varnothing,\{s\}}
\]
is an isomorphism.
\end{prop}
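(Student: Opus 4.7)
The plan is to reduce Proposition~\ref{prop:indbeta} to the single case $M=\bk$, identify both sides of $R\Ind_B^{P_s}\beta_\bk$ with the indecomposable tilting module $\tilt_s(\ell\varsigma_s)$, and invoke Proposition~\ref{prop:steinberg-tilting}(3) to conclude.

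First, each functor appearing in $\beta$ carries a natural compatibility with tensor product by finite-dimensional $\dot P_s$-modules (via the appropriate forgetful functor): for $R\Ind_B^{P_s}$ this is the tensor identity, for $\psi_{\{s\}}$ and $\psi_\varnothing$ this is~\eqref{eqn:isom-psi-tensoring} in Theorem~\ref{thm:formality}, and for $\Theta^{\varnothing,\{s\}}$, $\bk(\varsigma_s)$, $\For_B^{P_s}$, $\inc$ it is obvious. Since $\beta$ is built from adjunction units/counits and the isomorphism $\alpha$ of Proposition~\ref{prop:Theta-psi} (all of which are compatible with tensoring by $V\in\Repf(\dot P_s)$), one obtains $R\Ind_B^{P_s}\beta_{V}\cong R\Ind_B^{P_s}\beta_\bk\otimes\For_{P_s}^{\dot P_s}(V)$. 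As $\Dfg_{\dot P_s}(\bL_{\{s\}})$ is generated by the trivial dg-modules $V$ with $V\in\Repf(\dot P_s)$, it suffices to prove that $R\Ind_B^{P_s}\beta_\bk$ is an isomorphism.

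For the source at $M=\bk$: Lemma~\ref{lem:trivial-psi} gives $\psi_{\{s\}}(\bk)\cong\St_{\{s\}}$, and the tensor identity yields
\[
R\Ind_B^{P_s}\bigl(\bk(\varsigma_s)\otimes\For_B^{P_s}(\St_{\{s\}})\bigr)\cong\St_{\{s\}}\otimes\coweyl_{\{s\}}(\varsigma_s)\cong\tilt_s(\ell\varsigma_s)
\]
by Proposition~\ref{prop:steinberg-tilting}(1) with $\lambda=0$. For the target, I would use that $\bL_\varnothing$ is free of rank $2$ as a left $\bL_{\{s\}}$-module, with the degree $-1$ generator spanning the $\dot B$-module $\bk_{\dot B}(-\alpha_s)$; this gives a distinguished triangle
\[
\bk_{\dot B}(-\alpha_s)[1]\to\bL_\varnothing\lotimes_{\bL_{\{s\}}}\bk\to\bk\xrightarrow{[1]}
\]
in $D_{\dot B}(\bL_\varnothing)$ whose connecting morphism is nonzero (the triangle is non-split because $\bL_\varnothing$ acts non-trivially on the quotient). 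Twisting by $\bk_{\dot B}(\varsigma_s)$, applying the equivalence $\psi_\varnothing$, and then $R\Ind_B^{P_s}$, the weight $\ell\varsigma_s-\ell\alpha_s$ is singular below the $s$-wall so Kempf/Serre duality gives $R\Ind_B^{P_s}(\bk_B(\ell\varsigma_s-\ell\alpha_s))\cong\irr_{\{s\}}(\ell\varsigma_s-\alpha_s)[-1]$, yielding the triangle
\[
\irr_{\{s\}}(\ell\varsigma_s-\alpha_s)\to R\Ind_B^{P_s}(\inc(\psi_\varnothing(\Theta^{\varnothing,\{s\}}(\bk))))\to\coweyl_{\{s\}}(\ell\varsigma_s)\xrightarrow{[1]}.
\]
Since the connecting morphism lives in the $1$-dimensional $\Ext^2$-group of Lemma~\ref{lem:ext2b}, it is a scalar multiple of the class of the $4$-term exact sequence of Lemma~\ref{lem:steinberg-diff}. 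Applying $R\Ind_B^{P_s}$ to that $4$-term sequence collapses the middle terms (by Kempf, since $\varsigma_s-\alpha_s$ is on the $s$-wall) and recovers the tilting exact sequence of Lemma~\ref{lem:tilting-SL2-l}; hence the target is also isomorphic to $\tilt_s(\ell\varsigma_s)$.

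Under these identifications, $R\Ind_B^{P_s}\beta_\bk$ becomes an endomorphism $\theta$ of $\tilt_s(\ell\varsigma_s)$. By Proposition~\ref{prop:steinberg-tilting}(3) (with $\lambda=0$), it is enough to produce a morphism $h\colon\tilt_s(\ell\varsigma_s)\to\coweyl_{\{s\}}(\ell\varsigma_s)$ such that $h\circ\theta$ is surjective; I take $h$ to be the canonical surjection from Lemma~\ref{lem:tilting-SL2-l}, which corresponds to the second arrow in the triangle displayed above. Unwinding the definition of $\beta$ (the unit of $\Theta^{\varnothing,\{s\}}\dashv\Theta_{\varnothing,\{s\}}$, then $\alpha$, then the counit of $\For_B^{P_s}\dashv R\Ind_B^{P_s}$), one checks that $h\circ\theta$ agrees with the morphism obtained by applying $R\Ind_B^{P_s}$ to the surjection $\St_{\{s\}}\otimes\bk_B(\varsigma_s)\twoheadrightarrow\bk_B(\ell\varsigma_s)$ appearing in Lemma~\ref{lem:steinberg-diff}; this is surjective by Proposition~\ref{prop:steinberg-tilting}(2). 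The main obstacle is this last verification: carefully tracing $\beta$ through the adjunctions and matching the resulting composition to the explicit map of Lemma~\ref{lem:steinberg-diff}. Once that identification is made, Proposition~\ref{prop:steinberg-tilting}(3) closes the argument at once.
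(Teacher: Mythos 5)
Your overall strategy coincides with the paper's: reduce to the unit object, identify both sides of $R\Ind_B^{P_s}\beta$ with $\tilt_s(\ell\varsigma_s)$, and close with Proposition~\ref{prop:steinberg-tilting}(2)--(3). Your identification of the target via the truncation triangle for $\bL_\varnothing\lotimes_{\bL_{\{s\}}}\bk$, Lemma~\ref{lem:ext2b} and Lemma~\ref{lem:steinberg-diff} is exactly the content of the paper's Lemma~\ref{lem:phi-ys} and is essentially correct (two small quibbles: $\ell\varsigma_s-\ell\alpha_s$ is antidominant rather than singular, and only the middle term of weight $\varsigma_s-\alpha_s$ dies under $R\Ind_B^{P_s}$ --- the other one becomes $\tilt_s(\ell\varsigma_s)$ itself).

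The genuine gap is the step you yourself flag as ``the main obstacle'': showing that $h\circ\theta$ is a nonzero multiple of $\Ind_B^{P_s}(g)$, equivalently that the $B$-module composite $\psi_\varnothing(\tau)\circ\beta_\bk\colon\St_s\otimes\bk_B(\varsigma_s)\to\bk_B(\ell\varsigma_s)$ is nonzero. Since $\Hom_B(\St_s\otimes\bk_B(\varsigma_s),\bk_B(\ell\varsigma_s))$ is one-dimensional, nonvanishing is all that is at stake --- but without it the argument collapses, since if the composite were zero Proposition~\ref{prop:steinberg-tilting}(3) gives nothing. The paper's device for this is Lemma~\ref{lem:ys-isom}: the composite $\bk\xrightarrow{\eta}\Theta_{\varnothing,\{s\}}\Theta^{\varnothing,\{s\}}(\bk)\to\Theta_{\varnothing,\{s\}}(\bk_{\dot B}(\varsigma_s))\cong\bk$ is, by adjunction, the image of the truncation $\tau$ under the isomorphism $\Hom(\Theta^{\varnothing,\{s\}}(\bk),\bk_{\dot B}(\varsigma_s))\cong\Hom(\bk,\Theta_{\varnothing,\{s\}}(\bk_{\dot B}(\varsigma_s)))$, hence nonzero, hence an isomorphism between (shifts of) the unit object. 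One then transports this through $\psi_{\{s\}}$ via the naturality of $\alpha$ (diagram~\eqref{eqn:indbeta2}) and the zigzag identity for $\For_B^{P_s}\dashv R\Ind_B^{P_s}$ (diagram~\eqref{eqn:indbeta35}), obtaining that your composite is an isomorphism followed by the surjective counit $\For_B^{P_s}(\St_s)\twoheadrightarrow\bk_B((\ell-1)\varsigma_s)$, hence nonzero. You need to supply this adjunction argument, or an equivalent one; ``unwinding the definition of $\beta$'' against the explicit matrix of Lemma~\ref{lem:steinberg-diff} is not a workable substitute, because $\alpha$ is only known abstractly.

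A secondary point: your reduction to $M=\bk$ requires that the natural transformation $\beta$ itself (not merely its source and target) commutes with $({-})\otimes V$, i.e.\ that the unit of $\Theta^{\varnothing,\{s\}}\dashv\Theta_{\varnothing,\{s\}}$, the isomorphism $\alpha$, and the counit of $\For_B^{P_s}\dashv R\Ind_B^{P_s}$ are all compatible with the tensoring isomorphisms. This is true but is additional bookkeeping; the paper avoids it by running the whole argument uniformly over all simple $V\in\Rep(\dot P_s)$, simplicity (rather than triviality) being what Lemma~\ref{lem:ys-isom} actually uses.
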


For the proof of this proposition
we will use the following simplified notation:
\[
\psi:=\psi_\varnothing, \quad \psi_s := \psi_{\{s\}}, \quad \bL:=\bL_\varnothing, \quad \bL_s:=\bL_{\{s\}}, \quad \Theta^s:=\Theta^{\varnothing, \{s\}}, \quad \Theta_s:=\Theta_{\varnothing, \{s\}}.
\]

We will need some preliminary lemmas concerning the object
\[
Y_s = \Theta^s(\bk) \cong (\bL \aq \bL_s) \otimes \bk_{\dot B}(\varsigma_s) \quad \in \Dfg_{\dot B}(\bL).
\]
(See~\S\ref{ss:normal-subalg} for the definition of the quotient $\bL \aq \bL_s$.)
It is easy to see from the definition of $\Theta^s$ that for any $V \in \Rep(\dot P_s)$ (regarded as a $\dot P_s$-equivariant $\bL_s$-module, as in~\S\ref{ss:fromLtoRn}), there is a canonical isomorphism
\[
\Theta^s(V) \cong Y_s \otimes V.
\]
(Here and below, we omit the functor $\For^{\dot P_s}_{\dot B}$.)
Note that $\bL \aq \bL_s$ is isomorphic to the exterior algebra on the $1$-dimensional space $\dot\fn/\dot\fn_s \cong \bk(-\alpha_s)$.  We therefore have
\[
\coH^i(Y_s \otimes V) \cong
\begin{cases}
\bk_{\dot B}(\varsigma_s) \otimes V& \text{if $i = 0$;} \\
\bk_{\dot B}(\varsigma_s - \alpha_s) \otimes V& \text{if $i = -1$;} \\
0 & \text{otherwise.}
\end{cases}
\]
In particular, we have a truncation homomorphism
\[
\tau: Y_s \otimes V \to \bk_{\dot B}(\varsigma_s) \otimes V.
\]

\begin{lem}
\label{lem:phi-ys}
The object $\psi(Y_s \otimes V) \in \Db_\Stein(B)$ is isomorphic to the following chain complex concentrated in degrees $-1$ and $0$, where $f$ is the map defined in Lemma~{\rm \ref{lem:steinberg-diff}} (and were we omit the functor $\For^{\dot P_s}_{B}$):
\begin{equation}
\label{eqn:phi-ys}
\cdots \to 0 \to \St_s \otimes \bk_B(\varsigma_s - \alpha_s)  \otimes V \xrightarrow{f \otimes \id_V} \St_s \otimes \bk_B(\varsigma_s) \otimes V \to 0 \cdots
\end{equation}
\end{lem}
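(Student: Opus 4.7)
The plan is to establish the lemma in three stages. First, I would reduce to the case $V=\bk$ via Proposition~\ref{prop:phiI-equivariance} (applied with $J=I=\varnothing$): since $\psi$ commutes with tensoring by $\dot B$-modules, we will have $\psi(Y_s \otimes V) \cong \psi(Y_s) \otimes V$ once $V$ is viewed on the right as a $B$-module through Frobenius. It then suffices to identify $\psi(Y_s)$ in $\Db_\Stein(B)$ with the two-term complex $\St_s \otimes \bk_B(\varsigma_s - \alpha_s) \xrightarrow{f} \St_s \otimes \bk_B(\varsigma_s)$. Viewed as a $\dot B$-equivariant $\bL$-dg-module, $Y_s = (\bL \aq \bL_s) \otimes \bk_{\dot B}(\varsigma_s)$ is concentrated in degrees $-1$ and $0$: its degree-$0$ part is $\bk_{\dot B}(\varsigma_s)$, its degree-$(-1)$ part is $\bk_{\dot B}(\varsigma_s - \alpha_s)$, the differential is zero, and the $\bL$-action is trivial on $\fnt_s$ while the weight-$(-\alpha_s)$ vector in $\fnt$ identifies $Y_s^0$ with $Y_s^{-1}$.

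Second, since $\fnt \cdot Y_s^{-1} \subseteq Y_s^{-2} = 0$, the graded piece $Y_s^{-1}$ is an $\bL$-submodule, and the resulting short exact sequence in $\bL\ldgmod_{\dot B}$ yields a distinguished triangle
\[
\bk_{\dot B}(\varsigma_s - \alpha_s)[1] \to Y_s \to \bk_{\dot B}(\varsigma_s) \xrightarrow{\delta} \bk_{\dot B}(\varsigma_s - \alpha_s)[2]
\]
in $\Dfg_{\dot B}(\bL)$. I would then apply $\psi$ and use $\psi(\bk_{\dot B}(\mu)) \cong \bk_B(\ell\mu)$ (an instance of Proposition~\ref{prop:phiI-equivariance}, given that $\St_\varnothing = \bk$) to obtain an analogous triangle in $\Db_\Stein(B)$ whose outer vertices match the cohomology of the target complex. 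The four-term exact sequence of Lemma~\ref{lem:steinberg-diff} realizes the target complex as a representative of a nonzero class in $\Ext^2_B(\bk_B(\ell\varsigma_s), \bk_B(\ell\varsigma_s - \ell\alpha_s))$, which by Lemma~\ref{lem:ext2b} is one-dimensional. Hence the proof reduces to showing that $\psi(\delta) \neq 0$, or equivalently---since $\psi$ is fully faithful by Theorem~\ref{thm:formality}---that $\delta \neq 0$.

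The main technical obstacle is this nonvanishing of $\delta$. To establish it, I would compute $\Hom^1_{\Dfg_{\dot B}(\bL)}(Y_s, Y_s)$ via the adjunction $\Theta^s \dashv \Theta_s$ from~\S\ref{ss:Theta}: since $Y_s = \Theta^s(\bk)$, we have
\[
\Hom^1_{\Dfg_{\dot B}(\bL)}(Y_s, Y_s) \cong \Hom^1_{\Dfg_{\dot P_s}(\bL_s)}(\bk, \Theta_s(Y_s)).
\]
The restriction of $Y_s$ along $\bL_s \hookrightarrow \bL$ has trivial $\bL_s$-action, so splits as $\bk_{\dot B}(\varsigma_s) \oplus \bk_{\dot B}(\varsigma_s - \alpha_s)[1]$ in $\Dfg_{\dot B}(\bL_s)$; applying Kempf's vanishing for the trivial character together with the standard identity $R\Ind^{\dot P_s}_{\dot B}(\bk_{\dot B}(-\alpha_s)) \cong \bk_{\dot P_s}[-1]$ yields $\Theta_s(Y_s) \cong \bk_{\dot P_s}^{\oplus 2}$. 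The odd-degree vanishing intrinsic to Koszul duality (as in the proof of Lemma~\ref{lem:Hom-fixed-points-bLI}) then forces $\Hom^1_{\Dfg_{\dot P_s}(\bL_s)}(\bk, \bk) = 0$, whence the displayed Hom-space vanishes. On the other hand, were $Y_s$ isomorphic in $\Dfg_{\dot B}(\bL)$ to the split extension $\bk_{\dot B}(\varsigma_s - \alpha_s)[1] \oplus \bk_{\dot B}(\varsigma_s)$, then $\Hom^1(Y_s, Y_s)$ would contain the summand $\Ext^2_{\Dfg_{\dot B}(\bL)}(\bk_{\dot B}(\varsigma_s), \bk_{\dot B}(\varsigma_s - \alpha_s))$, which is nonzero by the full faithfulness of $\psi$ and Lemma~\ref{lem:ext2b}---a contradiction. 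Thus $\delta \neq 0$, and the lemma follows.
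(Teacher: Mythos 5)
Your proposal is correct and follows the same skeleton as the paper's proof: reduce to $V=\bk$ via the compatibility of $\psi$ with tensoring by $\dot B$-modules, form the truncation triangle $\bk_{\dot B}(\varsigma_s-\alpha_s)[1]\to Y_s\to\bk_{\dot B}(\varsigma_s)\xrightarrow{\delta}{}$, show $\delta\neq 0$, and then use the one-dimensionality of $\Ext^2_B(\bk_B(\ell\varsigma_s),\bk_B(\ell\varsigma_s-\ell\alpha_s))$ (Lemma~\ref{lem:ext2b}) together with the explicit $2$-extension of Lemma~\ref{lem:steinberg-diff} to identify $\psi(Y_s)$ with the cone of $f$. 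Where you genuinely diverge is the nonvanishing of the connecting map: the paper disposes of this in one line by observing that $Y_s$ is indecomposable because it is indecomposable as an $\bL$-module, whereas you prove it by computing $\Hom^1(Y_s,Y_s)\cong\Hom^1_{\Dfg_{\dot P_s}(\bL_s)}(\bk,\Theta_s(Y_s))$ via the adjunction $\Theta^s\dashv\Theta_s$, showing $\Theta_s(Y_s)\cong\bk^{\oplus 2}$ and invoking the odd-degree vanishing of Koszul duality, then noting that a splitting would force $\Hom^1(Y_s,Y_s)$ to contain the nonzero $\Ext^2(\bk_{\dot B}(\varsigma_s),\bk_{\dot B}(\varsigma_s-\alpha_s))$. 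Your route is longer but arguably more robust, since it avoids the (slightly glossed-over in the paper) passage from indecomposability of a dg-module to indecomposability of its image in the derived category; the computation $\Theta_s\Theta^s(\bk)\cong\bk^{\oplus 2}$ you carry out is moreover consistent with, and close in spirit to, the paper's Lemma~\ref{lem:ys-isom}. Two small citation corrections: the isomorphism $\psi(X\otimes V)\cong\psi(X)\otimes V$ and the identification $\psi(\bk_{\dot B}(\mu))\cong\bk_B(\ell\mu)$ should be attributed to Lemma~\ref{lem:frobenius-psi} and Lemma~\ref{lem:trivial-psi} (equivalently~\eqref{eqn:isom-psi-tensoring}, using $\St_\varnothing=\bk$ since $\varsigma_\varnothing=0$), rather than to Proposition~\ref{prop:phiI-equivariance}, which concerns the non-equivariant functor $\varphi_I$.
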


\begin{proof}
Recall from~\eqref{eqn:isom-psi-tensoring} that we have $\psi(Y_s \otimes V) \cong \psi(Y_s) \otimes V$.  Therefore, it suffices to prove the lemma in the special case where $V$ is the trivial $\dot P_s$-module.  Consider the truncation distinguished triangle
\[
Y_s \xrightarrow{\tau} \bk_{\dot B}(\varsigma_s) \xrightarrow{\zeta} \bk_{\dot B}(\varsigma_s - \alpha_s)[2] \to.
\]
The object $Y_s$ is certainly indecomposable (because it is indecomposable as a $\bL$-module), so the connecting morphism $\zeta$ is nonzero.  Therefore, $\psi(\zeta)$ is a nonzero element of $\Ext^2_B(\bk_B(\ell\varsigma_s), \bk_B(\ell\varsigma_s - \ell\alpha_s))$.  By Lemma~\ref{lem:ext2b}, $\psi(\zeta)$ must be a nonzero scalar multiple of the element $\theta \in \Ext^2_B(\bk_B(\ell\varsigma_s), \bk_B(\ell\varsigma_s - \ell\alpha_s))$ constructed in Lemma~\ref{lem:steinberg-diff}.  It follows that the cone of $\theta$ is isomorphic to the cone of $\psi(\zeta)$.  The cone of $\theta$ is given by the chain complex~\eqref{eqn:phi-ys} (with $V=\bk$), while the cone of $\psi(\zeta)$ is $\psi(Y_s)$.
\end{proof}

\begin{lem}
\label{lem:ys-isom}
For any simple module $V \in \Rep(\dot P_s)$, the composition
\begin{equation}\label{eqn:ys-isom}
V \xrightarrow{\eta} \Theta_s \Theta^s(V) \xrightarrow{\Theta_s(\tau)} \Theta_s(\bk_{\dot B}(\varsigma_s) \otimes V)
\end{equation}
(where $\eta$ is the adjunction morphism)
is an isomorphism.
\end{lem}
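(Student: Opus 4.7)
The strategy is to recognize the composition as the adjoint of a manifestly nonzero morphism. By the universal property of the adjunction $\Theta^s \dashv \Theta_s$ (with unit $\eta$), the composition in~\eqref{eqn:ys-isom} is the image of the truncation $\tau \colon \Theta^s(V) = Y_s \otimes V \to \bk_{\dot B}(\varsigma_s) \otimes V$ under the adjunction bijection
\[
\Hom_{\Dfg_{\dot B}(\bL)}\bigl(\Theta^s V,\, \bk_{\dot B}(\varsigma_s) \otimes V\bigr) \simto \Hom_{\Dfg_{\dot P_s}(\bL_s)}\bigl(V,\, \Theta_s(\bk_{\dot B}(\varsigma_s) \otimes V)\bigr).
\]

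The first step is to identify the target. Unwinding the definition $\Theta_s = R\Ind_{\dot B}^{\dot P_s} \circ j_{\varnothing,s}^* \circ \bigl((-) \otimes \bk_{\dot B}(-\varsigma_s)\bigr)$, the two character twists cancel, and the restriction of scalars along $j_{\varnothing,s}$ has no further effect because $\bL_s$ already acts trivially on $\bk_{\dot B}(\varsigma_s) \otimes V$. Hence $\Theta_s(\bk_{\dot B}(\varsigma_s) \otimes V) \cong R\Ind_{\dot B}^{\dot P_s}(V)$, viewed as an object of $\Dfg_{\dot P_s}(\bL_s)$ with trivial $\bL_s$-action. Since $V$ is itself a $\dot P_s$-module, the tensor identity combined with Kempf vanishing ($R\Ind_{\dot B}^{\dot P_s}(\bk) \cong \bk$, as $\dot P_s/\dot B \cong \mathbb{P}^1$) provides a canonical isomorphism $R\Ind_{\dot B}^{\dot P_s}(V) \cong V$ concentrated in degree~$0$.

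Under these identifications, the composition in~\eqref{eqn:ys-isom} becomes an endomorphism of $V$ in the category $\Rep(\dot P_s)$. Here the simplicity hypothesis enters: any nonzero endomorphism of a simple object is an isomorphism, so it suffices to show the composition is nonzero. But the adjunction bijection above is in particular injective, and $\tau$ is manifestly nonzero---on the degree-$0$ component $(\bL \aq \bL_s)^0 \otimes V \otimes \bk_{\dot B}(\varsigma_s) \to V \otimes \bk_{\dot B}(\varsigma_s)$ it is the identity. Hence the adjoint is nonzero, concluding the argument.

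I expect the main bookkeeping obstacle to lie in the identification of $\Theta_s(\bk_{\dot B}(\varsigma_s) \otimes V)$ with $V$, which requires juggling the three component adjunctions (the character twist, the change-of-rings $\bL_s \hookrightarrow \bL$, and $\For \dashv R\Ind$) and verifying that the resulting canonical isomorphism is compatible with the adjoint of $\tau$. Once this identification is established, the simplicity of $V$ together with the injectivity of the adjunction bijection finishes the argument immediately.
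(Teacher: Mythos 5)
Your proposal is correct and follows essentially the same route as the paper: identify $\Theta_s(\bk_{\dot B}(\varsigma_s)\otimes V)\cong V$, note by Schur's lemma that it suffices to show the composition is nonzero, and observe that the composition is precisely the image of the nonzero map $\tau$ under the adjunction bijection for $\Theta^s\dashv\Theta_s$. The only difference is that you spell out the identification of the target (which the paper dismisses as "easy to see from the definition"); your worry about compatibility of that identification with the adjoint of $\tau$ is unnecessary, since nonvanishing of a morphism between two simple objects that happen to be isomorphic already forces it to be an isomorphism.
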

\begin{proof}
It is easy to see from the definition that $\Theta_s(\bk_{\dot B}(\varsigma_s) \otimes V) \cong V$. Since $V$ is simple by assumption, we need only show that $\Theta_s(\tau) \circ \eta$ is nonzero.  
But this morphism is the image of $\tau$ under the isomorphism
\[
\Hom_{\Dfg_{\dot B}(\bL)}(\Theta^s(V), \bk_{\dot B}(\varsigma_s) \otimes V) \cong
\Hom_{\Dfg_{\dot P_s}(\bL_s)}(V, \Theta_s(\bk_{\dot B}(\varsigma_s) \otimes V))
\]
induced by adjunction; hence it is indeed nonzero.
\end{proof}

We are now ready to prove Proposition~\ref{prop:indbeta}.

\begin{proof}[Proof of Proposition~{\rm \ref{prop:indbeta}}]
By the same arguments as in the proof of Lemma~\ref{lem:stein-ff},
the category $\Dfg_{\dot P_s}(\bL_s)$ is generated by the simple $\dot P_s$-modu\-les $V$, regarded as $\dot P_s$-equivariant $\bL_s$-dg-modules with trivial $\bL_s$-action. Hence we can fix such a $V$, and it suffices to show that $R\Ind_B^{P_s} \beta_V$ is an isomorphism. Applying $\psi_s$ to the maps in~\eqref{eqn:ys-isom}, and using the natural transformation $\alpha$, we obtain the commutative diagram
\begin{equation}\label{eqn:indbeta2}
\vcenter{
\xymatrix@C=2pt{
\psi_s(V) \ar[r] &
  \psi_s\Theta_s\Theta^s(V) \ar[rr] \ar[d]_\alpha^{\wr} && \psi_s\Theta_s(\bk_{\dot B}(\varsigma_s) \otimes V) \ar[d]_\alpha^{\wr} \\
& R\Ind_B^{P_s}(\psi\Theta^s(V) \otimes \bk(-\varsigma_s)) \ar[rr] &&
  R\Ind_B^{P_s}(\psi(\bk_{\dot B}(\varsigma_s) \otimes V) \otimes \bk(-\varsigma_s)).
}
}
\end{equation}
For brevity, we introduce the notation
\[
Q_V := \psi\Theta^s(V) \otimes \bk_B(-\varsigma_s).
\]
According to Lemma~\ref{lem:phi-ys}, $Q_V$ can be identified with a chain complex
\[
\St_s \otimes \bk(-\alpha_s) \otimes V \to \St_s \otimes V
\]
concentrated in degrees $-1$ and $0$.  

We also have $\psi(\bk_{\dot B}(\varsigma_s) \otimes V) \cong \bk_B(\ell\varsigma_s) \otimes V$, so from~\eqref{eqn:indbeta2} we obtain the maps
\[
\psi_s(V) \to R\Ind_B^{P_s}
(Q_V)
\to
R\Ind_B^{P_s}(\bk_B((\ell-1)\varsigma_s) \otimes V).
\]
By Lemma~\ref{lem:ys-isom}, the composition of these two maps is an isomorphism.  Next, applying $\For_B^{P_s}$ and using the counit $\For_B^{P_s} R\Ind_B^{P_s} \to \id$, we obtain the commutative diagram
\begin{equation}\label{eqn:indbeta35}
\vcenter{
\xymatrix@C=10pt{
\For_B^{P_s}\psi_s(V) \ar[r] &
  \For_B^{P_s} R\Ind_B^{P_s} (Q_V)
   \ar[r] \ar[d] & 
  \For_B^{P_s}R\Ind_B^{P_s}(\bk_B((\ell-1)\varsigma_s) \otimes V) \ar[d] \\
& Q_V \ar[r] &
  \bk_B((\ell-1)\varsigma_s) \otimes V.
}
}
\end{equation}
Note that $R\Ind_B^{P_s}(\bk_B((\ell-1)\varsigma_s) \otimes V) \cong \St_s \otimes V$ by the tensor identity and Kempf's vanishing theorem. Hence the right-hand vertical arrow identifies with a surjective map $\For_B^{P_s} (\St_s \otimes V) \twoheadrightarrow \bk_B((\ell-1)\varsigma_s) \otimes V$. 

Let $Q'_V = Q_V \otimes \bk(\varsigma_s)$.  
Tensoring~\eqref{eqn:indbeta35} with $\bk(\varsigma_s)$, we obtain a sequence of maps
\begin{equation}\label{eqn:indbeta4}
\For_B^{P_s} \psi_s(V) \otimes \bk(\varsigma_s) 
\xrightarrow{\beta_V}
Q'_V
\to
\bk_B(\ell\varsigma_s) \otimes V,
\end{equation}
where the first map is induced by the natural transformation $\beta$.  The composition of these two maps is again surjective.  Now apply $R\Ind_B^{P_s}$ to obtain the diagram
\begin{equation}\label{eqn:indbeta-isom}
R\Ind_B^{P_s}(\For_B^{P_s} \psi_s(V) \otimes \bk(\varsigma_s))
\xrightarrow{R\Ind_B^{P_s}\beta_V}
R\Ind_B^{P_s}Q'_V 
\to
R\Ind_B^{P_s}(\bk_B(\ell\varsigma_s) \otimes V).
\end{equation}
Recall that $\psi_s(V) \cong \St_s \otimes V$, so the first term above is isomorphic to $R\Ind_B^{P_s} (\St_s \otimes \bk(\varsigma_s) \otimes V)$.  Next, $Q'_V$ is given by a chain complex of the form
\[
\cdots \to 0 \to \St_s \otimes \bk(\varsigma_s-\alpha_s) \otimes V \to \St_s \otimes \bk(\varsigma_s) \otimes V \to 0 \to \cdots,
\]
with nonzero terms in degrees $-1$ and $0$. Since $R\Ind_B^{P_s} (\St_s \otimes \bk(\varsigma_s - \alpha_s) \otimes V) \cong \St_s \otimes R\Ind_B^{P_s} \bk(\varsigma_s - \alpha_s) \otimes V = 0$, we can identify the second term in~\eqref{eqn:indbeta-isom} with $R\Ind_B^{P_s} (\St_s \otimes \bk(\varsigma_s) \otimes V)$ as well.  By Proposition~\ref{prop:steinberg-tilting}\eqref{it:ind-surj} and the surjectivity of the composition in~\eqref{eqn:indbeta4}, the composition of the two maps in~\eqref{eqn:indbeta-isom} is surjective.  Then Proposition~\ref{prop:steinberg-tilting}\eqref{it:ind-isom} tells us that the first map must be an isomorphism, as desired.
\end{proof}

\subsection{Main result}
\label{ss:main-translation}

Recall the definition of the functors $\Omega_I$ and $\Omega_J$ in~\eqref{eqn:omega-defn}. We
define natural transformations
\[
\theta: \Omega_I \circ \Theta_{J,I} \to T_J^I \circ \Omega_J \quad \text{and} \quad \phi: T_I^J \circ \Omega_I \to \Omega_J \circ \Theta^{J,I}
\]
by the diagrams in Figure~\ref{fig:thm-trans}. (The dotted boxes in that figure have no significance for the definition of $\theta$ and $\phi$, but they appear in the proof of the next lemma.)

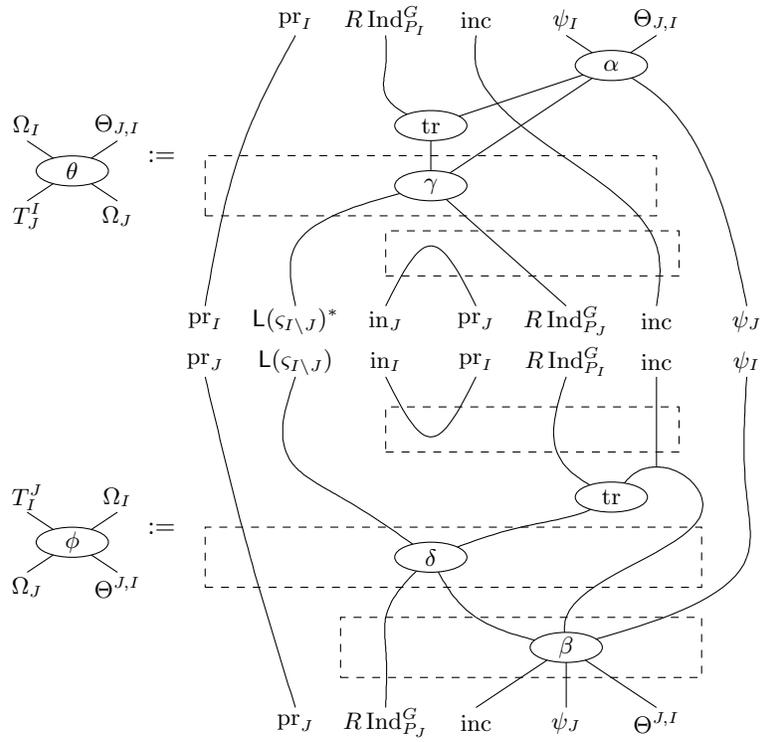
\begin{figure}
\ifdefined\PARTCOMPILE{
\[
FIGURE
\]
}
\else {
\begin{align*}
\vcenter{\hbox{\begin{tikzpicture}[trans]
\useasboundingbox (-0.5,-0.5) rectangle (2.5,3.5);
\draw (1.00,1.00) -- (2.00,0.00);
\draw (1.00,1.00) -- (0.00,0.00);
\draw (2.00,2.00) -- (1.00,1.00);
\draw (0.00,2.00) -- (1.00,1.00);
\filldraw[fill=white] (1.00,1.00) ellipse (0.80cm and 0.50cm);
\draw (0.00,2.50) node{$\Omega_I$};
\draw (2.00,2.50) node{$\Theta_{J,I}$};
\draw (1.00,1.00) node{$\theta$};
\draw (0.00,-0.50) node{$T_J^I$};
\draw (2.00,-0.50) node{$\Omega_J$};
\end{tikzpicture}}}
&:= 
\vcenter{\hbox{\begin{tikzpicture}[trans]
\useasboundingbox (-0.5,-0.5) rectangle (12.5,10.5);
\draw (4.00,9.00) -- (4.00,8.95) -- (4.00,8.90) -- (4.01,8.85) -- (4.01,8.80) -- (4.01,8.75) -- (4.01,8.70) -- (4.01,8.65) -- (4.01,8.61) -- (4.01,8.56) -- (4.01,8.51) -- (4.01,8.46) -- (4.01,8.41) -- (4.01,8.36) -- (4.01,8.31) -- (4.01,8.26) -- (4.01,8.20) -- (4.01,8.15) -- (4.01,8.10) -- (4.00,8.05) -- (4.00,8.00);
\draw (4.00,8.00) -- (4.00,7.95) -- (3.99,7.90) -- (3.99,7.84) -- (3.98,7.79) -- (3.98,7.74) -- (3.97,7.69) -- (3.97,7.64) -- (3.96,7.58) -- (3.96,7.53) -- (3.96,7.48) -- (3.95,7.43) -- (3.95,7.38) -- (3.95,7.33) -- (3.96,7.28) -- (3.96,7.23) -- (3.96,7.18) -- (3.97,7.14) -- (3.98,7.09) -- (3.99,7.04) -- (4.00,7.00);
\draw (4.00,7.00) -- (4.02,6.94) -- (4.05,6.88) -- (4.08,6.82) -- (4.11,6.76) -- (4.15,6.71) -- (4.19,6.65) -- (4.23,6.60) -- (4.28,6.55) -- (4.33,6.50) -- (4.38,6.45) -- (4.44,6.40) -- (4.50,6.36) -- (4.56,6.31) -- (4.62,6.27) -- (4.68,6.22) -- (4.74,6.18) -- (4.80,6.13) -- (4.87,6.09) -- (4.93,6.04) -- (5.00,6.00);
\draw (10.00,0.00) -- (10.01,0.10) -- (10.02,0.21) -- (10.03,0.31) -- (10.04,0.41) -- (10.05,0.52) -- (10.05,0.62) -- (10.06,0.72) -- (10.07,0.82) -- (10.07,0.93) -- (10.07,1.03) -- (10.08,1.13) -- (10.08,1.23) -- (10.07,1.33) -- (10.07,1.43) -- (10.06,1.52) -- (10.06,1.62) -- (10.05,1.72) -- (10.03,1.81) -- (10.02,1.91) -- (10.00,2.00);
\draw (10.00,2.00) -- (9.94,2.23) -- (9.87,2.45) -- (9.77,2.67) -- (9.67,2.88) -- (9.55,3.09) -- (9.42,3.29) -- (9.27,3.49) -- (9.12,3.68) -- (8.96,3.88) -- (8.79,4.07) -- (8.62,4.26) -- (8.44,4.45) -- (8.26,4.64) -- (8.08,4.83) -- (7.89,5.02) -- (7.71,5.21) -- (7.53,5.40) -- (7.35,5.60) -- (7.17,5.80) -- (7.00,6.00);
\draw (7.00,6.00) -- (6.93,6.09) -- (6.85,6.18) -- (6.78,6.28) -- (6.71,6.37) -- (6.65,6.47) -- (6.58,6.56) -- (6.52,6.66) -- (6.46,6.76) -- (6.40,6.86) -- (6.35,6.96) -- (6.30,7.06) -- (6.25,7.16) -- (6.20,7.26) -- (6.16,7.36) -- (6.13,7.47) -- (6.09,7.57) -- (6.06,7.68) -- (6.04,7.78) -- (6.02,7.89) -- (6.00,8.00);
\draw (6.00,8.00) -- (5.99,8.05) -- (5.99,8.10) -- (5.99,8.15) -- (5.98,8.20) -- (5.98,8.25) -- (5.98,8.30) -- (5.98,8.35) -- (5.98,8.40) -- (5.98,8.45) -- (5.98,8.50) -- (5.98,8.55) -- (5.98,8.60) -- (5.98,8.65) -- (5.98,8.70) -- (5.99,8.75) -- (5.99,8.80) -- (5.99,8.85) -- (5.99,8.90) -- (6.00,8.95) -- (6.00,9.00);
\draw (9.00,8.00) -- (9.05,7.95) -- (9.11,7.90) -- (9.16,7.86) -- (9.21,7.81) -- (9.26,7.76) -- (9.32,7.71) -- (9.37,7.67) -- (9.42,7.62) -- (9.47,7.57) -- (9.52,7.52) -- (9.57,7.47) -- (9.62,7.42) -- (9.67,7.37) -- (9.72,7.32) -- (9.77,7.27) -- (9.82,7.21) -- (9.87,7.16) -- (9.91,7.11) -- (9.96,7.05) -- (10.00,7.00);
\draw (10.00,7.00) -- (10.07,6.91) -- (10.13,6.82) -- (10.20,6.73) -- (10.26,6.64) -- (10.32,6.54) -- (10.37,6.45) -- (10.43,6.35) -- (10.48,6.25) -- (10.53,6.15) -- (10.58,6.05) -- (10.63,5.95) -- (10.68,5.85) -- (10.72,5.74) -- (10.77,5.64) -- (10.81,5.53) -- (10.85,5.43) -- (10.89,5.32) -- (10.93,5.21) -- (10.96,5.11) -- (11.00,5.00);
\draw (11.00,5.00) -- (11.08,4.75) -- (11.15,4.51) -- (11.23,4.26) -- (11.29,4.02) -- (11.35,3.77) -- (11.41,3.52) -- (11.47,3.27) -- (11.52,3.02) -- (11.57,2.77) -- (11.62,2.52) -- (11.66,2.27) -- (11.71,2.02) -- (11.75,1.77) -- (11.79,1.51) -- (11.82,1.26) -- (11.86,1.01) -- (11.90,0.76) -- (11.93,0.51) -- (11.97,0.25) -- (12.00,0.00);
\draw (9.00,8.00) -- (8.90,7.90) -- (8.80,7.80) -- (8.70,7.70) -- (8.60,7.60) -- (8.50,7.50) -- (8.40,7.40) -- (8.30,7.30) -- (8.20,7.20) -- (8.10,7.10) -- (8.00,7.00) -- (7.90,6.90) -- (7.80,6.80) -- (7.70,6.70) -- (7.60,6.60) -- (7.50,6.50) -- (7.40,6.40) -- (7.30,6.30) -- (7.20,6.20) -- (7.10,6.10) -- (7.00,6.00);
\draw (7.00,6.00) -- (6.90,5.90) -- (6.80,5.80) -- (6.70,5.70) -- (6.60,5.60) -- (6.50,5.50) -- (6.40,5.40) -- (6.30,5.30) -- (6.20,5.20) -- (6.10,5.10) -- (6.00,5.00) -- (5.90,4.90) -- (5.80,4.80) -- (5.70,4.70) -- (5.60,4.60) -- (5.50,4.50) -- (5.40,4.40) -- (5.30,4.30) -- (5.20,4.20) -- (5.10,4.10) -- (5.00,4.00);
\draw (9.00,8.00) -- (8.90,7.95) -- (8.80,7.90) -- (8.70,7.85) -- (8.60,7.80) -- (8.50,7.75) -- (8.40,7.70) -- (8.30,7.65) -- (8.20,7.60) -- (8.10,7.55) -- (8.00,7.50) -- (7.90,7.45) -- (7.80,7.40) -- (7.70,7.35) -- (7.60,7.30) -- (7.50,7.25) -- (7.40,7.20) -- (7.30,7.15) -- (7.20,7.10) -- (7.10,7.05) -- (7.00,7.00);
\draw (7.00,7.00) -- (6.90,6.95) -- (6.80,6.90) -- (6.70,6.85) -- (6.60,6.80) -- (6.50,6.75) -- (6.40,6.70) -- (6.30,6.65) -- (6.20,6.60) -- (6.10,6.55) -- (6.00,6.50) -- (5.90,6.45) -- (5.80,6.40) -- (5.70,6.35) -- (5.60,6.30) -- (5.50,6.25) -- (5.40,6.20) -- (5.30,6.15) -- (5.20,6.10) -- (5.10,6.05) -- (5.00,6.00);
\draw (10.00,9.00) -- (9.00,8.00);
\draw (8.00,9.00) -- (9.00,8.00);
\draw (2.00,9.00) -- (1.95,8.85) -- (1.89,8.70) -- (1.84,8.55) -- (1.78,8.40) -- (1.73,8.25) -- (1.68,8.11) -- (1.62,7.96) -- (1.57,7.81) -- (1.52,7.66) -- (1.47,7.51) -- (1.42,7.36) -- (1.37,7.21) -- (1.32,7.06) -- (1.27,6.91) -- (1.22,6.76) -- (1.18,6.61) -- (1.13,6.45) -- (1.09,6.30) -- (1.04,6.15) -- (1.00,6.00);
\draw (1.00,6.00) -- (0.92,5.71) -- (0.85,5.41) -- (0.78,5.12) -- (0.71,4.82) -- (0.65,4.52) -- (0.59,4.23) -- (0.54,3.93) -- (0.48,3.63) -- (0.43,3.33) -- (0.39,3.03) -- (0.34,2.73) -- (0.30,2.43) -- (0.26,2.12) -- (0.22,1.82) -- (0.18,1.52) -- (0.14,1.21) -- (0.11,0.91) -- (0.07,0.61) -- (0.03,0.30) -- (0.00,0.00);
\draw (5.00,6.00) -- (5.00,5.95) -- (5.00,5.90) -- (5.00,5.85) -- (5.00,5.80) -- (5.00,5.75) -- (5.00,5.70) -- (5.00,5.65) -- (5.00,5.60) -- (5.00,5.55) -- (5.00,5.50) -- (5.00,5.45) -- (5.00,5.40) -- (5.00,5.35) -- (5.00,5.30) -- (5.00,5.25) -- (5.00,5.20) -- (5.00,5.15) -- (5.00,5.10) -- (5.00,5.05) -- (5.00,5.00);
\draw (5.00,5.00) -- (5.00,4.95) -- (5.00,4.90) -- (5.00,4.85) -- (5.00,4.80) -- (5.00,4.75) -- (5.00,4.70) -- (5.00,4.65) -- (5.00,4.60) -- (5.00,4.55) -- (5.00,4.50) -- (5.00,4.45) -- (5.00,4.40) -- (5.00,4.35) -- (5.00,4.30) -- (5.00,4.25) -- (5.00,4.20) -- (5.00,4.15) -- (5.00,4.10) -- (5.00,4.05) -- (5.00,4.00);
\draw (5.00,4.00) -- (8.00,0.00);
\draw (5.00,4.00) -- (4.80,3.93) -- (4.60,3.85) -- (4.41,3.78) -- (4.21,3.70) -- (4.02,3.62) -- (3.84,3.54) -- (3.65,3.46) -- (3.48,3.37) -- (3.30,3.29) -- (3.14,3.19) -- (2.98,3.10) -- (2.83,3.00) -- (2.69,2.89) -- (2.56,2.78) -- (2.43,2.67) -- (2.32,2.55) -- (2.22,2.42) -- (2.14,2.29) -- (2.06,2.15) -- (2.00,2.00);
\draw (2.00,2.00) -- (1.97,1.91) -- (1.95,1.83) -- (1.93,1.74) -- (1.91,1.65) -- (1.90,1.55) -- (1.89,1.46) -- (1.89,1.36) -- (1.89,1.26) -- (1.89,1.16) -- (1.89,1.06) -- (1.89,0.96) -- (1.90,0.85) -- (1.91,0.75) -- (1.92,0.64) -- (1.93,0.54) -- (1.94,0.43) -- (1.96,0.32) -- (1.97,0.22) -- (1.99,0.11) -- (2.00,0.00);
\draw (4.00,0.00) -- (4.05,0.15) -- (4.10,0.30) -- (4.15,0.45) -- (4.20,0.59) -- (4.25,0.73) -- (4.30,0.87) -- (4.35,1.01) -- (4.40,1.14) -- (4.45,1.26) -- (4.50,1.38) -- (4.55,1.48) -- (4.60,1.58) -- (4.65,1.68) -- (4.70,1.76) -- (4.75,1.83) -- (4.80,1.89) -- (4.85,1.94) -- (4.90,1.97) -- (4.95,1.99) -- (5.00,2.00);
\draw (5.00,2.00) -- (5.05,1.99) -- (5.10,1.97) -- (5.15,1.94) -- (5.20,1.89) -- (5.25,1.83) -- (5.30,1.76) -- (5.35,1.68) -- (5.40,1.58) -- (5.45,1.48) -- (5.50,1.37) -- (5.55,1.26) -- (5.60,1.14) -- (5.65,1.01) -- (5.70,0.87) -- (5.75,0.73) -- (5.80,0.59) -- (5.85,0.45) -- (5.90,0.30) -- (5.95,0.15) -- (6.00,0.00);
\filldraw[fill=white] (9.00,8.00) ellipse (0.80cm and 0.50cm);
\filldraw[fill=white] (5.00,6.00) ellipse (0.80cm and 0.50cm);
\draw[dashed,] (0.00,5.00) rectangle (10.00,3.00);
\filldraw[fill=white] (5.00,4.00) ellipse (0.80cm and 0.50cm);
\draw[dashed,] (4.00,1.00) rectangle (10.50,2.50);
\draw (2.00,9.50) node{$\pr_I$};
\draw (4.00,9.50) node{$R\Ind_{P_I}^G$};
\draw (6.00,9.50) node{$\inc$};
\draw (8.00,9.50) node{$\psi_I$};
\draw (10.00,9.50) node{$\Theta_{J,I}$};
\draw (9.00,8.00) node{$\alpha$};
\draw (5.00,6.00) node{tr};
\draw (5.00,4.00) node{$\gamma$};
\draw (0.00,-0.50) node{$\pr_I$};
\draw (2.00,-0.50) node{$\irr(\varsigma_{I\ssm J})^*$};
\draw (4.00,-0.50) node{$\incl_J$};
\draw (6.00,-0.50) node{$\pr_J$};
\draw (8.00,-0.50) node{$R\Ind_{P_J}^G$};
\draw (10.00,-0.50) node{$\inc$};
\draw (12.00,-0.50) node{$\psi_J$};
\end{tikzpicture}}}
\\
\vcenter{\hbox{\begin{tikzpicture}[trans]
\useasboundingbox (-0.5,-0.5) rectangle (2.5,3.5);
\draw (1.00,1.00) -- (2.00,0.00);
\draw (1.00,1.00) -- (0.00,0.00);
\draw (2.00,2.00) -- (1.00,1.00);
\draw (0.00,2.00) -- (1.00,1.00);
\filldraw[fill=white] (1.00,1.00) ellipse (0.80cm and 0.50cm);
\draw (0.00,2.50) node{$T_I^J$};
\draw (2.00,2.50) node{$\Omega_I$};
\draw (1.00,1.00) node{$\phi$};
\draw (0.00,-0.50) node{$\Omega_J$};
\draw (2.00,-0.50) node{$\Theta^{J,I}$};
\end{tikzpicture}}}
 &:= 
\vcenter{\hbox{\begin{tikzpicture}[trans]
\useasboundingbox (-0.5,-0.5) rectangle (12.5,12.5);
\draw (4.00,11.00) -- (4.05,10.85) -- (4.10,10.70) -- (4.15,10.55) -- (4.20,10.41) -- (4.25,10.27) -- (4.30,10.13) -- (4.35,9.99) -- (4.40,9.86) -- (4.45,9.74) -- (4.50,9.62) -- (4.55,9.52) -- (4.60,9.42) -- (4.65,9.32) -- (4.70,9.24) -- (4.75,9.17) -- (4.80,9.11) -- (4.85,9.06) -- (4.90,9.03) -- (4.95,9.01) -- (5.00,9.00);
\draw (5.00,9.00) -- (5.05,9.01) -- (5.10,9.03) -- (5.15,9.06) -- (5.20,9.11) -- (5.25,9.17) -- (5.30,9.24) -- (5.35,9.32) -- (5.40,9.42) -- (5.45,9.52) -- (5.50,9.63) -- (5.55,9.74) -- (5.60,9.86) -- (5.65,9.99) -- (5.70,10.13) -- (5.75,10.27) -- (5.80,10.41) -- (5.85,10.55) -- (5.90,10.70) -- (5.95,10.85) -- (6.00,11.00);
\draw (2.00,11.00) -- (1.96,10.83) -- (1.93,10.67) -- (1.89,10.51) -- (1.86,10.34) -- (1.83,10.18) -- (1.80,10.02) -- (1.77,9.86) -- (1.75,9.70) -- (1.73,9.54) -- (1.72,9.38) -- (1.72,9.23) -- (1.72,9.08) -- (1.72,8.93) -- (1.74,8.79) -- (1.76,8.65) -- (1.79,8.51) -- (1.83,8.38) -- (1.87,8.25) -- (1.93,8.12) -- (2.00,8.00);
\draw (2.00,8.00) -- (2.04,7.94) -- (2.07,7.89) -- (2.12,7.84) -- (2.16,7.78) -- (2.20,7.73) -- (2.25,7.68) -- (2.30,7.63) -- (2.35,7.58) -- (2.40,7.53) -- (2.45,7.48) -- (2.50,7.43) -- (2.56,7.38) -- (2.61,7.33) -- (2.66,7.29) -- (2.72,7.24) -- (2.78,7.19) -- (2.83,7.14) -- (2.89,7.10) -- (2.94,7.05) -- (3.00,7.00);
\draw (3.00,7.00) -- (3.11,6.90) -- (3.22,6.81) -- (3.33,6.71) -- (3.43,6.61) -- (3.54,6.51) -- (3.64,6.42) -- (3.74,6.32) -- (3.84,6.22) -- (3.94,6.12) -- (4.04,6.02) -- (4.14,5.92) -- (4.24,5.82) -- (4.33,5.71) -- (4.43,5.61) -- (4.53,5.51) -- (4.62,5.41) -- (4.72,5.31) -- (4.81,5.20) -- (4.91,5.10) -- (5.00,5.00);
\draw (8.00,11.00) -- (7.96,10.84) -- (7.93,10.67) -- (7.89,10.51) -- (7.86,10.34) -- (7.83,10.18) -- (7.80,10.02) -- (7.78,9.86) -- (7.76,9.70) -- (7.74,9.54) -- (7.73,9.39) -- (7.72,9.24) -- (7.72,9.09) -- (7.73,8.94) -- (7.74,8.79) -- (7.76,8.65) -- (7.79,8.51) -- (7.83,8.38) -- (7.88,8.25) -- (7.93,8.12) -- (8.00,8.00);
\draw (8.00,8.00) -- (8.04,7.94) -- (8.07,7.89) -- (8.11,7.83) -- (8.15,7.78) -- (8.20,7.73) -- (8.24,7.68) -- (8.29,7.63) -- (8.34,7.57) -- (8.39,7.52) -- (8.44,7.48) -- (8.49,7.43) -- (8.55,7.38) -- (8.60,7.33) -- (8.66,7.28) -- (8.71,7.23) -- (8.77,7.19) -- (8.83,7.14) -- (8.88,7.09) -- (8.94,7.05) -- (9.00,7.00);
\draw (10.00,8.00) -- (10.00,11.00);
\draw (5.00,5.00) -- (5.10,5.06) -- (5.19,5.11) -- (5.29,5.17) -- (5.38,5.22) -- (5.48,5.28) -- (5.58,5.33) -- (5.67,5.39) -- (5.77,5.44) -- (5.87,5.49) -- (5.97,5.55) -- (6.07,5.60) -- (6.17,5.65) -- (6.27,5.70) -- (6.37,5.74) -- (6.47,5.79) -- (6.57,5.84) -- (6.68,5.88) -- (6.78,5.92) -- (6.89,5.96) -- (7.00,6.00);
\draw (7.00,6.00) -- (7.11,6.04) -- (7.22,6.07) -- (7.33,6.11) -- (7.44,6.14) -- (7.55,6.17) -- (7.66,6.21) -- (7.78,6.24) -- (7.88,6.28) -- (7.99,6.32) -- (8.10,6.36) -- (8.20,6.40) -- (8.31,6.45) -- (8.41,6.50) -- (8.50,6.56) -- (8.59,6.62) -- (8.68,6.68) -- (8.77,6.75) -- (8.85,6.83) -- (8.93,6.91) -- (9.00,7.00);
\draw (9.00,7.00) -- (9.04,7.05) -- (9.08,7.11) -- (9.11,7.17) -- (9.15,7.23) -- (9.18,7.28) -- (9.21,7.34) -- (9.25,7.40) -- (9.28,7.46) -- (9.32,7.52) -- (9.35,7.58) -- (9.39,7.63) -- (9.42,7.68) -- (9.46,7.73) -- (9.50,7.78) -- (9.55,7.82) -- (9.59,7.86) -- (9.64,7.90) -- (9.69,7.93) -- (9.74,7.95) -- (9.80,7.97);
\draw (9.80,7.97) -- (9.81,7.98) -- (9.82,7.98) -- (9.83,7.98) -- (9.84,7.98) -- (9.85,7.99) -- (9.86,7.99) -- (9.87,7.99) -- (9.88,7.99) -- (9.89,7.99) -- (9.90,7.99) -- (9.91,8.00) -- (9.92,8.00) -- (9.93,8.00) -- (9.94,8.00) -- (9.95,8.00) -- (9.96,8.00) -- (9.97,8.00) -- (9.98,8.00) -- (9.99,8.00) -- (10.00,8.00);
\draw (10.00,8.00) -- (10.01,8.00) -- (10.02,8.00) -- (10.03,8.00) -- (10.04,8.00) -- (10.05,8.00) -- (10.06,8.00) -- (10.07,8.00) -- (10.08,8.00) -- (10.09,7.99) -- (10.10,7.99) -- (10.11,7.99) -- (10.12,7.99) -- (10.13,7.99) -- (10.14,7.99) -- (10.15,7.99) -- (10.16,7.98) -- (10.17,7.98) -- (10.18,7.98) -- (10.19,7.98) -- (10.20,7.97);
\draw (10.20,7.97) -- (10.26,7.96) -- (10.32,7.93) -- (10.37,7.90) -- (10.43,7.87) -- (10.48,7.83) -- (10.53,7.79) -- (10.58,7.75) -- (10.63,7.70) -- (10.67,7.65) -- (10.72,7.60) -- (10.76,7.54) -- (10.79,7.49) -- (10.83,7.43) -- (10.86,7.37) -- (10.89,7.31) -- (10.92,7.25) -- (10.94,7.19) -- (10.97,7.12) -- (10.98,7.06) -- (11.00,7.00);
\draw (11.00,7.00) -- (11.03,6.76) -- (11.01,6.54) -- (10.96,6.32) -- (10.86,6.11) -- (10.73,5.91) -- (10.58,5.72) -- (10.40,5.53) -- (10.20,5.35) -- (9.99,5.16) -- (9.77,4.98) -- (9.54,4.80) -- (9.31,4.62) -- (9.09,4.44) -- (8.87,4.25) -- (8.67,4.06) -- (8.48,3.86) -- (8.32,3.66) -- (8.18,3.45) -- (8.07,3.23) -- (8.00,3.00);
\draw (8.00,3.00) -- (7.99,2.95) -- (7.98,2.91) -- (7.97,2.86) -- (7.97,2.81) -- (7.97,2.76) -- (7.96,2.71) -- (7.96,2.66) -- (7.96,2.61) -- (7.96,2.56) -- (7.96,2.51) -- (7.96,2.46) -- (7.96,2.41) -- (7.97,2.36) -- (7.97,2.31) -- (7.98,2.26) -- (7.98,2.21) -- (7.98,2.15) -- (7.99,2.10) -- (7.99,2.05) -- (8.00,2.00);
\draw (12.00,11.00) -- (11.99,10.80) -- (11.98,10.61) -- (11.98,10.41) -- (11.97,10.21) -- (11.96,10.02) -- (11.96,9.82) -- (11.95,9.62) -- (11.95,9.43) -- (11.94,9.23) -- (11.94,9.03) -- (11.94,8.83) -- (11.94,8.63) -- (11.94,8.43) -- (11.94,8.23) -- (11.95,8.03) -- (11.96,7.82) -- (11.96,7.62) -- (11.97,7.41) -- (11.99,7.21) -- (12.00,7.00);
\draw (12.00,7.00) -- (12.01,6.90) -- (12.02,6.79) -- (12.03,6.69) -- (12.03,6.58) -- (12.04,6.48) -- (12.05,6.38) -- (12.06,6.27) -- (12.06,6.17) -- (12.07,6.07) -- (12.07,5.96) -- (12.08,5.86) -- (12.08,5.76) -- (12.08,5.66) -- (12.07,5.56) -- (12.07,5.47) -- (12.06,5.37) -- (12.05,5.28) -- (12.04,5.18) -- (12.02,5.09) -- (12.00,5.00);
\draw (12.00,5.00) -- (11.94,4.78) -- (11.85,4.57) -- (11.75,4.38) -- (11.62,4.19) -- (11.48,4.01) -- (11.32,3.84) -- (11.15,3.68) -- (10.96,3.52) -- (10.76,3.37) -- (10.55,3.23) -- (10.33,3.09) -- (10.09,2.95) -- (9.85,2.83) -- (9.60,2.70) -- (9.34,2.58) -- (9.08,2.46) -- (8.81,2.34) -- (8.54,2.23) -- (8.27,2.11) -- (8.00,2.00);
\draw (0.00,11.00) -- (0.05,10.70) -- (0.09,10.40) -- (0.14,10.10) -- (0.19,9.80) -- (0.24,9.50) -- (0.29,9.20) -- (0.33,8.90) -- (0.38,8.60) -- (0.43,8.30) -- (0.48,8.00) -- (0.53,7.70) -- (0.58,7.40) -- (0.63,7.10) -- (0.68,6.80) -- (0.73,6.50) -- (0.78,6.20) -- (0.84,5.90) -- (0.89,5.60) -- (0.95,5.30) -- (1.00,5.00);
\draw (1.00,5.00) -- (1.05,4.75) -- (1.09,4.50) -- (1.14,4.25) -- (1.19,4.00) -- (1.24,3.75) -- (1.29,3.50) -- (1.34,3.25) -- (1.39,3.00) -- (1.44,2.75) -- (1.49,2.50) -- (1.54,2.25) -- (1.59,2.00) -- (1.64,1.75) -- (1.69,1.50) -- (1.74,1.25) -- (1.79,1.00) -- (1.84,0.75) -- (1.90,0.50) -- (1.95,0.25) -- (2.00,0.00);
\draw (5.00,5.00) -- (5.04,4.89) -- (5.08,4.78) -- (5.11,4.66) -- (5.15,4.55) -- (5.19,4.44) -- (5.23,4.33) -- (5.27,4.22) -- (5.32,4.12) -- (5.36,4.01) -- (5.41,3.91) -- (5.45,3.80) -- (5.50,3.70) -- (5.56,3.61) -- (5.61,3.51) -- (5.67,3.42) -- (5.73,3.33) -- (5.79,3.24) -- (5.86,3.16) -- (5.93,3.08) -- (6.00,3.00);
\draw (6.00,3.00) -- (6.08,2.93) -- (6.16,2.86) -- (6.24,2.79) -- (6.33,2.73) -- (6.42,2.67) -- (6.51,2.61) -- (6.61,2.56) -- (6.70,2.50) -- (6.80,2.45) -- (6.91,2.41) -- (7.01,2.36) -- (7.12,2.32) -- (7.22,2.27) -- (7.33,2.23) -- (7.44,2.19) -- (7.55,2.15) -- (7.66,2.11) -- (7.78,2.08) -- (7.89,2.04) -- (8.00,2.00);
\draw (5.00,5.00) -- (4.93,4.90) -- (4.86,4.81) -- (4.80,4.71) -- (4.73,4.62) -- (4.67,4.52) -- (4.60,4.42) -- (4.54,4.33) -- (4.48,4.23) -- (4.42,4.13) -- (4.37,4.03) -- (4.31,3.93) -- (4.26,3.83) -- (4.22,3.73) -- (4.17,3.63) -- (4.13,3.53) -- (4.10,3.42) -- (4.07,3.32) -- (4.04,3.21) -- (4.02,3.11) -- (4.00,3.00);
\draw (4.00,3.00) -- (3.99,2.95) -- (3.99,2.90) -- (3.99,2.85) -- (3.98,2.80) -- (3.98,2.75) -- (3.98,2.71) -- (3.98,2.66) -- (3.98,2.61) -- (3.98,2.56) -- (3.98,2.51) -- (3.98,2.45) -- (3.98,2.40) -- (3.98,2.35) -- (3.98,2.30) -- (3.99,2.25) -- (3.99,2.20) -- (3.99,2.15) -- (3.99,2.10) -- (4.00,2.05) -- (4.00,2.00);
\draw (4.00,2.00) -- (4.00,1.90) -- (4.01,1.80) -- (4.01,1.70) -- (4.01,1.60) -- (4.02,1.50) -- (4.02,1.40) -- (4.02,1.30) -- (4.02,1.20) -- (4.02,1.10) -- (4.02,1.00) -- (4.02,0.90) -- (4.02,0.80) -- (4.01,0.70) -- (4.01,0.60) -- (4.01,0.50) -- (4.01,0.40) -- (4.01,0.30) -- (4.00,0.20) -- (4.00,0.10) -- (4.00,0.00);
\draw (8.00,2.00) -- (10.00,0.00);
\draw (8.00,2.00) -- (8.00,0.00);
\draw (8.00,2.00) -- (6.00,0.00);
\draw[dashed,] (4.00,10.00) rectangle (10.50,8.50);
\filldraw[fill=white] (9.00,7.00) ellipse (0.80cm and 0.50cm);
\draw[dashed,] (0.00,6.00) rectangle (11.00,4.00);
\filldraw[fill=white] (5.00,5.00) ellipse (0.80cm and 0.50cm);
\filldraw[fill=white] (8.00,2.00) ellipse (0.80cm and 0.50cm);
\draw[dashed,] (3.00,1.00) rectangle (11.00,3.00);
\draw (0.00,11.50) node{$\pr_J$};
\draw (2.00,11.50) node{$\irr(\varsigma_{I\ssm J})$};
\draw (4.00,11.50) node{$\incl_I$};
\draw (6.00,11.50) node{$\pr_I$};
\draw (8.00,11.50) node{$R\Ind_{P_I}^G$};
\draw (10.00,11.50) node{$\inc$};
\draw (12.00,11.50) node{$\psi_I$};
\draw (9.00,7.00) node{tr};
\draw (5.00,5.00) node{$\delta$};
\draw (8.00,2.00) node{$\beta$};
\draw (2.00,-0.50) node{$\pr_J$};
\draw (4.00,-0.50) node{$R\Ind_{P_J}^G$};
\draw (6.00,-0.50) node{$\inc$};
\draw (8.00,-0.50) node{$\psi_J$};
\draw (10.00,-0.50) node{$\Theta^{J,I}$};
\end{tikzpicture}}}
\end{align*}
}
\fi
\caption{Natural transformations for Theorem~\ref{thm:translation}}\label{fig:thm-trans}
\end{figure}

\begin{lem}
\phantomsection
\label{lem:theta-phi-iso}
\begin{enumerate}
\item The natural transformation $\theta$ is an isomorphism.
\item If $J = \varnothing$ and $I = \{s\}$, then $\phi$ is an isomorphism.
\end{enumerate}
\end{lem}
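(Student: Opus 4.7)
My plan is to prove both statements by decomposing $\theta$ and $\phi$ along the pastings displayed in Figure~\ref{fig:thm-trans}, and then checking, at the specific functors where they are evaluated, that each constituent natural transformation is an isomorphism. Nothing in these diagrams is exotic: every piece is either $\alpha$, the transitivity isomorphism tr, one of the commutation isomorphisms~\eqref{eqn:cross-inc-Ind}--\eqref{eqn:comm-defn}, an adjunction unit/counit, or one of the transformations $\gamma$, $\delta$, $\beta$ introduced in~\S\ref{ss:more-natural}--\S\ref{ss:natural-formality}, each of which has already been studied on an appropriate subcategory.

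For part~(1), the constituents of $\theta$ are: the isomorphism $\alpha$ (invertible by Proposition~\ref{prop:Theta-psi}); the transitivity isomorphism~\eqref{eqn:transitivity-RInd}; the commutation isomorphisms~\eqref{eqn:cross-inc-Ind} and~\eqref{eqn:comm-defn}; the natural transformation $\gamma$; and, to match the target $T_J^I\circ\Omega_J$, an insertion of $\incl_J\circ\pr_J$. All pieces except the last two are isomorphisms by construction. Following the strands of the diagram for $\theta$, the occurrence of $\gamma$ acts on an input of the form $\inc\circ\psi_J(-)$, with $\psi_J(-)\in\Db_\Stein(P_J)$; hence the piece that must be invertible is $\pr_I\gamma\inc$ restricted to $\Db_\Stein(P_J)$, and this is exactly Lemma~\ref{lem:induce-translate}. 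The insertion of $\incl_J\circ\pr_J$ is applied to an object of the form $R\Ind_{P_J}^G(\inc\psi_J(-))$, which belongs to $\Db\Rep_J(G)$ by Lemma~\ref{lem:induce-block}, so the adjunction morphism is an isomorphism there. Pasting these isomorphisms together yields the invertibility of $\theta$.

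For part~(2), with $J=\varnothing$ and $I=\{s\}$, $\phi$ is similarly assembled from the transitivity isomorphism, the $(\incl_I,\pr_I)$ counit (invertible on objects of $\Db\Rep_I(G)$ by Lemma~\ref{lem:induce-block}), the natural transformation $\delta$, and the natural transformation $\beta$. Reading off the diagram, the relevant instance of $\delta$ is $\pr_\varnothing\,\delta\,\For^{P_s}_B$ evaluated on $\psi_s(-)\in\Db_\Stein(P_s)$, which is precisely the composition shown to be invertible in Lemma~\ref{lem:induce-p-translate}. The instance of $\beta$ appears post-composed with $R\Ind_B^{P_s}$ and applied to objects of the form $\psi_s(-)$; the required invertibility is therefore exactly that of $R\Ind_B^{P_s}\beta$, which is the content of Proposition~\ref{prop:indbeta}. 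Combining these observations gives the second assertion.

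The main obstacle is the invertibility of $R\Ind_B^{P_s}\beta$, i.e.\ Proposition~\ref{prop:indbeta}: all other inputs are general statements about translation and induction that we already possess. That proposition is proved by an explicit rank-one analysis, via the description of $\psi\circ\Theta^s(V)$ in Lemma~\ref{lem:phi-ys}, the $\Ext^2$-computation of Lemma~\ref{lem:ext2b}, and the tilting-module criterion of Proposition~\ref{prop:steinberg-tilting}. It is precisely the reliance on this rank-one computation that forces us to restrict to $\#I=1$ in part~(2); as announced in the introduction, the assumption will become superfluous a posteriori once the induction theorem has been established.
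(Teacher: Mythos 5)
Your proposal is correct and follows essentially the same route as the paper: both arguments decompose $\theta$ and $\phi$ along the pastings in Figure~\ref{fig:thm-trans}, observe that all pieces except the dotted regions are built from isomorphisms ($\alpha$, transitivity, commutation, unit/counit), and then invoke Lemmas~\ref{lem:induce-block} and~\ref{lem:induce-translate} for $\theta$, and Lemmas~\ref{lem:induce-block}, \ref{lem:induce-p-translate}, and Proposition~\ref{prop:indbeta} for $\phi$. The extra explanations you give — identifying the precise composites $\pr_I\gamma\inc$ and $\pr_J\delta\For_{P_J}^{P_I}$ on which those lemmas apply, and why the rank-one hypothesis is forced by Proposition~\ref{prop:indbeta} — are accurate and helpful unpackings of the same argument.
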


\begin{proof}
The large diagrams in Figure~\ref{fig:thm-trans} are mostly assembled from constituents that are already known to be isomorphisms, such as those from~\eqref{eqn:RInd-For-PI-PJ}, \eqref{eqn:comm-defn}, \eqref{eqn:tr-defn}, and~\eqref{eqn:alpha-defn}.  To complete the proof, we must check that each region enclosed in dotted lines is an isomorphism (under the appropriate assumptions).

In the definition of $\theta$, the two such regions are isomorphisms by Lemmas~\ref{lem:induce-block} and~\ref{lem:induce-translate}.  In the definition of $\phi$, the two upper regions are isomorphisms by Lemmas~\ref{lem:induce-block} and~\ref{lem:induce-p-translate}.  For the lower one, we must add the assumption that $J = \varnothing$ and $I = \{s\}$, and then invoke Proposition~\ref{prop:indbeta}.
\end{proof}

Recall (see~\S\ref{ss:Theta}) that the functor $\Theta^{J,I}$ is naturally left adjoint to $\Theta_{J,I}$. On the other hand, since the functor $T_I^J$ and $T_J^I$ are built from functors which are naturally (bi)adjoint, $T_I^J$ is naturally left adjoint to $T_J^I$. Therefore, the following lemma makes sense.

\begin{lem}
\label{lem:theta-phi-adj}
We have $\phi = \theta^\wedge$ and $\theta = \phi^\vee$.
\end{lem}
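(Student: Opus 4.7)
Since $(-)^\wedge$ and $(-)^\vee$ are mutually inverse bijections, it suffices to establish the first equality $\phi = \theta^\wedge$. Unwinding the definition from~\S\ref{ssec:string}, $\theta^\wedge$ is obtained from $\theta$ by pasting the unit of the adjunction $(\Theta^{J,I},\Theta_{J,I})$ at the top and the counit of $(T_I^J,T_J^I)$ at the bottom. The plan is to expand $\theta^\wedge$ and $\phi$ as string diagrams using Figure~\ref{fig:thm-trans}, and to reduce the identity $\phi = \theta^\wedge$ to a short list of elementary mateships, then apply the zigzag relations~\eqref{eqn:unit-counit} repeatedly.

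There are three elementary mateship ingredients. First, by Lemma~\ref{lem:delta-gamma-mate}, $\delta$ is the mate of $\gamma$ with respect to the adjunction $(\irr(\varsigma_{I\ssm J})\otimes(-),\,\irr(\varsigma_{I\ssm J})^*\otimes(-))$; composing with the adjunctions $(\pr_K,\incl_K)$ entering the definitions of $T_I^J$ and $T_J^I$ promotes this to a mateship with respect to $(T_I^J,T_J^I)$. Second, the construction of $\beta$ in~\S\ref{ss:natural-formality} was rigged precisely so that $\beta$ is the mate of $\alpha$ with respect to $(\Theta^{J,I},\Theta_{J,I})$ on one side and the tensor--restriction pair $(\bk(\varsigma_{I\ssm J})\circ\For_{P_J}^{P_I},\,R\Ind_{P_J}^{P_I}\circ\bk(-\varsigma_{I\ssm J}))$ on the other. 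Third, the transitivity isomorphism~\eqref{eqn:tr-defn} is compatible with the natural adjunctions in both directions; this follows from the uniqueness of adjoints together with the identity $R\Ind_{P_I}^G\circ R\Ind_{P_J}^{P_I}\cong R\Ind_{P_J}^G$ from~\eqref{eqn:transitivity-RInd} and its analogue for forgetful functors.

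Granted these three mateships, the argument proceeds by local moves on the string diagram for $\theta^\wedge$. The counit of $(T_I^J,T_J^I)$ at the bottom can be pushed past the $\gamma$-box: by the first mateship, this converts $\gamma$ into $\delta$ and produces cancelling unit/counit pairs for $(\irr(\varsigma_{I\ssm J}),\irr(\varsigma_{I\ssm J})^*)$, which disappear by zigzag. Symmetrically, the unit of $(\Theta^{J,I},\Theta_{J,I})$ inserted at the top can be pushed past the $\alpha$-box: by the second mateship, this converts $\alpha$ into $\beta$ and produces cancelling pairs for the tensor--restriction adjunction, which also disappear by zigzag. During these moves the transitivity isomorphism migrates through the diagram via the third mateship, and lands in the position occupied by the $\mathrm{tr}$-box in the diagram defining $\phi$. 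Collecting the remaining pieces reproduces exactly the string diagram for $\phi$ in Figure~\ref{fig:thm-trans}.

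The difficulty is purely combinatorial: the string diagrams involve many regions and it is easy to lose track of the numerous adjunction squares while rearranging. The cleanest presentation is graphical, redrawing the diagram for $\theta^\wedge$ explicitly and applying the three mateships and the zigzag relations as local moves in turn, rather than attempting a purely symbolic computation.
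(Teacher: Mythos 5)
Your proposal is correct and takes essentially the same route as the paper: both proofs unwind $\theta^\wedge$ and $\phi$ as string diagrams and reduce the identity to Lemma~\ref{lem:delta-gamma-mate} together with the relevant zigzag relations (what the paper invokes as Lemma~\ref{lem:zigzag-Ind-For} and ``the usual rules''). Your observation that $\beta$ is, by construction in~\S\ref{ss:natural-formality}, the mate of $\alpha$ is precisely the structural reason the bookkeeping closes up.
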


\begin{proof}
Since the operations $(-)^\wedge$ and $(-)^\vee$ are inverse to each other, the two equalities are equivalent; so we need only prove the first one.
Unpacking the definitions, this equality is equivalent to 
\ifdefined\PARTCOMPILE{
\[
FIGURE
\]
}
\else {
\[
\vcenter{\hbox{\begin{tikzpicture}[transsmall]
\useasboundingbox (-0.5,-0.5) rectangle (12.5,8.5);
\draw (2.00,7.00) -- (1.99,6.95) -- (1.98,6.89) -- (1.98,6.84) -- (1.97,6.79) -- (1.96,6.74) -- (1.96,6.68) -- (1.95,6.63) -- (1.95,6.58) -- (1.94,6.53) -- (1.94,6.48) -- (1.94,6.43) -- (1.94,6.38) -- (1.94,6.33) -- (1.95,6.28) -- (1.95,6.23) -- (1.96,6.18) -- (1.96,6.13) -- (1.97,6.09) -- (1.99,6.04) -- (2.00,6.00);
\draw (2.00,6.00) -- (2.02,5.94) -- (2.05,5.88) -- (2.08,5.82) -- (2.12,5.77) -- (2.16,5.71) -- (2.20,5.66) -- (2.25,5.61) -- (2.30,5.56) -- (2.35,5.51) -- (2.40,5.46) -- (2.46,5.41) -- (2.52,5.37) -- (2.57,5.32) -- (2.63,5.27) -- (2.69,5.23) -- (2.76,5.18) -- (2.82,5.14) -- (2.88,5.09) -- (2.94,5.05) -- (3.00,5.00);
\draw (3.00,5.00) -- (3.12,4.91) -- (3.24,4.81) -- (3.35,4.72) -- (3.46,4.62) -- (3.57,4.53) -- (3.67,4.43) -- (3.78,4.33) -- (3.88,4.23) -- (3.98,4.13) -- (4.08,4.03) -- (4.17,3.93) -- (4.27,3.83) -- (4.36,3.73) -- (4.46,3.62) -- (4.55,3.52) -- (4.64,3.42) -- (4.73,3.31) -- (4.82,3.21) -- (4.91,3.10) -- (5.00,3.00);
\draw (4.00,7.00) -- (4.05,6.93) -- (4.10,6.85) -- (4.15,6.78) -- (4.20,6.70) -- (4.25,6.63) -- (4.30,6.56) -- (4.35,6.50) -- (4.40,6.43) -- (4.45,6.37) -- (4.50,6.31) -- (4.55,6.26) -- (4.60,6.21) -- (4.65,6.16) -- (4.70,6.12) -- (4.75,6.09) -- (4.80,6.06) -- (4.85,6.03) -- (4.90,6.01) -- (4.95,6.00) -- (5.00,6.00);
\draw (5.00,6.00) -- (5.05,6.00) -- (5.10,6.01) -- (5.15,6.03) -- (5.20,6.06) -- (5.25,6.09) -- (5.30,6.12) -- (5.35,6.16) -- (5.40,6.21) -- (5.45,6.26) -- (5.50,6.31) -- (5.55,6.37) -- (5.60,6.43) -- (5.65,6.50) -- (5.70,6.56) -- (5.75,6.63) -- (5.80,6.70) -- (5.85,6.78) -- (5.90,6.85) -- (5.95,6.93) -- (6.00,7.00);
\draw (8.00,7.00) -- (7.99,6.95) -- (7.99,6.89) -- (7.98,6.84) -- (7.97,6.79) -- (7.97,6.74) -- (7.96,6.68) -- (7.96,6.63) -- (7.95,6.58) -- (7.95,6.53) -- (7.95,6.48) -- (7.94,6.43) -- (7.94,6.38) -- (7.95,6.33) -- (7.95,6.28) -- (7.95,6.23) -- (7.96,6.18) -- (7.97,6.14) -- (7.97,6.09) -- (7.99,6.04) -- (8.00,6.00);
\draw (8.00,6.00) -- (8.02,5.94) -- (8.05,5.88) -- (8.08,5.82) -- (8.12,5.77) -- (8.15,5.71) -- (8.20,5.66) -- (8.24,5.60) -- (8.29,5.55) -- (8.34,5.50) -- (8.39,5.45) -- (8.44,5.41) -- (8.50,5.36) -- (8.56,5.31) -- (8.62,5.27) -- (8.68,5.22) -- (8.74,5.18) -- (8.81,5.13) -- (8.87,5.09) -- (8.94,5.04) -- (9.00,5.00);
\draw (10.00,6.00) -- (10.00,7.00);
\draw (5.00,3.00) -- (5.10,3.06) -- (5.19,3.11) -- (5.29,3.17) -- (5.38,3.22) -- (5.48,3.28) -- (5.58,3.33) -- (5.67,3.39) -- (5.77,3.44) -- (5.87,3.49) -- (5.97,3.55) -- (6.07,3.60) -- (6.17,3.65) -- (6.27,3.70) -- (6.37,3.74) -- (6.47,3.79) -- (6.57,3.84) -- (6.68,3.88) -- (6.78,3.92) -- (6.89,3.96) -- (7.00,4.00);
\draw (7.00,4.00) -- (7.11,4.04) -- (7.22,4.07) -- (7.33,4.11) -- (7.44,4.14) -- (7.55,4.17) -- (7.66,4.21) -- (7.78,4.24) -- (7.88,4.28) -- (7.99,4.32) -- (8.10,4.36) -- (8.20,4.40) -- (8.31,4.45) -- (8.41,4.50) -- (8.50,4.56) -- (8.59,4.62) -- (8.68,4.68) -- (8.77,4.75) -- (8.85,4.83) -- (8.93,4.91) -- (9.00,5.00);
\draw (9.00,5.00) -- (9.04,5.05) -- (9.08,5.11) -- (9.11,5.17) -- (9.15,5.23) -- (9.18,5.28) -- (9.21,5.34) -- (9.25,5.40) -- (9.28,5.46) -- (9.32,5.52) -- (9.35,5.58) -- (9.39,5.63) -- (9.42,5.68) -- (9.46,5.73) -- (9.50,5.78) -- (9.55,5.82) -- (9.59,5.86) -- (9.64,5.90) -- (9.69,5.93) -- (9.74,5.95) -- (9.80,5.97);
\draw (9.80,5.97) -- (9.81,5.98) -- (9.82,5.98) -- (9.83,5.98) -- (9.84,5.98) -- (9.85,5.99) -- (9.86,5.99) -- (9.87,5.99) -- (9.88,5.99) -- (9.89,5.99) -- (9.90,5.99) -- (9.91,6.00) -- (9.92,6.00) -- (9.93,6.00) -- (9.94,6.00) -- (9.95,6.00) -- (9.96,6.00) -- (9.97,6.00) -- (9.98,6.00) -- (9.99,6.00) -- (10.00,6.00);
\draw (10.00,6.00) -- (10.01,6.00) -- (10.02,6.00) -- (10.03,6.00) -- (10.04,6.00) -- (10.05,6.00) -- (10.06,6.00) -- (10.07,6.00) -- (10.08,6.00) -- (10.09,5.99) -- (10.10,5.99) -- (10.11,5.99) -- (10.12,5.99) -- (10.13,5.99) -- (10.14,5.99) -- (10.15,5.99) -- (10.16,5.98) -- (10.17,5.98) -- (10.18,5.98) -- (10.19,5.98) -- (10.20,5.97);
\draw (10.20,5.97) -- (10.26,5.95) -- (10.32,5.93) -- (10.38,5.90) -- (10.43,5.87) -- (10.49,5.83) -- (10.54,5.79) -- (10.59,5.74) -- (10.64,5.70) -- (10.68,5.65) -- (10.73,5.59) -- (10.77,5.54) -- (10.81,5.48) -- (10.84,5.42) -- (10.88,5.36) -- (10.91,5.30) -- (10.93,5.24) -- (10.95,5.18) -- (10.97,5.12) -- (10.99,5.06) -- (11.00,5.00);
\draw (11.00,5.00) -- (11.01,4.81) -- (10.98,4.63) -- (10.91,4.46) -- (10.81,4.30) -- (10.67,4.16) -- (10.52,4.01) -- (10.34,3.88) -- (10.15,3.74) -- (9.94,3.61) -- (9.72,3.48) -- (9.50,3.36) -- (9.28,3.23) -- (9.07,3.10) -- (8.86,2.96) -- (8.66,2.82) -- (8.48,2.68) -- (8.32,2.52) -- (8.18,2.36) -- (8.08,2.19) -- (8.00,2.00);
\draw (8.00,2.00) -- (7.99,1.95) -- (7.98,1.91) -- (7.97,1.86) -- (7.96,1.81) -- (7.96,1.77) -- (7.95,1.72) -- (7.95,1.67) -- (7.95,1.62) -- (7.95,1.57) -- (7.95,1.52) -- (7.95,1.47) -- (7.95,1.42) -- (7.96,1.36) -- (7.96,1.31) -- (7.97,1.26) -- (7.97,1.21) -- (7.98,1.16) -- (7.99,1.10) -- (7.99,1.05) -- (8.00,1.00);
\draw (12.00,7.00) -- (12.00,6.90) -- (11.99,6.80) -- (11.99,6.71) -- (11.99,6.61) -- (11.98,6.51) -- (11.98,6.41) -- (11.98,6.31) -- (11.98,6.21) -- (11.97,6.12) -- (11.97,6.02) -- (11.97,5.92) -- (11.97,5.82) -- (11.97,5.72) -- (11.97,5.62) -- (11.98,5.51) -- (11.98,5.41) -- (11.98,5.31) -- (11.99,5.21) -- (11.99,5.10) -- (12.00,5.00);
\draw (12.00,5.00) -- (12.01,4.90) -- (12.02,4.79) -- (12.03,4.68) -- (12.03,4.58) -- (12.04,4.47) -- (12.05,4.37) -- (12.06,4.26) -- (12.07,4.16) -- (12.08,4.05) -- (12.08,3.95) -- (12.08,3.85) -- (12.09,3.75) -- (12.09,3.65) -- (12.08,3.55) -- (12.08,3.45) -- (12.07,3.36) -- (12.06,3.26) -- (12.04,3.17) -- (12.02,3.09) -- (12.00,3.00);
\draw (12.00,3.00) -- (11.93,2.82) -- (11.85,2.65) -- (11.74,2.49) -- (11.61,2.34) -- (11.47,2.21) -- (11.31,2.08) -- (11.14,1.97) -- (10.95,1.86) -- (10.75,1.76) -- (10.54,1.67) -- (10.32,1.58) -- (10.08,1.50) -- (9.84,1.43) -- (9.59,1.36) -- (9.34,1.29) -- (9.08,1.23) -- (8.81,1.17) -- (8.54,1.11) -- (8.27,1.06) -- (8.00,1.00);
\draw (0.00,7.00) -- (0.05,6.80) -- (0.09,6.60) -- (0.14,6.40) -- (0.18,6.20) -- (0.23,5.99) -- (0.28,5.79) -- (0.32,5.59) -- (0.37,5.39) -- (0.42,5.19) -- (0.47,4.99) -- (0.52,4.79) -- (0.57,4.59) -- (0.62,4.39) -- (0.67,4.19) -- (0.72,3.99) -- (0.78,3.79) -- (0.83,3.59) -- (0.89,3.40) -- (0.94,3.20) -- (1.00,3.00);
\draw (1.00,3.00) -- (1.05,2.85) -- (1.09,2.70) -- (1.14,2.55) -- (1.19,2.40) -- (1.23,2.25) -- (1.28,2.09) -- (1.33,1.94) -- (1.38,1.79) -- (1.43,1.64) -- (1.48,1.49) -- (1.53,1.34) -- (1.58,1.20) -- (1.63,1.05) -- (1.69,0.90) -- (1.74,0.75) -- (1.79,0.60) -- (1.84,0.45) -- (1.89,0.30) -- (1.95,0.15) -- (2.00,0.00);
\draw (5.00,3.00) -- (5.05,2.95) -- (5.09,2.89) -- (5.14,2.84) -- (5.19,2.79) -- (5.24,2.73) -- (5.29,2.68) -- (5.33,2.63) -- (5.38,2.58) -- (5.43,2.52) -- (5.48,2.47) -- (5.53,2.42) -- (5.58,2.37) -- (5.63,2.32) -- (5.68,2.27) -- (5.73,2.23) -- (5.79,2.18) -- (5.84,2.13) -- (5.89,2.09) -- (5.95,2.04) -- (6.00,2.00);
\draw (6.00,2.00) -- (6.09,1.93) -- (6.18,1.87) -- (6.27,1.81) -- (6.36,1.75) -- (6.46,1.69) -- (6.55,1.64) -- (6.65,1.58) -- (6.75,1.53) -- (6.85,1.48) -- (6.95,1.43) -- (7.05,1.39) -- (7.16,1.34) -- (7.26,1.30) -- (7.36,1.25) -- (7.47,1.21) -- (7.58,1.17) -- (7.68,1.12) -- (7.79,1.08) -- (7.89,1.04) -- (8.00,1.00);
\draw (5.00,3.00) -- (4.93,2.96) -- (4.87,2.91) -- (4.80,2.87) -- (4.74,2.82) -- (4.68,2.78) -- (4.62,2.73) -- (4.56,2.69) -- (4.50,2.64) -- (4.44,2.60) -- (4.38,2.55) -- (4.33,2.50) -- (4.28,2.45) -- (4.23,2.40) -- (4.19,2.35) -- (4.15,2.29) -- (4.11,2.24) -- (4.08,2.18) -- (4.05,2.12) -- (4.02,2.06) -- (4.00,2.00);
\draw (4.00,2.00) -- (3.99,1.96) -- (3.98,1.91) -- (3.97,1.86) -- (3.96,1.82) -- (3.96,1.77) -- (3.96,1.72) -- (3.95,1.67) -- (3.95,1.62) -- (3.95,1.57) -- (3.96,1.52) -- (3.96,1.47) -- (3.96,1.42) -- (3.97,1.36) -- (3.97,1.31) -- (3.98,1.26) -- (3.98,1.21) -- (3.99,1.16) -- (3.99,1.10) -- (4.00,1.05) -- (4.00,1.00);
\draw (4.00,1.00) -- (4.00,0.95) -- (4.01,0.90) -- (4.01,0.85) -- (4.01,0.80) -- (4.01,0.74) -- (4.01,0.69) -- (4.01,0.64) -- (4.01,0.59) -- (4.01,0.54) -- (4.01,0.49) -- (4.01,0.44) -- (4.01,0.39) -- (4.01,0.35) -- (4.01,0.30) -- (4.01,0.25) -- (4.01,0.20) -- (4.01,0.15) -- (4.00,0.10) -- (4.00,0.05) -- (4.00,0.00);
\draw (8.00,1.00) -- (10.00,0.00);
\draw (8.00,1.00) -- (8.00,0.00);
\draw (8.00,1.00) -- (6.00,0.00);
\filldraw[fill=white] (9.00,5.00) ellipse (0.80cm and 0.50cm);
\filldraw[fill=white] (5.00,3.00) ellipse (0.80cm and 0.50cm);
\filldraw[fill=white] (8.00,1.00) ellipse (0.80cm and 0.50cm);
\draw (0.00,7.50) node{$\pr_J$};
\draw (2.00,7.50) node{$\irr(\varsigma_{I\ssm J})$};
\draw (4.00,7.50) node{$\incl_I$};
\draw (6.00,7.50) node{$\pr_I$};
\draw (8.00,7.50) node{$R\Ind_{P_I}^G$};
\draw (10.00,7.50) node{$\inc$};
\draw (12.00,7.50) node{$\psi_I$};
\draw (9.00,5.00) node{tr};
\draw (5.00,3.00) node{$\delta$};
\draw (8.00,1.00) node{$\beta$};
\draw (2.00,-0.50) node{$\pr_J$};
\draw (4.00,-0.50) node{$R\Ind_{P_J}^G$};
\draw (6.00,-0.50) node{$\inc$};
\draw (8.00,-0.50) node{$\psi_J$};
\draw (10.00,-0.50) node{$\Theta^{J,I}$};
\end{tikzpicture}}}
\ =\ 
\vcenter{\hbox{\begin{tikzpicture}[transsmall]
\useasboundingbox (-0.5,-0.5) rectangle (12.5,7.5);
\draw (0.00,6.00) -- (0.00,5.90) -- (0.00,5.80) -- (0.00,5.70) -- (0.01,5.60) -- (0.01,5.49) -- (0.01,5.39) -- (0.01,5.29) -- (0.01,5.19) -- (0.01,5.09) -- (0.01,4.99) -- (0.01,4.89) -- (0.01,4.79) -- (0.01,4.69) -- (0.01,4.59) -- (0.01,4.49) -- (0.01,4.39) -- (0.01,4.29) -- (0.01,4.20) -- (0.00,4.10) -- (0.00,4.00);
\draw (0.00,4.00) -- (-0.00,3.95) -- (-0.00,3.90) -- (-0.00,3.85) -- (-0.01,3.80) -- (-0.01,3.76) -- (-0.01,3.71) -- (-0.01,3.66) -- (-0.01,3.61) -- (-0.01,3.56) -- (-0.01,3.51) -- (-0.01,3.46) -- (-0.01,3.41) -- (-0.01,3.36) -- (-0.01,3.31) -- (-0.01,3.26) -- (-0.01,3.21) -- (-0.01,3.16) -- (-0.01,3.11) -- (-0.00,3.05) -- (0.00,3.00);
\draw (0.00,3.00) -- (0.02,2.81) -- (0.05,2.61) -- (0.09,2.40) -- (0.13,2.20) -- (0.19,1.99) -- (0.25,1.79) -- (0.32,1.59) -- (0.40,1.39) -- (0.49,1.20) -- (0.59,1.02) -- (0.69,0.85) -- (0.80,0.69) -- (0.93,0.54) -- (1.06,0.41) -- (1.19,0.29) -- (1.34,0.19) -- (1.49,0.11) -- (1.65,0.05) -- (1.82,0.01) -- (2.00,0.00);
\draw (2.00,0.00) -- (2.26,0.02) -- (2.53,0.09) -- (2.80,0.20) -- (3.09,0.34) -- (3.37,0.50) -- (3.66,0.70) -- (3.94,0.91) -- (4.22,1.13) -- (4.49,1.36) -- (4.75,1.60) -- (4.99,1.83) -- (5.21,2.06) -- (5.41,2.28) -- (5.59,2.47) -- (5.74,2.65) -- (5.87,2.79) -- (5.96,2.91) -- (6.01,2.98) -- (6.03,3.02) -- (6.00,3.00);
\draw (6.00,3.00) -- (5.95,2.96) -- (5.89,2.89) -- (5.80,2.80) -- (5.69,2.69) -- (5.57,2.57) -- (5.44,2.43) -- (5.29,2.29) -- (5.13,2.14) -- (4.96,1.99) -- (4.79,1.83) -- (4.60,1.68) -- (4.42,1.54) -- (4.23,1.41) -- (4.04,1.28) -- (3.86,1.18) -- (3.67,1.09) -- (3.49,1.03) -- (3.32,0.99) -- (3.15,0.98) -- (3.00,1.00);
\draw (3.00,1.00) -- (2.91,1.03) -- (2.82,1.07) -- (2.74,1.13) -- (2.67,1.19) -- (2.59,1.26) -- (2.52,1.35) -- (2.46,1.44) -- (2.40,1.54) -- (2.35,1.64) -- (2.29,1.75) -- (2.25,1.87) -- (2.20,1.99) -- (2.16,2.11) -- (2.13,2.24) -- (2.10,2.37) -- (2.07,2.50) -- (2.05,2.62) -- (2.03,2.75) -- (2.01,2.88) -- (2.00,3.00);
\draw (2.00,3.00) -- (2.00,3.05) -- (1.99,3.11) -- (1.99,3.16) -- (1.99,3.21) -- (1.99,3.26) -- (1.98,3.31) -- (1.98,3.36) -- (1.98,3.42) -- (1.98,3.46) -- (1.98,3.51) -- (1.99,3.56) -- (1.99,3.61) -- (1.99,3.66) -- (1.99,3.71) -- (1.99,3.76) -- (1.99,3.81) -- (1.99,3.85) -- (2.00,3.90) -- (2.00,3.95) -- (2.00,4.00);
\draw (2.00,4.00) -- (2.00,4.10) -- (2.01,4.19) -- (2.01,4.29) -- (2.01,4.39) -- (2.01,4.49) -- (2.01,4.59) -- (2.01,4.69) -- (2.01,4.79) -- (2.01,4.89) -- (2.01,4.99) -- (2.01,5.09) -- (2.01,5.19) -- (2.01,5.29) -- (2.01,5.39) -- (2.01,5.49) -- (2.01,5.59) -- (2.00,5.70) -- (2.00,5.80) -- (2.00,5.90) -- (2.00,6.00);
\draw (4.00,6.00) -- (3.94,5.90) -- (3.88,5.81) -- (3.81,5.71) -- (3.75,5.62) -- (3.69,5.52) -- (3.63,5.43) -- (3.57,5.33) -- (3.52,5.23) -- (3.46,5.14) -- (3.41,5.04) -- (3.36,4.94) -- (3.31,4.84) -- (3.26,4.74) -- (3.21,4.64) -- (3.17,4.53) -- (3.13,4.43) -- (3.09,4.32) -- (3.06,4.22) -- (3.03,4.11) -- (3.00,4.00);
\draw (3.00,4.00) -- (2.99,3.95) -- (2.98,3.90) -- (2.97,3.85) -- (2.96,3.80) -- (2.95,3.75) -- (2.95,3.70) -- (2.94,3.65) -- (2.94,3.60) -- (2.94,3.55) -- (2.93,3.50) -- (2.93,3.45) -- (2.93,3.40) -- (2.94,3.35) -- (2.94,3.30) -- (2.95,3.25) -- (2.95,3.20) -- (2.96,3.15) -- (2.97,3.10) -- (2.99,3.05) -- (3.00,3.00);
\draw (3.00,3.00) -- (3.02,2.93) -- (3.05,2.86) -- (3.08,2.80) -- (3.12,2.73) -- (3.15,2.67) -- (3.19,2.60) -- (3.24,2.54) -- (3.28,2.48) -- (3.33,2.43) -- (3.38,2.37) -- (3.44,2.32) -- (3.49,2.27) -- (3.55,2.23) -- (3.61,2.18) -- (3.67,2.14) -- (3.73,2.11) -- (3.80,2.07) -- (3.86,2.05) -- (3.93,2.02) -- (4.00,2.00);
\draw (4.00,2.00) -- (4.11,1.98) -- (4.22,1.96) -- (4.33,1.96) -- (4.45,1.97) -- (4.56,1.98) -- (4.67,2.01) -- (4.79,2.04) -- (4.90,2.08) -- (5.01,2.13) -- (5.12,2.18) -- (5.22,2.24) -- (5.33,2.31) -- (5.43,2.38) -- (5.52,2.46) -- (5.62,2.54) -- (5.70,2.63) -- (5.79,2.72) -- (5.86,2.81) -- (5.94,2.90) -- (6.00,3.00);
\draw (6.00,3.00) -- (6.08,3.13) -- (6.14,3.27) -- (6.19,3.40) -- (6.24,3.54) -- (6.27,3.69) -- (6.29,3.83) -- (6.31,3.98) -- (6.32,4.13) -- (6.32,4.28) -- (6.31,4.43) -- (6.30,4.58) -- (6.28,4.74) -- (6.25,4.89) -- (6.23,5.05) -- (6.19,5.21) -- (6.16,5.36) -- (6.12,5.52) -- (6.08,5.68) -- (6.04,5.84) -- (6.00,6.00);
\draw (6.00,3.00) -- (6.19,3.06) -- (6.39,3.12) -- (6.58,3.17) -- (6.78,3.23) -- (6.97,3.29) -- (7.17,3.35) -- (7.36,3.40) -- (7.56,3.46) -- (7.76,3.51) -- (7.96,3.56) -- (8.16,3.61) -- (8.36,3.66) -- (8.56,3.71) -- (8.76,3.76) -- (8.96,3.81) -- (9.17,3.85) -- (9.37,3.89) -- (9.58,3.93) -- (9.79,3.97) -- (10.00,4.00);
\draw (10.00,4.00) -- (10.05,4.01) -- (10.10,4.02) -- (10.15,4.02) -- (10.21,4.03) -- (10.26,4.04) -- (10.31,4.04) -- (10.36,4.05) -- (10.41,4.05) -- (10.46,4.05) -- (10.51,4.06) -- (10.56,4.06) -- (10.61,4.06) -- (10.66,4.06) -- (10.71,4.05) -- (10.76,4.05) -- (10.81,4.04) -- (10.86,4.03) -- (10.91,4.03) -- (10.95,4.01) -- (11.00,4.00);
\draw (11.00,4.00) -- (11.06,3.98) -- (11.13,3.95) -- (11.19,3.92) -- (11.25,3.89) -- (11.31,3.85) -- (11.37,3.81) -- (11.43,3.76) -- (11.48,3.72) -- (11.53,3.67) -- (11.59,3.61) -- (11.64,3.56) -- (11.68,3.50) -- (11.73,3.44) -- (11.77,3.38) -- (11.82,3.32) -- (11.86,3.26) -- (11.90,3.20) -- (11.93,3.13) -- (11.97,3.07) -- (12.00,3.00);
\draw (12.00,3.00) -- (12.06,2.86) -- (12.11,2.72) -- (12.16,2.57) -- (12.19,2.43) -- (12.22,2.28) -- (12.24,2.13) -- (12.25,1.99) -- (12.25,1.84) -- (12.25,1.69) -- (12.25,1.54) -- (12.24,1.38) -- (12.22,1.23) -- (12.20,1.08) -- (12.18,0.93) -- (12.16,0.77) -- (12.13,0.62) -- (12.10,0.46) -- (12.07,0.31) -- (12.03,0.15) -- (12.00,0.00);
\draw (6.00,3.00) -- (10.00,0.00);
\draw (6.00,3.00) -- (8.00,0.00);
\draw (6.00,3.00) -- (6.00,0.00);
\draw (6.00,3.00) -- (4.00,0.00);
\draw (12.00,6.00) -- (6.00,3.00);
\draw (10.00,6.00) -- (6.00,3.00);
\draw (8.00,6.00) -- (6.00,3.00);
\filldraw[fill=white] (6.00,3.00) ellipse (0.80cm and 0.50cm);
\draw (0.00,6.50) node{$\pr_J$};
\draw (2.00,6.50) node{$\irr(\varsigma_{I\ssm J})$};
\draw (4.00,6.50) node{$\incl_I$};
\draw (6.00,6.50) node{$\pr_I$};
\draw (8.00,6.50) node{$R\Ind_{P_I}^G$};
\draw (10.00,6.50) node{$\inc$};
\draw (12.00,6.50) node{$\psi_I$};
\draw (6.00,3.00) node{$\theta$};
\draw (4.00,-0.50) node{$\pr_J$};
\draw (6.00,-0.50) node{$R\Ind_{P_J}^G$};
\draw (8.00,-0.50) node{$\inc$};
\draw (10.00,-0.50) node{$\psi_J$};
\draw (12.00,-0.50) node{$\Theta^{J,I}$};
\end{tikzpicture}}}
\]}\fi
Now this equality is a straightforward consequence of the definitions, Lemma~\ref{lem:zigzag-Ind-For}, Lemma~\ref{lem:delta-gamma-mate}, and the usual rules for manipulating string diagrams.
\end{proof}

Combining Lemma~\ref{lem:theta-phi-iso} and Lemma~\ref{lem:theta-phi-adj} in the special case where $J = \varnothing$ and $I = \{s\}$, we obtain the following statement, which is the main result of this section. 

\begin{thm}
\label{thm:translation}
The following diagram is a commutative diagram of adjoint pairs:
\[
\xymatrix@C=2cm{
\Dfg_{\dot B}(\bL) \ar[r]^-{\Omega_{\varnothing}} \ar@{}[d]|-{\dashv} \ar@<1ex>[d]^-{\Theta_{\varnothing,\{s\}}} 
  & \Db\Rep_{\varnothing}(G) \ar@{}[d]|-{\dashv} \ar@<1ex>[d]^-{T_{\varnothing}^{\{s\}}} \\
\Dfg_{\dot P_s}(\bL_s) \ar[r]_-{\Omega_{\{s\}}} \ar@<1ex>[u]^-{\Theta^{\varnothing,\{s\}}} 
  & \Db\Rep_{\{s\}}(G). \ar@<1ex>[u]^-{T_{\{s\}}^\varnothing}}
\]
\end{thm}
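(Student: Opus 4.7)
By Definition~\ref{defn:comm-adjoint}, the plan is to exhibit a natural transformation $\theta : \Omega_{\{s\}} \circ \Theta_{\varnothing,\{s\}} \to T_\varnothing^{\{s\}} \circ \Omega_\varnothing$ that is an isomorphism and whose mate $\theta^\wedge : T_{\{s\}}^\varnothing \circ \Omega_{\{s\}} \to \Omega_\varnothing \circ \Theta^{\varnothing,\{s\}}$ (in the sense of~\eqref{eqn:mateship}) is also an isomorphism. The natural candidates for $\theta$ and for a putative mate $\phi$ arise by assembling the isomorphism $\alpha$ of Proposition~\ref{prop:Theta-psi}, the adjunction-induced transformations $\gamma$ and $\delta$ relating $R\Ind_{P_?}^G$ with tensoring by $\irr(\varsigma_s)$ and its dual, the transitivity of induction, the commutation isomorphisms~\eqref{eqn:comm-defn}, and the new morphism $\beta$ built from $\alpha$ together with the appropriate adjunctions. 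The theorem will then follow from two distinct statements: first, that both $\theta$ and $\phi$ are isomorphisms (valid in this special case), and second, the purely formal identity $\phi = \theta^\wedge$.

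The first step is to prove that $\theta$ and $\phi$ are isomorphisms. Every building block of the big string diagrams defining them is already known to be invertible (by~\eqref{eqn:RInd-For-PI-PJ}, the unit/counit isomorphisms for $\For_{P_J}^{P_I} \dashv R\Ind_{P_J}^{P_I}$, transitivity, and $\alpha$), so the problem reduces to verifying invertibility of certain isolated subregions. For $\theta$ these are settled by Lemma~\ref{lem:induce-block} (that $R\Ind_{P_I}^G \circ \inc$ lands in $\Db\Rep_I(G)$) and Lemma~\ref{lem:induce-translate} (that $\pr_I\gamma\inc$ is an isomorphism). For $\phi$ the upper regions reduce to Lemma~\ref{lem:induce-block} and to Lemma~\ref{lem:induce-p-translate}, while the lower region is precisely where the minimal-parabolic hypothesis is needed: it reduces to showing $R\Ind_B^{P_s}\beta$ is an isomorphism, i.e.~Proposition~\ref{prop:indbeta}.

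The second step, showing $\phi = \theta^\wedge$, is purely a string-diagrammatic calculation. Unwinding $(-)^\wedge$ from~\eqref{eqn:mateship} and comparing with the definition of $\phi$ in Figure~\ref{fig:thm-trans}, the identity reduces to Lemma~\ref{lem:zigzag-Ind-For} (the zigzag relation for $\For_B^{P_s} \dashv R\Ind_B^{P_s}$), the mate relation $\delta = \gamma^\wedge$ (Lemma~\ref{lem:delta-gamma-mate}), and the standard rules for composing units and counits. This is formal once the string-diagram conventions of~\S\ref{ssec:string} are set up.

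The main obstacle will be Proposition~\ref{prop:indbeta}. Proving that $R\Ind_B^{P_s}\beta_V$ is an isomorphism requires a concrete model for the image $\psi(Y_s \otimes V)$ of the object $Y_s = \Theta^{\varnothing,\{s\}}(\bk)$ as an explicit two-term complex of $B$-modules. This model emerges by identifying the connecting map in a truncation triangle for $Y_s$ with the unique-up-to-scalar nonzero class in $\Ext^2_B(\bk_B(\ell\varsigma_s), \bk_B(\ell\varsigma_s - \ell\alpha_s))$ (a space whose dimension is pinned down by Lemma~\ref{lem:ext2b} using the Friedlander--Parshall isomorphism~\eqref{eqn:cohomology-B1} and Corollary~\ref{cor:dim-Ext}), whose representative is constructed in Lemma~\ref{lem:steinberg-diff}. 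With such a model in hand, one traces $\beta_V$ through a surjectivity/endomorphism argument modeled on Proposition~\ref{prop:steinberg-tilting}, invoking indecomposability of the relevant tilting modules for $M_s$. This chain of computations uses tilting-module data that has no straightforward extension to parabolics of higher semisimple rank, which explains the restriction $\#I=1$ in the statement (cf.~Remark~\ref{rmk:thm-translation}).
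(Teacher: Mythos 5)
Your proposal is correct and follows exactly the same route as the paper: define $\theta$ and $\phi$ as in Figure~\ref{fig:thm-trans}, prove both are isomorphisms via Lemmas~\ref{lem:induce-block}, \ref{lem:induce-translate}, \ref{lem:induce-p-translate} and (for the lower region of $\phi$) Proposition~\ref{prop:indbeta}, then verify the formal identity $\phi = \theta^\wedge$ using Lemmas~\ref{lem:zigzag-Ind-For} and~\ref{lem:delta-gamma-mate}. Your identification of Proposition~\ref{prop:indbeta} as the genuine obstacle, and of the rank-one tilting-module computation (via Lemmas~\ref{lem:ext2b}, \ref{lem:steinberg-diff}, and Proposition~\ref{prop:steinberg-tilting}) as the reason for the restriction to $\#I = 1$, is also exactly the paper's reasoning.
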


\begin{rmk}
\label{rmk:thm-translation}
It will follow from Theorem~\ref{thm:induction-thm} below that the functors $\Omega_K$ are equivalences of categories. Once this is known, the general case of Theorem~\ref{thm:translation} (for any pair $J \subset I$) will follow from Lemma~\ref{lem:equivalence-mate}.
\end{rmk}

Applying Lemma~\ref{lem:comm-adj-counit} in this special case we deduce the following corollary, which is the result we will use later in the paper.

\begin{cor}
\label{cor:isom-adjunctions-translation}
There exists an isomorphism of functors
\[
\Omega_{\varnothing} \circ \Theta^{\varnothing,\{s\}} \circ \Theta_{\varnothing,\{s\}} \simto T^{\varnothing}_{\{s\}} \circ T_{\varnothing}^{\{s\}} \circ \Omega_{\varnothing}
\]
such that for any $X$ in $\Dfg_{\dot B}(\bL)$ the following diagram commutes, where the vertical arrow is induced by our isomorphism of functors and the other arrows are induced by adjunction:
\[
\xymatrix@C=1.5cm{
\Omega_{\varnothing} \circ \Theta^{\varnothing,\{s\}} \circ \Theta_{\varnothing,\{s\}}(X) \ar[rd] \ar[d]_-{\wr} & \\
T^{\varnothing}_{\{s\}} \circ T_{\varnothing}^{\{s\}} \circ \Omega_{\varnothing}(X) \ar[r] & \Omega_{\varnothing}(X).
}
\]
\end{cor}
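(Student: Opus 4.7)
The plan is to deduce this corollary as a direct application of Lemma~\ref{lem:comm-adj-counit} to the commutative diagram of adjoint pairs furnished by Theorem~\ref{thm:translation}. More precisely, matching the notation of Lemma~\ref{lem:comm-adj-counit}, we take
\[
f = \Omega_\varnothing, \quad f' = \Omega_{\{s\}}, \quad (p,q) = (\Theta^{\varnothing,\{s\}}, \Theta_{\varnothing,\{s\}}), \quad (r,s) = (T_{\{s\}}^\varnothing, T_\varnothing^{\{s\}}),
\]
so that the adjunctions $p \dashv q$ and $r \dashv s$ are the ones described in~\S\ref{ss:Theta} and~\S\ref{ss:setting} respectively. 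Theorem~\ref{thm:translation} asserts precisely that the resulting square is a commutative diagram of adjoint pairs in the sense of Definition~\ref{defn:comm-adjoint}.

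With these identifications, Lemma~\ref{lem:comm-adj-counit} immediately provides a natural isomorphism $fpq \simto rsf$, which is exactly
\[
\Omega_{\varnothing} \circ \Theta^{\varnothing,\{s\}} \circ \Theta_{\varnothing,\{s\}} \simto T^{\varnothing}_{\{s\}} \circ T_{\varnothing}^{\{s\}} \circ \Omega_{\varnothing},
\]
and the commutativity statement of that lemma---asserting that the isomorphism intertwines the counit $fpq \to f$ with the counit $rsf \to f$---is precisely the commutativity of the triangle in the statement of the corollary, applied to any object $X \in \Dfg_{\dot B}(\bL)$.

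Since everything is a direct quotation of general categorical facts already proved in~\S\ref{ssec:string}, the only step requiring any genuine work is the verification of the hypothesis, namely that the square in Theorem~\ref{thm:translation} really is a commutative diagram of adjoint pairs; but this is already established (as part of the proof of Theorem~\ref{thm:translation}) by combining Lemma~\ref{lem:theta-phi-iso} with Lemma~\ref{lem:theta-phi-adj}. Consequently the corollary needs no further argument beyond citing Theorem~\ref{thm:translation} and Lemma~\ref{lem:comm-adj-counit}.
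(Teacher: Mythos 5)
Your proposal is correct and matches the paper's own proof exactly: the paper introduces Corollary~\ref{cor:isom-adjunctions-translation} with the single sentence ``Applying Lemma~\ref{lem:comm-adj-counit} in this special case we deduce the following corollary,'' and your identification of $f$, $f'$, $(p,q)$, $(r,s)$ with the functors in Theorem~\ref{thm:translation} is precisely the intended specialization.
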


\begin{rmk}\label{rmk:trans-biadj}
The vertical arrows in Theorem~\ref{thm:translation} are actually \emph{biadjoint} pairs: there are also adjunctions $\Theta^{\varnothing,\{s\}} \vdash \Theta_{\varnothing,\{s\}}$ and $T_{\{s\}}^\varnothing \vdash T_\varnothing^{\{s\}}$.  This raises two questions:
\begin{enumerate}
\item \emph{Is the diagram in Theorem~{\rm \ref{thm:translation}} a commutative diagram of adjoint pairs for the adjunctions $\Theta^{\varnothing,\{s\}} \vdash \Theta_{\varnothing,\{s\}}$ and $T_{\{s\}}^\varnothing \vdash T^{\{s\}}_\varnothing$?} Concretely, consider the isomorphism $\theta^{-1}: T^{\{s\}}_\varnothing \circ \Omega_{\varnothing} \to \Omega_{\{s\}} \circ \Theta_{\varnothing,\{s\}}$.  This question asks whether the morphism
\[
(\theta^{-1})^\vee: \Omega_\varnothing \circ \Theta^{\varnothing,\{s\}} \to T_{\{s\}}^\varnothing \circ \Omega_{\{s\}}
\]
is an isomorphism.  It is difficult to answer this question with explicit string diagram calculations, mainly because it is difficult to draw a string diagram for $\theta^{-1}$.  (The problem is that the definition of $\theta$ involves morphisms, such as $\gamma$, that are not isomorphisms.)  However, we will see later that $\Omega_\varnothing$ and $\Omega_{\{s\}}$ are equivalences of categories. Lemma~\ref{lem:equivalence-mate} will then tell us that $(\theta^{-1})^\vee$ is indeed an isomorphism.
\item
\emph{Is it true that $(\theta^{-1})^\vee = \phi^{-1}$?}  Starting from Theorem~\ref{thm:translation}, there are in fact two ways to make a commutative diagram of adjoint pairs for $\Theta^{\varnothing,\{s\}} \vdash \Theta_{\varnothing,\{s\}}$ and $T_{\{s\}}^\varnothing \vdash T^{\{s\}}_\varnothing$: we can either look at $\theta^{-1}$ and $(\theta^{-1})^\vee$ as above, or at $\phi^{-1}$ and $(\phi^{-1})^\wedge$.  These are a~priori different; if they happen to coincide, then a version of Lemma~\ref{lem:comm-adj-counit} would show that there is a commutative diagram
\[
\xymatrix{
\Omega_\varnothing(M) \ar[r]^-{\eta} \ar@{=}[d] &
  \Omega_\varnothing(\Theta^{\varnothing,\{s\}}\Theta_{\varnothing,\{s\}}(M)) \ar[r]^-{\epsilon} \ar[d]_{\wr} &
  \Omega_\varnothing(M) \ar@{=}[d] \\
\Omega_\varnothing(M) \ar[r]_-{\eta} &
  T_{\{s\}}^\varnothing T^{\{s\}}_\varnothing(\Omega_\varnothing(M)) \ar[r]_-{\epsilon} &
  \Omega_\varnothing(M).}
\]
We do not know the answer to this question.  
\end{enumerate}
\end{rmk}

\section{Cotangent bundles of partial flag varieties}
\label{sec:exotic}

\subsection{Springer resolutions}
\label{ss:Springer-res}

For any $I \subset S$, we
set
\[
\tcN_I := \dot G \times^{\dot P_I} \fnt_I.
\]
This variety is endowed with a natural $\dot G$-action, and is naturally isomorphic to the cotangent bundle to $\dot G/\dot P_I$. When $I=\varnothing$ we simplify the notation to $\tcN$; in this case the variety is nothing but the usual Springer resolution of the nilpotent cone.

\begin{rmk}
If one replaces $\fnt_I$ by $(\dot \fg/\dot \fp_I)^*$ in the definition of $\tcN_I$, then
the results of the present section hold for any reductive group $\dot G$ with simply connected derived subgroup in any characteristic. (Under our assumptions, it is well known that the Killing form induces an isomorphism of $\dot P_I$-modules $\fnt_I \cong (\dot \fg/\dot \fp_I)^*$.)
\end{rmk}

We let $\Gm$ act on $\fnt_I$ by $z \cdot x = z^{-2}x$.  This induces an action on $\tcN_I$ that commutes with the left multiplication action of $\dot G$, so one can consider the category $\Coh^{\dot G \times \Gm}(\tcN_I)$.  
As in~\S\ref{ss:reminder-Koszul}, we will denote by
\[
\langle 1 \rangle : \Coh^{\dot G \times \Gm}(\tcN_I) \simto \Coh^{\dot G \times \Gm}(\tcN_I)
\]
the functor of tensoring with the tautological $\Gm$-module of dimension $1$. We will use a similar convention for all varieties endowed with a $\Gm$-action to be encountered below.

\begin{rmk}
The convention for the definition of $\langle 1 \rangle$ used in the present paper is the same as in~\cite{mr:etspc,mr:etsps}, but is opposite to the convention used in~\cite{ar:agsr}.
\end{rmk}

Throughout this section, to simplify notation we set
\[
d_I := \dim (\dot G/ \dot P_I) = \dim_{\bk}(\dot \fn_I) = |\Phi^+| - |\Phi_I^+|, \qquad n_I := |\Phi_I^+|=\dim(\dot P_I/\dot B).
\]

For any $\dot P_I$-module $V$, we denote by $\cL_{\dot G/\dot P_I}(V)$ the associated $\dot G$-equivariant vector bundle on $\dot G/\dot P_I$ (see~\cite[\S I.5.8]{jantzen}). We also denote by $\cL_{\tcN_I}(V)$ the pullback of $\cL_{\dot G/\dot P_I}(V)$ under the natural projection $\tcN_I \to \dot G/\dot P_I$. This coherent sheaf has a natural $\dot G \times \Gm$-equivariant structure. When $V=\bk_{\dot P_I}(\lambda)$ for some $\lambda \in \bX$ 
which induces a character of $\dot P_I$, we write $\cO_{\tcN_I}(\lambda)$ instead of $\cL_{\tcN_I}(\bk_{\dot P_I}(\lambda))$.

For $\lambda \in \bX_I^+ \subset \bX$ 
we denote by
\[
\dot \weyl_I(\lambda), \quad \dot \coweyl_I(\lambda), \quad \dot \irr_I(\lambda)
\]
the Weyl, dual Weyl, and simple $\dot M_I$-modules of highest weight $\lambda$, respectively. We will also consider these $\dot M_I$-modules as $\dot P_I$-modules via the surjection $\dot P_I \twoheadrightarrow \dot M_I$. Using these modules we can consider the $\dot G \times \Gm$-equivariant coherent sheaves
\[
\cL_{\tcN_I}(\dot \weyl_I(\lambda)), \quad \cL_{\tcN_I}(\dot \coweyl_I(\lambda)), \quad \cL_{\tcN_I}(\dot \irr_I(\lambda))
\]
on $\tcN_I$.

Below we will use the following lemma, whose proof can be easily adapted from the proof of~\cite[Corollary~5.9]{achar}. (Of course, in this statement $\dot \coweyl_I(\lambda)$ could have been replaced by $\dot \weyl_I(\lambda)$ or by $\dot \irr_I(\lambda)$.)

\begin{lem}
\label{lem:generators-tcNI}
The category $\Db \Coh^{\dot G \times \Gm}(\tcN_I)$ is generated, as a triangulated category, by the objects $ \cL_{\tcN_I}(\dot \coweyl_I(\lambda)) \langle i \rangle$ for $\lambda \in \bX_I^+$ and $i \in \Z$.
\end{lem}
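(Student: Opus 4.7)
The plan is to realize $\Db\Coh^{\dot G \times \Gm}(\tcN_I)$ as a derived category of equivariant modules over a polynomial ring, apply an equivariant Hilbert syzygy theorem, and then exploit the highest weight structure of $\Repf(\dot M_I)$.

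First, using the induction equivalence for the projection $\tcN_I = \dot G \times^{\dot P_I} \fnt_I \to \dot G/\dot P_I$, one identifies
\[
\Coh^{\dot G \times \Gm}(\tcN_I) \cong \Coh^{\dot P_I \times \Gm}(\fnt_I) \cong \Sym(\fnt_I^*)\lmod^{\dot P_I \times \Gm, \mathrm{fg}},
\]
the category of finitely generated $\Z$-graded $\dot P_I$-equivariant $\Sym(\fnt_I^*)$-modules. Under this identification, $\cL_{\tcN_I}(V)\la i \ra$ corresponds to the ``free'' module $V \otimes \Sym(\fnt_I^*)$ with its internal grading shifted by $i$. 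Since $\Sym(\fnt_I^*)$ is Noetherian and of finite global dimension, an equivariant version of Hilbert's syzygy theorem shows that every object of this category admits a bounded resolution by free modules of the form $V \otimes \Sym(\fnt_I^*) \la i\ra$ with $V$ in $\Repf(\dot P_I)$. Hence $\Db\Coh^{\dot G \times \Gm}(\tcN_I)$ is generated, as a triangulated category, by the objects $\cL_{\tcN_I}(V)\la i\ra$ with $V \in \Repf(\dot P_I)$ and $i \in \Z$.

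Next, the functor $\cL_{\tcN_I}(-) : \Repf(\dot P_I) \to \Coh^{\dot G \times \Gm}(\tcN_I)$ is exact, being the composition of the descent equivalence $\cL_{\dot G/\dot P_I}(-)$ with flat pullback along the vector bundle $\tcN_I \to \dot G/\dot P_I$. A devissage along a composition series of $V$ (together with the fact that every simple $\dot P_I$-module factors through $\dot M_I$) reduces the problem to showing that $\cL_{\tcN_I}(\dot\irr_I(\lambda))$ lies in the triangulated subcategory $\mathcal{T}$ generated by the $\cL_{\tcN_I}(\dot\coweyl_I(\mu))\la i\ra$ for each $\lambda \in \bX_I^+$. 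For this, I would use the short exact sequence
\[
0 \to \dot\irr_I(\lambda) \to \dot\coweyl_I(\lambda) \to C_\lambda \to 0
\]
of $\dot P_I$-modules, whose cokernel $C_\lambda$ has all composition factors of the form $\dot\irr_I(\mu)$ with $\mu$ strictly below $\lambda$ in a suitable well-founded partial order on $\bX_I^+$ (given by the strong linkage relation for $\dot M_I$). Applying $\cL_{\tcN_I}(-)$ yields a distinguished triangle in $\Db\Coh^{\dot G \times \Gm}(\tcN_I)$, and an induction along this order reduces the question to $\cL_{\tcN_I}(\dot\coweyl_I(\lambda))$, which is tautologically in $\mathcal{T}$.

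The main delicate point will be verifying the well-foundedness of the partial order used in the last step. This rests on the finiteness of the set of $\mu \in \bX_I^+$ strongly linked to, and strictly below, a given weight, for which we refer to~\cite[II.6]{jantzen}; all remaining steps are standard, and the overall structure closely parallels that of~\cite[Corollary~5.9]{achar}.
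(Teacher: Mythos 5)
Your proof is correct and follows the same route the paper intends: the paper simply cites Achar's Corollary 5.9 for this lemma, and your argument is the natural adaptation of that proof (induction equivalence to $\dot P_I \times \Gm$-equivariant graded $\Sym(\fnt_I^*)$-modules, equivariant Hilbert syzygy theorem, then devissage from $\dot\irr_I(\lambda)$ to $\dot\coweyl_I(\lambda)$ using the socle embedding and induction on a well-founded order). The only remark I would add is that the well-foundedness you flag as the delicate point is in fact routine: the composition factors of the cokernel $C_\lambda$ are indexed by weights $\mu \in \bX_I^+$ with $\mu \preceq_I \lambda$ and lying in the same $\Phi_I$-linkage class, and this set is finite since it is a bounded set of lattice points in a fixed coset of $\Z\Phi_I$.
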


\subsection{Induction and restriction functors}
\label{ss:Ind-Res-Springer}

If $J \subset I \subset S$, we set
\[
\tcN_{J,I} := \dot G \times^{\dot P_J} \fnt_I.
\]
For any $\dot P_J$-module $M$, as above we can consider the vector bundle $\cL_{\tcN_{J,I}}(M)$ obtained by pulling back the vector bundle $\cL_{\dot G / \dot P_J}(M)$ under the projection $\tcN_{J,I} \to \dot G / \dot P_J$. We use the same convention as above for the notation $\cO_{\tcN_{J,I}}(\lambda)$.

The inclusion map $e_{J,I}: \fnt_I \hookrightarrow \fnt_J$ induces an inclusion map
\[
\se_{J,I}: \tcN_{J,I} \hookrightarrow \tcN_J.
\]
On the other hand, there is a smooth, proper map
\[
\mu_{J,I}: \tcN_{J,I} \to \tcN_I
\]
whose fibers are isomorphic to $\dot P_I/\dot P_J$.  Define a pair of functors
\begin{align*}
\Pi_{J,I} &: \Db\Coh^{\dot G \times \Gm}(\tcN_J) \to \Db\Coh^{\dot G \times \Gm}(\tcN_I), \\
\Pi^{J,I} &: \Db\Coh^{\dot G \times \Gm}(\tcN_I) \to \Db\Coh^{\dot G \times \Gm}(\tcN_J),
\end{align*}
by
\begin{align*}
\Pi_{J,I}(\cF) & =\mu_{J,I*}\se_{J,I}^*(\cF \otimes \cO_{\tcN_J}(-\varsigma_{I\ssm J})), \\
\Pi^{J,I}(\cF) & =\se_{J,I*}\mu_{J,I}^*(\cF) \otimes \cO_{\tcN_J}(\varsigma_{I\ssm J} - 2\rho_I + 2\rho_J)\la d_I - d_J \ra.
\end{align*}
In the special case where $J = \varnothing$, we denote these functors simply by $\Pi_I$ and $\Pi^I$. When $I=\{s\}$ for some $s \in S$, we further simplify $\Pi_{\{s\}}$ and $\Pi^{\{s\}}$ to $\Pi_s$ and $\Pi^s$.

For $\lambda, \mu \in \bX_I^+$, we have
\begin{gather}
\Pi_I(\cO_{\tcN}(\mu + \varsigma_I)) \cong \cL_{\tcN_I} ( \dot\coweyl_I(\mu)), \label{eqn:pi_i} \\
\Pi^I(\cL_{\tcN_I}( \dot\coweyl_I(\lambda))) \cong \se_{\varnothing, I*}\cL_{\tcN_{\varnothing,I}}(\dot\coweyl_I(\lambda) \otimes \bk_{\dot B}(\varsigma_I - 2\rho_I) ) \la -n_I \ra.\label{eqn:pi^i}
\end{gather}
(Here~\eqref{eqn:pi^i} follows directly from the definitions, and~\eqref{eqn:pi_i} can be deduced from~\cite[I.5.18(5)]{jantzen}.)
On the other hand, if $\mu \in -\bX_I^+$, then from~\cite[II.4.2(10)]{jantzen} one can deduce that
\begin{equation}\label{eqn:pi_i-dual}
\Pi_I(\cO_\tcN(\mu + \varsigma_I - 2\rho_I)) \cong \cL_{\tcN_I} (\dot\weyl_I(w_I\mu))[-n_I].
\end{equation}

\begin{lem}
\label{lem:pi-adjoint}
The functor $\Pi_{J,I}$ has a left adjoint given by $\Pi^{J,I}\la d_I - d_J\ra[d_I - d_J]$ and a right adjoint given by $\Pi^{J,I}\la d_J - d_I\ra[d_J - d_I]$.  
\end{lem}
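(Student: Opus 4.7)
The plan is to decompose $\Pi_{J,I}$ as the composition of three elementary functors---the twist $-\otimes\cO_{\tcN_J}(-\varsigma_{I\setminus J})$, the derived pullback $\se_{J,I}^*$ along the regular closed embedding $\se_{J,I}$ (codimension $d_J-d_I$), and the derived pushforward $\mu_{J,I*}$ along the smooth proper morphism $\mu_{J,I}$ (relative dimension $d_J-d_I$)---and to assemble the adjoints by composing the standard adjoints of each piece. For the twist, both adjoints are $-\otimes\cO_{\tcN_J}(\varsigma_{I\setminus J})$; for $\mu_{J,I*}$ the left adjoint is $\mu_{J,I}^*$ and the right adjoint is $\mu_{J,I}^!(-)=\mu_{J,I}^*(-)\otimes\omega_{\mu_{J,I}}[d_J-d_I]$; for $\se_{J,I*}$ the left adjoint is $\se_{J,I}^*$ and the right adjoint is $\se_{J,I}^!(-)=\se_{J,I}^*(-)\otimes\det N_{J,I}[d_I-d_J]$, with $N_{J,I}$ the normal bundle. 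The right adjoint of $\Pi_{J,I}$ then follows directly by composition. The left adjoint is more subtle: $\se_{J,I}^*$ is itself already a left adjoint (of $\se_{J,I*}$) and has no manifest further left adjoint, so one instead verifies the adjunction by hand, using $\se_{J,I*}\vdash\se_{J,I}^!$ together with the explicit comparison between $\se_{J,I}^*$ and $\se_{J,I}^!$.

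The needed geometric input is the explicit form of $\omega_{\mu_{J,I}}$ and $\det N_{J,I}$ as $\dot G\times\Gm$-equivariant line bundles on $\tcN_{J,I}$. The fiber of $\mu_{J,I}$ is the partial flag variety $\dot P_I/\dot P_J$, whose canonical bundle has $\dot P_J$-character $2\rho_J-2\rho_I$; since the fibers lie transverse to the $\Gm$-scaling on the $\fnt_I$-direction of $\tcN_{J,I}$, the relative canonical $\omega_{\mu_{J,I}}$ carries trivial $\Gm$-weight. The normal bundle $N_{J,I}$ is the vector bundle associated with the $\dot P_J$-module $\fnt_J/\fnt_I\cong\bigoplus_{\alpha\in\Phi_I^+\setminus\Phi_J^+}\dot\fg_{-\alpha}$, each line of which carries $\Gm$-weight $-2$; therefore
\[
\omega_{\mu_{J,I}}\cong\cO_{\tcN_{J,I}}(2\rho_J-2\rho_I),\qquad \det N_{J,I}\cong\cO_{\tcN_{J,I}}(2\rho_J-2\rho_I)\langle 2(d_I-d_J)\rangle.
\]

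With these formulas, the right adjoint, computed as $-\otimes\cO_{\tcN_J}(\varsigma_{I\setminus J})\circ\se_{J,I*}\circ\mu_{J,I}^!$, simplifies via the projection formula to $\se_{J,I*}(\mu_{J,I}^*(-)\otimes\cO_{\tcN_{J,I}}(\varsigma_{I\setminus J}+2\rho_J-2\rho_I))[d_J-d_I]$, which matches $\Pi^{J,I}\langle d_J-d_I\rangle[d_J-d_I]$ once the built-in $\langle d_I-d_J\rangle$ of $\Pi^{J,I}$ is cancelled against the extra $\langle d_J-d_I\rangle$. For the left adjoint, we unfold $\Hom(\Pi^{J,I}\langle d_I-d_J\rangle[d_I-d_J](\cF),\cG)$, apply $\se_{J,I*}\vdash\se_{J,I}^!$, and substitute the formula for $\se_{J,I}^!$: the auxiliary shift $\langle 2(d_I-d_J)\rangle[d_I-d_J]$ cancels between the two sides of $\Hom$; the line bundle $\cO_{\tcN_{J,I}}(2\rho_J-2\rho_I)$ produced on the target by $\se_{J,I}^!$, once transferred to the source as its inverse $\cO_{\tcN_{J,I}}(2\rho_I-2\rho_J)$, exactly cancels the $-2\rho_I+2\rho_J$ part of the twist $\cO_{\tcN_{J,I}}(\varsigma_{I\setminus J}-2\rho_I+2\rho_J)$ already present there, leaving only $\cO_{\tcN_{J,I}}(\varsigma_{I\setminus J})$; moving this remaining twist to the target as $\se_{J,I}^*(\cG\otimes\cO_{\tcN_J}(-\varsigma_{I\setminus J}))$ and applying $\mu_{J,I}^*\vdash\mu_{J,I*}$ produces $\Hom(\cF,\Pi_{J,I}(\cG))$. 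The main obstacle throughout is the careful bookkeeping of $T$-weights, $\Gm$-weights and cohomological shifts; in particular one must confirm the $\Gm$-grading $\langle 2(d_I-d_J)\rangle$ on $\det N_{J,I}$---less obvious than its $T$-character, and coming from the convention that $\Gm$ acts with weight $-2$ on each root line of $\fnt_J$---since it is precisely this grading that allows the $\se_{J,I}^!$-correction to cancel cleanly against the built-in shifts and twist of $\Pi^{J,I}$.
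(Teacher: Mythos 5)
Your proof is correct and takes essentially the same approach as the paper's: both compute the exceptional functors $\mu_{J,I}^!$ and $\se_{J,I}^!$ in terms of $\mu_{J,I}^*$, $\se_{J,I}^*$ via the relative canonical bundle and normal bundle (including the crucial $\Gm$-grading $\la 2(d_I-d_J)\ra$), then read off the right adjoint by composing right adjoints, and handle the left adjoint by trading $\se_{J,I}^*$ for $\se_{J,I}^!$ so that $\se_{J,I*}\dashv\se_{J,I}^!$ applies. The paper packages this last step by rewriting $\Pi_{J,I}$ as a composition with manifest left adjoints, while you carry out the equivalent $\Hom$-space unrolling; this is the same calculation.
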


\begin{proof}
In this proof,
for brevity we set $r = d_I - d_J$.  The canonical bundle of $\dot P_I/\dot P_J$ is isomorphic to the line bundle corresponding to the $\dot P_J$-representation $\bigwedge^{\text{top}} (\dot\fp_I/\dot\fp_J)^* \cong \bk_{\dot P_J}(2\rho_J - 2\rho_I)$.  Since $\mu_{J,I}$ is a smooth morphism with fibers isomorphic to $\dot P_I/\dot P_J$, we have
\[
\mu_{J,I}^!(-) \cong \mu_{J,I}^*(-) \otimes_{\cO_{\tcN_{J,I}}} \cO_{\tcN_{J,I}}(2\rho_J-2\rho_I)[-r].
\]
Next, the canonical bundle of $\fnt_J$ is isomorphic to $\cO_{\fnt_J} \otimes \bk_{\dot P_J}(2\rho - 2\rho_J)\la 2d_J \ra$, and likewise for $\fnt_I$.  The map $e_{J,I}: \fnt_I \to \fnt_J$ is an inclusion of one smooth variety in another, and it follows that $e_{J,I}^!(-) \cong e_{J,I}^*(-) \otimes_{\cO_{\fnt_I}} \bigl( \cO_{\fnt_I}\otimes \bk_{\dot P_J}(2\rho_J-2\rho_I)\la 2r\ra[r] \bigr)$.  We deduce that
\begin{multline}
\label{eqn:eJI!}
\se_{J,I}^!(-) \cong \se_{J,I}^*(-) \otimes_{\cO_{\tcN_{J,I}}} \cO_{\tcN_{J,I}}(2\rho_J-2\rho_I)\la 2r\ra[r] \\
\cong \se_{J,I}^*(- \otimes_{\cO_{\tcN_J}} \cO_{\tcN_J}(2\rho_J-2\rho_I))\la 2r\ra[r].
\end{multline}

Now, the right adjoint to $\Pi_{J,I}$ is given by
\begin{multline*}
\cF \mapsto \se_{J,I*}\mu_{J,I}^!(\cF) \otimes_{\cO_{\tcN_J}} \cO_{\tcN_J}(\varsigma_{I\ssm J}) \\
\cong \se_{J,I*}\mu_{J,I}^*(\cF) \otimes_{\cO_{\tcN_J}} \cO_{\tcN_J}(\varsigma_{I\ssm J} -2\rho_I + 2\rho_J)[-r]
\cong \Pi^{J,I}(\cF)\la -r\ra[-r].
\end{multline*}
On the other hand, if we rewrite $\Pi_{J,I}$ as
\[
\Pi_{J,I}(\cF) \cong \mu_{J,I*} \se_{J,I}^!(\cF \otimes \cO_{\tcN_J}(-\varsigma_{I\ssm J} + 2\rho_I-2\rho_J))\la -2r\ra[-r],
\]
we see that its left adjoint is given by
\[
\cF \mapsto \se_{J,I*}\mu_{J,I}^*(\cF) \otimes \cO_{\tcN_J}(\varsigma_{I\ssm J} - 2\rho_I+2\rho_J)\la 2r\ra [r] \cong \Pi^{J,I}(\cF)\la r\ra[r],
\]
as desired.
\end{proof}

\begin{rmk}
\label{rmk:adjunctions-PiI}
Below we will mainly consider the case when $J=\varnothing$. In this case we have $d_\varnothing - d_I=n_I$, hence we obtain adjoint pairs $(\Pi^I \langle -n_I \rangle [-n_I], \Pi_I)$ and $(\Pi_I, \Pi^I \langle n_I \rangle [n_I])$.
\end{rmk}

\begin{lem}
\label{lem:pi-transitive}
Assume that $K \subset J \subset I$.  Then there exist natural isomorphisms
\[
\Pi_{K,I} \cong \Pi_{J,I} \circ \Pi_{K,J}
\qquad\text{and}\qquad
\Pi^{K,I} \cong \Pi^{K,J} \circ \Pi^{J,I}.
\]
\end{lem}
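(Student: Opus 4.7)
The plan is to exhibit $\tcN_{K,I}$ as the fibre product $\tcN_{K,J} \times_{\tcN_J} \tcN_{J,I}$ and then derive the first isomorphism by flat base change and the projection formula. Concretely, the natural maps $\widetilde\se : \tcN_{K,I} \hookrightarrow \tcN_{K,J}$ (induced by $\fnt_I \subset \fnt_J$) and $\widetilde\mu : \tcN_{K,I} \to \tcN_{J,I}$ (induced by $\dot P_K \subset \dot P_J$) fit into a commutative square with $\se_{J,I}$ and $\mu_{K,J}$, and one checks that this square is cartesian by rewriting
\[
\tcN_{K,J} \cong \dot G \times^{\dot P_J}\!\bigl(\dot P_J/\dot P_K \times \fnt_J\bigr), \qquad \tcN_{K,I} \cong \dot G \times^{\dot P_J}\!\bigl(\dot P_J/\dot P_K \times \fnt_I\bigr),
\]
so that the square becomes the pullback of $\fnt_I \hookrightarrow \fnt_J$ under the projection $\dot P_J/\dot P_K \times \fnt_J \to \fnt_J$, which is manifestly cartesian.

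With the cartesian square in hand, the composition $\Pi_{J,I} \circ \Pi_{K,J}$ will be unwound in three routine moves: first absorb $\cO_{\tcN_J}(-\varsigma_{I\ssm J})$ into the pushforward via the projection formula for $\mu_{K,J}$, combining it with the twist $\cO_{\tcN_{K,J}}(-\varsigma_{J\ssm K})$ using the additivity $\varsigma_{I\ssm K} = \varsigma_{I\ssm J} + \varsigma_{J\ssm K}$; then apply flat base change $\se_{J,I}^* \circ \mu_{K,J*} \cong \widetilde\mu_* \circ \widetilde\se^*$ (valid since $\mu_{K,J}$ is smooth and proper); and finally use the factorisations $\se_{K,J} \circ \widetilde\se = \se_{K,I}$ and $\mu_{J,I} \circ \widetilde\mu = \mu_{K,I}$ to recognise the result as $\Pi_{K,I}(\cF)$.

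For the second isomorphism, the cleanest route is via Lemma~\ref{lem:pi-adjoint}: both $\Pi^{K,J} \circ \Pi^{J,I}\,\la d_I - d_K\ra[d_I - d_K]$ and $\Pi^{K,I}\,\la d_I - d_K\ra[d_I - d_K]$ are left adjoint to $\Pi_{K,I} \cong \Pi_{J,I} \circ \Pi_{K,J}$, so they are canonically isomorphic, and removing the common shift and twist yields $\Pi^{K,J} \circ \Pi^{J,I} \cong \Pi^{K,I}$. Alternatively one could repeat a base change / projection formula computation parallel to the one above, this time exploiting the further additivity $(\varsigma_{I\ssm K} - 2\rho_I + 2\rho_K) = (\varsigma_{I\ssm J} - 2\rho_I + 2\rho_J) + (\varsigma_{J\ssm K} - 2\rho_J + 2\rho_K)$ to match the characters appearing in the definition of $\Pi^{-,-}$.

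The main obstacle is the verification that the square is cartesian; once this geometric fact is established, everything else is formal manipulation with base change, the projection formula, and bookkeeping of characters and shifts. Set-theoretically the identification is somewhat subtle, since a pair in the fibre product is given by data $([g{:}x]_{\dot P_K}, [h{:}y]_{\dot P_J})$ related by some $p \in \dot P_J$ which a priori need not lie in $\dot P_K$; the observation that resolves the apparent discrepancy is that $\dot P_J \cdot \fnt_I \subset \fnt_I$ (because $\dot P_J \subset \dot P_I$ and $\fnt_I$ is $\dot P_I$-stable), which forces $x \in \fnt_I$ and makes the equivariant identification above transparent.
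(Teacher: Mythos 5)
Your argument is correct and follows essentially the same route as the paper: identify $\tcN_{K,I}$ as the fibre product $\tcN_{K,J}\times_{\tcN_J}\tcN_{J,I}$, apply base change for the resulting cartesian square with smooth (proper) vertical maps, and combine the projection formula with the additivity $\varsigma_{I\ssm K}=\varsigma_{I\ssm J}+\varsigma_{J\ssm K}$. For the second isomorphism the paper simply runs the analogous direct computation, whereas your appeal to uniqueness of left adjoints via Lemma~\ref{lem:pi-adjoint} is a valid and slightly slicker shortcut.
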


\begin{proof}
Let $\se': \tcN_{K,I} \to \tcN_{K,J}$ be the inclusion map induced by $e_{J,I}: \fnt_I \to \fnt_J$, and let $\mu': \tcN_{K,I} \to \tcN_{J,I}$ be the obvious map.  Consider the diagram
\[
\xymatrix{
\tcN_{K,I} \ar[r]_{\se'} \ar[d]^{\mu'} \ar@/^3ex/[rr]^{\se_{K,I}} \ar@/_3ex/[dd]_{\mu_{K,I}} & \tcN_{K,J} \ar[r]_{\se_{K,J}} \ar[d]^{\mu_{K,J}} & \tcN_K.\\
\tcN_{J,I} \ar[r]_{\se_{J,I}} \ar[d]^{\mu_{J,I}} & \tcN_J \\
\tcN_I}
\]
The square in the upper-left part of this diagram is cartesian, and the vertical maps are smooth, so there is a natural isomorphism $\se_{J,I}^* \mu_{K,J*} \cong \mu'_* (\se')^*$ (see~\cite[Proposition~A.15(3)]{mr:etspc}).  Therefore,
\begin{multline*}
\Pi_{J,I}(\Pi_{K,J}(\cF)) = \mu_{J,I*} \se_{J,I}^*(\mu_{K,J*} \se_{K,J}^*(\cF \otimes \cO_{\tcN_K}(-\varsigma_{J\ssm K})) \otimes \cO_{\tcN_J}(-\varsigma_{I \ssm J})) \\
\cong \mu_{J,I*} \se_{J,I}^*\mu_{K,J*} \se_{K,J}^*(\cF \otimes \cO_{\tcN_K}(-\varsigma_{J\ssm K}-\varsigma_{I\ssm J})) \\
\cong \mu_{J,I*} \mu'_* (\se')^* \se_{K,J}^*(\cF \otimes \cO_{\tcN_K}(-\varsigma_{I\ssm K})) \\
\cong \mu_{K,I*} \se_{K,I}^*(\cF \otimes \cO_{\tcN_K}(-\varsigma_{I\ssm K})) \cong \Pi_{K,I}(\cF).
\end{multline*}
The proof that $\Pi^{K,I} \cong \Pi^{K,J} \circ \Pi^{J,I}$ is similar.
\end{proof}

\subsection{Hom-group calculations}
\label{ss:Hom-calculations}

In this subsection we fix a subset $I \subset S$.

In the next lemma we use the standard order on $\bX$ defined by
\[
\lambda \preceq \mu \quad \Leftrightarrow \quad \mu - \lambda \in \Z_{\geq 0} \Phi^+.
\]

\begin{lem}
\phantomsection
\label{lem:tcn-pre-excep}
\begin{enumerate}
\item
\label{it:tcn-pre-excep1}
Let $\lambda, \mu \in \bX$.  If $\lambda \not\preceq \mu + 2\rho_I$, then for all $n,k \in \Z$, we have
\[
\Hom_{\Db\Coh^{\dot G \times \Gm}(\tcN)} \left( \se_{\varnothing, I*}\cO_{\tcN_{\varnothing,I}}(\mu), \cO_\tcN(\lambda)\la n\ra[k] \right) = 0.
\]
\item
\label{it:tcn-pre-excep2}
Let $\lambda \in \bX$.  We have
\begin{multline*}
\Hom_{\Db\Coh^{\dot G \times \Gm}(\tcN)} \left( \se_{\varnothing, I*}\cO_{\tcN_{\varnothing,I}}(\lambda - 2\rho_I), \cO_\tcN(\lambda)\la n\ra[k] \right) \cong 
\\
\begin{cases}
\bk & \text{if $n = 2n_I$ and $k = n_I$;} \\
0 & \text{otherwise.}
\end{cases}
\end{multline*}
\end{enumerate}
\end{lem}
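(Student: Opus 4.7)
The plan is to apply the adjunction for the closed embedding $\se_{\varnothing,I}:\tcN_{\varnothing,I}\hookrightarrow\tcN$ to transfer the computation to $\tcN_{\varnothing,I}$, convert to a $\dot B$-Ext computation via the standard equivalence $\Coh^{\dot G\times\Gm}(\tcN_{\varnothing,I})\simeq\Coh^{\dot B\times\Gm}(\fnt_I)$, and then settle both parts by a weight argument.

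First, the formula
\[
\se_{\varnothing,I}^!(\cO_\tcN(\nu)) \cong \cO_{\tcN_{\varnothing,I}}(\nu-2\rho_I)\langle-2n_I\rangle[-n_I],
\]
extracted from~\eqref{eqn:eJI!} in the proof of Lemma~\ref{lem:pi-adjoint}, combined with the projection formula and the adjunction $\se_{\varnothing,I*}\dashv\se_{\varnothing,I}^!$, rewrites the Hom space as
\[
\Hom_{\Db\Coh^{\dot G\times\Gm}(\tcN_{\varnothing,I})}\bigl(\cO_{\tcN_{\varnothing,I}}(\mu),\,\cO_{\tcN_{\varnothing,I}}(\lambda-2\rho_I)\langle n-2n_I\rangle[k-n_I]\bigr).
\]
Under the equivalence with $\Coh^{\dot B\times\Gm}(\fnt_I)$, where $\cO_{\tcN_{\varnothing,I}}(\nu)$ corresponds to the free graded $\Sym(\fnt_I^*)$-module $\bk(\nu)\otimes\cO_{\fnt_I}$, this Hom space reduces to
\[
\Ext^{k-n_I}_{\dot B}\bigl(\bk,\,\bk(\lambda-\mu-2\rho_I)\otimes\Sym^j\fnt_I^*\bigr)
\]
with $j=(n-2n_I)/2$; it is automatically zero unless $n\geq 2n_I$ and $n-2n_I$ is even.

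For the $\dot B$-Ext, the Hochschild--Serre spectral sequence for $\dot B=\dot T\ltimes\dot N$ collapses (since the algebraic torus $\dot T$ is linearly reductive), yielding
\[
\Ext^m_{\dot B}(\bk,W)\cong \coH^m(\dot N,W)^{\dot T}
\]
for every $\dot B$-module $W$, which is simply the $\dot T$-weight-zero part of $\coH^m(\dot N,W)$. The key observation is that every $\dot T$-weight of $\coH^\bullet(\dot N,W)$ lies in $(\text{weights of }W)+\Z_{\geq 0}\Phi^+$: the Hochschild complex $W\otimes\cO(\dot N)^{\otimes\bullet}$ has its weights in this cone (since $\cO(\dot N)$ is a polynomial ring in coordinates of positive-root weights), and cohomology is a subquotient. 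Consequently, the $\dot T$-invariants vanish as soon as no weight of $W$ is $\preceq 0$.

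Applied to $W=\bk(\lambda-\mu-2\rho_I)\otimes\Sym^j\fnt_I^*$, whose weights are $(\lambda-\mu-2\rho_I)+\xi$ for $\xi\in\Z_{\geq 0}(\Phi^+\setminus\Phi_I^+)$, a direct check shows that some such weight is $\preceq 0$ if and only if $\lambda\preceq\mu+2\rho_I$. For part~\eqref{it:tcn-pre-excep1}, the hypothesis rules this out and forces $\Ext^\bullet_{\dot B}(\bk,W)=0$ for every $j$. For part~\eqref{it:tcn-pre-excep2}, specializing $\mu=\lambda-2\rho_I$ gives $W=\Sym^j\fnt_I^*$: when $j>0$ every weight is strictly positive in the root order and hence not $\preceq 0$, so the Ext vanishes; when $j=0$ one has $W=\bk$, and the weight-zero part of $\coH^\bullet(\dot N,\bk)$ is $\bk$ concentrated in degree $0$, since the empty sum is the unique way to reach weight zero. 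Translating via $k-n_I$ and $j=(n-2n_I)/2$, this yields exactly $\bk$ at $(n,k)=(2n_I,n_I)$ and zero otherwise. The only technical input is the weight bound for $\coH^\bullet(\dot N,W)$, a formal consequence of the structure of $\cO(\dot N)$, so I do not anticipate any significant obstacle.
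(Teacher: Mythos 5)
Your proof is correct, but it takes a genuinely different route from the paper's. The paper handles the case $I=\varnothing$ by citation (to \cite[Lemma~7.10]{ar:agsr} or \cite[Lemma~2.6]{mr:etspc}) and then reduces general $I$ to that case by replacing $\se_{\varnothing,I*}\cO_{\tcN_{\varnothing,I}}(\mu)$ with its Koszul resolution, whose terms are filtered by line bundles $\cO_\tcN(\nu)\la 2i\ra$ with $\mu\preceq\nu\preceq\mu+2\rho_I$; part~(2) then drops out because only the top term of the resolution survives. You instead pass through the adjunction $\se_{\varnothing,I*}\dashv\se_{\varnothing,I}^!$ (using the computation of $\se_{\varnothing,I}^!$ already extracted in the proof of Lemma~\ref{lem:pi-adjoint}), land on $\tcN_{\varnothing,I}$ itself, and then run the induction equivalence and the weight bound on $\coH^\bullet(\dot N,-)$ coming from the fact that $\cO(\dot N)$ has $\dot T$-weights in $\Z_{\geq 0}\Phi^+$. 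All the inputs you use (equivariant Grothendieck duality for the closed embedding, the free--forgetful adjunction for $\Sym(\fnt_I^*)$-modules, exactness of $\dot T$- and $\Gm$-weight spaces) are either used elsewhere in the paper or standard, and the weight bookkeeping checks out: some weight of $\bk(\lambda-\mu-2\rho_I)\otimes\Sym^j\fnt_I^*$ is $\preceq 0$ only if $\lambda\preceq\mu+2\rho_I$, and for $\mu=\lambda-2\rho_I$ only $j=0$ contributes, giving $\coH^\bullet(\dot B,\bk)=\bk$ in degree $0$. What your approach buys is self-containedness and uniformity: it proves the $I=\varnothing$ base case rather than citing it (indeed, your weight argument is essentially the one used in the cited references, now run for arbitrary $I$ in one stroke), and it avoids the Koszul resolution entirely. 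What the paper's approach buys is that, once the base case is taken as known, the reduction is purely formal and reuses the resolution~\eqref{eqn:tcni-resolution} that is needed again in the proof of Lemma~\ref{lem:tcn-excep}.
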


\begin{proof}
In the special case where $I = \varnothing$, both of these statements are proved in~\cite[Lemma~7.10]{ar:agsr} or~\cite[Lemma~2.6]{mr:etspc}. In the general case, the coherent sheaf $\se_{\varnothing, I*}\cO_{\tcN_{\varnothing,I}}(\mu)$ admits a (Koszul) resolution by locally free coherent sheaves
\begin{equation}\label{eqn:tcni-resolution}
0 \to \cF_{n_I} \to \cF_{n_I-1} \to \cdots \to \cF_0 \to 0
\end{equation}
where
\[
\cF_i \cong \cL_{\tcN} \left( \bk_{\dot B}(\mu) \otimes \bigwedge^i (\fnt / \fnt_I)^* \right)\la 2i\ra
\]
for any $i$.
In particular, each $\cF_i$ admits a filtration whose subquotients are line bundles $\cO_\tcN(\nu)\la 2i\ra$ with $\mu \preceq \nu \preceq \mu + 2\rho_I$.

Thus, if $\lambda \not\preceq \mu + 2\rho_I$, then $\lambda \not\preceq \nu$ for all weights $\nu$ as above.  The special case $I = \varnothing$ then implies that $\Hom(\cF_i, \cO_\tcN(\lambda)\la n\ra[k]) = 0$ for all $i$, and part~\eqref{it:tcn-pre-excep1} of the lemma follows.

Suppose now that $\lambda = \mu + 2\rho_I$.  The reasoning in the previous paragraph still shows that $\Hom(\cF_i, \cO_\tcN(\lambda)\la n\ra[k]) = 0$ for $0 \le i < n_I$, and hence that there is a natural isomorphism
\[
\Hom(\cF_{n_I}[n_I],\cO_\tcN(\lambda)\la n\ra[k]) \simto \Hom(\se_{\varnothing, I*}\cO_{\tcN_{\varnothing,I}}(\lambda - 2\rho_I), \cO_\tcN(\lambda)\la n\ra[k]).
\]
Since $\cF_{n_I} \cong \cO_\tcN(\mu+2\rho_I)\la 2n_I\ra \cong \cO_\tcN(\lambda)\la 2n_I\ra$, part~\eqref{it:tcn-pre-excep2} also follows from the special case $I=\varnothing$ described above.
\end{proof}

\begin{lem}
\phantomsection\label{lem:tcn-excep}
\begin{enumerate}
\item
\label{it:tcn-excep1}
Let $\lambda, \mu \in \bX_I^+$.  If $\lambda \not\preceq \mu$, then for all $n, k \in \Z$, we have
\[
\Hom_{\Db\Coh^{\dot G \times \Gm}(\tcN_I)} \left( \cL_{\tcN_I} ( \dot\coweyl_I(\mu)), \cL_{\tcN_I} ( \dot\coweyl_I(\lambda))\la n\ra[k] \right) = 0.
\]
\item
\label{it:tcn-excep2}
Let $\lambda \in \bX_I^+$.  We have
\[
\Hom_{\Db\Coh^{\dot G \times \Gm}(\tcN_I)} \left( \cL_{\tcN_I} (\dot\coweyl_I(\lambda)), \cL_{\tcN_I} (\dot\coweyl_I(\lambda))\la n\ra[k] \right)
\cong
\begin{cases}
\bk & \text{if $n = k = 0$,} \\
0 & \text{otherwise.}
\end{cases}
\]
\end{enumerate}
\end{lem}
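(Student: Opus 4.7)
I will deduce this lemma from Lemma~\ref{lem:tcn-pre-excep} by using the functors $\Pi_I$, $\Pi^I$ to transfer the question to a $\Hom$-computation on $\tcN$ itself. Writing $\cL_{\tcN_I}(\dot\coweyl_I(\lambda)) \cong \Pi_I(\cO_\tcN(\lambda+\varsigma_I))$ by~\eqref{eqn:pi_i}, and invoking the adjunction $(\Pi^I\la-n_I\ra[-n_I],\Pi_I)$ of Lemma~\ref{lem:pi-adjoint} (see Remark~\ref{rmk:adjunctions-PiI}) together with~\eqref{eqn:pi^i}, I will obtain a functorial isomorphism
\begin{multline*}
\Hom(\cL_{\tcN_I}(\dot\coweyl_I(\mu)), \cL_{\tcN_I}(\dot\coweyl_I(\lambda))\la n\ra[k]) \\
\cong \Hom\bigl( \se_{\varnothing,I*}\cL_{\tcN_{\varnothing,I}}(\dot\coweyl_I(\mu) \otimes \bk_{\dot B}(\varsigma_I - 2\rho_I)), \, \cO_\tcN(\lambda+\varsigma_I)\la n+2n_I\ra[k+n_I] \bigr).
\end{multline*}

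Next, since $\dot B$ is solvable, the $\dot B$-module $\dot\coweyl_I(\mu)$ admits a filtration whose successive subquotients are one-dimensional $\dot B$-modules $\bk_{\dot B}(\tau)$, as $\tau$ ranges over the $T$-weights of $\dot\coweyl_I(\mu)$ (each occurring with its multiplicity). Taking $\cL_{\tcN_{\varnothing,I}}(-\otimes \bk_{\dot B}(\varsigma_I-2\rho_I))$ and then applying the exact functor $\se_{\varnothing,I*}$, I obtain a corresponding filtration of the source above whose subquotients are the sheaves $\se_{\varnothing,I*}\cO_{\tcN_{\varnothing,I}}(\tau + \varsigma_I - 2\rho_I)$. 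The long exact sequences in $\Hom$ then reduce the problem to analyzing, for each such weight $\tau$, the space
\[
\Hom\bigl( \se_{\varnothing,I*}\cO_{\tcN_{\varnothing,I}}(\tau+\varsigma_I-2\rho_I), \, \cO_\tcN(\lambda+\varsigma_I)\la n+2n_I\ra[k+n_I] \bigr).
\]
Any $T$-weight $\tau$ of $\dot\coweyl_I(\mu)$ satisfies $\tau \preceq \mu$ in the standard order, so Lemma~\ref{lem:tcn-pre-excep}\eqref{it:tcn-pre-excep1} asserts that this vanishes unless $\lambda + \varsigma_I \preceq \tau + \varsigma_I$, i.e.\ unless $\lambda \preceq \tau \preceq \mu$.

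For part~\eqref{it:tcn-excep1}, the assumption $\lambda \not\preceq \mu$ rules out every such $\tau$; hence all subquotient $\Hom$'s vanish and so does the total $\Hom$. For part~\eqref{it:tcn-excep2}, taking $\lambda = \mu$ forces $\tau = \mu$, and this weight space is one-dimensional since $\mu$ is the highest weight of $\dot\coweyl_I(\mu)$. Applying Lemma~\ref{lem:tcn-pre-excep}\eqref{it:tcn-pre-excep2} to the unique contributing subquotient yields $\bk$ when $n+2n_I = 2n_I$ and $k+n_I = n_I$ (i.e.\ $n=k=0$) and $0$ otherwise, establishing the claim.

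The argument is essentially a routine filtration plus adjunction reduction to the previous lemma; the only mildly delicate step is bookkeeping the various shifts $\la \cdot\ra$ and $[\cdot]$ introduced by~\eqref{eqn:pi^i} and the adjunction of Lemma~\ref{lem:pi-adjoint}, and I expect no substantial obstacle beyond this.
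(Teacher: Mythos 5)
Your proof is correct and takes essentially the same route as the paper: both reduce via the $(\Pi^I\la -n_I\ra[-n_I],\Pi_I)$ adjunction and~\eqref{eqn:pi^i} to a $\Hom$-space over $\tcN$, filter $\dot\coweyl_I(\mu)$ by its $T$-weight lines as a $\dot B$-module, and invoke Lemma~\ref{lem:tcn-pre-excep} on each subquotient. The only cosmetic difference is that the paper absorbs the $\varsigma_I$-twist into $\cO_\tcN(\lambda)$ up front and, in part~\eqref{it:tcn-excep2}, isolates the $\mu$-weight line via the explicit surjection onto $\se_{\varnothing,I*}\cO_{\tcN_{\varnothing,I}}(\lambda-2\rho_I)$, whereas you observe that only that one filtration subquotient can contribute; these are the same argument.
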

\begin{proof}
In the special case where $I = \varnothing$, this lemma reduces to Lemma~\ref{lem:tcn-pre-excep}, which, as we noted above, was proved in~\cite{ar:agsr,mr:etspc}.

For general $I$, using~\eqref{eqn:pi_i}, \eqref{eqn:pi^i}, and adjunction (see Remark~\ref{rmk:adjunctions-PiI}), we find that
\begin{multline*}
\Hom(\cL_{\tcN_I} ( \dot\coweyl_I(\mu)), \cL_{\tcN_I} (\dot\coweyl_I(\lambda))\la n\ra[k]) \cong \Hom(\cL_{\tcN_I} ( \dot\coweyl_I(\mu)), \Pi_I(\cO_{\tcN} (\lambda + \varsigma_I)\la n\ra[k])) \\
\cong \Hom(\se_{\varnothing, I*}\cL_{\tcN_{\varnothing,I}} (\dot\coweyl_I(\mu) \otimes \bk_{\dot B}(\varsigma_I - 2\rho_I))\la -2n_I\ra[-n_I], \cO_\tcN(\lambda + \varsigma_I)\la n\ra[k]) \\
\cong \Hom(\se_{\varnothing, I*}\cL_{\tcN_{\varnothing,I}} (\dot\coweyl_I(\mu) \otimes \bk_{\dot B}(- 2\rho_I))\la -2n_I\ra[-n_I], \cO_\tcN(\lambda)\la n\ra[k]).
\end{multline*}
The sheaf $\se_{\varnothing, I*}\cL_{\tcN_{\varnothing,I}} (\dot\coweyl_I(\mu) \otimes \bk_{\dot B}(- 2\rho_I))$ admits a filtration whose subquotients have the form $\se_{\varnothing, I*}\cO_{\tcN_{\varnothing,I}}(\nu)$ with $\nu \preceq \mu -2\rho_I$.
Thus, if $\lambda \not\preceq \mu$, then $\lambda \not\preceq \nu+2\rho_I$ for all such $\nu$.  Lemma~\ref{lem:tcn-pre-excep} then implies that $\Hom(\se_{\varnothing, I*}\cL_{\tcN_{\varnothing,I}} (\dot\coweyl_I(\mu) \otimes \bk_{\dot B}(- 2\rho_I))\la 2d_I\ra[-d_I], \cO_\tcN(\lambda)\la n\ra[k]) = 0$, so part~\eqref{it:tcn-excep1} is proved.

Suppose now that $\lambda = \mu$, and consider the surjective map 
\[
\se_{\varnothing, I*}\cL_{\tcN_{\varnothing,I}} (\dot\coweyl_I(\lambda) \otimes \bk_{\dot B}(- 2\rho_I)) \twoheadrightarrow \se_{\varnothing, I*}\cO_{\tcN_{\varnothing,I}}(\lambda - 2\rho_I).
\]
Its kernel is filtered by sheaves of the form $\se_{\varnothing, I*}\cO_{\tcN_{\varnothing,I}}(\nu)$ with $\nu \prec \lambda - 2\rho_I$, so Lemma~\ref{lem:tcn-pre-excep} implies that the induced map
\begin{multline*}
\Hom(\se_{\varnothing, I*}\cO_{\tcN_{\varnothing,I}}(\lambda - 2\rho_I)\la -2n_I\ra[-n_I], \cO_\tcN(\lambda)\la n\ra[k]) \\
\to
\Hom(\se_{\varnothing, I*}\cL_{\tcN_{\varnothing,I}} (\dot\coweyl_I(\mu) \otimes \bk_{\dot B}(- 2\rho_I))\la -2n_I\ra[-n_I], \cO_\tcN(\lambda)\la n\ra[k])
\end{multline*}
is an isomorphism.  The left-hand side is described by Lemma~\ref{lem:tcn-pre-excep}, and then part~\eqref{it:tcn-excep2} of the present lemma follows.
\end{proof}

The same arguments as in the proofs of Lemma~\ref{lem:tcn-pre-excep} and Lemma~\ref{lem:tcn-excep} allow us to deduce the following claim from~\cite[Lemma~7.10]{ar:agsr} or~\cite[Lemma~2.6]{mr:etspc}.

\begin{lem}
\label{lem:graded-finite-type}
For any $\lambda, \mu \in \bX_I^+$, the $\bk$-vector space
\[
\bigoplus_{k,n \in \Z} \Hom_{\Db\Coh^{\dot G \times \Gm}(\tcN_I)} \left( \cL_{\tcN_I} ( \dot\coweyl_I(\mu)), \cL_{\tcN_I} ( \dot\coweyl_I(\lambda))\la n\ra[k] \right)
\]
is finite-dimensional.
\end{lem}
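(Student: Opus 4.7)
The plan is to mimic exactly the reduction carried out in the proofs of Lemmas~\ref{lem:tcn-pre-excep} and~\ref{lem:tcn-excep}, transferring the computation from $\tcN_I$ to $\tcN$ so that we may invoke the analogous finite-dimensionality statement in the Borel case, which is either \cite[Lemma~7.10]{ar:agsr} or \cite[Lemma~2.6]{mr:etspc}.

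First, using~\eqref{eqn:pi_i} we rewrite $\cL_{\tcN_I}(\dot\coweyl_I(\lambda)) \cong \Pi_I(\cO_{\tcN}(\lambda + \varsigma_I))$. By the adjunction recorded in Remark~\ref{rmk:adjunctions-PiI} together with~\eqref{eqn:pi^i}, we obtain for each $n,k$ an isomorphism
\[
\Hom \bigl( \cL_{\tcN_I}(\dot\coweyl_I(\mu)),\, \cL_{\tcN_I}(\dot\coweyl_I(\lambda))\la n\ra[k] \bigr)
\]
\[
\cong
\Hom \bigl( \se_{\varnothing,I*}\cL_{\tcN_{\varnothing,I}}(\dot\coweyl_I(\mu)\otimes \bk_{\dot B}(-2\rho_I))\la -2n_I\ra[-n_I],\, \cO_{\tcN}(\lambda)\la n\ra[k] \bigr).
\]
Thus, up to a uniform shift in both the internal and cohomological gradings, it suffices to show that
\[
\bigoplus_{n,k \in \Z} \Hom_{\Db\Coh^{\dot G \times \Gm}(\tcN)} \bigl( \se_{\varnothing,I*}\cL_{\tcN_{\varnothing,I}}(\dot\coweyl_I(\mu)\otimes \bk_{\dot B}(-2\rho_I)),\, \cO_{\tcN}(\lambda)\la n\ra[k] \bigr)
\]
is finite-dimensional.

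Next, since $\dot\coweyl_I(\mu) \otimes \bk_{\dot B}(-2\rho_I)$ is a finite-dimensional $\dot B$-module, the sheaf $\se_{\varnothing,I*}\cL_{\tcN_{\varnothing,I}}(\dot\coweyl_I(\mu) \otimes \bk_{\dot B}(-2\rho_I))$ admits a finite filtration whose subquotients are of the form $\se_{\varnothing,I*}\cO_{\tcN_{\varnothing,I}}(\nu)$ for finitely many weights $\nu \in \bX$. Using the associated long exact sequences, it then suffices to prove that, for each such $\nu$, the space $\bigoplus_{n,k} \Hom(\se_{\varnothing,I*}\cO_{\tcN_{\varnothing,I}}(\nu),\, \cO_\tcN(\lambda)\la n\ra[k])$ is finite-dimensional. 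For this we replace $\se_{\varnothing,I*}\cO_{\tcN_{\varnothing,I}}(\nu)$ by its Koszul resolution~\eqref{eqn:tcni-resolution}, whose terms are iterated extensions of line bundles $\cO_\tcN(\nu')\la 2i\ra$ for finitely many $(\nu',i)$. A standard spectral sequence argument reduces the problem to the finite-dimensionality of
\[
\bigoplus_{n,k \in \Z} \Hom_{\Db\Coh^{\dot G \times \Gm}(\tcN)} \bigl( \cO_\tcN(\nu'),\, \cO_\tcN(\lambda)\la n\ra[k] \bigr)
\]
for each fixed $\nu'$, which is exactly the content of \cite[Lemma~7.10]{ar:agsr} (equivalently \cite[Lemma~2.6]{mr:etspc}).

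No step here presents any real obstacle: the adjunction-plus-Koszul-resolution mechanism is already in place in the earlier lemmas, and the only new ingredient is summing over $n$ and $k$ rather than establishing vanishing. The main thing to be careful about is simply tracking that all filtrations and resolutions are finite and involve only finitely many weights, so that the finite-dimensionality of each graded piece transfers to finite-dimensionality of the total graded Hom.
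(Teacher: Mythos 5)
Your proposal is correct and follows exactly the route the paper indicates: the paper's own (one-line) proof says to repeat the arguments of Lemmas~\ref{lem:tcn-pre-excep} and~\ref{lem:tcn-excep} — adjunction via $\Pi_I$/$\Pi^I$, the finite filtration by sheaves $\se_{\varnothing,I*}\cO_{\tcN_{\varnothing,I}}(\nu)$, and the Koszul resolution~\eqref{eqn:tcni-resolution} — to reduce to the line-bundle case on $\tcN$ handled by \cite[Lemma~7.10]{ar:agsr} or \cite[Lemma~2.6]{mr:etspc}, which is precisely what you do.
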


From Lemma~\ref{lem:generators-tcNI} and Lemma~\ref{lem:graded-finite-type} we deduce in particular that the category $\Db\Coh^{\dot G \times \Gm}(\tcN_I)$ is of graded finite type in the sense of~\cite[\S 2.1.5]{bez:ctm}.

\subsection{Some orders on $\bX$}
\label{ss:orders-X}

If $\lambda \in \bX$, we denote by $w_\lambda$ the shortest element in $W t_\lambda \subset \Waff$.
Then we can define a new partial order on $\bX$ by declaring that $\lambda \le \mu$ iff $w_\lambda$ precedes $w_\mu$ in the Bruhat order on $\Waff$. The goal of this subsection is to prove some properties of this order, and explain a construction of some refinements. (These properties are well known, but we could not find any proof in the literature.)

Given $\lambda \in \bX$ and $I \subset S$, we denote by $\dom_I(\lambda)$ the unique $W_I$-translate of $\lambda$ which belongs to $\bX_I^+$. (When $I=S$, we write $\dom$ instead of $\dom_S$.) Given $w \in W$, we denote by $\min(wW_I)$, resp.~$\max(wW_I)$, the minimal, resp.~maximal, element in $wW_I$. Then we define a ``Bruhat order'' on $W/W_I$ by declaring that
\[
vW_I \leq wW_I \quad \Leftrightarrow \quad \min(vW_I) \leq \min(wW_I) \quad \Leftrightarrow \quad \max(vW_I) \leq \max(wW_I).
\]
(The equivalence between the two properties follows from~\cite[Lemma~2.2]{douglass}.) 

For $\mu \in \bX$, we denote by $\conv(\mu)$ the intersection of the convex hull of $W \mu \subset \mathbb{R} \otimes_\Z \bX$ with $\mu + \Z\Phi$, and set $\convo(\mu):=\conv(\mu) \ssm W\mu$. (This definition agrees with that in~\cite{mr:etspc}, but differs slightly from~\cite{bez:ctm}, because we take an intersection with a coset of the root lattice, rather than with the weight lattice.)  With this notation introduced, it is well known that for $\lambda, \mu \in \bX$, we have
\begin{equation}
\label{eqn:conv-order}
\lambda \in \conv(\mu) \ \Leftrightarrow \ \dom(\lambda) \preceq \dom(\mu).
\end{equation}

The first property we will need is the following.

\begin{lem}
\label{lem:order-mu-smu}
Let $\mu \in \bX$ and $s \in S$, and assume that $\mu \prec s\mu$. Then $\mu < s\mu$.
\end{lem}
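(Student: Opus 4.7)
The plan is to show that $w_{s\mu}$ equals $w_\mu \cdot s$ and that $\ell(w_{s\mu}) = \ell(w_\mu) + 1$; the conclusion $w_\mu < w_{s\mu}$ in the Bruhat order on $\Waff$ then follows from the standard property that right multiplication by a simple reflection satisfies $w < ws$ iff $\ell(ws) = \ell(w) + 1$ (applied coset-by-coset after identifying the Bruhat order on each $\Waff^\circ$-coset with the Coxeter-group Bruhat order of $\WaffCox$).

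To make $w_\mu$ explicit, I would invoke the standard length formula
\[
\ell(v\, t_\mu) = \sum_{\alpha \in \Phi^+} \bigl| \langle \alpha^\vee, \mu \rangle + \chi(v\alpha \in \Phi^-) \bigr|
\]
(with $\chi(\cdot) \in \{0,1\}$) for $v \in W$ and $\mu \in \bX$. A termwise minimization over $v$ selects the unique $v_\mu \in W$ with inversion set $\mathrm{Inv}(v_\mu) = \Phi^+_\mu := \{\alpha \in \Phi^+ : \langle \alpha^\vee, \mu \rangle < 0\}$, giving $w_\mu = v_\mu\, t_\mu$. The hypothesis $\mu \prec s\mu$ rewrites as $\langle \alpha_s^\vee, \mu\rangle < 0$, so $\alpha_s \in \Phi^+_\mu = \mathrm{Inv}(v_\mu)$. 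Using $\langle \alpha^\vee, s\mu\rangle = \langle (s\alpha)^\vee, \mu\rangle$ together with the fact that $s$ restricts to an involution of $\Phi^+ \setminus \{\alpha_s\}$, one verifies $\Phi^+_{s\mu} = s(\Phi^+_\mu \setminus \{\alpha_s\})$; since $\alpha_s \in \mathrm{Inv}(v_\mu)$, right multiplication by $s$ decreases $\ell(v_\mu)$ by one and produces an element with inversion set $s(\mathrm{Inv}(v_\mu) \setminus \{\alpha_s\}) = \Phi^+_{s\mu}$, so $v_\mu\, s = v_{s\mu}$. The elementary identity $s\, t_{s\mu} = t_\mu\, s$ in $\Waff$ then yields
\[
w_{s\mu} \,=\, v_{s\mu}\, t_{s\mu} \,=\, v_\mu\, s\, t_{s\mu} \,=\, v_\mu\, t_\mu\, s \,=\, w_\mu\, s.
\]

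To finish, I would compare lengths directly from the formula. Sign analysis of the summands collapses it to the clean expression $\ell(w_\mu) = \sum_{\alpha \in \Phi^+} |\langle \alpha^\vee, \mu\rangle| - |\Phi^+_\mu|$, and similarly for $s\mu$. The first sum is $s$-invariant, because $s$ permutes $\Phi^+ \setminus \{\alpha_s\}$ and sends $\alpha_s$ to $-\alpha_s$, whose pairing with $\mu$ has the same absolute value; combined with $|\Phi^+_{s\mu}| = |\Phi^+_\mu| - 1$, this gives $\ell(w_{s\mu}) = \ell(w_\mu) + 1$. Together with $w_{s\mu} = w_\mu s$, the Bruhat-order criterion recalled in the first paragraph then yields $w_\mu < w_{s\mu}$, as desired. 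The only thing to watch is the sign bookkeeping in the length formula; no conceptual obstacle is anticipated.
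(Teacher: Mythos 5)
Your proof is correct and pursues essentially the same strategy as the paper: both reduce the claim to showing $w_{s\mu} = w_\mu s$ with $\ell(w_{s\mu}) = \ell(w_\mu)+1$, then invoke the coset-wise extension of the Bruhat order on $\Waff$. The difference is one of packaging: the paper writes $w_\mu$ in the ``dominant form'' $t_{\dom(\mu)}\,v^{-1}$ and cites~\cite[Lemmas~2.2 \& 2.4]{mr:etspc} (and a remark of Soergel on minimal coset representatives) to convert $\mu \prec s\mu$ into $sv < v$ and then to length information, whereas you work directly with $w_\mu = v_\mu t_\mu$, identify $\mathrm{Inv}(v_\mu)$ with $\Phi^+_\mu$, and carry out the inversion-set and Iwahori--Matsumoto length-formula bookkeeping by hand. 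Your route is more self-contained (at the cost of asserting, rather than citing, that $\Phi^+_\mu$ is indeed the inversion set of a unique Weyl group element, and that the particular length formula you wrote matches the normalization used in the paper); the paper's route is shorter because it defers exactly these computations to references. No gap.
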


\begin{proof}
Let $\nu=\dom(\mu)$, and let $I=\{t \in S \mid t(\nu)=\nu\}$. Let also $v \in W$ be the unique element such that $v=\min(v W_I)$ and $\mu=v(\nu)$. Then by~\cite[Lemmas~2.2 \& 2.4]{mr:etspc}, we have $w_\mu = t_\nu v^{-1}$, and $\ell(w_\mu) = \ell(t_\nu) - \ell(v)$. The fact that $s(\mu) \succ \mu$ implies that $\langle \nu, v^{-1}(\alpha_s^\vee) \rangle <0$, hence that $sv<v$. By a remark in~\cite[p.~86]{soergel}, this implies that $sv=\min(sv W_I)$. Using again~\cite[Lemmas~2.2 \& 2.4]{mr:etspc}, we deduce that $w_{s\mu} = t_\nu v^{-1} s = w_\mu s$ and that $\ell(w_{s\mu}) > \ell(w_\mu)$, so that indeed $s\mu > \mu$.
\end{proof}

\begin{cor}
\label{cor:order-dom-max}
Let $I \subset S$, and $\lambda, \mu \in \bX$ be such that $W_I \lambda = W_I \mu$.
\begin{enumerate}
\item
\label{it:order-dom-max}
If $\lambda \in \bX_I^+$, then $\mu \leq \lambda$.
\item
\label{it:order-antidom-min}
If $\lambda \in -\bX_I^+$, then $\mu \geq \lambda$.
\end{enumerate}
\end{cor}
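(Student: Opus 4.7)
The plan is to deduce Corollary~\ref{cor:order-dom-max} from Lemma~\ref{lem:order-mu-smu} by an induction along a reduced expression in $W_I$. For part~\eqref{it:order-dom-max}, I would write $\mu = v\lambda$ with $v \in W_I$, pick a reduced expression $v = s_1 \cdots s_r$ in the simple reflections of $W_I$, and consider the chain
\[
\lambda_0 := \lambda, \qquad \lambda_i := s_{r-i+1} \cdots s_r \, \lambda \quad (1 \le i \le r),
\]
so that $\lambda_r = \mu$. By transitivity of $\leq$ it then suffices to show $\lambda_i \leq \lambda_{i-1}$ for every $i$.

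Fix $i$ and set $s = s_{r-i+1}$, $w = s_{r-i+2} \cdots s_r$, so that $\lambda_{i-1} = w\lambda$ and $\lambda_i = s\lambda_{i-1}$. The key input is the standard identification of the inversion set of a reduced expression: the coroot $w^{-1}(\alpha_s^\vee)$ is one of the $r$ positive coroots sent to a negative coroot by $v^{-1}$, and since $s$ and $w$ lie in $W_I$, this coroot belongs in fact to $(\Phi_I^+)^\vee$. Combined with the assumption $\lambda \in \bX_I^+$ this gives
\[
\langle \lambda_{i-1}, \alpha_s^\vee \rangle = \langle \lambda, w^{-1}(\alpha_s^\vee) \rangle \ge 0,
\]
so that $\lambda_i = \lambda_{i-1} - \langle \lambda_{i-1}, \alpha_s^\vee\rangle \alpha_s \preceq \lambda_{i-1}$. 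If the pairing vanishes then $\lambda_i = \lambda_{i-1}$ and we are done; otherwise $\lambda_i \prec \lambda_{i-1}$, and Lemma~\ref{lem:order-mu-smu} (applied with $\mu$ replaced by $\lambda_i$) upgrades this to the desired $\lambda_i < \lambda_{i-1}$.

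Part~\eqref{it:order-antidom-min} is proved by exactly the same chain of reflections: for $\lambda \in -\bX_I^+$ each pairing $\langle \lambda_{i-1}, \alpha_s^\vee \rangle$ is now non-positive, so $\lambda_{i-1} \preceq \lambda_i$, and Lemma~\ref{lem:order-mu-smu} yields $\lambda_{i-1} \leq \lambda_i$. The argument is largely bookkeeping once Lemma~\ref{lem:order-mu-smu} is in hand; the only substantive point is the appeal to the inversion-set description above, which is what ensures the calculation stays inside $\Phi_I$ and so genuinely exploits the $I$-(anti)dominance hypothesis on $\lambda$.
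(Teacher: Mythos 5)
Your proof is correct and follows essentially the same route as the paper: both arguments walk down a reduced word in $W_I$ and apply Lemma~\ref{lem:order-mu-smu} at each step. The only cosmetic difference is that the paper chooses $w$ of minimal length with $\mu = w\lambda$ so that every step is a strict descent, whereas you allow an arbitrary reduced expression and handle the case $\langle \lambda_{i-1},\alpha_s^\vee\rangle = 0$ separately via the inversion-set computation.
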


\begin{proof}
We prove~\eqref{it:order-dom-max}; the proof of~\eqref{it:order-antidom-min} is completely analogous.
Let $w \in W_I$ be of minimal length such that $\mu=w\lambda$. If $w=s_1 \cdots s_r$ is a reduced decomposition, then we have
\[
\lambda \succ s_r \lambda \succ s_{r-1} s_r \lambda \succ \cdots \succ w\lambda=\mu.
\]
Hence the claim follows by a repeated application of Lemma~\ref{lem:order-mu-smu}.
\end{proof}

The following lemma can probably be proved by combinatorial arguments, but instead we rely on geometry of affine Grassmannians; for this reason we defer the proof to~\S\ref{ss:mixed-der}, where the necessary geometric background will be introduced. 

\begin{lem}
\phantomsection
\label{lem:properties-order}
\begin{enumerate}
\item
\label{it:orders-dominant}
If $\lambda, \mu \in \bX^+$, then $\lambda \leq \mu$ iff $\lambda \preceq \mu$.
\item
\label{it:orders-orbit}
Let $\lambda \in \bX^+$, and let $I=\{s \in S \mid s\lambda=\lambda\}$. Then, under the bijection
\[
\left\{
\begin{array}{ccc}
W/W_I & \simto & W\lambda \\
wW_I & \mapsto & w(\lambda)
\end{array}
\right. ,
\]
the restriction of $\leq$ to $W\lambda$ corresponds to the inverse of the Bruhat order on $W/W_I$.
\item
\label{it:order-conv}
If $\lambda \leq \mu$, then $\lambda \in \conv(\mu)$.
\end{enumerate}
\end{lem}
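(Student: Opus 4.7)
The plan is to deduce all three parts from the geometry of the affine Grassmannian $\Gr$ of $\dot G^\vee$, whose Iwahori and spherical stratifications will be set up in~\S\ref{ss:mixed-der}. For any $\lambda \in \bX$, let $L_\lambda \in \Gr$ denote the image of the cocharacter $z^\lambda$, and set $\Gr_\lambda := \Iw \cdot L_\lambda$; for $\mu \in \bX^+$, set also $\Gr_\mu^{\sph} := \dot G^\vee(\C[[z]]) \cdot L_\mu$. The key geometric inputs are standard: (a) $\Gr_\lambda$ is an affine cell of dimension $\ell(w_\lambda)$, and the closure order on $\Iw$-orbits is precisely the order $\leq$ of~\S\ref{ss:orders-X}, i.e., $\Gr_\lambda \subset \overline{\Gr_\mu}$ iff $\lambda \leq \mu$; (b) for $\mu \in \bX^+$, the spherical orbit decomposes as $\Gr_\mu^{\sph} = \bigsqcup_{\nu \in W\mu} \Gr_\nu$, with $\Gr_\mu$ itself open and dense in $\Gr_\mu^{\sph}$; (c) $\Gr_\lambda^{\sph} \subset \overline{\Gr_\mu^{\sph}}$ iff $\lambda \preceq \mu$.

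Parts~(1) and~(3) will then follow almost immediately. For~(1): if $\lambda,\mu \in \bX^+$, the density in~(b) gives $\overline{\Gr_\lambda} = \overline{\Gr_\lambda^{\sph}}$ and $\overline{\Gr_\mu} = \overline{\Gr_\mu^{\sph}}$, so $\lambda \leq \mu$ is equivalent to $\overline{\Gr_\lambda^{\sph}} \subset \overline{\Gr_\mu^{\sph}}$, and then to $\lambda \preceq \mu$ by~(c). For~(3): if $\lambda \leq \mu$ then $L_\lambda \in \overline{\Gr_\mu} \subset \overline{\Gr_{\dom(\mu)}^{\sph}}$; since $L_\lambda$ lies in the stratum $\Gr_{\dom(\lambda)}^{\sph}$, we deduce $\Gr_{\dom(\lambda)}^{\sph} \subset \overline{\Gr_{\dom(\mu)}^{\sph}}$, whence $\dom(\lambda) \preceq \dom(\mu)$ by~(c), which by~\eqref{eqn:conv-order} is equivalent to $\lambda \in \conv(\mu)$.

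Part~(2) is the most delicate, and will require some care with orientations. I plan to use the $\dot G^\vee$-equivariant isomorphism $\Gr_\lambda^{\sph} \cong \dot G^\vee/Q$, where $Q \subset \dot G^\vee$ is the parabolic of type $I$ obtained by reducing the stabilizer of $L_\lambda$ in $\dot G^\vee(\C[[z]])$ modulo $z$. Under this isomorphism, $\Iw$-orbits correspond to Schubert cells in $\dot G^\vee/Q$, which are parametrized by $W/W_I$ with closure order equal to the Bruhat order. The subtle point is to identify the two indexings of this stratification: on the one hand by $W\lambda$ as in~(b), on the other hand by $W/W_I$ as Schubert cells. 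Comparing dimensions via $\dim \Gr_\lambda = \ell(w_\lambda) = \dim \Gr_\lambda^{\sph}$ shows that the dominant orbit $\Gr_\lambda$ (which under the lemma's bijection corresponds to $1 \cdot W_I$) is mapped to the \emph{open} Schubert cell, indexed by the Bruhat-maximum coset $w_0 W_I$. Hence the Schubert indexing differs from that of the lemma by the order-reversing involution $wW_I \mapsto w_0 w W_I$, and therefore $\leq$ on $W\lambda$ corresponds, via $wW_I \mapsto w\lambda$, to the inverse of the Bruhat order on $W/W_I$. The main obstacle will be precisely this orientation check; the underlying geometric inputs are all standard, but the convention bookkeeping (which Iwahori, which translate of $L_\lambda$ is the open orbit, which coset represents the open Schubert cell) must be performed carefully once the setup of~\S\ref{ss:mixed-der} is fixed.
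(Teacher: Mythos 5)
Your proposal is correct and follows essentially the same route as the paper's proof, which carries out the identical three steps on the opposite affine Grassmannian $\Gr' = \dot G^\vee(\scO)\backslash \dot G^\vee(\scK)$ with the negative Iwahori acting on the right (chosen precisely so that the closure order of Iwahori orbits is $\leq$ and spherical orbit closures give $\preceq$, sidestepping the indexing check you defer to the setup). The only step to spell out in part~(2) is that the dimension count at the single dominant point determines just one value of the bijection between $W\lambda$ and the Schubert cells; it must be propagated to all of $W\lambda$ by $N_{\dot G^\vee}(\dot T^\vee)$-equivariance of the fibration (tracking torus-fixed points), which is exactly what the paper's explicit identification of the Schubert cell containing each orbit $\Gr'_{w(\lambda)}$ accomplishes.
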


\begin{rmk}
It is asserted without proof in~\cite[p.~340]{bez:ctm} (and then subsequently in~\cite{mr:etspc}) that the orders $\leq$ and $\preceq$ coincide on each $W$-orbit in $\bX$. However, comparing Lemma~\ref{lem:properties-order}\eqref{it:orders-orbit} with~\cite[Theorem~1.1]{deodhar}, we see that this claim is false in general.
\end{rmk}

From these properties, we deduce in particular the following fact.

\begin{lem}
\label{lem:order-domI}
Let $\lambda, \mu \in \bX$ and $I \subset S$.
If $\mu \in W\lambda$ and $\mu \leq \lambda$, then $\dom_I(\mu) \leq \dom_I(\lambda)$.
\end{lem}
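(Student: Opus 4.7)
The plan is to translate the statement into a question about Bruhat orders and then deduce it from an order-preservation property of the projection onto parabolic double coset representatives.

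Set $\lambda^+ := \dom(\lambda) = \dom(\mu)$, and let $J := \{s \in S \mid s\lambda^+ = \lambda^+\}$. By Lemma~\ref{lem:properties-order}(\ref{it:orders-orbit}), the map $wW_J \mapsto w(\lambda^+)$ is a bijection $W/W_J \simto W\lambda^+$ that intertwines the \emph{inverse} of the Bruhat order on $W/W_J$ with the restriction of $\leq$ to $W\lambda^+$. Writing $\lambda = v(\lambda^+)$ and $\mu = u(\lambda^+)$ with $u, v \in W^J$ the minimal length coset representatives, the hypothesis $\mu \leq \lambda$ becomes $v \leq u$ in the Bruhat order on $W$.

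By Corollary~\ref{cor:order-dom-max}(\ref{it:order-dom-max}), $\dom_I(\lambda)$ is the maximum of $W_I \lambda$ for $\leq$; under our bijection it corresponds to the minimum, in the Bruhat order on $W/W_J$, of the $W_I$-orbit of $vW_J$, which is represented by $aW_J$ where $a$ is the unique minimal length element of the double coset $W_I v W_J$ (automatically $a \in {}^I W^J$). Similarly $\dom_I(\mu) \leftrightarrow bW_J$ with $b = \min(W_I u W_J) \in {}^I W^J$. The desired inequality $\dom_I(\mu) \leq \dom_I(\lambda)$ thus becomes $a \leq b$ in $W$.

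To conclude, I would invoke the fact that the projection $\pi : W \to {}^I W^J$, $w \mapsto \min(W_I w W_J)$, is order-preserving for the Bruhat order; applied to $v \leq u$ this yields $a = \pi(v) \leq \pi(u) = b$. This admits a short direct proof by induction on $\ell(y)$: given $x \leq y$, if there exists $s \in I$ with $sy < y$ then $\pi(y) = \pi(sy)$, and the lifting property of Bruhat order reduces the assertion to the inductive hypothesis (either $x \leq sy$ directly, or $sx \leq sy$ together with $\pi(x) = \pi(sx)$); the symmetric case with $t \in J$ and $yt < y$ is analogous; and if $y \in {}^I W^J$ then $\pi(y) = y$ and $\pi(x) \leq x \leq y = \pi(y)$. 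The main obstacle is essentially bookkeeping --- keeping straight the order-reversal in Lemma~\ref{lem:properties-order}(\ref{it:orders-orbit}) and identifying minima of $W_I$-orbits in $W/W_J$ with minimal length double coset representatives; no genuinely new combinatorial input is required beyond these standard ingredients.
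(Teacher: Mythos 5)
Your proof is correct and follows essentially the same route as the paper: translate everything through the order-reversing bijection of Lemma~\ref{lem:properties-order}(\ref{it:orders-orbit}), identify $\dom_I$ with passage to the minimal element of the double coset $W_I v W_K$, and conclude from the fact that $v\mapsto\min(W_IvW_K)$ is Bruhat-order-preserving. The only (minor) difference is that the paper cites \cite[Lemma~2.2]{douglass} for that last monotonicity statement and verifies $I$-dominance of $v_1'(\nu)$ by a direct weight computation, whereas you prove the monotonicity by induction via the lifting property and instead invoke Corollary~\ref{cor:order-dom-max}; both are sound.
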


\begin{proof}
Let $\nu=\dom(\lambda)=\dom(\mu)$, and let $K:=\{s \in S \mid s(\nu)=\nu\}$. Then as in Lemma~\ref{lem:properties-order}\eqref{it:orders-orbit} we have a natural bijection $W/W_K \simto W\nu$. Write $\lambda=v_1 (\nu)$ and $\mu = v_2 (\nu)$, where $v_1=\min(v_1 W_K)$ and $v_2 = \min(v_2 W_K)$. Then, by Lemma~\ref{lem:properties-order}\eqref{it:orders-orbit}, the fact that $\mu \leq \lambda$ translates into the fact that $v_1 \leq v_2$. 

Now, let $v_1'$ be the minimal element in the double coset $W_I v_1 W_K$. Then $v_1'(\nu) \in W_I \lambda$. Now for any $s \in S$ we have $sv_1' > v_1'$, which implies that $\langle v_1'(\nu), \alpha_s^\vee \rangle \geq 0$.
Since this holds for any $s \in I$, this proves that $v_1'(\nu) \in \bX_I^+$, and finally that $\dom_I(\lambda) = v_1'(\nu)$. Moreover we clearly have $v_1'=\min(v_1' W_K)$. Similarly we have $\dom_I(\mu)=v_2'(\nu)$, where $v_2'$ is the minimal element in $W_I v_2 W_K$, and $v_2'=\min(v_2' W_K)$.

We can finally conclude. Since $v_1 \leq v_2$, by~\cite[Lemma~2.2]{douglass} we have $v_1' \leq v_2'$. By Lemma~\ref{lem:properties-order}\eqref{it:orders-orbit}, this implies that $v_2'(\nu) \leq v_1'(\nu)$, hence that $\dom_I(\mu) \leq \dom_I(\lambda)$, as stated.
\end{proof}

Below we will consider refinements $\leq'$ of the order $\leq$. We will usually require that these refinements satisfy the following property:
\begin{equation}
\label{eqn:condition-order}
\lambda \in \convo(\mu) \quad \Rightarrow \quad \lambda \leq' \mu.
\end{equation}

In the rest of this subsection we explain how one can construct explicitly a refinement of $\leq$ satisfying~\eqref{eqn:condition-order} and some extra useful properties related to a choice of a subset $I \subset S$. 
More precisely, let us choose
\begin{itemize}
\item a total order $\le_1$ on $\bX^+$ that refines the order $\preceq$ (or equivalently the order $\leq$, see Lemma~\ref{lem:properties-order}\eqref{it:orders-dominant}) and makes $(\bX^+,\leq_1)$ isomorphic to $(\Z_{\geq 0}, \leq)$;
\item for each $W$-orbit of weights $W\lambda$, a total order $\le_2$ on the set $W\lambda \cap \bX_I^+$ that refines the partial order induced by $\leq$; and
\item for each $\lambda \in \bX_I^+$, a total order $\le_3$ on $W_I\lambda$ that refines the partial order $\leq$.
\end{itemize}
Then we define a total order $\le'$ on $\bX$ by setting
\[
\lambda \le' \mu
\qquad\text{iff}\qquad
\begin{cases}
\text{$\dom(\lambda) <_1 \dom(\mu)$, or} \\
\text{$\dom(\lambda) = \dom(\mu)$ and $\dom_I(\lambda) <_2 \dom_I(\mu)$, or} \\
\text{$W_I\lambda = W_I\mu$ and $\lambda \le_3 \mu$.}
\end{cases}
\]
Clearly, the ordered set $(\bX, \leq')$ is isomorphic to $(\Z_{\geq 0}, \leq)$. A fortiori, the same property holds for $(\bX_I^+, \leq')$.

\begin{lem}
\label{lem:order-leq'}
The order $\leq'$ refines $\leq$ and satisfies~\eqref{eqn:condition-order}.
\end{lem}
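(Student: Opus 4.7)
The plan is to verify the two assertions separately, each reducing essentially to invoking results already established in~\S\ref{ss:orders-X}. The statement that $\leq'$ refines $\leq$ splits naturally along the three-layered definition of $\leq'$, so I would do a case analysis on the position of $\lambda$ and $\mu$ in the nested stratification $W_I\mu \subset W\mu \subset \bX$.

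First I would handle the condition~\eqref{eqn:condition-order}, which is the easier of the two. If $\lambda \in \convo(\mu)$, then $\lambda \in \conv(\mu)$ but $\lambda \notin W\mu$, so $\dom(\lambda) \ne \dom(\mu)$; by~\eqref{eqn:conv-order} we get $\dom(\lambda) \prec \dom(\mu)$, and since $\leq_1$ refines $\preceq$ on $\bX^+$ this gives $\dom(\lambda) <_1 \dom(\mu)$, hence $\lambda <' \mu$ directly from the first clause in the definition of $\leq'$.

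For the statement that $\leq'$ refines $\leq$, suppose $\lambda \leq \mu$. If $\dom(\lambda) \ne \dom(\mu)$, then Lemma~\ref{lem:properties-order}\eqref{it:order-conv} and~\eqref{eqn:conv-order} give $\dom(\lambda) \prec \dom(\mu)$, and we conclude exactly as in the previous paragraph. Otherwise $\lambda, \mu$ lie in a common $W$-orbit, and Lemma~\ref{lem:order-domI} (with the roles of $\lambda, \mu$ arranged so that the larger element is on the right) yields $\dom_I(\lambda) \leq \dom_I(\mu)$. If these are distinct, then $\dom_I(\lambda) <_2 \dom_I(\mu)$ since $\leq_2$ refines $\leq$ on $W\lambda \cap \bX_I^+$, so $\lambda <' \mu$ via the second clause. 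If instead $\dom_I(\lambda) = \dom_I(\mu)$, then $W_I\lambda = W_I\mu$ and $\lambda, \mu$ already lie in a common $W_I$-orbit on which $\leq_3$ refines $\leq$, giving $\lambda \leq_3 \mu$ and hence $\lambda \leq' \mu$ from the third clause.

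There is no serious obstacle here: every step is a direct application of results proved earlier in~\S\ref{ss:orders-X}. The only point requiring mild care is the bookkeeping of inequality directions in the application of Lemma~\ref{lem:order-domI}, whose statement is phrased in terms of $\mu \leq \lambda$ rather than $\lambda \leq \mu$; once one notes that the hypothesis $\mu \in W\lambda$ is symmetric, one simply relabels to obtain the implication needed.
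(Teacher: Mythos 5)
Your proof is correct and follows essentially the same route as the paper: both arguments split according to the three clauses in the definition of $\leq'$, deduce $\dom(\lambda) <_1 \dom(\mu)$ from~\eqref{eqn:conv-order} when $\lambda \in \convo(\mu)$, and otherwise invoke Lemma~\ref{lem:order-domI} (after relabeling) together with the refinement properties of $\leq_2$ and $\leq_3$. Your explicit remark about the direction of the variables in Lemma~\ref{lem:order-domI} is the only bookkeeping point, and you handle it correctly.
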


\begin{proof}
First,~\eqref{eqn:condition-order} is satisfied because if $\lambda \in \convo(\mu)$ then $\dom(\lambda) \in \conv(\dom(\mu)) \setminus \{\dom(\mu)\}$, so that $\dom(\lambda) <_1 \dom(\mu)$ by~\eqref{eqn:conv-order} and our choice of order $\leq_1$, and then $\lambda \leq' \mu$ by construction of $\leq'$.

Now assume that $\lambda \leq \mu$. Then by Lemma~\ref{lem:properties-order}\eqref{it:order-conv} we have $\lambda \in \conv(\mu)$. If $\lambda \in \convo(\mu)$ then as seen above $\lambda \leq' \mu$. Otherwise we have $\lambda \in W\mu$. 
By Lemma~\ref{lem:order-domI}, since $\lambda \leq \mu$ we have $\dom_I(\lambda) \leq \dom_I(\mu)$. If $\dom_I(\lambda) < \dom_I(\mu)$ then $\dom_I(\lambda) <_2 \dom_I(\mu)$, hence $\lambda \leq' \mu$. Otherwise we have $\dom_I(\lambda)=\dom_I(\mu)$, hence $\lambda \leq_3 \mu$ and again $\lambda \leq' \mu$.
\end{proof}

It is clear that this order also satisfies the following properties:
\begin{gather}
\label{eqn:order-leq'-domI}
\mu \leq' \lambda \quad \Rightarrow \quad \dom_I(\mu) \leq' \dom_I(\lambda); \\
\label{eqn:order-leq'-WI}
\text{if $\mu <' \lambda$ and $W_I \lambda \neq W_I \mu$, then $v_1\mu <' v_2 \lambda$ for all $v_1,v_2 \in W_I$}.
\end{gather}

\subsection{Standard and costandard exotic sheaves}
\label{ss:standard-costandard-exotic}

In this subsection again we fix a subset $I \subset S$, and we
let $\bXpp_I \subset \bX_I^+$ be the set of regular dominant weights for $M_I$:
\[
\bXpp_I = \{ \lambda \in \bX \mid \text{$\la \alpha_s^\vee, \lambda\ra > 0$ for all $s \in I$} \}.
\]
We clearly have
\[
\bXpp_I = \bX_I^+ + \varsigma_I.
\]

For $\lambda \in \bXpp_I$, we define
\[
\Db\Coh^{\dot G \times \Gm}(\tcN_I)_{\le \lambda}
\]
to be the full triangulated subcategory of $\Db\Coh^{\dot G \times \Gm}(\tcN_I)$ generated by objects of the form $\cL_{\tcN_I} (\dot\coweyl_I(\mu-\varsigma_I))\la n\ra$ with $\mu \in \bXpp_I$, $\mu \le \lambda$ and $n \in \Z$.  The subcategory
\[
\Db\Coh^{\dot G \times \Gm}(\tcN_I)_{< \lambda}
\]
is defined similarly.
If $\le'$ is a partial order refining $\le$, we can likewise define the subcategories $\Db\Coh^{\dot G \times \Gm}(\tcN_I)_{\le' \lambda}$ and $\Db\Coh^{\dot G \times \Gm}(\tcN_I)_{<' \lambda}$.

In the next statement we denote by $\delta_\lambda$ the minimal length of an element $v \in W$ such that $v(\lambda)$ is dominant.

\begin{prop}
\label{prop:exotic-defn}
Choose a total order $\le'$ on $\bX$ that refines $\le$, makes $(\bX,\le')$ isomorphic to $(\Z_{\geq 0}, \leq)$, and which satisfies~\eqref{eqn:condition-order}.

For each $\lambda \in \bXpp_I$, there exist objects $\nabla_I(\lambda)$, $\Delta_I(\lambda) \in \Db\Coh^{\dot G \times \Gm}(\tcN_I)$ that are uniquely determined (up to isomorphism) by the following two properties:
\begin{enumerate}
\item
there exist distinguished triangles
\begin{gather}
\cF \to \cL_{\tcN_I} (\dot\coweyl_I(\lambda-\varsigma_I)) \la -\delta_\lambda -n_I\ra[-n_I] \to \nabla_I(\lambda) \xrightarrow{[1]}, \label{eqn:exotic-nabla-defn}\\
\Delta_I(\lambda) \to \cL_{\tcN_I} ( \dot\coweyl_I(\lambda-\varsigma_I))\la -\delta_\lambda -n_I\ra[-n_I] \to \cF' \xrightarrow{[1]} \label{eqn:exotic-delta-defn}
\end{gather}
with $\cF,\cF' \in \Db\Coh^{\dot G \times \Gm}(\tcN_I)_{<' \lambda}$;
\item
we have
\[
\Hom(\cG, \nabla_I(\lambda)) = \Hom(\Delta_I(\lambda),\cG) = 0
\qquad\text{for all}\qquad
\cG \in \Db\Coh^{\dot G \times \Gm}(\tcN_I)_{<' \lambda}.
\]
\end{enumerate}
\end{prop}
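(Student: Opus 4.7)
The plan is to adapt the standard strategy from Bezrukavnikov~\cite{bez:ctm} for constructing standard and costandard objects in a triangulated category equipped with a well-ordered system of ``pre-exceptional'' generators: uniqueness will follow formally from the orthogonality property, and existence will be obtained by an inductive construction exploiting the fact that $(\bXpp_I,\le')$ is order-isomorphic to $(\Z_{\ge 0},\le)$.

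For uniqueness, set $M := \cL_{\tcN_I}(\dot\coweyl_I(\lambda-\varsigma_I))\la -\delta_\lambda-n_I\ra[-n_I]$ and let $\cC := \Db\Coh^{\dot G \times \Gm}(\tcN_I)_{<'\lambda}$; suppose $\nabla,\nabla'$ are two candidates for $\nabla_I(\lambda)$, with corresponding distinguished triangles involving $\cF,\cF' \in \cC$. Applying $\Hom(-,\nabla')$ to the triangle $\cF \to M \to \nabla \to \cF[1]$ and using that $\cF,\cF[1] \in \cC$ yields an isomorphism $\Hom(\nabla,\nabla') \simto \Hom(M,\nabla')$, so the structural map $M \to \nabla'$ factors uniquely through some $\phi:\nabla \to \nabla'$. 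The symmetric argument produces $\psi:\nabla' \to \nabla$, and a routine diagram chase using the same $\Hom$-isomorphism applied to $\End(\nabla)$ shows that $\psi\circ\phi = \id_{\nabla}$ and $\phi\circ\psi = \id_{\nabla'}$. The uniqueness of $\Delta_I(\lambda)$ is formally dual.

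For existence, enumerate $\bXpp_I$ as $\lambda_1 <' \lambda_2 <' \cdots$ and construct $\nabla_I(\lambda_n)$ and $\Delta_I(\lambda_n)$ by induction on $n$. For a fixed $\lambda = \lambda_n$, with $M$ and $\cC$ as above, I would build $\nabla_I(\lambda)$ by an iterative killing procedure: enumerate the generators $G_{\mu,m,k} := \cL_{\tcN_I}(\dot\coweyl_I(\mu-\varsigma_I))\la m\ra[k]$ of $\cC$ (with $\mu <' \lambda$) and successively take cones of the universal evaluation maps $G_{\mu,m,k} \otimes_\bk \Hom(G_{\mu,m,k}, N) \to N$, where $N$ denotes the running approximation starting from $N_0 = M$. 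The construction of $\Delta_I(\lambda)$ is dual: one instead kills morphisms \emph{out of} $M$, yielding a triangle $\Delta_I(\lambda) \to M \to \cF'$ with $\cF' \in \cC$.

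The principal difficulty lies in showing that this killing procedure terminates in a bounded object of $\Db\Coh^{\dot G \times \Gm}(\tcN_I)_{\le' \lambda}$ whose $\Hom$ really vanishes against \emph{all} of $\cC$ and not merely against the generators explicitly processed. The crucial inputs are Lemma~\ref{lem:graded-finite-type}, which guarantees finite-dimensionality of the total Hom-space $\bigoplus_{m,k}\Hom(G_{\mu,m,k},-)$ between any two generators, and the pre-exceptionality encoded in Lemma~\ref{lem:tcn-excep}\eqref{it:tcn-excep1} combined with condition~\eqref{eqn:condition-order}, which together ensure that killing off morphisms from $G_{\mu,m,k}$ does not inadvertently introduce new morphisms from strictly smaller generators. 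Cohomological boundedness of the resulting Postnikov tower follows from the same finiteness, and a standard telescoping argument then delivers the required $\nabla_I(\lambda)$ and $\Delta_I(\lambda)$.
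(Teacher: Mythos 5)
Your proposal is correct and follows essentially the same route as the paper: the paper verifies via Lemma~\ref{lem:tcn-excep} that the objects $\cL_{\tcN_I}(\dot\coweyl_I(\lambda-\varsigma_I))\la -\delta_\lambda-n_I\ra[-n_I]$ form a graded exceptional sequence and then simply cites the mutation machinery of~\cite[Lemma~3, Proposition~3]{bez:ctm}, which is exactly the uniqueness-by-orthogonality argument and iterative killing/Postnikov construction you spell out. Your identification of Lemma~\ref{lem:tcn-excep} together with the graded-finite-type property (Lemma~\ref{lem:graded-finite-type}) as the hypotheses that make this machinery run matches what the paper implicitly relies on.
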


\begin{proof}
Lemma~\ref{lem:tcn-excep} guarantees that the objects $\cL_{\tcN_I} (\dot\coweyl_I(\lambda-\varsigma_I)) \la -\delta_\lambda -n_I\ra[-n_I]$ for $\lambda \in \bXpp_I$ form a graded exceptional sequence with respect to the partial order $\preceq$, in the sense of~\cite[\S 2.1.5]{bez:ctm} (see also~\cite[\S 8.1]{ar:agsr} or~\cite[\S 2.3]{mr:etspc}). The objects $\nabla_I(\lambda)$ are obtained by taking the $\leq'$-mutation of this exceptional sequence, as in~\cite[Lemma~3]{bez:ctm}, and the objects $\Delta_I(\lambda)$ form the dual graded exceptional sequence, as in~\cite[Proposition~3]{bez:ctm}.
\end{proof}

\begin{rmk}
\phantomsection\label{rmk:exotic-other-defn}
\begin{enumerate}
\item
The assumption that $\leq'$ satisfies~\eqref{eqn:condition-order} is not necessary in Proposition~\ref{prop:exotic-defn}. However this property is used in the proof of certain properties of the objects $\nabla_I(\lambda)$ and $\Delta_I(\lambda)$ considered below.
\item
\label{it:rmk-def-nabla-weyl}
Let $\lambda \in \bXpp_I$, and let $\nabla_I'(\lambda)$ be an object such that there exists a distinguished triangle
\[
\cG \to \cL_{\tcN_I} (\dot\weyl_I(\lambda-\varsigma_I)) \la -\delta_\lambda -n_I\ra[-n_I] \xrightarrow{f} \nabla'_I(\lambda) 
\]
with $\cG \in \Db\Coh^{\dot G \times \Gm}(\tcN_I)_{<' \lambda}$ and such that
\begin{equation}
\label{eqn:nabla'-orthogonal}
\Hom(\mathcal{H}, \nabla'_I(\lambda)) = 0
\qquad\text{for all}\qquad
\mathcal{H} \in \Db\Coh^{\dot G \times \Gm}(\tcN_I)_{<' \lambda}.
\end{equation}
Then there exists an isomorphism $\nabla_I'(\lambda) \cong \nabla_I(\lambda)$. Indeed, since the cone of the natural morphism $\cL_{\tcN_I} (\dot\weyl_I(\lambda-\varsigma_I)) \to \cL_{\tcN_I} (\dot\coweyl_I(\lambda-\varsigma_I))$ belongs to $\Db\Coh^{\dot G \times \Gm}(\tcN_I)_{<' \lambda}$ (see property~\eqref{eqn:condition-order}),~\eqref{eqn:nabla'-orthogonal} implies that the morphism $f$ factors through a morphism $g : \cL_{\tcN_I} (\dot\coweyl_I(\lambda-\varsigma_I)) \to \nabla'_I(\lambda)$. And
an easy argument with the octahedral axiom shows that the cone of $g$ belongs to  $\Db\Coh^{\dot G \times \Gm}(\tcN_I)_{<' \lambda}$, so that $\nabla'_I(\lambda)$ satisfies the properties which characterize $\nabla_I(\lambda)$.
Of course, similar comments apply to the objects $\Delta_I(\lambda)$.
\end{enumerate}
\end{rmk}

The following important property follows from the general theory of (graded) exceptional sequences (see~\cite[\S 2.1.5]{bez:ctm}).

\begin{cor}\label{cor:exotic-orth}
For any order $\le'$ as in Proposition~{\rm \ref{prop:exotic-defn}}, we have
\[
\Hom(\Delta_I(\mu),\nabla_I(\lambda)\la n\ra [k]) \cong
\begin{cases}
\bk & \text{if $\mu = \lambda$ and $n = k = 0$;} \\
0 & \text{otherwise.}
\end{cases}
\]
\end{cor}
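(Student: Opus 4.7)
The plan is to reduce each case of the formula to the defining properties of $\Delta_I(\mu)$ and $\nabla_I(\lambda)$ given in Proposition~\ref{prop:exotic-defn}: the statement matches exactly the orthogonality pattern between a graded exceptional sequence and its dual. As a preliminary, one observes that the subcategories $\Db\Coh^{\dot G \times \Gm}(\tcN_I)_{\leq'\lambda}$ and $\Db\Coh^{\dot G \times \Gm}(\tcN_I)_{<'\lambda}$ are closed under $\langle 1\rangle$ and $[1]$ by construction, and that the triangles~\eqref{eqn:exotic-nabla-defn} and~\eqref{eqn:exotic-delta-defn} place $\nabla_I(\lambda)$ and $\Delta_I(\lambda)$ in $\Db\Coh^{\dot G \times \Gm}(\tcN_I)_{\leq'\lambda}$, together with all their shifts and twists.

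The two off-diagonal cases will then be immediate. If $\mu >' \lambda$, then $\nabla_I(\lambda)\langle n\rangle[k]$ lies in $\Db\Coh^{\dot G \times \Gm}(\tcN_I)_{<'\mu}$, so the desired Hom vanishes by the orthogonality characterization of $\Delta_I(\mu)$. If $\mu <' \lambda$, then $\Delta_I(\mu)$ lies in $\Db\Coh^{\dot G \times \Gm}(\tcN_I)_{<'\lambda}$, and a symmetric argument applies using the orthogonality characterization of $\nabla_I(\lambda)$.

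For the diagonal case $\mu = \lambda$, set $E_\lambda := \cL_{\tcN_I}(\dot\coweyl_I(\lambda-\varsigma_I))\langle -\delta_\lambda - n_I\rangle[-n_I]$. Applying $\Hom(-, \nabla_I(\lambda)\langle n\rangle[k])$ to~\eqref{eqn:exotic-delta-defn}, and using that every shift/twist of $\cF'$ remains in $\Db\Coh^{\dot G \times \Gm}(\tcN_I)_{<'\lambda}$, one obtains an isomorphism $\Hom(\Delta_I(\lambda), \nabla_I(\lambda)\langle n\rangle[k]) \cong \Hom(E_\lambda, \nabla_I(\lambda)\langle n\rangle[k])$. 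Applying next $\Hom(E_\lambda, -)$ to~\eqref{eqn:exotic-nabla-defn}, the outer terms of the long exact sequence are controlled by $\Hom(E_\lambda, \cF\langle n\rangle[j])$ for $j \in \{k, k+1\}$, which should vanish, while the middle term $\Hom(E_\lambda, E_\lambda\langle n\rangle[k])$ is computed by Lemma~\ref{lem:tcn-excep}\eqref{it:tcn-excep2}, producing $\bk$ when $n = k = 0$ and $0$ otherwise.

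The hard part is verifying the vanishing $\Hom(E_\lambda, \cF\langle n\rangle[j]) = 0$. Since $\cF$ lies in the triangulated subcategory generated by objects $E_\nu\langle m\rangle$ with $\nu <' \lambda$, this reduces by d\'evissage to $\Hom(E_\lambda, E_\nu\langle m\rangle[j]) = 0$ for all such $\nu$ and all $m, j$. By Lemma~\ref{lem:tcn-excep}\eqref{it:tcn-excep1}, this needs $\lambda \not\preceq \nu$, so the real content is confirming that $\leq'$ is compatible with $\preceq$ on $\bXpp_I$ in the sense that $\lambda \preceq \nu$ forces $\lambda \leq' \nu$. Property~\eqref{eqn:condition-order}, together with the fact that $\leq'$ refines $\leq$ (via Lemma~\ref{lem:properties-order}\eqref{it:order-conv} and~\eqref{eqn:conv-order}), is precisely designed to provide this, so that the $\leq'$-mutation of Proposition~\ref{prop:exotic-defn} behaves as a genuine mutation of the graded exceptional sequence in the sense of~\cite[\S 2.1.5, Lemma~3]{bez:ctm}.
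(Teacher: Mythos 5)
Your treatment of the off-diagonal cases is correct, and matches how one would naturally read Proposition~\ref{prop:exotic-defn}. The paper's own proof is a one-line citation to the general theory of graded exceptional sequences from~\cite[\S 2.1.5]{bez:ctm}, so your attempt to spell out a direct argument is a genuinely different route.

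The diagonal case, however, has a real gap at the d\'evissage step. Write $E_\lambda := \cL_{\tcN_I}(\dot\coweyl_I(\lambda-\varsigma_I))\langle -\delta_\lambda - n_I\rangle[-n_I]$. You want $\Hom(E_\lambda, E_\nu\langle m\rangle[j]) = 0$ for all $\nu <'\lambda$. Applying Lemma~\ref{lem:tcn-excep}\eqref{it:tcn-excep1} with its $\mu$ equal to $\lambda - \varsigma_I$ (the source) and its $\lambda$ equal to $\nu - \varsigma_I$ (the target), the hypothesis that delivers this vanishing is $\nu \not\preceq \lambda$, \emph{not} $\lambda \not\preceq \nu$ as you wrote. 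More seriously, neither version of the claimed compatibility between $\le'$ and $\preceq$ holds on $\bXpp_I$. Take $\dot G = \mathrm{SL}_2$, $I = \varnothing$, and $\alpha$ the simple root. With $\nu = 0$ and $\lambda = \alpha$ (both dominant), $\nu <' \lambda$ by Lemma~\ref{lem:properties-order}\eqref{it:orders-dominant}, while $\nu \preceq \lambda$. With $\nu = 0$ and $\lambda = -\alpha$, $w_\nu = e < w_\lambda$ in the Bruhat order, hence $\nu <' \lambda$, while $\lambda \preceq \nu$. So neither $\nu\not\preceq\lambda$ nor $\lambda\not\preceq\nu$ is available for every pair $\nu<'\lambda$, and the long exact sequence argument does not close; the same problem appears if one instead tries to kill $\Hom(\cF', E_\lambda\langle n\rangle[k])$ via the triangle for $\Delta_I(\lambda)$. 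What the citation to~\cite{bez:ctm} actually provides is an \emph{inductive} mutation along $\le'$, in which $\Delta_I(\cdot)$ and $\nabla_I(\cdot)$ are built simultaneously, so that at step $\lambda$ the cones $\cF$ and $\cF'$ are specific combinations of previously constructed objects against which the needed orthogonality has already been established; this cannot be recovered by a naive d\'evissage of $\Db\Coh^{\dot G \times \Gm}(\tcN_I)_{<'\lambda}$ into arbitrary $E_\nu\langle m\rangle$.
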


\subsection{Study of the case $I=\varnothing$}
\label{ss:reminder-exotic-empty}

In the special case where $I = \varnothing$, we omit the subscripts and simply write
\[
\nabla(\lambda) = \nabla_\varnothing(\lambda),
\qquad
\Delta(\lambda) = \Delta_\varnothing(\lambda).
\]
In this case, these objects have been studied extensively in~\cite{bez:ctm, ar:agsr, mr:etspc}. (Our normalization of these objects follows the conventions in~\cite{ar:agsr, mr:etspc} but is slightly different from those of~\cite{bez:ctm}, where the shift $\langle -\delta_\lambda \rangle$ is omitted.) The proposition below summarizes the main properties we will need.  This statement mentions 
the category
\[
\Db\Coh^{\dot G \times \Gm}(\tcN)_{\convo(\lambda)},
\]
defined as the full triangulated subcategory of $\Db\Coh^{\dot G \times \Gm}(\tcN)$ generated by the objects $\cO_\tcN(\mu)\la n\ra$ with $\mu \in \convo(\lambda)$.

\begin{prop}
\label{prop:exotic-recall}
Let $\lambda \in \bX$, and let $s \in S$.
\begin{enumerate}
\item 
\label{it:exotic-recall-1}
The objects $\nabla(\lambda)$ and $\Delta(\lambda)$ are independent of the choice of order $\le'$ as in Proposition~{\rm \ref{prop:exotic-defn}}.
\item
\label{it:nabla-convo}
In the distinguished triangle
\[
\cF \to \cO_\tcN(\lambda)\la -\delta_\lambda\ra \to \nabla(\lambda) \xrightarrow{[1]},
\]
we have $\cF \in \Db\Coh^{\dot G \times \Gm}(\tcN)_{\convo(\lambda)}$.
\item
\label{it:pi-s-null}
If $s\lambda = \lambda$, then $\Pi_s(\nabla(\lambda)) = \Pi_s(\Delta(\lambda)) = 0$.
\item
\label{it:pi-s-std}
If $s\lambda \prec \lambda$, there exist distinguished triangles
\begin{gather*}
\nabla(s\lambda)\la -1\ra[-1] \to \Pi^s\Pi_s(\nabla(\lambda))\la -1\ra[-1] \xrightarrow{\epsilon} \nabla(\lambda) \xrightarrow{[1]}, \\
\Delta(\lambda)\la -1\ra[-1] \to \Pi^s\Pi_s(\Delta(s\lambda))\la -1\ra[-1] \xrightarrow{\epsilon} \Delta(s\lambda) \xrightarrow{[1]},
\end{gather*}
where the second morphism in both triangles is the counit for the adjunction $\Pi^s\la -1\ra[-1] \dashv \Pi_s$.
\item
\label{it:pi-s-push}
If $s\lambda \prec \lambda$, then there exist isomorphisms
\[
\Pi_s(\nabla(s\lambda)) \cong \Pi_s(\nabla(\lambda))\la -1\ra[-1] \\
\qquad\text{and}\qquad
\Pi_s(\Delta(s\lambda)) \cong \Pi_s(\Delta(\lambda))\la 1\ra [1].
\]
\end{enumerate}
\end{prop}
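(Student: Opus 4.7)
These assertions are essentially the $I=\varnothing$ case of exotic t-structure results from~\cite{bez:ctm}, refined in the graded setting in~\cite{ar:agsr, mr:etspc}; our normalization differs only by the shift $\langle -\delta_\lambda\rangle$ built into Proposition~\ref{prop:exotic-defn}. My plan is to prove~(2) first, deduce~(1) from it, and then treat~(3)--(5) separately using the geometry of the correspondence $\tcN \xleftarrow{\se_{\varnothing,s}} \tcN_{\varnothing,s} \xrightarrow{\mu_{\varnothing,s}} \tcN_s$.

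For~(2), I would proceed by induction on $\lambda$ with respect to the total order $\leq'$. When $\lambda$ is $\leq$-minimal (equivalently $\lambda$ is antidominant) we have $\delta_\lambda=0$ and the defining triangle from Proposition~\ref{prop:exotic-defn} forces $\nabla(\lambda) \cong \cO_\tcN(\lambda)$ with $\cF = 0$. For the inductive step with $s\lambda \prec \lambda$, applying the counit morphism $\Pi^s\Pi_s\langle -1\rangle[-1](\cO_\tcN(\lambda)\langle -\delta_\lambda\rangle) \to \cO_\tcN(\lambda)\langle -\delta_\lambda\rangle$ from Remark~\ref{rmk:adjunctions-PiI} yields a distinguished triangle whose cocone is, up to twist, a complex built from $\cO_\tcN(s\lambda)\langle -\delta_{s\lambda}\rangle$ and line bundles on $\convo(\lambda)$. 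Combining this with the inductive hypothesis applied to $\nabla(s\lambda)$ produces the required presentation with $\cF \in \Db\Coh^{\dot G \times \Gm}(\tcN)_{\convo(\lambda)}$, since $\convo(\lambda)$ is $W$-stable and contains $\convo(s\lambda) \cup \{s\lambda\}$.

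Assertion~(1) follows once~(2) is in hand, via the ``quasi-exceptional set'' mechanism of~\cite[\S 2.1.5]{bez:ctm}: the objects $\nabla(\lambda)$ and $\Delta(\lambda)$ are then characterized by a universal property involving only the order-independent subcategory $\Db\Coh^{\dot G \times \Gm}(\tcN)_{\convo(\lambda)}$, and the mutations relating two different refinements $\leq'$ are trivial thanks to the $\Hom$-vanishing statements in Lemma~\ref{lem:tcn-pre-excep}\eqref{it:tcn-pre-excep1} and Lemma~\ref{lem:tcn-excep}\eqref{it:tcn-excep1}.

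For~(3), if $s\lambda=\lambda$ then $\langle\alpha_s^\vee,\lambda-\varsigma_s\rangle=-1$, so $\dot\coweyl_s(\lambda-\varsigma_s)=0$ by Kempf vanishing; combined with~\eqref{eqn:pi_i} this gives $\Pi_s(\cO_\tcN(\lambda))=0$, and an induction along the filtration from~(2) produces $\Pi_s(\nabla(\lambda))=0$, the case of $\Delta$ being dual. For~(4), I would apply the counit from the adjunction $\Pi^s\langle -1\rangle[-1] \dashv \Pi_s$ to $\nabla(\lambda)$ and identify the cocone with $\nabla(s\lambda)\langle -1\rangle[-1]$ by verifying the universal characterization from Proposition~\ref{prop:exotic-defn}, using~(2) to reduce the orthogonality check against $\Db\Coh^{\dot G \times \Gm}(\tcN)_{<' s\lambda}$ to the line-bundle case. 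Finally~(5) follows by applying $\Pi_s$ to the triangle from~(4) and invoking the ``$\mathbb{P}^1$'' splitting $\Pi_s\Pi^s(-)\cong (-)\langle 1\rangle[1] \oplus (-)\langle -1\rangle[-1]$ together with the zigzag identity $(\Pi_s\epsilon)\circ(\eta\Pi_s)=\id$, which shows $\Pi_s(\epsilon)$ is split surjective with the desired complementary summand. The main obstacle is the shift bookkeeping in~(4): confirming that the cocone is precisely $\nabla(s\lambda)\langle -1\rangle[-1]$ (rather than some other twist) requires tracking the compatibility $\delta_{s\lambda}=\delta_\lambda+1$ with the $\langle -1\rangle[-1]$ coming from the adjunction, and this must be done simultaneously with the orthogonality verification.
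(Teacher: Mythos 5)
Your argument for part~(3) has a genuine gap. From the triangle in~(2), applying $\Pi_s$ when $s\lambda=\lambda$ gives $\Pi_s(\cF) \to \Pi_s(\cO_\tcN(\lambda)\la-\delta_\lambda\ra)=0 \to \Pi_s(\nabla(\lambda))$, i.e.\ $\Pi_s(\nabla(\lambda)) \cong \Pi_s(\cF)[1]$ --- and $\Pi_s(\cF)$ has no reason to vanish, since $\cF$ is built from line bundles $\cO_\tcN(\mu)$ with $\mu\in\convo(\lambda)$ that are not killed by $\Pi_s$ in general. The paper instead uses the ``wall-crossing'' description of $\Pi^s\Pi_s$ as a Fourier--Mukai transform (the exact sequence~(3.2) and Proposition~3.3 of~\cite{mr:etspc}), which produces a triangle $\nabla(\lambda)\la1\ra\to\nabla(\lambda)\la1\ra\to\Pi^s\Pi_s(\nabla(\lambda))$, and then concludes by an indecomposability-plus-support argument; the same input is what makes the cocone identification in your part~(4) actually checkable. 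Your ``dual'' treatment of $\Delta(\lambda)$ is also unavailable: as the paper remarks right after the proposition, the analogue of~(2) \emph{fails} for $\Delta(\lambda)$, so neither your argument for~(3) nor your derivation of~(1) from~(2) applies to the $\Delta$'s (the paper handles them via~\cite[Proposition~3.6]{mr:etspc}, and order-independence of the $\Delta$'s should rather be deduced from that of the $\nabla$'s via the biorthogonality of Corollary~\ref{cor:exotic-orth}).

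There is also a sign error in your base case for~(2): $\delta_\lambda$ is the minimal length of $v$ with $v(\lambda)$ \emph{dominant}, so $\delta_\lambda=0$ iff $\lambda$ is dominant, and it is for dominant $\lambda$ that $\nabla(\lambda)\cong\cO_\tcN(\lambda)$ (for antidominant $\lambda$ it is $\Delta(\lambda)$ that is a twisted line bundle); moreover the $\leq'$-minimal weight of $\bX$ need not be antidominant. On the positive side, your treatment of~(5) --- applying $\Pi_s$ to the triangle of~(4), using the splitting $\Pi_s\Pi^s\cong\id\la1\ra[1]\oplus\id\la-1\ra[-1]$ and the zigzag identity to see that $\Pi_s(\epsilon)$ is a split surjection with complement the second summand --- is a legitimate and arguably cleaner alternative to the paper's proof, which instead identifies both $\Pi_s(\nabla(s\lambda))$ and $\Pi_s(\nabla(\lambda))\la-1\ra[-1]$ with the parabolic object $\nabla_{\{s\}}(\lambda)$ by verifying its defining characterization (and thereby proves more, which is used later). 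But this part presupposes~(4), whose proof, as written, still rests on the missing wall-crossing input.
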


\begin{proof}
Part~\eqref{it:exotic-recall-1} is proved in~\cite[Proposition~8.5(1)]{ar:agsr} or~\cite[Remark~3.5]{mr:etspc}, and part~\eqref{it:nabla-convo} follows from~\cite[Lemma~3.1(3)--(4)]{mr:etspc} and the proof of~\cite[Proposition~3.8]{mr:etspc}.

Let us now prove part~\eqref{it:pi-s-null}. Standard arguments (involving in particular the base change theorem) show that the functor $\Pi^s \circ \Pi_s$ is isomorphic to the Fourier--Mukai transform associated with the kernel $\cO_{Y_s}(-\varsigma_s, \varsigma_s - \alpha_s) \langle -1 \rangle$, where $Y_s$ is the subvariety of $\tcN \times \tcN$ considered in~\cite[\S 3.1]{mr:etspc} (and where we follow the notational conventions of~\cite{mr:etspc}). Hence, using~\cite[Proposition~3.3(2)]{mr:etspc} and the exact sequence~\cite[(3.2)]{mr:etspc} (in which $\rho$ can be replaced by $\varsigma_s$; see~\cite[Lemma~1.5.1]{riche-RT}), we obtain that if $s\lambda=\lambda$ there exists a distinguished triangle
\begin{equation}
\label{eqn:triangle-exotic-recall}
\nabla(\lambda) \langle 1 \rangle \to \nabla(\lambda) \langle 1 \rangle \to \Pi^s \Pi_s(\nabla(\lambda)) \xrightarrow{[1]}.
\end{equation}
We have $\Hom_{\Db \Coh^{\dot G \times \Gm}(\tcN)}(\nabla(\lambda), \nabla(\lambda))=\bk$, hence the first morphism in this triangle is either $0$ or an isomorphism. If it is zero, then $\Pi^s \Pi_s(\nabla(\lambda))$ is isomorphic to $\nabla(\lambda) \langle 1 \rangle \oplus \nabla(\lambda) \langle 1 \rangle[1]$. This is absurd since $\nabla(\lambda)$ has a nontrivial restriction to the inverse image of the regular orbit in the nilpotent cone (as follows e.g.~from the proof of~\cite[Proposition~3.8]{mr:etspc}), while $\Pi^s(\cF)$ has a trivial restriction to this open subvariety for any $\cF$ in $\Db \Coh^{\dot G \times \Gm}(\tcN_s)$.

We have proved that the first arrow in~\eqref{eqn:triangle-exotic-recall} is an isomorphism. Hence we have $\Pi^s \Pi_s(\nabla(\lambda))=0$. But the functor $\Pi^s$ does not kill any nonzero object, since it is a composition of a smooth pullback with a pushforward under a closed embedding. Hence indeed we have $\Pi_s(\nabla(\lambda))=0$. The proof of the fact that $\Pi_s(\Delta(\lambda))=0$ is similar, using~\cite[Proposition~3.6(1)]{mr:etspc} as the starting point.

We now consider part~\eqref{it:pi-s-std}.
As above, from~\cite[Proposition~3.3(3)]{mr:etspc} and the exact sequence~\cite[(3.2)]{mr:etspc} we deduce that there exists a distinguished triangle
\begin{equation}
\label{eqn:triangle-exotic-recall-3}
\nabla(s\lambda) \langle -1 \rangle [-1] \to \Pi^s \Pi_s(\nabla(\lambda)) \langle -1 \rangle [-1] \to \nabla(\lambda) \xrightarrow{[1]}.
\end{equation}
The second arrow in this triangle is nonzero, since otherwise $\nabla(s \lambda)$ would be decomposable, which would contradict the fact that $\Hom_{\Db \Coh^{\dot G \times \Gm}(\tcN)}(\nabla(s\lambda), \nabla(s\lambda))=\bk$. Hence to conclude the proof in this case, we just need to prove that
\begin{equation}
\label{eqn:dim-Hom}
\dim_\bk \bigl( \Hom_{\Db \Coh^{\dot G \times \Gm}(\tcN)}(\Pi^s \Pi_s(\nabla(\lambda)) \langle -1 \rangle [-1], \nabla(\lambda)) \bigr)=1.
\end{equation}
(Indeed, this will also prove that $\Pi_s(\nabla(\lambda))$ is nonzero, and hence that the morphism induced by adjunction forms a basis of this $1$-dimensional vector space.)

Since $s\lambda \prec \lambda$, by Lemma~\ref{lem:order-mu-smu} we have $s\lambda < \lambda$, and hence
\[
\Hom_{\Db \Coh^{\dot G \times \Gm}(\tcN)}(\nabla(s\lambda), \nabla(\lambda)\la n \ra [k])=0
\]
for any $k,n \in \Z$, by definition of a (graded) exceptional sequence. Thus, using the long exact sequence obtained by applying the functor $\Hom_{\Db \Coh^{\dot G \times \Gm}(\tcN)}(-, \nabla(\lambda))$ to the triangle~\eqref{eqn:triangle-exotic-recall-3} we obtain an isomorphism
\[
\Hom_{\Db \Coh^{\dot G \times \Gm}(\tcN)}(\nabla(\lambda), \nabla(\lambda)) \cong \Hom_{\Db \Coh^{\dot G \times \Gm}(\tcN)}(\Pi^s \Pi_s(\nabla(\lambda)) \langle -1 \rangle [-1], \nabla(\lambda)),
\]
which implies~\eqref{eqn:dim-Hom} and finishes the proof in this case.

The case of the objects $\Delta(\lambda)$ and $\Delta(s\lambda)$ is very similar (using~\cite[Proposition~3.6(1)]{mr:etspc}), and left to the reader.

Finally, we consider part~\eqref{it:pi-s-push}. By~\eqref{it:exotic-recall-1}, we can assume that the order $\leq'$ has been chosen as in~\S\ref{ss:orders-X}, in terms of the subset $I=\{s\}$. Under this assumption, we will also consider the objects $\nabla_{\{s\}}(\lambda)$ and $\Delta_{\{s\}}(\lambda)$ (constructed from the same order), and we will prove more precisely that
\begin{gather}
\label{eqn:Pis-nabla}
\Pi_s(\nabla(s\lambda)) \cong \Pi_s(\nabla(\lambda))\la -1\ra[-1] \cong \nabla_{\{s\}}(\lambda), \\
\label{eqn:Pis-Delta}
\Pi_s(\Delta(s\lambda)) \cong \Pi_s(\Delta(\lambda))\la 1\ra [1] \cong \Delta_{\{s\}}(\lambda).
\end{gather}

First we prove~\eqref{eqn:Pis-nabla}.
For $\mu \in \bX$, using~\eqref{eqn:pi_i} and~\eqref{eqn:pi_i-dual} we see that
\[
\Pi_s(\cO_{\tcN}(\mu)) = \begin{cases}
0 & \text{if $s\mu=\mu$;} \\
\cL_{\tcN_s}(\dot\coweyl_s(\mu-\varsigma_s)) & \text{if $s\mu \prec \mu$;} \\
\cL_{\tcN_s}(\dot\weyl_s(s\mu - \varsigma_s))[-1] & \text{if $s\mu \succ \mu$.}
\end{cases}
\]
Note that if $\mu \in \convo(\lambda)=\convo(s\lambda)$, then $\mu <' \lambda$ and $s\mu <' \lambda$ (see~\eqref{eqn:condition-order}).
Hence, using these isomorphisms, we see that applying $\Pi_s$ to the distinguished triangle in~\eqref{it:nabla-convo} for both $\lambda$ and $s\lambda$, we
obtain distinguished triangles
\begin{gather*}
\cG \to \cL_{\tcN_s}(\dot\coweyl_s(\lambda-\varsigma_s)) \langle -\delta_\lambda - 1 \rangle [-1] \to \Pi_s(\nabla(\lambda)) \langle -1 \rangle [-1], \\
\cG' \to \cL_{\tcN_s}(\dot\weyl_s(\lambda-\varsigma_s)) \langle -\delta_\lambda - 1 \rangle [-1] \to \Pi_s(\nabla(s\lambda)),
\end{gather*}
where $\cG$ and $\cG'$ belong to $\Db\Coh^{\dot G \times \Gm}(\tcN_I)_{<'\lambda}$. Using also Remark~\ref{rmk:exotic-other-defn}\eqref{it:rmk-def-nabla-weyl}, we see that to conclude the proof of the isomorphisms in this case, it suffices to prove that
\begin{multline*}
\Hom(\cL_{\tcN_s}(\dot\coweyl_s(\mu-\varsigma_s)), \Pi_s(\nabla(\lambda))\langle n \rangle [k])\\
=\Hom(\cL_{\tcN_s}(\dot\coweyl_s(\mu-\varsigma_s)), \Pi_s(\nabla(s\lambda))\langle n \rangle [k])=0
\end{multline*}
for any $\mu \in \bXpp_s$ such that $\mu <' \lambda$. And in turn, since $s\lambda <' \lambda$ (see Lemma~\ref{lem:order-mu-smu}), using adjunction (see Lemma~\ref{lem:pi-adjoint}), to prove this it suffices to prove that
\begin{equation}
\Pi^s(\cL_{\tcN_s}(\dot\coweyl_s(\mu-\varsigma_s))) \in \Db\Coh^{\dot G \times \Gm}(\tcN)_{<'s\lambda}.
\end{equation}
Now, as in the proof of Lemma~\ref{lem:tcn-excep}, the object $\Pi^s(\cL_{\tcN_s}(\dot\coweyl_s(\mu-\varsigma_s)))$ admits a filtration with subquotients of the form $\se_{\varnothing, \{s\}*} \cO_{\tcN_{\varnothing, \{s\}}}(\nu) \langle 1 \rangle$ with $\nu \in \{\mu-\alpha_s, \cdots, s\mu\}$. And as in the proof of Lemma~\ref{lem:tcn-pre-excep}, for any such $\nu$ there exists an exact sequence
\[
\cO_{\tcN}(\nu + \alpha_s) \langle 2 \rangle \hookrightarrow \cO_{\tcN}(\nu) \twoheadrightarrow \se_{\varnothing, \{s\}*} \cO_{\tcN_{\varnothing, \{s\}}}(\nu).
\]
Hence to conclude it suffices to prove that any for weight $\eta$ in $\{\mu, \mu - \alpha_s, \cdots, s\mu\}$ we have $\eta <' s\lambda$. However, these weights satisfy $\eta \leq' \mu$, and since $\mu \notin \{\lambda, s\lambda\}$,~\eqref{eqn:order-leq'-WI} ensures that $\mu <' s\lambda$, so that indeed $\eta <' s\lambda$. This finishes the proof of~\eqref{eqn:Pis-nabla}.

Finally we deduce~\eqref{eqn:Pis-Delta}. For this we note, using~\eqref{eqn:Pis-nabla} and Lemma~\ref{lem:pi-adjoint}, that for any $\mu \in \bXpp_s$ and $n,k \in \Z$ we have
\[
\Hom(\Pi_s(\Delta(\lambda)), \nabla_s(\mu) \langle n \rangle [k]) \cong \Hom(\Delta(\lambda), \Pi^s \Pi_s(\nabla(\mu)) \langle n \rangle [k]).
\]
Then, using~\eqref{it:pi-s-std} we deduce that this vector space vanishes unless $\mu=\lambda$ and $n=k=-1$. Using~\cite[Lemma~2.5]{mr:etspc}, this proves that $\Pi_s(\Delta(\lambda)) \cong \Delta_{\{s\}}(\lambda) \langle -1 \rangle [-1]$. One can prove by similar arguments that $\Pi_s(\Delta(s\lambda)) \cong \Delta_{\{s\}}(\lambda)$, and the proof of~\eqref{eqn:Pis-Delta} is then complete.
\end{proof}

\begin{rmk}
The analogue of Proposition~\ref{prop:exotic-recall}\eqref{it:nabla-convo} for the objects $\Delta(\lambda)$ does \emph{not} hold: the cone of the morphism $\Delta(\lambda) \to \cO_{\tcN}(\lambda) \langle -\delta_\lambda \rangle$ does not belong to the subcategory $\Db\Coh^{\dot G \times \Gm}(\tcN)_{\convo(\lambda)}$ in general. This is one of the subtle differences between the objects $\Delta(\lambda)$ and the objects $\nabla(\lambda)$.
\end{rmk}

Now we return to the case of a general subset $I \subset S$.
From Proposition~\ref{prop:exotic-recall} we deduce the following fact.

\begin{cor}
\label{cor:PiI-Delta-vanishing}
Let $\lambda \in \bX$, and assume that $\lambda \notin W_I \bXpp_I$ (i.e.~that the stabilizer of $\lambda$ in $W_I$ is nontrivial). Then
\[
\Pi_I(\Delta(\lambda)) = \Pi_I(\nabla(\lambda))=0.
\]
\end{cor}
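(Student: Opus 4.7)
\medskip

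\noindent\emph{Proof plan.} The hypothesis $\lambda \notin W_I \bXpp_I$ is equivalent to the assertion that the stabilizer of $\lambda$ in $W_I$ is nontrivial. The plan is to reduce to the case where $\lambda$ is $M_I$-dominant with nontrivial stabilizer (which is handled by Proposition~\ref{prop:exotic-recall}\eqref{it:pi-s-null}) using Proposition~\ref{prop:exotic-recall}\eqref{it:pi-s-push} repeatedly, together with the transitivity of the functors $\Pi_J$ established in Lemma~\ref{lem:pi-transitive}.

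Set $\mu = \dom_I(\lambda)$, so that $\lambda = w\mu$ for some $w \in W_I$ that we may take of minimal length. Since $W_I$ acts on $\bX$ as a finite reflection group with fundamental chamber containing $\bX_I^+$ and $\mu \in \bX_I^+$, the stabilizer of $\mu$ in $W_I$ is the standard parabolic subgroup generated by $\{s \in I : s\mu = \mu\}$. Since the stabilizer is nontrivial by hypothesis, there exists $s \in I$ with $s\mu = \mu$; then Proposition~\ref{prop:exotic-recall}\eqref{it:pi-s-null} gives $\Pi_s(\nabla(\mu)) = \Pi_s(\Delta(\mu)) = 0$, and Lemma~\ref{lem:pi-transitive} (applied with $K = \varnothing$, $J = \{s\}$) yields $\Pi_I(\nabla(\mu)) = \Pi_{\{s\}, I}(\Pi_s(\nabla(\mu))) = 0$, and similarly for $\Delta(\mu)$.

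I will then proceed by induction on $\ell(w)$. If $\ell(w) \geq 1$, write $w = s' w'$ with $s' \in I$ and $\ell(w') = \ell(w) - 1$. Since $w$ was chosen of minimal length, $s'w'\mu \neq w'\mu$; and because $\ell(s'w') > \ell(w')$, the root $w'^{-1}(\alpha_{s'})$ lies in $\Phi_I^+$, giving $\langle w'\mu, \alpha_{s'}^\vee \rangle \geq 0$ since $\mu \in \bX_I^+$. Combining, $w\mu = s'(w'\mu) \prec w'\mu$, so Proposition~\ref{prop:exotic-recall}\eqref{it:pi-s-push} (applied to $s'$ and the weight $w'\mu$) gives
\[
\Pi_{s'}(\nabla(w\mu)) \cong \Pi_{s'}(\nabla(w'\mu))\la -1\ra[-1], \qquad \Pi_{s'}(\Delta(w\mu)) \cong \Pi_{s'}(\Delta(w'\mu))\la 1\ra[1].
\]
Applying $\Pi_{\{s'\}, I}$ and invoking Lemma~\ref{lem:pi-transitive} again, we get corresponding isomorphisms with $\Pi_I$ in place of $\Pi_{s'}$, and the inductive hypothesis at $w'\mu$ yields the vanishing at $w\mu = \lambda$.

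There is no serious obstacle in this argument; the only points requiring a little care are (i) identifying the stabilizer of the $M_I$-dominant representative $\mu$ as a standard parabolic of $W_I$ (so that one finds a simple reflection $s \in I$, not merely a reflection), and (ii) verifying the strict inequality $s'(w'\mu) \prec w'\mu$ needed to apply Proposition~\ref{prop:exotic-recall}\eqref{it:pi-s-push}, which follows from the minimality of the chosen expression for $w$.
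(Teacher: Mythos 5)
Your proof is correct and follows essentially the same route as the paper's: reduce to the $I$-dominant representative $\mu=\dom_I(\lambda)$, kill $\Pi_I(\Delta(\mu))$ and $\Pi_I(\nabla(\mu))$ via Proposition~\ref{prop:exotic-recall}\eqref{it:pi-s-null} and Lemma~\ref{lem:pi-transitive}, and then transfer the vanishing along a reduced word using Proposition~\ref{prop:exotic-recall}\eqref{it:pi-s-push}. The only cosmetic difference is that you phrase the last step as an induction on $\ell(w)$ while the paper writes out the whole chain of isomorphisms at once.
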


\begin{proof}
We prove that $\Pi_I(\Delta(\lambda)) =0$; the case of $\nabla(\lambda)$ is similar.

First, let us assume that $\lambda \in \bX_I^+$. Then since $\lambda \notin \bXpp_I$, there exists $s \in I$ such that $s\lambda=\lambda$. Using Lemma~\ref{lem:pi-transitive}, we obtain that
\[
\Pi_I(\Delta(\lambda)) = \Pi_{\{s\},I} \circ \Pi_s(\Delta(\lambda))=0
\]
by Proposition~\ref{prop:exotic-recall}\eqref{it:pi-s-null}.

Now we consider the general case. Let $\mu = \dom_I(\lambda)$, and let $v \in W_I$ be the element of minimal length such that $\lambda = v\mu$. Let $v=s_1 \cdots s_r$ be a reduced decomposition of $v$. Then we have
\[
\mu \succ s_r \mu \succ s_{r-1} s_r \mu \succ \cdots \succ s_1 \lambda \succ \lambda.
\]
Decomposing $\Pi_I$ as $\Pi_{\{s_k\},I} \circ \Pi_{s_k}$ for $k \in \{1, \cdots, r\}$ and using Proposition~\ref{prop:exotic-recall}\eqref{it:pi-s-push} repeatedly, we obtain that
\[
\Pi_I(\Delta(\mu)) \cong \Pi_I(\Delta(s_r \mu)) \langle -1 \rangle [-1] \cong \cdots \cong \Pi_I(\Delta(\lambda)) \langle -r \rangle [-r].
\]
By the case of $I$-dominant weights considered above we have $\Pi_I(\Delta(\mu))=0$, and hence $\Pi_I(\Delta(\lambda))=0$ as well.
\end{proof}

\subsection{Standard and costandard exotic sheaves and induction/restriction functors}
\label{ss:standard-costandard-functors}

In this subsection we fix a subset $I \subset S$ with $I \neq \varnothing$, and we assume that the objects $\nabla_I(\lambda)$ and $\Delta_I(\lambda)$ are defined with respect to an order $\leq'$ constructed as in~\S\ref{ss:orders-X} (which is authorized by Lemma~\ref{lem:order-leq'}).

Below we will need the following lemma on weights. Here, for any $X \subset \bX$, we denote by $\mathsf{Conv}(X)$ the convex hull of $X$ (in $\mathbb{R} \otimes_\Z \bX$).

\begin{lem}
\label{lem:weights}
Let $\lambda \in \bXpp_I$, and let $Y \subset \Phi_I^+$. Then the weight $\lambda - \sum_{\alpha \in Y} \alpha$ belongs to $\mathsf{Conv}(W_I \lambda) \cap (\lambda + \Z\Phi_I)$.
\end{lem}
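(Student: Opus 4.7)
The inclusion $\lambda - \sum_{\alpha \in Y}\alpha \in \lambda + \Z\Phi_I$ is immediate, since each $\alpha \in Y$ belongs to $\Phi_I$. The substance of the lemma is the claim that $\lambda - \sum_{\alpha \in Y}\alpha \in \mathsf{Conv}(W_I\lambda)$.

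My strategy is to use the standard half-space description of the polytope $\mathsf{Conv}(W_I\lambda)$: since $\lambda$ is $I$-dominant, a point $\mu \in \lambda + \mathbb{R}\Phi_I$ lies in $\mathsf{Conv}(W_I \lambda)$ if and only if $w\mu \preceq_I \lambda$ for every $w \in W_I$, where $\preceq_I$ is the order on $\bX \otimes_\Z \mathbb{R}$ given by $\nu \preceq_I \lambda$ iff $\lambda - \nu$ lies in the nonnegative real cone on $\Sigma_I$. (Equivalently, since the $W_I$-orbit of $\mu$ contains a unique $I$-dominant element $\dom_I(\mu)$, this reduces to $\dom_I(\mu) \preceq_I \lambda$.) This characterization follows directly from the classical fact that a dominant weight lies in $\mathsf{Conv}(W_I\lambda)$ iff it is $\preceq_I$-smaller than $\lambda$, combined with $W_I$-invariance of the polytope.

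Applying this to $\mu := \lambda - \sum_{\alpha\in Y}\alpha$, for any $w \in W_I$ we must show that
\[
\lambda - w\mu \;=\; (\lambda - w\lambda) + \sum_{\alpha \in Y} w\alpha
\]
is a nonnegative real combination of $\Sigma_I$. The first summand is automatically a nonnegative integer combination of $\Sigma_I$ because $\lambda \in \bX_I^+$ and $w \in W_I$. For the second summand, I would partition $Y = Y^+ \sqcup Y^-$ according to whether $w\alpha \in \Phi_I^+$ or $w\alpha \in \Phi_I^-$; the $Y^+$-part contributes nonnegatively, while the roots $\{-w\alpha : \alpha \in Y^-\}$ all lie in the inversion set $\text{inv}(w^{-1}) = \Phi_I^+ \cap w^{-1}\Phi_I^-$ and contribute $-1$ to the coefficient of each such positive root. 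The key observation is that the assumption $\lambda \in \bXpp_I$ forces $\langle \lambda, \beta^\vee \rangle \geq 1$ for every $\beta \in \Phi_I^+$ (since any positive coroot is a sum of simple coroots, so pairs with $\varsigma_I$ to at least $1$), and consequently, in the expansion of $\lambda - w\lambda$ as a combination of the roots in $\text{inv}(w^{-1})$, each such root appears with coefficient at least $1$---enough to absorb the $-1$ contribution coming from $Y^-$.

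The main obstacle is making the last step precise, since the expansion of $\lambda - w\lambda$ along inversion roots is a telescoping formula whose coefficients are values $\langle w'\lambda, \alpha_s^\vee\rangle$ for various $w' \in W_I$, which need not all be bounded below by $1$ at each intermediate step. The cleanest way around this is to induct on $\ell(w)$: for $\ell(w) = 0$ the conclusion $w\mu \preceq_I \lambda$ reduces to $\mu \preceq_I \lambda$, which holds since $\lambda - \mu = \sum_{\alpha \in Y}\alpha$ is a sum of elements of $\Phi_I^+$; and for the inductive step, writing $w = s_\alpha w'$ with $\alpha \in \Sigma_I$ and $\ell(w') = \ell(w)-1$, one uses the identity $\lambda - sw'\lambda = s(\lambda - w'\lambda) + \langle w'\lambda, \alpha^\vee\rangle \alpha$ together with the inductive bound to propagate the nonnegativity.
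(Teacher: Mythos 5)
Your approach is correct and does prove the lemma, but it is genuinely different from the paper's proof, and you raise a concern that is in fact unwarranted. Both you and the paper reduce to the half-space criterion (show $w\mu \preceq_I \lambda$ for all $w \in W_I$), but after that the decompositions diverge. The paper splits $\lambda - \sum_{\alpha \in Y}\alpha = (\lambda - \rho_I) + \bigl(\rho_I - \sum_{\alpha\in Y}\alpha\bigr)$ and handles the two pieces separately: since $\lambda \in \bXpp_I$ the difference $\lambda - \rho_I$ is $I$-dominant, so $w(\lambda - \rho_I) \preceq_I \lambda - \rho_I$; and the second piece equals $\frac12\sum_{\beta\in(\Phi_I^+\setminus Y)\sqcup(-Y)}\beta$, a half-sum over a transversal of $\Phi_I/\{\pm1\}$, a shape visibly preserved by $w$, so $w\bigl(\rho_I - \sum_{Y}\alpha\bigr) = \rho_I - \sum_{Z}\alpha \preceq_I \rho_I$ for some $Z\subset\Phi_I^+$. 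This sidesteps all inversion-set bookkeeping. Your decomposition $\lambda - w\mu = (\lambda - w\lambda) + \sum_{\alpha\in Y}w\alpha$ also works, but you should know that the ``obstacle'' you flag is not real: fixing a reduced expression $w = s_1\cdots s_r$ in $W_I$, the coefficient of $\beta_k := s_1\cdots s_{k-1}\alpha_{s_k}$ in $\lambda - w\lambda$ equals $\langle \lambda, \bigl((s_{k+1}\cdots s_r)^{-1}\alpha_{s_k}\bigr)^\vee\rangle$, and $(s_{k+1}\cdots s_r)^{-1}\alpha_{s_k}$ is a \emph{positive} root of $\Phi_I$ (because $s_k\cdots s_r$ is a suffix of a reduced word, hence reduced), so every coefficient is $\geq 1$; this already absorbs the $-1$ contributions coming from $\{-w\alpha : \alpha\in Y,\ w\alpha < 0\}\subset\Phi_I^+\cap w\Phi_I^-$, and the inductive fallback in your last paragraph is unnecessary. (As an aside, the identity you quote for the inductive step, $\lambda - sw'\lambda = s(\lambda - w'\lambda) + \langle w'\lambda,\alpha^\vee\rangle\alpha$, is not quite right; the correct one is $\lambda - sw'\lambda = (\lambda - w'\lambda) + \langle w'\lambda,\alpha^\vee\rangle\alpha$.) What the paper's argument buys is brevity and the avoidance of any reduced-expression or inversion-set analysis; what your argument buys is a more direct view of where the regularity hypothesis $\lambda\in\bXpp_I$ (rather than merely $\lambda\in\bX_I^+$) gets used.
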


\begin{proof}
If $\preceq_I$ is the order on $\bX$ defined by $\lambda \preceq_I \mu$ iff $\mu - \lambda \in \Z_{\geq 0} \Phi_I^+$, then
it is well known that a weight $\mu \in \bX$ belongs to $\mathsf{Conv}(W_I \lambda)  \cap (\lambda + \Z\Phi_I)$ iff $w(\mu) \preceq_I \lambda$ for any $w \in W_I$.
Hence it suffices to prove that our weight $\lambda - \sum_{\alpha \in Y} \alpha$ satisfies this condition. For this we will work in $\frac{1}{2}\bX$; we extend the order $\preceq_I$ to this lattice by using the same rule as above.

For any $w \in W_I$ we have
\[
w \left( \lambda - \sum_{\alpha \in Y} \alpha \right) = w(\lambda - \rho_I) + w \left( \rho_I - \sum_{\alpha \in Y} \alpha \right).
\]
Since $\langle \lambda - \rho_I, \alpha^\vee \rangle \in \Z_{\geq 0}$ for any $\alpha \in \Phi_I^+$, we have $w(\lambda- \rho_I) \preceq_I \lambda - \rho_I$. Hence to prove the lemma it suffices to prove that
\[
w \left( \rho_I - \sum_{\alpha \in Y} \alpha \right) \preceq_I \rho_I.
\]
However we have
\[
\rho_I - \sum_{\alpha \in Y} \alpha = \frac{1}{2} \sum_{\alpha \in (\Phi_I^+ \setminus Y) \sqcup (-Y)} \alpha.
\]
The subset $(\Phi_I^+ \setminus Y) \sqcup (-Y)$ contains one representative for each pair of opposite roots in $\Phi_I$. Hence the same property holds for its image under $w$. In other words,
there exists $Z \subset \Phi_I^+$ such that
\[
w \left( \rho_I - \sum_{\alpha \in Y} \alpha \right) = \frac{1}{2} \sum_{\alpha \in (\Phi_I^+ \setminus Z) \sqcup (-Z)} \alpha = \rho_I - \sum_{\alpha \in Z} \alpha \preceq_I \rho_I,
\]
which finishes the proof.
\end{proof}

\begin{lem}
\label{lem:pi-i-pull}
Let $\lambda \in \bXpp_I$.  We have
\[
\Pi^I(\Db\Coh^{\dot G \times \Gm}(\tcN_I)_{\le' \lambda}) \subset \Db\Coh^{\dot G \times \Gm}(\tcN)_{\le' \lambda}.
\]
\end{lem}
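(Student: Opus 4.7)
The plan is to reduce to a weight-combinatorial inequality and then apply a convexity argument. First I would note that by the generation of $\Db\Coh^{\dot G \times \Gm}(\tcN_I)_{\le' \lambda}$, it suffices to verify $\Pi^I(\cL_{\tcN_I}(\dot\coweyl_I(\mu - \varsigma_I))\langle n \rangle) \in \Db\Coh^{\dot G \times \Gm}(\tcN)_{\le' \lambda}$ for every $\mu \in \bXpp_I$ with $\mu \le' \lambda$ and every $n \in \Z$. Using formula~\eqref{eqn:pi^i}, this object is a grading shift of $\se_{\varnothing,I*}\cL_{\tcN_{\varnothing,I}}(V)$, where $V = \dot\coweyl_I(\mu-\varsigma_I) \otimes \bk_{\dot B}(\varsigma_I - 2\rho_I)$. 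Picking a $\dot B$-stable filtration of $V$ with one-dimensional subquotients (using solvability of $\dot B$), and then invoking the Koszul-type locally free resolution from the proof of Lemma~\ref{lem:tcn-pre-excep} for each pushed-forward line bundle $\se_{\varnothing,I*}\cO_{\tcN_{\varnothing,I}}(\eta)$, the problem reduces to showing $\cO_\tcN(\eta') \in \Db\Coh^{\dot G \times \Gm}(\tcN)_{\le' \lambda}$, i.e.\ $\eta' \le' \lambda$, for every weight of the shape
\[
\eta' = \nu + \varsigma_I - \sum_{\alpha \in \Phi_I^+ \setminus Y} \alpha,
\]
with $\nu$ a $\dot T$-weight of $\dot\coweyl_I(\mu - \varsigma_I)$ and $Y \subset \Phi_I^+$.

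The combinatorial core is a double-convexity estimate. Since $\mu \in \bXpp_I$, the weight $\mu - \varsigma_I$ is $W_I$-dominant, so highest-weight theory (applicable since $\ell > h$) gives $\nu \in \mathsf{Conv}(W_I(\mu - \varsigma_I))$; Lemma~\ref{lem:weights} applied to $\varsigma_I \in \bXpp_I$ with the subset $\Phi_I^+ \setminus Y$ gives $\varsigma_I - \sum_{\alpha \in \Phi_I^+ \setminus Y} \alpha \in \mathsf{Conv}(W_I\varsigma_I)$. The elementary Minkowski containment $\mathsf{Conv}(W_I a) + \mathsf{Conv}(W_I b) \subset \mathsf{Conv}(W_I(a+b))$ for $W_I$-dominant $a, b$, which is checked by noting that $vx = vy + vz \preceq_I a + b$ for any $v \in W_I$ and any decomposition $x = y + z$ with $y \in \mathsf{Conv}(W_I a)$, $z \in \mathsf{Conv}(W_I b)$, then yields $\eta' \in \mathsf{Conv}(W_I\mu) \subset \mathsf{Conv}(W\mu)$, hence $\dom(\eta') \preceq \dom(\mu)$.

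It remains to translate this into $\eta' \le' \lambda$ using the explicit order from~\S\ref{ss:orders-X}. If $\dom(\eta') \prec \dom(\mu)$, then the first-component comparison $<_1$ gives $\eta' <' \mu \le' \lambda$. Otherwise $\eta' \in W\mu$; together with $\eta' \in \mathsf{Conv}(W_I\mu)$, the identity $W\mu \cap \mathsf{Conv}(W_I\mu) = W_I\mu$ forces $\eta' \in W_I\mu$, so $\dom_I(\eta') = \mu$ and Corollary~\ref{cor:order-dom-max} yields $\eta' \le \mu$. Since the refining order $\le_3$ on the $W_I$-orbit refines $\le$, this gives $\eta' \le_3 \mu$ and hence $\eta' \le' \mu \le' \lambda$.

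The main obstacle is the identity $W\mu \cap \mathsf{Conv}(W_I\mu) = W_I\mu$, which is not stated in the paper. I would prove it by decomposing $\mu = \mu_Z + \mu_R$ into $W$-fixed and $\mathbb{R}\Phi$-parts, picking a $W$-invariant inner product that is positive-definite on $\mathbb{R}\Phi$, and invoking strict convexity of $\|\cdot\|^2$: any nontrivial convex combination $\sum_v t_v v\mu_R$ of distinct $W$-translates satisfies $\|\sum_v t_v v\mu_R\|^2 < \|\mu_R\|^2$, so the equality of norms forced by $\eta \in W\mu$ pins $\eta$ inside $W_I\mu$.
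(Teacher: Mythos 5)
Your proof is correct and follows essentially the same route as the paper's: reduce to the generators $\cL_{\tcN_I}(\dot\coweyl_I(\mu-\varsigma_I))$, apply~\eqref{eqn:pi^i} and a $\dot B$-stable filtration, use the Koszul resolution~\eqref{eqn:tcni-resolution} to reduce to a weight inequality $\eta'\le'\lambda$, and show $\eta'\in\mathsf{Conv}(W_I\mu)$ via Lemma~\ref{lem:weights} together with the weight bound for $\dot\coweyl_I$.

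There are two points worth recording. First, on the step showing the relevant weights lie in $\mathsf{Conv}(W_I\mu)$, the paper's proof proceeds by conjugating each term $w(\mu-\varsigma_I)+\varsigma_I-\sum_{\alpha\in Z}\alpha$ by $w^{-1}$, sorting roots according to the sign of their $w$-image, and recognizing the result as an instance of Lemma~\ref{lem:weights}; your route instead applies Lemma~\ref{lem:weights} directly to $\varsigma_I$ (which indeed lies in $\bXpp_I$) and the subset $\Phi_I^+\setminus Y$, and then uses the Minkowski-sum containment $\mathsf{Conv}(W_Ia)+\mathsf{Conv}(W_Ib)\subset\mathsf{Conv}(W_I(a+b))$ for $W_I$-dominant $a,b$. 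Both arguments are valid; yours replaces the explicit root-sorting with a clean convexity inclusion, at the cost of having to verify the Minkowski containment (which is standard, and your $\preceq_I$-comparison argument is the right way to check it).

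Second, and more substantively, you have isolated a step the paper passes over in silence. After establishing that each weight $\sigma$ lies in $\mathsf{Conv}(W_I\lambda)\cap(\lambda+\Z\Phi)\subset\conv(\lambda)=\convo(\lambda)\sqcup W\lambda$, the paper asserts that $\sigma\in\convo(\lambda)\cup W_I\lambda$; this requires precisely the identity $W\lambda\cap\mathsf{Conv}(W_I\lambda)=W_I\lambda$ that you flag as the ``main obstacle'' and then prove. Your norm-squared argument (decompose $\lambda=\lambda_Z+\lambda_R$ into $W$-fixed and $\mathbb{R}\Phi$-parts, note that all $W$-translates of $\lambda_R$ lie on a sphere, and use strict convexity to pin nontrivial convex combinations strictly inside) is correct and does fill this gap. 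One small slip: in the last sentence of your proof you wrote ``distinct $W$-translates''; it should read ``distinct $W_I$-translates'', since the convex combination coming from $\eta\in\mathsf{Conv}(W_I\mu)$ ranges over $W_I$. That correction made, your argument is complete.
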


\begin{proof}
It suffices to prove that for any $\lambda \in \bXpp_I$ the object $\cG := \Pi^I(\cL_{\tcN_I} (\dot\coweyl_I(\lambda-\varsigma_I)))$ belongs to $\Db\Coh^{\dot G \times \Gm}(\tcN)_{\le' \lambda}$.  By~\eqref{eqn:pi^i}, $\cG$ has a filtration whose subquotients are of the form $\se_{\varnothing, I*}\cO_{\tcN_{\varnothing,I}}(\varsigma_I-2\rho_I + \nu)\la n\ra$ with $\nu$ a weight of $\dot\coweyl_I(\lambda-\varsigma_I)$.  Next, the resolution~\eqref{eqn:tcni-resolution} shows that $\se_{\varnothing,I*}\cO_{\tcN_{\varnothing,I}}(\varsigma_I-2\rho_I+\nu)\la n\ra$ lies in the full triangulated subcategory of $\Db\Coh^{\dot G \times \Gm}(\tcN)$ generated by the objects $\cO_\tcN(\sigma)\la k\ra$ with $k \in \Z$ and $\sigma$ of the form
\begin{equation}
\label{eqn:weights-PiI-subcategories}
\sigma = \varsigma_I -2\rho_I + \nu + \sum_{\alpha \in Y} \alpha = \varsigma_I + \nu - \sum_{\alpha \in \Phi_I^+ \setminus Y} \alpha,
\end{equation}
where $Y \subset \Phi_I^+$ is a subset.

It is well known that if $\nu$ is a weight of $\dot\coweyl_I(\lambda-\varsigma_I)$, then $\nu$ belongs to $\mathsf{Conv}(W_I(\lambda-\varsigma_I))$. Hence the weights $\sigma$ as in~\eqref{eqn:weights-PiI-subcategories} belong to
\begin{multline}
\label{eqn:union-weights}
\bigcup_{Z \subset \Phi_I^+} \left(\mathsf{Conv}(W_I(\lambda-\varsigma_I)) + \varsigma_I - \sum_{\alpha \in Z} \alpha\right) \\
= \bigcup_{Z \subset \Phi_I^+} \mathsf{Conv}\left(W_I(\lambda-\varsigma_I) + \varsigma_I - \sum_{\alpha \in Z} \alpha\right).
\end{multline}
Now for any $w \in W_I$ we have
\[
w(\lambda-\varsigma_I)+\varsigma_I - \sum_{\alpha \in Z} \alpha = w(\lambda) + \sum_{\substack{\beta \in \Phi_I^+ \setminus Z \\ w^{-1}(\beta)<0}} \beta - \sum_{\substack{\alpha \in Z \\ w^{-1}(\alpha)>0}} \alpha.
\]
In particular,
\[
w^{-1} \left( w(\lambda-\varsigma_I)+\varsigma_I - \sum_{\alpha \in Z} \alpha \right) = \lambda + \sum_{\substack{\gamma \in -\Phi_I^+ \\ w(\gamma) \in \Phi_I^+ \setminus Z}} \gamma + \sum_{\substack{\delta \in -\Phi_I^+ \\ w(\delta) \in -Z}} \delta.
\]
This weight is of the form considered in Lemma~\ref{lem:weights}, so it belongs to $\mathsf{Conv}(W_I \lambda)$. This analysis shows that the subset of $\mathbb{R} \otimes_\Z \bX$ considered in~\eqref{eqn:union-weights} is contained in $\mathsf{Conv}(W_I \lambda)$. Hence any weight $\sigma$ as in~\eqref{eqn:weights-PiI-subcategories} belongs to $\convo(\lambda) \cup W_I \lambda$. By condition~\eqref{eqn:condition-order} and Corollary~\ref{cor:order-dom-max}\eqref{it:order-dom-max}, we then have $\sigma \leq' \lambda$, and we finally deduce that
$\cG$ belongs to $\Db\Coh^{\dot G \times \Gm}(\tcN)_{\le' \lambda}$, as desired.
\end{proof}

\begin{prop}
\label{prop:pi-i-push}
Let $\lambda \in \bX$.
\begin{enumerate}
\item
\label{it:pi-i-push-nabla}
Assume that
$\lambda \in W_I\bXpp_I$, 
and let $w \in W_I$ be the unique element such that $w\lambda \in \bXpp_I$. Then we have
\[
\Pi_I(\nabla(\lambda)) \cong \nabla_I(w\lambda)\la -\ell(w)+n_I\ra [-\ell(w) + n_I].
\]
\item
\label{it:pi-i-push-delta}
Assume that
$\lambda \in W_I\bXpp_I$, 
and
let $w \in W_I$ be the unique element such that $w\lambda \in \bXpp_I$. Then we have
\[
\Pi_I(\Delta(\lambda)) \cong \Delta_I(w\lambda)\la \ell(w)-n_I \rangle [\ell(w)-n_I].
\]
\item
\label{it:pi-i-push-cat}
If $\mu \notin \bXpp_I$ for all $\mu \in \bX$ such that $\mu \le' \dom_I(\lambda)$, then
\[
\Pi_I \left(\Db\Coh^{\dot G \times \Gm}(\tcN)_{\le'\lambda} \right) = 0.
\]
Otherwise, let $a(\lambda)$ be the largest weight (with respect to $\le'$) such that $a(\lambda) \in \bXpp_I$ and $a(\lambda) \le' \dom_I(\lambda)$.  Then
\[
\Pi_I \left( \Db\Coh^{\dot G \times \Gm}(\tcN)_{\le'\lambda} \right) \subset \Db\Coh^{\dot G \times \Gm}(\tcN_I)_{\le' a(\lambda)}.
\]
\end{enumerate}
\end{prop}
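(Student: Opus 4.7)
The natural strategy is to prove (3) first as the principal technical input, then deduce the base case $\ell(w)=0$ of (1) and (2) from (3) via the Proposition~\ref{prop:exotic-defn} characterization, and finally obtain the general case of (1) and (2) by iterated application of Proposition~\ref{prop:exotic-recall}\eqref{it:pi-s-push}. For (3), I would start from the direct computation $\Pi_I(\cO_\tcN(\mu)) \cong \cL_{\tcN_I}(R\Ind_{\dot B}^{\dot P_I}\bk(\mu-\varsigma_I))$ coming from the Cartesian square used in the definition of $\Pi_I$. Since $\varsigma_I-\rho_I$ pairs to zero with every simple coroot of $I$, it is $W_I$-fixed, so $\mu-\varsigma_I+\rho_I$ is $W_I$-singular exactly when $\mu \notin W_I\bXpp_I$; the standard iterated rank-one vanishing for $R\Ind$ then gives $\Pi_I(\cO_\tcN(\mu))=0$ in that case. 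For $\mu \in W_I\bXpp_I$, I would invoke the standard bound that every $\dot T$-weight of $R^\bullet\Ind_{\dot B}^{\dot P_I}\bk(\sigma)$ lies in $\mathsf{Conv}(W_I(\sigma+\rho_I))-\rho_I$; specialized to $\sigma = \mu-\varsigma_I$ and combined again with the $W_I$-invariance of $\varsigma_I-\rho_I$, any $I$-dominant weight $\eta$ appearing satisfies $\eta+\varsigma_I \in \mathsf{Conv}(W_I\mu) \cap (\mu+\Z\Phi_I) \subset \conv(\mu)$. Using~\eqref{eqn:conv-order}, the construction of $\leq'$ in~\S\ref{ss:orders-X},~\eqref{eqn:order-leq'-domI}, and Lemma~\ref{lem:order-domI} (to handle the borderline case $\dom(\eta+\varsigma_I)=\dom(\mu)$ where one must compare within a common $W$-orbit), I would conclude $\eta+\varsigma_I \leq' \dom_I(\lambda)$ for all $\mu \leq' \lambda$, so that $\Pi_I(\cO_\tcN(\mu)) \in \Db\Coh^{\dot G\times\Gm}(\tcN_I)_{\leq' a(\lambda)}$ (or vanishes when no such $a(\lambda)$ exists); triangulated closure over generators then gives (3).

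For the base case $\ell(w)=0$ (i.e.\ $\lambda \in \bXpp_I$) of (1) and (2), I would apply $\Pi_I$ to the defining triangle for $\nabla(\lambda)$ from Proposition~\ref{prop:exotic-recall}\eqref{it:nabla-convo} and the defining triangle for $\Delta(\lambda)$ from Proposition~\ref{prop:exotic-defn} (with $I=\varnothing$), using~\eqref{eqn:pi_i} to identify the middle term as $\cL_{\tcN_I}(\dot\coweyl_I(\lambda-\varsigma_I))\la-\delta_\lambda\ra$. Part (3), applied to $\cF \in \Db\Coh^{\dot G\times\Gm}(\tcN)_{\convo(\lambda)}$ together with the observation that $\mu \in \convo(\lambda)$ forces $\dom(\mu) \prec \dom(\lambda)=\lambda$ and hence $a(\mu) <' \lambda$, puts $\Pi_I(\cF)$ inside $\Db\Coh^{\dot G\times\Gm}(\tcN_I)_{<'\lambda}$. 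The orthogonality property of the image object propagates from the defining orthogonality of $\nabla(\lambda)$, resp.\ $\Delta(\lambda)$, via the adjunctions $\Pi^I\la -n_I\ra[-n_I] \dashv \Pi_I \dashv \Pi^I\la n_I\ra[n_I]$ of Lemma~\ref{lem:pi-adjoint} together with Lemma~\ref{lem:pi-i-pull}. After shifting by the appropriate power of $\la 1\ra[1]$ to align the middle terms with those in Proposition~\ref{prop:exotic-defn}, the characterization given there (or its variant in Remark~\ref{rmk:exotic-other-defn}) identifies the two candidates, giving $\Pi_I(\nabla(\lambda)) \cong \nabla_I(\lambda)\la n_I\ra[n_I]$ and $\Pi_I(\Delta(\lambda)) \cong \Delta_I(\lambda)\la -n_I\ra[-n_I]$.

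For the inductive step, given $\lambda \in W_I\bXpp_I$ with $\ell(w) \geq 1$, I would choose $s \in I$ with $ws < w$, so that $w(\alpha_s) \in -\Phi_I^+$; a short calculation using $w\lambda \in \bXpp_I$ gives $\langle\lambda,\alpha_s^\vee\rangle = \langle w\lambda, w(\alpha_s^\vee)\rangle < 0$ and hence $\lambda \prec s\lambda$. Applying Proposition~\ref{prop:exotic-recall}\eqref{it:pi-s-push} to $\lambda' := s\lambda$ yields $\Pi_s(\nabla(\lambda)) \cong \Pi_s(\nabla(s\lambda))\la-1\ra[-1]$ and $\Pi_s(\Delta(\lambda)) \cong \Pi_s(\Delta(s\lambda))\la 1\ra[1]$; composing with $\Pi_{\{s\},I}$ and using the factorization $\Pi_I = \Pi_{\{s\},I}\circ\Pi_s$ from Lemma~\ref{lem:pi-transitive}, this reduces the claim for $\lambda$ to the claim for $s\lambda$, whose associated Weyl element is $ws$ of length $\ell(w)-1$. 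The inductive hypothesis then delivers the stated shift formulas.

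The hardest part will be the careful tracking of the $\la\cdot\ra[\cdot]$-shifts in the base case of (2): the defining triangles for $\nabla_I(\lambda)$ and $\Delta_I(\lambda)$ both use the \emph{same} middle term $\cL_{\tcN_I}(\dot\coweyl_I(\lambda-\varsigma_I))\la-\delta_\lambda-n_I\ra[-n_I]$, whereas applying $\Pi_I$ to the $\tcN$-side triangles produces middle $\la-\delta_\lambda\ra$, so that reconciling the two via the orthogonality characterization produces a $\pm n_I$ correction whose sign differs between $\nabla$ and $\Delta$ (reflecting that $\nabla$ and $\Delta$ mutate in opposite directions through the exceptional sequence). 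Propagating these base-case shifts through the inductive step then produces the asymmetric formulas $-\ell(w)+n_I$ for $\nabla$ and $\ell(w)-n_I$ for $\Delta$. The other delicate point is the weight-convexity bound on $R^\bullet\Ind_{\dot B}^{\dot P_I}$ used in Stage~1, which in positive characteristic requires the iterated rank-one argument applied to the flag variety $\dot P_I/\dot B$ of $\dot M_I$ rather than a direct appeal to Borel--Weil--Bott.
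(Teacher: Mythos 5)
Your overall architecture for part~(1) — characterize the image via the triangle from Proposition~\ref{prop:exotic-recall}\eqref{it:nabla-convo} plus the orthogonality coming from Lemma~\ref{lem:pi-adjoint} and Lemma~\ref{lem:pi-i-pull}, then propagate through the $W_I$-orbit with Proposition~\ref{prop:exotic-recall}\eqref{it:pi-s-push} and Lemma~\ref{lem:pi-transitive} — is exactly the paper's, and your shift bookkeeping in the inductive step is correct. But there is a genuine gap in your base case for part~(2). You place it at the $I$-dominant end $\lambda \in \bXpp_I$ and apply $\Pi_I$ to the triangle $\Delta(\lambda) \to \cO_\tcN(\lambda)\la -\delta_\lambda\ra \to \cF'$. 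Unlike the $\nabla$-side, the error term $\cF'$ here lies only in $\Db\Coh^{\dot G \times \Gm}(\tcN)_{<'\lambda}$ and \emph{not} in $\Db\Coh^{\dot G \times \Gm}(\tcN)_{\convo(\lambda)}$ (the paper warns of precisely this asymmetry in the remark following Proposition~\ref{prop:exotic-recall}). Since every $\mu \in W_I\lambda \smallsetminus \{\lambda\}$ satisfies $\mu <' \lambda$ (Corollary~\ref{cor:order-dom-max}\eqref{it:order-dom-max}), $\cF'$ may involve $\nabla(\mu)$ or $\Delta(\mu)$ for such $\mu$, whose images under $\Pi_I$ are shifts of $\nabla_I(\lambda)$ or $\Delta_I(\lambda)$ themselves rather than objects of $\Db\Coh^{\dot G \times \Gm}(\tcN_I)_{<'\lambda}$; the characterization of $\Delta_I(\lambda)$ then fails to apply. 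The paper's fix is to anchor the $\Delta$-induction at the \emph{antidominant} end $w = w_I$, where~\eqref{eqn:pi_i-dual} produces the Weyl-module twist needed by Remark~\ref{rmk:exotic-other-defn}\eqref{it:rmk-def-nabla-weyl} and where Corollary~\ref{cor:order-dom-max}\eqref{it:order-antidom-min} guarantees that no $\mu <' \lambda$ lies in $W_I\lambda$, so that $\Pi_I(\cF') \in \Db\Coh^{\dot G \times \Gm}(\tcN_I)_{<'w_I\lambda}$; one then inducts \emph{downward} in $\ell(w)$.

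Your route to part~(3) is also riskier than it looks. The paper does not compute $\Pi_I(\cO_\tcN(\mu))$ for general $\mu$: it deduces (3) from (1) in an intertwined induction, using that $\Db\Coh^{\dot G \times \Gm}(\tcN)_{\le'\lambda}$ is generated by the $\nabla(\mu)\la n\ra$, and it obtains the vanishing for $W_I$-singular $\mu$ at the level of $\nabla(\mu)$, $\Delta(\mu)$ via the factorization $\Pi_I = \Pi_{\{s\},I}\circ\Pi_s$ and the rank-one statement $\Pi_s(\nabla(\mu)) = 0$ when $s\mu = \mu$ (Corollary~\ref{cor:PiI-Delta-vanishing}). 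Your proposal instead needs the vanishing and a weight-convexity bound for $R\Ind_{\dot B}^{\dot P_I}\bk(\mu - \varsigma_I)$ for \emph{arbitrary} $\mu$. For $|I| \ge 2$ this functor does not factor as a composition of rank-one inductions, so the ``iterated rank-one argument'' you invoke is not available; in particular, the vanishing of all $R^i\Ind_{\dot B}^{\dot P_I}\bk(\sigma)$ when $\sigma + \rho_I$ is orthogonal to a non-simple coroot of $\Phi_I$ is exactly the singular-weight vanishing problem for line bundles on flag varieties in positive characteristic, which is not a theorem one can cite. The containment half of your (3) could plausibly be salvaged (a convexity bound on weights of higher direct images suffices, and your order gymnastics with~\eqref{eqn:conv-order}, \eqref{eqn:order-leq'-domI}, and Lemma~\ref{lem:order-domI} are sound), but the ``$=0$'' half, which is part of the statement and is used in the base cases, is not accessible this way. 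You should follow the paper and derive (3) from (1) together with Corollary~\ref{cor:PiI-Delta-vanishing}.
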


\begin{proof}
We begin with the claim that if part~\eqref{it:pi-i-push-nabla} holds for all $\mu \in \bX$ such that $\mu \le' \lambda$, then part~\eqref{it:pi-i-push-cat} holds for $\lambda$.  Indeed, $\Db\Coh^{\dot G \times \Gm}(\tcN)_{\le'\lambda}$ is generated by the objects $\nabla(\mu)\la n\ra$ with $\mu \le'\lambda$ (see~\cite[Lemma~3]{bez:ctm}), so to prove the claim, we must check that 
\begin{equation}\label{eqn:pi-i-push-reduce}
\Pi_I(\nabla(\mu)) \in \Db\Coh^{\dot G \times \Gm}(\tcN_I)_{\le' a(\lambda)}
\end{equation}
for all such $\mu$ (where by convention the subcategory is $\{0\}$ if $a(\lambda)$ is not defined).
Part~\eqref{it:pi-i-push-nabla} and Corollary~\ref{cor:PiI-Delta-vanishing} tell us that the left-hand side either vanishes or is of the form
\[
\nabla_I(\dom_I(\mu))\la n\ra[k]
\]
with $\dom_I(\mu) \in \bXpp_I$. By~\eqref{eqn:order-leq'-domI} we have $\dom_I(\mu) \le' \dom_I(\lambda)$, so $\dom_I(\mu) \le' a(\lambda)$, so~\eqref{eqn:pi-i-push-reduce} holds.

Let us now prove part~\eqref{it:pi-i-push-nabla}.  We proceed by induction on $\dom_I(\lambda)$ (for the order $\leq'$) and, within a $W_I$-orbit, by induction on the length of the element $w \in W_I$ such that $w(\lambda) = \dom_I(\lambda)$.

So, let us fix some $\lambda \in \bX$ such that $W_I\lambda \cap \bXpp_I \neq \varnothing$. 
We first consider the case when
$\lambda \in \bXpp_I$.  Form the distinguished triangle
\begin{equation}\label{eqn:pi-i-push-dt}
\cF \to \cO_\tcN(\lambda)\la -\delta_\lambda\ra \to \nabla(\lambda) \xrightarrow{[1]}
\end{equation}
of Proposition~\ref{prop:exotic-defn}. By Proposition~\ref{prop:exotic-recall}\eqref{it:nabla-convo}, $\cF$ belongs to $\Db\Coh^{\dot G \times \Gm}(\tcN)_{\convo(\lambda)}$.
Now, if $\nu \in \convo(\lambda)$, then $\nu \leq' \lambda$ by~\eqref{eqn:condition-order}. Hence if $\eta \leq' \nu$, then  $\eta \leq' \lambda$, so $\dom_I(\eta) \leq' \lambda$ by~\eqref{eqn:order-leq'-domI}. Moreover $\eta \notin W\lambda$ (because otherwise $\nu \in \convo(\eta)$, which contradicts the fact that $\eta \leq' \nu$), so that these weights even satisfy $\dom_I(\eta) <' \lambda$.
By induction and the claim in the first paragraph, we deduce that part~\eqref{it:pi-i-push-cat} of the lemma  holds for such $\nu$: $\Pi_I(\Db\Coh^{\dot G \times \Gm}(\tcN)_{\le'\nu})$ is either $\{0\}$ or contained in the subcategory $\Db\Coh^{\dot G \times \Gm}(\tcN_I)_{\le' a(\nu)}$.  In the latter case, we have $a(\nu) \leq' \dom_I(\nu) <' \lambda$.  In all cases, we deduce that
\begin{equation}\label{eqn:pi-i-push-bound}
\Pi_I(\cF) \in \Db\Coh^{\dot G \times \Gm}(\tcN_I)_{<' \lambda}.
\end{equation}
Let us now apply the functor $\Pi_I\la -n_I\ra[-n_I]$ to~\eqref{eqn:pi-i-push-dt}. By~\eqref{eqn:pi_i}, we obtain a distinguished triangle
\begin{multline}\label{eqn:pi-i-push-pidt}
\Pi_I(\cF)\la -n_I\ra[-n_I] \to \cL_{\tcN_I} (\dot\coweyl_I(\lambda - \varsigma_I)) \la - \delta_\lambda - n_I\ra[-n_I] \\
\to \Pi_I(\nabla(\lambda))\la -n_I\ra[-n_I] \xrightarrow{[1]}.
\end{multline}
If $\cG \in \Db\Coh^{\dot G \times \Gm}(\tcN_I)_{<'\lambda}$, then using Lemma~\ref{lem:pi-adjoint} we have
\begin{equation}\label{eqn:pi-i-push-vanish}
\Hom(\cG,\Pi_I(\nabla(\lambda))\la -n_I\ra[-n_I])
\cong \Hom(\Pi^I(\cG), \nabla(\lambda)) = 0,
\end{equation}
where the last equality holds because, by Lemma~\ref{lem:pi-i-pull}, $\Pi^I(\cG)$ lies in the subcategory $\Db\Coh^{\dot G \times \Gm}(\tcN)_{<'\lambda}$.

From~\eqref{eqn:pi-i-push-bound}, \eqref{eqn:pi-i-push-pidt}, and~\eqref{eqn:pi-i-push-vanish}, we see that $\Pi_I(\nabla(\lambda))\la -n_I\ra[-n_I]$ satisfies the properties that uniquely characterize $\nabla_I(\lambda)$, so
\[
\Pi_I(\nabla(\lambda)) \cong \nabla_I(\lambda)\la n_I\ra[n_I],
\]
as desired.

Finally, suppose that $w\lambda \in \bXpp_I$ for some nontrivial $w \in W_I$.  Choose a simple reflection $s \in I$ such that 
$ws < w$.  By induction, we already know that $\Pi_I(\nabla(s\lambda)) \cong \nabla_I(w\lambda)\la -\ell(ws) + n_I\ra [-\ell(ws) + n_I]$.  But since $s\lambda \succ \lambda$, Lemma~\ref{lem:pi-transitive} and Proposition~\ref{prop:exotic-recall}\eqref{it:pi-s-push} imply that
\[
\Pi_I(\nabla(\lambda)) \cong \Pi_I(\nabla(s\lambda))\la -1\ra [-1] \cong \nabla_I(w\lambda)\la -\ell(w) + n_I\ra [-\ell(w) + n_I],
\]
as desired.  Part~\eqref{it:pi-i-push-nabla} of the lemma is now proved.  By the claim in the first paragraph, part~\eqref{it:pi-i-push-cat} is proved as well.

We now turn to part~\eqref{it:pi-i-push-delta}. 
This time we proceed by downward induction on the length of $w \in W_I$ such that $w\lambda \in \bXpp_I$, beginning with the case where $w = w_I$ (so $\ell(w) = n_I$).  Applying $\Pi_I$ to the distinguished triangle
\[
\Delta(\lambda) \to \cO_\tcN(\lambda)\la -\delta_\lambda\ra \to \cF' \xrightarrow{[1]}
\]
(where $\cF' \in \Db\Coh^{\dot G \times \Gm}(\tcN)_{<'\lambda}$) and using~\eqref{eqn:pi_i-dual}, we obtain a distinguished triangle
\[
\Pi_I(\Delta(\lambda)) \to \cL_{\tcN_I} (\dot\weyl_I(w_I\lambda - \varsigma_I)) \la - \delta_\lambda\ra[-n_I] \to \Pi_I(\cF') \xrightarrow{[1]}.
\]
If $\nu <' \lambda$, then $\dom_I(\nu) \leq' \dom_I(\lambda)=w_I \lambda$ by~\eqref{eqn:order-leq'-domI}. In fact, in this case we even have $\dom_I(\nu) <' w_I \lambda$ since $\nu \notin W_I \lambda$ by Corollary~\ref{cor:order-dom-max}\eqref{it:order-antidom-min}. Hence
$a(\nu) <'  w_I\lambda$ if $a(\nu)$ is defined.  Therefore, by part~\eqref{it:pi-i-push-cat} of the lemma, $\Pi_I(\cF')$ lies in the subcategory $\Db\Coh^{\dot G \times \Gm}(\tcN_I)_{<'w_I\lambda}$.  

If $\mu \in \bXpp_I$ and $\mu <' w_I\lambda$, then by~\eqref{eqn:order-leq'-WI} we have $\mu <' \lambda$.
Lemma~\ref{lem:pi-i-pull} and this remark
imply that
\[
\Pi^I(\Db\Coh^{\dot G \times \Gm}(\tcN_I)_{<'w_I\lambda}) \subset \Db\Coh^{\dot G \times \Gm}(\tcN)_{<'\lambda}.
\]
Then, 
an adjunction argument similar to that in~\eqref{eqn:pi-i-push-vanish} shows that
\[
\Hom(\Pi_I(\Delta(\lambda)), \cG') = 0
\]
for all $\cG' \in \Db\Coh^{\dot G \times \Gm}(\tcN_I)_{<'w_I\lambda}$.  Using Remark~\ref{rmk:exotic-other-defn}\eqref{it:rmk-def-nabla-weyl} and the fact that $\delta_\lambda = \delta_{w_I\lambda} + n_I$, we see that $\Pi_I(\Delta(\lambda))$ satisfies the properties that uniquely characterize $\Delta_I(w_I\lambda)$, so
\[
\Pi_I(\Delta(\lambda)) \cong \Delta_I(w_I\lambda),
\]
as desired.  

Finally, if $\lambda$ is a weight such that $w\lambda \in \bXpp_I$ for some $w \in W_I$, $w \ne w_I$, an induction argument using Proposition~\ref{prop:exotic-recall}\eqref{it:pi-s-push} shows that
\[
\Pi_I(\Delta(\lambda)) \cong \Delta_I(w\lambda)\la \ell(w)-n_I \rangle [\ell(w)-n_I]
\]
as desired.
\end{proof}

\subsection{Koszul duality}
\label{ss:Koszul-Springer}

For any subset $I \subset S$, we consider the algebras
\[
\bS_I := \mathrm{Sym}(\fnt_I^*), \quad \bL_I := \bigwedge \hspace{-3pt} {}^\bullet \, \fnt_I
\]
defined as in Section~\ref{sec:koszul} (with respect to the natural $\dot P_I$-module structure on $\fnt_I$). Here $\bS_I$ will be mainly considered as a $\dot P_I \times \Gm$-equivariant algebra, and $\bL_I$ will be mainly considered as $\dot P_I$-equivariant dg-algebra. Then we have the functor
\[
\kappa_I : \Db(\bS_I\lmod_{\dot P_I \times \Gm}^{\mathrm{fg}}) \to \Dfg_{\dot P_I}(\bL_I)
\]
as in~\S\ref{ss:regrading}. 
If $J \subset I$, we can also restrict the $\dot P_I$-action to $\dot P_J$, and obtain a functor
\[
\kappa_{J,I} : \Db(\bS_I\lmod_{\dot P_J \times \Gm}^{\mathrm{fg}}) \to \Dfg_{\dot P_J}(\bL_I)
\]

Let $\gamma_I: \fnt_I \to \tcN_I$ be the inclusion map $x \mapsto [1_{\dot G} : x]$.  Then coherent pullback along $\gamma_I$ gives rise to an equivalence of categories
\[
\gamma_I^* : \Coh^{\dot G \times \Gm}(\tcN_I) \simto \Coh^{\dot P_I \times \Gm}(\fnt_I) = \bS_I\lmod_{\dot P_I \times \Gm}^{\mathrm{fg}},
\]
sometimes called the ``induction equivalence,'' see e.g.~\cite[Lemma~2]{brion}.  We define $\gamma_{J,I}: \fnt_I \to \tcN_{J,I}$ similarly; it induces an equivalence
\[
\gamma_{J,I}^*: \Coh^{\dot G \times \Gm}(\tcN_{J,I}) \simto \Coh^{\dot P_J \times \Gm}(\fnt_I) =\bS_I\lmod^{\mathrm{fg}}_{\dot P_J \times \Gm}.
\]
Then we set
\[
\varkappa_I := \kappa_I \circ \Db(\gamma_I^*) : \Db\Coh^{\dot G \times \Gm}(\tcN_I) \to \Dfg_{\dot P_I}(\bL_I).
\]
As for $\kappa_I$, there exists a natural isomorphism of functors
\begin{equation}
\label{eqn:varkappa-shift}
\varkappa_I \circ \langle 1 \rangle [1] \cong \varkappa_I.
\end{equation}
And
it follows from the isomorphism in~\eqref{eqn:forget-grading-kappa} that for any $\cF,\cG \in \Db\Coh^{\dot G \times \Gm}(\tcN_I)$, the functor $\varkappa_I$ and the isomorphism~\eqref{eqn:varkappa-shift} induce an isomorphism
\begin{equation}
\label{eqn:varkappa-degr}
\bigoplus_{n \in \Z} \Hom_{\Db\Coh^{\dot G \times \Gm}(\tcN_I)}(\cF,\cG\la n\ra[n]) \to \Hom_{\Dfg_{\dot P_I}(\bL_I)}(\varkappa_I(\cF), \varkappa_I(\cG)).
\end{equation}

The functors $\gamma_I^*$ and $\gamma_{J,I}^*$ allow us to convert the study of the functors $\Pi_{J,I}$ into the language of $\bS_J$- and $\bS_I$-modules, as shown in the left part of the diagram of Figure~\ref{fig:coh-bl}.  The right part of the diagram comes from the discussion of Koszul duality in Section~\ref{sec:koszul}. It follows from the definitions that the left part of the diagram is commutative, and from Propositions~\ref{prop:koszul-compatibility-1} and~\ref{prop:koszul-compatibility-2} that the right part is commutative.

\begin{figure}
\[
\xymatrix@C=1.5cm{
\Db\Coh^{\dot G \times \Gm}(\tcN_J) \ar[r]^{\Db(\gamma_J^*)}_{\sim} \ar@/^5ex/[rr]^{\varkappa_J} 
     \ar[d]^-{({-}) \otimes \cO_\tcN(-\varsigma_{I \ssm J})} \ar@/_11ex/[ddd]_{\Pi_{J,I}} &
  \Db(\bS_J\lmod^{\mathrm{fg}}_{\dot P_J \times \Gm}) \ar[r]^-{\kappa_J} \ar[d]_-{({-}) \otimes \bk(-\varsigma_{I\ssm J})} &
  \Dfg_{\dot P_J}(\bL_J) \ar[d]_{({-}) \otimes \bk(-\varsigma_{I\ssm J})} \ar@/^6ex/[ddd]^{\Theta_{J,I}} \\
\Db\Coh^{\dot G \times \Gm}(\tcN_J) \ar[r]^{\Db(\gamma_J^*)}_{\sim} \ar[d]_{\se_{J,I}^*} &
  \Db(\bS_J\lmod^{\mathrm{fg}}_{\dot P_J \times \Gm}) \ar[r]^-{\kappa_J} \ar[d]_{\bS_I \lotimes_{\bS_J} ({-})} &
  \Dfg_{\dot P_J}(\bL_J) \ar[d]_{j_{J,I}^*} \\
\Db\Coh^{\dot G \times \Gm}(\tcN_{J,I}) \ar[r]^{\Db(\gamma_{J,I}^*)}_{\sim} \ar[d]_{\mu_{J,I*}} &
  \Db(\bS_I\lmod^{\mathrm{fg}}_{\dot P_J \times \Gm}) \ar[r]^-{\kappa_{J,I}} \ar[d]_{R\Ind_{\dot P_J}^{\dot P_I}} &
  \Dfg_{\dot P_J}(\bL_I) \ar[d]_{R\Ind_{\dot P_J}^{\dot P_I}} \\
\Db\Coh^{\dot P_I \times \Gm}(\tcN_I) \ar[r]^{\Db(\gamma_I^*)}_{\sim} \ar@/_5ex/[rr]_{\varkappa_I} &
  \Db(\bS_I\lmod^{\mathrm{fg}}_{\dot P_I \times \Gm}) \ar[r]^-{\kappa_I} &
  \Dfg_{\dot P_I}(\bL_I) }
\]
\caption{From $\tcN_I$ to $\bL_I$}\label{fig:coh-bl}
\end{figure}

\begin{prop}
\label{prop:adj-varkappa}
The diagram below is a commutative diagram of adjoint pairs:
\[
\xymatrix@C=2cm{
\Db\Coh^{\dot G \times \Gm}(\tcN_J) \ar[r]^-{\varkappa_J} \ar@{}[d]|\dashv \ar@<1ex>[d]^{\Pi_{J,I}}
  & \Dfg_{\dot P_J}(\bL_J) \ar@{}[d]|\dashv \ar@<1ex>[d]^{\Theta_{J,I}} \\
\Db\Coh^{\dot G \times \Gm}(\tcN_I) \ar[r]_-{\varkappa_I} \ar@<1ex>[u]^{\Pi^{J,I}\la d_I-d_J\ra[d_I-d_J]} 
  & \Dfg_{\dot P_I}(\bL_I) \ar@<1ex>[u]^{\Theta^{J,I}}}
\]
\end{prop}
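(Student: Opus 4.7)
The plan is to exploit the commutativity of the large diagram in Figure~\ref{fig:coh-bl} (established in the paragraph above the proposition), which provides a natural isomorphism
\[
\theta \colon \varkappa_I \circ \Pi_{J,I} \simto \Theta_{J,I} \circ \varkappa_J.
\]
In view of Definition~\ref{defn:comm-adjoint}, what needs to be checked is that the mate
\[
\theta^\wedge \colon \Theta^{J,I} \circ \varkappa_I \to \varkappa_J \circ \Pi^{J,I} \la d_I - d_J \ra [d_I - d_J]
\]
is again an isomorphism. To this end, I would decompose Figure~\ref{fig:coh-bl} into the three horizontal strips corresponding to the three factors in the definitions of $\Pi_{J,I}$ and $\Theta_{J,I}$, argue that each strip is itself a commutative diagram of adjoint pairs, and then invoke the routine observation---verified by a string-diagram manipulation using the zigzag relations---that two vertically stacked commutative diagrams of adjoint pairs compose to a commutative diagram of adjoint pairs for the composed vertical adjunctions.

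For the first strip, whose vertical arrows are the autoequivalences given by twisting with $\cO_\tcN(-\varsigma_{I \ssm J})$ and $\bk(-\varsigma_{I \ssm J})$, compatibility with the horizontal $\varkappa$ functors is immediate, and since the vertical arrows are equivalences, Lemma~\ref{lem:equivalence-mate} applies at once. For the second strip, with vertical arrows $\se_{J,I}^*$ and $j_{J,I}^*$, commutativity is the content of Proposition~\ref{prop:koszul-compatibility-1}. Here $j_{J,I}^*$ is already presented as the right adjoint of $\bL_J \lotimes_{\bL_I}(-)$ (as recorded in~\S\ref{ss:Theta}), while $\se_{J,I}^*$ may be rewritten as a right adjoint of $\se_{J,I*}$ up to a twist and shift via the identity~\eqref{eqn:eJI!} used in the proof of Lemma~\ref{lem:pi-adjoint}. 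Crucially, Proposition~\ref{prop:koszul-compatibility-1} is itself proved by first verifying the dual commutativity (for the right adjoints $f^*$ and $Re_*$) and then deducing the stated one (for the left adjoints $\bS' \lotimes_{\bS}(-)$ and $e^*$) by adjunction; this is precisely the mate property required for the strip to be a commutative diagram of adjoint pairs. For the third strip, with vertical arrows $\mu_{J,I*}$ and $R\Ind_{\dot P_J}^{\dot P_I}$, Proposition~\ref{prop:koszul-compatibility-2} explicitly states the commutativities for both the forgetful/pullback functors and the induction/pushforward functors, which is exactly what it means for this strip to be a commutative diagram of adjoint pairs.

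The hard part will be the bookkeeping of shifts and twists in the second strip: the passage between $\se_{J,I}^*$ and $\se_{J,I}^!$ via~\eqref{eqn:eJI!} introduces the cohomological and Tate-twist shift $\la d_I - d_J \ra [d_I - d_J]$, whereas the corresponding Koszul-side adjunction $\bL_J \lotimes_{\bL_I}(-) \dashv j_{J,I}^*$ carries no such shift. Once these shifts are carefully collected, composing the left adjoints of the three strips reproduces $\Pi^{J,I}\la d_I - d_J \ra [d_I - d_J]$ on the coherent side and $\Theta^{J,I}$ on the Koszul side, confirming the identification of left adjoints required by the statement of the proposition.
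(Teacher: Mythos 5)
Your overall strategy---cut Figure~\ref{fig:coh-bl} into horizontal strips, show each strip is a commutative diagram of adjoint pairs, and paste---is precisely the alternative route the authors themselves mention in the remark following the proposition, and vertical pasting of mates is indeed unproblematic. The gap is in your justification that each strip \emph{is} a commutative diagram of adjoint pairs. The horizontal functors of every strip are built from the Koszul duality functors $\kappa$, which are degrading functors and \emph{not} equivalences, so Lemma~\ref{lem:equivalence-mate} never applies: it requires the \emph{horizontal} functors $f,f'$ of~\eqref{eqn:comm-adjoint} to be equivalences, not the vertical ones. (For the first strip the conclusion still holds, but for a different reason: the vertical adjunctions are adjoint equivalences, so the unit and counit entering the formula for the mate are invertible.) More seriously, for the middle strip the mate property you need is with respect to the adjunction $\bL_J \lotimes_{\bL_I}({-}) \dashv j_{J,I}^*$, in which the restriction functor is the \emph{right} adjoint; the proof of Proposition~\ref{prop:koszul-compatibility-1} instead passes through the square~\eqref{eqn:koszul-comp2} using the \emph{other} adjunction $e^* \dashv Re_*$, in which the restriction functor is the left adjoint. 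So the mate compatibility implicit in that proof concerns the wrong adjunction, and even where a mate compatibility is genuinely available it is only for the equivalence $\Koszul \circ \xi$, not for $\kappa = \For^{H \times \Gm}_H \circ \Koszul \circ \xi$. Matching the Grothendieck--Serre twist-and-shift adjunction of $\se_{J,I}^*$ on the coherent side with the untwisted free-extension-of-scalars adjunction of $j_{J,I}^*$ on the Koszul side, across a non-invertible functor, is exactly the nontrivial content of the proposition; it cannot be read off from the statements or proofs in Section~\ref{sec:koszul}.

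The paper's proof sidesteps all of this with one global argument. Starting from the isomorphism $\zeta : \varkappa_I \circ \Pi_{J,I} \simto \Theta_{J,I} \circ \varkappa_J$ provided by Figure~\ref{fig:coh-bl}, it combines Lemma~\ref{lem:mateship-diagram} with the degrading isomorphism~\eqref{eqn:varkappa-degr} to show that composition with $\zeta^\wedge_{\cF}$ induces an isomorphism on $\Hom$'s into objects of the form $\varkappa_J(\cG)$; since such objects generate $\Dfg_{\dot P_J}(\bL_J)$, the five lemma and Yoneda then show $\zeta^\wedge$ is invertible. If you wish to retain the strip-by-strip structure, you would need to run this same Yoneda-type argument (or an explicit unit/counit computation) within each strip, rather than appeal to Lemma~\ref{lem:equivalence-mate} or to the internal structure of the proofs of Propositions~\ref{prop:koszul-compatibility-1} and~\ref{prop:koszul-compatibility-2}.
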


\begin{proof}
This proposition is ``almost'' an application of Lemma~\ref{lem:equivalence-mate}, because $\varkappa_J$ and $\varkappa_I$ are close to being equivalences. More precisely we argue as follows.
For brevity, let us put $\bar\Pi^{J,I} := \Pi^{J,I}\la d_I - d_J\ra [d_I - d_J]$.  The commutativity of the diagram in Figure~\ref{fig:coh-bl} gives us an isomorphism
\[
\zeta: \varkappa_I \circ \Pi_{J,I} \simto \Theta_{J,I} \circ \varkappa_J.
\]
Let $\zeta^\wedge: \Theta^{J,I} \circ \varkappa_I \to \varkappa_J \circ \bar \Pi^{J,I}$ be the morphism constructed from $\zeta$ as in \S\ref{ssec:string}.  We must show that $\zeta^\wedge$ is an isomorphism.

We begin with a weaker claim: that for any $\cF \in \Db\Coh^{\dot G \times \Gm}(\tcN_I)$ and $\cG \in \Db\Coh^{\dot G \times \Gm}(\tcN_J)$, the map
\begin{equation}\label{eqn:zeta-test1}
({-}) \circ \zeta^\wedge_{\cF} :
\Hom(\varkappa_J \bar\Pi^{J,I}(\cF),\varkappa_J(\cG)) \to \Hom(\Theta^{J,I}\varkappa_I(\cF),\varkappa_J(\cG))
\end{equation}
is an isomorphism.  To prove this claim, we apply Lemma~\ref{lem:mateship-diagram} to obtain the following commutative diagram:
\[
\xymatrix@C=1.5cm{
\Hom(\bar\Pi^{J,I}(\cF), \cG) \ar[r]^-{\varkappa_J}  \ar[d]^{\wr}_{\mathrm{adj}} &
\Hom(\varkappa_J \bar\Pi^{J,I}(\cF),\varkappa_J(\cG)) \ar[d]^-{(-) \circ \zeta_{\cF}^\wedge} \\
  \Hom(\cF, \Pi_{J,I}(\cG)) \ar[r] & \Hom(\Theta^{J,I}\varkappa_I(\cF),\varkappa_J(\cG)).
}
\]
In the left-hand column, let us replace $\cG$ by $\cG\la n\ra[n]$ and then sum over all $n \in \Z$:
\begin{equation}\label{eqn:zeta-test2}
\vcenter{\xymatrix@C=1.5cm{
  \bigoplus_{n\in\Z} \Hom(\bar\Pi^{J,I}(\cF), \cG\la n\ra[n]) \ar[r]^-{\varkappa_J}  \ar[d]^{\wr}_{\mathrm{adj}} &
\Hom(\varkappa_J \bar\Pi^{J,I}(\cF),\varkappa_J(\cG)) \ar[d]^-{(-) \circ \zeta_{\cF}^\wedge} \\
  \bigoplus_{n\in\Z} \Hom(\cF, \Pi_{J,I}(\cG\la n\ra[n])) \ar[r] & \Hom(\Theta^{J,I}\varkappa_I(\cF),\varkappa_J(\cG)).
}}
\end{equation}
In this diagram, the top horizontal arrow is an isomorphism by~\eqref{eqn:varkappa-degr}.  The bottom horizontal arrow is defined to be the composition
\begin{multline*}
\textstyle \bigoplus_{n\in\Z} \Hom(\cF, \Pi_{J,I}(\cG\la n\ra[n]))
\xrightarrow{\varkappa_I}
\Hom(\varkappa_I(\cF), \varkappa_I\Pi_{J,I}(\cG)) \\
\xrightarrow[\sim]{\zeta_\cG \circ ({-})}
\Hom(\varkappa_I(\cF), \Theta_{J,I}\varkappa_J(\cG))
\xrightarrow[\sim]{\mathrm{adj}}
\Hom(\Theta^{J,I}\varkappa_I(\cF),\varkappa_J(\cG))
\end{multline*}
so it too is an isomorphism.  We conclude that the left-hand vertical arrow in~\eqref{eqn:zeta-test2}, i.e., the map in~\eqref{eqn:zeta-test1}, is an isomorphism as well.

For any $V \in \Rep(\dot P_J)$, we have $\varkappa_J(\cO_{\tcN_J} \otimes V) \cong \bk_{\bL_J} \otimes V$, so objects of the form $\varkappa_J(\cG)$ generate $\Dfg_{\dot P_J}(\bL_J)$ as a triangulated category.  Hence~\eqref{eqn:zeta-test1} and the five-lemma actually imply that
\[
(-) \circ \zeta_{\cF}^\wedge : \Hom(\varkappa_J \bar\Pi^{J,I}(\cF),\cG') \to \Hom(\Theta^{J,I}\varkappa_I(\cF),\cG')
\]
is an isomorphism for all $\cG' \in \Dfg_{\dot P_J}(\bL_J)$.  By Yoneda's lemma, this shows that $\zeta^\wedge_{\cF}: \Theta^{J,I} \bar\varkappa_I(\cF) \to \varkappa_J\Pi^{J,I}(\cF)$ is an isomorphism, as desired.
\end{proof}

\begin{rmk}
\begin{enumerate}
\item
Later we will use this proposition only in the case $J=\varnothing$. We treat the general case since it is not more difficult that this special case.
\item
One can also prove Proposition~\ref{prop:adj-varkappa} by showing that each small square in Figure~\ref{fig:coh-bl} is a commutative diagram of adjoint pairs.  (For the middle row of squares, one can use~\eqref{eqn:eJI!} to describe the left adjoint of $\se_{J,I}^*$; similar descriptions are possible for the other functors in that row.)
\item
As noticed (in a special case) in Remark~\ref{rmk:trans-biadj}, the functor $\Theta^{J,I}$ is also right adjoint to $\Theta_{J,I}$.  There is also a commutative diagram of adjoint pairs involving this adjunction:
\[
\xymatrix@C=3cm{
\Db\Coh^{\dot G \times \Gm}(\tcN_I) \ar[r]^-{\varkappa_I} \ar@{}[d]|\dashv \ar@<1ex>[d]^{\Pi^{J,I} \langle d_J-d_I \rangle [d_J - d_I]}
  & \Dfg_{\dot P_I}(\bL_I) \ar@{}[d]|\dashv \ar@<1ex>[d]^{\Theta^{J,I}} \\
\Db\Coh^{\dot G \times \Gm}(\tcN_J) \ar[r]_-{\varkappa_J} \ar@<1ex>[u]^{\Pi_{J,I}} 
  & \Dfg_{\dot P_J}(\bL_J). \ar@<1ex>[u]^{\Theta_{J,I}}}
\]
However, this version will not be useful to us: unlike the diagram in Proposition~\ref{prop:adj-varkappa}, this version cannot be combined with Theorem~\ref{thm:translation}.
\end{enumerate}
\end{rmk}

Applying Lemma~\ref{lem:comm-adj-counit} we deduce from Proposition~\ref{prop:adj-varkappa} the following corollary, which is the result we will use in Section~\ref{sec:induction-thm}.

\begin{cor}
\label{cor:isom-adjunctions-Pi}
There exists an isomorphism of functors
\[
\varkappa_J \circ (\Pi^{J,I} \langle d_I-d_J \rangle [d_I-d_J]) \circ \Pi_{J,I}
\simto
\Theta^{J,I} \circ \Theta_{J,I} \circ \varkappa_J
\]
such that for any $\cF$ in $\Db\Coh^{\dot G \times \Gm}(\tcN_J)$ the following diagram commutes, where the vertical arrows are induced by our isomorphism of functors and the other arrows are induced by adjunction:
\[
\xymatrix@C=1.5cm{
\varkappa_J \circ (\Pi^{J,I} \langle d_I-d_J \rangle [d_I-d_J]) \circ \Pi_{J,I}(\cF) \ar[rd] \ar[d]_-{\wr} & \\
\Theta^{J,I} \circ \Theta_{J,I} \circ \varkappa_J(\cF) \ar[r] & \varkappa_J(\cF).
}
\]
\end{cor}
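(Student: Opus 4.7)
The plan is to deduce this corollary immediately from Lemma~\ref{lem:comm-adj-counit}, taking as input the commutative diagram of adjoint pairs provided by Proposition~\ref{prop:adj-varkappa}. Specifically, I will match the data of Lemma~\ref{lem:comm-adj-counit} to the square
\[
\xymatrix@C=2cm{
\Db\Coh^{\dot G \times \Gm}(\tcN_J) \ar[r]^-{\varkappa_J} \ar@{}[d]|\dashv \ar@<1ex>[d]^{\Pi_{J,I}}
  & \Dfg_{\dot P_J}(\bL_J) \ar@{}[d]|\dashv \ar@<1ex>[d]^{\Theta_{J,I}} \\
\Db\Coh^{\dot G \times \Gm}(\tcN_I) \ar[r]_-{\varkappa_I} \ar@<1ex>[u]^{\Pi^{J,I}\la d_I-d_J\ra[d_I-d_J]}
  & \Dfg_{\dot P_I}(\bL_I) \ar@<1ex>[u]^{\Theta^{J,I}}
}
\]
by setting $f = \varkappa_J$, $f' = \varkappa_I$, $p = \Pi^{J,I}\la d_I-d_J\ra[d_I-d_J]$, $q = \Pi_{J,I}$, $r = \Theta^{J,I}$, and $s = \Theta_{J,I}$, with the adjunctions $p \dashv q$ and $r \dashv s$ being the ones recorded in Lemma~\ref{lem:pi-adjoint} and~\S\ref{ss:Theta}, respectively.

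The substantive content is Proposition~\ref{prop:adj-varkappa}, which asserts precisely that the square above, equipped with these adjunction data, is a commutative diagram of adjoint pairs in the sense of Definition~\ref{defn:comm-adjoint}. Under this dictionary, the composition $pq$ becomes $(\Pi^{J,I}\la d_I-d_J\ra[d_I-d_J]) \circ \Pi_{J,I}$, the composition $rs$ becomes $\Theta^{J,I} \circ \Theta_{J,I}$, and Lemma~\ref{lem:comm-adj-counit} produces an isomorphism $fpq \simto rsf$, which is exactly the isomorphism of functors claimed in the corollary. Moreover, that same lemma records that the isomorphism intertwines $f$ applied to the counit $\epsilon : pq \to \id$ with the counit $\epsilon : rs \to \id$ of the other adjoint pair, which, once unwound, is precisely the commutative triangle in the statement.

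Since the argument is purely formal once Proposition~\ref{prop:adj-varkappa} is in hand, there is no genuine obstacle. The only point that needs a brief sanity check is that the unit/counit isomorphism $\theta$ used to witness the commutative-diagram-of-adjoint-pairs structure in the proof of Proposition~\ref{prop:adj-varkappa} is the one coming from the explicit identification built from Figure~\ref{fig:coh-bl}; this is exactly the $\theta$ to which Lemma~\ref{lem:comm-adj-counit} is applied, so no ambiguity arises.
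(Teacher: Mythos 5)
Your proposal is correct and follows exactly the paper's route: the corollary is stated immediately after the sentence ``Applying Lemma~\ref{lem:comm-adj-counit} we deduce from Proposition~\ref{prop:adj-varkappa} the following corollary,'' and your identification of $f,f',p,q,r,s$ is precisely the intended instantiation.
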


\section{The induction theorem}
\label{sec:induction-thm}

\subsection{Combinatorics of weights}
\label{ss:combinatorics-weights}

Let $\Waffmin \subset \Waff$ be the subset consisting of the elements $w$ which are minimal in $Ww$. Then
it is well known that the assignment $w \mapsto w \bullet 0$ induces a bijection
\[
\Waffmin \simto (\Waff \bullet 0) \cap \bX^+.
\]
On the other hand, we also have bijections $\Waffmin \simto W \backslash \Waff \cong \bX$. Recall (see~\S\ref{ss:orders-X}) that for $\lambda \in \bX$, the inverse image of $\lambda$ under this bijection is denoted $w_\lambda$. This element is described explicitly in~\cite[Lemma~2.4]{mr:etspc}: if $v_\lambda \in W$ is the element of minimal length such that $v_\lambda (\lambda) \in \bX^+$, then $w_\lambda = v_\lambda \cdot t_\lambda$. Combining these bijections we obtain a bijection
\begin{equation}
\label{eqn:bijection-weights}
\bX \simto (\Waff \bullet 0) \cap \bX^+ : \lambda \mapsto w_\lambda \bullet 0 = v_\lambda \bullet 0 + \ell \cdot v_\lambda(\lambda).
\end{equation}

Now, consider the order
$\uparrow$ on $\bX$ as defined in~\cite[\S 6.4]{jantzen}. 

\begin{lem}
\label{lem:orders-dominant-weights}
For $\lambda,\mu \in \bX$, we have
\[
w_\lambda \bullet 0 \uparrow w_\mu \bullet 0 \quad \Leftrightarrow \quad \lambda \leq \mu.
\]
\end{lem}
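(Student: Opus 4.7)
The plan is to reduce this comparison between the Bruhat order on $\Waffmin$ and Jantzen's $\uparrow$ order (restricted to $\Waff \bullet 0 \cap \bX^+$) to the analogous statement for the Coxeter group $\WaffCox$, and then invoke a classical Coxeter-theoretic result.

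First I would use the decomposition $\Waff = \Waff^\circ \ltimes \WaffCox$ to write uniquely, for each $\lambda \in \bX$, $w_\lambda = \omega_\lambda u_\lambda$ with $\omega_\lambda \in \Waff^\circ$ and $u_\lambda \in \WaffCox$. Because $\omega_\lambda$ acts on $\WaffCox$ by a diagram automorphism (permuting its Coxeter generators), one has $\ell(w_\lambda) = \ell(u_\lambda)$, and the Bruhat order on $\Waff$ restricts to: $w_\lambda \leq w_\mu$ if and only if $\omega_\lambda = \omega_\mu$ and $u_\lambda \leq u_\mu$ in the Bruhat order on $\WaffCox$. Moreover, since $w_\lambda \in \Waffmin$ one checks that $u_\lambda \in \WaffminI := \WaffCox \cap \Waffmin$.

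Next, on the weight side: Jantzen's order $\uparrow$ is by definition generated by elementary steps $s_{\beta,n\ell} \bullet \nu \uparrow \nu$ (whenever $s_{\beta,n\ell} \bullet \nu \preceq \nu$), and every affine reflection $s_{\beta,n\ell}$ lies in $\WaffCox$. Consequently $\nu \uparrow \nu'$ forces $\nu$ and $\nu'$ to lie in the same $\WaffCox$-orbit under the dot action. Since $w_\lambda \bullet 0 = \omega_\lambda u_\lambda \bullet 0 \in (\WaffCox \omega_\lambda) \bullet 0$ and the cosets $\WaffCox \omega$ (for $\omega$ ranging over $\Waff^\circ$) produce weights lying in distinct residue classes modulo $\Z\Phi$, the relation $w_\lambda \bullet 0 \uparrow w_\mu \bullet 0$ likewise forces $\omega_\lambda = \omega_\mu$. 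Combining both observations reduces the lemma to the case $\omega_\lambda = \omega_\mu$; after translating by $\omega_\lambda^{-1}$ (which preserves both orders, since it acts by a Coxeter-diagram automorphism on $\WaffCox$ and intertwines the corresponding dot actions), we are left with the following Coxeter-theoretic statement: for $u,u' \in \WaffminI$ with $u \bullet 0, u' \bullet 0 \in \bX^+$, we have $u \bullet 0 \uparrow u' \bullet 0$ if and only if $u \leq u'$ in the Bruhat order on $\WaffCox$.

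This last equivalence is classical. The implication ``$\Rightarrow$'' is proved by induction on the length of a chain of elementary $\uparrow$-steps, using the observation that each step $s_{\beta,n\ell} \bullet \nu \preceq \nu$ between dominant weights of the form $u \bullet 0$ decreases the length of the associated element of $\WaffminI$ (a direct application of the exchange condition in $\WaffCox$ together with the characterisation of $\WaffminI$ as minimal coset representatives). The implication ``$\Leftarrow$'' is proved by induction on $\ell(u') - \ell(u)$: choosing a reduced expression for $u'$ and using the subword property of the Bruhat order to extract a reduced expression for $u$, one exhibits $u \bullet 0$ as obtained from $u' \bullet 0$ by a sequence of reflection steps, each of which is $\preceq$ the previous weight by dominance of all intermediate weights. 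Both arguments are carried out in the references~\cite[\S II.6]{jantzen} (see in particular~II.6.5). The main obstacle is the bookkeeping linking the decompositions $\Waff = \Waff^\circ \ltimes \WaffCox$ and $\Waff \bullet 0 = \bigsqcup_{\omega \in \Waff^\circ} (\WaffCox \omega) \bullet 0$; once the matching of length-zero components is established, the residual Coxeter-theoretic comparison is standard.
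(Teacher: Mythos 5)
Your reduction to $\WaffCox$ uses the \emph{left} decomposition $w_\lambda = \omega_\lambda u_\lambda$ with $\omega_\lambda \in \Waff^\circ$, and asserts that $u_\lambda \in \Waffmin$ whenever $w_\lambda \in \Waffmin$. This step is false in general: $\Waffmin$ is cut out by the condition $\ell(sw)>\ell(w)$ for all $s\in S$, and $\ell(s\,\omega_\lambda u_\lambda)=\ell\bigl((\omega_\lambda^{-1}s\omega_\lambda)\,u_\lambda\bigr)$ shows that $w_\lambda\in\Waffmin$ only forces $u_\lambda$ to be increased by left multiplication by the \emph{conjugated} reflections $\omega_\lambda^{-1}S\omega_\lambda\subset \WaffCox$, which need not equal $S$ (conjugation by a length-zero element can send a finite simple reflection to the affine one). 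Equivalently, via the bijection $\Waffmin\simeq(\Waff\bullet 0)\cap\bX^+$, the weight $w_\lambda\bullet 0=\omega_\lambda\bullet(u_\lambda\bullet 0)$ is dominant, but $\omega_\lambda\bullet$ only stabilizes $\overline{C}_\Z$, not the full dominant cone, so dominance of $w_\lambda\bullet 0$ does not propagate to $u_\lambda\bullet 0$. Right multiplication by $\Waff^\circ$ \emph{does} preserve $\Waffmin$, which is precisely why the paper writes $w_\lambda=w_{\lambda'}\omega$ with $\omega$ on the right.

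Even granting the decomposition, two further points are skipped over. The claim that "translating by $\omega_\lambda^{-1}$" preserves the $\uparrow$-order is exactly where the paper invokes Jantzen~II.6.5(1): for $w_1,w_2\in\WaffCox$ and $\nu,\nu'\in\overline{C}_\Z$ one has $w_1\bullet\nu\uparrow w_2\bullet\nu$ if and only if $w_1\bullet\nu'\uparrow w_2\bullet\nu'$, and one applies this with $\nu=0$, $\nu'=\omega\bullet 0\in\overline{C}_\Z$. Your left-handed variant $u_\lambda\bullet 0\uparrow u_\mu\bullet 0 \Leftrightarrow \omega\bullet(u_\lambda\bullet 0)\uparrow\omega\bullet(u_\mu\bullet 0)$ is not an instance of II.6.5(1), and is false in general since $\omega\bullet$ does not preserve $\preceq$ outside $\overline{C}_\Z$. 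Finally, the residual statement that the $\uparrow$-order on $\WaffCox\bullet 0$ agrees with the Bruhat order is not "carried out in Jantzen~\S II.6"; that is the theorem of Ye and Wang, which the paper cites. Your sketch of the implication (Bruhat $\Rightarrow$ $\uparrow$) in particular never addresses how one keeps the intermediate weights dominant when extracting a reduced subword, which is the actual content of that theorem.
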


\begin{proof}
In case $G$ is semisimple, this statement is equivalent to the main results of~\cite{ye, wang} (see also~\cite[\S 5]{humphreys} for a discussion of this result in English). Since here we work with a reductive group, we have to be slightly more careful.

First, let us assume that $\lambda, \mu \in \Z\Phi$. Then $w_\lambda, w_\mu \in \WaffCox$. Let us consider $V=\Z\Phi \otimes_\Z \mathbb{R}$, and denote by $\mathscr{A}_0$ the intersection of the fundamental alcove for $G$ (as defined in~\cite[II.6.2(6)]{jantzen}) with $V$. In other words, $\mathscr{A}_0$ is the fundamental alcove for the group $G/Z(G)$. The restriction of the order $\uparrow$ to $\Z\Phi$ is clearly the order $\uparrow$ for the group $G/Z(G)$. We deduce that, if we consider the order $\uparrow$ on alcoves of $G/Z(G)$ defined as in~\cite[\S II.6.5]{jantzen}, then by~\cite[II.6.5(1)]{jantzen} we have
\[
w_\lambda \bullet 0 \uparrow w_\mu \bullet 0 \quad \Leftrightarrow \quad w_\lambda \bullet \mathscr{A}_0 \uparrow w_\mu \bullet \mathscr{A}_0.
\]
Then by~\cite{ye,wang} this condition is equivalent to $w_\lambda \leq w_\mu$, hence by definition to $\lambda \leq \mu$.

Now we treat the general case. If $\lambda - \mu \notin \Z\Phi$, then neither of the conditions in the statement hold, so the equivalence is guaranteed. So, let us assume that $\lambda-\mu \in \Z\Phi$. Then there exists a unique $\omega \in \Waff$ with $\ell(\omega)=0$ and $w_\lambda \omega^{-1} \in \WaffCox$ and $w_\mu \omega^{-1} \in \WaffCox$. Since these elements belong to $\Waffmin$, there exist $\lambda',\mu' \in \Z\Phi$ such that
\[
w_\lambda = w_{\lambda'} \omega, \qquad w_\mu = w_{\mu'} \omega.
\]
By definition of the Bruhat order on $\Waff$, we have $w_\lambda \leq w_\mu$ iff $w_{\lambda'} \leq w_{\mu'}$, hence $\lambda \leq \mu$ iff $\lambda' \leq \mu'$. By the case already treated, this condition is equivalent to $w_{\lambda'} \bullet 0 \uparrow w_{\mu'} \bullet 0$. And since $0$ and $\omega \bullet 0$ both belong to the fundamental alcove (for $G$), using~\cite[II.6.5(1)]{jantzen} we see that this condition is equivalent to $w_\lambda \bullet 0 \uparrow w_{\mu} \bullet 0$, and the proof is complete.
\end{proof}

It follows in particular from Lemma~\ref{lem:orders-dominant-weights} that the order on $(\Waff \bullet 0) \cap \bX^+$ induced by any order $\leq'$ as in~\S\ref{ss:orders-X} via the bijection~\eqref{eqn:bijection-weights} refines the order $\uparrow$.

If $I \subset S$, then we define
\[
\WaffminI := \{ w \in \Waff \mid w \text{ is maximal in $wW_I$ and $wv \in \Waffmin$ for all $v \in W_I$}\}.
\]
(In fact, using the same trick from~\cite[p.~86]{soergel} as in the proof of Lemma~\ref{lem:order-mu-smu}, one can check that if $w$ is maximal in $wW_I$ and $w \in \Waffmin$, then $wv \in \Waffmin$ for all $v \in W_I$.)

\begin{lem}
\label{lem:WaffminI-domI}
Let $\lambda \in \bX$. Then $\lambda$ belongs to $\bXpp_I$ iff 
$w_\lambda \in \WaffminI$. 
\end{lem}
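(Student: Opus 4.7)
Since $w_\lambda \in \Waffmin$ holds by definition of $w_\lambda$, the parenthetical remark in the definition of $\WaffminI$ tells us that $w_\lambda \in \WaffminI$ if and only if $w_\lambda$ is maximal in $w_\lambda W_I$. This in turn is equivalent to $w_\lambda s < w_\lambda$ in the Bruhat order on $\Waff$ for every $s \in I$. Noting that $\bXpp_I$ is defined by the inequalities $\la \lambda, \alpha_s^\vee \ra \geq 1$ for $s \in I$, the lemma reduces to showing that for each simple reflection $s$,
\[
w_\lambda s < w_\lambda \quad \Longleftrightarrow \quad \la \lambda, \alpha_s^\vee \ra \geq 1.
\]

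For the implication $\Leftarrow$, I would appeal directly to the argument recalled in the proof of Lemma~\ref{lem:order-mu-smu} (which goes back to \cite[Lemma~2.4]{mr:etspc}): if $\la \lambda, \alpha_s^\vee \ra \geq 1$, then $s\lambda \prec \lambda$, so applying that analysis to $\mu := s\lambda$ (for which $s\mu = \lambda \succ \mu$) yields $w_\lambda = w_{s\lambda} s$ with $\ell(w_\lambda) = \ell(w_{s\lambda})+1$, whence $w_\lambda s = w_{s\lambda}$ and $w_\lambda s < w_\lambda$.

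For the reverse implication, I would show that $\la \lambda, \alpha_s^\vee \ra \leq 0$ forces $w_\lambda s > w_\lambda$, splitting into two subcases. If $\la \lambda, \alpha_s^\vee \ra < 0$, then $s\lambda \succ \lambda$ and the same lemma applied with $\mu := \lambda$ gives $w_{s\lambda} = w_\lambda s$ with $\ell(w_{s\lambda}) = \ell(w_\lambda)+1$, so $w_\lambda s > w_\lambda$ as required. The remaining case $\la \lambda, \alpha_s^\vee \ra = 0$ is the main subtlety, because then $s\lambda = \lambda$ and $w_\lambda s$ lies in the same right $W$-coset as $w_\lambda$ rather than corresponding to a distinct element of $\Waffmin$. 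Writing $w_\lambda = t_\nu v_\lambda$ with $\nu = \dom(\lambda)$ and $v_\lambda(\lambda)=\nu$ (as in the proof of Lemma~\ref{lem:order-mu-smu}), and setting $\beta := v_\lambda(\alpha_s)$, a direct computation using $s_\beta(\nu) = v_\lambda s v_\lambda^{-1}(\nu) = v_\lambda s(\lambda) = v_\lambda(\lambda) = \nu$ yields
\[
w_\lambda s \, w_\lambda^{-1} = t_\nu \, s_\beta \, t_{-\nu} = t_{\nu - s_\beta(\nu)} s_\beta = s_\beta,
\]
so $w_\lambda s = s_\beta \cdot w_\lambda$ with $s_\beta$ a nontrivial reflection in $W$. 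Since $w_\lambda \in \Waffmin$, left multiplication by any nontrivial element of $W$ strictly increases length, giving $w_\lambda s > w_\lambda$ and completing the proof.

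The main (though ultimately mild) obstacle is the case $\la \lambda, \alpha_s^\vee \ra = 0$: unlike the other two cases, it cannot be read off directly from the $s$-chain behaviour of $w_{(-)}$ on $\Waffmin$, and one must instead exploit the explicit formula $w_\lambda = t_{\dom(\lambda)} v_\lambda$ together with the defining minimality property of $\Waffmin$.
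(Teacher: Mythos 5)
Your proof is correct and follows essentially the same route as the paper: the cases $\la\lambda,\alpha_s^\vee\ra>0$ and $<0$ are handled via the proof of Lemma~\ref{lem:order-mu-smu} exactly as in the paper, and your conjugation computation $w_\lambda s = (v_\lambda s v_\lambda^{-1})w_\lambda$ in the case $s\lambda=\lambda$ is literally the paper's. The only organizational difference is that you invoke the parenthetical remark in the definition of $\WaffminI$ to reduce to maximality in the coset and then argue one simple reflection at a time, whereas the paper verifies both defining conditions directly by running the chain $w_\lambda > w_\lambda s_r > \cdots > w_\lambda v$ along a reduced expression of an arbitrary $v \in W_I$; both are fine.
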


\begin{proof}
First, let us assume that $\lambda \in \bXpp_I$. Then for any $v \in W_I$, and any reduced expression $v=s_1 \cdots s_r$, we have
\[
\lambda \succ s_r (\lambda) \succ s_{r-1} s_r (\lambda) \succ \cdots \succ v(\lambda).
\]
As explained in the proof of Lemma~\ref{lem:order-mu-smu}, this implies that
\[
w_\lambda > w_\lambda s_r > w_\lambda s_r s_{r-1} > \cdots > w_\lambda v,
\]
and that all these elements belong to $\Waffmin$. Hence $w_\lambda \in \WaffminI$.

On the other hand, assume that $\lambda \notin \bXpp_I$. Then there exists $s \in I$ such that $s\lambda \succeq \lambda$. If $s\lambda \succ \lambda$, then as above by Lemma~\ref{lem:order-mu-smu} and its proof we have $w_\lambda < w_\lambda s$, and hence $w_\lambda \notin \WaffminI$. And if $s\lambda=\lambda$ we have
\[
w_\lambda s = v_\lambda t_\lambda s = v_\lambda s t_\lambda = (v_\lambda s v_\lambda^{-1}) w_\lambda > w_\lambda,
\]
and so again $w_\lambda \notin \WaffminI$.
\end{proof}

From Lemma~\ref{lem:WaffminI-domI} we obtain a bijection
\[
\bXpp_I \simto \WaffminI : \lambda \mapsto w_\lambda.
\]
On the other hand, it is clear that the assignment $w \mapsto w \bullet (-\varsigma_I)$ defines a bijection $\WaffminI \simto (\Waff \bullet (-\varsigma_I)) \cap \bX^+$; combining these bijections we obtain a bijection
\begin{equation}
\label{eqn:bijection-weights-I}
\bXpp_I \simto (\Waff \bullet (-\varsigma_I)) \cap \bX^+ : \lambda \mapsto w_\lambda \bullet (-\varsigma_I) = v_\lambda \bullet (-\varsigma_I) + \ell \cdot v_\lambda(\lambda).
\end{equation}

\subsection{Images of standard and costandard objects}

From now on, for any subset $I \subset S$ with $I \neq \varnothing$, we assume that the objects $\Delta_I(\lambda)$ and $\nabla_I(\lambda)$ are defined with respect to the an order constructed as in~\S\ref{ss:orders-X}. (In particular, this order depends on $I$.) In the case $I=\varnothing$, the objects $\Delta(\lambda)$ and $\nabla(\lambda)$ are independent of the choice of order satisfying~\eqref{eqn:condition-order}, by Proposition~\ref{prop:exotic-recall}\eqref{it:exotic-recall-1}.

\begin{prop}
\label{prop:exotic-weyl}
For any $\lambda \in \bXpp_I$, we have isomorphisms
\[
\Omega_I(\varkappa_I(\nabla_I(\lambda)) \cong \coweyl(w_\lambda \bullet (-\varsigma_I)), \qquad
\Omega_I(\varkappa_I(\Delta_I(\lambda)) \cong \weyl(w_\lambda \bullet (-\varsigma_I)).
\]
\end{prop}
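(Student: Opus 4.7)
The proof splits into two stages: a reduction from general $I$ to $I = \varnothing$, and an inductive analysis of the case $I = \varnothing$. For the reduction, fix $\mu \in \bXpp_I$. Proposition~\ref{prop:pi-i-push}(1) with $w = 1$ gives $\nabla_I(\mu) \cong \Pi_I(\nabla(\mu))\la -n_I\ra[-n_I]$, while Proposition~\ref{prop:pi-i-push}(2) with $\lambda = w_I\mu$ and $w = w_I$ gives $\Delta_I(\mu) \cong \Pi_I(\Delta(w_I\mu))$. I then apply $\varkappa_I$ and invoke Proposition~\ref{prop:adj-varkappa} to rewrite $\varkappa_I \circ \Pi_I$ as $\Theta_{\varnothing, I} \circ \varkappa_\varnothing$, using $\varkappa_I \circ \la 1\ra[1] \cong \varkappa_I$ to absorb internal shifts. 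Applying $\Omega_I$ and invoking Lemma~\ref{lem:theta-phi-iso}(1) (giving $\Omega_I \circ \Theta_{\varnothing,I} \cong T_\varnothing^I \circ \Omega_\varnothing$) reduces the problem to identifying $T_\varnothing^I \coweyl(w_\mu \bullet 0)$ and $T_\varnothing^I \weyl(w_{w_I\mu}\bullet 0)$, which by Jantzen's translation principle \cite[II.7.11]{jantzen} equal $\coweyl(w_\mu \bullet (-\varsigma_I))$ and $\weyl(w_\mu \bullet (-\varsigma_I))$ respectively. The inputs are: $w_\mu \in \WaffminI$ by Lemma~\ref{lem:WaffminI-domI}, so both $w_\mu \bullet 0$ and $w_\mu \bullet (-\varsigma_I)$ are dominant; the identity $w_{w_I\mu} = w_\mu w_I$ (both sides being the unique element of $Wt_{w_I\mu} \cap \Waffmin$); and $w_I \bullet (-\varsigma_I) = -\varsigma_I$ from~\eqref{eqn:stab-varsigmaK}.

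In the case $I = \varnothing$, I prove the $\nabla$-identification $\Omega_\varnothing\varkappa_\varnothing \nabla(\lambda) \cong \coweyl(w_\lambda \bullet 0)$ by induction on $\delta_\lambda$. The base case $\lambda \in \bX^+$ is direct: for $\mu \leq' \lambda$ with $\mu \neq \lambda$, Lemma~\ref{lem:properties-order}(3) together with~\eqref{eqn:conv-order} gives $\mu \preceq \dom(\mu) \preceq \lambda$, hence $\lambda \not\preceq \mu$, so Lemma~\ref{lem:tcn-pre-excep}(1) forces $\nabla(\lambda) \cong \cO_\tcN(\lambda)$. Then $\varkappa_\varnothing \cO_\tcN(\lambda) \cong \bk_{\dot B}(\lambda)$, $\psi_\varnothing(\bk_{\dot B}(\lambda)) \cong \bk_B(\ell\lambda)$, and Kempf's vanishing yields $\Omega_\varnothing\varkappa_\varnothing\nabla(\lambda) \cong \coweyl(\ell\lambda) = \coweyl(w_\lambda \bullet 0)$. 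For the inductive step, pick $s \in S$ with $s\lambda \prec \lambda$ and $\delta_{s\lambda} = \delta_\lambda - 1$, and apply $\Omega_\varnothing \circ \varkappa_\varnothing$ to the $\nabla$-triangle of Proposition~\ref{prop:exotic-recall}(4). Combining Corollary~\ref{cor:isom-adjunctions-Pi} (identifying $\varkappa_\varnothing \circ \Pi^s \Pi_s$ with $\Theta^s\Theta_s \circ \varkappa_\varnothing$, adjunction-compatibly) and Corollary~\ref{cor:isom-adjunctions-translation} (identifying $\Omega_\varnothing \circ \Theta^s\Theta_s$ with $T^\varnothing_{\{s\}} T_\varnothing^{\{s\}} \circ \Omega_\varnothing$, again adjunction-compatibly), the image triangle becomes
\[
\Omega_\varnothing\varkappa_\varnothing\nabla(s\lambda) \to T^\varnothing_{\{s\}} T_\varnothing^{\{s\}} \coweyl(w_\lambda \bullet 0) \to \coweyl(w_\lambda \bullet 0)
\]
with the right arrow the counit. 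The classical wall-crossing SES $0 \to \coweyl(w_{s\lambda} \bullet 0) \to T^\varnothing_{\{s\}} T_\varnothing^{\{s\}}\coweyl(w_\lambda \bullet 0) \to \coweyl(w_\lambda \bullet 0) \to 0$, in which the quotient map is the counit, then identifies the fiber with $\coweyl(w_{s\lambda} \bullet 0)$, advancing the induction.

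The $\Delta$-identification runs by a parallel induction, but Proposition~\ref{prop:exotic-recall}(4) now expresses $\Delta(\lambda)$ in terms of $\Delta(s\lambda)$ when $s\lambda \prec \lambda$; accordingly the induction propagates from the antidominant representative up to the dominant one within each $W$-orbit. The inductive step is identical in spirit to the $\nabla$ case, using the wall-crossing SES $0 \to \weyl(w_\lambda \bullet 0) \to T^\varnothing_{\{s\}} T_\varnothing^{\{s\}} \weyl(w_{s\lambda} \bullet 0) \to \weyl(w_{s\lambda} \bullet 0) \to 0$ to identify the fiber of the counit with $\weyl(w_\lambda \bullet 0)$. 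The \emph{main obstacle} is the base case for $\Delta$: identifying $\Omega_\varnothing \varkappa_\varnothing \Delta(\lambda_0)$ with $\weyl(w_{\lambda_0} \bullet 0)$ for $\lambda_0$ antidominant. This is trivial when $\lambda_0 = 0$ (where $\Delta(0) \cong \nabla(0) \cong \cO_\tcN$, since $0$ is the minimum of $\leq$ so $\Db\Coh^{\dot G \times \Gm}(\tcN)_{<'0} = 0$), but for general antidominant $\lambda_0$ a concrete description of $\Delta(\lambda_0)$ is required. The expected description $\Delta(\lambda_0) \cong \cO_\tcN(\lambda_0)\la -\delta_{\lambda_0}\ra[-\delta_{\lambda_0}]$, verified by a $\Hom$-vanishing check dual to the $\nabla$ base case using Lemma~\ref{lem:tcn-pre-excep} together with Serre duality on $\tcN$, would then yield $\weyl(w_{\lambda_0} \bullet 0) = \weyl(\ell w_0\lambda_0 - 2\rho)$ via a Bott-type identification of $R\Ind_B^G(\bk_B(\ell\lambda_0))$ in cohomological degree $\delta_{\lambda_0}$. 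Alternatively, a duality argument on the exotic heart (which exchanges $\Delta(\lambda)$ and $\nabla$ of a related weight up to explicit shifts) would allow the $\Delta$-identification to be deduced from the already-established $\nabla$-identification once its compatibility with $\Omega_\varnothing\varkappa_\varnothing$ is verified.
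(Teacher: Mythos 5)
Your overall strategy matches the paper's: reduce the general case to $I=\varnothing$ via Proposition~\ref{prop:pi-i-push}, Proposition~\ref{prop:adj-varkappa}, Lemma~\ref{lem:theta-phi-iso}, and Jantzen's translation principle; and for $I=\varnothing$ argue by induction along a chain of simple reflections using the exotic triangles of Proposition~\ref{prop:exotic-recall}\eqref{it:pi-s-std} together with the adjunction-compatibility Corollaries~\ref{cor:isom-adjunctions-translation} and~\ref{cor:isom-adjunctions-Pi}. Two points need correcting.

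First, in the $\nabla$-step you write ``pick $s\in S$ with $s\lambda\prec\lambda$ and $\delta_{s\lambda}=\delta_\lambda-1$.'' These two conditions are incompatible: if $s\lambda\prec\lambda$, then (as follows from the argument in the proof of Lemma~\ref{lem:order-mu-smu}) $\delta_{s\lambda}=\delta_\lambda+1$. Your deduction, concluding for $s\lambda$ from the already-known case $\lambda$, is correct once the sign is fixed, but as written the induction bookkeeping is backwards; the paper's version of the same step goes from $\lambda$ with $\delta_\lambda>0$ up to $s\lambda\succ\lambda$ with $\delta_{s\lambda}=\delta_\lambda-1$.

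Second, and more seriously, the $\Delta$-base case is where the content lies, and your proposed description is off by a homological shift. You write $\Delta(\lambda_0)\cong\cO_\tcN(\lambda_0)\la-\delta_{\lambda_0}\ra[-\delta_{\lambda_0}]$ for antidominant $\lambda_0$. The correct description, which is \cite[Proposition~3.6(2)]{mr:etspc} and which the paper simply cites, carries only the internal shift: $\Delta(\lambda_0)\cong\cO_\tcN(\lambda_0)\la-\delta_{\lambda_0}\ra$. With your extra $[-\delta_{\lambda_0}]$ the two shifts cancel under $\varkappa_\varnothing$ (since $\varkappa_\varnothing\circ\la 1\ra[1]\cong\varkappa_\varnothing$), giving $\Omega_\varnothing\varkappa_\varnothing\Delta(\lambda_0)\cong R\Ind_B^G(\ell\lambda_0)$, which lives in cohomological degree $\delta_{\lambda_0}$ rather than $0$. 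The correct description produces $R\Ind_B^G(\ell\lambda_0)[\delta_{\lambda_0}]$, and the concentration of $R^\bullet\Ind_B^G(\ell\lambda_0)$ in the single degree $\delta_{\lambda_0}$ is what requires work: the paper factors through $R\Ind_{P_K}^G$ with $K=\{s\in S\mid s\lambda_0=\lambda_0\}$ and applies the Serre-duality identity \cite[II.4.2(8)]{jantzen}. Note also that your formula $\weyl(\ell w_0\lambda_0-2\rho)$ for the resulting Weyl module is correct only for regular antidominant $\lambda_0$; in general the highest weight is $w_S(\ell\lambda_0+2\rho-2\rho_K)$. Your suggested alternative via a duality on the exotic heart could in principle replace the direct computation, but it requires establishing compatibility of that duality with $\Omega_\varnothing\circ\varkappa_\varnothing$, which is not in the paper and is not clearly less work.
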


\begin{proof}
We begin with the first isomorphism.
Suppose first that $I = \varnothing$. In this case, we will prove the isomorphism by induction on $\delta_\lambda$.  If $\delta_\lambda = 0$, i.e.~if $\lambda$ is dominant, then $\nabla(\lambda) \cong \cO_\tcN(\lambda)$ (see~\cite[Corollary~3.4]{mr:etspc}), so
\[
\Omega_\varnothing(\varkappa_\varnothing(\nabla(\lambda))) \cong \Omega_\varnothing(\bk_{\dot B}(\lambda)) \cong R\Ind_B^G(\ell\lambda) \cong \coweyl(\ell\lambda)
\]
by Kempf's vanishing theorem.
This proves the claim since $w_\lambda=t_\lambda$.

Otherwise, we have $\ell(v_\lambda)>0$. Let $s \in S$ be such that $\ell(v_\lambda s)<\ell(v_\lambda)$. Then $s\lambda \succ \lambda$, $\delta_{s\lambda} = \delta_\lambda-1$, and $w_{s\lambda} = w_\lambda s$ with $\ell(w_\lambda)=\ell(w_{s\lambda})-1$ (see Lemma~\ref{lem:order-mu-smu} and its proof). Consider the first distinguished triangle in Proposition~\ref{prop:exotic-recall}\eqref{it:pi-s-std}:
\[
\nabla(\lambda)\la -1\ra[-1] \to \Pi^s\Pi_s(\nabla(s\lambda))\la -1\ra[-1] \xrightarrow{\epsilon} \nabla(s\lambda) \xrightarrow{[1]}.
\]
Applying $\Omega_{\varnothing} \circ \varkappa_{\varnothing}$ to this triangle, and using induction and Corollaries~\ref{cor:isom-adjunctions-translation} and~\ref{cor:isom-adjunctions-Pi}, we obtain a distinguished triangle
\begin{equation}
\label{eqn:exact-seq-costandard}
\Omega_{\varnothing}(\varkappa_{\varnothing}(\nabla(\lambda))) \to T_{\{s\}}^\varnothing T_\varnothing^{\{s\}}(\coweyl(w_\lambda s \bullet 0)) \xrightarrow{\epsilon} \coweyl(w_\lambda s \bullet 0) \xrightarrow{[1]},
\end{equation}
in which the second arrow is induced by adjunction.
By~\cite[Proposition~II.7.19(a) and II.7.21(8)]{jantzen}, this distinguished triangle is actually a short exact sequence in $\Rep(G)$ whose first term is isomorphic to $\coweyl(w_\lambda \bullet 0)$, as desired.

We now turn to the case of general $I$.  Let $\lambda \in \bXpp_I$.
Using Proposition~\ref{prop:pi-i-push}\eqref{it:pi-i-push-nabla}, we have
\[
\Omega_I(\varkappa_I(\nabla_I(\lambda)) \cong \Omega_I\varkappa_I\Pi_I(\nabla(\lambda)\la -n_I\ra[-n_I]).
\]
Then, using Proposition~\ref{prop:adj-varkappa} and Lemma~\ref{lem:theta-phi-iso} we obtain isomorphisms
\[
\Omega_I(\varkappa_I(\nabla_I(\lambda))
\cong \Omega_I \Theta_{\varnothing,I}\varkappa_{\varnothing}(\nabla(\lambda)\la -n_I\ra[-n_I])
\cong T_{\varnothing}^{I}\Omega_\varnothing \varkappa_\varnothing (\nabla(\lambda)\la -n_I\ra[-n_I]).
\]
Next, using~\eqref{eqn:varkappa-shift} and the case $I=\varnothing$, we obtain an isomorphism
\[
\Omega_I(\varkappa_I(\nabla_I(\lambda)) \cong T_{\varnothing}^{I} \coweyl(w_\lambda \bullet 0).
\]
Finally,
by~\cite[Proposition~II.7.11]{jantzen} we have $T_\varnothing^I \coweyl(w_\lambda \bullet 0) \cong \coweyl(w_\lambda \bullet (-\varsigma_I))$, and the proof is complete.

Now we consider the case of $\Delta_I(\lambda)$, first in the case when $I=\varnothing$ and $\lambda$ is antidominant. In this case we have $\Delta(\lambda) \cong \cO_{\tcN}(\lambda) \langle -\delta_\lambda \rangle$ by~\cite[Proposition~3.6(2)]{mr:etspc}. As in the case of the objects $\nabla_I(\lambda)$, we deduce that
\[
\Omega_\varnothing \circ \varkappa_\varnothing(\Delta(\lambda)) \cong R\Ind_B^G(\ell \lambda)[\delta_\lambda].
\]
Now, since $\lambda$ is antidominant, its stabilizer in $W$ is $W_K$, where $K:=\{s \in S \mid s(\lambda)=\lambda\}$. It follows in particular that $v_\lambda = w_S w_K$ 
and $\delta_\lambda = d_K$. We also deduce that $R\Ind_B^G(\ell \lambda) \cong R\Ind_{P_K}^G(\ell \lambda)$. Now using~\cite[II.4.2(8)]{jantzen}, for any $i \in \Z$ we have
\begin{multline*}
R^i \Ind_{P_K}^G(\ell \lambda) \cong (R^{d_K-i} \Ind_{P_K}^G(-\ell \lambda - (2\rho-2\rho_K))^* \\
\cong (R^{d_K-i} \Ind_{B}^G(-\ell \lambda - (2\rho-2\rho_K))^*.
\end{multline*}
The weight $-\ell \lambda - (2\rho-2\rho_K)$ is dominant, so by Kempf's vanishing theorem the third term vanishes unless $i=d_K$, and we finally obtain that
\[
R\Ind_B^G(\ell \lambda)[\delta_\lambda] \cong (\Ind_B^G(-\ell \lambda - 2\rho+2\rho_K))^* \cong \weyl(w_S(\ell \lambda +2\rho-2\rho_K)).
\]
Since $w_\lambda \bullet 0 = w_S w_K(\ell \lambda + \rho)-\rho = w_S(\ell \lambda +2\rho-2\rho_K)$, this proves the desired isomorphism in this case.

We continue to assume that $I=\varnothing$, and prove the isomorphism by downward induction on $\delta_\lambda$ within a given $W$-orbit. The case when $\delta_\lambda$ is maximal is the case when $\lambda$ is antidominant, which was treated above. If $\lambda$ is not antidominant, there exists $s \in S$ such that $s\lambda \prec \lambda$, so that $\delta_{s\lambda} = \delta_\lambda +1$ and $w_{s\lambda} = w_\lambda s$ with $\ell(w_{s\lambda})=\ell(w_\lambda)-1$ (see again Lemma~\ref{lem:order-mu-smu} and its proof). Consider the second distinguished triangle in Proposition~\ref{prop:exotic-recall}\eqref{it:pi-s-std}:
\[
\Delta(\lambda)\la -1\ra[-1] \to \Pi^s\Pi_s(\Delta(s\lambda))\la -1\ra[-1] \xrightarrow{\epsilon} \Delta(s\lambda) \xrightarrow{[1]}.
\]
As above, applying the functor $\Omega_{\varnothing} \circ \varkappa_{\varnothing}$ and using induction and Corollaries~\ref{cor:isom-adjunctions-translation} and~\ref{cor:isom-adjunctions-Pi} (together with~\eqref{eqn:varkappa-shift}), we obtain a distinguished triangle
\[
\Omega_{\varnothing} \circ \varkappa_{\varnothing}(\Delta(\lambda)) \to T_{\{s\}}^\varnothing T_\varnothing^{\{s\}}(\weyl(w_\lambda s \bullet 0)) \xrightarrow{\epsilon} \weyl(w_\lambda s \bullet 0) \xrightarrow{[1]}
\]
where the second morphism is induced by adjunction. This implies that the first term is isomorphic to $\weyl(w_\lambda \bullet 0)$, and finishes the proof in this case.

Finally, as in the case of the objects $\nabla_I(\lambda)$, the case of a general subset $I$ follows from the case $I=\varnothing$ using Proposition~\ref{prop:pi-i-push}\eqref{it:pi-i-push-delta}.
\end{proof}

\begin{lem}
\label{lem:weyl-can}
For any $\lambda \in \bXpp_I$,
the image under $\Omega_I \circ \varkappa_I$ of any nonzero map $\Delta_I(\lambda) \to \nabla_I(\lambda)$ is nonzero.
\end{lem}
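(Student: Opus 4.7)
My plan is to proceed by contradiction. By Corollary~\ref{cor:exotic-orth}, the Hom space $\Hom(\Delta_I(\lambda),\nabla_I(\lambda))$ is one-dimensional, generated by $c_\lambda$; on the other side, $\Hom_G(\weyl(w_\lambda\bullet(-\varsigma_I)),\coweyl(w_\lambda\bullet(-\varsigma_I)))$ is also one-dimensional, so the task is to rule out $\Omega_I\varkappa_I(c_\lambda)=0$. If that were so, I would show that the simple $\irr(w_\lambda\bullet(-\varsigma_I))$ must both appear and not appear as a composition factor of $\coH^0(\Omega_I\varkappa_I(C))$, where $C$ denotes the cone of $c_\lambda$, yielding the desired contradiction.

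The first step will be to locate $C$ inside $\Db\Coh^{\dot G\times\Gm}(\tcN_I)_{<'\lambda}$. Splicing the two defining triangles of Proposition~\ref{prop:exotic-defn} through the common term $M:=\cL_{\tcN_I}(\dot\coweyl_I(\lambda-\varsigma_I))\langle -\delta_\lambda-n_I\rangle[-n_I]$, I first need to check that the composition $\Delta_I(\lambda)\to M\to\nabla_I(\lambda)$ is nonzero, hence equals $c_\lambda$ up to scalar: if it vanished, the defining map $\Delta_I(\lambda)\to M$ would factor through the fiber $\cF\in\Db\Coh^{\dot G\times\Gm}(\tcN_I)_{<'\lambda}$ of $M\to\nabla_I(\lambda)$, contradicting the orthogonality $\Hom(\Delta_I(\lambda),\cF)=0$ built into the exceptional-sequence characterization of $\Delta_I(\lambda)$. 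The octahedral axiom applied to this factorization then exhibits $C$ as an extension of objects of $\Db\Coh^{\dot G\times\Gm}(\tcN_I)_{<'\lambda}$, hence $C$ lies in that subcategory. Under the hypothesis $\Omega_I\varkappa_I(c_\lambda)=0$, Proposition~\ref{prop:exotic-weyl} gives a splitting $\Omega_I\varkappa_I(C)\cong \coweyl(w_\lambda\bullet(-\varsigma_I))\oplus\weyl(w_\lambda\bullet(-\varsigma_I))[1]$, so $\irr(w_\lambda\bullet(-\varsigma_I))$ occurs as a composition factor in cohomological degree~$0$.

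The second step is to derive the opposite conclusion from the location of $C$. The subcategory $\Db\Coh^{\dot G\times\Gm}(\tcN_I)_{<'\lambda}$ is generated by the $\cL_{\tcN_I}(\dot\coweyl_I(\mu-\varsigma_I))\langle n\rangle$ with $\mu\in\bXpp_I$ and $\mu<'\lambda$, and unwinding the definitions of $\varkappa_I$ and $\psi_I$ shows that $\Omega_I\varkappa_I$ sends such a generator to $\pr_I R\Ind_{P_I}^G(\St_I\otimes\Fr^*(\dot\coweyl_I(\mu-\varsigma_I)))$, a complex of $G$-modules whose highest weight as a $P_I$-module is $(\ell-1)\varsigma_I+\ell(\mu-\varsigma_I)=\ell\mu-\varsigma_I=t_\mu\bullet(-\varsigma_I)$. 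Andersen's strong linkage principle (applied through a $B$-filtration) then forces every composition factor to have the form $\irr(\xi)$ with $\xi\uparrow t_\mu\bullet(-\varsigma_I)$ and $\xi$ dominant; since $t_\mu\bullet(-\varsigma_I)\uparrow w_\mu\bullet(-\varsigma_I)$ (the dominant representative is the $\uparrow$-maximum of its orbit), transitivity gives $\xi\uparrow w_\mu\bullet(-\varsigma_I)$. Writing $\xi=w_{\eta'}\bullet(-\varsigma_I)$ via the bijection~\eqref{eqn:bijection-weights-I}, the analog of Lemma~\ref{lem:properties-order}\eqref{it:orders-dominant}--Lemma~\ref{lem:orders-dominant-weights} for the block of $-\varsigma_I$ then yields $\eta'\le\mu$; combined with $\mu<'\lambda$ and the fact that $\leq'$ refines $\leq$, this forces $\eta'\leq'\mu<'\lambda$, so $\eta'\ne\lambda$ and $\irr(w_\lambda\bullet(-\varsigma_I))$ cannot occur as a composition factor of $\Omega_I\varkappa_I(C)$. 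Combined with the previous step, this is the sought contradiction.

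The hard part will be justifying the extension of Lemma~\ref{lem:orders-dominant-weights} to $\WaffminI$ and the block of $-\varsigma_I$, i.e.\ that for $\lambda,\mu\in\bXpp_I$ one has $w_\lambda\bullet(-\varsigma_I)\uparrow w_\mu\bullet(-\varsigma_I)$ if and only if $\lambda\le\mu$; this should follow from the same affine-Grassmannian geometry invoked in~\S\ref{ss:mixed-der}, now for a singular orbit, but must be checked separately from the principal-block statement. Everything else is a routine combination of the octahedral axiom, the explicit description of $\Omega_I\varkappa_I$ on the coherent generators, and Andersen's linkage principle.
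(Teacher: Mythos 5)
Your argument is complete for $I=\varnothing$, and there it essentially coincides with the paper's: the cone $C$ of the (unique up to scalar) nonzero map $\Delta(\lambda)\to\nabla(\lambda)$ lies in $\Db\Coh^{\dot G\times\Gm}(\tcN)_{<'\lambda}$, its image under $\Omega_\varnothing\circ\varkappa_\varnothing$ lies in the corresponding subcategory of $\Db\Rep_\varnothing(G)$ by Proposition~\ref{prop:exotic-weyl}, and this forces $\Omega_\varnothing\varkappa_\varnothing(c_\lambda)\ne 0$. (The paper phrases the last step via the highest weight structure on $\Rep_\varnothing(G)$ for the order $\le'$, which refines $\uparrow$ by Lemma~\ref{lem:orders-dominant-weights}, rather than via composition factors and linkage; this avoids invoking the strong linkage principle at all.)

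For general $I$, however, your proof has a genuine gap, and it is exactly the point you flag: you need that, after transporting $\le'$ to $(\Waff\bullet(-\varsigma_I))\cap\bX^+$ via~\eqref{eqn:bijection-weights-I}, the resulting order refines $\uparrow$ on the singular linkage class --- equivalently, that $\irr(w_\lambda\bullet(-\varsigma_I))$ cannot occur in $\coweyl(w_\mu\bullet(-\varsigma_I))$ for $\mu<'\lambda$. Lemma~\ref{lem:orders-dominant-weights} is proved only for the regular orbit $\Waff\bullet 0$, and its singular analogue is not a formality: the relation between $\uparrow$ on a singular orbit and the Bruhat order on $\WaffminI$ is precisely the kind of statement where naive expectations fail (compare the remark after Lemma~\ref{lem:properties-order}). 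It can be repaired --- e.g.\ by applying $T_\varnothing^I$ to $\coweyl(w_\mu w_I\bullet 0)$, using~\cite[Proposition~II.7.15]{jantzen} to control which simples survive translation to the wall, and then Lemma~\ref{lem:order-domI} --- but that argument is missing from your write-up. The paper circumvents the issue entirely: by Proposition~\ref{prop:pi-i-push} the generator of $\Hom(\Delta_I(\lambda),\nabla_I(\lambda))$ is $\Pi_I(f)$ for a nonzero $f:\Delta(w_I\lambda)\to\nabla(w_I\lambda)$, the case $I=\varnothing$ shows $\Omega_\varnothing\varkappa_\varnothing(f)$ is nonzero with image the simple $\irr(w_\lambda w_I\bullet 0)$, and since $w_\lambda w_I$ is minimal in $w_\lambda W_I$ one has $T_\varnothing^I\irr(w_\lambda w_I\bullet 0)\cong\irr(w_\lambda\bullet(-\varsigma_I))\ne 0$; the isomorphism $T_\varnothing^I\circ\Omega_\varnothing\circ\varkappa_\varnothing\cong\Omega_I\circ\varkappa_I\circ\Pi_I$ then concludes. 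You should either adopt this reduction or supply a proof of the singular-block order comparison before your argument can be considered complete.
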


\begin{proof}
First, let us consider the case $I=\varnothing$. We still denote by $\leq'$ the order on $(\Waff \bullet 0) \cap \bX^+$ induced by the order $\leq'$ on $\bX$ via the bijection~\eqref{eqn:bijection-weights}. As explained 
after Lemma~\ref{lem:orders-dominant-weights},
this order is a refinement of the order $\uparrow$; in particular, $\Rep_{\varnothing}(G)$ is a highest weight category for this order, with standard objects $\weyl(\lambda)$ and costandard objects $\coweyl(\lambda)$ (for $\lambda \in (\Waff \bullet 0) \cap \bX^+$). For $\mu \in (\Waff \bullet 0) \cap \bX^+$, we denote by $\Db \Rep_{\varnothing}(G)_{<'\mu}$ the triangulated subcategory of $\Db \Rep_{\varnothing}(G)$ generated by the objects $\coweyl(\nu)$ with $\nu <' \mu$, or equivalently by the objects $\weyl(\nu)$ with $\nu <' \mu$. With this notation, Proposition~\ref{prop:exotic-weyl} implies that for any $\mu \in \bX$ we have
\begin{equation}
\label{eqn:Omega-kappa-graded}
\Omega_\varnothing \circ \varkappa_\varnothing(\Db \Coh^{\dot G \times \Gm}(\tcN)_{<'\mu}) \subset \Db \Rep_{\varnothing}(G)_{<' w_\mu \bullet 0}.
\end{equation}

Now, let us fix $\lambda \in \bX$.
There exists only one (up to scalar) nonzero morphism $f : \Delta(\lambda) \to \nabla(\lambda)$; let $C$ be its cone. Then $C$ belongs to $\Db \Coh^{\dot G \times \Gm}(\tcN)_{<'\lambda}$. The cone of $\Omega_\varnothing \circ \varkappa_\varnothing(f)$ is $\Omega_\varnothing \circ \varkappa_\varnothing(C)$, and by~\eqref{eqn:Omega-kappa-graded} it belongs to $\Db \Rep_{\varnothing}(G)_{<' w_\lambda \bullet 0}$. Now we have
\[
\Omega_\varnothing \circ \varkappa_\varnothing(\Delta(\lambda)) \cong \weyl(w_\lambda \bullet 0), \qquad \Omega_\varnothing \circ \varkappa_\varnothing(\nabla(\lambda)) \cong \coweyl(w_\lambda \bullet 0),
\]
so $\Omega_\varnothing \circ \varkappa_\varnothing(f)$ is a morphism from $\weyl(w_\lambda \bullet 0)$ to $\coweyl(w_\lambda \bullet 0)$. The fact that its cone belongs to $\Db \Rep_{\varnothing}(G)_{<' w_\lambda \bullet 0}$ forces this morphism to be nonzero, and the claim is proved in this case.

Now let $I$ be arbitrary, and let $\lambda \in \bXpp_I$. Consider a nonzero morphism $f : \Delta(w_I \lambda) \to \nabla(w_I \lambda)$. By Proposition~\ref{prop:pi-i-push} we have
\[
\Pi_I(\Delta(w_I \lambda)) \cong \Delta_I(\lambda), \qquad \Pi_I(\nabla(w_I \lambda)) \cong \nabla_I(\lambda),
\]
so $\Pi_I(f)$ is a morphism from $\Delta_I(\lambda)$ to $\nabla_I(\lambda)$. By the case treated above, the morphism $\Omega_\varnothing \circ \varkappa_\varnothing(f)$ is a nonzero morphism from $\weyl(w_\lambda w_I \bullet 0)$ to $\coweyl(w_\lambda w_I \bullet 0)$. (Here we use that $w_{w_I \lambda} = w_\lambda w_I$.) Now since $w_\lambda w_I$ is minimal in $w_\lambda w_I W_I = w_\lambda W_I$, by~\cite[Proposition~II.7.15]{jantzen} we have
\[
T_\varnothing^I(\irr(w_\lambda w_I \bullet 0)) \cong \irr(w_\lambda w_I \bullet (-\varsigma_I)) = \irr(w_\lambda \bullet (-\varsigma_I)).
\]
This implies that the image under $T_\varnothing^I$ of any nonzero morphism from $\weyl(w_\lambda w_I \bullet 0)$ to $\coweyl(w_\lambda w_I \bullet 0)$, in particular of $\Omega_\varnothing \circ \varkappa_\varnothing(f)$, is nonzero. But as in the proof of Proposition~\ref{prop:exotic-weyl} we have an isomorphism of functors
\[
T_\varnothing^I \circ \Omega_\varnothing \circ \varkappa_\varnothing \cong \Omega_I \circ \varkappa_I \circ \Pi_I;
\]
hence $\Omega_I \circ \varkappa_I \circ \Pi_I(f)$ is nonzero. This implies that $\Pi_I(f)$ is nonzero. In other words, it forms a basis of $\Hom(\Delta_I(\lambda), \nabla_I(\lambda))$, and the desired claim is proved.
\end{proof}

\begin{rmk}
We have seen in the course of the proof of Lemma~\ref{lem:weyl-can} that, if $\lambda \in \bXpp_I$, the image under $\Pi_I$ of any nonzero morphism from $\Delta(w_I \lambda)$ to $\nabla(w_I \lambda)$ is nonzero. This property can also be deduced directly from Proposition~\ref{prop:pi-i-push}.
\end{rmk}

\subsection{The parabolic induction theorem}

\begin{prop}\label{prop:coh-degr}
For any $\cF$, $\cG$ in $\Db\Coh^{\dot G \times \Gm}(\tcN_I)$,
the functor
\[
\Omega_I \circ \varkappa_I: \Db\Coh^{\dot G \times \Gm}(\tcN_I) \to \Db\Rep_{I}(G)
\]
and the isomorphism~\eqref{eqn:varkappa-shift}
induce an isomorphism
\[
\bigoplus_{n \in \Z} \Hom_{\Db\Coh^{\dot G \times \Gm}(\tcN_I)}(\cF, \cG\la n\ra[n]) \simto
\Hom_{\Db\Rep_{I}(G)}(\Omega_I(\varkappa_I(\cF)), \Omega_I(\varkappa_I(\cG))).
\]
\end{prop}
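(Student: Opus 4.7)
The plan is to show that $\Omega_I \circ \varkappa_I$ satisfies the defining property of a degrading functor with respect to the autoequivalence $\la 1\ra[1]$. First I would observe that the isomorphism~\eqref{eqn:varkappa-shift} induces, after composition with $\Omega_I$, a natural isomorphism $\Omega_I \circ \varkappa_I \circ \la 1\ra[1] \cong \Omega_I \circ \varkappa_I$. Consequently both sides of the asserted isomorphism are cohomological functors in each of $\cF$ and $\cG$ that transform compatibly under $\la 1\ra[1]$, so a standard two-step dévissage together with the five-lemma reduces the problem to verifying the claim when $\cF$ and $\cG$ range over a generating set of $\Db\Coh^{\dot G \times \Gm}(\tcN_I)$.

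For this generating set I would take the standard (resp.\ costandard) exotic objects $\Delta_I(\mu)$ (resp.\ $\nabla_I(\lambda)$) of~\S\ref{ss:standard-costandard-exotic}, which, together with their Tate twists, generate $\Db\Coh^{\dot G \times \Gm}(\tcN_I)$ as a triangulated category by the general theory of graded exceptional sequences (see~\cite[Lemma~3]{bez:ctm}, applied as in the proof of Proposition~\ref{prop:pi-i-push}). Concretely, I would first fix $\cF = \Delta_I(\mu)$ and establish the isomorphism for $\cG$ of the form $\nabla_I(\lambda)\la n\ra$; a dévissage in $\cG$ then gives it for all $\cG$, and a second dévissage in $\cF$ finishes the reduction.

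It remains to check the isomorphism when $\cF = \Delta_I(\mu)$ and $\cG = \nabla_I(\lambda)$. By Corollary~\ref{cor:exotic-orth}, the left-hand side equals $\bk$ when $\mu = \lambda$ and vanishes otherwise. For the right-hand side, Proposition~\ref{prop:exotic-weyl} identifies $\Omega_I(\varkappa_I(\Delta_I(\mu)))$ with $\weyl(w_\mu \bullet (-\varsigma_I))$ and $\Omega_I(\varkappa_I(\nabla_I(\lambda)))$ with $\coweyl(w_\lambda \bullet (-\varsigma_I))$. Since $\Rep_I(G)$ is a highest weight category in which these are precisely the standard and costandard objects (see the bijection~\eqref{eqn:bijection-weights-I} and the discussion after Lemma~\ref{lem:orders-dominant-weights}), the standard $\Ext$-vanishing between standard and costandard modules in a highest weight category (see~\cite[\S II.4]{jantzen}) shows that the right-hand side is also $\bk$ when $\mu = \lambda$ and zero otherwise.

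Thus in all nontrivial cases both sides are one-dimensional, so it only remains to verify that the morphism is nonzero when $\mu = \lambda$. Any generator of the left-hand side is (up to scalar) a nonzero morphism $\Delta_I(\lambda) \to \nabla_I(\lambda)$, and the claim that its image under $\Omega_I \circ \varkappa_I$ is nonzero is exactly the content of Lemma~\ref{lem:weyl-can}. The main obstacle of the proof is therefore this nonvanishing statement, which has already been established; granted Lemma~\ref{lem:weyl-can}, Proposition~\ref{prop:exotic-weyl} and Corollary~\ref{cor:exotic-orth}, what remains is routine manipulation with exceptional sequences and the five-lemma.
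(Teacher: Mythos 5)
Your proof is correct and follows essentially the same route as the paper: reduce by dévissage to the case $\cF = \Delta_I(\mu)$, $\cG = \nabla_I(\lambda)$ (plus shifts), compute the left-hand side via Corollary~\ref{cor:exotic-orth}, compute the right-hand side via Proposition~\ref{prop:exotic-weyl} together with the $\Ext$-vanishing between (co)standard modules in $\Rep_I(G)$, and invoke Lemma~\ref{lem:weyl-can} for the nonvanishing when $\mu = \lambda$. The only cosmetic difference is that the paper explicitly ranges $\cG$ over $\nabla_I(\mu)[k]$ for all $k$ rather than leaving the cohomological shift to the dévissage.
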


\begin{proof}
It suffices to check this property in the case when $\cF = \Delta_I(\lambda)$ and $\cG = \nabla_I(\mu)[k]$ for some $\lambda, \mu \in \bXpp_I$ and $k \in \Z$, since these objects (together with their grading shifts) generate $\Db\Coh^{\dot G \times \Gm}(\tcN_I)$ as a triangulated category (see Lemma~\ref{lem:generators-tcNI} and~\cite[Lemma~3]{bez:ctm}).  If $\lambda \ne \mu$, or if $\lambda = \mu$ but $k \ne 0$, then the left-hand side vanishes by Corollary~\ref{cor:exotic-orth}, and the right-hand side vanishes by Proposition~\ref{prop:exotic-weyl} and~\cite[Proposition~II.4.13]{jantzen} (see also the bijection~\eqref{eqn:bijection-weights-I}).

Suppose now that $\lambda = \mu$ and that $k = 0$.  Then Corollary~\ref{cor:exotic-orth} tells us that there is only one nonzero summand in the left-hand side, corresponding to $n = 0$, and that that term is $1$-dimensional.  The right-hand side is also $1$-dimensional, and Lemma~\ref{lem:weyl-can} tells us that the induced map in this case is an isomorphism.
\end{proof}

Recall from Lemma~\ref{lem:induce-block} that for $M \in \Db_\Stein(P_I)$, the object $R\Ind_{P_I}^G(M) \in \Db\Rep(G)$ actually lies in the subcategory $\Db\Rep_{I}(G)$.  In a minor abuse of notation, we henceforth denote the composition
\[
\Db_\Stein(P_I) \xrightarrow{\inc} \Db\Rep(P_I) \xrightarrow{R\Ind_{P_I}^G} \Db\Rep(G) \xrightarrow{\pr_I} \Db\Rep_{I}(G)
\]
simply by $R\Ind_{P_I}^G: \Db_\Stein(P_I) \to \Db\Rep_{I}(G)$.

\begin{thm}[Induction theorem]
\label{thm:induction-thm}
The functor
\[
R\Ind_{P_I}^G: \Db_\Stein(P_I) \to \Db\Rep_{I}(G)
\]
is an equivalence of triangulated categories.
\end{thm}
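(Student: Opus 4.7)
\medskip

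\noindent\textbf{Proof proposal.} Since $\psi_I : \Dfg_{\dot P_I}(\bL_I) \simto \Db_{\Stein}(P_I)$ is already an equivalence by Theorem~\ref{thm:formality}, it is enough to prove that $\Omega_I = R\Ind_{P_I}^G \circ \psi_I$ is an equivalence of triangulated categories. My plan is to check the two usual conditions (fully faithful + essentially surjective onto a generating set) by leveraging the degrading functor $\varkappa_I$ and the two Hom-calculation results we now have in hand.

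For fully faithfulness, the key observation is that the essential image of $\varkappa_I : \Db\Coh^{\dot G \times \Gm}(\tcN_I) \to \Dfg_{\dot P_I}(\bL_I)$ generates the target as a triangulated category (this is immediate from the generation by objects $\cL_{\tcN_I}(\dot\coweyl_I(\lambda))$, see Lemma~\ref{lem:generators-tcNI}, together with the fact that $\varkappa_I$ sends these to objects which, after forgetting the grading, generate $\Dfg_{\dot P_I}(\bL_I)$). Therefore it suffices to prove that for $\cF, \cG \in \Db\Coh^{\dot G \times \Gm}(\tcN_I)$ the natural map
\[
\Omega_I : \Hom_{\Dfg_{\dot P_I}(\bL_I)}(\varkappa_I \cF, \varkappa_I \cG) \longrightarrow \Hom_{\Db\Rep_I(G)}(\Omega_I \varkappa_I \cF, \Omega_I \varkappa_I \cG)
\]
is an isomorphism; once this is known, a standard five-lemma argument (fixing $\varkappa_I \cG$, sliding $\varkappa_I \cF$ through the generators) upgrades it to fully faithfulness on all of $\Dfg_{\dot P_I}(\bL_I)$. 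But by~\eqref{eqn:varkappa-degr} the source identifies with $\bigoplus_{n\in\Z} \Hom(\cF, \cG\la n\ra[n])$, and by Proposition~\ref{prop:coh-degr} the target identifies with the same sum, and these two identifications are compatible by construction. Hence fully faithfulness follows for free.

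For essential surjectivity, I plan to use Proposition~\ref{prop:exotic-weyl}: for every $\lambda \in \bXpp_I$ the object $\coweyl(w_\lambda \bullet (-\varsigma_I))$ lies in the essential image of $\Omega_I \circ \varkappa_I$, hence of $\Omega_I$. The bijection~\eqref{eqn:bijection-weights-I} shows that as $\lambda$ runs over $\bXpp_I$ the highest weights $w_\lambda \bullet (-\varsigma_I)$ exhaust $(\Waff \bullet (-\varsigma_I)) \cap \bX^+$, which is precisely the indexing set of the simple (equivalently, costandard) objects in the highest weight category $\Rep_I(G)$. Thus the essential image of $\Omega_I$ contains a family of costandard modules generating $\Db\Rep_I(G)$ as a triangulated category, and combined with fully faithfulness this gives essential surjectivity.

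The main obstacle in this argument has really been shouldered already, so the proof of the theorem itself is short: the genuine work went into Proposition~\ref{prop:exotic-weyl} (computing $\Omega_I \circ \varkappa_I$ on standard/costandard exotic sheaves, which required the compatibility with translation functors via Corollaries~\ref{cor:isom-adjunctions-translation} and~\ref{cor:isom-adjunctions-Pi} together with the parabolic push-forward formulas of Proposition~\ref{prop:pi-i-push}) and into Lemma~\ref{lem:weyl-can} (the non-vanishing of $\Omega_I \circ \varkappa_I$ on the canonical map $\Delta_I(\lambda) \to \nabla_I(\lambda)$, which was needed to make the Hom-comparison of Proposition~\ref{prop:coh-degr} land in the one-dimensional diagonal case with a nonzero map). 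Once those tools are available, the induction theorem is essentially a formal consequence.
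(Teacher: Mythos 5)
Your proposal is correct and takes essentially the same route as the paper: factoring through the degrading functor $\varkappa_I$, invoking~\eqref{eqn:varkappa-degr} and Proposition~\ref{prop:coh-degr} to obtain fully faithfulness of $\Omega_I = R\Ind_{P_I}^G\circ\psi_I$ (equivalently, of $R\Ind_{P_I}^G$ after the equivalence $\psi_I$), and then deducing essential surjectivity from Proposition~\ref{prop:exotic-weyl} together with the bijection~\eqref{eqn:bijection-weights-I}. Your assessment that the real work lies in Propositions~\ref{prop:exotic-weyl} and~\ref{prop:coh-degr} and Lemma~\ref{lem:weyl-can} also matches the paper.
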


\begin{proof}
Let $\cF, \cG \in \Db\Coh^{\dot G \times \Gm}(\tcN_I)$, and consider the commutative diagram
\[
\xymatrix@C=0.2cm{
\bigoplus_{n \in \Z} \Hom_{\Db\Coh^{\dot G \times \Gm}(\tcN_I)}(\cF, \cG\la n\ra[n])
\ar[r]^-{\varkappa_I} \ar@/_5ex/[ddr]_-{\Omega_I \circ \varkappa_I} &
  \Hom_{\Dfg_{\dot P_I}(\bL_I)}(\varkappa_I(\cF), \varkappa_I(\cG)) \ar[d]^-{\psi_I} \\
& \Hom_{\Db_\Stein(P_I)}(\psi_I\varkappa_I(\cF), \psi_I\varkappa_I(\cG)) \ar[d]^-{R\Ind_{P_I}^G} \\
& \Hom_{\Db\Rep_{I}(G)}(\Omega_I(\varkappa_I(\cF)), \Omega_I(\varkappa_I(\cG)))}
\]
By Theorem~\ref{thm:formality},~\eqref{eqn:varkappa-degr}, and Proposition~\ref{prop:coh-degr}, the arrows labelled $\psi_I$, $\varkappa_I$, and $\Omega_I \circ \varkappa_I$ are isomorphisms, so the remaining arrow is an isomorphism as well.  Recall that if $\cF = \cO_{\tcN_I} \otimes V$ with $V \in \Rep(\dot P_I)$, then $\psi_I\varkappa_I(\cF) \cong \St_I \otimes \For^{\dot P_I}_{P_I} (V)$. As observed in the proof of Lemma~\ref{lem:generators-SteinPK}, such $P_I$-modules generate $\Db_\Stein(P_I)$ as a triangulated category. So we deduce that the map
\[
\Hom_{\Db_\Stein(P_I)}(M,N) \to \Hom_{\Db\Rep_{I}(G)}(R\Ind_{P_I}^G(M), R\Ind_{P_I}^G(N))
\]
induced by $R\Ind_{P_I}^G$
is an isomorphism for all $M,N \in \Db_{\Stein}(P_I)$.  In other words,
\[
R\Ind_{P_I}^G: \Db_\Stein(P_I) \to \Db\Rep_{I}(G)
\]
is fully faithful.  The category $\Db\Rep_{I}(G)$ is generated by the Weyl modules (or dual Weyl modules) appearing in Proposition~\ref{prop:exotic-weyl}, so our functor is essentially surjective as well, and hence an equivalence.
\end{proof}

\section{The graded Finkelberg--Mirkovi\'c conjecture}
\label{sec:fm-conjecture}

\subsection{Mixed derived category and mixed perverse sheaves on affine Grassmannians}
\label{ss:mixed-der}

Let $\dot T^\vee$ be the complex torus which is Langlands dual to $\dot T$ (i.e.~whose weight lattice is dual to the weight lattice of $\dot T$),
and let
$\dot G^\vee$ be the unique (up to isomorphism) connected complex reductive group with maximal torus $\dot T^\vee$ such that the root datum of $(\dot G^\vee, \dot T^\vee)$ is dual to that of $(\dot G, \dot T)$. Let also $\dot B^\vee_+ \subset \dot G^\vee$, resp.~$\dot B^\vee \subset \dot G^\vee$, be the Borel subgroup whose set of roots is $\Phi^\vee_+$, resp.~$- \Phi^\vee_+$. (Recall that we have identified characters of $\dot T$ with characters of $T$; in this way $\Phi$ is also the root system of~$(\dot G, \dot T)$.)

Let $\scK:=\C( \hspace{-1pt} (z) \hspace{-1pt})$, and $\scO:=\C[\hspace{-1pt}[ z ] \hspace{-1pt} ]$, and 
consider the loop group $\dot G^\vee(\scK)$ and its subgroup $\dot G^\vee(\scO)$.  Recall that the \emph{affine Grassmannian} for $\dot G^\vee$ is a complex ind-variety $\Gr$ whose set of $\C$-points identifies in a natural way with the quotient $\dot G^\vee(\scK) / \dot G^\vee(\scO)$. We let $\Iw \subset \dot G^\vee(\scO)$ be the Iwahori subgroup associated with $\dot B^\vee_+$, i.e.~the inverse image of $\dot B^\vee_+$ under the natural morphism $\dot G^\vee(\scO) \to \dot G^\vee$. To any $\lambda \in \bX$ (considered as a cocharacter of $\dot T^\vee$) one can associate in a natural way an element $z^\lambda \in \dot T^\vee(\scK)$, hence a point $L_\lambda = z^\lambda \dot G^\vee(\scO) \in \Gr$, and if we set $\Gr_\lambda := \Iw \cdot L_\lambda$, then each $\Gr_\lambda$ is isomorphic to an affine space and we have the Bruhat decomposition
\[
\Gr = \bigsqcup_{\lambda \in \bX} \Gr_\lambda.
\]

Following~\cite{modrap2}, we define the mixed derived category $\Dmix_{(\Iw)}(\Gr, \bk)$ of $\Iw$-construc\-tible $\bk$-sheaves on $\Gr$ as $\Kb(\Par_{(\Iw)}(\Gr, \bk))$, the bounded homotopy category of the additive category of $\Iw$-constructible parity complexes on $\Gr$ (in the sense of~\cite{jmw}). As explained in~\cite[\S 3.1]{modrap2}, this category admits a natural t-structure, called the \emph{perverse t-structure}, and whose heart will be denoted $\Perv^\mix_{(\Iw)}(\Gr, \bk)$. It also admits a ``Tate twist'' autoequivalence $\langle 1 \rangle$ which is t-exact, see~\cite[\S 2.2]{modrap2}. With respect to this autoequivalence, the category $\Perv^\mix_{(\Iw)}(\Gr, \bk)$ has a natural structure of a graded quasi-hereditary category with poset $\bX$ (for the order induced by inclusion of closures of orbits $\Gr_\lambda$); see~\cite[\S 3.2]{modrap2}. (Note that the assumption~\cite[(A2)]{modrap2} holds in the present setting by~\cite[Corollary~4.8]{modrap2}.) We will denote by $\cJ_!(\lambda)$, $\cJ_*(\lambda)$, $\IC^\mix_\lambda$ and $\cT(\lambda)$ the corresponding standard, costandard, simple, and tilting objects respectively. (In the conventions of~\cite{modrap2}, the objects $\cJ_!(\lambda)$, $\cJ_*(\lambda)$, $\cT(\lambda)$ would rather be denoted $\Delta^\mix_\lambda$, $\nabla^\mix_\lambda$, $\mathcal{T}^\mix_\lambda$.)

\begin{rmk}
\label{rmk:order-Pervmix}
It follows from the proof of Lemma~\ref{lem:properties-order} and Remark~\ref{rmk:comparison-ar-mr}\eqref{it:comp-ar-mr} that the order on $\bX$ induced by inclusions of closures of orbits $\Gr_\lambda$ is precisely the order $\leq$ introduced in~\S\ref{ss:orders-X}.
\end{rmk}

Now that this notation is introduced, we can finally give the proof of Lemma~\ref{lem:properties-order}.

\begin{proof}[Proof of Lemma~{\rm \ref{lem:properties-order}}]
Let $\Iw_- \subset \dot G^\vee(\scO)$ be the Iwahori subgroup associated with the Borel subgroup $\dot B^\vee$, and consider the ``opposite'' affine Grassmannian
\[
\Gr' := \dot G^\vee(\scO) \backslash \dot G^\vee(\scK).
\]
This ind-variety is endowed with natural actions of $\Iw_-$ and $\dot G^\vee(\scO)$ induced by right multiplication on $\dot G^\vee(\scK)$. For any $\lambda \in \bX$ we set $\Gr'_\lambda := \dot G^\vee(\scO) \backslash \dot G^\vee(\scO) \cdot z^\lambda \cdot \Iw_-$. Then the length function and Bruhat order on $\Waff$ describe dimensions of $\Iw_-$-orbits and inclusions between the closures of these orbits, respectively, in $\Iw_- \backslash \dot G^\vee(\scK)$. We deduce that we have
\begin{equation}
\label{eqn:order-Gr'}
\ell(w_\lambda) = \dim(\Gr'_\lambda) \quad \text{and} \quad \bigl( \lambda \leq \mu \ \Leftrightarrow \ \overline{\Gr'_\lambda} \subset \overline{\Gr'_\mu} \bigr).
\end{equation}
When $\lambda \in \bX^+$ we also set $(\Gr')^\lambda = \dot G^\vee(\scO) \backslash \dot G^\vee(\scO) z^\lambda \dot G^\vee(\scO)$. Then it is well known that
\begin{equation}
\label{eqn:order-Gr'-dominant}
\overline{(\Gr')^\lambda} \subset \overline{(\Gr')^\mu} \ \Leftrightarrow \ \lambda \preceq \mu.
\end{equation}
Moreover, $\Gr'_\lambda$ is dense in $(\Gr')^\lambda$.

Now we can prove part~\eqref{it:orders-dominant}. Let $\lambda, \mu \in \bX^+$. Then as explained above $\Gr'_\lambda$ is dense in $(\Gr')^\lambda$ and $\Gr'_\mu$ is dense in $(\Gr')^\mu$. We deduce that $\overline{\Gr'_\lambda} \subset \overline{\Gr'_\mu}$ if and only if $\overline{(\Gr')^\lambda} \subset \overline{(\Gr')^\mu}$. Comparing with~\eqref{eqn:order-Gr'} and~\eqref{eqn:order-Gr'-dominant}, we deduce that $\lambda \leq \mu$ if and only if $\lambda \preceq \mu$.

Then we prove part~\eqref{it:orders-orbit}. Let $\dot P_\lambda^\vee$ be the stabilizer in $\dot G^\vee$ of the point $\dot G^\vee(\scO) \cdot z^{w_S(\lambda)} \in \Gr'$. Then $\dot P_\lambda^\vee$ is the parabolic subgroup containing $\dot B^\vee$ associated with the subset $K=\{w_S s w_S, \, s \in I\}$ of $S$. Moreover there exists a natural morphism $(\Gr')^\lambda \to \dot P_\lambda^\vee \backslash \dot G^\vee$ which is an affine fibration and sends the point $\dot G^\vee(\scO) \cdot z^{w_S(\lambda)}$ to the base point $\dot P_\lambda^\vee \cdot 1$. For any $w \in W$, this fibration restricts to a fibration $\Gr'_{w(\lambda)} \to \dot P_\lambda^\vee \backslash \dot P_\lambda^\vee (w_S w^{-1}) \dot B^\vee$ with the same fiber. Hence the inclusions between closures of orbits in $(\Gr')^\lambda$ are governed by the inclusions between closures of $\dot B^\vee$-orbits in $\dot P_\lambda^\vee \backslash \dot G^\vee$, which is itself governed by the Bruhat order on $W_K \backslash W$. More precisely, let $v,w \in W$ be such that $v=\min(vW_I)$ and $w=\min(wW_I)$. Then using~\eqref{eqn:order-Gr'} we have
\begin{multline*}
v(\lambda) \leq w(\lambda) \ \Leftrightarrow \ \overline{\Gr'_{v(\lambda)}} \subset \overline{\Gr'_{w(\lambda)}} \ \Leftrightarrow \ \overline{\Gr'_{v(\lambda)}} \cap (\Gr')^\lambda \subset \overline{\Gr'_{w(\lambda)}} \cap (\Gr')^\lambda \\
\Leftrightarrow \ \overline{\dot P_\lambda^\vee \backslash \dot P_\lambda^\vee (w_S v^{-1}) \dot B^\vee} \subset \overline{\dot P_\lambda^\vee \backslash \dot P_\lambda^\vee (w_S w^{-1}) \dot B^\vee}.
\end{multline*}
Now we have $w_S v^{-1} = \max(W_K w_S v^{-1})$ and $w_S w^{-1} = \max(W_K w_S w^{-1})$. Hence this last condition is equivalent to $w_S v^{-1} \leq w_S w^{-1}$, and finally to $w \leq v$, which finishes the proof.

Finally we prove part~\eqref{it:order-conv}. If $\lambda \leq \mu$ then $\overline{\Gr'_\lambda} \subset \overline{\Gr'_\mu}$ (see~\eqref{eqn:order-Gr'}), hence $\overline{\Gr'_\lambda} \subset \overline{(\Gr')^{\dom(\mu)}}$, which implies that $\overline{(\Gr')^{\dom(\lambda)}} \subset \overline{(\Gr')^{\dom(\mu)}}$, and finally that $\dom(\lambda) \preceq \dom(\mu)$ (see~\eqref{eqn:order-Gr'-dominant}). By~\eqref{eqn:conv-order}, this implies that $\lambda \in \conv(\mu)$.
\end{proof}

\subsection{Geometric Satake equivalence}
\label{ss:geometric-Satake}

Let $\Perv_\sph(\Gr,\bk)$ be the abelian category of (ordinary, i.e.~non-mixed) $\dot G^\vee(\scO)$-equivariant perverse sheaves on $\Gr$. (The $\dot G^\vee(\scO)$-orbits on $\Gr$ are sometimes called the \emph{spherical orbits}, and the objects of $\Perv_\sph(\Gr,\bk)$ are then called \emph{spherical perverse sheaves}.) This category is equipped with a symmetric monoidal structure given by the convolution product $\star$; moreover there exists an equivalence of abelian tensor categories
\begin{equation}\label{eqn:satake}
\Sat: (\Perv_\sph(\Gr,\bk), \star) \simto (\Repf(\dot G), \otimes),
\end{equation}
which sends the intersection cohomology sheaf associated with an orbit $\dot G^\vee(\scO) \cdot L_\lambda$ with $\lambda \in \bX^+$ to the simple $\dot G$-module with highest weight $\lambda$.
This equivalence is known as the \emph{geometric Satake equivalence}; in this generality, it is due to Mirkovi{\'c}--Vilonen~\cite{mv:gld}.

Following~\cite[\S 2.4]{ar:agsr}, one can define a right action of $\Perv_\sph(\Gr,\bk)$ on $\Dmix_{(\Iw)}(\Gr, \bk)$ as follows. Let $\PerPar_\sph(\Gr,\bk)$ be the subcategory of $\Perv_\sph(\Gr,\bk)$ consisting of objects which are parity. In view of the geometric Satake equivalence~\eqref{eqn:satake}, the category $\Perv_\sph(\Gr,\bk)$ admits a natural structure of highest weight category, and the objects $\PerPar_\sph(\Gr,\bk)$ are exactly the tilting objects for this structure. (In most cases, this follows from the main result of~\cite{jmw2}. The general case is discussed in detail in~\cite[\S 1.5]{mr:etsps}.) In particular, the natural functor
\[
\Kb(\PerPar_{\sph}(\Gr,\bk)) \to \Db \Perv_{\sph}(\Gr,\bk)
\]
is an equivalence of categories, so that we can consider $\Perv_{\sph}(\Gr,\bk)$ as a full subcategory in $\Kb(\PerPar_{\sph}(\Gr,\bk))$. The convolution product induces a symmetric monoidal structure on $\PerPar_{\sph}(\Gr,\bk)$, and hence on $\Kb(\PerPar_{\sph}(\Gr,\bk))$, so that the monoidal structure can also be recovered from this equivalence (see~\cite{jmw2}). In conclusion, to construct an action of $\Perv_\sph(\Gr,\bk)$ on $\Dmix_{(\Iw)}(\Gr, \bk)$ it suffices to construct an action of $\Kb(\PerPar_{\sph}(\Gr,\bk))$ on $\Dmix_{(\Iw)}(\Gr, \bk)$. Now the convolution product also restricts to a bifunctor
\[
\Par_{(\Iw)}(\Gr, \bk) \times \PerPar_{\sph}(\Gr,\bk) \to \Par_{(\Iw)}(\Gr, \bk);
\]
see~\cite[Theorem~4.8]{jmw}.
Passing to bounded homotopy categories we deduce the desired action of the monoidal category $\Kb(\PerPar_{\sph}(\Gr,\bk))$ on $\Dmix_{(\Iw)}(\Gr, \bk)$.

\subsection{Relation with coherent sheaves on the Springer resolution}
\label{ss:ar-mr-thms}

The following theorem is the main result of~\cite{ar:agsr}; see also~\cite{mr:etsps} for a different construction of such an equivalence. (See Remark~\ref{rmk:comparison-ar-mr}\eqref{it:comp-ar-mr} below for a comparison of the two constructions.)

\begin{thm}
\label{thm:ar-equivalence}
There exists an equivalence of triangulated categories
\[
P : \Dmix_{(\Iw)}(\Gr, \bk) \simto \Db\Coh^{\dot G \times \Gm}(\tcN)
\]
with the following properties:
\begin{enumerate}
\item
\label{it:ar-shift}
there exists an isomorphism of functors $P \circ \langle 1 \rangle \cong \langle 1 \rangle [1] \circ P$;
\item
\label{it:ar-standard-costandard}
for any $\lambda \in \bX$,
there exist isomorphisms
\[
P(\cJ_!(\lambda)) \cong \Delta(\lambda), \quad P(\cJ_*(\lambda)) \cong \nabla(\lambda);
\]
\item
\label{it:ar-Satake}
for any $\cF$ in $\Dmix_{(\Iw)}(\Gr, \bk)$ and any $\cG \in \Perv_\sph(\Gr,\bk)$, there exists a bifunctorial isomorphism $P(\cF \star \cG) \cong P(\cF) \otimes \Sat(\cG)$.
\end{enumerate}
\end{thm}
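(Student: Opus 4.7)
The plan is to reduce the construction of $P$ to an equivalence of additive categories between parity sheaves on $\Gr$ and tilting objects for the exotic t-structure on $\tcN$. On the constructible side, by definition $\Dmix_{(\Iw)}(\Gr,\bk)=\Kb(\Par_{(\Iw)}(\Gr,\bk))$. On the coherent side, the exotic t-structure of~\S\ref{ss:standard-costandard-exotic} makes $\Db\Coh^{\dot G \times \Gm}(\tcN)$ into a graded highest weight category with poset $(\bX,\le)$, and Lemma~\ref{lem:graded-finite-type} provides the finite-dimensionality of graded $\Hom$-spaces needed to invoke a standard tilting-theoretic argument producing an equivalence $\Kb(\mathsf{Tilt}^{\mathrm{ex}}) \simto \Db\Coh^{\dot G \times \Gm}(\tcN)$, where $\mathsf{Tilt}^{\mathrm{ex}}$ denotes the additive category of exotic tilting sheaves. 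It therefore suffices to produce an equivalence of graded additive categories
\[
\Par_{(\Iw)}(\Gr,\bk) \simto \mathsf{Tilt}^{\mathrm{ex}}
\]
which intertwines the grading shift $\langle 1\rangle$ on the source with $\langle 1\rangle[1]$ on the target, sends the indecomposable parity sheaf supported on $\overline{\Gr_\lambda}$ to the indecomposable tilting exotic sheaf labelled by $\lambda\in\bX$, and is compatible with the right convolution action of $\Perv_\sph(\Gr,\bk)$ and the tensor product action of $\Repf(\dot G)$ via $\Sat$.

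The additive equivalence is the heart of the matter. Following the approach of~\cite{mr:etsps} (or the alternative in~\cite{ar:agsr}), one produces a generator on each side whose graded endomorphism algebra can be computed and matched. Via geometric Satake, $\PerPar_\sph(\Gr,\bk)$ identifies with tilting $\dot G$-modules, and convolution with these gives enough parity objects to build $\Par_{(\Iw)}(\Gr,\bk)$ from a small collection of seed parity complexes; on the coherent side the analogous generators arise by tensoring a seed tilting exotic sheaf with $\dot G$-modules. The matching of endomorphism algebras ultimately rests on parallel graded $\Ext$-computations, governed on the sheaf side by hyperbolic restriction to $\dot T^\vee(\scK)$-fixed-point components and the combinatorics of $\bX$, and on the coherent side by the $\dot G\times\Gm$-equivariant geometry of $\tcN$ together with the Koszul duality developed in Section~\ref{sec:koszul}.

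With the additive equivalence in hand, the three asserted properties follow in turn. Property~\eqref{it:ar-shift} is built into the construction, since the internal shift $\langle 1\rangle$ on parity complexes corresponds by design to $\langle 1\rangle[1]$ on coherent sheaves. For property~\eqref{it:ar-standard-costandard}, both $\cJ_!(\lambda)$ and $\Delta(\lambda)$ (resp.\ $\cJ_*(\lambda)$ and $\nabla(\lambda)$) are characterized within their respective graded highest weight structures by the same $\Hom$-vanishing conditions relative to the common poset $(\bX,\le)$ of Remark~\ref{rmk:order-Pervmix}; this characterization, together with the matching of indecomposable tilting objects, forces the correspondence. Property~\eqref{it:ar-Satake} is essentially definitional once the equivalence is constructed to intertwine the two actions: the Satake-twisted action of $\Perv_\sph(\Gr,\bk)$ on $\mathsf{Tilt}^{\mathrm{ex}}$ corresponds under $\Kb$ to the tensor-by-$\Sat(\cG)$ action on $\Db\Coh^{\dot G\times\Gm}(\tcN)$.

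The hard part is the second paragraph: producing the additive equivalence and in particular carrying out the $\Ext$-computation that identifies the two endomorphism algebras. Neither parity sheaves on $\Gr$ nor tilting exotic sheaves on $\tcN$ admit an elementary combinatorial description, so the matching must be established indirectly, and a nontrivial amount of geometric input (hyperbolic restriction, the geometry of the Springer resolution, and Koszul duality) is required to set up the isomorphisms compatibly on both sides.
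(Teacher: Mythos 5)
The paper does not prove Theorem~\ref{thm:ar-equivalence}: it is explicitly cited as ``the main result of~\cite{ar:agsr}'' (with an alternative construction in~\cite{mr:etsps}), and Remark~\ref{rmk:comparison-ar-mr} discusses the differences between the two external constructions. So you are supplying a proof where the paper supplies a reference.

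Your sketch follows the general architecture of the Mautner--Riche construction~\cite{mr:etsps}: reduce to an equivalence between $\Par_{(\Iw)}(\Gr,\bk)$ and the additive category of tilting objects for the exotic t-structure, and establish it by matching graded endomorphism algebras of generators. That much is a fair outline. However, the last paragraph of your sketch contains a genuine gap: you assert that property~\eqref{it:ar-Satake} ``is essentially definitional once the equivalence is constructed to intertwine the two actions.'' The difficulty is precisely in arranging that the additive tilting equivalence intertwines the right convolution action of $\Perv_\sph(\Gr,\bk)$ with $(-)\otimes\Sat(\cG)$; this is \emph{not} part of the construction in~\cite{mr:etsps}, and Remark~\ref{rmk:comparison-ar-mr}\eqref{it:comp-ar-mr} in the paper flags this explicitly as ``the main difference between the two constructions,'' namely that the Satake compatibility is not clear from the proof in~\cite{mr:etsps}. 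The Achar--Rider approach~\cite{ar:agsr} secures property~\eqref{it:ar-Satake} by a different route (not through an abstract comparison of endomorphism algebras of tilting generators), and that is the construction the paper needs. So if one insists on proving the theorem via the parity/tilting additive equivalence, the Satake compatibility is a substantive unresolved step, not a formality. A second, smaller caveat: your argument that $\cJ_!(\lambda)\mapsto\Delta(\lambda)$ requires that the labelings of the two highest weight structures by $(\bX,\le)$ are compatible under the equivalence, and this itself requires a computation (it is one of the points where the two references~\cite{ar:agsr} and~\cite{mr:etsps} differ, as noted in Remark~\ref{rmk:comparison-ar-mr}\eqref{it:comp-ar-mr}).
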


\begin{rmk}
\label{rmk:comparison-ar-mr}
\begin{enumerate}
\item
The difference of sign between property~\eqref{it:ar-shift} in Theorem~\ref{thm:ar-equivalence} and the statement of~\cite[Theorem~1.1]{ar:agsr} is due to the difference of conventions in the definition of the functor $\langle 1 \rangle$ for coherent sheaves in~\cite{ar:agsr} and in the present paper. Property~\eqref{it:ar-standard-costandard} is not stated explicitly in~\cite[Theorem~1.1]{ar:agsr}, but it appears in the proof of~\cite[Theorem~8.3]{ar:agsr}.
\item
\label{it:comp-ar-mr}
In~\cite{mr:etsps}, a different construction of an equivalence between $\Dmix_{(\Iw)}(\Gr, \bk)$ and $\Db\Coh^{\dot G \times \Gm}(\tcN)$ is given. The main difference between the two constructions is that the compatibility with the geometric Satake equivalence (Property~\eqref{it:ar-Satake}) is not clear from the proof in~\cite{mr:etsps}. Another difference appears in the labeling of objects: the equivalence of~\cite{mr:etsps} exchanges the (co)standard mixed perverse sheaf labeled by $\lambda$ and the (co)standard exotic sheaf labeled by $-\lambda$. To resolve this apparent contradiction, one should recall that the Iwahori subgroup used in~\cite{mr:etsps} is the \emph{negative} one, denoted $\Iw_-$ in the proof of Lemma~\ref{lem:properties-order}. Hence, if $\varphi$ is an automorphism of $\dot G^\vee$ as in the proof of~\cite[Corollary~II.1.16]{jantzen}, then we have $\varphi(\dot B^\vee)=\dot B^\vee_+$ and $\varphi(t)=t^{-1}$ for $t \in \dot T^\vee$, so that the induced automorphism of $\Gr$ sends the orbit $\Iw_- \cdot L_\lambda$ to the orbit denoted $\Gr_{-\lambda}$ in the present paper; hence the induced equivalence $\Dmix_{(\Iw_-)}(\Gr, \bk) \simto \Dmix_{(\Iw)}(\Gr, \bk)$ will send the object denoted $\Delta^\mix_{\lambda}$ in~\cite{mr:etsps} to the object $\cJ_!(-\lambda)$ of the present paper, and similarly for costandard objects. Using the notation introduced in the proof of Lemma~\ref{lem:properties-order}, this comment also shows that the anti-automorphism $g \mapsto \varphi(g)^{-1}$ induces an isomorphism of varieties $\Gr \simto \Gr'$ which sends $\Gr_\lambda$ to $\Gr'_\lambda$.
\end{enumerate}
\end{rmk}

\subsection{The Finkelberg--Mirkovi\'c conjecture}
\label{ss:fmconjecture}

The category $\Repf(\dot G)$ embeds in the category $\Rep_\varnothing(G)$ via the functor $V \mapsto \For^{\dot G}_G(V)$ associated with the Frobenius morphism $G \to \dot G$. On the other hand, according to~\cite[Proposition~2.1]{mv:gld}, the category $\Perv_{\sph}(\Gr, \bk)$ is equivalent (via the natural forgetful functor) to the category of perverse sheaves on $\Gr$ constructible with respect to the $\dot G^\vee(\scO)$-orbits, so it embeds in the category $\Perv_{(\Iw)}(\Gr,\bk)$ of (ordinary) $\Iw$-constructible perverse sheaves.
In~\cite[\S1.5]{fm:sif1}, M.~Finkelberg and I.~Mirkovi{\'c} conjectured that~\eqref{eqn:satake} can be extended to an equivalence between these larger categories.
In the statement below, we denote by $\IC_\lambda$ the simple perverse sheaf associated with the $\Iw$-orbit $\Gr_\lambda$.  Recall also that the convolution action of the category $\Perv_\sph(\Gr,\bk)$ on the $\Iw$-constructible derived category $\Db_{(\Iw)}(\Gr,\bk)$ restricts to a right action of $\Perv_\sph(\Gr,\bk)$ on $\Perv_{(\Iw)}(\Gr,\bk)$. (This fact is proved for $\overline{\mathbb{Q}}_\ell$-coefficients in the \'etale setting in~\cite[Comments after Proposition~6]{gaitsgory}; the same proof applies also in our setting.)

\begin{conj}[Finkelberg--Mirkovi\'c~\cite{fm:sif1}]
\label{conj:fm}
There exists an equivalence of highest weight categories
\[
Q: \Perv_{(\Iw)}(\Gr,\bk) \simto \Rep_\varnothing(G)
\]
such that
\begin{enumerate}
\item 
for any $\lambda \in \bX$, we have $Q(\IC_\lambda) \cong \irr(w_\lambda \bullet 0)$;
\item 
for any $\cF \in \Perv_{(\Iw)}(\Gr,\bk)$ and any $\cG \in \Perv_\sph(\Gr,\bk)$, there exists a bifunctorial isomorphism $Q(\cF \star \cG) \cong Q(\cF) \otimes \For^{\dot G}_G(\Sat(\cG))$.
\end{enumerate}
\end{conj}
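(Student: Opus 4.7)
The strategy is to deduce the Finkelberg--Mirkovi\'c conjecture from the graded version (Theorem~\ref{thm:intro-fm-conjecture}) established in the paper. The graded equivalence $\bQ$ is a degrading functor with respect to the Tate twist, so producing the ungraded functor $Q$ amounts to ``killing the Tate twist'' in a suitable way, via a realization functor from the mixed derived category to the ordinary one.

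Concretely, I would first construct a triangulated realization functor
\[
v: \Dmix_{(\Iw)}(\Gr, \bk) \to \Db_{(\Iw)}(\Gr, \bk)
\]
using the identification $\Dmix_{(\Iw)}(\Gr, \bk) = \Kb(\Par_{(\Iw)}(\Gr,\bk))$: a bounded complex of parity complexes has a natural ``total complex'' realization in the ordinary constructible derived category. I would verify that $v$ is t-exact for the perverse t-structures, satisfies $v \circ \langle 1 \rangle \cong v$, sends $\cJ_!(\lambda)$, $\cJ_*(\lambda)$, $\IC^\mix_\lambda$ and $\cT(\lambda)$ to the corresponding standard, costandard, simple and tilting non-mixed perverse sheaves (so in particular $v(\IC^\mix_\lambda) \cong \IC_\lambda$), is compatible with the right convolution action of $\Perv_\sph(\Gr, \bk)$, and induces, for $\cF, \cG \in \Perv^\mix_{(\Iw)}(\Gr, \bk)$, an isomorphism
\[
\bigoplus_{n \in \Z} \Hom_{\Perv^\mix_{(\Iw)}(\Gr, \bk)}(\cF, \cG\langle n\rangle) \simto \Hom_{\Perv_{(\Iw)}(\Gr,\bk)}(v(\cF), v(\cG)).
\]

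Given this functor, I would define $Q$ by combining the degrading property of $\bQ$ from Theorem~\ref{thm:intro-fm-conjecture}(2) with the analogous property of $v$: for $\cF$ in $\Perv^\mix_{(\Iw)}(\Gr, \bk)$, set $Q(v(\cF)) := \bQ(\cF)$, and define $Q$ on morphisms via the canonical isomorphism
\[
\Hom_{\Perv_{(\Iw)}(\Gr,\bk)}(v(\cF), v(\cG)) \cong \Hom_{\Rep_\varnothing(G)}(\bQ(\cF), \bQ(\cG))
\]
obtained by composing the two degrading isomorphisms. By construction $Q$ is fully faithful on morphisms between objects of the form $v(\cF)$, and since the simples $\IC_\lambda = v(\IC^\mix_\lambda)$ generate $\Perv_{(\Iw)}(\Gr, \bk)$, this determines $Q$ uniquely and shows it is an equivalence of highest weight categories. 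The identification $Q(\IC_\lambda) \cong \irr(w_\lambda \bullet 0)$ follows from $\bQ(\IC^\mix_\lambda) \cong \irr(w_\lambda \bullet 0)$ (part (1) of Theorem~\ref{thm:intro-fm-conjecture}) combined with $v(\IC^\mix_\lambda) \cong \IC_\lambda$, while the compatibility with convolution follows from the analogous property of $\bQ$ (part (3)) and the compatibility of $v$ with convolution.

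The main obstacle is the construction and verification of the realization functor $v$, and in particular the two crucial properties that $v(\IC^\mix_\lambda) \cong \IC_\lambda$ and that $v$ induces the displayed degrading isomorphism on Hom-spaces. The existence of $v$ as a triangulated ``total complex'' functor is essentially formal, but the identification $v(\IC^\mix_\lambda) \cong \IC_\lambda$ is closely tied to the parity-vanishing property for the non-mixed $\IC$-sheaf $\IC_\lambda$ with respect to the $\Iw$-orbit stratification, which is a deep and in general open problem in modular geometric representation theory. The plan as described would thus give a conditional proof of the Finkelberg--Mirkovi\'c conjecture, with the condition being precisely this parity-vanishing statement; proving this condition unconditionally would likely require a substantial new input beyond the tools developed in the paper.
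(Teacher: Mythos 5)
The statement you were asked to prove is labelled a \emph{conjecture} in the paper, and the paper explicitly does not prove it: the text states ``Conjecture~\ref{conj:fm} remains open at the moment'' and instead establishes only the graded analogue (Theorem~\ref{thm:graded-fm}). Your proposal is therefore not a proof but a conditional reduction, and to your credit you say so explicitly in your final paragraph. The reduction itself is the natural one and matches the paper's own expectations: the Remark following Theorem~\ref{thm:graded-fm} states precisely that the existence of a degrading functor $v : \Perv^\mix_{(\Iw)}(\Gr,\bk) \to \Perv_{(\Iw)}(\Gr,\bk)$ ``is not known at present.''

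The gap you identify is genuine and is in fact the entire difficulty, and it is worth being precise about how hard it is. First, even before the issue of $v(\IC^\mix_\lambda) \cong \IC_\lambda$, the t-exactness of the total-complex realization $\Kb(\Par_{(\Iw)}(\Gr,\bk)) \to \Db_{(\Iw)}(\Gr,\bk)$ and the degrading isomorphism on $\Ext$-groups are not formal: they amount to a Koszulity/formality statement for the ordinary $\Iw$-constructible derived category of $\Gr$ with coefficients in $\bk$, which is exactly the kind of statement the mixed formalism of~\cite{modrap2} was designed to circumvent (and which in~\cite{modrap2} is only established for finite flag varieties in good characteristic). Second, the identification $v(\IC^\mix_\lambda) \cong \IC_\lambda$ requires $\cE_\lambda \cong \IC_\lambda$, i.e.\ parity-vanishing and torsion-freeness of stalks and costalks of the ordinary intersection cohomology complexes. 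By Theorem~\ref{thm:criterion-lusztig}\eqref{it:criterion-1}, this condition on the orbits in $Y_1$ already \emph{implies} Lusztig's conjecture, which is known to fail for some values of $\ell > h$ by~\cite{williamson}. So your hypothesis cannot hold unconditionally in the generality of the conjecture ($\ell > h$), and any correct proof of Conjecture~\ref{conj:fm} must take a route that does not pass through parity-vanishing of the ordinary $\IC_\lambda$. In short: the strategy is the expected one, but the missing ingredient is not a technical verification — it is (at least) as deep as the open problems the paper is organized around, and in the stated generality the specific condition you propose to rely on is false.
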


A characteristic-zero analogue of this conjecture (involving the principal block of a quantum group at a root of unity) was proved in~\cite{abg:qglg}.

\subsection{A graded version of the Finkelberg--Mirkovi\'c conjecture}
\label{ss:graded-fm-conj}

Conjecture~\ref{conj:fm} remains open at the moment. Our goal in this section is to establish a ``graded version'' of it, involving the following notion from~\cite{bgs}.

\begin{defn}
Let $\mathcal{A}$ be a $\bk$-linear abelian category in which every object has finite length.  A \emph{grading} on $\mathcal{A}$ is a triple $(\mathcal{M},v,\varepsilon)$ where $\mathcal{M}$ is a $\bk$-linear abelian category equipped with an autoequivalence $\la 1\ra: \mathcal{M} \to \mathcal{M}$, $v: \mathcal{M} \to \mathcal{A}$ is an exact functor whose essential image includes all simple objects in $\mathcal{A}$, and $\varepsilon: v \simto v \circ \la 1\ra$ is an isomorphism of functors such that the induced map
\[
\bigoplus_{n \in \Z} \Ext^k_{\mathcal{M}}(M,N\la n\ra) \to \Ext^k_{\mathcal{A}}(v(M),v(N))
\]
is an isomorphism for all $M,N \in \mathcal{M}$ and all $k \in \Z$.
\end{defn}

Our first result is that the convolution action of $\Perv_\sph(\Gr,\bk)$ on $\Dmix_{(\Iw)}(\Gr,\bk)$ introduced in~\S\ref{ss:geometric-Satake} is t-exact, in the following sense.

\begin{prop}
\label{prop:convolution-exact}
For any $\cF \in \Perv^\mix_{(\Iw)}(\Gr,\bk)$ and any $\cG \in \Perv_\sph(\Gr,\bk)$, we have $\cF \star \cG \in \Perv^\mix_{(\Iw)}(\Gr,\bk)$.
\end{prop}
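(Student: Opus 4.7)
The strategy is to transport the problem across the equivalence $P$ of Theorem~\ref{thm:ar-equivalence} into a question about coherent sheaves on the Springer resolution, where one can exploit the structural results of Section~\ref{sec:exotic} together with Bezrukavnikov's exotic t-structure.

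First I would verify that $P$ intertwines the mixed perverse t-structure on $\Dmix_{(\Iw)}(\Gr,\bk)$ with the exotic t-structure on $\Db\Coh^{\dot G \times \Gm}(\tcN)$ (whose standard and costandard objects are the $\Delta(\lambda)$ and $\nabla(\lambda)$ of Proposition~\ref{prop:exotic-defn}). By Theorem~\ref{thm:ar-equivalence}(2), $P$ sends $\cJ_!(\lambda)$ to $\Delta(\lambda)$ and $\cJ_*(\lambda)$ to $\nabla(\lambda)$; both hearts are graded highest weight categories indexed by $\bX$, and the heart of such a category is determined (inside its ambient derived category) by its families of standard and costandard objects, so $P$ preserves the hearts.

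Next, using Theorem~\ref{thm:ar-equivalence}(3), we have $P(\cF \star \cG) \cong P(\cF) \otimes \Sat(\cG)$, so the proposition reduces to the assertion that for every $V \in \Repf(\dot G)$, the endofunctor $({-}) \otimes V$ of $\Db\Coh^{\dot G \times \Gm}(\tcN)$ is t-exact for the exotic t-structure. Since $({-}) \otimes V$ has both adjoints (given by $({-}) \otimes V^*$), it suffices to show that $\Delta(\lambda) \otimes V$ and $\nabla(\lambda) \otimes V$ lie in the exotic heart for every $\lambda \in \bX$. Filtering $V$ by $\dot B$-subspaces with one-dimensional successive quotients $\bk_{\dot B}(\nu_i)$ reduces this further to the case $V = \bk_{\dot B}(\nu)$, i.e.\ to showing that $\Delta(\lambda) \otimes \cO_{\tcN}(\nu)$ admits a standard-exotic filtration and $\nabla(\lambda) \otimes \cO_{\tcN}(\nu)$ admits a costandard-exotic filtration, for all $\lambda, \nu \in \bX$.

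These filtrations would be built by induction on the length $\delta_\lambda$, with the base case of dominant $\lambda$ handled by the identifications $\Delta(\lambda) = \nabla(\lambda) = \cO_{\tcN}(\lambda)$ and a direct analysis of $\cO_{\tcN}(\lambda+\nu)$ using~\eqref{eqn:pi_i} and~\eqref{eqn:pi_i-dual}, together with the description in Proposition~\ref{prop:exotic-recall}\eqref{it:nabla-convo}. The inductive step would use the wall-crossing triangles of Proposition~\ref{prop:exotic-recall}\eqref{it:pi-s-std}, tensored with $\cO_{\tcN}(\nu)$, to propagate the property from $\nabla(s\lambda)$ (resp.\ $\Delta(s\lambda)$) to $\nabla(\lambda)$ (resp.\ $\Delta(\lambda)$), provided one knows the compatibility of the functors $\Pi_s$ and $\Pi^s$ with line bundle twists. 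The main obstacle will be this inductive step: line bundle twists need not preserve the Bruhat order on $\bX$, so combining the wall-crossing triangles to produce the desired filtrations requires careful bookkeeping with the refined orders of~\S\ref{ss:orders-X} and the grading shifts. This is essentially an elaboration of techniques standard in the theory of exotic sheaves.
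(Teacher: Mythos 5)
Your proposal takes a genuinely different route from the paper's. The paper composes $P$ with $\varkappa_\varnothing$ and $\Omega_\varnothing = \pr_\varnothing \circ R\Ind_B^G {}\circ {\inc} \circ \psi_\varnothing$ to land in $\Db\Rep_\varnothing(G)$, observes that $\bQ$ is t-exact (by Proposition~\ref{prop:exotic-weyl} and property~\eqref{eqn:bq-degr}) and conservative, notes that $\bQ(\cF \star \cG) \cong \bQ(\cF) \otimes \For^{\dot G}_G(\Sat(\cG))$ obviously lies in the heart because tensoring with a finite-dimensional module is exact on $\Rep(G)$, and then concludes by t-exactness plus conservativity of $\bQ$. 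This sidesteps the exotic t-structure entirely: the hard work is all encapsulated in Proposition~\ref{prop:exotic-weyl}, which the paper has already established. Your plan instead stops at $\Db\Coh^{\dot G \times \Gm}(\tcN)$ and tries to prove t-exactness of $({-}) \otimes V$ there. In principle this is equivalent (and can actually be \emph{deduced} from the proposition via the conservative t-exact $\bQ$), but proving it directly is substantially harder than the paper's route.

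Beyond the strategic difference, there are concrete gaps. First, your base case is false: for dominant $\lambda$ one has $\nabla(\lambda) \cong \cO_\tcN(\lambda)$, but $\Delta(\lambda)$ is \emph{not} isomorphic to $\cO_\tcN(\lambda)$ unless $\lambda$ is fixed by $W$. The identification $\Delta(\lambda) \cong \cO_\tcN(\lambda)\langle -\delta_\lambda\rangle$ holds for \emph{antidominant} $\lambda$ (see the proof of Proposition~\ref{prop:exotic-weyl}), so your base-case analysis would have to treat $\Delta$ and $\nabla$ separately, starting from opposite ends of a $W$-orbit. Second, the crux of the argument — the inductive step combining the wall-crossing triangles of Proposition~\ref{prop:exotic-recall}\eqref{it:pi-s-std} with line-bundle twists — is exactly where you flag the obstruction (line-bundle twists do not respect the Bruhat order), and you do not resolve it. Establishing that $\Delta(\lambda) \otimes V$ admits a standard-exotic filtration for all $V \in \Repf(\dot G)$ is a nontrivial theorem comparable in strength to the results of Section~\ref{sec:exotic}; writing it off as "an elaboration of standard techniques" does not close the gap. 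The paper deliberately avoids having to prove such a filtration statement by pushing forward all the way to $\Rep_\varnothing(G)$, where the analogous exactness is classical.
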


This proposition will be proved simultaneously with the following theorem, which we view as a ``graded version'' of Conjecture~\ref{conj:fm}. In this statement, for $\mu \in \bX^+$, we denote by $\tilt(\mu)$ the tilting $G$-module with highest weight $\mu$.

\begin{thm}
\label{thm:graded-fm}
There exists an exact functor
\[
\bQ: \Perv^\mix_{(\Iw)}(\Gr,\bk) \to \Rep_\varnothing(G)
\]
together with an isomorphism $\varepsilon: \bQ \simto \bQ \circ \la 1\ra$ such that $(\Perv^\mix_{(\Iw)}(\Gr,\bk), \bQ, \varepsilon)$ is a grading on $\Rep_\varnothing(G)$.  In addition,
\begin{enumerate}
\item
\label{it:Q-images}
for any $\lambda \in \bX$, we have
\begin{gather*}
\bQ(\cJ_!(\lambda)) \cong \weyl(w_\lambda \bullet 0), \quad
\bQ(\cJ_*(\lambda)) \cong \coweyl(w_\lambda \bullet 0), \\
\bQ(\IC^\mix_\lambda) \cong \irr(w_\lambda \bullet 0), \quad \bQ(\cT(\lambda)) \cong \tilt(w_\lambda \bullet 0);
\end{gather*}
\item
\label{it:Q-Satake}
for any $\cF \in \Perv^\mix_{(\Iw)}(\Gr,\bk)$ and any $\cG \in \Perv_\sph(\Gr,\bk)$, there exists a bifunctorial isomorphism $\bQ(\cF \star \cG) \cong \bQ(\cF) \otimes \For^{\dot G}_G(\Sat(\cG))$.
\end{enumerate}
\end{thm}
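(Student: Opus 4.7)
The plan is to define $\bQ$ at the level of triangulated categories by
\[
\bQ := R\Ind_B^G \circ F_\varnothing \circ P : \Dmix_{(\Iw)}(\Gr,\bk) \to \Db\Rep_\varnothing(G),
\]
with $F_\varnothing = \psi_\varnothing \circ \varkappa_\varnothing$, verify all stated properties at the triangulated level, and only then descend to the abelian statement of Theorem~\ref{thm:graded-fm} and to Proposition~\ref{prop:convolution-exact} simultaneously. First, combining Theorem~\ref{thm:ar-equivalence}(1), the isomorphism~\eqref{eqn:varkappa-shift}, and the fact that $\psi_\varnothing$ and $R\Ind_B^G$ are equivalences of triangulated categories (Theorems~\ref{thm:formality} and~\ref{thm:induction-thm}), I obtain a canonical isomorphism $\varepsilon : \bQ \simto \bQ \circ \langle 1 \rangle$. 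Chaining this with the graded-Hom isomorphism~\eqref{eqn:varkappa-degr} and the full faithfulness of the other factors gives, for all $\cF, \cG$ in $\Dmix_{(\Iw)}(\Gr,\bk)$ and all $k \in \Z$, an isomorphism
\[
\textstyle\bigoplus_{n \in \Z} \Hom_{\Dmix_{(\Iw)}(\Gr,\bk)}(\cF, \cG\langle n\rangle[k]) \simto \Hom_{\Db\Rep_\varnothing(G)}(\bQ(\cF), \bQ(\cG)[k]),
\]
which is the triangulated analogue of property~(2); in particular $\bQ$ is conservative.

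For the images in property~(1), Theorem~\ref{thm:ar-equivalence}(2) identifies $P(\cJ_!(\lambda))$ with $\Delta(\lambda)$ and $P(\cJ_*(\lambda))$ with $\nabla(\lambda)$, while Proposition~\ref{prop:exotic-weyl} applied with $I = \varnothing$ computes $\Omega_\varnothing \circ \varkappa_\varnothing$ on these objects. This yields
\[
\bQ(\cJ_!(\lambda)) \cong \weyl(w_\lambda \bullet 0), \qquad \bQ(\cJ_*(\lambda)) \cong \coweyl(w_\lambda \bullet 0),
\]
both of which lie in the heart $\Rep_\varnothing(G)$. Since every object of $\Perv^\mix_{(\Iw)}(\Gr,\bk)$ admits a finite $\Delta$-resolution and a finite $\nabla$-coresolution in the graded highest weight structure, $\bQ$ sends the heart of $\Dmix_{(\Iw)}(\Gr,\bk)$ into the heart of $\Db\Rep_\varnothing(G)$; a triangulated functor that is heart-preserving is automatically t-exact, and restricts to an exact functor $\Perv^\mix_{(\Iw)}(\Gr,\bk) \to \Rep_\varnothing(G)$. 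The image of $\IC^\mix_\lambda$, realized as the image of the up-to-scalar unique nonzero morphism $\cJ_!(\lambda) \to \cJ_*(\lambda)$, is then the image of the induced nonzero morphism (nonzero by the first paragraph's graded-Hom isomorphism) $\weyl(w_\lambda \bullet 0) \to \coweyl(w_\lambda \bullet 0)$, which is $\irr(w_\lambda \bullet 0)$; the image $\bQ(\cT(\lambda)) \cong \tilt(w_\lambda \bullet 0)$ follows from the standard characterization of indecomposable tilting modules by their $\Delta$- and $\nabla$-filtrations, together with the images already computed.

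For property~(3), composing Theorem~\ref{thm:ar-equivalence}(3) for $P$, the compatibility of $F_\varnothing$ with tensoring by $V \in \Repf(\dot G)$ from Theorem~\ref{thm:intro-formality}, and that of $R\Ind_B^G$ with tensoring by $\Fr^*(V)$ from Theorem~\ref{thm:intro-induction} (recalling $\Fr^* = \For^{\dot G}_G$), I obtain a bifunctorial isomorphism
\[
\bQ(\cF \star \cG) \cong \bQ(\cF) \otimes \For^{\dot G}_G(\Sat(\cG))
\]
for $\cF \in \Dmix_{(\Iw)}(\Gr,\bk)$ and $\cG \in \Perv_\sph(\Gr,\bk)$. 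For Proposition~\ref{prop:convolution-exact}, fix $\cF \in \Perv^\mix_{(\Iw)}(\Gr,\bk)$ and $\cG \in \Perv_\sph(\Gr,\bk)$: the right-hand side above lies in the heart $\Rep_\varnothing(G)$, so $\bQ(\cF \star \cG)$ lies in the heart; t-exactness together with conservativity of $\bQ$ (both established at the triangulated level) then force $\cF \star \cG$ to lie in the heart $\Perv^\mix_{(\Iw)}(\Gr,\bk)$. The abelian-category form of property~(3) in Theorem~\ref{thm:graded-fm} follows immediately. The main subtlety, rather than a genuine obstacle, is the apparent circularity between Theorem~\ref{thm:graded-fm}(3) and Proposition~\ref{prop:convolution-exact}, which is resolved by carrying out the entire argument first at the triangulated level and only afterward descending to abelian categories via conservativity.
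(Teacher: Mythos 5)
Your proposal follows essentially the same path as the paper: define $\bQ = R\Ind_B^G \circ \psi_\varnothing \circ \varkappa_\varnothing \circ P$, establish the degrading isomorphism from the equivalences and~\eqref{eqn:varkappa-shift}, deduce t-exactness from the standard/costandard images, compute the images via Theorem~\ref{thm:ar-equivalence}(2) and Proposition~\ref{prop:exotic-weyl}, and then derive the convolution compatibility and Proposition~\ref{prop:convolution-exact} simultaneously from the triangulated-level statement, breaking the apparent circularity by conservativity. That overall architecture matches the paper's proof.

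However, there is a genuine gap in your treatment of the tilting objects. You assert that $\bQ(\cT(\lambda)) \cong \tilt(w_\lambda \bullet 0)$ ``follows from the standard characterization of indecomposable tilting modules by their $\Delta$- and $\nabla$-filtrations, together with the images already computed.'' The images of $\cJ_!(\lambda)$ and $\cJ_*(\lambda)$ plus exactness of $\bQ$ do show that $\bQ(\cT(\lambda))$ is a tilting module of highest weight $w_\lambda \bullet 0$, hence that $\tilt(w_\lambda \bullet 0)$ is a \emph{direct summand} of it; but having $\Delta$- and $\nabla$-filtrations with a given highest weight does \emph{not} pin down an indecomposable tilting module, since decomposable tilting modules satisfy the same formal conditions, and $\bQ$ is a degrading (not fully faithful) functor, so indecomposability of $\cT(\lambda)$ does not transport automatically to $\bQ(\cT(\lambda))$. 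The paper closes this gap by a separate argument: using the graded-Hom isomorphism~\eqref{eqn:bq-degr} to identify $\End(\bQ(\cT(\lambda)))$ with the graded ring $\bigoplus_n \Hom(\cT(\lambda), \cT(\lambda)\langle n\rangle)$, whose degree-zero part is local because $\cT(\lambda)$ is indecomposable, and then invoking Gordon--Green~\cite[Theorem~3.1]{gg} (a positively graded ring with local degree-zero part is local) to conclude that $\End(\bQ(\cT(\lambda)))$ is local, hence $\bQ(\cT(\lambda))$ is indecomposable. Without some such argument your tilting identification is incomplete.

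A lesser issue: your derivation of t-exactness (``every object admits a finite $\Delta$-resolution and $\nabla$-coresolution $\Rightarrow$ $\bQ$ preserves the heart $\Rightarrow$ $\bQ$ is t-exact'') relies on the unstated facts that the terms in these resolutions are only $\Delta$-\emph{filtered} rather than direct sums of $\Delta$'s, and that heart-preservation implies t-exactness only for bounded t-structures; both hypotheses hold here, so the conclusion is right, but the unqualified claim ``a triangulated functor that is heart-preserving is automatically t-exact'' is false in general. The paper instead cites the clean criterion of~\cite[Proposition~3.4]{modrap2}, which states directly that it suffices to check that $\bQ(\cJ_!(\lambda))$ and $\bQ(\cJ_*(\lambda))$ lie in the heart.
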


\begin{rmk}
We expect that there also exists a functor
$v : \Perv^\mix_{(\Iw)}(\Gr, \bk) \to \Perv_{(\Iw)}(\Gr,\bk)$
and an isomorphism $\varepsilon : v \circ \langle 1 \rangle \simto v$ such that $(\Perv^\mix_{(\Iw)}(\Gr, \bk),v,\epsilon)$ is a grading on $\Perv_{(\Iw)}(\Gr,\bk)$. However, this fact is not known at present. (In~\cite{modrap2} we have constructed such a structure for finite-dimensional flag varieties of reductive groups and coefficients of good characteristic.)
\end{rmk}

\begin{proof}[Proof of Proposition~{\rm \ref{prop:convolution-exact}} and Theorem~{\rm \ref{thm:graded-fm}}]
Define
\[
\bQ: \Dmix_{(\Iw)}(\Gr,\bk) \to \Db\Rep_\varnothing(G)
\]
to be the composition
\[
\Dmix_{(\Iw)}(\Gr,\bk) \xrightarrow{P} \Db\Coh^{\dot G \times \Gm}(\tcN)
\xrightarrow{\varkappa_\varnothing}
\Dfg_{\dot B}(\bL_\varnothing)
\xrightarrow{\Omega_\varnothing}
\Db\Rep_\varnothing(G).
\]
In view of Property~\eqref{it:ar-shift} in Theorem~\ref{thm:ar-equivalence} and~\eqref{eqn:varkappa-shift}, we have functorial isomorphisms
\[
\bQ(\cF\la 1\ra) \cong \Omega_\varnothing(\varkappa_\varnothing(P(\cF)\la 1\ra [1])) \cong \Omega_\varnothing(\varkappa_\varnothing(P(\cF))) \cong \bQ(\cF)
\]
for any $\cF$ in $\Dmix_{(\Iw)}(\Gr,\bk)$.
In other words, there exists a natural isomorphism $\varepsilon: \bQ \simto \bQ \circ \la 1\ra$.

Let us next show that $\bQ$ is exact. 
In view of~\cite[Proposition~3.4]{modrap2}, it is enough to show that $\bQ(\cJ_!(\lambda))$ and $\bQ(\cJ_*(\lambda))$ lie in $\Rep_\varnothing(G)$. However, by Proposition~\ref{prop:exotic-weyl} and Property~\eqref{it:ar-standard-costandard} in Theorem~\ref{thm:ar-equivalence}, we have
\begin{equation}
\label{eqn:isom-Q-standard-costandard}
\bQ(\cJ_!(\lambda)) \cong \weyl(w_\lambda \bullet 0), \qquad \bQ(\cJ_*(\lambda)) \cong \coweyl(w_\lambda \bullet 0).
\end{equation}
This proves the desired exactness, and also the first two isomorphisms in~\eqref{it:Q-images}.

Proposition~\ref{prop:coh-degr} and Theorem~\ref{thm:ar-equivalence} imply that for any $\cF,\cG \in \Perv^\mix_{(\Iw)}(\Gr,\bk)$, $\bQ$ induces an isomorphism
\begin{equation}\label{eqn:bq-degr}
\bigoplus_{n \in \Z} \Hom_{\Dmix_{(\Iw)}(\Gr,\bk)}(\cF,\cG\la n\ra[k]) \simto \Ext^k_{\Rep_\varnothing(G)}(\bQ(\cF), \bQ(\cG)).
\end{equation}
On the other hand, we know from~\cite[Lemma~3.15]{modrap2} that the realization functor provides an equivalence $\Db\Perv^\mix_{(\Iw)}(\Gr,\bk) \cong \Dmix_{(\Iw)}(\Gr,\bk)$.  This means that on the left-hand side of~\eqref{eqn:bq-degr}, we can replace $\Hom(\cF,\cG\la n\ra[k])$ by $\Ext^k_{\Perv^\mix_{(\Iw)}(\Gr,\bk)}(\cF,\cG\la n\ra)$. 

Next, the simple object $\IC^\mix_\lambda$ is the image of any nonzero morphism $\cJ_!(\lambda) \to \cJ_*(\lambda)$, while the simple object $\irr(w_\lambda \bullet 0)$ is the image of any nonzero morphism $\weyl(w_\lambda \bullet 0) \to \coweyl(w_\lambda \bullet 0)$.  In view of~\eqref{eqn:isom-Q-standard-costandard}, and since $\bQ$ is exact and faithful (as follows from~\eqref{eqn:bq-degr}),
we find that
\[
\bQ(\IC^\mix_\lambda) \cong \irr(w_\lambda \bullet 0).
\]
We have thus shown that $(\Perv^\mix_{(\Iw)}(\Gr,\bk),\bQ,\varepsilon)$ is a grading on $\Rep_\varnothing(G)$.

We now turn to the fourth isomorphism in~\eqref{it:Q-images}. The exactness of $\bQ$ and~\eqref{eqn:isom-Q-standard-costandard} (together with Lemma~\ref{lem:orders-dominant-weights} and Remark~\ref{rmk:order-Pervmix}) imply that $\bQ(\cT(\lambda))$ is a tilting $G$-module which admits $\tilt(w_\lambda \bullet 0)$ as a direct summand. Using the isomorphism~\eqref{eqn:bq-degr} for $\cF=\cG=\cT(\lambda)$ and $k=0$, together with~\cite[Theorem~3.1]{gg}, we see that the ring $\End(\bQ(\cT(\lambda)))$ is local, and hence that $\bQ(\cT(\lambda))$ is indecomposable, which proves that $\bQ(\cT(\lambda)) \cong \tilt(w_\lambda \bullet 0)$.

Finally, using Property~\eqref{it:ar-Satake} in Theorem~\ref{thm:ar-equivalence},~\eqref{eqn:isom-psi-tensoring}, and the tensor identity,
one can check that for any $\cF \in \Dmix_{(\Iw)}(\Gr,\bk)$ and $\cG \in \Perv_\sph(\Gr,\bk)$, there exists a bifunctorial isomorphism
\[
\bQ(\cF \star \cG) \cong \bQ(\cF) \otimes \For^{\dot G}_G(\Sat(\cG))
\]
in $\Db\Rep_\varnothing(G)$.  In particular, if $\cF \in \Perv^\mix_{(\Iw)}(\Gr,\bk)$, then $\bQ(\cF \star \cG)$ lies in $\Rep_\varnothing(G)$.  Now, $\bQ$ is t-exact, and~\eqref{eqn:bq-degr} implies that $\bQ$ kills no nonzero object.  Since $\bQ(\cF \star \cG)$ has cohomology only in degree~$0$, $\cF \star \cG$ must have perverse cohomology only in degree~$0$.  In other words, $\cF \star \cG$ is perverse. This proves Proposition~\ref{prop:convolution-exact}, and also Property~\eqref{it:Q-Satake} of the theorem.
\end{proof}

\subsection{Application to characters of simple $G$-modules}

It is well known that the classes of the modules $\coweyl(w_\lambda \bullet 0)$ form a $\Z$-basis of the Grothendieck group $[\Rep_\varnothing(G)]$ of the abelian category $\Rep_\varnothing(G)$.
As a direct application of Theorem~\ref{thm:graded-fm}, we obtain the following result. 

\begin{prop}
\label{prop:ch-simples}
For any $\lambda, \mu \in \bX$, the coefficients of $[\coweyl(w_\lambda \bullet 0)]$ in the expansion of $[\irr(w_\mu \bullet 0)]$ on the basis $\bigl( [\coweyl(w_\nu \bullet 0)] : \nu \in \bX \bigr)$ of $[\Rep_\varnothing(G)]$ is
\[
\sum_{\substack{i,j \in \Z}} (-1)^{i} \cdot \dim_\bk \bigl( \Hom_{\Dmix_{(\Iw)}(\Gr,\bk)}(\cJ_!(\lambda), \IC^\mix_\mu \langle j \rangle [i]) \bigr).
\]
\end{prop}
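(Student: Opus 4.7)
The plan is to combine the standard Ext-reciprocity for highest weight categories with the degrading property of the functor $\bQ$ from Theorem~\ref{thm:graded-fm}.

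First I would translate the multiplicity into an Euler characteristic. Since $\Rep_\varnothing(G)$ is a highest weight category, we have $[\weyl(w_\lambda \bullet 0)] = [\coweyl(w_\lambda \bullet 0)]$ in $[\Rep_\varnothing(G)]$, so if $m_{\lambda,\mu}$ denotes the coefficient under consideration then
\[
[\irr(w_\mu \bullet 0)] = \sum_\nu m_{\nu,\mu} [\weyl(w_\nu \bullet 0)].
\]
Using the standard Ext-orthogonality $\Ext^i_{\Rep_\varnothing(G)}(\weyl(w_\lambda \bullet 0), \coweyl(w_\nu \bullet 0)) = \bk$ if $i=0$ and $\nu = \lambda$, and zero otherwise, together with the fact that $\weyl(w_\lambda \bullet 0)$ has finite projective dimension in $\Rep_\varnothing(G)$ (so all Ext-groups vanish in high degree and are finite dimensional), one deduces the Euler-characteristic formula
\[
m_{\lambda,\mu} = \sum_i (-1)^i \dim_\bk \Ext^i_{\Rep_\varnothing(G)}(\weyl(w_\lambda \bullet 0), \irr(w_\mu \bullet 0)).
\]

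Next I would apply the graded Finkelberg--Mirkovi\'c equivalence. By Theorem~\ref{thm:graded-fm}\eqref{it:Q-images} we have $\bQ(\cJ_!(\lambda)) \cong \weyl(w_\lambda \bullet 0)$ and $\bQ(\IC^\mix_\mu) \cong \irr(w_\mu \bullet 0)$, and the defining property of the grading (the isomorphism~\eqref{eqn:bq-degr}, which was established in the proof of Theorem~\ref{thm:graded-fm}) yields
\[
\bigoplus_{j \in \Z} \Ext^i_{\Perv^\mix_{(\Iw)}(\Gr,\bk)}(\cJ_!(\lambda), \IC^\mix_\mu \langle j \rangle) \simto \Ext^i_{\Rep_\varnothing(G)}(\weyl(w_\lambda \bullet 0), \irr(w_\mu \bullet 0)).
\]
Combining with the realization equivalence $\Db \Perv^\mix_{(\Iw)}(\Gr,\bk) \cong \Dmix_{(\Iw)}(\Gr,\bk)$ of~\cite[Lemma~3.15]{modrap2}, the left-hand side identifies with
\[
\bigoplus_{j \in \Z} \Hom_{\Dmix_{(\Iw)}(\Gr,\bk)}(\cJ_!(\lambda), \IC^\mix_\mu \langle j \rangle [i]).
\]
Substituting into the Euler-characteristic formula yields the claim.

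The only genuine issue is ensuring that the double sum over $(i,j)$ has only finitely many nonzero terms, so that the Euler characteristic makes sense; this follows by transferring the finite projective dimension of $\weyl(w_\lambda \bullet 0)$ together with the fact that each graded Hom-space $\Hom(\cJ_!(\lambda), \IC^\mix_\mu \langle j \rangle [i])$ is finite-dimensional and vanishes for all but finitely many $(i,j)$ (the latter being a consequence of the graded finite type property of $\Dmix_{(\Iw)}(\Gr,\bk)$, itself inherited from Lemma~\ref{lem:graded-finite-type} via the equivalence $P$ of Theorem~\ref{thm:ar-equivalence}). Once this finiteness is established, the computation is purely formal.
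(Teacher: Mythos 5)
Your proof is correct, and it rests on the same two pillars as the paper's argument (Ext-orthogonality of standards against costandards, plus the properties of $\bQ$ from Theorem~\ref{thm:graded-fm}), but you run the computation in the opposite direction. The paper works entirely in $[\Perv^\mix_{(\Iw)}(\Gr,\bk)]$: the classes $[\cJ_*(\nu)\langle j\rangle]$ form a basis there, the coefficient of $[\cJ_*(\nu)\langle j\rangle]$ in $[\IC^\mix_\mu]$ is the Euler characteristic $\sum_i (-1)^i \dim_\bk \Hom(\cJ_!(\nu), \IC^\mix_\mu\langle -j\rangle[i])$, and one then simply applies the exact functor $\bQ$, which collapses all Tate twists and sends $\cJ_*(\nu) \mapsto \coweyl(w_\nu\bullet 0)$ and $\IC^\mix_\mu \mapsto \irr(w_\mu\bullet 0)$. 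You instead extract the multiplicity downstairs in $[\Rep_\varnothing(G)]$ via Ext-orthogonality of Weyl against dual Weyl modules, and then transport the resulting Euler characteristic back through the degrading isomorphism~\eqref{eqn:bq-degr}. Both routes are valid; the paper's is slightly cheaper, since it needs only the exactness of $\bQ$ and the identification of images of standard, costandard and simple objects (not the full degrading property), and finiteness of the Euler characteristic is immediate upstairs because $\IC^\mix_\mu$ is a bounded complex of parity objects supported on finitely many strata. Your route additionally requires knowing that $\Ext^i_{\Rep_\varnothing(G)}(\weyl(w_\lambda\bullet 0),\irr(w_\mu\bullet 0))$ is finite-dimensional and vanishes for $i\gg 0$; your phrase ``finite projective dimension of $\weyl(w_\lambda\bullet 0)$'' is not literally correct (blocks of $\Rep(G)$ have no projectives), but the alternative justification you give --- finiteness of the total graded $\Hom$-space upstairs, transported through~\eqref{eqn:bq-degr} --- does close this point, so the argument goes through.
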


\begin{proof}
It is clear that the classes $[\cJ_*(\nu) \langle j \rangle]$ for $\nu \in \bX$ and $j \in \Z$ form a basis of the Grothendieck group $[\Perv^\mix_{(\Iw)}(\Gr,\bk)]$, and that the coefficient of $[\cJ_*(\nu) \langle j \rangle]$ in the expansion of the class of an object $\cF$ in this basis is equal to
\[
\sum_{i \in \Z} (-1)^{i} \cdot \dim_\bk \bigl( \Hom_{\Dmix_{(\Iw)}(\Gr,\bk)}(\cJ_!(\lambda), \cF \langle -j \rangle [i] \bigr).
\]
Applying $\bQ$ to the expansion of $[\IC^\mix_\mu]$, we obtain the desired equality.
\end{proof}

\begin{rmk}
\begin{enumerate}
\item
Since the characters of the induced modules $\coweyl(w_\lambda \bullet 0)$ are given by Weyl's character formula, see~\cite[Proposition~II.5.10]{jantzen}, determining the character of a module is equivalent to expressing the class of this module in terms of the classes $[\coweyl(w_\lambda \bullet 0)]$. In particular, this proposition gives a geometric character formula for all simple $G$-modules in $\Rep_\varnothing(G)$ (which, admittedly, is not computable in practice).
\item
Using adjunction, the sum in Proposition~\ref{prop:ch-simples} can be interpreted (up to sign) as the Euler characteristic of the costalk at $L_\lambda$ of $\IC^\mix_\mu$, in the sense of mixed derived categories.
\end{enumerate}
\end{rmk}

Let $Y_1 \subset \Gr$ be the union of the $\Iw$-orbits $\Gr_\lambda$ such that $w_\lambda \bullet 0$ is restricted, i.e.~satisfies $0 \leq \langle w_\lambda \bullet 0, \alpha^\vee \rangle <\ell$ for any simple root $\alpha$. (This subvariety is independent of $\ell$ under our assumptions, but is not closed in general.) On the other hand, let $Y_2 \subset \Gr$ be the union of the $\Iw$-orbits $\Gr_\lambda$ such that $\langle w_\lambda \bullet 0 +\rho, \alpha^\vee \rangle \leq \ell(\ell-h+2)$ for any positive root $\alpha$. (This subvariety is closed, but depends on $\ell$.) We will assume that $\ell \geq 2h-3$, so that $Y_1 \subset Y_2$ (see~\cite[\S 1.13]{williamson}). For any $\lambda \in \bX$, we will denote by $\mathcal{E}_\lambda \in \Par_{(\Iw)}(\Gr, \bk)$ the indecomposable object supported on $\overline{\Gr_\lambda}$ and whose restriction to $\Gr_\lambda$ is $\underline{\bk}_{\Gr_\lambda} [ \dim (\Gr_\lambda) ]$, and by $\IC_\lambda$ the (ordinary) intersection cohomology complex associated with the constant rank-$1$ local system on $\Gr_\lambda$.

Let $(h_{y,x} : x,y \in \Waff)$ be the affine Kazhdan--Lusztig polynomials for $\Waff$ normalized as in~\cite{soergel}. (To be really precise, the setting we consider does not fit exactly with the setting of~\cite{soergel} since $\Waff$ is not a Coxeter group. However a direct generalization applies, see e.g.~\cite[\S 1.8]{williamson-takagi}.)
Lusztig's conjecture~\cite{lusztig} predicts that
\[
[\irr(w \bullet 0)] = \sum_{\substack{y \in \Waffmin \\ y \leq w}} (-1)^{\ell(w)+\ell(y)} h_{w_S y, w_S w}(1) \cdot [\coweyl(y \bullet 0)]
\]
for any $w \in \Waffmin$ such that $\langle w \bullet 0 + \rho, \alpha^\vee \rangle \leq \ell(\ell-h+2)$ for any positive root $\alpha$. It was proved by Kato that it is equivalent to require this formula for any $w \in \Waffmin$ such that $w \bullet 0$ is restricted, see~\cite[\S 5.4]{kato}

\begin{thm}
\label{thm:criterion-lusztig}
Assume that $\ell \geq 2h-3$ and $\ell>h$.
\begin{enumerate}
\item
\label{it:criterion-1}
If $\cE_\lambda \cong \IC_\lambda$ for any $\lambda \in \bX$ such that $\Gr_\lambda \subset Y_1$, then
Lusztig's conjecture holds.
\item
\label{it:criterion-2}
If Lusztig's conjecture holds, then
for any $\lambda$ such that $\Gr_\lambda \subset Y_2$, we have $\mathcal{E}_\lambda \cong \IC_\lambda$.
\end{enumerate}
\end{thm}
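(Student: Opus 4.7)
\medskip

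The plan is to deduce both statements from the character formula of Proposition~\ref{prop:ch-simples}, translating the condition $\cE_\lambda \cong \IC_\lambda$ into a ``purity'' condition on the corresponding mixed intersection cohomology complexes and then comparing with the computation of costalks of parity sheaves in terms of Kazhdan--Lusztig polynomials. More precisely, for any $x \in \Waffmin$ the parity complex $\cE_{w_x}$ admits a natural lift $\cE^\mix_{w_x}$ in $\Dmix_{(\Iw)}(\Gr,\bk)$ (namely the image of $\cE_{w_x}$ in $\Kb(\Par_{(\Iw)}(\Gr,\bk))$), and a standard ``pointwise purity'' argument shows that for any $y \in \Waffmin$ one has
\[
\sum_{i \in \Z} (-1)^i \dim_\bk \Hom_{\Dmix_{(\Iw)}(\Gr,\bk)}(\cJ_!(y \bullet 0), \cE^\mix_{x \bullet 0} \langle j \rangle [i])
\]
equal (up to a uniform sign depending only on $\ell(x)-\ell(y)$) to the coefficient of $v^j$ in $h_{w_S y, w_S x}$. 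Thus whenever $\cE_\mu \cong \IC_\mu$ one has $\cE^\mix_\mu \cong \IC^\mix_\mu$ (since both are characterized up to grading shift by having simple generic stalk), and Proposition~\ref{prop:ch-simples} yields
\[
[\irr(x \bullet 0)] = \sum_{\substack{y \in \Waffmin \\ y \leq x}} (-1)^{\ell(x)+\ell(y)} h_{w_S y, w_S x}(1) \cdot [\coweyl(y \bullet 0)].
\]

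For part~\eqref{it:criterion-1}, I would apply this directly: under the assumption $\cE_\lambda \cong \IC_\lambda$ for every $\lambda$ with $\Gr_\lambda \subset Y_1$, the formula above holds for every $x = w_\mu$ with $\mu \bullet 0$ restricted, which is precisely Lusztig's conjecture in the restricted case; by Kato's reduction~\cite[\S 5.4]{kato} (already invoked in the statement) this implies the full Lusztig conjecture in the Jantzen region.

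For part~\eqref{it:criterion-2}, I would argue by induction on $\ell(w_\mu)$ (equivalently, on the closure order of orbits), assuming Lusztig's conjecture and working inside the closed subvariety $Y_2$. The base case is trivial, so fix $\mu$ with $\Gr_\mu \subset Y_2$ and assume $\cE_\nu \cong \IC_\nu$ for all $\nu < \mu$ with $\Gr_\nu \subset Y_2$. Since $\cE_\mu$ is indecomposable, we can write $\cE^\mix_\mu \cong \IC^\mix_\mu \oplus \bigoplus_\nu \IC^\mix_\nu \langle ? \rangle^{\oplus ?}$ in $\Dmix_{(\Iw)}(\Gr,\bk)$ for some collection of $\nu < \mu$ (after passing to the mixed category; here I use the semisimplicity of pure objects in the mixed derived category together with the inductive hypothesis which ensures $\IC^\mix_\nu \cong \cE^\mix_\nu$ for the $\nu$ that occur). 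Computing the graded Euler characteristic of the costalk at $L_y$ of both sides and feeding the result into Proposition~\ref{prop:ch-simples} yields a corrective term on the right-hand side of the character formula for $[\irr(w_\mu \bullet 0)]$, indexed by the supplementary summands; comparing with Lusztig's formula (which, by assumption, is valid for all restricted weights, and for $\mu$ inside $Y_2$ by Kato's translation principle) forces these correction terms to vanish, that is, no supplementary summand occurs, so $\cE_\mu \cong \IC_\mu$.

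The key technical obstacle I anticipate is justifying the ``pointwise purity/Kazhdan--Lusztig'' identity for parity sheaves in the mixed framework of~\cite{modrap2}: one needs to know that $\Hom^\bullet_{\Dmix}(\cJ_!(y \bullet 0), \cE^\mix_{x \bullet 0})$ is, as a graded vector space, isomorphic to the costalk $i_{L_y}^! \cE_{x \bullet 0}$ with its natural grading by cohomological degree twisted by the Tate twist (so that Euler characteristics in $\Dmix$ translate to specialization at $v=1$ of the appropriate Kazhdan--Lusztig polynomial). This can be established by the usual recollement/standard-costandard computations for parity sheaves as in~\cite{jmw}, adapted to the mixed setting of~\cite{modrap2}; the sign conventions and the precise identification of the grading with the $\langle 1 \rangle$ coming from Theorem~\ref{thm:ar-equivalence}\eqref{it:ar-shift} must be tracked carefully, but once this dictionary is in place both parts of the theorem follow cleanly from Proposition~\ref{prop:ch-simples}.
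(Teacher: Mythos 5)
Your argument for part~(1) essentially matches the paper's: translate the isomorphism $\cE_\lambda \cong \IC_\lambda$ into the mixed framework, compute the costalk Euler characteristics via Kazhdan--Lusztig polynomials, feed the result into Proposition~\ref{prop:ch-simples}, and close with Kato's translation principle. The technical identification with $h_{w_S y, w_S x}(1)$ is supported by~\cite[Proposition~3.11]{williamson-IC}, which the paper invokes explicitly; you would need a similar reference but the idea is the same.

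For part~(2) your argument has a genuine gap. You propose writing $\cE^\mix_\mu \cong \IC^\mix_\mu \oplus \bigoplus_\nu \IC^\mix_\nu\la ? \ra^{\oplus ?}$ by invoking ``semisimplicity of pure objects in the mixed derived category,'' but in the modular mixed setting of~\cite{modrap2} there is no decomposition theorem: the parity complex $\cE_\mu(\bk)$ is indecomposable, and the object $\cE^\mix_\mu$ it defines in $\Kb(\Par_{(\Iw)}(\Gr,\bk))$ is therefore indecomposable as well, so your claimed decomposition can hold only if the right-hand side has a single summand --- which is precisely the conclusion you want. The inductive hypothesis ($\cE_\nu \cong \IC_\nu$ for smaller $\nu$) does not rescue this: there is no a priori reason that an indecomposable parity complex in $\Kb(\Par)$ equals a simple mixed perverse sheaf, and comparing Euler characteristics in the Grothendieck group cannot detect whether a parity complex is perverse --- a costalk equal to $\bk[d] \oplus \bk[d+1] \oplus \bk[d+2]$ has the same Euler characteristic as $\bk[d]$ but ruins the perverse-vanishing condition one needs.

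The missing ingredient is a genuine vanishing input, which Euler-characteristic bookkeeping alone cannot supply. The paper obtains it by a three-coefficients argument: it establishes $\bk(\IC^\mix_\lambda(\Z_\ell)) \cong \IC^\mix_\lambda(\bk)$ via the Grothendieck-group comparison enabled by $\bQ$ and Lusztig's formula, then shows that the relevant $\Hom$-spaces $\Hom_{\Dmix(\Z_\ell)}(\cJ_!(\lambda,\Z_\ell), \IC^\mix_\mu(\Z_\ell)\la j\ra[i])$ and their duals are \emph{$\Z_\ell$-free}, by feeding Andersen's parity-vanishing theorem (a purely representation-theoretic consequence of Lusztig's conjecture) back through the functor $\bQ$. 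Freeness lets one transport the pointwise purity known over $\Q_\ell$ to $\bk$, after which a separate lemma (Lemma~\ref{lem:characterization-parity-Dmix}) characterizes parity complexes in $\Dmix$ by exactly this $\Hom$-vanishing and forces $\IC^\mix_\lambda(\bk)$ to be a parity complex, hence equal to $\cE_\lambda(\bk)$ by indecomposability and support. This entire mechanism --- the $\Z_\ell$ intermediary, Andersen's parity vanishing, the torsion-freeness step, and the parity-characterization lemma --- is absent from your proposal and cannot be replaced by an Euler-characteristic comparison.
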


\begin{rmk}
\begin{enumerate}
\item
It is well known that the condition in~\eqref{it:criterion-1} holds if $\ell \gg 0$, see~\cite{williamson-IC}. Hence Theorem~\ref{thm:criterion-lusztig} provides a new proof of Lusztig's conjecture in large characteristic.
\item
The criterion~\eqref{it:criterion-1} is similar to a criterion obtained by Fiebig, see~\cite[\S 7.5]{fiebig}. In his setting, no analogue of~\eqref{it:criterion-2} is obtained, however. (Note that Fiebig's criterion is in terms of the affine flag variety of $\dot G^\vee$, while ours is in terms of the affine Grassmannian.)
\item
Theorem~\ref{thm:criterion-lusztig} can also be compared with~\cite[Corollary~1.0.3]{soergel-relation}, which gives a similar result relating Lusztig's conjecture ``around the Steinberg weight" and parity complexes on the \emph{finite} flag variety.
\item
Combining~\eqref{it:criterion-1} and~\eqref{it:criterion-2}, we see that the absence of $\ell$-torsion in stalks and costalks of intersection cohomology complexes associated with orbits in $Y_1$ (which is equivalent to the condition $\mathcal{E}_\lambda \cong \IC_\lambda$, see~\cite{williamson-IC}) implies the same condition on $Y_2$, a portion which might be much larger (in particular if $\ell \gg 0$). The fact that this property follows from the ``ordinary" Finkelberg--Mirkovi\'c conjecture was noted in~\cite[Remarks after Theorem~2.14]{williamson}.
This property has no known geometric explanation.
\end{enumerate}
\end{rmk}

Before proving this statement we need a preliminary result. Recall that for any complex algebraic variety $X$ endowed with a finite algebraic stratification
\[
X = \bigsqcup_{s \in \mathscr{S}} X_s
\]
where each $X_s$ is isomorphic to an affine space, and for any field $\mathbb{F}$, we define the mixed derived category $\Dmix_{\mathscr{S}}(X,\mathbb{F})$ as the bounded homotopy category of the additive category $\Par_{\mathscr{S}}(X,\mathbb{F})$ of parity complexes on $X$.
This category has an autoequivalence $\{1\}$ induced by the cohomological shift in parity complexes, another autoequivalence $[1]$ given by the cohomological shift of complexes, and the Tate twist $\langle 1 \rangle = \{-1\}[1]$. 

In particular, if $s \in \mathscr{S}$, the category $\Dmix_{\{s\}}(X_s,\mathbb{F})$ is equivalent to the bounded derived category of the category of graded $\bk$-vector spaces, with $\bk$ (concentrated in internal degree $0$ and considered as a complex in degree $0$) corresponding to $\underline{\mathbb{F}}_{X_s}\{\dim(X_s)\}$, regarded as a complex concentrated in degree $0$, see~\cite[Lemma~3.1]{modrap2}. Here the Tate twist $\langle 1 \rangle$ corresponds to the shift-of-grading functor normalized as in~\cite[\S 3.1]{modrap2}, which we will also denote $\langle 1 \rangle$.

If we denote by
$i_s : X_s \to X$ the embedding, then we have
a standard object $\Delta^\mix_s = (i_s)_! \underline{\mathbb{F}}_{X_s} \{\dim(X_s)\}$ and a costandard object $\nabla^\mix_s =(i_s)_* \underline{\mathbb{F}}_{X_s} \{\dim(X_s)\}$ in the category $\Dmix_{\mathscr{S}}(X,\mathbb{F})$.
(See~\cite[\S\S2.4--2.5]{modrap2} for the definition of the functors $(i_s)_*$ and $(i_s)_!$; these functors are part of a ``recollement" formalism.)

\begin{lem}
\label{lem:characterization-parity-Dmix}
Let $\cF \in \Dmix_{\mathscr{S}}(X,\mathbb{F})$ be an object which satisfies
\[
\Hom_{\Dmix_{\mathscr{S}}(X,\mathbb{F})}(\Delta^\mix_s, \cF \{i\}[j]) = \Hom_{\Dmix_{\mathscr{S}}(X,\mathbb{F})}(\cF, \nabla^\mix_s \{i\}[j])=0
\]
unless $j=0$.
Then $\cF$ is isomorphic to an object of $\Par_{\mathscr{S}}(X,\mathbb{F})$, considered as a complex concentrated in degree $0$.
\end{lem}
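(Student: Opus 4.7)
The plan is to argue by induction on $|\mathscr{S}|$, using the recollement structure of mixed derived categories together with the fact (proved in \cite[Lemma~3.1]{modrap2}) that on a single stratum $X_s$, the category $\Dmix_{\{s\}}(X_s,\mathbb{F})$ identifies with $D^b(\mathrm{grVect}_{\mathbb{F}})$, under which both $\Delta^\mix_s$ and $\nabla^\mix_s$ correspond to $\mathbb{F}$ in bidegree zero. The base case $|\mathscr{S}|=1$ is then immediate: the hypothesis forces the cohomology of $\cF$ in nonzero cohomological degrees to vanish, so $\cF$ is concentrated in cohomological degree $0$ and is a graded vector space, i.e.~an object of $\Par_{\{s\}}(X_s)$.

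For the inductive step, I would pick an open stratum $s$, set $U=X_s$ and $Z=X\ssm U$, and use the recollement with $j:U\hookrightarrow X$ and $i:Z\hookrightarrow X$. The adjunctions together with the identities $j_!\Delta^U_s=\Delta^X_s$, $i_*\Delta^Z_t=\Delta^X_t$ (and their $\nabla$-analogues) and the standard vanishings $j^*i_*=0$, $i^*j_!=0$, $i^!j_*=0$ show directly that $j^*\cF$ satisfies the hypothesis on $U$, while $\Hom(i^*\cF,\nabla^Z_t\{i\}[j])$ and $\Hom(\Delta^Z_t\{i\},i^!\cF[j])$ vanish for $j\neq 0$ by straightforward adjunction. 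The more subtle vanishing $\Hom(\Delta^Z_t\{i\},i^*\cF[j])=0$ for $t\in Z$ and $j\neq 0$ I would obtain by applying $\Hom(\Delta^X_t,-)$ to the triangle $j_!j^*\cF\to\cF\to i_*i^*\cF$ and noting that $\Hom(\Delta^X_t,\Delta^X_s\{n\}[k])=0$ whenever $t\neq s$ by base change ($i_t^*(i_s)_!=0$ since $X_t\cap X_s=\varnothing$). The dual argument handles $i^!\cF$. By the inductive hypothesis, $j^*\cF$, $i^*\cF$ and $i^!\cF$ are thus isomorphic to parity complexes $E_U$, $E_Z^*$, $E_Z^!$, each in degree~$0$.

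Next I would choose a parity complex $\mathcal{E}$ on $X$ (the direct sum of the indecomposable parity extensions of the summands of $E_U$) with $j^*\mathcal{E}=E_U$, and attempt to lift the identity on $j^*\cF$ to a morphism $\mathcal{E}\to\cF$. The obstruction lies in $\Hom_{\Dmix(X)}(i_*i^*\mathcal{E},\cF[1])\cong\Hom_{\Dmix(Z)}(i^*\mathcal{E},i^!\cF[1])$, and since both $i^*\mathcal{E}$ and $i^!\cF$ sit in $\Par(Z)$ viewed as degree-$0$ complexes of $K^b(\Par(Z))$, a chain-map computation shows that $\Hom_{K^b(\Par(Z))}$ from a degree-$0$ complex to a degree-$(-1)$ complex vanishes; hence the lift exists. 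Let $\cF'$ denote its cone, which is supported on $Z$ since $j^*$ of the lift is an isomorphism.

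The main obstacle is to show that $\cF'$ is itself a parity complex in degree $0$. The plan is to verify that $\cF'$ again satisfies the hypothesis, by applying $\Hom(\Delta^X_t,-)$ and $\Hom(-,\nabla^X_t)$ to the triangle $\mathcal{E}\to\cF\to\cF'$ and combining the hypothesis on $\cF$ with the analogous vanishings for the parity complex $\mathcal{E}$ (which follow from the purity of $i_t^*\mathcal{E}$ and $i_t^!\mathcal{E}$ in the single-stratum computation). Once $\cF'\cong i_*E'$ with $E'\in\Par(Z)$ by induction applied to the restricted object, the extension $\mathcal{E}\to\cF\to i_*E'\to\mathcal{E}[1]$ is classified by an element of $\Hom_{K^b(\Par(X))}(i_*E',\mathcal{E}[1])$, which vanishes by the same shift-in-$K^b$ argument; the triangle therefore splits and $\cF\cong\mathcal{E}\oplus i_*E'$ is parity, as required.
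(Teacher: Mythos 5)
Your base case, the construction of the lift $\mathcal{E}\to\cF$ (whose obstruction lives in a $\Hom$ in $\Kb(\Par_{\mathscr{S}}(Z,\mathbb{F}))$ between complexes concentrated in degrees $0$ and $-1$), and the final splitting step are all sound. The gap is in the step where you transfer the hypothesis to $i^*\cF$ (and, dually, to $i^!\cF$ and later to $\cF'$). You invoke the vanishing $\Hom(\Delta^\mix_t,\Delta^\mix_s\{n\}[k])=0$ for $t\ne s$, justified ``by base change, $i_t^*(i_s)_!=0$.'' This uses the wrong adjunction: $\Hom(\Delta^\mix_t,\cG)=\Hom((i_t)_!A,\cG)\cong\Hom(A,i_t^!\cG)$ involves $i_t^!$, not $i_t^*$, and $i_t^!(i_s)_!$ does not vanish for $s$ open. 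In fact the claimed vanishing is false: in a (graded) quasi-hereditary category one has $\Hom(\Delta_t,\Delta_s)\ne 0$ whenever the simple object indexed by $t$ lies in the socle of $\Delta_s$. Already for $X=\mathbb{P}^1$ stratified by a point and its open complement, $i_{pt}^!\,j_!\underline{\mathbb{F}}_{\mathbb{A}^1}[1]\cong\mathbb{F}\oplus\mathbb{F}[-1]$, so $\Hom(\Delta_{pt},\Delta_{\mathbb{A}^1}[k])\ne0$ for $k=0$ \emph{and} $k=1$ (up to internal shift, the same holds in the mixed category). What is true is the orthogonality $\Hom(\Delta^\mix_t,\nabla^\mix_s\{n\}[k])=0$ unless $t=s$, $k=0$; you appear to have substituted one for the other. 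Consequently, applying $\Hom(\Delta^\mix_t,-)$ to $j_!j^*\cF\to\cF\to i_*i^*\cF$ only traps $\Hom(\Delta^Z_t,i^*\cF[j])$ between $\Hom(\Delta^\mix_t,\cF[j])$ and $\Hom(\Delta^\mix_t,j_!j^*\cF[j+1])$, and the latter does not vanish; the same leak occurs in degrees $j=\pm1$ when you try to transfer the hypothesis to the cone $\cF'$ using the triangle $\mathcal{E}\to\cF\to\cF'$. This is not cosmetic: the hypothesis is precisely the purity of all $i_t^*\cF$ and $i_t^!\cF$, and purity of $i_t^!\cF$ for $t\in Z$ does not formally yield purity of $(i_t^Z)^!(i^*\cF)$, so the induction on $Z$ cannot be restarted without a new idea.

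The paper's proof avoids this difficulty entirely by removing a \emph{closed} stratum $X_s$ rather than an open one (handling the degenerate case $j^*\cF=0$ separately). It chooses an indecomposable parity complex $\cE$ with $j^*\cE\cong j^*\cF$ and shows that $j^*$ is \emph{surjective} on $\Hom(\mathcal{H},\mathcal{H}')$ for $\mathcal{H},\mathcal{H}'\in\{\cF,\cE\}$: the obstruction lies in $\Hom(i_s^*\mathcal{H},i_s^!\mathcal{H}'[1])$, which vanishes because both restrictions to the single stratum $X_s$ are pure --- by hypothesis for $\cF$ and by the defining property of parity complexes for $\cE$. Lifting mutually inverse isomorphisms $j^*\cF\to j^*\cE$ and $j^*\cE\to j^*\cF$ and using that $\End(\cE)$ is local then gives $\cF\cong\cE$ directly, with no need to prove that any restriction of $\cF$ to the complement again satisfies the hypothesis. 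If you wish to salvage your gluing strategy, you must supply an independent argument for the purity of $i^*\cF$ and $i^!\cF$; as written, that step fails.
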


\begin{proof}
We prove the claim by induction on $\#\mathscr{S}$.

We can assume that $\cF$ is indecomposable. We
choose $s \in \mathscr{S}$ such that $X_s$ is closed in $X$, and set $U:=\bigsqcup_{t \in \mathscr{S} \smallsetminus \{s\}} X_t$. We denote by $j : U \hookrightarrow X$ the embedding. If $j^* \cF=0$, then using the canonical triangle
\[
j_! j^* \cF \to \cF \to (i_s)_* i_s^* \cF \xrightarrow{[1]}
\]
in $\Dmix_{\mathscr{S}}(X,\mathbb{F})$
we see that $\cF \cong (i_s)_* i_s^* \cF$. The assumption implies that $i_s^* \cF$ is isomorphic to a complex concentrated in degree $0$, and the claim follows.

From now on we assume that $j^* \cF \neq 0$.
By induction, there exists a parity complex $\cE_U$ on $U$ such that $j^* \cF \cong \cE_U$. By the classification of parity complexes on $X$ (see~\cite{jmw}), there exists a parity complex $\cE$ on $X$ such that $\cE_U \cong j^* \cE$. Then we have canonical distinguished triangles
\[
(i_s)_* i_s^! \cF \to \cF \to j_* j^* \cF \xrightarrow{[1]} \quad \text{and} \quad (i_s)_* i_s^! \cE \to \cE \to j_* j^* \cE \xrightarrow{[1]}
\]
in $\Dmix_{\mathscr{S}}(X,\mathbb{F})$.

We claim that the functor $j^*$ induces a surjection
\begin{equation}
\label{eqn:j-surjection}
\Hom_{\Dmix_{\mathscr{S}}(X,\mathbb{F})}(\mathcal{H}, \mathcal{H}') \twoheadrightarrow \Hom_{\Dmix_{\mathscr{S}}(U,\mathbb{F})}(j^* \mathcal{H}, j^* \mathcal{H}')
\end{equation}
when $\mathcal{H}$ and $\mathcal{H}'$ are either $\cF$ or $\cE$. Indeed, using the distinguished triangle above for $\mathcal{H}'$ we obtain an exact sequence
\begin{multline*}
\Hom_{\Dmix_{\mathscr{S}}(X,\mathbb{F})}(\mathcal{H}, \mathcal{H}') \to \Hom_{\Dmix_{\mathscr{S}}(X,\mathbb{F})}(\mathcal{H}, j_* j^* \mathcal{H}') \\
\to \Hom_{\Dmix_{\mathscr{S}}(X,\mathbb{F})}(\mathcal{H}, (i_s)_* i_s^! \mathcal{H}' [1]),
\end{multline*}
where the first map is the morphism appearing in~\eqref{eqn:j-surjection}. Hence to conclude it suffices to prove that the third term in this exact sequence vanishes. However, by adjunction we have
\[
\Hom_{\Dmix_{\mathscr{S}}(X,\mathbb{F})}(\mathcal{H}, (i_s)_* i_s^! \mathcal{H}'[1]) \cong \Hom_{\Dmix_{\{s\}}(X_s,\mathbb{F})}(i_s^*\mathcal{H}, i_s^! \mathcal{H}' [1]).
\]
And either by our assumption (for the case of $\cF$) or by~\cite[Remark~2.7]{modrap2} (for the case of $\cE$), the 
objects $i_s^*\mathcal{H}$ and $i_s^! \mathcal{H}'$, considered as complexes of graded $\bk$-vector spaces (see above), are direct sums of objects of the form $\bk \langle n \rangle [-n]$ with $n \in \Z$. Hence the desired vanishing indeed holds.

From this surjectivity we deduce that $j^* \cF \cong \cE_U$ is indecomposable, hence that $\cE$ can be chosen to be indecomposable also. Next we fix isomorphisms $j^* \cF \simto \cE_U$ and $\cE_U \simto j^* \cF$. By surjectivity again, these isomorphisms can be lifted to morphisms $f : \cF \to \cE$ and $g : \cE \to \cF$. Since $g \circ f$ does not belong to the maximal ideal in $\End(\cE)$, it must be invertible. Similarly $f \circ g$ is invertible, and finally $\cF \cong \cE$.
\end{proof}

\begin{proof}[Proof of Theorem~{\rm \ref{thm:criterion-lusztig}}]
\eqref{it:criterion-1} Assume that $\cE_\lambda \cong \IC_\lambda$ for any $\lambda \in \bX$ such that $\Gr_\lambda \subset Y_1$.
Then, by~\cite[Lemma~3.7]{modrap2}, in $\Dmix_{(\Iw)}(\Gr,\bk)$, for any $\lambda$ such that $\Gr_\lambda \subset Y_1$, the simple mixed perverse sheaf $\IC^\mix_\lambda$ is simply $\mathcal{E}_\lambda$, considered as a complex concentrated in degree $0$. Hence, using~\cite[Remark~2.7]{modrap2} and Proposition~\ref{prop:ch-simples}, we see that
\[
[\irr(w_\lambda \bullet 0)] = \sum_{\substack{\mu \in \bX \\ w_\mu \leq w_\lambda}} (-1)^{\ell(w_\lambda)+\ell(w_\mu)} \left( \sum_{i \in \Z} \dim \mathsf{H}^i(\Gr_\mu, i_\mu^!(\mathcal{E}_\lambda)) \right) \cdot [\coweyl(w_\mu \bullet 0)],
\]
where $i_\mu : \Gr_\mu \to \Gr$ is the embedding. However, it is easy to see (using the defining property of Kazhdan--Lusztig elements) that if $\mathcal{E}_\lambda \cong \IC_\lambda$ then we have
\[
\sum_{i \in \Z} \dim \mathsf{H}^i(\Gr_\mu, i_\mu^!(\mathcal{E}_\lambda)) = h_{w_S w_\mu, w_S w_\lambda}(1),
\]
see~\cite[Implication $(3) \Rightarrow (4)$ in Proposition~3.11]{williamson-IC}. Hence Lusztig's formula holds for any 
$\lambda \in \bX$ such that
$w_\lambda \bullet 0$ is restricted. As noted above, this is known to imply Lusztig's conjecture via results of Kato~\cite{kato}.

\eqref{it:criterion-2} We assume now that Lusztig's conjecture holds. 
The theory of mixed derived categories developed in~\cite{modrap2} also applies to coefficients $\Z_\ell$ or $\Q_\ell$.
In particular we will consider the mixed derived categories $\Dmix_{(\Iw)}(Y_2,\bE)$, for any $\bE \in \{\Q_\ell, \Z_\ell, \bk\}$. (Here the subscript $(\Iw)$ means constructible with respect to the stratification by orbits of $\Iw$.)  In each case we have standard and costandard perverse sheaves $\cJ_!(\lambda, \bE)$ and $\cJ_*(\lambda, \bE)$, and an intermediate extension $\IC^\mix_\lambda(\bE)$. We also have ``extension of scalars" functor
\[
\bk : \Dmix_{(\Iw)}(Y_2, \Z_\ell) \to \Dmix_{(\Iw)}(Y_2, \bk), \qquad \Q_\ell : \Dmix_{(\Iw)}(Y_2, \Z_\ell) \to \Dmix_{(\Iw)}(Y_2, \Q_\ell),
\]
which satisfy
\begin{gather}
\label{eqn:kK-J!}
\bk(\cJ_!(\lambda, \Z_\ell)) \cong \cJ_!(\lambda, \bk), \quad \Q_\ell(\cJ_!(\lambda, \Z_\ell)) \cong \cJ_!(\lambda, \Q_\ell), \\
\label{eqn:kK-J*}
\bk(\cJ_*(\lambda, \Z_\ell)) \cong \cJ_*(\lambda, \bk), \quad \Q_\ell(\cJ_*(\lambda, \Z_\ell)) \cong \cJ_*(\lambda, \Q_\ell), \\
\label{eqn:Qp-IC}
\Q_\ell(\IC_\lambda^\mix(\Z_\ell)) \cong \IC^\mix_\lambda(\Q_\ell).
\end{gather}
(To be precise, in~\cite{modrap2} we only consider triples $(\bK, \bO, \mathbb{F})$ such that 
$\mathbb{F}$ is the residue field of $\bO$. But the same constructions apply for our present triple $(\Q_\ell, \Z_\ell, \bk)$.)

We will also consider the indecomposable parity complex $\cE_\lambda(\Q_\ell)$ and the ``ordinary" intersection cohomology complex $\IC_\lambda(\Q_\ell)$ with coefficients in $\Q_\ell$.
Note that, as in the proof of~\eqref{it:criterion-1}, $\IC^\mix_\lambda(\Q_\ell)$ is isomorphic to $\cE_\lambda(\Q_\ell) \cong \IC_\lambda(\Q_\ell)$, considered as a complex concentrated in degree $0$. In particular, the coefficients of $\IC^\mix_\lambda(\Q_\ell)$ in the basis of costandard perverse sheaves are given by $(-1)^{\ell(w_\lambda)+\ell(w_\mu)} h_{w_S w_\mu, w_S w_\lambda}(1)$.

First, we claim that
\begin{equation}
\label{eqn:k-IC}
\bk ( \IC^\mix_\lambda(\Z_\ell)) \cong \IC^\mix_\lambda(\bk)
\end{equation}
for any $\lambda \in \bX$ such that $\Gr_\lambda \subset Y_2$.
In fact, as in the proof of Proposition~\ref{prop:ch-simples}, it is not difficult to see that the classes of the objects $\cJ_*(\lambda, \bE) \langle i \rangle$ form a $\Z$-basis of the Grothendieck group $[\Dmix_{(\Iw)}(Y_2,\bE)]$, for any $\bE \in \{\Q_\ell, \Z_\ell, \bk\}$. In view of~\eqref{eqn:kK-J*}, this implies that the functors $\bk$ and $\Q_\ell$ induce canonical isomorphisms
\[
\xymatrix{
[\Dmix_{(\Iw)}(Y_2,\Q_\ell)] & [\Dmix_{(\Iw)}(Y_2,\Z_\ell)] \ar[l]_-{r_{\Q_\ell}}^-{\sim} \ar[r]^-{r_\bk}_-{\sim} & [\Dmix_{(\Iw)}(Y_2,\bk)],
}
\]
such that $r_\bk( r_{\Q_\ell}^{-1}([\IC_\lambda^\mix(\Q_\ell)])) = [\bk ( \IC^\mix_\lambda(\Z_\ell))]$. Now, if we consider these Gro\-thendieck groups as $\Z[v,v^{-1}]$-modules where $v$ acts via $\langle 1 \rangle$, Lusztig's conjecture and the existence of $\bQ$ imply that
\[
r_\bk( r_{\Q_\ell}^{-1}([\IC_\lambda^\mix(\Q_\ell)]))_{|v=1} = [\IC^\mix_\lambda(\bk)]_{|v=1},
\]
where $(-)_{|v=1}$ is the map to the quotient by the submodule generated by $v-1$. Hence the mixed perverse sheaf $\bk ( \IC^\mix_\lambda(\Z_\ell))$ has only one composition factor, which is a Tate twist of $\IC^\mix_\lambda(\bk)$. Considering the restrictions to $\Gr_\lambda$, we deduce~\eqref{eqn:k-IC}.

Next, we claim that
\begin{multline}
\label{eqn:stalks-costalhs-IC-free}
\Hom_{\Dmix_{(\Iw)}(Y_2,\Z_\ell)}(\cJ_!(\lambda,\Z_\ell), \IC^\mix_\mu(\Z_\ell) \langle j \rangle [i]) \text{ and} \\
\Hom_{\Dmix_{(\Iw)}(Y_2,\Z_\ell)}(\IC^\mix_\mu(\Z_\ell), \cJ_*(\lambda,\Z_\ell) \langle j \rangle [i])\text{ are $\Z_\ell$-free}
\end{multline}
for any $i,j \in \Z$ and any $\lambda, \mu \in \bX$ such that $\Gr_\lambda, \Gr_\mu \subset Y_2$. We treat the second case; the first one is similar. First we remark that we can replace $\Dmix_{(\Iw)}(Y_2,\Z_\ell)$ by $\Dmix_{(\Iw)}(Y_2,\Z_\ell)$ in these $\Hom$-spaces. Now,
assume that the finitely generated $\Z_\ell$-module $\Hom_{\Dmix_{(\Iw)}(\Gr,\Z_\ell)}(\IC^\mix_\mu(\Z_\ell), \cJ_*(\lambda,\Z_\ell) \langle j \rangle [i])$ has torsion. Using~\eqref{eqn:k-IC} and~\cite[Lemma~2.10]{modrap2}, we deduce that 
$\Hom_{\Dmix_{(\Iw)}(\Gr,\bk)}(\IC^\mix_\mu(\bk), \cJ_*(\lambda,\bk) \langle j \rangle [k])$ is nonzero for $k \in \{i,i+1\}$. Then applying $\bQ$ we obtain that the $\bk$-vector space $\Ext^k_{\Rep_\varnothing(G)}(\irr(w_\lambda \bullet 0), \coweyl(w_\lambda \bullet 0))$ is nonzero for $k \in \{i,i+1\}$. This contradicts a parity-vanishing result of Andersen~\cite{andersen} (obtained as a consequence of Lusztig's conjecture), see~\cite[Proposition~C.2(b)]{jantzen}.

From~\eqref{eqn:kK-J!}, \eqref{eqn:k-IC}, \eqref{eqn:stalks-costalhs-IC-free} and~\cite[Lemma~2.10]{modrap2}, we deduce that
\begin{multline*}
\dim_\bk \bigl( \Hom_{\Dmix_{(\Iw)}(Y_2,\bk)}(\cJ_!(\lambda,\bk), \IC^\mix_\mu(\bk) \langle j \rangle [i]) \bigr) \\
= \dim_{\Q_\ell} \bigl( \Hom_{\Dmix_{(\Iw)}(Y_2,\Q_\ell)}(\cJ_!(\lambda,\Q_\ell), \IC^\mix_\mu(\Q_\ell) \langle j \rangle [i]) \bigr)
\end{multline*}
and that
\begin{multline*}
\dim_\bk \bigl( \Hom_{\Dmix_{(\Iw)}(Y_2,\bk)}(\IC^\mix_\mu(\bk), \cJ_*(\lambda,\bk) \langle j \rangle [i]) \bigr) \\
= \dim_{\Q_\ell} \bigl( \Hom_{\Dmix_{(\Iw)}(Y_2,\Q_\ell)}(\IC^\mix_\mu(\Q_\ell), \cJ_*(\lambda,\Q_\ell) \langle j \rangle [i]) \bigr)
\end{multline*}
for any $i, j \in \Z$ and $\lambda, \mu \in \bX$ such that $\Gr_\lambda, \Gr_\mu \subset Y_2$. Since, in the case of $\Q_\ell$, we know that these spaces vanish unless $i+j=0$ (see above), we deduce the same property over $\bk$. Then, Lemma~\ref{lem:characterization-parity-Dmix} implies that $\IC^\mix_\lambda(\bk)$ is isomorphic to a parity complex considered as a complex concentrated in degree $0$, for any $\lambda \in \bX$ such that $\Gr_\lambda \subset Y_2$. By indecomposibility and considering supports, we even have $\IC^\mix_\lambda(\bk) \cong \cE_\lambda(\bk)$ for any such $\lambda$.

By the well-known characterization of simple objects in the recollement formalism, see~\cite[Corollaire~1.4.24]{bbd}, the fact that $\IC^\mix_\lambda(\bk) \cong \cE_\lambda(\bk)$ implies that $i_\mu^*(\cE_\lambda(\bk))$ is in perverse degrees $\leq -1$ and that $i_\mu^!(\cE_\lambda(\bk))$ is in perverse degrees $\geq 1$, for any $\mu \in \bX$ such that $\Gr_\mu \subset \overline{\Gr_\lambda} \smallsetminus \Gr_\lambda \subset Y_2$. On the other hand, these complexes are just the ordinary restriction and corestriction of $\cE_\lambda(\bk)$ to $\Gr_\mu$, considered as complexes concentrated in degree $0$; see~\cite[Remark~2.7]{modrap2}. Hence these conditions mean that $i_\mu^*(\cE_\lambda(\bk))$ belongs to $D^{\leq -\dim(\Gr_\mu)-1}(\Gr_\mu,\bk)$, and that $i_\mu^!(\cE_\lambda(\bk))$ belongs to $D^{\geq -\dim(\Gr_\mu)+1}(\Gr_\mu,\bk)$, where $i_\mu^*$ and $i_\mu^!$ now mean the \emph{ordinary} restriction and corestriction functors. Using the characterization of \emph{ordinary} intersection cohomology complexes given by~\cite[Corollaire~1.4.24]{bbd}, it follows that $\IC_\lambda(\bk) \cong \cE_\lambda(\bk)$.
\end{proof}

\begin{rmk}
Theorem~\ref{thm:criterion-lusztig}\eqref{it:criterion-2}, \cite[Lemma~3.7]{modrap2} and~\cite[Corollary~3.17]{modrap3} imply that if Lusztig's conjecture holds, then the category $\Perv^\mix_{(\Iw)}(Y_2,\bk)$ (which is part of a grading on the Serre subcategory of $\Rep_\varnothing(G)$ generated by the simple objects $\irr(w \bullet 0)$ with $w \in \Waffmin$ such that $\langle w \bullet 0 + \rho, \alpha^\vee \rangle \leq \ell(\ell-h+2)$ for any positive root $\alpha$) is a Koszul category.
\end{rmk}


\addtocontents{toc}{\protect\addvspace{1.5em}}

\newpage

\section*{Index of notation}

\noindent
\begin{center}
\begin{tabular}{|@{}p{0.32\textwidth\relax}|@{}p{0.32\textwidth\relax}|@{}p{0.32\textwidth\relax}@{}|}
\hline
\begin{tabular}{@{}p{2.6cm}@{}p{\dimexpr0.32\textwidth-2.6cm\relax}@{}}
\hspace{0.05cm} $\preceq$ & \S\ref{ss:Hom-calculations} \\
\hspace{0.05cm} $\le$, $\le'$ & \S\ref{ss:orders-X} \\
\hspace{0.05cm} $\sA \aq \sa$ & \S\ref{ss:normal-subalg} \\
\hspace{0.05cm} $\sA \rtimes \sD$ & \S\ref{ss:semi-direct-products} \\
\hspace{0.05cm} $\sA\ldgmod$ & \S\ref{ss:notation} \\
\hspace{0.05cm} $\sA\ldgmod_H$, \\
\quad $\sA\ldgmod^+_H$ & \S\ref{ss:equiv-dgmod} \\
\hspace{0.05cm} $\sA\lmod$ & \S\ref{ss:notation} \\
\hspace{0.05cm} $\sA\lmod_H$ & \S\ref{ss:equiv-dgmod} \\
\hspace{0.05cm} $B$, $\fb$, $B_1$, $\dot B$  & \S\ref{ss:notation-alg-groups} \\
\hspace{0.05cm} $B^+$, $\fb^+$  & \S\ref{ss:notation-alg-groups} \\
\hspace{0.05cm} $\conv(\lambda)$, \\
\quad $\convo(\lambda)$ & \S\ref{ss:orders-X} \\
\hspace{0.05cm} $D(\sA)$, $\Dfg(\sA)$ & \S\ref{ss:notation} \\
\hspace{0.05cm} $D_H(\sA)$, $D^+_H(\sA)$, \\
\quad $\Dfg_H(\sA)$   & \S\ref{ss:equiv-dgmod} \\
\hspace{0.05cm} $\Db_{\Stein}(P_{I,1})$ & \S\ref{ss:statement-equiv-formality} \\
\hspace{0.05cm} $\Db_{\Stein}(P_JM_{I,1})$ & \S\ref{sec:formality-non-equiv} \\
\hspace{0.05cm} $\Dmix_{(\Iw)}(\Gr, \bk)$ & \S\ref{ss:mixed-der} \\
\hspace{0.05cm} $d_I$ & \S\ref{ss:Springer-res} \\
\hspace{0.05cm} $\Dist(H)$ & \S\ref{ss:induction} \\
\hspace{0.05cm} $\dom(\lambda)$, \\
\quad $\dom_I(\lambda)$ & \S\ref{ss:orders-X} \\
\hspace{0.05cm} $\Delta(\lambda)$, $\Delta_I(\lambda)$ & \S\ref{ss:standard-costandard-exotic}, \S\ref{ss:reminder-exotic-empty} \\
\hspace{0.05cm} $\se_{J,I}$ & \S\ref{ss:Ind-Res-Springer} \\
\hspace{0.05cm} $f^*$, $f_*$ & \S\ref{ss:notation}, \S\ref{ss:equiv-dgmod} \\
\hspace{0.05cm} $\For^K_H$ & \S\ref{ss:induction}, \S\ref{ss:equiv-dgmod} \\
\hspace{0.05cm} $\Fr$ & \S\ref{ss:notation-alg-groups} \\
\hspace{0.05cm} $G$, $\fg$, $\dot G$ & \S\ref{ss:notation-alg-groups} \\
\hspace{0.05cm} $\Gr$, $\Gr_\lambda$ & \S\ref{ss:mixed-der} \\
\hspace{0.05cm} $\mathbb{I}^H$ & \S\ref{ss:induction}, \S\ref{ss:spectral-sequence-quotient} \\
\hspace{0.05cm} $\IC^\mix_\lambda$ & \S\ref{ss:mixed-der} \\
\end{tabular}
&
\begin{tabular}{@{}p{2.6cm}@{}p{\dimexpr0.32\textwidth-2.6cm\relax}@{}}
\hspace{0.05cm} $\incl_I$ & \S\ref{ss:setting} \\
\hspace{0.05cm} $\inc$ & \S\ref{ss:setting} \\
\hspace{0.05cm} $\Ind^K_H$ & \S\ref{ss:induction}, \S\ref{ss:equiv-dgmod} \\
\hspace{0.05cm} $\cJ_!(\lambda)$, $\cJ_*(\lambda)$ & \S\ref{ss:mixed-der} \\
\hspace{0.05cm} $\bk_H(\lambda)$ & \S\ref{ss:induction} \\
\hspace{0.05cm} $\irr_I(\lambda)$ & \S\ref{ss:ssrk1}, \S\ref{ss:setting} \\
\hspace{0.05cm} $\cL_{\dot G/\dot P_I}(V)$, \\
\quad $\cL_{\tcN_I}(V)$ & \S\ref{ss:Springer-res} \\
\hspace{0.05cm} $\bL_I$ & \S\ref{ss:RnI} \\
\hspace{0.05cm} $M_I$, $\fm_I$, \\
\quad $M_{I,1}$, $\smm_I$ & \S\ref{ss:notation-alg-groups} \\
\hspace{0.05cm} $\weyl_I(\lambda)$ & \S\ref{ss:ssrk1}, \S\ref{ss:setting} \\
\hspace{0.05cm} $\mu_{J,I}$ & \S\ref{ss:Ind-Res-Springer} \\
\hspace{0.05cm} $N$, $\fn$, $N_I$, $\fn_I$, \\
\quad $N_{I,1}$, $\snn_I$ & \S\ref{ss:notation-alg-groups}  \\
\hspace{0.05cm} $\dot N$, $\fnt$, $\dot N_I$, $\fnt_I$ & \S\ref{ss:notation-alg-groups} \\
\hspace{0.05cm} $\fN_I$ & \S\ref{ss:notation-alg-groups} \\
\hspace{0.05cm} $\coweyl_I(\lambda)$ & \S\ref{ss:ssrk1}, \S\ref{ss:setting} \\
\hspace{0.05cm} $\tcN$, $\tcN_I$ & \S\ref{ss:Springer-res} \\
\hspace{0.05cm} $\tcN_{J,I}$ & \S\ref{ss:Ind-Res-Springer} \\
\hspace{0.05cm} $n_I$ & \S\ref{ss:Springer-res} \\
\hspace{0.05cm} $\nabla(\lambda)$, $\nabla_I(\lambda)$ & \S\ref{ss:standard-costandard-exotic}, \S\ref{ss:reminder-exotic-empty} \\
\hspace{0.05cm} $\Omega_I$ & \S\ref{ss:setting} \\
\hspace{0.05cm} $P_I$, $\fp_I$, \\
\quad $P_{I,1}$, $\spp_I$ & \S\ref{ss:notation-alg-groups} \\
\hspace{0.05cm} $\Perv_\sph(\Gr,\bk)$ & \S\ref{ss:geometric-Satake} \\
\hspace{0.05cm} $\Phi$, $\Phi^+$, $\Phi_I^+$ & \S\ref{ss:notation-alg-groups} \\
\hspace{0.05cm} $\phi$ & \S\ref{ss:main-translation} \\
\hspace{0.05cm} $\varphi_I$ & \S\ref{ss:formality-PI1} \\
\hspace{0.05cm} $\Pi_I$, $\Pi_{J,I}$ & \S\ref{ss:Ind-Res-Springer} \\
\hspace{0.05cm} $\Pi^I$, $\Pi^{J,I}$ & \S\ref{ss:Ind-Res-Springer} \\
\hspace{0.05cm} $\pi_I$ & \S\ref{ss:RnI} \\
\end{tabular}
&
\begin{tabular}{@{}p{2.3cm}@{}p{\dimexpr0.32\textwidth-2.3cm\relax}@{}}
\hspace{0.05cm} $\pr_I$ & \S\ref{ss:setting} \\
\hspace{0.05cm} $\psi_I$, $\psi_{J,I}$ & \S\ref{ss:Theta}, \S\ref{ss:statement-equiv-formality} \\
\hspace{0.05cm} $\bR_I$ & \S\ref{ss:RnI} \\
\hspace{0.05cm} $\Rep(H)$, \\
\quad $\Repf(H)$ & \S\ref{ss:induction} \\
\hspace{0.05cm} $\Rep_I(G)$ & \S\ref{ss:setting} \\
\hspace{0.05cm} $\rho$, $\rho_I$ & \S\ref{ss:notation-alg-groups} \\
\hspace{0.05cm} $\Rn_I$ & \S\ref{ss:RnI} \\
\hspace{0.05cm} $S$ & \S\ref{ss:notation-alg-groups} \\
\hspace{0.05cm} $\Sat$ & \S\ref{ss:geometric-Satake} \\
\hspace{0.05cm} $\Sigma$ & \S\ref{ss:notation-alg-groups} \\
\hspace{0.05cm} $\sigma_I$ & \S\ref{ss:fromLtoRn} \\
\hspace{0.05cm} $\varsigma$, $\varsigma_I$ & \S\ref{ss:notation-alg-groups}, \S\ref{ss:setting} \\
\hspace{0.05cm} $\St$, $\St_I$ & \S\ref{ss:Steinberg} \\
\hspace{0.05cm} $T$, $\ft$, $\dot T$  & \S\ref{ss:notation-alg-groups} \\
\hspace{0.05cm} $T_I^J$, $T_J^I$ & \S\ref{ss:setting} \\
\hspace{0.05cm} $\tilt(\lambda)$, $\tilt_s(\lambda)$ & \S\ref{ss:ssrk1}, \S\ref{ss:graded-fm-conj} \\
\hspace{0.05cm} $\cT(\lambda)$ & \S\ref{ss:mixed-der} \\
\hspace{0.05cm} $t_\lambda$ & \S\ref{ss:setting} \\
\hspace{0.05cm} $\Theta_{J,I}$, $\Theta^{J,I}$ & \S\ref{ss:Theta} \\
\hspace{0.05cm} $\theta$ & \S\ref{ss:main-translation} \\
\hspace{0.05cm} $W$, $W_I$ & \S\ref{ss:notation-alg-groups} \\
\hspace{0.05cm} $W_\aff$ & \S\ref{ss:setting} \\
\hspace{0.05cm} $\WaffCox$, $\Waff^\circ$ & \S\ref{ss:setting} \\
\hspace{0.05cm} $\Waffmin$, $\WaffminI$ & \S\ref{ss:combinatorics-weights} \\
\hspace{0.05cm} $w_I$ & \S\ref{ss:notation-alg-groups} \\
\hspace{0.05cm} $w_\lambda$ & \S\ref{ss:orders-X} \\
\hspace{0.05cm} $\bX$, $\bX^+$ & \S\ref{ss:notation-alg-groups} \\
\hspace{0.05cm} $\bX_I^+$ & \S\ref{ss:setting} \\
\hspace{0.05cm} $\bXpp_I$ & \S\ref{ss:standard-costandard-exotic} \\
\hspace{0.05cm} $\fZ_I$ & \S\ref{ss:notation-alg-groups} \\
\hspace{0.05cm} $\rmZ_I$ & \S\ref{ss:Steinberg} \\
\ \\
\end{tabular} \\
\hline
\end{tabular}
\end{center}

\newpage


\end{document}